\title[Computing quantum dynamics in the semiclassical regime]{Computing quantum dynamics in the semiclassical regime}
\author[Caroline Lasser and Christian Lubich]{%
Caroline Lasser\\
{\tt classer@ma.tum.de}\\
\and
Christian Lubich\\
{\tt lubich@na.uni-tuebingen.de}
}
\newcommand{\eg}{{\it e.g.}}
\newcommand{\ie}{{\it i.e.}}
\newcommand{\cn}{\citeasnoun}
\newcommand{\ignore}[1]{}
\newcommand{\dist}{\mathrm{dist}}
\newcommand{\tr}{\mathrm{tr}}
\newcommand{\eps}{\varepsilon}
\newcommand{\e}{{\mathrm e}}
\newcommand{\Id}{\mathrm{Id}}
\newcommand{\N}{{\mathbb N}}
\newcommand{\Rb}{{\mathbb R}}
\newcommand{\T}{{\mathbb T}}
\newcommand{\C}{{\mathbb C}}
\newcommand{\calF}{{\mathcal F}}
\newcommand{\calK}{{\mathcal K}}
\newcommand{\calM}{{\mathcal M}}
\newcommand{\calS}{{\mathcal S}}
\newcommand{\calV}{{\mathcal V}}
\newcommand{\calW}{{\mathcal W}}
\newtheorem{remark}{Remark}[section]
\newtheorem{theorem}{Theorem}[section]
\newtheorem{lemma}[theorem]{Lemma}
\newtheorem{corollary}[theorem]{Corollary}
\newtheorem{proposition}[theorem]{Proposition}
\newtheorem{definition}{Definition}[section]
\def\bch{\color{black}}
\def\ech{\color{black}\,}
\begin{document}

\label{firstpage}
\maketitle

\begin{abstract}
The semiclassically scaled time-dependent multi-particle  Schr\"odinger equation describes, {\it inter alia}, quantum dynamics of nuclei in a molecule. It poses the combined computational challenges of high oscillations and high dimensions. This paper reviews and studies numerical approaches that are robust to the small semiclassical parameter. We present and analyse variationally evolving Gaussian wave packets, Hagedorn's semiclassical wave packets, continuous superpositions of both thawed and frozen Gaussians, and Wigner function approaches to the direct computation of expectation values of observables.
Making good use of classical mechanics is essential for all these approaches. The arising aspects of time integration and high-dimensional quadrature are also discussed.
%
%Numerical approaches to the semiclassically scaled time-dependent multi-particle  Schr\"odinger equation that are robust to the small semiclassical parameter, are presented and studied in this work. Making good use of classical mechanics turns out essential to address the combined challenge of high oscillations and high dimensions. We present and analyse variationally evolving Gaussian wave packets, Hagedorn's semiclassical wave packets, continuous superpositions of both thawed and frozen Gaussians, and Wigner function based approaches to the direct computation of expectation values of observables. The arising aspects of time integration and high-dimensional quadrature are also discussed.
\end{abstract}

\tableofcontents 

%\section{Introduction}
\section{Introduction}

``{\it Semiclassical}\/'' is a notion that arises in a variety of situations in physics and chemistry and comes with an even larger variety of different viewpoints and analytical and computational techniques in mathematics. We just refer to the monographs by \cn{Hel18} and \cn{Zwo12} to illustrate the wide span of what is referred to as ``semiclassical", from a range of quantum-mechanical problems and their approximate models to a branch of microlocal analysis. In a vague but useful conception,  a quantum-mechanical problem is considered to be semiclassical when its approximation can make good use of classical mechanics (see \eg\ \cn{Mil74}). This aspect will be emphasised here. 

The basic equation in this field is 
the {\it time-dependent Schr\"odinger equation in semiclassical scaling},
\begin{equation}
\label{tdse-sc}
\I\eps \,\partial_t \psi(x,t) = -\frac{\eps^2}2 \Delta_x \psi(x,t) + V(x)\psi(x,t), \qquad 0 < \eps \ll 1,
\end{equation}
for the complex-valued wave function $\psi$ that depends on spatial variables $x\in\Rb^d$ and time $t\in\Rb$.  Here, $\Delta_x$ is the Laplacian on $\Rb^d$, and $V:\Rb^d\to\Rb$ is a smooth potential that acts as a multiplication operator in \eqref{tdse-sc}. The small scaling parameter $\eps\ll 1$ can be viewed as an effective Planck constant, \ie\ as the ratio between Planck's constant $\hbar$ and a characteristic action (length times momentum or energy divided by frequency) of the physical system under consideration. The semiclassically scaled Schr\"odinger equation
arises, in particular, in the description of quantum dynamics of nuclei in a molecule, where the square of the small parameter $\eps$ equals the mass ratio of electrons and nuclei, and where the dimension $d$ equals three times the number of nuclei.

Equation \eqref{tdse-sc} with an arbitrarily small parameter $\eps>0$ is the partial differential equation for which numerical approximation by various approaches will be studied in this paper.  Computing approximate solutions to the initial value problem for \eqref{tdse-sc} is challenging because
\begin{itemize}
\item solutions are highly oscillatory in space and time;
\item the problem is high-dimensional.
\end{itemize}
This combination of high oscillations and high dimensions makes standard discretisations by grid-based numerical methods infeasible, whether finite difference, finite element or pseudospectral methods. Hence different, problem-adapted and asymptotic-preserving (as $\eps\to 0$) numerical approaches need to be developed, and this is the subject of the present paper.

There already exists an {\em Acta Numerica\,} review of mathematical and computational methods for semiclassical Schr\"odinger equations by \cn{JinMS11}, which presents their perspective on the vast subject just as this paper presents ours.  The two reviews complement each other. Moreover, we include here many new results and developments from the past decade.

In the motivating Section~\ref{sec:bo} we describe how the adiabatic or Born--Oppenheimer approximation to full molecular quantum dynamics (which includes electrons and nuclei) leads to a  Schr\"odinger equation~\eqref{tdse-sc}  for the nuclei only, which is semiclassically scaled because of the small mass ratio $\eps^2=m/M$ of electrons and nuclei. It is again the small mass ratio that justifies the Born--Oppenheimer approximation as an effective model of the quantum dynamics of nuclei, with an $O(\eps)$ accuracy.

In Section~\ref{sec:gwp}, the high-dimensional and highly oscillatory wave function is approximated by a single, variationally evolving complex Gaussian. This simple yet basic approximation yields an $O(\eps^{1/2})$ error in the $L^2$-norm and an $O(\eps)$ error in averages of observables. When written in Hagedorn's parametrisation of the complex Gaussian, the equations of motion show a remarkable correspondence with classical mechanics.

In Section~\ref{sec:hagwp} we present Hagedorn's semiclassical wave packets. These are ingeniously constructed polynomials times complex Gaussians, which make it possible to approximate the wave function to higher orders in $\eps$ by a combination of classical mechanics for the Gaussian parameters and a Galerkin method for the polynomial part that is robust as $\eps\to 0$.

In Section~\ref{sec:csg} we study continuous superpositions of Gaussians for the approximation of the wave function. The width of the evolving Gaussians can be either thawed or frozen, corresponding to Gaussian beams or the Herman--Kluk approximation, respectively. Both cases yield an approximation of $O(\eps)$ accuracy in the 
$L^2$-norm.  They both lead to a computational approach in which quadrature points in phase space (``particles") are transported by the classical flow and the linearised classical equations of motion are solved along the trajectories.

In Section~\ref{sec:wigner} we aim at directly approximating averages of observables rather than the wave function. We show how the combination of Egorov's theorem, which relates quantum observables and classically propagated observables, and of Wigner or Husimi functions, which represent averages of quantum observables as integrals over classical phase space, leads to a computational approach of $O(\eps^2)$ accuracy in which quadrature points in phase space are transported by the classical flow.

We note that Sections~\ref{sec:hagwp}, \ref{sec:csg} and~\ref{sec:wigner} can be read independently of each other, but they depend on material in Section~\ref{sec:gwp} to a varying degree.

The approximations described in Sections~\ref{sec:gwp}--\ref{sec:wigner} require appropriate time discretisation and the computation of high-dimensional integrals. These important computational aspects are considered in Sections~\ref{sec:time} and~\ref{sec:quad}, respectively.

In Section~\ref{sec:further} we briefly address some important topics that are not treated in detail in this article: %time-dependent potentials,
systems of semiclassical Schr\"odinger equations,
WKB-type approximations and nonlinear semiclassical  Schr\"odinger equations.

Aside from the introductory Section~\ref{sec:bo} and the final Section~\ref{sec:further}, we have  aimed to provide full proofs of all key results, emphasising basic ideas and techniques. In doing so, we have arrived at various new results and proofs, which are not published elsewhere. In the core Sections~\ref{sec:gwp}--\ref{sec:wigner} we have put references to the literature into notes at the end of the sections, whereas in the other, more diverse sections, references are integrated into the running text.

%\newpage
%\section{Born--Oppenheimer approximation}
\def\for{\quad\ \hbox{ for}\quad }
\def\with{\quad\ \hbox{ with}\quad }
\newcommand{\sfrac}[2]{\mbox{\footnotesize$\displaystyle{#1\over#2}$}}

%\section{Born--Oppenheimer approximation}
\section{Quantum dynamics of nuclei in molecules}
\label{sec:bo}

We describe how semiclassically scaled Schr\"odinger equations arise in the formulation of quantum dynamics of nuclei in molecules, with the  electron/nucleus mass ratio appearing as the squared small scaling parameter~$\eps$. The semiclassical Schr\"odinger equation for the nuclei comes about by restricting the electronic degrees of freedom in the molecular wave function to a potential energy surface. This approximation is known as the {\it adiabatic} or {\it Born--Oppenheimer approximation}. From the numerical analyst's viewpoint, this is a particular Galerkin approximation to the full molecular quantum dynamics. It is motivated and rigorously justified by the multi-scale nature of the problem, \ie\ for small parameters $\eps\ll 1$. 

\subsection{Molecular Hamiltonian}
We describe a molecule in terms of nuclei and electrons.  We assume that there are $N$ nuclei, with masses $M_n$ and charge numbers~$Z_n$. The position coordinates are collected in a vector 
\[
x = (x_1,\ldots,x_N)\quad\text{with each}\quad x_n\in\Rb^3.
\] 
Similarly, we collect the position coordinates of $L$ electrons in a vector 
\[
y = (y_1,\ldots,y_L)\quad\text{with each}\quad y_l\in\Rb^3.
\] 
Electronic mass and elementary charge are denoted by $m$ and $e$, respectively.  The kinetic energy operator
\[
T = T_N + T_e
\]
is the sum of the kinetic energy operators for the nuclei and the electrons, 
\[
T_N = -\sum_{n=1}^N \frac{\hbar^2}{2M_n} \Delta_{x_n},\qquad T_e = -\sum_{l=1}^L \frac{\hbar^2}{2m} \Delta_{y_l},
\] 
where \bch $\hbar$ is the reduced Planck's constant,\ech which has the physical dimension of an action (length times momentum or energy divided by frequency).
The potential energy operator
\[
V = V_{NN} + V_{Ne} + V_{ee}
\]
comprises Coulomb interactions between nuclei and nuclei, nuclei and electrons, as well as electrons and electrons. They are
\begin{align*}
V_{NN}(x) = \sum_{1\le m<n\le N} \frac{Z_m Z_n e^2}{|x_m-x_n|},\qquad V_{Ne}(x,y) = -\sum_{n=1}^N \sum_{l=1}^L \frac{Z_ne^2}{|x_n-y_l|}
\end{align*}
and
\[
V_{ee}(y) = \sum_{1\le k<l\le L} \frac{e^2}{|y_k-y_l|}.
\]
Adding the kinetic and the potential energy operator, we obtain the molecular Schr\"odinger operator (or Hamiltonian)
\[
H_{\rm mol} = T + V.
\]
Before turning to the dynamics induced by the molecular Schr\"odinger operator, we briefly 
comment on its basic functional analytic properties.

%\begin{remark}
In the following we distinguish between functions depending on both nuclear and electronic coordinates $x$ and $y$, functions only depending on the nuclear coordinates $x$ and functions only depending on the electronic coordinates $y$. We use the shorthand notation
\[
L^2_{x,y} = L^2(\Rb^{3N+3L}),\qquad
L^2_x = L^2(\Rb^{3N}),\qquad L^2_y = L^2(\Rb^{3L})
\]
for the corresponding Lebesgue spaces of square-integrable complex-valued functions and analogously for Sobolev spaces. \bch The inner products are conjugate linear in the first argument.\ech 
%\end{remark}

%\begin{remark}
We view the molecular Hamiltonian $H_{\rm mol}$ as an operator on the whole molecular function space $L^2_{x,y}$.
%without any anti-symmetrisation in the electronic degrees of freedom. The Pauli exclusion principle is not relevant in the present context.
%\end{remark}

%\begin{remark}
\cn{Kat51} proved that the molecular Hamiltonian $H_{\rm mol}$ is a self-adjoint linear operator with domain $D(H_{\rm mol}) = H^2_{x,y}$, see also \cite[Theorem X.16]{ReeS2}. 
%\end{remark}

\subsection{Molecular Schr\"odinger equation}
The quantum motion of a molecule is described by the time-dependent Schr\"odinger equation,
\[
\I\hbar\ \frac{\partial\Psi}{\partial t} = H_{\rm mol} \Psi,\qquad \Psi |_{t=0} = \Psi_0,
\]
which has a unique solution
\[
\Psi = \Psi(x_1,\ldots,x_N,y_1,\ldots,y_L,t)
\]
for all times $t\in\Rb$ for all square-integrable initial data 
\[
\Psi_0\in L^2_{x,y}.
\]
Using the unitary group
\[
\e^{-\I H_{\rm mol}\, t/\hbar},\qquad t\in\Rb,
\]
associated with the molecular Hamiltonian, the solution of the Schr\"o\-dinger equation may be written as
\[
\Psi(\cdot,t) = \e^{-\I H_{\rm mol} \, t/\hbar} \, \Psi_0.
\]
For general square-integrable initial data, it depends continuously on time. For initial data in the domain 
$D(H_{\rm mol}) = H^2_{x,y}$, the solution is continuously differentiable with respect to time.

\begin{remark}
The spectral representation of self-adjoint linear operators was developed by 
\cn[Section IX and Appendix II]{vNe30}, \cn{Sto29}, and \cn{Rie30}; see also 
\cite[Chapter VIII.3]{ReeS1}. Given the existence of the spectral representation of $H_{\rm mol}$, 
we obtain the existence of its unitary group without further ado.
\end{remark}

While it is reassuring that existence and uniqueness of the solution are guaranteed, the direct numerical simulation of the molecular Schr\"odinger equation is 
notoriously difficult, or rather intractable, for at least two reasons: 
\begin{description}
\item
{\em High dimensionality:} A molecular configuration space $\Rb^{3N+3L}$ is finite-dimensional, but typically high-dimensional. For example, a single CO$_2$ molecule has $N=3$ atoms and $L=22$ electrons. Hence, its configuration space has dimension $3N+3L = 75$. Conventional discretizations of partial differential equations by finite differences or finite element methods scale exponentially with respect to the dimension of the configuration space. 
Thus, they are inappropriate for the molecular Schr\"odinger equation, and one must search for alternatives. 
\item\item
{\em Multiple scales:} Nuclei are much heavier than electrons, which implies that nuclei move on considerably different time and length scales than electrons. For example, electronic motion is typically associated with the order of several tens or hundreds of attoseconds, while chemical reactions 
take several tens or hundreds of femtoseconds. We recall that 
\[
1\,\text{attosecond} = 10^{-18}\,{\rm s} \;\ll 1\; \text{femtosecond} = 10^{-15}\,{\rm s}.
\]
\end{description}
Our interest here is in numerical methods that resolve the femtosecond time scale, the time scale of chemical reactions.

\subsection{Electronic Schr\"odinger equation}
The mass discrepancy of nuclei and electrons motivates to clamp the nuclei, \ie, to fix a nuclear configuration $x=(x_1,\ldots,x_N)$, and to consider the associated electronic Hamiltonian
\[
H_e(x) = T_e + V_{Ne}(x,\cdot) + V_{ee}.
\]
The electronic Hamiltonian depends parametrically on the nuclear configuration and acts on functions of the electronic coordinates 
$y=(y_1,\ldots,y_L)$. 
%As for the molecular Hamiltonian, anti-symmetrization of the electronic coordinates is neglected here. 
The electronic Hamiltonian is a self-adjoint operator with domain $ H^2_y$ 
that is bounded from below  \cite[Theorem 1]{Kat51}. In view of the fermionic nature of electrons, the domain $D(H_e(x))$ is usually restricted to
the closed subspace of anti-symmetric functions in $H^2_y$.  The electronic Hamiltonian has a real eigenvalue $E(x)$ as the lowest point in its spectrum $\sigma(H_e(x))$,
\[
E(x) = \min\sigma(H_e(x)),
\]
which is called the ground state energy. Together with an eigenfunction 
$\Phi(x,\cdot)$ it satisfies the stationary electronic Schr\"odinger equation
\[
H_e(x)\Phi(x, \cdot) = E(x)\Phi(x,\cdot).
\]
The problem of computing a solution to this high-dimensional eigenvalue problem is often referred to as the electronic structure problem. 

\begin{remark}
Computing eigenvalues and eigenfunctions of the 
electronic Schr\"o\-dinger equation is the primary concern of computational quantum chemistry; see, \eg \ 
\cn{SzaO96} or \cn{Jen16} 
and from a more mathematical viewpoint \cn{CanDKLM03}, \cn{CanLM06}, \cn{LeB05}, and \cn{LinLY19}. Here we simply assume that this problem is solved in 
some satisfactory way.
\end{remark}

\subsection{Potential energy surfaces}

The electronic ground state energy %need not be obtained via the stationary electronic Schr\"odinger equation, but can also 
can be characterized 
using the Courant--Fischer min-max principle \cite[Theorem~4.2.6]{HorJ13}, as
\[
E(x) = \inf_{\dim(U) = 1} \ \sup_{\phi\in U,\,\|\phi\|_{L^2_y} = 1} \left\langle \phi, H_e(x)\phi\right\rangle_{L^2_y},
\]
where $U$ ranges over the one-dimensional subspaces of $D(H_e(x))$. 
For higher electronic eigenvalues a variational characterisation is also convenient, and 
the $m$th eigenvalue is given by
\[
E_m(x) = \inf_{\dim(U) = m} \ \sup_{\phi\in U,\,\|\phi\|_{L^2_y} = 1} \left\langle \phi, H_e(x)\phi\right\rangle_{L^2_y}\ \for m\ge 1.
\]
Now, $U$ ranges over the $m$-dimensional subspaces of $D(H_e(x))$. In this way, we also obtain an enumeration 
of the discrete electronic spectrum that is continuous with respect to the nuclear configuration, regardless of the 
multiplicities of the eigenvalues that may change when varying $x$. 

The functions that map a nuclear configuration to an electronic eigenvalue,
\[
\Rb^{3N}\to\Rb,\quad x\mapsto E_m(x)
\]
or
\[
\Rb^{3N}\to\Rb,\quad x\mapsto E_m(x) + V_{NN}(x),
\]
are called the potential energy surfaces of the molecule. They are important for 
our basic understanding of chemical reactions. 

\cn[Section~6]{HunG80} proved that there is a Lipschitz constant $\beta>0$ such that
\[
\left| E_m(x) - E_m(\tilde x) \right| \le \beta \left| x-\tilde x\right|
\]
for all $m\ge 1$ and all nuclear configurations $x,\tilde x\in\Rb^{3N}$. The Lipschitz constant depends on the number of nuclei and 
their charge numbers as well as the number of electrons. 

The general Lipschitz estimate 
for the functions $x\mapsto E_m(x)$ can be refined further.
% as follows: 
\cn[Corollary of Theorem~1]{Hun86} proved that
%\begin{theorem}[Analyticity]
non\-degenerate 
discrete eigenvalues of the electronic Hamiltonian $H_e(x)$ are analytic in \bch $x=(x_1,\ldots,x_N)\in\C^{3N}$ 
in a neighbourhood of any nuclear configuration $x\in\Rb^{3N}$ satisfying 
$x_k\neq x_\ell$ if $k\neq \ell$.\ech  
%\end{theorem}

%This result is proved in \cite[Corollary of Theorem~1]{Hun86}. 
The two 
conditions for analyticity are not redundant. As for non-degeneracy, the standard example 
is provided by the $2\times 2$ matrix
\[
\begin{pmatrix}\xi_1 & \xi_2\\ \xi_2 & -\xi_1\end{pmatrix}
\]
which is an entire function of $\xi\in\C^2$, but has the eigenvalues
$\pm(\xi_1^2 + \xi_2^2)^{1/2}$ that are not even differentiable at $\xi=0$. 
Such degeneracies occur in many polyatomic molecules;  
see \eg\ \cite{DomYK11}. They are referred to as conical intersections, since the graphs of 
the two eigenvalues form two intersecting, rotationally symmetric cones.

\medskip
For diatomic molecules, one can adapt the coordinate system such 
that 
\[
x_1 = (-\tfrac12 r,0,0)\quad\text{and}\quad x_2 = (\tfrac12 r,0,0).
\] 
Then, the electronic eigenvalues just depend on one single variable, the internuclear distance 
$r = |x_1-x_2|$.  The simplest molecular ion, the dihydrogen cation H$_2^+$ ($N=2$ nuclei and $L=1$ electron) 
shows that analyticity may break down at $r=0$ in a rather subtle way, since its electronic ground state 
energy has the asymptotic expansion
\[
E(r) = -2 + \tfrac23 (2r)^2 -\tfrac23 (2r)^3 + \tfrac{22}{135} (2r)^4 - \tfrac{2}{9} (2r)^5 \log(r) + O(r^5) 
\]
as $r\to0$\bch ;\ech see \cite{Kla83}.

\begin{remark}
There are two choices for the definition of an electronic Hamiltonian. Setting
\[
H_e(x) = T_e + V_{Ne}(x,\cdot) + V_{ee},
\]
only the nucleus-electron and electron-electron interactions 
are taken into account, so the electronic eigenvalues satisfy the above Lipschitz estimate. Alternatively, one may consider
\[
H_e(x) + V_{NN}(x)
\] 
as an electronic Hamiltonian acting on functions of the electronic coordinates. This operator has the same spectral properties as $H_e(x)$, and even the same eigenfunctions. However, since its eigenvalues are 
\[
E_m(x) +V_{NN}(x),
\] 
they are not globally continuous with respect to $x$.   
\end{remark}

\subsection{Schr\"odinger equation for the nuclei via adiabatic approximation}
%on an Electronic Energy Surface}

We fix a potential energy surface $E(x)$ and let $\Phi(x,\cdot)\in L^2_y$ be a corresponding eigenfunction of unit norm that depends continuously on $x$. For fixed nuclear coordinates $x$, 
the solution of the time-dependent electronic Schr\"odinger equation
$$%\begin{equation}\label{II:elec-tdse}
\I\hbar\, \partial_t \Psi_e  = H_e(x)\Psi_e
$$%\end{equation}
with initial data $\psi_0(x)\Phi(x,\cdot)$
is given by
$$%\begin{equation}\label{II:elec-tdse-sol}
\Psi_e(x,y,t) =  e^{-iE(x)t/\hbar} \psi_0(x)\cdot \Phi(x,y),
$$%\end{equation}
and so the solution  stays for all times in the subspace of $L^2_{x,y}$ given by
\begin{equation}\label{II:M-ad}
\calV = \{ v\in L^2_{x,y}: v(x,y)=\psi(x)\,\Phi(x,y), 
\ \psi\in L^2_x \} .
\end{equation}
This motivates the {\it adiabatic approximation} to the molecular 
Schr\"o\-dinger equation,
which is the Galerkin approximation  on the subspace $\calV$: Given the initial value $u_0\in \calV$,
find $u(t)=u(\cdot,\cdot,t)\in\calV$ such that
\begin{equation} \label{II:adi-galerkin}
\bigl\langle \I \hbar\, \partial_t u - H_{\rm mol}u \mid v \bigr\rangle = 0 \quad\text{ for all } v \in \calV, \qquad u(0)=u_0.
\end{equation}
This leads
to a {\it Schr\"odinger equation for the nuclei} on
the electronic energy surface~$E$\,: a calculation shows that $u(x,y,t)=\psi(x,t) \,\Phi(x,y)$, where the nuclear wave function $\psi$ satisfies
\begin{eqnarray*}%\label{II:nuc}
&& i\hbar\,\partial_t \psi  = H_N\psi
\quad\text{ with }\quad
H_N = T_N + V_{NN}+E + B_1 + B_2\,,
\\
\nonumber
&& 
B_1 =  \sum_{n=1}^N {\hbar\over M_n} 
\,\Im \langle \nabla_{x_n}\Phi \,|\, \Phi \rangle_{L^2_y}\cdot
(-\I\hbar\nabla_{x_n})\,,\quad
B_2 = \sum_{n=1}^N {\hbar^2\over 2M_n} \,
\| \nabla_{x_n} \Phi \|_{L^2_y}^2 .
\end{eqnarray*}
The Hamiltonian $H_N$ acts on functions of only the nuclear coordinates $x$, with the electronic eigenvalue
$E$ as a potential. The last two terms $B_1$ and $B_2$ contain derivatives of the
electronic wave function $\Phi$ with respect to the nuclear coordinates $x$.
They are usually neglected in computations, first on practical grounds because they are expensive to compute 
or simply not available and second by the 
formally compelling yet factually dubious argument that they can be neglected because they have the large nuclear masses $M_n$ in the denominator and are of lower differentiation order than the kinetic energy term. The resulting simplified
approximation with the Hamiltonian 
\begin{equation}\label{II:HBO}
H_{\rm BO}=T_N+V_{NN}+E
\end{equation}
is known as the {\it time-dependent Born--Oppenheimer approximation}.
It describes the motion of the nuclei as driven by the potential energy surface $E$ of the electrons. The 
vast majority of computations in molecular dynamics are based on this approximation.

The term $B_2$ can indeed be safely neglected: it can be shown that its omission introduces an error that is of the same magnitude as the approximation error in the adiabatic approximation.

The term $B_1$, known as the 
Berry connection, vanishes for real eigenfunctions $\Phi$ and, more generally, 
it can be made to vanish by a transformation
$\Phi(x,y) \to e^{i\theta(x)}\Phi(x,y)$ with $\theta$ satisfying 
$\nabla_{x_n}\theta(x) =
-\Im \langle \nabla_{x_n}\Phi \,|\, \Phi \rangle_{L^2_y}$. 
This transformation of $\Phi$ changes 
$\psi(x,t)\to e^{-i\theta(x)}\psi(x,t)$. 
The function $\theta$ is uniquely determined up to a constant if 
$\Phi$ is a smooth function  on all of $\Rb^{3N}$ or on a simply connected domain, but else $\theta$ is only locally uniquely determined. In the latter case, $B_1$ can cause physical effects that are not captured otherwise; see 
\cn{Ber84} and \cn{Sim83}.

\subsection{Semiclassical scaling}\index{semiclassical scaling}
\label{II:subsec:semiclassical}
The success of the adiabatic approximation relies on
the smallness of the mass ratio of electrons and nuclei,
$$%\begin{equation}\label{II:eps}
\eps^2 = \frac m M \ll 1
$$%\end{equation}
with $M=\min_n M_n$.
For ease of presentation, we assume in the following that the masses of the nuclei are all equal: $M_n=M$ for all $n$.
In atomic units ($\hbar=1$, $m=1$, $r=1$, $e=1$) and with the small parameter
$\eps$, the molecular Hamiltonian 
then takes the form
$$%\begin{equation}\label{II:H-eps}
H_{\rm mol}^\eps = -\frac{\eps^2}2 \Delta_x + H_e(x) \quad\text{ with }\quad
H_e(x)= -\tfrac12 \Delta_y - V(x,\cdot).
$$%\end{equation}
We are interested in solutions to the Schr\"odinger equation of bounded energy, and in particular of bounded kinetic energy
$$
\langle \Psi \,|\, -\tfrac{\eps^2}2 \Delta_x  \Psi \rangle =
\tfrac12\,\| \eps\nabla_x \Psi \|^2 = O(1)\,.
$$
For a wave packet $e^{\I p\cdot x} a(x)$, with a smooth and fast decaying amplitude function $a$, this condition corresponds to
a momentum $p \sim \eps^{-1}$ and hence to a velocity $v=p/M\sim \eps$.
Motion of the nuclei over a distance $\sim 1$ can thus be expected 
on a time scale $\eps^{-1}$. We therefore rescale time
$$
t \to t/\eps\,,
$$
so that, with respect to this new time, nuclear motion over distances $\sim 1$
can be expected to occur at time $\sim 1$. The molecular Schr\"odinger equation in the rescaled time then takes the form
\begin{equation}\label{II:schroed-eps}
\I\eps \partial_t\Psi = H_{\rm mol}^\eps\Psi\,.
\end{equation}
The Schr\"odinger equation for the nuclei becomes
\begin{eqnarray}\label{II:nuc-eps}
&&\I\eps \partial_t\psi = H_N^\eps\psi\ \text{ with }\
\bch H_N^\eps=-\frac{\eps^2}2\Delta_x +V_{NN} + E +\eps B_1+\eps^2 B_2\,,\ech
\\
\nonumber
&&\qquad
B_1= \Im \langle \nabla_{x}\Phi \,|\, \Phi \rangle_{L^2_y}\cdot
\widehat p\,,\quad\
B_2 = \tfrac{1}2\,\| \nabla_{x} \Phi \|_{L^2_y}^2 \,,
\end{eqnarray}
with the momentum operator $\widehat p=-\I\eps\nabla_x$.
We are interested in solutions over times $t\sim 1$.

\subsection{Error of the adiabatic approximation}
We present an error bound of the adiabatic approximation that was obtained independently by \cn{SpoT01} and by \cn{MarS02}.
In addition to the smallness of  the scaling parameter $\eps$, a {\it spectral gap} condition is required: the 
eigenvalue $E(x)$ is separated from the remainder of the spectrum $\sigma(H_e(x))$ of  the electronic
Hamiltonian $H_e(x)$,
$$%\begin{equation}\label{II:delta}
{\rm dist}\bigl( E(x),\, \sigma(H_e(x))\setminus\{ E(x)\} \bigr) \ge \delta >0 \for x\in\Rb^{3N}.
$$%\end{equation}
The Coulomb interactions of the nuclei are mollified to
smooth bounded potentials: It is assumed that the potential $V(x,y)$ has a bounded partial derivative with respect to $x$. 
%\begin{equation}\label{II:V-bound}
%\| \nabla_x V(x,y) \| \le C_V \for x\in\Rb^{3N},\, y\in \Rb^{3L}
%\end{equation}
Initial data are considered to lie in the 
approximation space $\calV$ of (\ref{II:M-ad}),
$$%\begin{equation}\label{II:init-bound}
\Psi_0(x,y) =\psi_0(x)\Phi(x,y) \with 
\| H_N^\eps\psi_0 \| \le C_0\,, \  \|\psi_0\|=1~.
$$%\end{equation}
We consider the adiabatic approximation 
%$u(t) = u(\cdot,\cdot,t)$, with initial data $\Psi_0$,  determined by
%the time-dependent variational principle:
%\begin{equation}\label{II:qvar-adi}
%\sfrac{\partial u}{\partial t}\in \calM \quad\hbox{ such that }\quad
%\Bigl\langle \vv\,\Big|\, \sfrac{\partial u}{\partial t} - 
%\sfrac 1{i\eps} H_{\rm mol}^\eps u \Bigr\rangle=0
%\quad \forall\ \vv \in \calM\,.
%\end{equation}
%We know already that 
$$%\begin{equation}\label{II:u-psi-Phi}
u(x,y,t)=\psi(x,t)\Phi(x,y)\,,
$$%\end{equation}
where $\psi(x,t)$ is 
the solution of the 
nuclear Schr\"odinger equation~(\ref{II:nuc-eps})
with initial data $\psi_0(x)$. This is
compared with the exact solution $\Psi(t)=\Psi(\cdot,\cdot,t)$ of the molecular Schr\"odinger equation (\ref{II:schroed-eps}) with the above initial data 
$\Psi_0$.

\begin{theorem}[space-adiabatic theorem, \cn{Teu03}]
\label{II:thm:ad}
Under the conditions stated above, the error of the adiabatic approximation  is bounded in the $L^2$-norm by
$$
\| u(t) - \Psi(t) \| \le c\,(1+t)\,\eps \for t\ge 0\,,
$$
where $c$ is independent of $\eps$ and $t$ and initial data as specified above,
but depends on the spectral gap $\delta$.
%of (\ref{II:gap}) (uniform for $x\in\Rb^{3N}$), 
%on bounds of partial derivatives with respect to $x$ 
%up to third order of
%the eigenfunctions $\Phi$, and on the bounds 
%$C_V$ of (\ref{II:V-bound}) and $C_0$ of (\ref{II:init-bound}).
\end{theorem}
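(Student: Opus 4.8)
The plan is to run the space-adiabatic argument, whose heart is the construction of an almost-invariant ``superadiabatic'' subspace. Let $P(x)$ denote the orthogonal eigenprojection of $H_e(x)$ associated with the isolated eigenvalue $E(x)$, written via the Riesz formula $P(x)=\frac1{2\pi\I}\oint_\Gamma(z-H_e(x))^{-1}\,\mathrm dz$ with $\Gamma$ a fixed circle around $E(x)$ at distance $\ge\delta/2$ from $\sigma(H_e(x))\setminus\{E(x)\}$; the spectral gap together with the assumed boundedness of $\partial_xV$ makes $x\mapsto P(x)$ bounded with bounded $x$-derivatives of every order. Lift it to $\Pi$ on $L^2_{x,y}$ by $(\Pi f)(x,\cdot)=P(x)f(x,\cdot)$, so that $\calV=\mathrm{ran}\,\Pi$ is the space \eqref{II:M-ad}; the adiabatic solution solves the Galerkin equation $\I\eps\,\partial_t u=\Pi H_{\rm mol}^\eps\Pi\,u$, which --- as the computation in Section~\ref{sec:bo} shows --- is exactly \eqref{II:nuc-eps} lifted to $\calV$, with $u(0)=\Psi_0\in\calV$. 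One first records the a priori bound $\tfrac12\|\eps\nabla_x\Psi(t)\|\le C$ uniformly in $t$ and $\eps$, which follows from conservation of $\langle\Psi(t),H_{\rm mol}^\eps\Psi(t)\rangle$ and the uniform lower bound on $H_e(x)$ coming from the boundedness of the mollified potential; the assumption $\|H_N^\eps\psi_0\|\le C_0$ controls this energy at $t=0$, and the same bound holds for $u(t)$.

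The key point --- and the main obstacle --- is that a naive energy estimate is not sharp enough. Since $P(x)$ is a spectral projection of $H_e(x)$, one has $[H_{\rm mol}^\eps,\Pi]=[-\tfrac{\eps^2}2\Delta_x,\Pi]=-\eps^2(\nabla_xP)\cdot\nabla_x-\tfrac{\eps^2}2(\Delta_xP)$, which is only of size $O(\eps)$ on states of bounded kinetic energy; plugging this directly into a Gronwall estimate for $(1-\Pi)\Psi(t)$ yields the useless bound $O(t)$ instead of $O(\eps t)$. To gain the extra power of $\eps$ one must exploit the gap by correcting $\Pi$ to a superadiabatic projection. Concretely, I would construct an orthogonal projection $\Pi^\eps=\Pi+\eps\Pi_1+O(\eps^2)$ with $\Pi_1$ bounded, Hermitian and off-diagonal (i.e.\ $\Pi_1=\Pi\Pi_1(1-\Pi)+(1-\Pi)\Pi_1\Pi$) such that $[H_{\rm mol}^\eps,\Pi^\eps]=O(\eps^2)$ on states of bounded kinetic energy. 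The equation for $\Pi_1$ reduces to inverting $\mathrm{ad}_{H_e(x)}=[H_e(x),\,\cdot\,]$ on the off-diagonal block, which is possible precisely because of the gap (the inverse is bounded by $\delta^{-1}$), and the remainder is genuinely $O(\eps^2)$ because the one surviving $x$-derivative produced when $-\tfrac{\eps^2}2\Delta_x$ hits $\Pi_1$ carries a factor $\eps$. One also keeps the near-identity unitary intertwiner $U^\eps=\Id+O(\eps)$ with $U^\eps\Pi U^{\eps*}=\Pi^\eps$. I expect this construction, together with the verification of the $O(\eps^2)$ commutator bound uniformly on bounded-energy states, to be the one genuinely substantial step; everything else is bookkeeping with Duhamel's formula, self-adjointness and Gronwall's lemma.

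With $\Pi^\eps$ in hand the rest proceeds as follows. Writing $w(t)=(1-\Pi^\eps)\Psi(t)$ and using that $\Pi^\eps$ is time-independent and that $(1-\Pi^\eps)H_{\rm mol}^\eps(1-\Pi^\eps)$ is self-adjoint, the energy identity gives $\tfrac{\mathrm d}{\mathrm dt}\|w\|^2=\tfrac2\eps\,\Im\langle w,(1-\Pi^\eps)[H_{\rm mol}^\eps,\Pi^\eps]\Psi\rangle\le 2C\eps\,\|w\|$, hence $\tfrac{\mathrm d}{\mathrm dt}\|w\|\le C\eps$; since $\Psi_0\in\calV$ gives $\|w(0)\|=\|(\Pi-\Pi^\eps)\Psi_0\|=O(\eps)$, we obtain $\|(1-\Pi)\Psi(t)\|\le\|w(t)\|+O(\eps)\le c(1+t)\eps$. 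Next, $\Pi^\eps\Psi(t)$ solves $\I\eps\,\partial_t(\Pi^\eps\Psi)=\Pi^\eps H_{\rm mol}^\eps\Pi^\eps(\Pi^\eps\Psi)-\Pi^\eps[H_{\rm mol}^\eps,\Pi^\eps]\Psi$ with inhomogeneity of size $O(\eps^2)$, so Duhamel gives $\Pi^\eps\Psi(t)=\e^{-\I t\Pi^\eps H_{\rm mol}^\eps\Pi^\eps/\eps}\Psi_0+O((1+t)\eps)$. Finally, conjugating this group on $\mathrm{ran}\,\Pi^\eps$ by $U^\eps$ turns its generator into $U^{\eps*}\Pi^\eps H_{\rm mol}^\eps\Pi^\eps U^\eps=\Pi H_{\rm mol}^\eps\Pi-\Pi U^{\eps*}[U^\eps,H_{\rm mol}^\eps]\Pi$; because $U^\eps-\Id$ is off-diagonal and $H_e(x)$ is block-diagonal for $\Pi$, the correction term vanishes to leading order and is in fact $O(\eps^2)$ after sandwiching by $\Pi$, so $\e^{-\I t\Pi^\eps H_{\rm mol}^\eps\Pi^\eps/\eps}\Psi_0=\e^{-\I t\Pi H_{\rm mol}^\eps\Pi/\eps}\Psi_0+O((1+t)\eps)=u(t)+O((1+t)\eps)$. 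The triangle inequality then gives $\|u(t)-\Psi(t)\|\le c(1+t)\eps$, with $c$ depending on $\delta$, $C_0$ and $\|\partial_xV\|_\infty$ but not on $t$ or $\eps$. An equivalent alternative avoiding $\Pi^\eps$ is a Kato-type integration by parts in time, in which the ``Liouvillian'' is inverted on the off-diagonal part of $(1-\Pi)[H_{\rm mol}^\eps,\Pi]\Pi$ directly inside the Duhamel integral, again at the cost of a factor $\delta^{-1}$; the $O((1+t)\eps)$ rate and the $\delta$-dependence of the constant appear as in \cn{SpoT01}, \cn{MarS02} and \cn{Teu03}.
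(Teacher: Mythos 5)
Your proposal is essentially correct, but note that the paper itself does not prove this theorem: Section~\ref{sec:bo} is explicitly exempted from the authors' ``full proofs'' policy, and they defer to \cn{Teu03} and to the short proof in \cite[Section II.2.5]{Lub08}. Your main route --- constructing a superadiabatic projection $\Pi^\eps=\Pi+\eps\Pi_1+O(\eps^2)$ by inverting $\mathrm{ad}_{H_e(x)}$ on the off-diagonal block, then running the energy estimate for $(1-\Pi^\eps)\Psi$ and a Duhamel comparison of the two effective groups --- is the Nenciu--Teufel space-adiabatic argument, and the bookkeeping you give (the $O(\eps^2)$ commutator on bounded-kinetic-energy states, the $O(\eps)$ mismatch at $t=0$ from $\Pi^\eps-\Pi$, the vanishing of $\Pi[H_e,T]\Pi$ for off-diagonal $T$) is sound. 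The ``equivalent alternative'' you relegate to a closing remark --- representing the off-diagonal block of $[H_{\rm mol}^\eps,\Pi]$ as a commutator $[H_e,F]$ with $\|F\|\lesssim\delta^{-1}$ and integrating by parts in time inside the Duhamel integral --- is in fact the route taken by the short proof the paper cites, and it is precisely the mechanism the paper itself sketches in its discussion of adiabatic decoupling for systems (the operator $F_{12}$ and the commutator representation \eqref{eq:comm12} in Section~\ref{sec:further}). That variant buys a shorter argument because one never has to verify that $\Pi^\eps$ is an exact orthogonal projection or construct the intertwiner $U^\eps$; your variant buys a statement (almost-invariance of $\mathrm{ran}\,\Pi^\eps$ to all orders) that is stronger than what the theorem needs.

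Two small caveats. First, $\Pi_1$ cannot be a mere multiplication operator: since the off-diagonal part of $[-\tfrac{\eps^2}{2}\Delta_x,\Pi]$ contains the first-order term $-\eps^2(\nabla_xP)\cdot\nabla_x$, the corrector must itself be first order in $\eps\nabla_x$, which is why the rigorous versions use a (twisted) pseudodifferential calculus; your sketch glosses over this but it does not invalidate the argument. Second, the construction uses two bounded $x$-derivatives of $P(x)$ (hence of the resolvent), slightly more regularity than the literal ``bounded partial derivative of $V$ with respect to $x$'' stated as a hypothesis; the cited references assume the stronger smoothness.
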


\noindent
\cn{Teu03} actually proves a more general result and provides a wealth of related theory. The twisted pseudo-differential calculus developed by \cn{MarS09} also includes Coulomb-type interactions. A short proof of the theorem as stated is given in \cite[Section II.2.5]{Lub08}. 

In the global version stated above, Theorem~\ref{II:thm:ad} remains valid for the time-dependent Born--Oppenheimer approximation when the terms $B_1$ and $B_2$ in \eqref{II:nuc-eps} are dropped. This is no longer true for local versions, where the eigenfunction 
$\Phi$ is defined only on a domain that is not simply connected.

The result is related in spirit (though not proof technique) to the time-adiabatic theorem of
\cn{BorF28} and \cn{Kat50}, which states that in a quantum system with a slowly time-varying Hamiltonian,  a wave function that is initially an eigenfunction, approximately remains an eigenfunction of the Hamiltonian over 
long times.

It is known that the
adiabatic approximation generally breaks down near crossings of eigenvalues. One remedy, also covered by the space-adiabatic theory of~\cn{Teu03},  is to enlarge the approximation space
by including several energy bands that are well separated from the remaining 
ones in the region of physical interest, using
$$%\begin{equation}\label{II:M-ad-2}
\calV = \{ u\in L^2_{x,y}: u(x,y)=\sum_{j=1}^m\psi_j(x)\Phi_j(x,y)
, \ \psi_j\in L^2_x \}~,
$$%\end{equation}
where $\Phi_j(x,\cdot)$ $(j=1,\dots,m)$ span an invariant subspace 
of the electronic Hamiltonian $H_e(x)$. The Galerkin approximation
on $\calV$ then leads to a system of coupled Schr\"odinger equations
with an $m\times m$ matrix-valued potential.

%
%\begin{equation}\label{II:schrod-system}
%i\hbar\, \frac{\partial\psi}{\partial t} = T_N\psi + B_1\psi + B_2\psi + V\psi \for 
%\psi=\begin{pmatrix} \psi_1 \\ \psi_2 \end{pmatrix}
%\end{equation}
%with the matrix-valued potential
%\begin{equation}\label{II:matrix-potential}
%V = \begin{pmatrix} V_{11} & V_{12} \\ 
%V_{21} & V_{22} \end{pmatrix} \with
%V_{ij}(x) = \langle \Phi_i(x,\cdot)\,|\, H_e(x) \,|\, 
%\Phi_j(x,\cdot) \rangle_{L^2_y} 
%\end{equation}
%and with the diagonal operators $B_j=\begin{pmatrix} B_j^1 & 0 \\ 
%0 & B_j^2 \end{pmatrix}$, where $B_j^k$ are defined as $B_j$ in
%(\ref{II:nuc}) with $\Phi_k$ instead of $\Phi$.
% 
%

%\newpage
%\section{Variational Gaussian wave packets}
\section{Variational Gaussian wave packets}
\label{sec:gwp}

In this section we approximate solutions to the semiclassically scaled Schr\"o\-din\-ger equation by variationally evolving complex Gaussians. For a para\-metrisation due to Hagedorn, the equations of motion of the parameters of the Gaussian wave packet are remarkably close to the equations of motion of classical mechanics and their linearisation. It is shown that the Gaussian wave packets approximate the exact wave function with Gaussian initial data with an $O(\eps^{1/2})$ error in the $L^2$-norm and with an $O(\eps)$ error for expectation values of observables over times $t\sim 1$.

\subsection{Variational approximation by complex Gaussians}
We consider the Schr\"odinger equation in semiclassical scaling, 
\begin{equation}\label{tdse}
\I\eps\partial_t \psi = H\psi,\qquad H = -\tfrac{\eps^2}{2}\Delta_x + V,
\end{equation}
as it emerges from the time-dependent Born--Oppenheimer approximation to the motion of nuclei. We let $d\ge 1$ (often $d\gg 1$) denote the dimension of the 
nuclear configuration space, and assume that the potential
$V:\Rb^d\to\Rb$ is a smooth real-valued function. \bch As before, the $L^2$ inner product is conjugate linear 
in the first argument.\ech

We seek an approximation $u(\cdot,t)$ (later written as $u(t)$ for short) to the solution $\psi(t)$ in the manifold of complex-valued Gaussian functions
\begin{align*}
& {\mathcal M} = \left\{ u\in L^2(\Rb^d)\ \Big|\  u(x) = \exp\!\left( \tfrac\I\eps \left(\tfrac12 (x-q)^T C(x-q) + p^T(x-q) + \zeta\right) \right), \right. \\
& \hspace*{3em} \left. q\in\Rb^{d},\ p\in\Rb^{d},\  C=C^T\in\C^{d\times d}, \ \Im C \text{ is positive definite}, \ \zeta\in\C\right\},
\end{align*}
where we find by a direct computation that for such an $u\in{\mathcal M}$ of unit $L^2$-norm, the position and momentum averages equal the parameters $q$ and $p$, respectively:
$$
\langle u\,|\, xu \rangle =q, \quad\ \langle u \,|\, - \I\eps \nabla_x u \rangle =p.
$$

We invoke the Dirac--Frenkel time-dependent variational approximation principle, which determines the approximate solution 
\[
u(t)\in{\mathcal M}
\] 
by requiring that the residual of the Schr\"odinger equation be orthogonal to the tangent space ${\mathcal T}_{u(t)}{\mathcal M}$
of the manifold ${\mathcal M}$ at $u(t)$:  for all times $t\in\Rb$,
\begin{align}\nonumber
&\partial_t u(t)\in{\mathcal T}_{u(t)}{\mathcal M}\ \text{ is such that}\\*[0.25em] \label{eq:var}
&\left\langle v\mid -\I\eps\partial_t u(t) + Hu(t)\right\rangle = 0\ \text{ for all}\  v\in{\mathcal T}_{u(t)}{\mathcal M}.
\end{align}
With  the orthogonal projection $P_u:L^2(\Rb^d)\to {\mathcal T}_{u}{\mathcal M}$ onto the tangent space ${\mathcal T}_u{\mathcal M}$, we can rewrite this equivalently as
\begin{equation} \label{PuHu}
\I\eps \partial_t u = P_u H u.
\end{equation}
We start by having a closer look at the tangent space.

\medskip
\begin{lemma}[tangent space]\label{lem:tangent space}
At every Gaussian function $u\in{\mathcal M}$, the tangent space equals
$$
{\mathcal T}_u{\mathcal M} = \left\{ \varphi u\ \big|\   \text{$\varphi$ is a complex $d$-variate polynomial of degree at most $2$} \right\}.
$$
\end{lemma}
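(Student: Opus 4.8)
The plan is to identify ${\mathcal T}_u{\mathcal M}$ with the image of the differential of the parametrisation $\Phi:(q,p,C,\zeta)\mapsto u$ and to compute that image by differentiating the exponential directly. Since $u$ depends holomorphically on the complex parameters $\zeta$ and $C=C^T$ (whose independent entries are $C_{jk}$ with $j\le k$), the corresponding directions enter with complex coefficients, while the real parameters $q,p$ contribute with real coefficients only; hence, with $\partial_{C_{jk}}$ and $\partial_\zeta$ denoting complex derivatives,
\[
{\mathcal T}_u{\mathcal M}=\mathrm{span}_{\Rb}\bigl\{\partial_{q_j}u,\ \partial_{p_j}u\bigr\}+\mathrm{span}_{\C}\bigl\{\partial_{C_{jk}}u,\ \partial_\zeta u\bigr\}.
\]
Writing $r=x-q$, one computes $\partial_\zeta u=\tfrac{\I}{\eps}u$, $\partial_{p_j}u=\tfrac{\I}{\eps}r_j\,u$, $\partial_{C_{jk}}u=c_{jk}\tfrac{\I}{\eps}r_jr_k\,u$ with $c_{jk}\in\{\tfrac12,1\}$, and, using $C^T=C$, $\partial_{q_j}u=-\tfrac{\I}{\eps}\bigl(p_j+(Cr)_j\bigr)u$. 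Each of these is (a polynomial of degree $\le 2$ in $x$) times $u$, which gives the easy inclusion ${\mathcal T}_u{\mathcal M}\subseteq\{\varphi u:\deg\varphi\le 2\}$.

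For the reverse inclusion I would show that these derivatives already span $\{\varphi u:\deg\varphi\le 2\}$ over the relevant scalars. From $\partial_\zeta u$ we recover all constants $\C\,u$. From the $\partial_{C_{jk}}u$, whose polynomial factors run over the monomials $r_jr_k$ ($j\le k$), we recover all homogeneous quadratics in $r$ times $u$, with complex coefficients. The linear terms are the delicate step: $\mathrm{span}_{\Rb}\{\partial_{p_j}u\}$ gives $\tfrac{\I}{\eps}$ times \emph{real} linear forms in $r$ only, while $\mathrm{span}_{\Rb}\{\partial_{q_j}u\}$, after subtracting the constants and the real linear forms already obtained, contributes $\tfrac{\I}{\eps}\bigl\{(Cc)^Tr:c\in\Rb^d\bigr\}u$. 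Because $\Im C$ is positive definite, $C$ is invertible and $\Rb^d+C\Rb^d=\C^d$ as real vector spaces (subtract $\Re(C)\Rb^d\subseteq\Rb^d$ and use $\Im(C)\Rb^d=\Rb^d$), so the two families together produce all complex linear forms in $r$ times $u$. Summing the constant, linear and quadratic contributions in $r$, and observing that $x\mapsto x-q$ preserves polynomial degree, we obtain $\{\varphi u:\deg\varphi\le 2\}\subseteq{\mathcal T}_u{\mathcal M}$, hence equality. As a by-product, this spanning argument shows $d\Phi$ has full rank $(d+1)(d+2)$, the common real dimension of ${\mathcal M}$ and of the degree-$\le 2$ polynomials, so $\Phi$ is an immersion and, pleasantly, the real span is in fact closed under multiplication by $\I$, as required for $P_u$ in \eqref{PuHu} to be an orthogonal projection onto a complex subspace.

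I expect the one genuine obstacle to be exactly this linear-terms step: one is tempted to think the momentum directions $\partial_{p_j}$ alone account for the linear part, but they only yield ``half'' of it, and it is precisely the positive-definiteness of $\Im C$ that lets the position directions $\partial_{q_j}$ fill in the rest. The constant and quadratic parts are essentially immediate because $\zeta$ and $C$ are genuine complex parameters, so no analogous issue arises there.
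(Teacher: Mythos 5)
Your proposal is correct and follows essentially the same route as the paper: differentiate the parametrisation $(q,p,C,\zeta)\mapsto u$, observe that each tangent direction is a polynomial of degree at most $2$ times $u$, and then check surjectivity onto that space. The only difference is one of detail: the paper simply asserts that every complex polynomial of degree $\le 2$ arises as such a prefactor, whereas you correctly spell out the one nontrivial point, namely that the complex linear forms are obtained from the real span of the $\partial_{p_j}u$ and $\partial_{q_j}u$ precisely because $\Im C$ is invertible, so that $\Rb^d+C\,\Rb^d=\C^d$.
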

In particular, ${\mathcal T}_u{\mathcal M}$ is a complex-linear subspace of $L^2(\Rb^d)$, in the sense that 
$v\in{\mathcal T}_u{\mathcal M}$ implies $\I v\in{\mathcal T}_u{\mathcal M}$.
Moreover, for  all differential operators $A$ of order at most $2$ with constant coefficients, we have $Au\in{\mathcal T}_u{\mathcal M}$.

\begin{proof}
Since the tangent space consists of derivatives of paths on ${\mathcal M}$ passing through $u$, we find that every $v\in{\mathcal T}_u{\mathcal M}$ is of the form
\begin{align*}
v(x) = 
& \frac\I\eps \Bigl(  -\dot q^T C(x-q) +\tfrac12 (x-q)^T \dot C (x-q) + \dot p^T(x-q) -p^T\dot q+ \dot\zeta
\Bigr)u(x)
\end{align*}
with arbitrary  $(\dot q,\dot p)\in \Rb^{2d}$, $\dot C={\dot C}^T\in\C^{d\times d}$, $\dot\zeta\in\C$. 
Since every complex polynomial of degree $\le 2$ can be written in the form of the prefactor of the above tangent vector $v$, we 
obtain the stated characterisation of ${\mathcal T}_u{\mathcal M}$. 
\end{proof}

As a direct corollary we obtain the following property, which is a major motivation for considering localised Gaussians to approximate the wave function $\psi$.

\begin{proposition} [exactness for quadratic potentials] 
\label{prop:exact-gaussian}
If the potential $V$ is quadratic, then the variational approximation is exact: $u(t)=\psi(t)$,  provided that the initial data are Gaussian, \ie\
$u(0)=\psi(0)\in{\mathcal M}$.
\end{proposition}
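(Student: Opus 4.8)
The plan is to show that when $V$ is quadratic the Dirac--Frenkel equation \eqref{PuHu} is in fact the Schr\"odinger equation \eqref{tdse} restricted to ${\mathcal M}$, so that the variational flow and the exact flow coincide. The key observation is that $Hu$ already lies in the tangent space at every $u\in{\mathcal M}$: by Lemma~\ref{lem:tangent space} and the remark following it, $\Delta u\in{\mathcal T}_u{\mathcal M}$ since $\Delta$ is a constant-coefficient differential operator of order at most~$2$; and if $V$ is a polynomial of degree at most~$2$, then $Vu$ is a polynomial of degree at most~$2$ times the Gaussian $u$, hence also lies in ${\mathcal T}_u{\mathcal M}$. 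Therefore $Hu=-\tfrac{\eps^2}{2}\Delta u+Vu\in{\mathcal T}_u{\mathcal M}$, and consequently $P_uHu=Hu$.

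Granting this, \eqref{PuHu} reads $\I\eps\,\partial_t u=Hu$, i.e.\ \eqref{tdse} itself. I would then argue that the variational conditions \eqref{eq:var} form an ordinary differential equation on the finite-dimensional manifold ${\mathcal M}$ with a locally Lipschitz right-hand side, hence possess a unique maximal solution $u(t)\in{\mathcal M}$ with $u(0)=\psi(0)$; on its interval of existence this $u$ solves \eqref{tdse}. Since $H$ is self-adjoint and $\psi(t)$ is the unique solution of \eqref{tdse} with the same initial datum, uniqueness yields $u(t)=\psi(t)$ wherever $u$ is defined. (Norm conservation, which follows from the variational principle because $u\in{\mathcal T}_u{\mathcal M}$ and $H$ is self-adjoint, is consistent with this.)

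The hard part will be to rule out finite-time escape of $u(t)$ from the admissible parameter set, i.e.\ global existence of the parameter flow. For this I would appeal to the explicit equations of motion for $(q,p,C,\zeta)$ derived in the following subsection: for quadratic $V$ the equations for $q$ and $p$ are the linear classical equations of motion and hence global, the equation for $C$ is a matrix Riccati equation whose solution started with $\Im C(0)$ positive definite stays symmetric with $\Im C$ positive definite for all time (invariance of the Siegel upper half space $\{C=C^T:\ \Im C\ \text{positive definite}\}$), and $\zeta$ is then recovered by a quadrature; hence $u(t)$, and the identity $u(t)=\psi(t)$, extends to all $t\in\Rb$. An alternative that avoids referring forward is to first verify directly that the exact solution stays Gaussian --- insert the Gaussian ansatz into \eqref{tdse} and match the degree-$\le 2$ polynomial coefficients --- so that $\psi(t)\in{\mathcal M}$ with $\partial_t\psi(t)=\tfrac1{\I\eps}H\psi(t)\in{\mathcal T}_{\psi(t)}{\mathcal M}$; then $\psi$ satisfies \eqref{eq:var} trivially, and $u=\psi$ again follows from uniqueness of the variational solution.
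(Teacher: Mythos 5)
Your proof is correct and its core argument — $Hu\in{\mathcal T}_u{\mathcal M}$ by Lemma~\ref{lem:tangent space}, hence $P_uHu=Hu$ and both $u$ and $\psi$ satisfy the same differential equation — is exactly the paper's proof. The additional material on uniqueness and on global existence of the parameter flow (invariance of the Siegel half-space under the Riccati flow) is a legitimate extra layer of rigour that the paper's one-line proof leaves implicit, but it does not change the route.
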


\begin{proof} If $V$ is quadratic, then $Hu=-\tfrac{\eps^2}{2}\Delta_x u + Vu$ is a quadratic polynomial times $u$ and hence, by Lemma~\ref{lem:tangent space},  is in the tangent space ${\mathcal T}_u{\mathcal M}$ for $u\in{\mathcal M}$. It follows that $P_uHu=Hu$, and hence $u(t)$ and $\psi(t)$ satisfy the same differential equation. 
\end{proof}

We remark that this result and its proof extend directly to the case of a time-dependent quadratic potential $V(\cdot,t)$.

In the case of a non-quadratic potential, Lemma~\ref{lem:tangent space} simplifies \eqref{PuHu} to
\begin{equation} \label{PuVu}
\I\eps \partial_t u = -\tfrac{\eps^2}{2}\Delta_x u + P_u V u.
\end{equation}
We will study the term $P_uVu$ in detail below, in Proposition~\ref{prop:project}.

\subsection{Conservation properties}

The variational Gaussian wave packets enjoy norm and energy conservation, as the exact Schr\"odinger solution does.

\begin{proposition}[norm and energy conservation]\label{prop:cons}
The time-depen\-dent variational approximation \R{eq:var} is norm- and energy-conserving.
\end{proposition}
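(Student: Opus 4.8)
The plan is to differentiate the two conserved quantities $\|u(t)\|^2$ and the energy $\langle u(t)\mid Hu(t)\rangle$ in time and to show that both derivatives vanish, using nothing more than the variational identity~\eqref{eq:var} evaluated at two well-chosen test functions together with the self-adjointness of $H$. No genuinely hard analysis is involved; the whole content is the choice of test vectors and the observation that certain inner products are forced to be purely imaginary.

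First, for the norm: since the inner product is conjugate linear in the first argument, $\tfrac{\mathrm d}{\mathrm dt}\|u\|^2 = 2\,\Re\langle u\mid \partial_t u\rangle$. By Lemma~\ref{lem:tangent space} the constant polynomial $\varphi\equiv 1$ gives $u\in{\mathcal T}_u{\mathcal M}$, so we may take $v=u$ in~\eqref{eq:var}, which yields $\mathrm i\eps\,\langle u\mid\partial_t u\rangle = \langle u\mid Hu\rangle$. The right-hand side is real because $H$ is self-adjoint on the Gaussian (and polynomial-times-Gaussian) functions involved, so $\langle u\mid\partial_t u\rangle$ is purely imaginary and its real part vanishes, giving $\tfrac{\mathrm d}{\mathrm dt}\|u\|^2=0$.

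Second, for the energy: differentiating and using self-adjointness of $H$ (which is time-independent), $\tfrac{\mathrm d}{\mathrm dt}\langle u\mid Hu\rangle = \langle\partial_t u\mid Hu\rangle + \langle Hu\mid\partial_t u\rangle = 2\,\Re\langle\partial_t u\mid Hu\rangle$. Now take $v=\partial_t u$ in~\eqref{eq:var}, which is legitimate precisely because $\partial_t u\in{\mathcal T}_{u}{\mathcal M}$ by the definition of the variational solution; this gives $\langle\partial_t u\mid Hu\rangle = \mathrm i\eps\,\|\partial_t u\|^2$, again purely imaginary, so the real part vanishes and $\tfrac{\mathrm d}{\mathrm dt}\langle u\mid Hu\rangle=0$. (Equivalently, one could argue from the projected form~\eqref{PuHu} using that $P_u$ is an orthogonal projection onto a space containing $u$ and $\partial_t u$, but the direct test-function computation is shorter.)

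The "main obstacle", such as it is, is purely bookkeeping: verifying the membership facts $u\in{\mathcal T}_u{\mathcal M}$ and $\partial_t u\in{\mathcal T}_u{\mathcal M}$ (both supplied by Lemma~\ref{lem:tangent space} and the formulation~\eqref{eq:var}), and checking the mild domain issue that $Hu$ and $H\partial_t u$ are again polynomials times the same Gaussian, so that self-adjointness of $H$ may be applied to move $H$ across the inner product without boundary terms. One should also note that norm conservation needs no sign condition on $V$, while for the energy $\langle u\mid Hu\rangle$ to be finite one only uses that $u$ and $\partial_t u$ lie in the form domain of $H$, which holds for every element of ${\mathcal M}$ and of its tangent spaces.
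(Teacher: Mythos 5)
Your proof is correct and takes essentially the same route as the paper's: both norm and energy conservation are obtained by testing the variational identity \eqref{eq:var} with $v=u$ and $v=\partial_t u$ respectively, and observing that self-adjointness of $H$ forces the relevant inner products $\langle u\mid\partial_t u\rangle$ and $\langle\partial_t u\mid Hu\rangle$ to be purely imaginary. The additional bookkeeping remarks on tangent-space membership and domains are consistent with Lemma~\ref{lem:tangent space} and do not alter the argument.
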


\begin{proof} We use the defining relation \R{eq:var} for $\partial_t u(t)\in{\mathcal T}_{u(t)}{\mathcal M}$.
Since the Hamiltonian $H$ is self-adjoint, we have
\begin{align*}
\frac{\D}{\D t} \langle u(t)\mid H u(t)\rangle 
&= \langle \partial_t u(t)\mid Hu(t)\rangle + \langle u(t) \,|\, H \partial_t u(t)\rangle \\
&= 2\,\Re\langle \partial_t u(t) \mid Hu(t)\rangle \\
&= 2\,\Re\langle \partial_t u(t) \mid \I\eps\partial_t u(t)\rangle \\
&= 0,
\end{align*}
which proves energy conservation.
Since $u(t)\in{\mathcal T}_{u(t)}{\mathcal M}$, we also have
\begin{align*}
\frac{\D}{\D t} \|u(t)\|^2 
&= 2 \,\Re\!\left\langle u(t)\mid \partial_t u(t)\right\rangle \\
&= 2\,\Re\!\left\langle u(t)\mid (\I\eps)^{-1}Hu(t)\right\rangle \\
& = 0,
\end{align*}
which proves norm conservation.
\end{proof}

The quantum-mechanical  total linear momentum and
 total angular momentum operators, written for the position variables $x=(x_1,\dots,x_N)$ with $x_k\in\Rb^3$, are
\begin{equation}\label{P}
\begin{array}{l}
P = \sum_{k=1}^N  (-i\varepsilon\nabla_{x_k})
\quad\hbox{and}\quad\\[2mm]
L = \text{Hermitian part of }\ \sum_{k=1}^N x_k \times (-i\varepsilon\nabla_{x_k}),
\end{array}
\end{equation}
respectively. They both commute with the kinetic energy operator $T=-\tfrac12 \eps^2\Delta_x$.
%\bch Already said above: 
%(Here we consider\, $x=(x_1,\dots,x_N)$ with $x_k\in\Rb^3$.)
%\ech
For potentials that are invariant under translations, that is,
$$
V(x_1,\dots, x_N) = V(x_1+ r,\dots, x_N+ r) \quad\ \hbox{for all $r\in\Rb^3$},
$$
$P$ commutes also with multiplication with $V$. For potentials invariant under rotations, that is,
$$
V(x_1,\dots, x_N) = V(Rx_1,\dots, Rx_N) \quad\ \hbox{for all $R\in SO(3)$},
$$
$L$ commutes with multiplication with $V$, and hence also with the
Hamiltonian $H$. Their averages $\langle \psi \,|\, P\psi\rangle$ and $\langle \psi \,|\, L\psi\rangle$ are therefore conserved along solutions $\psi(t)$ to the Schr\"odinger equation with Hamiltonian $H=T+V$. As we show next, this conservation property is retained under variational Gaussian wave packet dynamics. We will use the shorthand notation
\[
\langle A\rangle_u =  \langle u \,|\, A u\rangle
\] 
for self-adjoint operators $A$ that have a well-defined average with respect to a given Gaussian wave packet $u\in{\mathcal M}$.
For a vector-valued operator (such as $P$ and $L$ above), $\langle A\rangle_u$ is the vector of the averages of the components of $A$.
%(For example,
%potentials with only distance-dependent pair interactions
%$V_{jk}\left( |x_j - x_k| \right)$
%are invariant
%under both translations and rotations.)
In the following result, $q(t)=(q_1(t),\dots,q_N(t))$ and $p(t)=(p_1(t),\dots,p_N(t))$ with $q_k(t),p_k(t)\in\Rb^3$ denote the time-dependent position and momentum parameters, respectively, of a variational Gaussian wave packet $u(t)$.

\begin{proposition} [linear and angular momentum conservation] 
\label{prop:ang-mom} 
In a  translation-invariant potential, variational Gaussian wave packet dynamics  conserves the total linear momentum
$\langle P \rangle_{u(t)}$, which equals the classical linear momentum $\sum_{k=1}^N p_k(t)$.

In a rotation-invariant potential, variational Gaussian wave packet dynamics conserves the total angular momentum
$\langle L \rangle_{u(t)}$, which equals the classical angular momentum $\sum_{k=1}^N q_k(t) \times p_k(t)$.
\end{proposition}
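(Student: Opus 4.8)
The plan is to exploit the key structural fact established in Lemma~\ref{lem:tangent space}: the tangent space $\mathcal{T}_u\mathcal{M}$ contains $Au$ for every differential operator $A$ of order at most $2$ with constant coefficients. Both $P$ and $L$ are such operators (each component of $P$ is first order with constant coefficients; each component of $L$ is first order with linear, hence not quite constant, coefficients — so the angular momentum case needs a small extra argument, see below). First I would compute $\frac{\mathrm d}{\mathrm dt}\langle P\rangle_{u(t)} = \frac{\mathrm d}{\mathrm dt}\langle u \mid Pu\rangle = 2\,\Re\langle \partial_t u \mid Pu\rangle$, using that $P$ is self-adjoint. Since $Pu(t)\in\mathcal{T}_{u(t)}\mathcal{M}$, I can insert the variational relation \eqref{eq:var} with $v=Pu(t)$, giving $\langle Pu(t)\mid -\I\eps\partial_t u(t) + Hu(t)\rangle = 0$, i.e.\ $\I\eps\langle Pu \mid \partial_t u\rangle = \langle Pu\mid Hu\rangle$. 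Taking real parts and using that $H$ commutes with $P$ (by translation invariance of $V$, plus $[P,T]=0$) together with self-adjointness of both, one gets $\Re\langle Pu\mid Hu\rangle = \Re\langle u\mid PHu\rangle = \Re\langle u\mid HPu\rangle = \Re\langle Hu\mid Pu\rangle = \Re\langle Pu\mid Hu\rangle$; combining these identities forces $\frac{\mathrm d}{\mathrm dt}\langle P\rangle_{u(t)} = 0$. The identification $\langle P\rangle_{u} = \sum_k p_k$ follows from the stated formula $\langle u\mid -\I\eps\nabla_x u\rangle = p$ for a normalised Gaussian, read componentwise.

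For the angular momentum, the same scheme works provided $Lu(t)\in\mathcal{T}_{u(t)}\mathcal{M}$; this is the one place requiring care, since $L$ has linear (not constant) coefficients. I would verify it by direct computation: applying $x_k\times(-\I\eps\nabla_{x_k})$ to the Gaussian $u$ produces (linear in $x$) times (linear in $x$) times $u$, hence a degree-$\le 2$ polynomial times $u$, which lies in $\mathcal{T}_u\mathcal{M}$ by Lemma~\ref{lem:tangent space}; taking the Hermitian part preserves this. Once $Lu(t)\in\mathcal{T}_{u(t)}\mathcal{M}$ is in hand, the argument is identical to the momentum case, now using that $L$ commutes with $H$ under rotation invariance of $V$. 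The identification $\langle L\rangle_u = \sum_k q_k\times p_k$ is a direct Gaussian computation: for a normalised Gaussian centred at $(q,p)$, the cross-covariance of $x_k$ and $-\I\eps\nabla_{x_k}$ contributes a pure-imaginary (hence, after taking Hermitian parts, vanishing) term, leaving exactly $q_k\times p_k$.

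The main obstacle I anticipate is not the conservation mechanism — which is a clean two-line consequence of the variational principle once $Pu, Lu\in\mathcal{T}_u\mathcal{M}$ — but rather the bookkeeping in the angular-momentum identification $\langle L\rangle_u=\sum_k q_k\times p_k$: one must carefully check that the covariance/width contributions of the complex matrix $C$ drop out after taking the Hermitian part, and that the classical cross-product $q_k\times p_k$ is what survives. This is a routine but slightly delicate Gaussian integral computation (the off-diagonal second moment $\langle u\mid (x_k-q_k)\otimes(-\I\eps\nabla_{x_k}-p_k)u\rangle$ must be evaluated and shown to be anti-Hermitian), and it is the only step where the specific parametrisation of $\mathcal{M}$ enters beyond the already-quoted first-moment formulas.
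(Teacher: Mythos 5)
Your argument is the same as the paper's: use Lemma~\ref{lem:tangent space} to get $Pu,\,Lu\in{\mathcal T}_u{\mathcal M}$, insert $v=Pu$ (resp.\ $v=Lu$) into the variational relation \eqref{eq:var}, and conclude from self-adjointness and $[H,P]=0$ (resp.\ $[H,L]=0$); your extra remark that $L$ is not constant-coefficient and therefore needs the direct check that it produces a degree-$\le 2$ polynomial times $u$ is a point the paper glosses over, and is correct. One step of your write-up is garbled, though: the chain
$\Re\langle Pu\mid Hu\rangle=\Re\langle u\mid PHu\rangle=\cdots=\Re\langle Pu\mid Hu\rangle$
is circular and by itself forces nothing. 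What you actually need is that $\langle Pu\mid Hu\rangle$ is \emph{real}, since
$\tfrac{\D}{\D t}\langle P\rangle_{u}=2\,\Re\bigl(\tfrac1{\I\eps}\langle Pu\mid Hu\rangle\bigr)=\tfrac2\eps\,\Im\langle Pu\mid Hu\rangle$;
and reality is exactly what your identities deliver once you drop the $\Re$ prefixes: $\langle Pu\mid Hu\rangle=\langle u\mid PHu\rangle=\langle u\mid HPu\rangle=\langle Hu\mid Pu\rangle=\overline{\langle Pu\mid Hu\rangle}$. With that one-line repair the proof is complete and coincides with the paper's.
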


\begin{proof} We formulate the proof for the angular momentum $L$. The self-adjoint operator $L$  commutes with the Hamiltonian $H$, that is,
${[H,L]=0}$. Using Lemma~\ref{lem:tangent space}, we find that $Lu\in {\mathcal T}_{u}{\mathcal M}$ for $u\in{\mathcal M}$. It then follows that
$$
\frac{d}{dt}\langle u \, | \, L  u \rangle =
2\, {\rm Re}\, \langle Lu \,|\, \partial_t u \rangle =
2\, {\rm Re}\, \langle Lu \,|\, \tfrac1{\I\varepsilon}\, Hu \rangle =
\langle u \, | \, \tfrac1{\I\varepsilon}\,[H,L]  u \rangle = 0,
$$
 where we use \eqref{eq:var} in the
second equality. The proof for linear momentum is the same, since $P$ has the same two properties that were stated for $L$ at the beginning of this proof. Finally, a direct calculation shows that for a Gaussian wave packet, the quantum-mechanical linear and angular momentum equal their classical counterparts for the position parameters~$q$ and momenta~$p$.
\end{proof}

\subsection{Error in the $L^2$-norm and in expectation values of observables}

We now turn to error bounds which show that, for small $\eps$, the variational Gaussian approximation stays close to the solution of the Schr\"odinger equation over time intervals independent of $\eps$, as long as the Gaussian wave packet remains localised of width $O(\sqrt\eps)$. Over such time intervals, the error in the $L^2$-norm is $O(t\sqrt\eps)$, and the error in observables is $O(t\eps)$.

\begin{theorem}[Error bounds] \label{thm:error-gauss}
We consider the Gaussian wave packet approximation $u(t)$ determined by the Dirac--Frenkel variational principle \R{eq:var}. 
We assume the following:
\begin{enumerate}
\item[1.]
The eigenvalues of the positive definite width matrix $\Im C(t)$ are bounded from below by a constant $\rho>0$, for all $t\in[0,\overline t]$.
\item[2.]
The potential function $V$ is three times continuously differentiable with a polynomially bounded third derivative. 
\end{enumerate}
Then, the error between the Gaussian wave packet $u(t)$ and the  solution $\psi(t)$ of the Schr\"odinger equation \eqref{tdse} with 
Gaussian initial data $\psi(0) = u(0)$ of unit norm satisfies the following bounds:

(a) The error in the $L^2$-norm is bounded by
\[
\|u(t) - \psi(t)\| \le c\, t \sqrt\eps, \qquad 0\le t \le \overline t.
\] 

(b) Let $A$ be an observable that has a polynomially bounded Weyl symbol (see~Section~\ref{sec:wigner}; \eg~a symmetrised polynomial in the position operator $\widehat q$ and the momenta operator $\widehat p$, with $(\widehat q\varphi)(x)=x\varphi(x)$ and $(\widehat p\varphi)(x) = -\I\eps \nabla_x \varphi(x)$). Then, the error in the expectation value of the observable $A$ over $u(t)$,  \ie\
$\langle A \rangle_{u(t)}= \langle u(t) \,|\, A  u(t) \rangle$,
is bounded by
\[
|\langle A \rangle_{u(t)} - \langle A \rangle_{\psi(t)}| \le c\, t \,\eps, \qquad 0\le t \le \overline t.
\]
In both (a) and (b),  $c<\infty$ is independent of $\eps$ and $t$ but depends on $\rho$.
\end{theorem}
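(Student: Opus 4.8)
The plan is to prove (a) by a defect-and-Gronwall estimate and (b) by a superconvergence argument that exploits the orthogonality of the variational residual to the tangent space. The single key fact behind (b) is that the exact Schr\"odinger propagator maps a localised Gaussian, and a fixed-degree polynomial times such a Gaussian, to within $O(\sqrt\eps)$ of the tangent space of $\mathcal M$.

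For part (a), introduce the defect $d(t)=\I\eps\,\partial_t u(t)-Hu(t)$. By \eqref{PuHu} it equals $(P_{u(t)}-I)Hu(t)$, and since $-\tfrac{\eps^2}{2}\Delta_x u(t)\in\mathcal T_{u(t)}\mathcal M$ by Lemma~\ref{lem:tangent space}, we get $d(t)=-(I-P_{u(t)})(Vu(t))$. Taylor-expanding $V$ about the centre $q(t)$, its polynomial part of degree $\le2$ times $u(t)$ lies in $\mathcal T_{u(t)}\mathcal M$, so $d(t)=-(I-P_{u(t)})(R_3\,u(t))$ with $R_3$ the third-order remainder; the bound $|R_3(x)|\lesssim|x-q(t)|^3$ (polynomial growth of $\nabla^3V$ absorbed by the Gaussian tails) together with $\Im C(t)\ge\rho$, which makes $|u(t,\cdot)|^2$ a normalised Gaussian of width $\sim\sqrt{\eps/\rho}$, yields $\|d(t)\|\le c\,\eps^{3/2}$ through a sixth-moment estimate. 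Then $e=u-\psi$ solves $\I\eps\,\partial_t e=He+d$ with $e(0)=0$, and self-adjointness of $H$ gives $\tfrac{\D}{\D t}\|e\|^2=\tfrac2\eps\,\Im\langle e\mid d\rangle\le\tfrac2\eps\|e\|\,\|d\|$, hence $\tfrac{\D}{\D t}\|e\|\le\tfrac1\eps\|d\|\le c\sqrt\eps$ and, upon integration, $\|e(t)\|\le c\,t\sqrt\eps$. Applying the operators $\eps\nabla_x$ and $\langle x\rangle^m$ to the equation for $e$ and running the analogous estimates — their commutators with $H$ carry a compensating factor $\eps$ — shows that $e(t)$ is $O(t\sqrt\eps)$ also in the weighted $\eps$-Sobolev norms needed to apply a fixed-order observable; this is used in (b).

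For part (b), fix $t$ and set $B(s)=\e^{\I(t-s)H/\eps}A\,\e^{-\I(t-s)H/\eps}$, which is self-adjoint with $\partial_s B(s)=\tfrac\I\eps[B(s),H]$. Then $g(s)=\langle u(s)\mid B(s)u(s)\rangle$ interpolates between $g(0)=\langle A\rangle_{\psi(t)}$ and $g(t)=\langle A\rangle_{u(t)}$, and a short computation using $\I\eps\,\partial_s u=Hu+d(s)$ and self-adjointness of $H$ and $B(s)$ makes all terms not involving $d$ cancel, leaving $g'(s)=-\tfrac2\eps\,\Im\langle d(s)\mid B(s)u(s)\rangle$. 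Because $d(s)\perp\mathcal T_{u(s)}\mathcal M$ we may subtract the orthogonal projection of $B(s)u(s)$ and obtain $|g'(s)|\le\tfrac2\eps\,\|d(s)\|\,\dist(B(s)u(s),\mathcal T_{u(s)}\mathcal M)$. With $\|d(s)\|\le c\eps^{3/2}$ from (a), everything reduces to the key estimate $\dist(B(s)u(s),\mathcal T_{u(s)}\mathcal M)\le c\sqrt\eps$; granting it, $|g'(s)|\le c\eps$, and integrating over $[0,t]$ gives $|\langle A\rangle_{u(t)}-\langle A\rangle_{\psi(t)}|=|g(t)-g(0)|\le c\,t\,\eps$.

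The key estimate is where the work sits. I would write $B(s)u(s)=\e^{\I(t-s)H/\eps}\,A\,w(s)$ with $w(s)=\e^{-\I(t-s)H/\eps}u(s)$ the exact evolution of the Gaussian $u(s)$; by (a) with initial time $s$, $\|w(s)-u(t)\|\le c\sqrt\eps$ in $L^2$ and in the relevant weighted $\eps$-Sobolev norms, so $\|Aw(s)-Au(t)\|\le c\sqrt\eps$. Localisation of the observable — Taylor-expanding the Weyl symbol of $A$ about $(q(t),p(t))$, with $\Im C(t)\ge\rho$ controlling the Gaussian moments — gives $Au(t)=v_0+O(\eps^{3/2})$ with $v_0$ a polynomial of degree $\le2$ times $u(t)$, hence $v_0\in\mathcal T_{u(t)}\mathcal M$. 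It then remains to see that $\e^{\I(t-s)H/\eps}v_0$, the exact backward evolution of a degree-$\le2$ polynomial times $u(t)$, stays within $c\sqrt\eps$ of the degree-$\le2$ polynomials times $u(s)$, which is $\mathcal T_{u(s)}\mathcal M$; this is again an estimate of the type of (a), now for the finitely many lowest Hagedorn wave packets (cf.~Section~\ref{sec:hagwp}). The main obstacles are precisely this last step — quantifying how the Schr\"odinger flow distorts the tangent space at the minimal regularity $V\in C^3$ — and, recurrently, applying the unbounded observable $A$ to $O(\sqrt\eps)$-small differences, which is why one needs the weighted $\eps$-Sobolev strengthening of the error bounds rather than $L^2$ alone.
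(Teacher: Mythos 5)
Your part (a) is the paper's proof: the defect is $-P_{u}^{\perp}(Vu)=-P_u^\perp(W_q u)$ with $W_q$ the cubic Taylor remainder, the moment estimate (Lemma~\ref{lem:int-bound}) gives $\|d\|\le c\,\eps^{3/2}$, and the stability estimate (Lemma~\ref{lem:stability}) integrates this to $c\,t\sqrt\eps$. For part (b) you also start from the same exact identity as the paper: your $g'(s)=-\tfrac2\eps\,\Im\langle d(s)\mid B(s)u(s)\rangle$ is precisely the integrand of Lemma~\ref{lem:obs-err-eq}, since $d=-P_u^\perp Hu=-W_u u$. Where you genuinely diverge is in bounding this integrand. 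The paper (Lemma~\ref{lem:obs-err-ineq}, proved in Section~\ref{sec:proofobs}) uses Egorov's theorem in operator norm to replace $A(s)$ by $\op(a\circ\Phi^s)$ and then a first-order semiclassical commutator expansion (Calder\'on--Vaillancourt) to extract one factor of $\eps$ from the commutator and one from $\|\nabla W_u\,u\|=O(\eps)$. You instead exploit the Galerkin orthogonality $d\perp\mathcal T_u\mathcal M$ and reduce everything to the single estimate $\dist(B(s)u(s),\mathcal T_{u(s)}\mathcal M)\le c\sqrt\eps$. That reduction is correct and attractive --- it stays entirely within the wave-packet framework and avoids pseudodifferential calculus on the evolved observable.

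The gap is that this key estimate is only reduced to further claims, each of which is itself a substantial piece of work that you do not carry out. Concretely: (i) to conclude $\|Aw(s)-Au(t)\|\le c\sqrt\eps$ from $\|w(s)-u(t)\|\le c\sqrt\eps$ you must upgrade part (a) to weighted $\eps$-Sobolev norms, since $A$ is unbounded; the commutator $[\eps\nabla_x,H]=\eps\nabla V$ couples the derivative of the error to $\nabla V\cdot e$, which is only controllable under growth assumptions on $\nabla V$ (essentially subquadratic $V$) that go beyond hypothesis~2 of the theorem, and the defect must also be bounded in these weighted norms. (ii) The step ``$e^{\I(t-s)H/\eps}v_0$ stays within $c\sqrt\eps$ of $\mathcal T_{u(s)}\mathcal M$'' requires both a Hagedorn--Galerkin error bound of order $\sqrt\eps$ for initial data with $|k|\le 2$ (which does follow from the $O(\eps^{3/2})$ bound on the Galerkin matrix entries, giving $\dot c_\ell=O(\sqrt\eps)$ for the newly populated coefficients) \emph{and} a comparison of the backward \emph{classically} evolved parameters with the \emph{variational} parameters of $u(s)$: their equations of motion differ by $O(\eps)$ (averages versus point values), which by Gr\"onwall gives $O(\eps)$ parameter differences, hence $O(\sqrt\eps)$ differences of the wave packets and an $O(1)$ global phase that is harmless only because $\mathcal T_{u(s)}\mathcal M$ is complex-linear. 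None of this is fatal --- I believe the route can be completed, and it would arguably yield an even better bound than $O(\eps)$ under stronger regularity --- but as written the decisive distance estimate is asserted rather than proved, whereas the paper's Egorov/Weyl-calculus route needs none of the weighted error bounds or backward wave-packet propagation.
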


We remark that this is an {\it a posteriori} error bound since Assumption~1. about $\rho$ involves the Gaussian approximation. 
However, we will see in Section~\ref{subsec:ehrenfest} that the decay of $\rho$ with time is essentially  determined by the Lyapunov exponent of the {\it classical} equations of motion $\dot q=p$, $\dot p=-\nabla V(q)$.

For the proof of the theorem we first give some simple lemmas that will also be useful later on.

\begin{lemma}[stability] \label{lem:stability}
Suppose that $e(t)$ satisfies the Schr\"odinger equation up to a defect $d(t)$,
$$
\partial_t e = \frac1{\I\eps}He + d, \qquad e(0)=e_0.
$$
Then,
$$
\| e(t) \| \le \| e_0\| + \int_0^t  \| d(s) \|\, \D s.
$$
\end{lemma}

\begin{proof}
Since $H$ is self-adjoint, $\langle e | He\rangle$ is real, and then
the Cauchy-Schwarz inequality implies
\begin{align*}
\|e\| \tfrac{\D}{\D t}\|e\| 
&= \tfrac12 \tfrac{\D}{\D t}\|e\|^2 %\\*[0.5em]
= \Re\langle e\mid \partial_te\rangle \\
&= \Re\langle e\mid \tfrac{1}{\I\eps}He\rangle 
+ \Re\langle e\,|\, d\rangle
=  \Re\langle e\,|\, d\rangle%\\
\le \|e\| \ \| d\|.
\end{align*}
Integration from $0$ to $t$ yields the result.
\end{proof}

It will be useful to have an explicit formula for the norm of a Gaussian wave packet.

\begin{lemma}[norm]\label{lem:norm}
For $u\in{\mathcal M}$, we have
\[
\|u\| = (\pi\eps)^{d/4}\ \det(\Im C)^{-1/4}\ \exp(-\tfrac1\eps\, \Im\zeta).
\]
Therefore, if $\|u\| = 1$, then
\[
\exp(-\tfrac1\eps\,\Im\zeta) = (\pi\eps)^{-d/4} \ \det(\Im C)^{1/4}.
\]
\end{lemma}

\begin{proof}
We write
\[
|u(x)|^2  = \exp\!\left( -\tfrac1\eps \left( (x-q)^T \Im C(x-q) + 2\,\Im\zeta\right) \right)
\]
and obtain
\[
\int_{\Rb^d} |u(x)|^2 \D x = (\pi\eps)^{d/2} \ \det(\Im C)^{-1/2} \ \exp(-\tfrac2\eps\, \Im\zeta). 
\]
\end{proof}

The following observation will also be used repeatedly.

\begin{lemma}[moments] 
\label{lem:int-bound} For arbitrary $m,n\ge 0$, there exists $c_{m,n}<\infty$ such that the following holds true:
For any symmetric positive definite width matrix $\Im C$ with smallest eigenvalue bounded from below by $\rho>0$ and  for all positive $\eps>0$,
\begin{align*}
(\pi\eps)^{-d/4}\ \det(\Im C)^{1/4} &
\left(\int_{\Rb^d}  |x|^{2m} \Bigl(1+\frac\rho\eps |x|^2\Bigr)^n  \exp(-\tfrac1\eps x^T \Im C x) \D x\right)^{1/2}
\\
& \hspace{5.3cm} \le c_{m,n} \Bigl( \frac {\eps} {\rho} \Bigr)^{m/2}.
\end{align*}
\end{lemma}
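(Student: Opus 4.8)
The plan is to reduce everything to a single scalar Gaussian integral in one dimension after an orthogonal diagonalisation. First I would write $\Im C = Q^T D Q$ with $Q$ orthogonal and $D=\mathrm{diag}(\lambda_1,\dots,\lambda_d)$, all $\lambda_j\ge\rho$, and substitute $y = \eps^{-1/2} Q x$. This normalises the exponent to $\exp(-y^T D y) = \prod_j \exp(-\lambda_j y_j^2)$, turns $|x|^{2m}$ into $\eps^m |y|^{2m}$, turns $\rho\eps^{-1}|x|^2$ into $\rho|y|^2$, and contributes a Jacobian factor $\eps^{d/2}$. Combined with the $\det(\Im C)^{1/4} = (\prod_j\lambda_j)^{1/4}$ prefactor and the $(\pi\eps)^{-d/4}$ factor, the claimed bound becomes
\[
\Bigl(\prod_{j=1}^d \lambda_j\Bigr)^{1/4}\pi^{-d/4}\,\eps^{m/2}\left(\int_{\Rb^d} |y|^{2m}(1+\rho|y|^2)^n \prod_{j=1}^d \e^{-\lambda_j y_j^2}\,\D y\right)^{1/2} \le c_{m,n}\,\eps^{m/2}\rho^{-m/2},
\]
so the $\eps$-dependence already matches and I only need an $\eps$-independent bound on the remaining expression, uniformly over all $\lambda_j\ge\rho$.

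For that uniform bound I would expand $(1+\rho|y|^2)^n = \sum_{k=0}^n \binom{n}{k}\rho^k|y|^{2k}$ and $|y|^{2(m+k)} = (\sum_j y_j^2)^{m+k}$ by the multinomial theorem into a finite sum of monomials $\prod_j y_j^{2\alpha_j}$ with $\sum_j \alpha_j = m+k$. Each resulting integral factorises as $\prod_j \int_\Rb y_j^{2\alpha_j}\e^{-\lambda_j y_j^2}\,\D y_j$, and the one-dimensional formula $\int_\Rb s^{2\alpha}\e^{-\lambda s^2}\,\D s = \Gamma(\alpha+\tfrac12)\,\lambda^{-(\alpha+1/2)}$ gives, after multiplying by the prefactor $(\prod_j\lambda_j)^{1/4}\pi^{-d/4}$, a product $\prod_j \bigl(\Gamma(\alpha_j+\tfrac12)\pi^{-1/4}\lambda_j^{-\alpha_j-1/4}\bigr)$. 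Since $\alpha_j\ge 0$ and $\lambda_j\ge\rho$, we have $\lambda_j^{-\alpha_j-1/4}\le \lambda_j^{-1/4}\min(1,\rho^{-\alpha_j})$; but more to the point, for the indices $j$ with $\alpha_j=0$ the factor is just $\Gamma(\tfrac12)\pi^{-1/4}\lambda_j^{-1/4}$, and the product of these over all such $j$ is bounded by $(\Gamma(\tfrac12)\pi^{-1/4})^d\prod_j\lambda_j^{-1/4}$ which combined with the surviving large powers $\lambda_j^{-\alpha_j}$ for $\alpha_j\ge 1$ is bounded uniformly because $\lambda_j\ge\rho$ controls $\rho^k$ coming from the binomial expansion. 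The cleanest bookkeeping is: the total prefactor $\pi^{-d/4}$ and the $d$ many $\Gamma(\tfrac12)$'s exactly cancel the Gaussian normalisation $\int_\Rb\e^{-\lambda s^2}\D s = \sqrt{\pi/\lambda}$ in each coordinate, so what remains is a finite sum over $k$ and over multi-indices $\alpha$ of $\rho^k \prod_j (\text{constant})\lambda_j^{-\alpha_j}$, and since each $\lambda_j^{-\alpha_j}\le\rho^{-\alpha_j}$ and $\sum_j\alpha_j = m+k$, each term is $\le (\text{constant})\,\rho^{k-(m+k)} = (\text{constant})\,\rho^{-m}$; after taking the square root this yields exactly the $\rho^{-m/2}$ in the claim, with $c_{m,n}$ a finite combinatorial constant depending only on $m$ and $n$.

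The only mildly delicate point — the step I would flag as the main obstacle — is making the bound genuinely uniform in the dimension-$d$ product as $d\to\infty$ with $\lambda_j$ ranging independently over $[\rho,\infty)$: one must be careful that the $d$ factors of $\Gamma(\tfrac12)\pi^{-1/4}$ in the prefactor $(\pi\eps)^{-d/4}\det(\Im C)^{1/4}$ precisely match the $d$ Gaussian normalisers, leaving no residual $d$-dependent constant, and that only finitely many coordinates ($\alpha_j\ge1$, at most $m+n$ of them) carry the decaying powers $\lambda_j^{-\alpha_j}$ used to produce $\rho^{-m}$, so $c_{m,n}$ truly does not depend on $d$ or on $\eps$ or on $\rho$. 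Once the exact cancellation of the Gaussian normalisation against the prefactor is verified coordinatewise, the rest is elementary.
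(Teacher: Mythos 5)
Your argument is correct and begins the same way as the paper's proof (orthogonal diagonalisation of $\Im C$), but it finishes by a more laborious route. After diagonalising, the paper substitutes \emph{anisotropically}, $z_j=\sqrt{\lambda_j/\eps}\,y_j$ in each coordinate: the exponent becomes exactly $-|z|^2$, the Jacobian cancels the determinant prefactor, and the elementary inequalities $|y|^2=\sum_j(\eps/\lambda_j)z_j^2\le(\eps/\rho)|z|^2$ and $(\rho/\eps)|y|^2\le|z|^2$ pull out the factor $(\eps/\rho)^{m/2}$ and reduce the whole expression in one step to $(\eps/\rho)^{m/2}\pi^{-d/4}\bigl(\int_{\Rb^d}|z|^{2m}(1+|z|^2)^n\,\e^{-|z|^2}\,\mathrm{d}z\bigr)^{1/2}$, with no binomial or multinomial expansion at all. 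Your isotropic rescaling $y=\eps^{-1/2}Qx$ leaves the $\lambda_j$ in the exponent, which is why you then need the term-by-term evaluation of one-dimensional Gaussian moments via $\Gamma(\alpha_j+\tfrac12)\lambda_j^{-\alpha_j-1/2}$ together with the bound $\lambda_j^{-\alpha_j}\le\rho^{-\alpha_j}$; that bookkeeping is sound and does yield the stated estimate. One caveat: your closing claim that $c_{m,n}$ is independent of $d$ is false --- already for $\Im C=\rho\,\Id$, $n=0$, $m=1$ the left-hand side equals $\sqrt{d/2}\,(\eps/\rho)^{1/2}$, and in your expansion the number of multi-indices $\alpha$ with $|\alpha|=m+k$ grows with $d$ --- but this is immaterial, since the lemma only asserts finiteness of $c_{m,n}$ for the fixed dimension $d$, and the paper's own constant (the $d$-dimensional Gaussian moment above) is equally $d$-dependent.
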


\begin{proof} 
We diagonalise the width matrix
\[
\Im C = S^T \, {\rm diag}(\lambda_1,\ldots,\lambda_d) \ S,
\]
with eigenvalues $\lambda_1,\ldots,\lambda_d>\rho$ and $S\in\Rb^{d\times d}$ orthogonal. 
Substituting $y=Sx$, the Gaussian integral can be rewritten as
\[
\frac{(\lambda_1\cdots\lambda_d)^{1/4}}{(\pi\eps)^{d/4}} 
\left(\int_{\Rb^d}  |y|^{2m} \Bigl(1+\frac\rho\eps |y|^2\Bigr)^n  
\exp(-\tfrac1\eps (\lambda_1y_1^2 + \cdots +\lambda_d y_d^2)) \D y\right)^{1/2}.
\]
Then, we substitute $z_j=\sqrt{\lambda_j/\eps}y_j$ and obtain the upper bound
\[
\Bigl( \frac {\eps} {\rho} \Bigr)^{m/2} \pi^{-d/4} 
\left(\int_{\Rb^d}  |z|^{2m} \Bigl(1+|z|^2\Bigr)^n  
\exp(-|z|^2) \D z\right)^{1/2},
\]
which gives the result, since the integral depends only on $m$ and $n$.
\end{proof}

\begin{proof} (of Theorem~\ref{thm:error-gauss} (a): $L^2$-error bound)
Using \eqref{PuHu}, we have
\begin{align*}
\partial_t (u-\psi) 
&= \tfrac{1}{\I\eps} H(u-\psi) + \tfrac{1}{\I\eps}(P_uH - H)u \\
&= \tfrac{1}{\I\eps}H(u-\psi) - \tfrac{1}{\I\eps} P^\perp_uH u,
\end{align*}
where $P^\perp_u = \Id - P_u$ is the projection onto the orthogonal complement, and 
\[
 \|\bch \tfrac{1}{\I\eps} P^\perp_{u(s)}Hu(s)\ech\| =
 \dist(\tfrac{1}{\I\eps}H u(s),{\mathcal T}_{u(s)}{\mathcal M}).
\]
By Lemma~\ref{lem:stability} we therefore have
\begin{align*}
\|u(t)-\psi(t)\| \le \int_0^t \dist\bigl(\tfrac{1}{\I\eps}H u(s),{\mathcal T}_{u(s)}{\mathcal M}\bigr) \D s.
\end{align*}
Now we evaluate this general estimate for the Gaussian manifold ${\mathcal M}$. 
Let $q\in\Rb^d$ be the position of $u\in{\mathcal M}$. We let $U_q:\Rb^d\to\Rb$ denote the second order Taylor polynomial of $V$ at $q$ and let $W_q:\Rb^d\to\Rb$ be the corresponding remainder, 
\[
V = U_q + W_q.
\]
By Lemma~\ref{lem:tangent space}, we have $\Delta_x u, U_q u\in{\mathcal T}_u{\mathcal M}$, and therefore
\begin{align*}
\dist(\tfrac{1}{\I\eps} Hu,{\mathcal T}_u{\mathcal M}) &= 
\dist( \tfrac{1}{\I\eps}W_q u,{\mathcal T}_u{\mathcal M}) \le \tfrac{1}{\eps} \|W_q u\|.
\end{align*}
Using the norm conservation of Proposition~\ref{prop:cons} together with Lemma~\ref{lem:norm}, we write
\begin{align*}
\|W_q u\| &= (\pi\eps)^{-d/4}\ \det(\Im C)^{1/4}\\
 &\left(\int_{\Rb^d} |W_q(x)|^2 \exp(-\tfrac1\eps(x-q)^T \Im C (x-q)) \D x\right)^{1/2}.
\end{align*}
%Since for all $\alpha >0$,
%\[
%\int_\Rb s^6 \e^{-\alpha s^2} \D s = \tfrac{15}8  \sqrt\pi \,\alpha^{-7/2},
%\]
%and since $|W_q(x)|^2 \le \frac1{3!} \gamma |x-q|^6 $, where $\gamma$ is a bound of the third derivative of $V$,
%a diagonalisation of the width matrix $\Im C$ implies the estimate 
%\[
%\| W_q u\|^2 \le c \ \eps^{3/2},
%\]
%where the constant $c$ depends on $\rho$ and $\gamma$. An estimate of the same type is still obtained if the third derivative of $V$ is not bounded by a constant, but by a polynomial.
Since 
$W_q(x)$  is the non-quadratic remainder at $q$, and by the assumption on the polynomial boundedness of $V$ (of degree $n$, say), we have  $|W_q(x)|^2 \le c_3 |x-q|^6 (1+|x-q|^{2n})$, and hence Lemma~\ref{lem:int-bound} implies the estimate 
\[
\| W_q u\| \le c \ \eps^{3/2},
\]
where the constant $c$ depends on $\rho$. In summary, we obtain
\[
\|u(t)-\psi(t)\| \le \int_0^t \tfrac1\eps \|W_{q(s)}u(s)\| \D s \le  c\,  t \sqrt\eps,
\]
which is the stated $L^2$-error bound.
\end{proof}

We now turn to the error in observables. In view of Lemma~\ref{lem:tangent space}, for any Gaussian $u\in{\mathcal M}$, there is a quadratic potential $U_u$ such that
$$
P_u Hu = -\tfrac{\eps^2}{2}\Delta_x u+ U_u u, \quad\ie\quad   P_u Vu = U_u u \in {\mathcal T}_u{\mathcal M}.
$$
We introduce the remainder potential $W_u=V-U_u$, so that
\begin{equation} \label{Wu}
Hu = P_uHu + W_u u = P_uHP_u u + W_u u,
\end{equation}
where we used that $u\in {\mathcal T}_u{\mathcal M}$ for all $u\in {\mathcal M}$ in the last equality.
Theorem~\ref{thm:error-gauss} (b) (error in observables) then follows directly from the following two lemmas.

\begin{lemma}[error in observables] \label{lem:obs-err-eq} 
With the unitary group denoted by $U(t)= \exp(-\I tH/\eps)$ and the notation 
$A(t)=U(t)^*A U(t)$,
we have
$$
\langle A \rangle_{u(t)} - \langle A \rangle_{\psi(t)} = \int_0^t \bigl\langle \tfrac 1{\I\eps} [ W_{u(t-s)}, A(s) ] \bigr\rangle_{u(t-s)}\D s.
$$
\end{lemma}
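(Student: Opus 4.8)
The plan is to differentiate the quantity $\langle A\rangle_{u(t)} - \langle A\rangle_{\psi(t)}$ in $t$ and integrate back up, exploiting that the evolution of $\psi$ is exactly unitary with generator $H$ while the evolution of $u$ is governed by the projected equation \eqref{PuHu}. First I would note that, by unitarity of $U(t)=\exp(-\I tH/\eps)$, we may write $\langle A\rangle_{\psi(t)} = \langle \psi(0)\,|\, A(t)\psi(0)\rangle$ with $A(t)=U(t)^*AU(t)$, and since $\psi(0)=u(0)$ this equals $\langle A(t)\rangle_{u(0)}$. Hence the difference to be estimated is $\langle A\rangle_{u(t)} - \langle A(t)\rangle_{u(0)}$, which telescopes: introduce the auxiliary function $g(s) = \langle A(s)\rangle_{u(t-s)}$ for $s\in[0,t]$, so that $g(0)=\langle A(t)\rangle_{u(0)}$ and $g(t)=\langle A(0)\rangle_{u(t)} = \langle A\rangle_{u(t)}$. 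Then $\langle A\rangle_{u(t)} - \langle A\rangle_{\psi(t)} = g(t)-g(0) = \int_0^t g'(s)\,\D s$, and the whole statement reduces to computing $g'(s)$ and identifying it with the integrand.

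The key computation is the derivative $g'(s)$, which has two contributions by the product rule: one from differentiating $A(s)$ and one from differentiating $u(t-s)$. For the first, the Heisenberg relation $\tfrac{\D}{\D s}A(s) = \tfrac{1}{\I\eps}[A(s),H] = \tfrac{\I}{\eps}[H,A(s)]$ gives a term $\langle u(t-s)\,|\, \tfrac{1}{\I\eps}[A(s),H]\, u(t-s)\rangle$. For the second, using \eqref{PuHu} in the form $\partial_\tau u(\tau) = \tfrac{1}{\I\eps}P_{u(\tau)}Hu(\tau)$ and the chain rule (with the minus sign from $t-s$), one gets $-2\,\Re\langle \tfrac{1}{\I\eps}P_{u(t-s)}Hu(t-s)\,|\, A(s)u(t-s)\rangle$. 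Here I would use the self-adjointness of the orthogonal projection $P_u$ and of $A(s)$ to move $P_{u(t-s)}$ onto the other factor, and then invoke \eqref{Wu}, i.e.\ $Hu = P_uHP_uu + W_uu$, together with $u\in\mathcal{T}_u\mathcal{M}$, to rewrite $P_uHu$ as $Hu - W_uu$. The bare $Hu$ part will combine with the Heisenberg term to produce the full commutator $\langle \tfrac{1}{\I\eps}[H,A(s)]\rangle_{u(t-s)}$, which cancels (the Heisenberg term carried $[A(s),H]$, the opposite sign), leaving exactly the remainder contribution $\langle \tfrac{1}{\I\eps}[W_{u(t-s)},A(s)]\rangle_{u(t-s)}$ — taking care that the real-part symmetrisation reconstructs the commutator rather than an anticommutator, which works because $W_u$ is real (a multiplication operator) and hence self-adjoint, so that $\langle \tfrac{1}{\I\eps}W_u\varphi\,|\,\psi\rangle + \langle\varphi\,|\,\tfrac{1}{\I\eps}W_u\psi\rangle$ vanishes and only the genuine commutator survives.

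The main obstacle I anticipate is bookkeeping rather than anything deep: keeping the $\I$'s, signs, and conjugations straight through the product rule, and justifying that all the pairings are well-defined (differentiability of $s\mapsto g(s)$, applicability of the Heisenberg equation for the unbounded $A$, and the fact that $W_uu$ and $A(s)u$ lie in $L^2$ so the inner products make sense). The polynomial boundedness of the Weyl symbol of $A$ and the Gaussian decay of $u$, already exploited via Lemma~\ref{lem:int-bound} in part (a), take care of the integrability issues; differentiability in $s$ follows from the smoothness of the variational flow on $\mathcal{M}$. Once $g'(s)$ is identified, integrating from $0$ to $t$ gives the claimed identity, and then bounding $\|W_{u(t-s)}u(t-s)\| \le c\,\eps^{3/2}$ exactly as in part (a), together with $\|A(s)u(t-s)\|$ controlled by the moment estimates, yields the $O(t\eps)$ bound of Theorem~\ref{thm:error-gauss}(b) — which is presumably the content of the second, as-yet-unseen lemma.
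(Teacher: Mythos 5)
Your proposal follows essentially the same route as the paper: write the difference as $\int_0^t \tfrac{\D}{\D s}\langle u(t-s)\mid A(s)\,u(t-s)\rangle\,\D s$, apply the Heisenberg equation for $A(s)$ and the projected equation \eqref{PuHu} for $u$, and use $Hu=P_uHP_uu+W_uu$ together with the self-adjointness of the multiplication operator $W_u$ to collapse everything to the commutator with $W_{u(t-s)}$. The only slip is in your endpoint bookkeeping: with $g(s)=\langle A(s)\rangle_{u(t-s)}$ one has $g(0)=\langle A\rangle_{u(t)}$ and $g(t)=\langle A(t)\rangle_{u(0)}=\langle A\rangle_{\psi(t)}$ (you state them interchanged), so $\int_0^t g'(s)\,\D s=\langle A\rangle_{\psi(t)}-\langle A\rangle_{u(t)}$ and the overall sign must be carried through the cancellation accordingly --- a trivially fixable point that does not affect the validity of the argument.
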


\begin{lemma}[commutator bound] \label{lem:obs-err-ineq}
Under the conditions of Theorem~\ref{thm:error-gauss}, we have
$$
\bigl| \bigl\langle \tfrac 1{\I\eps} [ W_{u(t-s)}, A(s) ] \bigr\rangle_{u(t-s)} \bigr| \le c\,\eps, \qquad 0\le s \le t \le \bar t.
$$
\end{lemma}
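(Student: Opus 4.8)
The plan is to reduce the bilinear expression to a Cauchy--Schwarz estimate whose two factors are the small remainder $\|W_u u\|$ and the fluctuation of the transported observable in the Gaussian state. Fix $s$ and write $u=u(t-s)$, with phase-space centre $(q,p)$ and width matrix $\Im C$ whose eigenvalues are $\ge\rho$. Since $W_u$ is multiplication by a real function and $A(s)=U(s)^*AU(s)$ is self-adjoint, the two terms of the commutator are complex conjugates of each other, so
\[
\Bigl\langle \tfrac1{\I\eps}[W_u,A(s)]\Bigr\rangle_u = \tfrac2\eps\,\Im\,\langle W_u u\mid A(s)u\rangle .
\]
Because $W_u u = Vu - P_uVu = P_u^\perp(Vu)$ is orthogonal to ${\mathcal T}_u{\mathcal M}$, which contains every constant multiple of $u$, we have $\langle W_u u\mid u\rangle=0$; hence $A(s)u$ may be replaced by $(A(s)-\langle A(s)\rangle_u)u$ for free, and Cauchy--Schwarz gives
\[
\Bigl|\Bigl\langle \tfrac1{\I\eps}[W_u,A(s)]\Bigr\rangle_u\Bigr| \le \tfrac2\eps\,\|W_u u\|\,\bigl\|(A(s)-\langle A(s)\rangle_u)u\bigr\| .
\]
It then suffices to prove $\|W_u u\|\le c\,\eps^{3/2}$ and $\|(A(s)-\langle A(s)\rangle_u)u\|\le c\,\sqrt\eps$, both uniformly for $s\in[0,\overline t]$.

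The first bound is essentially already available. Letting $U_q$ be the second-order Taylor polynomial of $V$ at $q$, Lemma~\ref{lem:tangent space} gives $U_q u\in{\mathcal T}_u{\mathcal M}$, so that $\|W_u u\|=\dist(Vu,{\mathcal T}_u{\mathcal M})\le\|(V-U_q)u\|=\|W_q u\|$, and $\|W_q u\|\le c\,\eps^{3/2}$ (with $c$ depending on $\rho$) was derived in the proof of part~(a) from $|W_q(x)|^2\le c_3|x-q|^6(1+|x-q|^{2n})$ together with Lemma~\ref{lem:int-bound}. For the fluctuation bound I would use the semiclassical calculus of Section~\ref{sec:wigner}. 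Egorov's theorem represents $A(s)$, uniformly for $s\in[0,\overline t]$, as the Weyl quantisation $\mathrm{Op}_\eps^{\mathrm W}(a\circ\Phi^s)$ plus an operator that is $O(\eps)$ in the relevant sense, where $\Phi^s$ is the classical flow of $\tfrac12|p|^2+V(q)$ and $a$ is the (polynomially bounded) Weyl symbol of $A$. Applying $\mathrm{Op}_\eps^{\mathrm W}(b)$ to the normalised Gaussian $u$ and expanding $b=a\circ\Phi^s$ to first order about $(q,p)$ yields $\mathrm{Op}_\eps^{\mathrm W}(b)u=b(q,p)\,u+r$ with $\|r\|\le c\,\sqrt\eps$, the $\sqrt\eps$ coming from the first-order term of the wave-packet expansion and the polynomial growth of $b$ and its derivatives being absorbed by the Gaussian decay of $u$ via Lemma~\ref{lem:int-bound}. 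Since $b(q,p)\,u$ is a constant multiple of $u$ and $\langle A(s)\rangle_u=b(q,p)+O(\sqrt\eps)$, this gives $\|(A(s)-\langle A(s)\rangle_u)u\|\le c\,\sqrt\eps$, with $c$ depending on $\rho$, on $A$, on $\overline t$ and on the classical flow.

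Combining the two bounds yields $|\langle\tfrac1{\I\eps}[W_u,A(s)]\rangle_u|\le \tfrac2\eps\cdot c\,\eps^{3/2}\cdot c\,\sqrt\eps = c\,\eps$ for $0\le s\le t\le\overline t$, which is the claim, the constant inheriting its $\rho$-dependence from the remainder factor. I expect the fluctuation estimate $\|(A(s)-\langle A(s)\rangle_u)u\|\le c\sqrt\eps$, \emph{uniform in} $s$, to be the real work: one must genuinely use that $A(s)$ is a classically transported quantised observable, and verify that Egorov's theorem and the coherent-state expansion apply to the merely polynomially bounded symbol $a$ under the regularity of $V$ available here; once that is in place, the rest is bookkeeping with the Gaussian moment bounds of Lemma~\ref{lem:int-bound}.
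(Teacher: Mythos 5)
Your proof is correct in outline but takes a genuinely different route from the paper's (given in Section~\ref{sec:proofobs}). The paper does not centre the observable: it replaces $A(s)$ by $\mathrm{op}(a\circ\Phi^s)$ via the operator-norm Egorov estimate \eqref{wig:egorov-norm}, applies the first-order commutator expansion \eqref{eq:comm1} to write $\tfrac1{\I\eps}[W_u,\mathrm{op}(a\circ\Phi^s)]$ as $\mathrm{op}(-\nabla_p(a\circ\Phi^s))\cdot\nabla W_u$ plus an $O(\eps)$ operator, and concludes from $\|\nabla W_u\,u\|\le c\,\eps$, which follows from Lemma~\ref{lem:av} (applied to $\nabla V$ and $\nabla^2V$) together with Lemma~\ref{lem:int-bound}. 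Your substitute for the last two steps is the exact orthogonality $\langle W_uu\mid u\rangle=0$ (valid, since $W_uu=P_u^\perp Vu$ and $u\in{\mathcal T}_u{\mathcal M}$) followed by Cauchy--Schwarz, and your observation that $\|W_uu\|=\dist(Vu,{\mathcal T}_u{\mathcal M})\le\|W_qu\|=O(\eps^{3/2})$ is a neat one --- in fact sharper than the $O(\eps)$ bound the paper proves for the same quantity. The price is that everything now rests on the fluctuation estimate $\|(A(s)-\langle A(s)\rangle_u)u\|\le c\sqrt\eps$, which you rightly identify as the real work but leave as a sketch, and there your justification is not quite adequate as written: Lemma~\ref{lem:int-bound} controls only multiplication operators against the Gaussian, whereas $\mathrm{op}(a\circ\Phi^s)u-(a\circ\Phi^s)(q,p)\,u$ involves the Weyl quantisation of a Taylor remainder in both $q$ and $p$, so one needs a Calder\'on--Vaillancourt bound on the remainder symbol sandwiched with factors of $\widehat q-q$ and $\widehat p-p$; moreover the momentum fluctuation satisfies $\|(\widehat p-p)u\|=O(\sqrt\eps\,\|P\|)$, so your constant depends on the momentum width of the Gaussian (on $\|P(t)\|$), which Assumption~1 of Theorem~\ref{thm:error-gauss} does not control directly (it is bounded on $[0,\bar t]$ under the standing hypotheses, but this should be said). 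Both your argument and the paper's share the looseness that Egorov's theorem as stated requires bounded derivatives of the symbol while Theorem~\ref{thm:error-gauss}(b) allows polynomial growth. With the fluctuation bound supplied, your final bookkeeping $\tfrac2\eps\cdot\eps^{3/2}\cdot\sqrt\eps=2\eps$ is correct.
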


\medskip
The proof of Lemma~\ref{lem:obs-err-ineq} requires techniques that are different from those of this section and  is therefore deferred to Section~\ref{sec:proofobs}. Lemma~\ref{lem:obs-err-eq} is proved here.

\begin{proof} (of Lemma~\ref{lem:obs-err-eq}) 
We compare the outcome of a variational expectation value with the actual one. Writing the true solution as
$\psi(t) = U(t)\psi_0$ in terms of the unitary group, we can reformulate the difference of the expectation 
values in view of $\psi_0=u_0$ as
\[
\langle \psi(t)\!\mid\! A\psi(t)\rangle - \langle u(t)\!\mid\! A u(t)\rangle =
\int_0^t \tfrac{\D}{\D s} \langle u(t-s)\mid U(s)^* A U(s) u(t-s)\rangle \D s.
\]
For $A(s) = U(s)^* A U(s)$ we note that
\[
\tfrac{\D}{\D s} A(s) = \tfrac{1}{\I\eps} \left( A(s)H - H A(s)\right).
\]
Therefore, with $P_u^\perp =\Id -P_u$,
\begin{align*}
&\tfrac{\D}{\D s} \langle u(t-s)\mid A(s) u(t-s)\rangle 
=
\tfrac{1}{\I\eps} \left( 
\langle P_{u(t-s)}H u(t-s)\mid A(s) u(t-s)\rangle \right.\\
&\left.
+ \langle u(t-s)\mid( A(s)H - H A(s)) u(t-s)\rangle
-  \langle u(t-s)\mid A(s) P_{u(t-s)}H u(t-s)\rangle
\right)\\
&=
\tfrac{1}{\I\eps} \left( 
\langle u(t-s)\mid A(s) P^\perp_{u(t-s)}H u(t-s)\rangle
-\langle P^\perp_{u(t-s)}H u(t-s)\mid A(s) u(t-s)\rangle 
\right).
\end{align*}
Hence, 
\begin{align*}
\langle \psi(t)\!\mid\! A\psi(t)\rangle - \langle u(t)\!\mid\! A u(t)\rangle 
&= 
\tfrac{1}{\I\eps}\int_0^t \left( 
\langle u(t-s)\mid A(s) P^\perp_{u(t-s)}H u(t-s)\rangle \right.
\\
& \qquad \quad \left.
\!\! -\langle P^\perp_{u(t-s)}H u(t-s)\mid A(s) u(t-s)\rangle 
\right) \!\D s.
\end{align*}
Since $u(t)\in{\mathcal T}_{u(t)}{\mathcal M}$, we may write
\[
P_{u(t)}H u(t) =  P_{u(t)}H P_{u(t)}u(t) = H_{u(t)}u(t),
\]
with $H_{u}=P_uHP_u$. The above formula for the expectation values then simplifies to 
\[
\langle \psi(t)\!\mid\! A\psi(t)\rangle - \langle u(t)\!\mid\! A u(t)\rangle =
\int_0^t \langle u(t-s)\mid\tfrac{1}{\I\eps}[A(s), H-H_{u(t-s)}] u(t-s)\rangle \D s.
\]
Since we have $(H-H_u)u=W_u u$ by \eqref{Wu}, the result follows.
\end{proof}

\subsection{Differential equations for the parameters}

In this subsection we derive the following equations of motion for the parameters of variational Gaussian wave packet dynamics.

\begin{theorem} [equations of motion for parameters] 
\label{thm:eom-gauss}
For  Gaussian initial data  $u_0\in\mathcal M$ of unit norm, the variational Gaussian approximation \eqref{eq:var} is obtained with
parameters $(q(t),p(t),C(t),\zeta(t))$ that satisfy the following ordinary differential equations:
\begin{align*}
\dot{q} &= p,\\
\dot{p} &= -\langle \nabla_x V\rangle_u,\\
\dot{C} &= -C^2 - \langle \nabla_x^2 V\rangle_u,\\
\dot{\zeta} &= \tfrac12|p|^2 -\langle V\rangle_u + \tfrac{\I\eps}{2}\,\tr(C) + 
\tfrac{\eps}{4} \,\tr( (\Im C)^{-1}\langle\nabla^2_x V\rangle_u).
\end{align*}
\end{theorem}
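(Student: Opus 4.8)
The plan is to insert the parametrised Gaussian directly into the reduced variational equation~\eqref{PuVu}, namely $\I\eps\,\partial_t u=-\tfrac{\eps^2}2\Delta_x u+P_uVu$, to observe that each of the three terms is a polynomial of degree at most~$2$ in $x-q$ times $u$, and then to equate these polynomials coefficient by coefficient. This matching is legitimate because $u$ nowhere vanishes, so two polynomials of degree $\le 2$ agree after multiplication by $u$ precisely when they agree. Preliminarily one notes that the map $(q,p,C,\zeta)\mapsto u$ is a bijection onto $\calM$ (recover $C$ from the Hessian of $\log u$, then $q$ as the centre of the Gaussian $|u|^2$, then $p$ and $\zeta$ from the gradient and value of $\log u$ at $q$) and that the solution $u(t)$ of~\eqref{PuVu} depends smoothly on~$t$, so the parameters $(q(t),p(t),C(t),\zeta(t))$ and their derivatives are well defined.

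For the left-hand side I would reuse the computation in the proof of Lemma~\ref{lem:tangent space}: with $u=\exp(\tfrac\I\eps g)$, $g=\tfrac12(x-q)^TC(x-q)+p^T(x-q)+\zeta$, one has $\I\eps\,\partial_t u=-(\partial_t g)\,u$, where $\partial_t g=-\dot q^TC(x-q)+\tfrac12(x-q)^T\dot C(x-q)+\dot p^T(x-q)-p^T\dot q+\dot\zeta$ is a quadratic in $x-q$ with coefficients linear in $(\dot q,\dot p,\dot C,\dot\zeta)$. For the kinetic term, $\nabla_x u=\tfrac\I\eps(C(x-q)+p)u$ yields $-\tfrac{\eps^2}2\Delta_x u=\bigl(\tfrac12(x-q)^TC^2(x-q)+p^TC(x-q)+\tfrac12|p|^2-\tfrac{\I\eps}2\tr C\bigr)u$. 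For the potential term, Lemma~\ref{lem:tangent space} gives $P_uVu=U_u u$ with $U_u$ of degree $\le 2$; since $P_u$ is the $L^2$-orthogonal projection onto $\{\varphi u:\deg\varphi\le 2\}$, $U_u$ is the orthogonal projection of $V$ onto polynomials of degree $\le 2$ in the weighted space $L^2(|u|^2\,\D x)$. Computing this projection by Gaussian integration by parts --- using that the weight $\propto\exp(-\tfrac1\eps(x-q)^T\Im C(x-q))$ is the density of the Gaussian with mean $q$ and covariance $\tfrac\eps2(\Im C)^{-1}$, so that $V-U_u$ is orthogonal to $1$, to $x-q$, and to $(x-q)\otimes(x-q)$ --- identifies $U_u(x)=\langle V\rangle_u-\tfrac\eps4\tr\bigl((\Im C)^{-1}\langle\nabla_x^2V\rangle_u\bigr)+\langle\nabla_xV\rangle_u^T(x-q)+\tfrac12(x-q)^T\langle\nabla_x^2V\rangle_u(x-q)$; this is the content of Proposition~\ref{prop:project}.

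Equating the three homogeneous parts of the prefactors in~\eqref{PuVu} then gives: from the quadratic part, $-\dot C=C^2+\langle\nabla_x^2V\rangle_u$; from the linear part, the vector identity $C\dot q-\dot p=Cp+\langle\nabla_xV\rangle_u$; and from the constant part, $p^T\dot q-\dot\zeta=\tfrac12|p|^2-\tfrac{\I\eps}2\tr C+\langle V\rangle_u-\tfrac\eps4\tr\bigl((\Im C)^{-1}\langle\nabla_x^2V\rangle_u\bigr)$. The first is already the asserted equation for $\dot C$. In the second, all of $\dot q,\dot p,p,\langle\nabla_xV\rangle_u$ are real, so on separating real and imaginary parts and using that $\Im C$ is positive definite, hence invertible, the imaginary part forces $\dot q=p$, and then the real part gives $\dot p=-\langle\nabla_xV\rangle_u$. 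Substituting $\dot q=p$ into the constant-part identity yields the asserted formula for $\dot\zeta$.

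The calculation is largely bookkeeping; the one genuinely structural point is the decoupling, via $\Im C>0$, of the $C$-weighted linear identity into the classical equations for $(q,p)$. (Those two equations can also be obtained from the quantum Ehrenfest identity $\tfrac{\D}{\D t}\langle A\rangle_u=\bigl\langle\tfrac1{\I\eps}[A,H]\bigr\rangle_u$ applied to $A=\widehat q$ and $A=\widehat p$, which holds here because $\widehat q\,u,\widehat p\,u\in\calT_u\calM$ and $P_u^\perp Hu\perp\calT_u\calM$, exactly as in the proof of Proposition~\ref{prop:ang-mom}.) If one prefers not to invoke Proposition~\ref{prop:project}, the only step requiring more than routine algebra is the Gaussian-moment computation that produces the trace correction $\tfrac\eps4\tr((\Im C)^{-1}\langle\nabla_x^2V\rangle_u)$ appearing in $U_u(q)$ and hence in $\dot\zeta$.
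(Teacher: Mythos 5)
Your proposal is correct and follows essentially the same route as the paper: compute $\I\eps\partial_t u$ and $-\tfrac{\eps^2}{2}\Delta_x u$ as quadratic prefactors times $u$, invoke Proposition~\ref{prop:project} for $P_uVu$, and match the quadratic, linear and constant parts. The only (welcome) refinement is that you derive $\dot q=p$ as forced by separating real and imaginary parts of the linear identity using $\Im C>0$, and you note the bijectivity of the parametrisation, whereas the paper simply sets $\dot q=p$ and verifies consistency.
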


The first observation is that the parameters change slowly, on the time scale $O(1)$, as opposed to the highly oscillatory Gaussian wave packet defined by them, which has $O(1)$ changes on time intervals of length $O(\eps)$.
We note further that the first two equations look very similar to the classical equations of motion 
$$
\dot q =p, \qquad \dot p= - \nabla V(q),
$$ 
which are obtained from $\dot q =p, \ \dot{p} = -\langle \nabla_x V\rangle_u$ in the limit $\eps\to 0$ if  the potential is continuous and the wave packet is localised as stated in assumption 1.~of Theorem~\ref{thm:error-gauss}.
Moreover, the last equation determines $\zeta$, up to small perturbations, as the action integral that corresponds to the classical Lagrange function $\tfrac12|p|^2 - V(q)$. Only the differential equation for the matrix $C$ seems not to be related to classical mechanics at first sight, but this apparent exemption will be resolved in the next subsection.

For the proof of Theorem~\ref{thm:eom-gauss} we first derive auxiliary results that allow us to give an explicit expression for the projected potential term $P_uVu$ in \eqref{PuVu}.
We start by setting up an orthonormal basis of the tangent space.

\begin{lemma}[orthonormal basis of the tangent space]\label{lem:onb}
For a  Gaussian $u\in{\mathcal M}$ of unit norm, let the invertible real $d\times d$ matrix $Q$ be a Choleski factor of the inverse of the width matrix, that is,
\[
\Im C = (QQ^T)^{-1}.
\]
For $x\in\Rb^d$, we denote 
\[
y = \tfrac{1}{\sqrt\eps} \,Q^{-1}(x-q),
\]
and define the following multivariate scaled Hermite functions $\varphi_k:\Rb^d\to\C$ for multi-indices $k\in\N^d$ with $|k|:=\sum_i k_i\le 2$:
\[
\varphi_k(x) = \left\{\begin{array}{ll}
u(x) & \text{if}\ |k|=0,\\*[1ex]
\sqrt2 y_m u(x) & \text{if}\ |k|=1\ \text{and}\  k_m=1,\\*[1ex]
\tfrac{1}{\sqrt2}(-1+2y_m^2)u(x) & \text{if} \ |k|=2 \ \text{and}\ k_m=2,\\*[1ex]
2 y_m y_n u(x) & \text{if}\ |k|=2\ \text{and}\  k_m = k_n = 1.
\end{array}\right.
\]
Then, $\{\varphi_k\}_{|k|\le 2}$ is an orthonormal basis of ${\mathcal T}_u{\mathcal M}$.
\end{lemma}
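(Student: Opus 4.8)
The plan is to verify the two defining properties of an orthonormal basis directly: that $\{\varphi_k\}_{|k|\le 2}$ spans the tangent space, and that these functions are orthonormal in $L^2(\Rb^d)$. As a preliminary bookkeeping remark, the number of multi-indices $k\in\N^d$ with $|k|\le 2$ equals $\binom{d+2}{2}$, which by Lemma~\ref{lem:tangent space} is exactly the dimension of ${\mathcal T}_u{\mathcal M}$; thus once orthonormality is known, spanning is equivalent to having the right count, but I would still establish spanning by hand. Namely, $y=\tfrac1{\sqrt\eps}Q^{-1}(x-q)$ is an invertible affine function of $x$, so the prefactors $1$, $\sqrt2\,y_m$, $\tfrac1{\sqrt2}(2y_m^2-1)$, $2y_my_n$ are (real, hence complex) polynomials of degree at most $2$ in $x$, whence each $\varphi_k\in{\mathcal T}_u{\mathcal M}$ by Lemma~\ref{lem:tangent space}. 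These prefactors arise from the monomials $1$, $y_m$, $y_my_n$ ($m\le n$) by a triangular, hence invertible, change of basis, so their $\C$-span is all polynomials of degree $\le 2$; composing with $x\mapsto y$, the $\varphi_k$ span ${\mathcal T}_u{\mathcal M}$.

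For orthonormality, the key point is that the substitution $x\mapsto y$ turns $|u(x)|^2$ into an isotropic Gaussian weight. From the Cholesky relation $\Im C=(QQ^T)^{-1}$ one gets $Q^T\Im C\,Q=\Id$, so setting $x-q=\sqrt\eps\,Qy$ gives $(x-q)^T\Im C\,(x-q)=\eps|y|^2$; combined with $|u(x)|^2=\exp(-\tfrac1\eps((x-q)^T\Im C(x-q)+2\,\Im\zeta))$ and $\D x=\eps^{d/2}|\det Q|\,\D y=\eps^{d/2}\det(\Im C)^{-1/2}\,\D y$, and writing $p_j,p_k$ for the real polynomial prefactors of $\varphi_j,\varphi_k$, one obtains
\[
\langle\varphi_j\mid\varphi_k\rangle=\e^{-\frac2\eps\Im\zeta}\,\eps^{d/2}\det(\Im C)^{-1/2}\int_{\Rb^d}p_j(y)\,p_k(y)\,\e^{-|y|^2}\,\D y.
\]
Here I would invoke Lemma~\ref{lem:norm}, which for the unit-norm wave packet gives $\e^{-\frac1\eps\Im\zeta}=(\pi\eps)^{-d/4}\det(\Im C)^{1/4}$; squaring it makes the constant in front of the integral collapse exactly to $\pi^{-d/2}$.

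It then remains to note that $\pi^{-d/2}\e^{-|y|^2}=\prod_{i=1}^d\pi^{-1/2}\e^{-y_i^2}$ is a product weight, while the prefactors $p_k$ are precisely the tensor products $\prod_i h_{k_i}(y_i)$ of the one-dimensional normalised Hermite polynomials $h_0=1$, $h_1(t)=\sqrt2\,t$, $h_2(t)=\tfrac1{\sqrt2}(2t^2-1)$, which satisfy $\pi^{-1/2}\int_\Rb h_a(t)h_b(t)\,\e^{-t^2}\,\D t=\delta_{ab}$ for $a,b\in\{0,1,2\}$ by an elementary integral computation. Fubini then yields $\langle\varphi_j\mid\varphi_k\rangle=\prod_i\delta_{j_ik_i}=\delta_{jk}$, and combined with the spanning statement this proves that $\{\varphi_k\}_{|k|\le 2}$ is an orthonormal basis of ${\mathcal T}_u{\mathcal M}$.

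The argument is essentially bookkeeping, and there is no serious obstacle; the only points needing care are arranging that the normalisation constants cancel to precisely $\pi^{-d/2}$ — which is exactly where the unit-norm hypothesis is used, through Lemma~\ref{lem:norm} — and recognising that the specific quadratic prefactors listed in the statement really are the normalised Hermite polynomials, so that the multivariate orthonormality reduces by Fubini to the one-dimensional case.
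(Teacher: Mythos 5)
Your proof is correct and follows essentially the same route as the paper's: the paper likewise reduces spanning to Lemma~\ref{lem:tangent space}, writes $|u(x)|^2=\exp(-\tfrac2\eps\Im\zeta)\exp(-y^Ty)$ in the $y$-coordinates, and then "calculates some Gaussian integrals" — which is precisely the Hermite/Fubini computation you carry out explicitly, including the cancellation of the normalisation constant to $\pi^{-d/2}$ via Lemma~\ref{lem:norm}.
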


\begin{proof}
By Lemma~\ref{lem:tangent space}, the  tangent space consists of all quadratic  polynomials times the Gaussian wave packet $u$. 
To prove the orthonormality of the functions $\varphi_k$, we write the Gaussian density as
\begin{align*}
|u(x)|^2 &= \exp(-\tfrac2\eps\,\Im(\zeta)) \exp(-\tfrac1\eps(x-q)^T \Im(C)(x-q))\\
&=\exp(-\tfrac2\eps\,\Im(\zeta)) \exp( -y^T y)
\end{align*}
and calculate some Gaussian integrals.
\end{proof}

The orthonormal basis $\{\varphi_k\}_{|k|\le 2}$ allows us to explicitly determine projections of functions to the tangent space.
We start with functions whose gradient and Hessian vanish on average.

\begin{lemma}\label{lem:project}
We consider a normalised Gaussian $u\in{\mathcal M}$ and a smooth function $W:\Rb^d\to\Rb$ with at most polynomial growth satisfying
\[
\langle \nabla_x W\rangle_u = 0\quad\text{and}\quad \langle\nabla_x^2 W\rangle_u = 0.
\]
Then,
\[
P_uWu = \langle W\rangle_u u.
\]
\end{lemma}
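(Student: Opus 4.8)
The plan is to prove that $(W-\langle W\rangle_u)\,u$ is orthogonal to the tangent space $\mathcal{T}_u\mathcal{M}$. Since $P_u$ is the orthogonal projection onto $\mathcal{T}_u\mathcal{M}$ and $u=\varphi_0\in\mathcal{T}_u\mathcal{M}$ with $\langle u\mid Wu\rangle=\langle W\rangle_u$, this immediately yields $P_uWu=\langle W\rangle_u u$. By Lemma~\ref{lem:tangent space} the tangent space is spanned by the orthonormal Hermite functions $\{\varphi_k\}_{|k|\le 2}$ of Lemma~\ref{lem:onb}, so it suffices to check that $\langle\varphi_k\mid Wu\rangle=0$ for $1\le|k|\le 2$; indeed then $P_uWu=\langle\varphi_0\mid Wu\rangle\varphi_0=\langle W\rangle_u u$. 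Because the scaled variable $y=\tfrac1{\sqrt\eps}Q^{-1}(x-q)$ is real-valued, each such inner product is a real Gaussian integral of $W$ against $|u|^2$ weighted by $y_m$ (for $|k|=1$) or by $y_my_n$ (for $|k|=2$), so the task reduces to computing the first and second ``$y$-moments'' $\int y\,W\,|u|^2\,\D x$ and $\int yy^T W\,|u|^2\,\D x$.

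For the first moments I would integrate by parts once. From $\Im C=(QQ^T)^{-1}$ one has $\Im C\,(x-q)=\sqrt\eps\,(Q^T)^{-1}y$, hence $\nabla_x|u|^2=-\tfrac2{\sqrt\eps}|u|^2\,(Q^T)^{-1}y$, and therefore
\begin{align*}
(Q^T)^{-1}\int_{\Rb^d} y\,W(x)\,|u(x)|^2\,\D x
&= -\tfrac{\sqrt\eps}{2}\int_{\Rb^d}(\nabla_x|u|^2)\,W\,\D x \\
&= \tfrac{\sqrt\eps}{2}\int_{\Rb^d}|u|^2\,\nabla_x W\,\D x = \tfrac{\sqrt\eps}{2}\,\langle\nabla_x W\rangle_u = 0,
\end{align*}
the boundary terms vanishing because $|u|^2$ has Gaussian decay while $W$ grows at most polynomially. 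As $Q$ is invertible, $\int y\,W\,|u|^2\,\D x=0$, which kills the $|k|=1$ components.

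For the second moments the same device is applied twice. Starting from $\int_{\Rb^d}\partial_{x_i}\partial_{x_j}(|u|^2W)\,\D x=0$ and expanding, the three terms carrying at least one derivative on $W$ reduce, after a further integration by parts, to multiples of $\langle\partial_{x_i}\partial_{x_j}W\rangle_u$, a component of $\langle\nabla_x^2 W\rangle_u=0$; hence $\int(\partial_{x_i}\partial_{x_j}|u|^2)\,W\,\D x=0$. Inserting $\partial_{x_i}\partial_{x_j}|u|^2=|u|^2\bigl(\tfrac4\eps((Q^T)^{-1}y)_i((Q^T)^{-1}y)_j-\tfrac2\eps(QQ^T)^{-1}_{ij}\bigr)$ and using $\|u\|=1$, this reads in matrix form $(Q^T)^{-1}\bigl(\int yy^TW|u|^2\,\D x\bigr)Q^{-1}=\tfrac12\langle W\rangle_u\,(QQ^T)^{-1}$, so $\int yy^T W\,|u|^2\,\D x=\tfrac12\langle W\rangle_u\,\Id$. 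Plugging $\int yW|u|^2\,\D x=0$ and $\int yy^TW|u|^2\,\D x=\tfrac12\langle W\rangle_u\Id$ into the explicit expressions for $\varphi_k$ in Lemma~\ref{lem:onb} gives $\langle\varphi_k\mid Wu\rangle=0$ for all $1\le|k|\le 2$, completing the argument.

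The only real obstacle is careful bookkeeping in the two integrations by parts — tracking the $\eps$- and $Q$-dependent prefactors, and noting that the logarithmic derivative of $|u|^2$ is linear in $y$ (so one integration by parts turns $\langle\nabla_x W\rangle_u$ into the first moment) while its second logarithmic derivative contributes the constant term $-\tfrac2\eps\Im C$, which is exactly what produces the $\tfrac12\delta_{mn}$ normalisation matching the second moments of the Gaussian density. Everything else is routine Gaussian calculus, all boundary contributions being controlled by the Gaussian decay of $|u|^2$ against the polynomial growth of $W$.
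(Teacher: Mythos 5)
Your proof is correct and follows essentially the same route as the paper: expand $P_uWu$ in the orthonormal tangent-space basis of Lemma~\ref{lem:onb} and integrate by parts against the Gaussian density, using $\nabla_x|u|^2=-\tfrac{2}{\sqrt\eps}Q^{-T}y\,|u|^2$ and $\nabla_x^2|u|^2=\tfrac1\eps Q^{-T}(-2I_d+4yy^T)Q^{-1}|u|^2$ to convert the first and second $y$-moments into the vanishing averages $\langle\nabla_xW\rangle_u$ and $\langle\nabla_x^2W\rangle_u$. The only (cosmetic) difference is that you first solve for the moments $\int yW|u|^2\,\D x=0$ and $\int yy^TW|u|^2\,\D x=\tfrac12\langle W\rangle_u\Id$ and then substitute into each $\varphi_k$, whereas the paper evaluates the inner products directly (packaging the $|k|=2$ ones into the matrix $\Phi_2$); both calculations are identical in substance.
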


\begin{proof} We compute the projection
\[
P_uWu = \sum_{|k|\le 2} \langle \varphi_k|Wu\rangle \varphi_k
\]
by examining its summands order by order. The zeroth order provides 
\[
\langle \varphi_0|\, Wu\rangle \varphi_0 = \langle u|W u\rangle u = \langle W\rangle_u u.
\]
For the first-order terms, a partial integration shows the vector identity
\begin{align*}
\langle \sqrt2 yu| Wu\rangle & = \int_{\Rb^d} \sqrt 2 y W(x) |u(x)|^2 \D x\\
&=\sqrt{\tfrac\eps2}Q^T \langle \nabla_x W\rangle_u,
\end{align*}
since the gradient of the Gaussian density satisfies
\[
\nabla_x |u(x)|^2 = -\tfrac{2}{\sqrt\eps}Q^{-T} y |u(x)|^2.
\]
Therefore, 
\[
\sum_{|k|=1} \langle\varphi_k|Wu\rangle\varphi_k = 0.
\]
For the second-order terms, we work with the complex symmetric matrix
\[
\Phi_2(x) = \tfrac{1}{\sqrt2}\left(- I_d+2yy^T\right) u(x)\in\C^{d\times d}.
\]
It contains all the basis functions $\varphi_k(x)$ with $|k|=2$ up to multiplicative factors. 
The $d(d+1)/2$ basis functions are redundantly placed on the $d^2$~entries of the matrix $\Phi_2(x)$ as follows.
The main diagonal carries the $\varphi_k(x)$ with $k_m=2$ for $m=1,\ldots,d$. 
The off-diagonal entries contain the functions $\varphi_k(x)/\sqrt2$ for
$k_m=k_n=1$ with $m\neq n$. The off-diagonal factors $1/\sqrt2$ compensate for listing 
these basis functions twice, and we have
\[
\sum_{|k|=2} \langle\varphi_k|Wu\rangle\varphi_k = 
\tr\left( \langle \Phi_2|Wu\rangle \Phi_2\right).
\] 
Two partial integrations provide
\begin{align*}
\langle \Phi_2 | Wu\rangle 
&= \int_{\Rb^d} \tfrac{1}{\sqrt2}(- I_d+2yy^T) W(x)|u(x)|^2 \D x\\
&= \tfrac{\eps}{2\sqrt2} Q^T \langle \nabla_x^2 W\rangle_u\,Q,
\end{align*} 
since the Hessian of the Gaussian density satisfies
\[
\nabla_x^2 |u(x)|^2 = \tfrac{1}{\eps} Q^{-T}(-2 I_d + 4yy^T)Q^{-1}|u(x)|^2.
\]
Therefore, 
\[
\sum_{|k|=2} \langle\varphi_k|Wu\rangle \varphi_k = 0. 
\]
\end{proof}

For a general potential function, the tangent space projection produces the following quadratic polynomial times the Gaussian.

\begin{proposition}[potential term projected to the tangent space] \label{prop:project}
For a normalised Gaussian $u\in{\mathcal M}$ centred at $q$ with width matrix $C$ and for a smooth potential $V:\Rb^d\to\Rb$ with at most polynomial growth, we have
\[
P_uVu = \left( \alpha+ a^T (x-q) + \tfrac12(x-q)^TA(x-q)\right) u,
\]
where 
\begin{align*}
\alpha &= \langle V \rangle _u-\tfrac\eps4\tr((\Im C)^{-1}\langle\nabla_x^2 V\rangle_u),\\
a &= \langle \nabla_x V\rangle_u,\quad\text{and}\quad
A = \langle \nabla_x^2 V\rangle_u.
\end{align*}
\end{proposition}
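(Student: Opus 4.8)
The plan is to split the potential into a quadratic part that already lies in the tangent space and a remainder that falls under the hypotheses of Lemma~\ref{lem:project}. Explicitly, I would set
\[
U(x) = a^T(x-q) + \tfrac12 (x-q)^T A (x-q), \qquad a = \langle\nabla_x V\rangle_u, \quad A = \langle\nabla_x^2 V\rangle_u,
\]
with the constant term chosen to be zero (it will turn out to be immaterial), and put $W = V - U$. Since $V$ is smooth with at most polynomial growth and $U$ is a polynomial, $W$ is again smooth with at most polynomial growth.

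Next I would verify that $W$ meets the conditions of Lemma~\ref{lem:project}. Because $u$ is normalised, $\langle (x-q)\rangle_u = \langle u\mid xu\rangle - q = 0$, hence $\langle\nabla_x U\rangle_u = \langle a + A(x-q)\rangle_u = a$ and $\langle\nabla_x^2 U\rangle_u = A$, so that $\langle\nabla_x W\rangle_u = 0$ and $\langle\nabla_x^2 W\rangle_u = 0$. Lemma~\ref{lem:tangent space} gives $Uu\in{\mathcal T}_u{\mathcal M}$, so $P_u(Uu) = Uu$, while Lemma~\ref{lem:project} gives $P_u(Wu) = \langle W\rangle_u u$. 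Adding these and using $P_u(Vu) = P_u(Uu) + P_u(Wu)$, I obtain
\[
P_uVu = \bigl( a^T(x-q) + \tfrac12 (x-q)^T A (x-q) + \langle W\rangle_u \bigr)\, u,
\]
so it remains only to identify the constant $\langle W\rangle_u = \langle V\rangle_u - \langle U\rangle_u$ with $\alpha$.

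The single genuine computation is the Gaussian second moment. Writing $|u(x)|^2 = \exp(-\tfrac2\eps\Im\zeta)\exp(-\tfrac1\eps (x-q)^T\Im C(x-q))$ and evaluating the integral exactly as in the proofs of Lemma~\ref{lem:norm} and Lemma~\ref{lem:onb}, one finds $\langle (x-q)(x-q)^T\rangle_u = \tfrac\eps2 (\Im C)^{-1}$, whence $\langle U\rangle_u = \tfrac12\,\tr\!\bigl(A\,\langle (x-q)(x-q)^T\rangle_u\bigr) = \tfrac\eps4\,\tr((\Im C)^{-1}A)$. Therefore $\langle W\rangle_u = \langle V\rangle_u - \tfrac\eps4\,\tr((\Im C)^{-1}\langle\nabla_x^2 V\rangle_u) = \alpha$, and substituting back yields the stated formula. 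The main (mild) obstacle is this moment evaluation, a Gaussian integral of the same type already handled earlier in the section; everything else is bookkeeping, and one notices that the unspecified constant term of $U$ cancels on its own, so its choice is genuinely irrelevant.
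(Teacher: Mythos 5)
Your proof is correct and follows essentially the same route as the paper: split $V$ into its averaged quadratic part plus a remainder $W$ with vanishing averaged gradient and Hessian, apply Lemma~\ref{lem:project} to $W$, and identify the constant via the Gaussian second moment $\langle (x-q)(x-q)^T\rangle_u = \tfrac\eps2(\Im C)^{-1}$. The only (immaterial) difference is that the paper absorbs $\langle V\rangle_u$ into the quadratic Taylor-type polynomial so that $\langle W\rangle_u$ comes out directly as $-\tfrac\eps4\tr((\Im C)^{-1}\langle\nabla_x^2 V\rangle_u)$, whereas you compute $\langle W\rangle_u=\langle V\rangle_u-\langle U\rangle_u$; both amount to the same moment evaluation.
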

 
\begin{proof}
We write the potential as
\[
V = \langle V\rangle_u + \langle \nabla_x V\rangle_u^T (x-q) + \tfrac12(x-q)^T\langle\nabla_x^2V\rangle_u (x-q) + W
\]
and observe that 
\begin{align*}
P_uVu &= \left(\langle V\rangle_u + \langle \nabla_x V\rangle_u^T (x-q) + \tfrac12(x-q)^T\langle\nabla_x^2V\rangle_u (x-q) \right)u \\*[1ex]
&\quad+ P_uWu.
\end{align*}
The function $W$ has the average
\begin{align*}
\langle W\rangle_u 
&= -\tfrac12\langle (x-q)^T \langle\nabla_x^2 V\rangle_u (x-q)\rangle_u\\
&=-\tfrac\eps4 \, \tr\!\left((\Im C)^{-1}\langle \nabla_x^2 V\rangle_u\right).
\end{align*}
For the gradient and Hessian, we have    
\begin{align*}
\nabla_x V &= \langle \nabla_x V\rangle_u + \langle\nabla_x^2 V\rangle_u (x-q) + \nabla_x W,\\
\nabla_x^2 V &= \langle \nabla_x^2 V\rangle_u + \nabla_x^2W,
\end{align*}
and therefore
\[
\langle \nabla_x W\rangle_u = 0\quad\text{and}\quad \langle\nabla_x^2 W\rangle_u = 0.
\]
By Lemma~\ref{lem:project}, $P_uWu = \langle W\rangle_u u$,  
so that the claimed identity for $P_uVu$ is proved.
\end{proof}
 
The projection of the potential term is the crucial ingredient for determining the  equations of motion for the parameters. 
The time derivative and the Laplacian only require differentiation of the Gaussian wave packet. 
  
\begin{proof} (of Theorem~\ref{thm:eom-gauss})
We start by calculating the first derivatives of the approximate solution $u=u(x,t)$. We obtain
\begin{align*}
&-\I\eps\partial_t u \\
&= \left( -\dot{q }^T C (x-q ) + \tfrac12(x-q )^T \dot{C } (x-q ) + \dot{p }^T(x-q ) - p ^T \dot{q } + \dot{\zeta }\right) u
\end{align*}
and
\begin{align*}
-\I\eps\nabla_x u = \left( C (x-q ) + p \right) u.
\end{align*}
Hence, 
\begin{align*}
-\tfrac{\eps^2}{2}\Delta_x u = \left( \tfrac12(x-q )^T C ^2(x-q ) + p ^TC (x-q ) + \tfrac12|p |^2 \bch- \tfrac{\I\eps}{2}\tr(C )\ech\right) u. 
\end{align*}
By Proposition~\ref{prop:cons}, the norm of the variational approximation is conserved, and Proposition~\ref{prop:project} then provides
\begin{align*}
P_u(Vu) &= \left( \langle V \rangle _u-\tfrac\eps4\tr((\Im C )^{-1}\langle\nabla_x^2 V\rangle_u \right)u\\
&+ \langle\nabla_x V\rangle_u^T (x-q )u + \tfrac12(x-q )^T\langle\nabla_x^2V\rangle_u (x-q )u.
\end{align*}
Hence, we solve the variational equation by matching corresponding powers in the polynomials. We set
$\dot{q} = p$.
Then, the linear, quadratic and constant terms result in the expressions for $\dot p$, $\dot C$ and $\dot \zeta$ as stated.
\end{proof} 

We further note the following bound of the difference between averages and point evaluations.

\begin{lemma}[averages] \label{lem:av}
For smooth $V:\Rb^d\to\Rb$ with bounded second order derivatives and 
a normalised complex Gaussian $u\in\calM$ with position centre $q\in\Rb^d$, we have
\[
\left| \langle V\rangle_u - V(q) \right| \le c\,\eps, 
\]
where the constant $c>0$ depends on the second order derivatives of $V$ and the eigenvalues of the matrix $\Im C$.
\end{lemma}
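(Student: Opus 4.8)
The plan is a second-order Taylor expansion of $V$ about the centre $q$, combined with the Gaussian moment bound of Lemma~\ref{lem:int-bound}. First I would write, for $x\in\Rb^d$,
\[
V(x) = V(q) + \nabla_x V(q)^T(x-q) + R_q(x), \qquad
R_q(x) = \int_0^1 (1-s)\,(x-q)^T \nabla_x^2 V\bigl(q+s(x-q)\bigr)(x-q)\,\D s,
\]
which is valid since $V\in C^2$, and note the pointwise bound $|R_q(x)| \le \tfrac12\,M_2\,|x-q|^2$ with $M_2 := \sup_{\xi}\|\nabla_x^2 V(\xi)\|<\infty$.

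Next I would take the average $\langle\,\cdot\,\rangle_u = \int_{\Rb^d} (\cdot)\,|u(x)|^2\,\D x$ of this identity. The constant term contributes $V(q)$ because $\|u\|=1$. The linear term vanishes: since $|u(x)|^2 = \exp\!\bigl(-\tfrac1\eps((x-q)^T\Im C\,(x-q)+2\,\Im\zeta)\bigr)$ is symmetric about $q$, the first moment $\langle x-q\rangle_u$ is zero (this is the same fact, recorded near the definition of $\mathcal M$, that the position average of $u$ equals $q$). Hence
\[
\bigl|\langle V\rangle_u - V(q)\bigr| = \bigl|\langle R_q\rangle_u\bigr| \le \tfrac12\,M_2\,\bigl\langle\,|x-q|^2\,\bigr\rangle_u .
\]

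It then remains to bound the second moment. Using Lemma~\ref{lem:norm} in the form $\exp(-\tfrac2\eps\Im\zeta) = (\pi\eps)^{-d/2}\det(\Im C)^{1/2}$, I would write
\[
\bigl\langle\,|x-q|^2\,\bigr\rangle_u = (\pi\eps)^{-d/2}\det(\Im C)^{1/2}\int_{\Rb^d} |y|^2\,\exp(-\tfrac1\eps\, y^T\Im C\, y)\,\D y ,
\]
and apply Lemma~\ref{lem:int-bound} with $m=1$, $n=0$ (after squaring) to obtain $\langle\,|x-q|^2\,\rangle_u \le c_{1,0}^2\,\eps/\rho$, where $\rho>0$ is the smallest eigenvalue of $\Im C$. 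Combining, $|\langle V\rangle_u - V(q)| \le \tfrac12 M_2\, c_{1,0}^2\,\rho^{-1}\,\eps =: c\,\eps$, with $c$ depending only on $M_2$ (through the second derivatives of $V$) and on the eigenvalues of $\Im C$ (through $\rho$), exactly as claimed.

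There is no genuine obstacle here; the only points requiring a little care are the vanishing of the first moment (which relies on the exact centring of the Gaussian weight at $q$) and the correct bookkeeping of the normalisation constant when passing from $|u|^2$ to a bare Gaussian so that Lemma~\ref{lem:int-bound} applies verbatim.
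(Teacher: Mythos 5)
Your proof is correct and follows essentially the same route as the paper's: a first-order Taylor expansion of $V$ at $q$ with a second-order remainder, the linear term vanishing by the centring of the Gaussian at $q$, and the remainder bounded via the second Gaussian moment (which the paper handles by the substitution $x=q+\sqrt\eps\,y$ and you handle via Lemma~\ref{lem:int-bound} with $m=1$, $n=0$). Your version is merely more explicit about the integral remainder and the bookkeeping of the normalisation constant.
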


\begin{proof}
We write
\begin{align*}
\langle V\rangle_u &=
 (\pi\eps)^{-d/2} (\det\Im C)^{1/2} \int_{\Rb^d} V(x) \exp(-\tfrac1\eps (x-q)^T \Im C (x-q)) \D x\\
 &=  \pi^{-d/2} (\det\Im C)^{1/2} \int_{\Rb^d} V(q+\sqrt\eps y) \exp(-y^T \Im C y) \D y.
\end{align*}
We perform a first-order Taylor approximation of $V(q+\sqrt\eps y)$ at $q$.
% with remainder term in integral form.
%\[
%V(q+\sqrt\eps y) = V(q) + \sqrt\eps\, \nabla V(q)^T y + \eps\, V_2(q,y).
%\]
%with 
%\[
%V_2(q,y) =  
%\sum_{|m|=2} \frac{1}{m!} \,y^m \int_0^1 (1-\vartheta)^2\, (\partial^m V)(q+\sqrt\eps\vartheta y)  \D\vartheta.
%\]
%Integrating the Gaussian against these three summands, we 
We then obtain $V(q)$ due to the unit integral and 
no contribution from $\nabla V(q)$ due to the vanishing first order moments. The remainder is 
of order $\eps$ and depends on both $V$ and $\Im C$.
\end{proof}

\subsection{Hagedorn's parametrisation of Gaussian wave packets}
\label{subsec:hag-gauss}
We now turn to a different parametrisation of complex Gaussians, in which the complex symmetric width matrix $C$  with positive definite imaginary part is factorised into two matrices with remarkable properties.

A real $2d\times 2d$ matrix $Y$ is called
{\it symplectic} if it satisfies the quadratic relation
$$%\begin{equation}\label{V:symp}
Y^TJY = J \quad\text{ with }\quad J=
\bch\begin{pmatrix} 0 & -\Id \\ \Id & 0 
\end{pmatrix}.\ech
$$%\end{equation}
With this notion, we have the following characterisation of complex symmetric matrices with positive definite imaginary part.

\begin{lemma}[symplecticity] \label{lem:hag-rel}
Let $Q$ and $P$ be complex $d\times d$ matrices such that the real $2d\times 2d$ matrix
\begin{equation}\label{PQ-symp}
Y =
\begin{pmatrix} \Re Q & \Im Q \\ \Re P & \Im P 
\end{pmatrix} \quad\text{is symplectic}.
\end{equation}
Then, $Q$ and $P$ are invertible, and 
\begin{equation} \label{CPQ}
C=PQ^{-1}
\end{equation}
is complex symmetric (\ie\ $C=C^T$) with positive definite imaginary part
\begin{equation}\label{QQ}
\Im C = (QQ^*)^{-1}.
\end{equation}
Conversely, every complex symmetric matrix $C$ with positive definite imaginary part can be written as $C=PQ^{-1}$ with matrices $Q$ and $P$ satisfying (\ref{PQ-symp}) and (\ref{QQ}).
\end{lemma}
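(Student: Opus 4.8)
The plan is to prove both directions of Lemma~\ref{lem:hag-rel} by translating the symplecticity condition \eqref{PQ-symp} into block identities and reading off the claims. First I would write out $Y^TJY=J$ explicitly. With $Y=\begin{pmatrix}\Re Q & \Im Q\\ \Re P & \Im P\end{pmatrix}$, a direct block computation gives the three real relations
\begin{align*}
(\Re Q)^T\Re P - (\Re P)^T\Re Q &= 0,\\
(\Im Q)^T\Im P - (\Im P)^T\Im Q &= 0,\\
(\Re Q)^T\Im P - (\Re P)^T\Im Q &= \Id.
\end{align*}
The key trick is to recombine these into the two complex identities
\[
Q^TP - P^TQ = 0 \qquad\text{and}\qquad Q^*P - P^*Q = 2\I\,\Id,
\]
where the first is symmetry-type and the second is the ``$\Im$'' relation; one gets them by forming $(\Re Q - \I\Im Q)^T(\Re P + \I\Im P)$ and its variants and using the three real relations above. (Equivalently, note that $Y$ symplectic means $Y^{-1}=-JY^TJ$, which one may find cleaner to manipulate.)

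Next I would extract invertibility of $Q$. Suppose $Qv=0$ for some $v\in\C^d$. Applying $v^*$ on the left and $v$ on the right in $Q^*P-P^*Q=2\I\,\Id$ gives $v^*Q^*Pv - v^*P^*Qv = 2\I\,v^*v$; both terms on the left vanish since $Qv=0$, forcing $v=0$. Hence $Q$ is invertible. Then $C:=PQ^{-1}$ is well-defined. Symmetry $C=C^T$ follows from $Q^TP=P^TQ$: indeed $C^T = Q^{-T}P^T = Q^{-T}(P^TQ)Q^{-1} = Q^{-T}(Q^TP)Q^{-1} = PQ^{-1} = C$. For the imaginary part, write $C - C^* = PQ^{-1} - Q^{-*}P^*$; multiplying by $Q^*$ on the left and $Q$ on the right gives $Q^*(C-C^*)Q = Q^*P - P^*Q = 2\I\,\Id$, so $2\I\,\Im C = C - C^* = Q^{-*}(2\I\,\Id)Q^{-1}$, i.e.\ $\Im C = (QQ^*)^{-1}$, which is manifestly Hermitian positive definite. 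This also re-proves that $P=CQ$ is invertible since $C$ is.

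For the converse, given complex symmetric $C$ with $\Im C$ positive definite, I would set $Q = (\Im C)^{-1/2}$ (the Hermitian positive definite square root of the inverse, hence $QQ^* = (\Im C)^{-1}$, giving \eqref{QQ}) and $P = CQ$. Then $PQ^{-1}=C$ gives \eqref{CPQ}, and one checks \eqref{PQ-symp} by verifying the two complex relations $Q^TP=P^TQ$ and $Q^*P-P^*Q=2\I\,\Id$ and then unpacking them into the three real block identities that constitute $Y^TJY=J$. The first holds because $Q^TP = Q^TCQ$ is symmetric (as $C=C^T$ and we may note $Q=Q^T$ is real symmetric here), and the second because $Q^*P - P^*Q = Q^*CQ - Q^*C^*Q = Q^*(C-C^*)Q = Q^*(2\I\,\Im C)Q = 2\I\,Q^*(QQ^*)^{-1}Q = 2\I\,\Id$. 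I do not expect a genuine obstacle here; the only mildly delicate point is bookkeeping the conversion between the real $2d\times 2d$ form of the symplectic relation and its complex $d\times d$ condensation, so I would state that equivalence cleanly once at the start and use it throughout rather than expanding blocks repeatedly.
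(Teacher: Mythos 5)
Your proposal is correct and follows essentially the same route as the paper: both pass to the complex relations $Q^TP-P^TQ=0$ and $Q^*P-P^*Q=2\I\,\Id$, deduce invertibility by sandwiching the second relation with a null vector, obtain symmetry of $C=PQ^{-1}$ from the first relation, compute $\Im C=(QQ^*)^{-1}$ from the second, and prove the converse with $Q=(\Im C)^{-1/2}$, $P=CQ$. The only difference is that you explicitly derive the equivalence between $Y^TJY=J$ and the two complex relations, which the paper asserts without proof; that derivation is a harmless (and correct) addition.
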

We note that the symplecticity condition \eqref{PQ-symp} is equivalent to the relations
\begin{equation}\label{hag-rel}
\begin{array}{l}
Q^T P - P^T Q = 0 \\[1mm]
Q^* P - P^* Q = \bch 2\I\, \Id\,.\ech
\end{array}
\end{equation}
The factorisation \eqref{CPQ} is not unique, since multiplying $Q$ and $P$  from the \bch right \ech with a unitary matrix preserves the relations (\ref{hag-rel}).

\begin{proof} The second equation of 
(\ref{hag-rel}), multiplied from the left and the right with a vector 
$v\in\C^d$, yields
$$
(Qv)^*(Pv) - (Pv)^*(Qv) = 2\I\, \| v \|^2\,,
$$
which shows that $v=0$ is the only vector in the null-space 
of $Q$ or $P$. Hence, these matrices are invertible. The first equation of (\ref{hag-rel}), multiplied from the left with
$(Q^{-1})^T$ and from the right with $Q^{-1}$, gives
$$
PQ^{-1} - (Q^{-1})^T P^T = 0,
$$
which shows that $C=PQ^{-1}$ is complex symmetric.
Moreover, we have
\begin{align*}
(\Im C)  (QQ^*) &= \tfrac1{2\I}(PQ^{-1} - (Q^{-1})^*P^*)QQ^* 
\\
&=
\tfrac1{2\I}(PQ^* - (Q^*)^{-1}(P^*Q)Q^*) =\bch \Id \ech,
\end{align*}
where we use the second equation of (\ref{hag-rel}) for substituting $P^*Q=Q^*P - \bch 2\I \,\Id\ech $ to obtain the last equality. This gives us \eqref{QQ}.

Conversely, for a complex symmetric matrix $C$ with positive definite imaginary part, we set $Q=(\Im C)^{-1/2}$ and $P=CQ$. A direct calculation shows that these matrices satisfy the relations
(\ref{hag-rel}).
\end{proof}

We now parametrise the variational Gaussian wave packet approximation $u(\cdot,t)\in{\mathcal M}$ as
\begin{equation}\label{hag-par}
u(x,t) = \exp\!\left( \tfrac\I\eps \left(\tfrac12 (x-q(t))^T P(t)Q(t)^{-1}(x-q(t)) + p(t)^T(x-q(t)) + \zeta(t)\right) \right),
\end{equation}
where $P(t),Q(t)$ are required to satisfy the symplecticity condition \eqref{PQ-symp}. We  have the following differential equations for the parameters.

\begin{theorem} [equations of motion for Hagedorn's parameters] 
\label{thm:eom-gauss-PQ}
For  Gaussian initial data  $u_0\in\mathcal M$ of unit norm, the variational Gaussian approximation \eqref{eq:var} is obtained with
parameters $(q(t),p(t),Q(t),P(t),\zeta(t))$ that satisfy the following ordinary differential equations:
\begin{align*}
\dot{q} &= p, \quad\ \
\dot{p} = -\langle \nabla_x V\rangle_u,\\
\dot{Q} &= P, \quad\
\dot{P} =  - \langle \nabla_x^2 V\rangle_u Q,
\end{align*}
and $\zeta$ satisfies the same differential equation as in Theorem~\ref{thm:eom-gauss}, with $C=PQ^{-1}$ 
\bch and $(\Im C)^{-1}=QQ^*$. \ech
Moreover, $P(t)$ and $Q(t)$ then have the symplecticity property \eqref{PQ-symp} for all $t$, provided that this is satisfied for the initial values.
\end{theorem}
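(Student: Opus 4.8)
The plan is to start from the already-established equations of motion for the parameters $(q,p,C,\zeta)$ in Theorem~\ref{thm:eom-gauss} and simply read off what the factorisation $C = PQ^{-1}$ forces on $Q$ and $P$. The guiding idea is that the nonlinear Riccati-type equation $\dot C = -C^2 - \langle \nabla_x^2 V\rangle_u$ linearises under the substitution $C = PQ^{-1}$, in exact analogy with the classical trick that turns a scalar Riccati equation into a linear second-order equation. So I would \emph{define} $Q(t), P(t)$ by solving the linear system $\dot Q = P$, $\dot P = -\langle \nabla_x^2 V\rangle_u Q$ with initial data $Q(0), P(0)$ chosen (via Lemma~\ref{lem:hag-rel}) so that $C(0) = P(0)Q(0)^{-1}$ and \eqref{PQ-symp} holds — concretely one may take $Q(0) = (\Im C(0))^{-1/2}$, $P(0) = C(0)Q(0)$. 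Note $\langle \nabla_x^2 V\rangle_u$ depends on $u$, hence on $(q,p,C,\zeta)$, so strictly this is a coupled system; but since $q,p$ decouple and $C$ is already determined by Theorem~\ref{thm:eom-gauss}, one can regard $\langle \nabla_x^2 V\rangle_{u(t)}$ as a known time-dependent matrix along the given trajectory, and the $Q,P$ equations are then genuinely linear.

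The core of the argument is then a three-part verification. First, I would check that the $Q,P$ system preserves symplecticity: writing $Y(t)$ as in \eqref{PQ-symp}, one computes $\frac{d}{dt}(Y^T J Y)$ and, using $\dot Q = P$, $\dot P = -\langle\nabla_x^2 V\rangle_u Q$ together with the symmetry of $\langle\nabla_x^2 V\rangle_u$, finds $\frac{d}{dt}(Y^T J Y) = 0$; hence $Y^T J Y = J$ for all $t$ given that it holds initially. (Equivalently, one differentiates the two relations in \eqref{hag-rel} directly and sees that the right-hand sides are constant.) By Lemma~\ref{lem:hag-rel}, this guarantees $Q(t), P(t)$ stay invertible and that $C(t) := P(t)Q(t)^{-1}$ is complex symmetric with $\Im C(t) = (Q(t)Q(t)^*)^{-1}$ positive definite — so the Hagedorn parametrisation \eqref{hag-par} remains legitimate. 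Second, I would verify that this $C(t)$ solves the Riccati equation from Theorem~\ref{thm:eom-gauss}: differentiating $C = PQ^{-1}$ gives $\dot C = \dot P Q^{-1} - P Q^{-1}\dot Q Q^{-1} = -\langle\nabla_x^2 V\rangle_u - PQ^{-1}PQ^{-1} = -\langle\nabla_x^2 V\rangle_u - C^2$, which is exactly the stated ODE for $C$. Since $C(0)$ matches, uniqueness of solutions to the ODE system of Theorem~\ref{thm:eom-gauss} identifies the two.

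Finally, since $q,p$ satisfy literally the same equations in both theorems, and $\zeta$ is defined to satisfy the same ODE (now with $C = PQ^{-1}$ and $(\Im C)^{-1} = QQ^*$ substituted, which is consistent by the previous step), the full parameter vector produced by the $(q,p,Q,P,\zeta)$ system reconstructs, via $C = PQ^{-1}$, the unique solution of the $(q,p,C,\zeta)$ system of Theorem~\ref{thm:eom-gauss}, hence yields the variational Gaussian approximation \eqref{eq:var}. I do not anticipate a genuine obstacle here — the whole statement is essentially bookkeeping once the Riccati linearisation is spotted. The one point requiring a little care is the well-posedness/uniqueness argument: one should note that the right-hand sides are locally Lipschitz in the parameters (the averages $\langle\nabla_x V\rangle_u$, $\langle\nabla_x^2 V\rangle_u$ depend smoothly on $q$ and on $\Im C = (QQ^*)^{-1}$ under the polynomial-growth hypothesis on $V$, and $Q$ stays invertible by the symplecticity just proved), so that the local solution is unique and the identification with Theorem~\ref{thm:eom-gauss} is valid on the common interval of existence.
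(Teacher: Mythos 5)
Your proposal is correct and follows essentially the same route as the paper: differentiate $C=PQ^{-1}$ to recover the Riccati equation of Theorem~\ref{thm:eom-gauss}, and differentiate the relations \eqref{hag-rel} to see that symplecticity is preserved. The extra remarks on uniqueness and on treating $\langle\nabla_x^2 V\rangle_{u(t)}$ as a known time-dependent coefficient are sound points of care that the paper leaves implicit, but they do not change the argument.
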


The equations for $q,p,\zeta$ are clearly the same as in Theorem~\ref{thm:eom-gauss}, but the quadratic differential equation for $C$ is replaced by equations for $P$ and $Q$ that are close to the linearisation of the classical equations of motion $\dot q=p$, $\dot p = -\nabla  V(q)$ around the solution $(q,p)$, that is, of the matrix differential equations $\dot Q =P$, $\dot P = - \nabla^2 V(q)Q$ that are linear in $(Q,P)$.

\begin{proof} If $\dot{Q} = P$ and
$\dot{P} =  - \langle \nabla_x^2 V\rangle_u Q$, then $C=PQ^{-1}$ satisfies
\begin{align*}
\dot C &= - PQ^{-1} \dot Q Q^{-1} + \dot P Q^{-1} \\
&= - PQ^{-1} PQ^{-1}  - \langle \nabla_x^2 V\rangle_u Q Q^{-1} = -C^2 - \langle \nabla_x^2 V\rangle_u ,
\end{align*}
which is the differential equation for $C$ in Theorem~\ref{thm:eom-gauss}. It remains to show that the symplecticity property of $Q$ and $P$ is conserved. We have
\begin{align*}
\frac{\D}{\D t}(Q^T P - P^T Q) &= \dot Q^T P + Q^T \dot P - \dot P^T Q - P^T \dot Q \\
&= P^T P -Q^T  \langle \nabla_x^2 V\rangle_u Q +
Q^T  \langle \nabla_x^2 V\rangle_u Q -P^TP =0,
\end{align*}
and similarly $\frac{\D}{\D t}(Q^* P - P^* Q) =0$. This shows that \eqref{hag-rel}, and hence \eqref{PQ-symp}, remains satisfied for all times.
\end{proof}

We refer to the matrices $Q$ and $P$ as position and momentum matrices, respectively, in view of their equations of motion. We will see in the following subsection and in Section~\ref{sec:hagwp} that their role goes well beyond merely providing more pleasing differential equations.

\subsection{Ehrenfest time}
\label{subsec:ehrenfest}
The time scale on which quantum dynamics can be approximated by classical dynamics is known as the {\it Ehrenfest time}. 
%By Theorem~\ref{thm:error-gauss} we know that a variational Gaussian wave packet approximates the full wave function for small $\eps$ on a time interval $0\le t \le \bar t$ provided that the Gaussian remains well localised with a width bounded by $\sqrt{\eps}/\rho$, where $\rho>0$ is an $\eps$-independent lower bound of the smallest eigenvalue of the width matrices $\Im C(t)$ for $0\le t \le \bar t$. 
The proof of Theorem~\ref{thm:error-gauss} shows that the $L^2$-norm error of the Gaussian approximation is small on a time interval $0\le t \le \bar t$ provided that
$\eps^{1/2} \ll \rho^{3/2}$, \ie\ $\rho^{-1} \ll \eps^{-1/3}$, where $\rho>0$ is an $\eps$-independent lower bound of the smallest eigenvalue of the width matrices $\Im C(t)$ for $0\le t \le \bar t$. 
By \eqref{QQ} we have $\Im C(t) = (Q(t)Q(t)^*)^{-1}$, and hence the smallest eigenvalue of $\Im C(t)$ is equal to 
$1/\| Q(t) \|_2^2$, the inverse square of the matrix $2$-norm of $Q(t)$.
On the other hand, by Theorem~\ref{thm:eom-gauss-PQ}, $Q(t)$ is up to $O(\eps)$ terms a solution to the linearisation of the classical equations of motion $\dot q =p$, $\dot p =-\nabla V(q)$.  
The growth of $\| Q(t) \|_2$ in time is therefore described by the  Lyapunov exponent $\lambda_*$ of the {\it classical} dynamics: for all $\lambda = \lambda_*+\delta$ with $\delta>0$, we have $\| Q(t) \|_2 \le c_\delta \,\e^{\lambda t}$. The Lyapunov exponent thus determines the Ehrenfest time: for all $\delta>0$,
$$
\log \rho^{-1} \le \log  \max_{0\le t \le \bar t} \| Q( t) \|_2^2 \le 2(\lambda_*+\delta) \bar t + 2 \log c_\delta.
$$
The Lyapunov exponent $\lambda_*$ is always non-negative for classical mechanics. For $\lambda_*>0$ (where we can choose $\delta=\lambda_*$), the condition $\rho^{-1} \ll \eps^{-1/3}$ thus yields the condition that $\lambda_* \bar t$ be logarithmic in $\eps^{-1}$.

The Lyapunov exponent  is zero for integrable systems, for which $\| Q(t) \|_2$ grows linearly with $t$, \ie\  $\| Q(t) \|_2\le c t$, so that for integrable systems
$$
 \rho^{-1} \le   \max_{0\le t \le \bar t} \| Q( t) \|_2^2 \le (c \bar t)^2.
$$
The condition $\rho^{-1} \ll \eps^{-1/3}$ then yields the asymptotic condition $\bar t \ll \eps^{-1/6}$.

\subsection{Gaussians evolving on a quadratic potential}
\label{subsec:gaussian-quadratic}
We already know from Proposition~\ref{prop:exact-gaussian} that in the case of a quadratic potential $V$, variationally evolving complex Gaussians are exact solutions to the Schr\"odinger equation \eqref{tdse}. We are now in a position to give the precise form and the equations of motion of the parameters of such a Gaussian solution. 

\begin{proposition}[Gaussian solutions for a quadratic potential]
\label{V:thm:gwp-quad}
Let $V$ be a quadratic potential, let 
$(q(t),p(t),Q(t),P(t))$  be a solution of the classical equations of motion and their linearisation,
\begin{equation}\label{eom-classical}
\dot q=p, \ \ \dot p=-\nabla V(q) \quad\text{and}\quad \dot Q=P,\ \ \dot P=-\nabla^2V(q)Q,
\end{equation}
and let $S(t)= \int_0^t \bigl(\frac12|p(s)|^2-V(q(s))\bigr) {\mathrm d}s$ be the corresponding classical action.
Let $Q(0)$ and $P(0)$ satisfy 
the symplecticity relations (\ref{hag-rel}).
Then, the complex Gaussian
\begin{align}\label{hag-gauss}
e^{\I S(t)/\eps} \,&(\pi\eps)^{-d/4} (\det Q(t))^{-1/2}\ \times 
\\
&\exp\Bigl( \frac \I{2\eps} (x-q(t))^T P(t)Q(t)^{-1} (x-q(t)) + 
\frac \I\eps p(t)^T(x-q(t)) \Bigr)
 \nonumber
\end{align}
is a solution to the time-dependent Schr\"odinger equation (\ref{tdse}) of unit $L^2$-norm. Moreover, the matrices $Q(t)$ and $P(t)$ satisfy 
the symplecticity relations \eqref{hag-rel} for all $t$.
\end{proposition}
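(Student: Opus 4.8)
The plan is to identify the explicit Gaussian \eqref{hag-gauss} with the variationally evolving Gaussian wave packet \eqref{hag-par} for a suitable choice of the initial phase constant $\zeta_0$, and then to read off all three assertions from results already proved: that it solves \eqref{tdse} from Proposition~\ref{prop:exact-gaussian}, that it has unit $L^2$-norm from Proposition~\ref{prop:cons}, and that the symplecticity relations are preserved from Theorem~\ref{thm:eom-gauss-PQ}.

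First I would specialise Theorem~\ref{thm:eom-gauss-PQ} to a quadratic potential. For quadratic $V$ the gradient $\nabla_x V$ is affine and the Hessian $\nabla_x^2 V$ is constant, so evaluating the Gaussian averages over $u\in{\mathcal M}$ with centre $q$ (the first moments of a centred Gaussian vanish) gives $\langle\nabla_x V\rangle_u = \nabla V(q)$, $\langle\nabla_x^2 V\rangle_u = \nabla^2 V(q)$, and, expanding $V$ exactly to second order, $\langle V\rangle_u = V(q) + \tfrac\eps4\tr\!\bigl((\Im C)^{-1}\nabla^2 V(q)\bigr)$. Hence the parameter equations of Theorem~\ref{thm:eom-gauss-PQ} collapse precisely to the classical system and its linearisation \eqref{eom-classical}, together with $\dot\zeta = \tfrac12|p|^2 - V(q) + \tfrac{\I\eps}2\tr C$, the mysterious $\eps$-term in $\langle V\rangle_u$ cancelling against the last term of the $\zeta$-equation.

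Next I would integrate this $\zeta$-equation. By Lemma~\ref{lem:hag-rel}, $Q(t)$ is invertible and $C(t)=P(t)Q(t)^{-1}$, so Jacobi's formula gives $\tr C = \tr(Q^{-1}\dot Q) = \tfrac{\mathrm{d}}{\mathrm{d}t}\log\det Q$ along the continuous branch of the logarithm. Writing $S(t)$ for the classical action as in the statement, integration yields $\zeta(t) = \zeta_0 + S(t) + \tfrac{\I\eps}2\bigl(\log\det Q(t) - \log\det Q(0)\bigr)$, that is, $e^{\I\zeta(t)/\eps} = e^{\I\zeta_0/\eps}\,(\det Q(0))^{1/2}\,e^{\I S(t)/\eps}\,(\det Q(t))^{-1/2}$. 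Choosing $\zeta_0$ so that $e^{\I\zeta_0/\eps} = (\pi\eps)^{-d/4}(\det Q(0))^{-1/2}$ then turns \eqref{hag-par} into exactly \eqref{hag-gauss}; moreover, since \eqref{QQ} gives $\det\Im C(0) = |\det Q(0)|^{-2}$, Lemma~\ref{lem:norm} shows that this choice makes $u(\cdot,0)$ of unit $L^2$-norm.

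It then remains to conclude. At $t=0$ the function \eqref{hag-gauss} is a unit-norm Gaussian in ${\mathcal M}$, so by Proposition~\ref{prop:exact-gaussian} the variational solution with this initial datum is the exact solution of \eqref{tdse}, by Proposition~\ref{prop:cons} it stays of unit norm, and by Theorem~\ref{thm:eom-gauss-PQ} the pair $(Q(t),P(t))$ keeps the symplecticity relations \eqref{hag-rel}. The one point requiring care is that the square root $(\det Q(t))^{-1/2}$ must be given a consistent meaning for all $t$; this is legitimate precisely because $\det Q(t)\neq 0$ for all $t$ (a consequence of the conserved symplecticity), so one may take the branch determined by continuity from $t=0$, which is also the branch produced by exponentiating $\log\det Q(t)$. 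I expect this branch bookkeeping, rather than any genuine analytic difficulty, to be the only delicate part. An alternative, entirely self-contained route is to substitute \eqref{hag-gauss} directly into \eqref{tdse} and verify the identity using the same ingredients: $\dot q=p$ and $\dot p=-\nabla V(q)$, $\dot Q=P$ and $\dot P=-\nabla^2 V(q)Q$, Jacobi's formula for $\tfrac{\mathrm{d}}{\mathrm{d}t}\det Q$, and the symmetry of $PQ^{-1}$ from \eqref{hag-rel}; this is more computational but needs nothing beyond Lemma~\ref{lem:hag-rel}.
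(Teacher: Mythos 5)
Your proposal is correct and follows essentially the same route as the paper's proof: reduce to the variational Gaussian via exactness for quadratic potentials, specialise the parameter equations of Theorem~\ref{thm:eom-gauss-PQ} by noting that the averages become point evaluations (with the $\tfrac\eps4\tr$-term cancelling in the $\zeta$-equation), and match the phase using $\tfrac{\mathrm d}{\mathrm dt}\log\det Q = \tr(\dot Q Q^{-1}) = \tr(PQ^{-1}) = \tr C$ together with Lemmas~\ref{lem:norm} and~\ref{lem:hag-rel} for the normalisation. The only cosmetic difference is that you integrate the $\zeta$-equation explicitly and fix $\zeta_0$, whereas the paper differentiates the claimed identity and checks that both sides satisfy the same ODE; these are equivalent.
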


The branch of the square root of $\det Q(t)$ is to be chosen such that this becomes a continuous function of $t$.

\begin{proof} Since a variationally evolving Gaussian is an exact solution in the case of a quadratic potential, it suffices to show that a variational Gaussian takes the stated form. We have the quadratic polynomial
$$
V(x) = V(q) + \nabla V(q)^T (x-q) + \tfrac12 (x-q)^T \nabla^2V(q)(x-q).
$$
Since  for a normalised Gaussian $u\in{\mathcal M}$ we have $P_uVu=Vu$, Proposition~\ref{prop:project} shows that
\begin{align*}
&V(q)=  \langle V \rangle _u-\tfrac\eps4\tr((\Im C)^{-1}\langle\nabla_x^2 V\rangle_u),
\\
&\nabla V(q) = \langle \nabla V \rangle_u, \quad 
\nabla^2 V(q) = \langle \nabla^2 V \rangle_u.
\end{align*}
Theorem~\ref{thm:eom-gauss-PQ} therefore shows that the differential equations for the parameters become \eqref{eom-classical} together with
$$
\dot\zeta = \tfrac12 |p|^2 - V(q) + \tfrac12 \I\eps\, \tr(C ),\quad\text{ with } C=PQ^{-1},
$$
and that $Q(t),P(t)$ satisfy \eqref{hag-rel} for all $t$. The Gaussian \eqref{hag-gauss} is of unit norm by Lemma~\ref{lem:norm}, \bch while the relation $\Im C=(QQ^*)^{-1}$ follows from Lemma~\ref{lem:hag-rel}.\ech So it remains to show that
$$
e^{\I \zeta(t)/\eps} =
e^{\I S(t)/\eps} \, (\pi\eps)^{-d/4} (\det Q(t))^{-1/2} e^{\I\phi}
$$
for some real phase $\phi$ that is independent of $t$. As we already know that this holds at $t=0$, taking the logarithm and differentiating with respect to $t$ shows that this is satisfied if, for all $t$,
$$
\frac\I\eps \dot\zeta(t)=\frac\I\eps \dot S(t) - \tfrac12 \frac{d}{dt} \log \det Q(t) .
$$
This holds true because of the differential equation for $\zeta$ and because
$$
\frac{d}{dt} \log \det Q = \tr(\dot Q Q^{-1}) = \tr(PQ^{-1}) = \tr(C).
$$
This yields the stated result.
\end{proof}
We remark that the result and its proof extend in a straightforward way to time-dependent quadratic potentials $V(\cdot,t)$. Replacing the averages by point evaluations in solving the equations of motion of Theorem~\ref{thm:eom-gauss-PQ}, \ie\ solving instead the differential equations \eqref{eom-classical} for a general smooth potential $V$,
%\[
%\dot q = p,\quad \dot p = -\nabla V(q), \quad \dot Q = P,\quad \dot P = -\nabla^2 V(q)Q,
%\]
therefore corresponds to solving the Schr\"odinger equation with a locally quadratic approximation to the potential at the current position $q(t)$.

\subsection{Notes}

The time-dependent variational principle was first used by \cn{Dir30} in the context of what is now called the time-dependent Hartree--Fock method. This was then taken up in the book by \cn{Fre34}.  We refer to \cn{KraS81} and \cn{Lub08} for dynamic, geometric and approximation aspects of the Dirac--Frenkel time-dependent variational principle.

Variational Gaussian wave packets became prominent through the work of \cn{Hel76}.
In the chemical literature, simplified variants of variational Gaussian wave packets are often used, in particular with a diagonal width matrix (Hartree Gaussians; see \cn{Hel76}) or with a fixed width matrix (frozen Gaussians; see \cn{Hel81}). Variational linear combinations of Gaussian wave packets are also used successfully; see \cn{RicPSWBL15} and references therein. The book chapter by \cn{VanB19} reviews the 
application of various non-variational thawed Gaussian approximations, in particular of the 
single-Hessian thawed Gaussian approximation that is energy-conserving despite being non-variational; see also \cite{BegCV19}.

The conservation of energy of a variational Gaussian wave packet generally follows from the time-dependent variational principle, which yields a Hamiltonian system on the approximation manifold.   
The conservation of norm and linear and angular momentum are instances of the general result that the average of a self-adjoint operator is conserved in the variational approximation if the operator commutes with the Hamiltonian and is such that it maps every point on the approximation manifold into its tangent space. These aspects are discussed in \cn[Section II.1]{Lub08}.

Error bounds for  Gaussian wave packets  were first given by \cn{Hag80} in a non-variational setting (with point evaluations instead of averages of the potential and its gradient and Hessian). The $O(\sqrt\eps)$ $L^2$-error bound of Theorem~\ref{thm:error-gauss} for variational Gaussian wave packets was given by \cn[Section II.4]{Lub08}, whereas the $O(\eps)$ error bound for the averages of observables appears to be new. We note that these error bounds are not valid for Gaussians with width matrices that are restricted to be diagonal or fixed.

The equations of motion for the parameters of a multi-variate variational Gaussian wave packet, as given in Theorem~\ref{thm:eom-gauss}, were first derived by \cn{CoaK90}.
The  differential equations for the parameters have a non-canonical Hamiltonian structure that was studied by \cn{FaoL06} and, with different geometric interpretations, by \cn{OhsL13}; see also \cn{Ohs15b} for further geometrical aspects of variational Gaussian wave packets and their use for obtaining conservation laws from symmetries. 

\bch The relationship between the Ehrenfest time and the classical Lyapunov exponent was studied in a more general setting by \cn{ComR97}, using different arguments from those given here; see e.g. \cn{BamGP99} and \cn{HagJ00} for further developments.\ech

The factorisation \eqref{CPQ}   and its use with evolving Gaussian wave packets were introduced by \cn{Hag80}, where also Proposition~\ref{V:thm:gwp-quad} was shown.  In our text we use the notation of \cn[Chapter~V]{Lub08}, working
with the matrices $(Q,P)$ which correspond to $(A,\I B)$ in Hagedorn's work, because of their relation to symplectic matrices and because their equations of motion are close to linearised classical equations of motion. The set of complex symmetric matrices with positive definite imaginary part is known as the Siegel half-space, after \cn{Sie43} who studies  generalised M\"obius transformations of this set based on symplectic matrices. The geometry behind the factorisation \eqref{CPQ} and its relation to Siegel's results and with Marsden--Weinstein reduction was explored by \cn{Ohs15a}.

%\newpage
%\section{Hagedorn's semiclassical wave packets}
\def\jvec{{\langle j \rangle}}

\section{Hagedorn's semiclassical wave packets}
\label{sec:hagwp}
%The time-dependent Schr\"odinger equation in semiclassical scaling,
%\index{semiclassical scaling}
%\begin{equation}\label{V:tdse}
%\I\eps\frac{\partial\psi}{\partial t} = -\frac{\eps^2}{2m}
%\Delta\psi + V\psi
%\end{equation}
%with a small parameter $\eps$, has typical solutions that are
%wavepackets of width $\sim \sqrt{\eps}$, highly oscillatory with wavelength $\sim\eps$, 
%and with the envelope moving at velocity $\sim 1$. 
%Grid-based numerical methods therefore need very fine resolution for small $\eps$ and thus become computationally infeasible or at least very expensive; cf.~Markowich, Pietra \& Pohl (\cite{MaPP99}). The highly oscillatory solution behaviour also excludes the approximation of the wave function on sparse grids in higher dimensions, because the necessary smoothness requirements for this technique are not met for small $\eps$; see Gradinaru~(\cite{Gra08}).

In this section, the wave function is approximated to higher order in the semiclassical parameter $\eps$ by a moving and deforming complex Gaussian times a polynomial. Such a wave packet is written in a parameter-dependent orthonormal basis constructed by  \cn{Hag98}. This basis has very favourable propagation properties in the time-dependent Schr\"odinger equation (\ref{tdse}).

\subsection{Construction of Hagedorn functions via ladder operators}

We start from a normalised $d$-dimen\-sional complex Gaussian in Hagedorn's parametrisation (see Sections~\ref{subsec:hag-gauss} and~\ref{subsec:gaussian-quadratic}):
\begin{align}\label{V:phi-0}
&\varphi_0^\eps[q,p,Q,P](x) = 
\\
&\qquad (\pi\eps)^{-d/4} (\det Q)^{-1/2}
\exp\Bigl( \frac \I{2\eps} (x-q)^T PQ^{-1} (x-q) + 
\frac \I\eps p^T(x-q) \Bigr)
\nonumber
\end{align}
with $q,p\in\Rb^d$ and matrices $Q,P\in\C^{d\times d}$ that satisfy the symplecticity relations (\ref{hag-rel}). 
This Gaussian is of unit $L^2$-norm by Lemmas~\ref{lem:norm} and~\ref{lem:hag-rel}. The 
branch of the square root is chosen appropriately, such that
$(\det Q(t))^{-1/2}$ is a continuous function of $t$ for a continuous family of invertible matrices $Q(t)$. We write
$\varphi_0$ or $\varphi_0^\eps$ instead of $\varphi_0^\eps[q,p,Q,P]$ when the parameters are clear from the context.

We denote the position and momentum {\it operators} by 
$\widehat q=(\widehat q_j)_{j=1}^d$ and
$\widehat p=(\widehat p_j)_{j=1}^d$, respectively: 
$$
\widehat q \psi = x \psi,\quad\
\widehat p \psi = -\I\eps \nabla_x\psi.
$$
The hats are added to avoid confusion with the Gaussian parameters $q$ and~$p$. The commutator relation 
\begin{equation}\label{V:qp-comm}
\frac1{\I\eps}\, [\widehat q_j, \widehat  p_k] 
= \delta_{jk}
\end{equation}
is fundamental in quantum mechanics and, in particular, in the construction of the harmonic oscillator eigenfunctions (the Hermite functions) using Dirac's ladder.
Extending that construction, we consider Hagedorn's {\it parameter-dependent ladder operators}
$A=(A_j)_{j=1}^d$ and $A^\dagger=(A_j^\dagger)_{j=1}^d$ defined as
\begin{equation}\label{V:ladder}
\begin{array}{l}
A\phantom{^\dagger} = A[q,p,Q,P] 
= \displaystyle 
-\frac \I{\sqrt{2\eps}}\Bigl( P^T(\widehat q - q) -
Q^T(\widehat p - p) \Bigr) \\[3mm]
A^\dagger = A^\dagger[q,p,Q,P] = \displaystyle 
\frac \I{\sqrt{2\eps}}\Bigl( P^*(\widehat q - q) -
Q^*(\widehat p - p) \Bigr)\,.
\end{array}
\end{equation}
We note that for $d=1$, $\eps=1$, $q=0$, $p=0$, $Q=1$, $P=\I$, these operators reduce to  Dirac's ladder operators $\frac1{\sqrt{2}}(\widehat q+\I \widehat p)$ and $\frac1{\sqrt{2}}(\widehat q-\I \widehat p)$, respectively.
The key properties
of those operators extend as follows.

\begin{lemma}[commutator relations] \label{V:lem:ladder}
If $Q$ and $P$ satisfy the symplecticity relations (\ref{hag-rel}), then we have the commutator identities
%(for $j,k=1,\ldots,d$)
\begin{equation}\label{V:ladder-comm}
[A_j, A_k^\dagger] = \delta_{jk} \,.
\end{equation}
Moreover, $A_j^\dagger$ is adjoint to $A_j$ on the 
Schwartz space $\calS$ of functions on $\Rb^d$ that are infinitely differentiable and decay faster than any negative power:
\begin{equation}\label{V:ladder-adj}
\langle A_j^\dagger \varphi \,|\,\psi \rangle =
\langle  \varphi \,|\,A_j \psi \rangle
\qquad \text{for all } \varphi,\psi\in \calS\,.
\end{equation}
\end{lemma}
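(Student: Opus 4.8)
The plan is to verify both identities by direct computation, exploiting the fact that the ladder operators are linear combinations of the shifted position and momentum operators with constant (matrix) coefficients, so that everything reduces to the canonical commutation relation \eqref{V:qp-comm} and to integration by parts.

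For the commutator relation \eqref{V:ladder-comm}, first I would note that shifting $\widehat q$ by the constant $q$ and $\widehat p$ by the constant $p$ does not change any commutators, so we may as well compute with $\widehat q$ and $\widehat p$ themselves. Writing $A_j = -\tfrac{\I}{\sqrt{2\eps}}\sum_m\bigl(P_{mj}\widehat q_m - Q_{mj}\widehat p_m\bigr)$ and $A_k^\dagger = \tfrac{\I}{\sqrt{2\eps}}\sum_n\bigl(\overline{P_{nk}}\,\widehat q_n - \overline{Q_{nk}}\,\widehat p_n\bigr)$, I would expand $[A_j,A_k^\dagger]$ bilinearly. The $\widehat q\widehat q$ and $\widehat p\widehat p$ terms drop out since position operators commute among themselves and likewise momentum operators; only the mixed terms survive, and by \eqref{V:qp-comm} each contributes a multiple of $\delta_{mn}$. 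Collecting the terms, the commutator becomes $\tfrac{1}{2\eps}\cdot\I\eps$ times $\bigl(Q^* P - P^* Q\bigr)_{jk}$ (one may have to keep careful track of which factor is conjugated and of an overall sign), and the second symplecticity relation in \eqref{hag-rel}, namely $Q^*P - P^*Q = 2\I\,\Id$, turns this into $\delta_{jk}$. The only real bookkeeping is making sure the $-\I$ and $+\I$ prefactors and the conjugations combine to land exactly on $Q^*P-P^*Q$ rather than its transpose or negative; I would do this on a small scrap first.

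For the adjointness relation \eqref{V:ladder-adj}, I would use that on the Schwartz space $\widehat q_j$ is symmetric (multiplication by the real variable $x_j$) and $\widehat p_j = -\I\eps\partial_{x_j}$ is symmetric as well, the latter by integration by parts with no boundary terms since Schwartz functions decay rapidly. Hence for any constant complex coefficients, the formal adjoint of $\sum_m c_m\widehat q_m + \sum_m d_m\widehat p_m$ on $\calS$ is $\sum_m \overline{c_m}\,\widehat q_m + \sum_m \overline{d_m}\,\widehat p_m$. Applying this to $A_j = -\tfrac{\I}{\sqrt{2\eps}}\bigl(\sum_m P_{mj}\widehat q_m - \sum_m Q_{mj}\widehat p_m\bigr)$ gives formal adjoint $\overline{-\tfrac{\I}{\sqrt{2\eps}}}\bigl(\sum_m \overline{P_{mj}}\,\widehat q_m - \sum_m \overline{Q_{mj}}\,\widehat p_m\bigr) = \tfrac{\I}{\sqrt{2\eps}}\bigl(P^*(\widehat q - q) - Q^*(\widehat p - p)\bigr)$ after reinstating the shifts, which is exactly $A_j^\dagger$ by \eqref{V:ladder}. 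The shifts by the real constants $q,p$ are harmless here since multiplication by a real constant and subtraction of a constant from a derivative argument are both symmetric operations.

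I do not expect a genuine obstacle: both parts are elementary once one commits to the componentwise expansion. The one place where an error could creep in is the sign/conjugation bookkeeping in \eqref{V:ladder-comm} — in particular getting $Q^*P-P^*Q$ with the correct orientation so that \eqref{hag-rel} applies directly — and, less seriously, confirming that the first symplecticity relation $Q^TP - P^TQ = 0$ is not needed for either identity (it is needed elsewhere, for the symmetry of $C=PQ^{-1}$, but not here). So the write-up would be short: one display expanding the commutator, one invocation of \eqref{hag-rel}, then a one-line integration-by-parts remark for \eqref{V:ladder-adj}.
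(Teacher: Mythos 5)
Your proposal is correct and follows essentially the same route as the paper: set $q=p=0$ (shifts do not affect commutators), expand componentwise, keep only the mixed $\widehat q$--$\widehat p$ terms via \eqref{V:qp-comm}, and invoke only the second relation in \eqref{hag-rel}, with adjointness reduced to the symmetry of $\widehat q_\ell$ and $\widehat p_\ell$ on $\calS$. The one piece of bookkeeping you deferred does need the check you anticipated: the sum collapses to $\tfrac{\I}{2}\bigl(P^*Q-Q^*P\bigr)_{k,j}$, i.e.\ the negative transpose of what you wrote, which by $Q^*P-P^*Q=2\I\,\Id$ still gives $\delta_{jk}$.
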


\begin{proof} With $Q=(Q_{jk})$ and $P=(P_{jk})$, we have
(with $q=p=0$ for simplicity)
$$
[ A_j, A_k^\dagger] = \frac1{2\eps} \Bigl[
\sum_{\ell=1}^d \bigl(
P_{\ell j} \widehat q_\ell -
Q_{\ell j} \widehat p_\ell \bigr)
\, , \,
\sum_{m=1}^d \bigl(
\overline P_{m k} \widehat q_m  -
\overline Q_{m k} \widehat p_m \bigr)
\Bigr]\,.
$$
By the canonical commutator identities (\ref{V:qp-comm}), this simplifies to
$$
[ A_j, A_k^\dagger] = \frac \I2 
\sum_{\ell=1}^d \bigl( -P_{\ell j}\overline Q_{\ell k} +
Q_{\ell j} \overline P_{\ell k} \bigr) =
\frac \I2 \bigl( -Q^*P + P^*Q \bigr)_{k,j},
$$
and by the symplecticity relation (\ref{hag-rel}), this equals $\delta_{jk}$.

The relation (\ref{V:ladder-adj}) follows directly from the fact that 
$\widehat q_\ell$ and $\widehat p_\ell$ are 
symmetric operators on $\calS$.
\end{proof}

\begin{lemma}[null-space]\label{lem:ladder-null}
If $Q$ and $P$ satisfy the symplecticity relations 
(\ref{hag-rel}), then 
the complex Gaussian $\varphi_0=\varphi_0^\eps[q,p,Q,P]$ of (\ref{V:phi-0}) 
spans the null-space of $A$.
\end{lemma}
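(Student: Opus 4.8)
The plan is to prove both inclusions: that $\varphi_0$ is annihilated by every $A_j$, and that conversely every element of the null-space is a scalar multiple of $\varphi_0$.

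First I would verify $A_j\varphi_0 = 0$ by a direct computation. Acting on the Gaussian \eqref{V:phi-0} one has $(\widehat q_\ell - q_\ell)\varphi_0 = (x_\ell-q_\ell)\varphi_0$, while differentiating the exponent and using the symmetry of $PQ^{-1}$ gives $(\widehat p_\ell - p_\ell)\varphi_0 = -\I\eps\partial_{x_\ell}\varphi_0 - p_\ell\varphi_0 = (PQ^{-1}(x-q))_\ell\,\varphi_0$. Substituting into the definition \eqref{V:ladder} of $A_j$ yields
\[
A_j\varphi_0 = -\frac{\I}{\sqrt{2\eps}}\Bigl( \bigl(P^T(x-q)\bigr)_j - \bigl(Q^TPQ^{-1}(x-q)\bigr)_j \Bigr)\varphi_0 .
\]
Here $Q$ is invertible by Lemma~\ref{lem:hag-rel}, and the first symplecticity relation in \eqref{hag-rel}, $Q^TP = P^TQ$, gives $Q^TPQ^{-1} = P^T$; hence the bracket vanishes identically and $A_j\varphi_0 = 0$.

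For the converse I would exploit the fact that each $A_j$ is a first-order differential operator whose differential part is $\sqrt{\eps/2}\,(Q^T\nabla_x)_j$, the remaining part being multiplication by a smooth function. Since $\varphi_0$ is smooth and nowhere zero, any $\psi$ in the null-space can be written $\psi = g\,\varphi_0$, and the Leibniz rule together with $A_j\varphi_0 = 0$ collapses the zeroth-order contribution, leaving
\[
A_j(g\,\varphi_0) = \sqrt{\eps/2}\;\varphi_0\,(Q^T\nabla_x g)_j .
\]
Thus $A\psi = 0$ is equivalent to $Q^T\nabla_x g = 0$, and invertibility of $Q^T$ forces $\nabla_x g = 0$, so $g$ is constant on $\Rb^d$ and $\psi$ is a scalar multiple of $\varphi_0$.

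The only delicate point, and the place where I would be careful, is fixing the function class in which the null-space is understood in the converse direction --- Schwartz functions, or $L^2$ with the equations $A_j\psi=0$ read in the sense of distributions --- and checking that the factorisation $\psi = g\,\varphi_0$ together with the implication ``$\nabla_x g = 0 \Rightarrow g$ constant'' remains valid there; on the connected domain $\Rb^d$ this is routine. The substantive content is entirely algebraic: the single identity $Q^TP = P^TQ$ does the work twice, once to kill $A_j\varphi_0$ and once (via invertibility of $Q$) to exclude any other solution.
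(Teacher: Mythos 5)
Your proof is correct and follows essentially the same route as the paper: the paper rewrites $A\varphi=0$ as the first-order system $-\I\eps\nabla\varphi - p\varphi = PQ^{-1}(x-q)\varphi$ and substitutes $\varphi=\rho\,\varphi_0$ to obtain $\nabla\rho=0$, which is exactly your factorisation $\psi=g\,\varphi_0$ combined with the Leibniz rule and the invertibility of $Q$. Your separate explicit verification that $A_j\varphi_0=0$ via $Q^TP=P^TQ$ is implicit in the paper's substitution step, so the two arguments coincide.
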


\begin{proof} If $\varphi\in\calS$ is in the null-space of $A$, then it must satisfy the linear system of partial differential equations
$$
-\I\eps\nabla\varphi(x) - p\varphi(x) = C(x-q)\varphi(x)
$$
with the complex symmetric matrix $C=PQ^{-1}$. Multiples of
$\varphi_0$ are the only non-trivial solutions of this equation, since substituting $\varphi=\rho \varphi_0$ yields the equation $\nabla\rho=0$, so that $\rho$ is constant.
\end{proof}

With the properties established in these two lemmas, we can now construct eigenfunctions of the operators 
$A_jA_j^\dagger$ to eigenvalues $1,2,3,\ldots$ by the same arguments that are familiar from Dirac's construction of the eigenfunctions of the harmonic oscillator $\frac12(\widehat q^2 + \widehat p^2)= \frac1{\sqrt{2}}(\widehat q+\I \widehat p)\,\frac1{\sqrt{2}}(\widehat q-\I \widehat p)-\frac12$.

Let $k=(k_1,\ldots,k_d)$ be a multi-index with non-negative integers $k_j$, and let
\bch$\langle j \rangle = (0,\ldots, 1, \ldots, 0)$\ech denote the $j$th $d$-dimensional unit vector.

With Lemmas~\ref{V:lem:ladder} and~\ref{lem:ladder-null} it follows that
$$
A_jA_j^\dagger\varphi_0 = A_j^\dagger A_j\varphi_0 + \varphi_0=\varphi_0\,,
$$
and hence $\varphi_0$ is an eigenfunction of $A_jA_j^\dagger$ to the 
eigenvalue $1$. Applying the operator $A_j^\dagger$ to both sides of
this equation, we see that $\phi_{\langle j \rangle} :=A_j^\dagger\varphi_0$ is an
eigenfunction of $A_j^\dagger A_j$ to the 
eigenvalue $1$.

On the other hand, we further have by Lemmas~\ref{V:lem:ladder} and~\ref{lem:ladder-null} that
$$
A_lA_j^\dagger\varphi_0 = A_j^\dagger A_l\varphi_0 =0 \quad\ \text{ for }l\ne j,
$$
and applying the operator $A_l^\dagger$ to both sides of
this equation, we see that $\phi_{\langle j \rangle} =A_j^\dagger\varphi_0$ is an
eigenfunction of $A_l^\dagger A_l$ to the 
eigenvalue $0$.

We continue in this way to construct recursively
\begin{equation}\label{raise}
\phi_{k+\langle j \rangle}=A_j^\dagger\phi_k
\end{equation}
 for every multi-index $k=(k_1,\ldots,k_d)$ with non-negative integers $k_l$. We thus 
obtain eigenfunctions $\phi_k$
to $A_l^\dagger A_l$ with eigenvalue $k_l$ and  to $A_lA_l^\dagger$ with eigenvalue $k_l+1$.
These eigenfunctions are not yet normalised.
To achieve this, we note that by Lemma~\ref{V:lem:ladder},
$$
\| A_j^\dagger\phi_k \|^2 = \langle A_j^\dagger\phi_k \,|\, A_j^\dagger\phi_k \rangle =
\langle \phi_k\,|\, 
A_jA_j^\dagger\phi_k \rangle = (k_j+1)\,\|\phi_k\|^2\,.
$$
We therefore obtain normalised eigenfunctions to $A_lA_l^\dagger$ and $A_l^\dagger A_l$
by defining  the functions
$\varphi_k=\varphi_k^\eps[q,p,Q,P]$ recursively by
\begin{equation}\label{V:raising}
\varphi_{k+\langle j \rangle} = \frac1{\sqrt{k_j+1}}\, A_j^\dagger\varphi_k.
\end{equation}
Since $A_j\varphi_{k+\langle j \rangle}= \frac1{\sqrt{k_j+1}}\, A_jA_j^\dagger\varphi_k =
\sqrt{k_j+1}\, \varphi_k$, we also have (replacing $k_j+1$ by $k_j$)
\begin{equation}\label{III:lowering}
\varphi_{k-\langle j \rangle} = \frac1{\sqrt{k_j}}\, A_j\varphi_{k},
\end{equation}
so that $A_j^\dagger$ and $A_j$ can be viewed as raising and lowering operators, respectively, in the $j$th component of the multi-index.
%These relations explain the names of {\it raising operator} and
%{\it lowering operator} for $A^\dagger$ and $A$, respectively,
%and of {\it ladder operators} for both of them.
Multiplying (\ref{V:raising}) by $\sqrt{k_j+1}$ and (\ref{III:lowering}) by $\sqrt{k_j}$, summing the resulting formulas
and using the
definitions of $A_j$ and $A_j^\dagger$ and the fact that
$QQ^*$ is a real matrix by Lemma~\ref{lem:hag-rel}, we obtain the three-term
recurrence relation
\begin{equation}\label{V:rec}
Q\Bigl(\sqrt{k_j+1}\,\varphi_{k+\jvec}(x)\Bigr)_{j=1}^d = 
\sqrt{\frac2\eps}\, (x-q)\varphi_{k}(x) 
- \overline Q \Bigl(\sqrt{k_j}\,\varphi_{k-\jvec}(x)\Bigr)_{j=1}^d \,.
\end{equation}
This permits us to evaluate $\varphi_k(x)$ at any given point $x$.
It also shows that $\varphi_k$ is the product of a polynomial of degree
$|k|=k_1+\ldots+k_d$ with the Gaussian $\varphi_0$. Writing this property as
\begin{equation}\label{phik-pol}
\varphi_k(x) = \frac{1}{\sqrt{2^{|k|}k!}} \ p_k(\tfrac{1}{\sqrt\eps}Q^{-1}(x-q))\ \varphi_0(x),
\end{equation}
we obtain the polynomial three-term recurrence relation
\begin{equation}\label{phik-pol-rec}
\Bigl(p_{k+\jvec}(x)\Bigr)_{j=1}^d = 2x \, p_{k}(x) 
- 2Q^{-1}\,\overline Q \Bigl(k_j\,p_{k-\jvec}(x)\Bigr)_{j=1}^d. 
\end{equation}
This shows that if the matrix $Q^{-1}\,\overline Q$ is diagonal, then each $p_k$ is the tensor product of appropriately scaled univariate Hermite polynomials. Indeed, if $Q^{-1}\,\overline Q = {\rm diag}(\lambda_1,\ldots,\lambda_d)$, 
then
\[
p_k(x) = \prod_{j=1}^d \lambda_j^{k_j/2} \ H_{k_j}\!\Bigl(\frac{x_j}{\sqrt{\lambda_j}}\Bigr)
\]
where $(H_n)_{n\ge0}$ are the Hermite polynomials defined by the three-term recurrence relation 
\bch $H_{n+1}(y) = 2yH_n(y) - 2n H_{n-1}(y)$, $H_0(y) = 1$.\ech If the matrix $Q^{-1}\,\overline Q$ is not diagonal, then 
the polynomials $p_k$ do not have tensor product form. 

\medskip
The Hagedorn functions $\varphi_k$ are eigenfunctions 
of the symmetric operators $A_j A_j^\dagger$ for each $j$. Therefore, they are orthogonal:
$$
(k_j+1) \langle  \varphi_k,\varphi_\ell \rangle = \langle A_j A_j^\dagger \varphi_k,\varphi_\ell \rangle=
\langle  \varphi_k,A_j A_j^\dagger\varphi_\ell \rangle= (\ell_j+1) \langle \varphi_k,\varphi_\ell \rangle,
$$
so that $ \langle \varphi_k,\varphi_\ell \rangle=0$ for $k\ne \ell$. The functions $\varphi_k$ even form a complete orthonormal system, as
can be shown by extending the arguments in the proof of completeness of the Hermite functions. We do not present this proof here, as our interest in the following will be in the evolution of localised wave packets with a moderate polynomial degree rather than in the expansion of general functions in $L^2(\Rb^d)$ in the orthonormal basis of Hagedorn functions.

We summarise the above construction as follows.

 \begin{theorem}[Hagedorn functions]
\label{V:thm:hag-fun}
The functions $\varphi_k=\varphi_k^\eps[q,p,Q,P]$ 
defined by (\ref{V:phi-0}) and 
(\ref{V:raising}) form a complete $L^2$-orthonormal set of functions.
\end{theorem}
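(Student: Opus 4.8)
The plan is to handle orthonormality and completeness separately. Orthonormality is essentially already in the discussion preceding the theorem: since $\varphi_k$ is an eigenfunction of the symmetric operator $A_jA_j^\dagger$ with eigenvalue $k_j+1$, two functions $\varphi_k,\varphi_\ell$ with $k\ne\ell$ differ in some component and are therefore orthogonal, while $\|\varphi_0\|=1$ together with the normalising constants in \eqref{V:raising} and the identity $\|A_j^\dagger\varphi_k\|^2=(k_j+1)\|\varphi_k\|^2$ propagates $\|\varphi_k\|=1$ to all $k$. One also has to observe that $\varphi_k$ is well defined independently of the order in which the raising operators are applied, which follows from $[A_j^\dagger,A_\ell^\dagger]=0$, a consequence of the first symplecticity relation in \eqref{hag-rel} obtained exactly as \eqref{V:ladder-comm}. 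So the real work is completeness, and I would mimic the classical completeness proof for the Hermite functions.

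The structural fact I would establish first is that, for every $N\ge 0$,
$$
\operatorname{span}\{\varphi_k : |k|\le N\}=\{\,p\,\varphi_0 : p\ \text{a polynomial on}\ \Rb^d\ \text{of degree}\le N\,\}.
$$
The inclusion ``$\subseteq$'' is immediate from \eqref{phik-pol}, which exhibits $\varphi_k$ as a degree-$|k|$ polynomial in $x$ times $\varphi_0$. For ``$\supseteq$'' I would induct on $N$ using the three-term recurrence \eqref{V:rec}: since $Q$ is invertible by Lemma~\ref{lem:hag-rel}, \eqref{V:rec} writes each $(x_i-q_i)\varphi_k$ as a linear combination of the $\varphi_{k\pm\jvec}$, so multiplication by $x_i-q_i$ maps $\operatorname{span}\{\varphi_k:|k|\le N\}$ into $\operatorname{span}\{\varphi_k:|k|\le N+1\}$; starting from $\varphi_0$ this reaches every $(x-q)^\alpha\varphi_0$, hence every $x^\alpha\varphi_0$, with $|\alpha|\le N$. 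Consequently $\psi\in L^2(\Rb^d)$ is orthogonal to all $\varphi_k$ if and only if $\langle\psi\mid x^\alpha\varphi_0\rangle=0$ for every multi-index $\alpha$.

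Then I would conclude as in one dimension. Assume all those moments vanish and put $g=\overline\psi\,\varphi_0$. Since $|\varphi_0(x)|$ decays like the genuine Gaussian $\exp(-\tfrac1{2\eps}(x-q)^T\Im C\,(x-q))$ with $\Im C=(QQ^*)^{-1}$ positive definite (Lemma~\ref{lem:hag-rel}), the map $x\mapsto\e^{a|x|}|\varphi_0(x)|$ lies in $L^2(\Rb^d)$ for every $a\ge0$, so $\e^{a|x|}g\in L^1(\Rb^d)$ by Cauchy--Schwarz. Hence $G(z)=\int_{\Rb^d}\e^{z\cdot x}g(x)\,\D x$ converges for all $z\in\C^d$ and is entire, with Taylor coefficients at the origin proportional to the moments $\int_{\Rb^d}x^\alpha g(x)\,\D x=\langle\psi\mid x^\alpha\varphi_0\rangle=0$; an entire function with vanishing Taylor series is identically zero, so $G\equiv0$, and restricting $G$ to imaginary arguments identifies the Fourier transform of $g$ as zero, whence $g=0$ a.e. Since $\varphi_0$ vanishes nowhere, $\psi=0$ a.e., so $\{\varphi_k\}$ has trivial orthogonal complement and is complete.

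The one genuinely analytic ingredient, and the step I expect to require the most care, is this last one: turning the vanishing of all moments of $g$ into $g\equiv0$ via the entire extension of its Fourier/Laplace transform, i.e.\ the multivariate, ``twisted-Gaussian'' form of the standard Hermite argument. Everything else is bookkeeping with \eqref{V:rec} and with the symplecticity relations \eqref{hag-rel}, which enter only to guarantee that $Q$ is invertible, that $\Im C=(QQ^*)^{-1}$ is positive definite, and that the raising operators commute among themselves. (Alternatively one could realise the whole system $\{\varphi_k\}$ as the image of the standard scaled Hermite basis under a single unitary operator built from a Heisenberg translation and a metaplectic transformation, but the direct argument above is closer to the hint of extending the Hermite proof.)
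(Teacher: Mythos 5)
Your proposal is correct. For orthonormality it coincides with what the paper does: orthogonality from the eigenvalue identity $(k_j+1)\langle\varphi_k,\varphi_\ell\rangle=(\ell_j+1)\langle\varphi_k,\varphi_\ell\rangle$ for the symmetric operators $A_jA_j^\dagger$, and normalisation propagated through \eqref{V:raising} via $\|A_j^\dagger\varphi_k\|^2=(k_j+1)\|\varphi_k\|^2$. You are also right to flag the order-independence of the recursive definition; the needed commutation $[A_j^\dagger,A_\ell^\dagger]=0$ does follow from $Q^TP=P^TQ$ by the same computation as Lemma~\ref{V:lem:ladder}, and the paper leaves this implicit. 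The real difference is that the paper deliberately does \emph{not} prove completeness — it only remarks that the Hermite argument extends and moves on — whereas you carry that extension out. Your route is the standard one and it is sound: the three-term recurrence \eqref{V:rec} together with the invertibility of $Q$ identifies $\operatorname{span}\{\varphi_k:|k|\le N\}$ with $\{p\,\varphi_0:\deg p\le N\}$, so orthogonality to all $\varphi_k$ is equivalent to the vanishing of all moments of $g=\overline\psi\,\varphi_0$; the genuine Gaussian decay of $|\varphi_0|$, guaranteed by $\Im C=(QQ^*)^{-1}>0$ from Lemma~\ref{lem:hag-rel}, makes $\e^{a|x|}g$ integrable, so the Laplace transform of $g$ is entire with vanishing Taylor coefficients, hence zero, hence $g=0$ and $\psi=0$ since $\varphi_0$ is nowhere zero. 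In short, what you have written is a correct and complete proof of a statement for which the paper supplies only the orthonormality half.
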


\subsection{Evolution of Hagedorn wave packets for quadratic Hamiltonians}
We now consider the Hamiltonian $H=-\frac{\eps^2}{2}\Delta + V= \frac12 \widehat p\cdot\widehat p +V$ with a quadratic potential $V$ and take a  Hagedorn function as initial data for the time-dependent Schr\"odinger equation \eqref{tdse}. There is the following remarkable result.

\begin{theorem}[Hagedorn functions on a quadratic potential]
\label{V:thm:hagwp-quad}
Let $V$ be a quadratic potential, let 
$(q(t),p(t),Q(t),P(t))$  be a solution of the classical equations of motion and their linearisation,
\begin{equation}\label{V:classical-eom}
\dot q=p, \ \ \dot p=-\nabla V(q) \quad\text{and}\quad \dot Q=P,\ \ \dot P=-\nabla^2V(q)Q,
\end{equation}
and let $S(t)= \int_0^t \bigl(\frac12|p(s)|^2-V(q(s))\bigr) {\mathrm d}s$ be the corresponding classical action.
Assume that $Q(0)$ and $P(0)$ satisfy 
the symplecticity relations \eqref{hag-rel}.
Then, for every multi-index $k$,
$$%\begin{equation}\label{V:psi-gwp}
e^{\I S(t)/\eps} \,\varphi_k^\eps[q(t),p(t),Q(t),P(t)](x)
$$%\end{equation}
is a solution of the time-dependent Schr\"odinger equation
(\ref{tdse}). Moreover, the symplecticity relations  \eqref{hag-rel} are satisfied by $Q(t)$ and $P(t)$ for all $t$.
\index{theorem!Hagedorn}
\end{theorem}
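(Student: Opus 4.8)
The plan is to argue by induction on the length $|k|$ of the multi-index, using the raising operators $A_j^\dagger$ together with the base case $k=0$, which is precisely Proposition~\ref{V:thm:gwp-quad}. Throughout, write $L = \I\eps\,\partial_t - H$, set $A_j^\dagger(t) = A_j^\dagger[q(t),p(t),Q(t),P(t)]$ and $\psi_k(t) = e^{\I S(t)/\eps}\,\varphi_k^\eps[q(t),p(t),Q(t),P(t)]$. Since $V$ is quadratic, $\nabla V$ is affine and $\nabla^2 V$ is a constant real symmetric matrix, so the parameters solve linear ordinary differential equations with smooth coefficients; hence $t\mapsto\psi_k(t)$ and $t\mapsto A_j^\dagger(t)$ are smooth into the Schwartz space $\calS$ and the manipulations below are legitimate. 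The preservation of the symplecticity relations \eqref{hag-rel} for $Q(t),P(t)$ is obtained exactly as in the proof of Theorem~\ref{thm:eom-gauss-PQ}, the only inputs being $\dot Q=P$, $\dot P=-\nabla^2V(q)Q$ with $\nabla^2V(q)$ symmetric; I would simply quote that computation. Proposition~\ref{V:thm:gwp-quad} then gives $L\psi_0 = 0$, the base of the induction.

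For the inductive step, suppose $L\psi_k=0$ for all $k$ with $|k|=n$, and let $|k'|=n+1$; writing $k'=k+\jvec$ with $|k|=n$, the raising relation \eqref{V:raising} and the fact that $e^{\I S(t)/\eps}$ is spatially constant give $\psi_{k'}(t) = \frac1{\sqrt{k_j+1}}\,A_j^\dagger(t)\,\psi_k(t)$. Differentiating the product $A_j^\dagger(t)\psi_k(t)$ in $t$ and using $\I\eps\,\partial_t\psi_k = H\psi_k$ yields the identity
\begin{equation*}
L\bigl(A_j^\dagger(t)\psi_k(t)\bigr) = \bigl(\I\eps\,\dot A_j^\dagger(t) - [H,A_j^\dagger(t)]\bigr)\psi_k(t) + A_j^\dagger(t)\,L\psi_k(t),
\end{equation*}
where $\dot A_j^\dagger(t)$ denotes the operator obtained by differentiating the matrix- and vector-valued coefficients in \eqref{V:ladder}. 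The last term vanishes by the induction hypothesis, so the whole step reduces to the operator identity
\begin{equation*}
\I\eps\,\dot A_j^\dagger(t) = [H,A_j^\dagger(t)] \qquad\text{along the classical trajectory, for quadratic }V.
\end{equation*}

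To establish this identity I would compute both sides explicitly. For the commutator, the canonical relations \eqref{V:qp-comm} give $[H,\widehat q_\ell] = -\I\eps\,\widehat p_\ell$ and $[H,\widehat p_\ell] = \I\eps\,(\nabla V(\widehat q))_\ell$ (the latter from $[V(\widehat q),\widehat p_\ell]=\I\eps\,(\partial_\ell V)(\widehat q)$), so inserting the definition \eqref{V:ladder} of $A_j^\dagger$ yields $[H,A_j^\dagger] = \sqrt{\tfrac\eps2}\sum_{\ell}\bigl(\overline{P_{\ell j}}\,\widehat p_\ell + \overline{Q_{\ell j}}\,(\nabla V(\widehat q))_\ell\bigr)$. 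For $\dot A_j^\dagger$, differentiating the coefficients in \eqref{V:ladder} and substituting the equations of motion \eqref{V:classical-eom} ($\dot q=p$, $\dot p=-\nabla V(q)$, $\dot Q=P$, $\dot P=-\nabla^2V(q)Q$), the terms $\overline{P_{\ell j}}\,p_\ell$ cancel and one is left with an expression in $\widehat p_\ell$, $(\widehat q_\ell-q_\ell)$, $\nabla V(q)$ and $\nabla^2V(q)$. The point where quadraticity of $V$ enters is that $\nabla V(\widehat q) = \nabla V(q) + \nabla^2V(q)\,(\widehat q - q)$ holds exactly and $\nabla^2V(q)$ is constant and symmetric; using its symmetry to reorganise the $(\widehat q-q)$-terms, the contributions recombine into $\sum_\ell\overline{Q_{\ell j}}\,(\nabla V(\widehat q))_\ell$, and one finds $\I\eps\,\dot A_j^\dagger = [H,A_j^\dagger]$. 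This closes the induction, so $L\psi_k=0$ for every $k$; since $S(0)=0$ gives $\psi_k(0)=\varphi_k^\eps[q(0),p(0),Q(0),P(0)]$, the function $\psi_k$ is exactly the claimed solution of \eqref{tdse}.

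The main obstacle is the verification of $\I\eps\,\dot A_j^\dagger(t) = [H,A_j^\dagger(t)]$: it requires careful bookkeeping of the total time derivative of $A_j^\dagger$ through all four parameters, correct use of the commutator relations \eqref{V:qp-comm} (and of $[V(\widehat q),\widehat p_\ell]=\I\eps(\partial_\ell V)(\widehat q)$), and the observation that it is precisely the affineness of $\nabla V$ --- the quadraticity of $V$ already exploited in Proposition~\ref{V:thm:gwp-quad} --- that makes the two sides coincide. One may alternatively phrase the same computation in the Heisenberg picture, noting the identity says $A_j^\dagger(t) = U(t)A_j^\dagger(0)U(t)^*$ with $U(t)=\e^{-\I tH/\eps}$, and then obtaining $\psi_k(t)$ from $\psi_0(t)$ by repeated raising; but the inductive PDE formulation above avoids operator-domain technicalities.
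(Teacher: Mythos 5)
Your proposal is correct and follows essentially the same route as the paper: induction on $|k|$ via the raising operators, with the base case supplied by Proposition~\ref{V:thm:gwp-quad} and the inductive step reduced to the ladder evolution identity $\I\eps\,\dot A_j^\dagger=[H,A_j^\dagger]$, which the paper isolates as Lemma~\ref{V:lem:A-evol} and proves by exactly the computation you outline (the exactness of $\nabla V(\widehat q)=\nabla V(q)+\nabla^2V(q)(\widehat q-q)$ for quadratic $V$ being the decisive point). The symplecticity preservation is likewise quoted from the Gaussian case in both arguments.
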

 
 As a preparation for the proof, we first consider the evolution of the parameter-dependent ladder operators under the quadratic Hamiltonian. Along a solution $(q(t),p(t),Q(t),P(t))$ of the classical equations \eqref{V:classical-eom} we consider the time-depen\-dent operators
$$
A_j(t) = A_j[q(t),p(t),Q(t),P(t)],\qquad
A_j^\dagger(t) = A_j^\dagger[q(t),p(t),Q(t),P(t)]\,.
$$
These operators evolve according to the following equations.

\begin{lemma}[ladder evolution for quadratic potentials]\label{V:lem:A-evol} Let $H=-\frac{\eps^2}{2}\Delta + V$ be the Hamiltonian of \eqref{tdse} with 
a quadratic potential $V$. Then we have
$$
\dot A_j = \frac1{\I\eps}[A_j,H]\,,\qquad
\dot A_j^\dagger = -\frac1{\I\eps}[A_j^\dagger ,H]\,.
$$
\end{lemma}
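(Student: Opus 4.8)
The plan is to verify the two Heisenberg-type evolution equations by a direct computation, exploiting the fact that for a quadratic potential $V$ the Hamiltonian $H=\tfrac12\widehat p\cdot\widehat p+V$ is a quadratic polynomial in the operators $\widehat q$ and $\widehat p$, so that all commutators $[A_j,H]$ and $[A_j^\dagger,H]$ are again linear in $\widehat q-q(t)$ and $\widehat p-p(t)$, i.e.\ of the same algebraic form as $A_j$ and $A_j^\dagger$ themselves. First I would write out $A_j(t)$ explicitly using \eqref{V:ladder}: it is a fixed linear combination (with coefficients built from $P(t)$ and $Q(t)$) of the operators $\widehat q_\ell-q_\ell(t)$ and $\widehat p_\ell-p_\ell(t)$, times the $t$-independent prefactor $-\I/\sqrt{2\eps}$. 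Differentiating in $t$ produces two kinds of terms: those coming from $\dot P(t)$, $\dot Q(t)$, which by the linearised classical equations \eqref{V:classical-eom} equal $-\nabla^2V(q)Q$ and $P$; and those coming from $-\dot q_\ell(t)=-p_\ell(t)$ and $-\dot p_\ell(t)=\nabla V(q(t))_\ell$. This gives an explicit first-order operator as the value of $\dot A_j$.

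Next I would compute $\tfrac1{\I\eps}[A_j,H]$ directly from the canonical commutation relations \eqref{V:qp-comm}. Since $H=\tfrac12\sum_\ell \widehat p_\ell^2+V(\widehat q)$ with $V$ quadratic, one has $\tfrac1{\I\eps}[\widehat q_\ell,H]=\widehat p_\ell$ and $\tfrac1{\I\eps}[\widehat p_\ell,H]=-\partial_\ell V(\widehat q)=-(\nabla V(q)+\nabla^2V(q)(\widehat q-q))_\ell$, using the exact Taylor expansion of $\nabla V$ about $q=q(t)$ (exact because $V$ is quadratic). Substituting these into the linear combination defining $A_j$ and collecting terms, one obtains exactly the same first-order operator found by differentiating $A_j(t)$: the $\widehat p_\ell$-coefficients match because of $\dot Q=P$, the $\widehat q_\ell$-coefficients match because of $\dot P=-\nabla^2V(q)Q$, and the constant (c-number) terms match because of $\dot q=p$, $\dot p=-\nabla V(q)$. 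Hence $\dot A_j=\tfrac1{\I\eps}[A_j,H]$. The computation for $A_j^\dagger$ is the same with $P,Q$ replaced by their conjugates $\overline P=P^*$ componentwise; the extra overall sign in the second identity, $\dot A_j^\dagger=-\tfrac1{\I\eps}[A_j^\dagger,H]$, arises because $A_j^\dagger$ carries the opposite sign $+\I/\sqrt{2\eps}$ in front, so that $\tfrac1{\I\eps}[\,\cdot\,,H]$ produces terms with the matching sign only after accounting for this.

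The main obstacle is purely organisational rather than conceptual: one must keep careful track of the transpose/conjugate structure (the operators appear as $P^T(\widehat q-q)-Q^T(\widehat p-p)$ in $A$ and with $P^*,Q^*$ in $A^\dagger$), and match two linear operators coefficient by coefficient. The key simplification, which I would state up front, is that everything takes place in the finite-dimensional space of affine-in-$(\widehat q,\widehat p)$ operators, closed under $\tfrac1{\I\eps}[\,\cdot\,,H]$ precisely because $V$ is quadratic; within this space the identity reduces to the matrix identities $\dot Q=P$ and $\dot P=-\nabla^2V(q)Q$ together with the scalar Hamiltonian equations for $(q,p)$, all of which are hypotheses. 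I would also remark that no regularity issues arise since all operators act on the Schwartz space $\calS$, where $\widehat q_\ell$ and $\widehat p_\ell$ are symmetric and all manipulations are legitimate.
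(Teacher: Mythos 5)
Your strategy --- differentiate $A_j(t)$ using the classical equations \eqref{V:classical-eom} and compare with $\frac1{\I\eps}[A_j,H]$ computed from the canonical commutation relations, exploiting that for quadratic $V$ everything stays in the space of operators affine in $(\widehat q,\widehat p)$ --- is exactly the paper's proof, and your computation of $\dot A$ is right: one gets $\dot A=\frac{\I}{\sqrt{2\eps}}\bigl(Q^T\nabla V(\widehat q)+P^T\widehat p\bigr)$ using the exact expansion $\nabla V(\widehat q)=\nabla V(q)+\nabla^2V(q)(\widehat q-q)$. The gap is in the sign bookkeeping: your proposed mechanism for the relative minus sign between the two identities cannot work. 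Both $\frac{\D}{\D t}$ and $[\,\cdot\,,H]$ are linear, so an overall constant prefactor ($-\I/\sqrt{2\eps}$ for $A$ versus $+\I/\sqrt{2\eps}$ for $A^\dagger$) drops out of any relation of the form $\dot X=\lambda[X,H]$: if $A$ satisfies it with some $\lambda$, so does $cA$ for every constant $c$, and so does $A^\dagger$, which is obtained by replacing $P,Q$ with $\overline P,\overline Q$ --- and these satisfy the complex-conjugate, hence identical (the Hessian is real), linearised equations. Equivalently, taking adjoints of $\dot A=\frac1{\I\eps}[A,H]$ yields $\dot A^\dagger=\frac1{\I\eps}[A^\dagger,H]$ with the \emph{same} sign, because the two flips from $\overline{1/(\I\eps)}=-1/(\I\eps)$ and $[A,H]^\dagger=-[A^\dagger,H]$ cancel. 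So whichever sign your coefficient-by-coefficient matching produces for $A$, it must produce the same sign for $A^\dagger$; it cannot produce the two displayed identities simultaneously.

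If you actually carry out the commutator computation you sketch, with $\frac1{\I\eps}[\widehat q_\ell,H]=\widehat p_\ell$ and $\frac1{\I\eps}[\widehat p_\ell,H]=-\partial_\ell V(\widehat q)$, you find $\frac1{\I\eps}[A,H]=-\frac{\I}{\sqrt{2\eps}}\bigl(Q^T\nabla V(\widehat q)+P^T\widehat p\bigr)=-\dot A$, so the constant and first-order terms do not ``match'' as you claim but come out with opposite signs. A sanity check on the harmonic oscillator ($d=1$, $\eps=1$, $V=\frac12 q^2$, $q=p=0$, $Q=e^{\I t}$, $P=\I e^{\I t}$, hence $A(t)=e^{\I t}\cdot\frac1{\sqrt2}(\widehat q+\I\widehat p)$ with $\dot A=\I A$ while $\frac1{\I}[A,H]=-\I A$) confirms this. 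Both ladder operators therefore satisfy the anti-Heisenberg relation $\dot X=-\frac1{\I\eps}[X,H]$; the displayed identity for $A_j^\dagger$ is the correct and relevant one --- it is precisely what makes $\I\eps\dot A_j^\dagger+[A_j^\dagger,H]$ vanish in the proof of Theorem~\ref{V:thm:hagwp-quad} --- and the sign in the first identity should agree with it. Compute the sign once, carefully, rather than attributing it to the prefactor.
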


\begin{proof} With \eqref{V:classical-eom} we obtain for $A(t)=(A_j(t))$ 
$$
\dot A = \frac \I{\sqrt{2\eps}} \bigl(
Q^T \nabla V(\widehat q) +  P^T \widehat p \bigr).
$$
The same expression is obtained for $\frac1{\I\eps}[A,H]$ with the commutator relations (\ref{V:qp-comm}), which further yield
$\frac1{\I\eps}[\widehat q_j, \widehat  p_k^2] = 
\delta_{jk}\cdot 2 \widehat  p_k$  and
$\frac1{\I\eps}[\widehat q_j^2, \widehat  p_k] = 
\delta_{jk}\cdot 2 \widehat  q_j$. The result for $A^\dagger$ is obtained by taking complex conjugates.
\end{proof}

\begin{proof} (of Theorem~\ref{V:thm:hagwp-quad})
By Theorem~\ref{V:thm:gwp-quad},
the result holds true for ${k=0}$. In view of the construction of the functions $\varphi_k$ by (\ref{V:raising}), the result for general $k$ follows by induction if we can show that with a solution
$\psi(\cdot,t)$, also $A_j^\dagger(t)\psi(\cdot,t)$ is a solution to the Schr\"odinger equation \eqref{tdse}. This follows from 
$$
\I\eps \frac\partial{\partial t}(A_j^\dagger\psi)= 
\I\eps {\dot A}_j^\dagger\psi + A_j^\dagger H\psi =
\Bigl( \I\eps {\dot A}_j^\dagger\psi + [A_j^\dagger,H]\psi\Bigr) + H A_j^\dagger \psi ,
$$
because the expression in big brackets vanishes by 
Lemma~\ref{V:lem:A-evol}. The preservation of the symplecticity relations \eqref{hag-rel} is already known from Proposition~\ref{V:thm:gwp-quad}.
\end{proof}

Like Proposition~\ref{V:thm:gwp-quad}, Theorem~\ref{V:thm:hagwp-quad} also remains valid, with the same proof, for a time-dependent quadratic potential $V(\cdot,t)$.

\subsection{A Galerkin method for non-quadratic potentials}
For the time-dependent Schr\"odinger equation (\ref{tdse}) with a non-quadratic potential $V$, the wave function will now be approximated by a finite linear combination of Hagedorn functions whose parameters evolve according to the classical equations of motion. The coefficients of the linear combination are determined by a Galerkin condition on the time-varying approximation space that is spanned by a basis of finitely many evolving Hagedorn functions.

We search for an approximation to the wave function of the form
\begin{equation}\label{V:hag-series}
\psi_\calK(x,t)=e^{\I S(t)/\eps} 
\sum_{k\in\calK} c_k(t)\, \varphi_k(x,t)
\end{equation}
where we  abbreviate $\varphi_k(x,t)=\varphi_k^\eps[q(t),p(t),Q(t),P(t)](x)$ for a solution
$(q(t),p(t),Q(t),P(t))$  to the classical equations
\eqref{V:classical-eom} and where $S(t)= \int_0^t \bigl(\frac12|p(s)|^2-V(q(s))\bigr) {\mathrm d}s$ is the corresponding classical action.  
The finite multi-index set $\calK$ may, \eg\ be a cube 
$\{k\in \N^d\,:\,|k_j|\le K\}$ or a  simplex $\{ k \in \N^d\,:\, |k|= \sum_{j=1}^d k_j \le K\}$ or a hyperbolically reduced set $\{ {k \in \N^d}\,:\,  \prod_{j=1}^d (1+k_j) \le K\}$. The initial data will be assumed to be a linear combination over a smaller set $\calK_0\subset\calK$.

The coefficients $c_k(t)$ are determined by the Galerkin condition
\begin{equation}\label{V:hag-galerkin}
\Big\langle \varphi_\ell(t) \,\Big|\, 
\I\eps\frac{\partial \psi_\calK}{\partial t}(t)
-H\psi_\calK(t) \Big\rangle = 0
\quad\ \text{for each $\ell\in\calK$ and for all $t$.}
\end{equation}
If we write the potential as
$$
V=U_{q(t)} + W_{q(t)}
$$
with the quadratic Taylor polynomial of $V$ at the position $q$,
$$
U_q(x) =V(q) + \nabla V(q) ^T(x-q) +\tfrac12 (x-q)^T \nabla^2 V(q) (x-q),
$$  
and with the non-quadratic remainder $W_q$, then we have
\begin{eqnarray}
\label{V:defect}
&&\I\eps\frac\partial{\partial t}(c_k e^{\I S/\eps}\varphi_k) - 
H(c_ke^{\I S/\eps}\varphi_k) =
\I\eps \dot c_k e^{\I S/\eps}\varphi_k 
\\
\nonumber
&&\quad
+ \ c_k \Bigl( \I\eps\frac\partial{\partial t}(e^{\I S/\eps}\varphi_k) +
\frac{\eps^2}{2}\Delta (e^{\I S/\eps}\varphi_k) -
U_q e^{\I S/\eps}\varphi_k \Bigr) - c_k e^{\I S/\eps} W_q \varphi_k.
\end{eqnarray}
The term in big brackets vanishes by the version of
Theorem~\ref{V:thm:hagwp-quad} 
for time-dependent quadratic potentials, since just the quadratic part $U_q$ of the potential $V$ determines
$q(t),p(t),Q(t),P(t),S(t)$.
The Galerkin condition \eqref{V:hag-galerkin} then gives us the following differential equations for the coefficients 
$c(t)=(c_k(t))_{k\in\calK}\,$:
\begin{equation}\label{V:hag-coeff-ode}
\I\eps \dot c(t) = G(t)c(t) \quad\text{ with }\quad
G(t)=\bigl( \langle \varphi_\ell(\cdot,t)\,|\,W_{q(t)}\varphi_k(\cdot,t)
\rangle\bigr)_{\ell,k\in\calK}\,.
\end{equation}
We now easily observe norm conservation of the approximation: 

\begin{lemma}[norm conservation]
The Galerkin approximation \eqref{V:hag-series} is norm-conserving.
\end{lemma}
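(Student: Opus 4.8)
The plan is to reduce the $L^2$-norm of $\psi_\calK(t)$ to the Euclidean norm of the coefficient vector and then exploit the differential equation \eqref{V:hag-coeff-ode}. First I would use that, by Theorem~\ref{V:thm:hag-fun}, the functions $\{\varphi_k(\cdot,t)\}_{k\in\calK}$ are $L^2$-orthonormal for every fixed $t$, while the scalar prefactor $e^{\I S(t)/\eps}$ has modulus one; hence
\[
\|\psi_\calK(\cdot,t)\|^2 = \sum_{k\in\calK} |c_k(t)|^2 = \|c(t)\|^2,
\]
so it suffices to show that $t\mapsto\|c(t)\|$ is constant.

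Next I would differentiate and substitute $\I\eps\dot c = G(t)c$ from \eqref{V:hag-coeff-ode}:
\[
\frac{\D}{\D t}\|c(t)\|^2 = 2\,\Re\langle c(t)\mid \dot c(t)\rangle
= 2\,\Re\langle c(t)\mid \tfrac1{\I\eps}G(t)c(t)\rangle
= \tfrac2\eps\,\Im\langle c(t)\mid G(t)c(t)\rangle .
\]
The whole claim thus comes down to the single point that $\langle c(t)\mid G(t)c(t)\rangle$ is real, i.e.\ that the Galerkin matrix $G(t)$ is Hermitian. This in turn is immediate from the fact that $W_{q(t)}$, the non-quadratic Taylor remainder of the real potential $V$ at $q(t)$, is a real-valued multiplication operator and hence symmetric on the Schwartz space: taking complex conjugates in $G(t)_{\ell k}=\langle\varphi_\ell(t)\mid W_{q(t)}\varphi_k(t)\rangle$ gives $\overline{G(t)_{\ell k}} = \langle\varphi_k(t)\mid W_{q(t)}\varphi_\ell(t)\rangle = G(t)_{k\ell}$, so $G(t)^*=G(t)$, and therefore $\frac{\D}{\D t}\|c(t)\|^2=0$.

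I expect no genuine obstacle here; the lemma is the discrete shadow of the continuous norm conservation in Proposition~\ref{prop:cons}. Equivalently, since $\psi_\calK(t)$ lies, up to the unimodular phase, in the span of the test functions $\varphi_\ell(t)$, one may test the Galerkin condition \eqref{V:hag-galerkin} against $\psi_\calK(t)$ itself, obtaining $\langle \psi_\calK(t)\mid \I\eps\partial_t\psi_\calK(t) - H\psi_\calK(t)\rangle = 0$ and hence $\frac{\D}{\D t}\|\psi_\calK(t)\|^2 = \tfrac2\eps\,\Im\langle\psi_\calK(t)\mid H\psi_\calK(t)\rangle = 0$ by self-adjointness of $H$. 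The only minor point to keep in mind is that the time derivatives $\partial_t\varphi_\ell(t)$ of the moving basis functions need not lie in the approximation space, which is why one tests with $\psi_\calK(t)$ rather than with $\partial_t\psi_\calK(t)$; the Hermiticity of $G(t)$ is precisely what encodes the self-adjointness of $H$ restricted to the evolving subspace.
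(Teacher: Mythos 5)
Your proof is correct and follows essentially the same route as the paper: the paper's proof is exactly the one-line observation that $G(t)$ is Hermitian, so the Euclidean norm of $c(t)$ is preserved, and orthonormality of the $\varphi_k(t)$ then gives $L^2$-norm conservation. You merely fill in the (easy) verification that $G(t)^*=G(t)$ and add an equivalent reformulation via the Galerkin condition, both of which are fine.
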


\begin{proof}
Since the matrices $G(t)$ are Hermitian, the Euclidean norm of the coefficient vector $c(t)$ is preserved and hence, because of the orthonormality of the Hagedorn functions $\varphi_k(t)$, the $L^2$-norm of the approximate wave function $\psi_\calK(t)$ is also preserved.
\end{proof}

With this approximation method, the differential equations \eqref{V:classical-eom} for the parameters $(q(t),p(t),Q(t),P(t))$ of the Hagedorn functions are obviously independent of the coefficients $c_k(t)$, but the differential equations \eqref{V:hag-coeff-ode} depend on the parameters through the basis functions $\varphi_k(\cdot,t)=\varphi_k^\eps[q(t),p(t),Q(t),P(t)]$. We remark that with a fully variational approach for parameters and coefficients such a separation would not be attained. However, we do not have exact energy conservation with the semi-variational approach considered here.

An important feature is that the coefficients are slowly varying: 
$$
\|\dot c(t)\| \le C\eps^{1/2},
$$ 
which holds true because  the norm of the Galerkin matrix $G(t)$ is bounded by $C\eps^{3/2}$ with a constant $C$ that depends on the multi-index set $\calK$ and on the width of the underlying Gaussian $\varphi_0(\cdot,t)$. This follows directly from Lemma~\ref{lem:int-bound}. We even have the following stronger bounds for the entries of the matrix $G(t)$.

\begin{lemma}[bounds for the entries of the Galerkin matrix]
\label{V:lem:F-bound}
Let\\ ${\varphi_k^\eps=\varphi_k^\eps[q,p,Q,P]}$ for $\eps>0$ and $k\in\N^d$ denote the Hagedorn functions with parameters $(q,p,Q,P)$, and let $W$ be a real-valued function on $\Rb^d$ with arbitrarily many polynomially bounded derivatives such that $W$ and its first two derivatives vanish at $q$. For $k,\ell\in\N^d$ we then have the bound
$$
\bigl| \langle \varphi_\ell^\eps \,|\, W \varphi_k^\eps \rangle \bigr| \le c \, \eps^{\mu/2} \quad\text{ with }\quad \mu= \max\bigl(|k-\ell|,3),
$$
where $c$ is independent of $\eps$ but depends on $k,\ell$, on the matrix 2-norm of~$Q$ and on bounds of derivatives of~$W$.
\end{lemma}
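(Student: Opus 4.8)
The plan is to separate the two sources of smallness in $\langle\varphi_\ell^\eps\,|\,W\varphi_k^\eps\rangle$: the vanishing of $W$ to second order at $q$, which is worth a factor $\eps^{3/2}$, and the $L^2$-orthonormality of the Hagedorn functions, which is worth a factor $\eps^{|k-\ell|/2}$ because, by the three-term recurrence \eqref{V:rec}, multiplying $\varphi_k^\eps$ by a single coordinate $(x_i-q_i)$ produces $\sqrt\eps$ times a combination of $\varphi_{k\pm\langle j\rangle}^\eps$, so coupling $\varphi_k^\eps$ to $\varphi_\ell^\eps$ costs one factor $\sqrt\eps$ per unit of $|k-\ell|$. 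To expose both at once I would Taylor-expand $W$ about $q$ up to order $N=\mu-1$, writing $W=P_N+R_N$ with
\[
P_N(x)=\sum_{3\le|\alpha|\le N}\frac{1}{\alpha!}\,\partial^\alpha W(q)\,(x-q)^\alpha ,
\]
where the sum starts at $|\alpha|=3$ because $W$, $\nabla W$ and $\nabla^2 W$ vanish at $q$ (so $P_N\equiv 0$ when $|k-\ell|\le 3$), and $R_N$ is the Taylor remainder.

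First I would show that the polynomial part contributes nothing. Iterating \eqref{V:rec} componentwise, namely $(x_i-q_i)\varphi_k^\eps=\sqrt{\eps/2}\sum_j\bigl(Q_{ij}\sqrt{k_j+1}\,\varphi_{k+\langle j\rangle}^\eps+\overline Q_{ij}\sqrt{k_j}\,\varphi_{k-\langle j\rangle}^\eps\bigr)$, one obtains for every multi-index $\alpha$ that $(x-q)^\alpha\varphi_k^\eps$ lies in $\eps^{|\alpha|/2}\cdot\mathrm{span}\{\varphi_{k'}^\eps:|k'-k|\le|\alpha|\}$, with coefficients bounded uniformly in $\eps$ in terms of $|\alpha|$, $|k|$ and $\|Q\|_2$ (the entries of $Q$ and $\overline Q$ are bounded by $\|Q\|_2$). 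Since every $\alpha$ occurring in $P_N$ has $|\alpha|\le N=\mu-1<|k-\ell|$, or there is no such $\alpha$ at all, the orthonormality of the $\varphi_{k'}^\eps$ gives $\langle\varphi_\ell^\eps\,|\,(x-q)^\alpha\varphi_k^\eps\rangle=0$, hence $\langle\varphi_\ell^\eps\,|\,P_N\varphi_k^\eps\rangle=0$.

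It then remains to bound $\langle\varphi_\ell^\eps\,|\,R_N\varphi_k^\eps\rangle$, where only the $\eps^{3/2}$-type mechanism is needed. By Taylor's theorem and the polynomial boundedness of the derivatives of $W$ of order $N+1$, one has $|R_N(x)|\le c\,|x-q|^{N+1}(1+|x|)^M$ for some $M$. Combining this with the scaling bound $|\varphi_k^\eps(x)|\le c_k\,\eps^{-d/4}\bigl(1+|x-q|/\sqrt\eps\bigr)^{|k|}\exp\bigl(-\tfrac{1}{2\eps}(x-q)^T\Im C\,(x-q)\bigr)$, $\Im C=(QQ^*)^{-1}$ (from \eqref{phik-pol} and Lemma~\ref{lem:hag-rel}), and using Cauchy--Schwarz with $\|\varphi_\ell^\eps\|=1$, the substitution $x=q+\sqrt\eps\,\xi$ turns $\|R_N\varphi_k^\eps\|$ into $\eps^{(N+1)/2}$ times a fixed Gaussian moment integral that is finite and controlled by $|k|$, $M$ and the smallest eigenvalue $\|Q\|_2^{-2}$ of $\Im C$ — exactly the kind of estimate furnished by Lemma~\ref{lem:int-bound} with $\rho=\|Q\|_2^{-2}$. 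Since $N+1=\mu$, this yields $|\langle\varphi_\ell^\eps\,|\,R_N\varphi_k^\eps\rangle|\le c\,\eps^{\mu/2}$, and adding the vanishing contribution of $P_N$ gives the claim.

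The main obstacle is the bookkeeping of the powers of $\eps$: one must pick the Taylor order precisely so that every surviving monomial of $P_N$ is annihilated by orthogonality while $R_N$ still costs only $\eps^{\mu/2}$, and one must check that iterating the three-term recurrence \eqref{V:rec} simultaneously produces the factor $\eps^{|\alpha|/2}$, the multi-index localisation $|k'-k|\le|\alpha|$, and coefficients that stay bounded as $\eps\to0$ and depend on $Q$ only through $\|Q\|_2$ (together with $\|Q^{-1}\|_2$, which enters the pointwise bound on $\varphi_k^\eps$). Once these structural facts are in hand, the remaining Gaussian integral estimate is routine.
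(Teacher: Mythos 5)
Your proof is correct and follows essentially the same route as the paper: Taylor expansion of $W$ at $q$, the three-term recurrence \eqref{V:rec} plus orthonormality to annihilate the monomials $(x-q)^\alpha$ with $|\alpha|<|k-\ell|$, and the Gaussian moment bound of Lemma~\ref{lem:int-bound} for what remains. The only (cosmetic) difference is that you truncate the Taylor polynomial at order exactly $\mu-1$ so that the polynomial part vanishes identically and only the integral remainder survives, whereas the paper keeps a general higher-order expansion and observes that each surviving monomial already contributes at least $O(\eps^{\mu/2})$.
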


\begin{proof} We start from the Taylor expansion of $W$ at $q$ with integral remainder term:
\begin{align*}
W(x) = &\sum_{3 \le |m| \le N} \frac1{|m|!} \,\partial^m W(q) (x-q)^m 
\\
&+\ 
\int_0^1 \frac{(1-\theta)^N}{N!} \sum_{|m|=N+1} \partial^m W(q+\theta(x-q)) (x-q)^m\,{\mathrm d}\theta,
\end{align*}
where for a multi-index $m=(m_1,\dots,m_d)$, we denote the partial derivatives 
$\partial^m W = \partial_1^{m_1}\dots \partial_d^{m_d} W$ and the product $(x-q)^m=(x_1-q_1)^{m_1}\dots (x_d-q_d)^{m_d}$.

So we need to study the inner product $\langle \varphi_\ell^\eps \,|\, (x-q)^m \varphi_k^\eps \rangle$
for $k,\ell,m$. By the normalisation of the wave packets, we have
\[
|\langle \varphi_\ell^\eps \,|\, (x-q)^m \varphi_k^\eps \rangle| \le \|(x-q)^m \varphi_k\|.
\]
Next we write $\varphi_k^\eps$ in the form \eqref{phik-pol}, viz.
\bch\[
\varphi_k^\eps(x) = \frac{1}{\sqrt{2^{|k|}k!}} \, p_k\!\left(\tfrac{1}{\sqrt\eps}Q^{-1}(x-q)\right) \varphi_0(x).
\]\ech
Hence, there exists a constant $\gamma>0$ that depends on $k$ and $Q^{-1}\,\overline Q$, such that
\begin{align*}
&\| (x-q)^m \varphi_k^\eps \| \\
&\le \gamma \left( \int_{\Rb^d} |x|^{2m} \ 
\left(1 + \tfrac{1}{\eps}|Q^{-1}x|^2\right)^{k} \ \exp(-\tfrac1\eps x^T (QQ^*)^{-1} x) \D x \right)^{1/2}. 
\end{align*}
By Lemma~\ref{lem:int-bound},
\[
\langle \varphi_\ell^\eps \,|\, (x-q)^m \varphi_k^\eps\rangle = O(\eps^{|m|/2}).
\] 
Here we note further that the smallest eigenvalue of the width matrix $\Im C=(QQ^*)^{-1}$ equals the square of the largest singular value of $Q$, which is the matrix 2-norm of $Q$. 

Moreover, in view of the orthogonality of the Hagedorn functions and the three-term recurrence relation \eqref{V:rec}, the inner product 
$\langle \varphi_\ell^\eps \,|\, (x-q)^m \varphi_k^\eps \rangle$ can be non-zero only if
there exist standard unit vectors $\langle j_1 \rangle,\dots,\langle j_{|m|} \rangle$ of $\Rb^d$ such that with appropriate signs,
$$
k\pm \langle j_1 \rangle \pm \dots \pm \langle j_{|m|} \rangle = \ell,
$$
which requires
$
|k-\ell| \le |m|.
$
So we obtain
\begin{equation}\label{V:klm}
\bigl| \langle \varphi_\ell^\eps \,|\, (x-q)^m \varphi_k^\eps \rangle \bigr| = 
\begin{cases} 0 &\mbox{if } |m|<|k-\ell| \\
O(\eps^{|m|/2}) & \mbox{if } |m| \ge |k-\ell|. \end{cases}
\end{equation}
Using these estimates after inserting the Taylor expansion of $W$ at $q$ in the expression $ \langle \varphi_\ell^\eps \,|\, W \varphi_k^\eps \rangle$ gives the result. 
\end{proof}

\subsection{Approximation  of higher order in $\eps$ by the Hagedorn--Galerkin method}
Theorem~\ref{thm:error-gauss} provided an $O(\eps^{1/2})$ error bound over time $O(1)$ for variational Gaussians. For any chosen multi-index set $\calK$, a bound of the same approximation order $1/2$ in $\eps$ can also be obtained for the Hagedorn--Galerkin method of the previous subsection. However, as we will show next, an error bound of any prescribed order in $\eps$ is obtained if the initial wave packet has non-vanishing coefficients $c_k(0)$ only for $k$ in some fixed initial multi-index set $\calK_0$ and the multi-index set $\calK$ of the Galerkin method is substantially larger than $\calK_0$. We begin by studying the particular case where $\calK_0=\{ k_0 \}$ has a single element $k_0\in \N^d$. The case of a general finite initial set $\calK_0$ then follows by linear superposition.

\begin{theorem} [higher-order error bound]
\label{V:thm:hag-err}
Let $N\ge 2$ and suppose that for some $k_0\in \N^d$, the multi-index set $\calK$ of the Galerkin approximation \eqref{V:hag-series} is such that
\begin{equation}\label{V:K}
\{ k\in \N^d\,:\, |k-k_0| < 3N \} \subset \calK.
\end{equation}
Let $\psi(\cdot,t)$ denote the solution of the Schr\"odinger equation \eqref{tdse} with a smooth, polynomially bounded potential $V$, for initial data given by a single Hagedorn function with multi-index $k_0$:
$$
\psi(\cdot,0) = \varphi_{k_0}^\eps[q(0),p(0),Q(0),P(0)],
$$
where $Q(0),P(0)$ satisfy the symplecticity relations \eqref{hag-rel}. Let $\psi_\calK(\cdot,t)$ denote the Galerkin approximation \eqref{V:hag-series} with parameters $(q(t),p(t),Q(t),P(t))$  satisfying the classical equations
\eqref{V:classical-eom} and with coefficients $c_k(t)$ for $k\in\calK$ determined by \eqref{V:hag-coeff-ode}, with initial data 
$\psi_\calK(\cdot,0)=\psi(\cdot,0)$, i.e., $c_k(0)=\delta_{k,k_0}$. Then, the error is bounded by
$$
\| \psi_\calK(\cdot,t) - \psi(\cdot,t) \| \le C \eps^{N/2} \quad\text{ for }\quad 0\le t\le T,
$$
where $C$ is independent of $\eps$, but depends on $k_0$, $\calK$, on $\max_{0\le t \le T}\|Q(t)\|_2$ and $T$.
\end{theorem}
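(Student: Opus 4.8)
The plan is to show that the Galerkin approximation $\psi_\calK$ solves the Schr\"odinger equation~\eqref{tdse} up to a defect $d(t)$ with $\|d(t)\|\le C\eps^{N/2}$, and then to invoke the stability estimate of Lemma~\ref{lem:stability}: since $\psi_\calK(\cdot,0)=\psi(\cdot,0)$, this immediately yields $\|\psi_\calK(\cdot,t)-\psi(\cdot,t)\|\le\int_0^t\|d(s)\|\,\D s\le C\,T\eps^{N/2}$. To identify the defect, I would insert the splitting $V=U_{q(t)}+W_{q(t)}$ and use~\eqref{V:defect} together with the time-dependent version of Theorem~\ref{V:thm:hagwp-quad}: the bracketed term in~\eqref{V:defect} vanishes, so the residual of~\eqref{tdse} for $\psi_\calK$ equals $e^{\I S/\eps}\sum_{k\in\calK}(\I\eps\,\dot c_k\,\varphi_k-c_k\,W_{q}\varphi_k)$. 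By the Galerkin condition~\eqref{V:hag-galerkin} this residual is $L^2$-orthogonal to $\mathrm{span}\{\varphi_\ell:\ell\in\calK\}$, and since the terms $\I\eps\dot c_k\varphi_k$ already lie in that span, the residual in fact equals $-e^{\I S/\eps}(\Id-\Pi_\calK)\sum_{k\in\calK}c_k W_{q}\varphi_k$, with $\Pi_\calK$ the orthogonal projector onto that span. Hence $d(t)=\tfrac1{\I\eps}\cdot(\text{residual})$, and by orthonormality and completeness of the Hagedorn functions (Theorem~\ref{V:thm:hag-fun}),
$$
\eps^2\,\|d(t)\|^2 \;=\; \sum_{\ell\notin\calK}\Bigl|\,\sum_{k\in\calK}c_k(t)\,\langle\varphi_\ell\,|\,W_{q(t)}\varphi_k\rangle\,\Bigr|^2.
$$

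Two estimates then control this right-hand side, both uniform for $t\in[0,T]$ (along the classical trajectory $q(t)$ stays in a bounded set, so the derivatives of $W_{q(t)}$ are uniformly bounded, while $\max_{0\le t\le T}\|Q(t)\|_2$ is finite by hypothesis). First, Lemma~\ref{V:lem:F-bound} gives $|\langle\varphi_\ell^\eps\,|\,W_{q(t)}\varphi_k^\eps\rangle|\le c\,\eps^{\max(|k-\ell|,3)/2}$. Second, and this is the key point, the Galerkin coefficients decay away from $k_0$: $|c_k(t)|\le C\,\eps^{|k-k_0|/6}$ for $k\in\calK$. I would prove this from the coefficient ODE~\eqref{V:hag-coeff-ode} by passing to the rescaled vector $\tilde c_k=\eps^{-|k-k_0|/6}\,c_k$. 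Using the entry bound for $G(t)$ together with the triangle inequality $|\ell-k_0|-|k-k_0|\le|\ell-k|$, the conjugated matrix $\tilde G=\bigl(\eps^{-|\ell-k_0|/6}\,G_{\ell k}\,\eps^{|k-k_0|/6}\bigr)_{\ell,k\in\calK}$ satisfies $|\tilde G_{\ell k}|\le c\,\eps^{-|k-\ell|/6+\max(|k-\ell|,3)/2}\le c\,\eps$, because the exponent $-\tfrac16 j+\tfrac12\max(j,3)$ has minimum value $1$ over integers $j\ge0$, attained at $j=3$. Therefore $\tfrac1{\I\eps}\tilde G$ has operator norm bounded independently of $\eps$, and since $\I\eps\,\dot{\tilde c}=\tilde G\tilde c$ with $\|\tilde c(0)\|=1$, Gronwall's inequality keeps $\|\tilde c(t)\|$ bounded on $[0,T]$, which is the claimed coefficient bound.

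Finally I would combine the two estimates. For $k\in\calK$ we have $|k-k_0|\le 3N-1$ by definition, and for $\ell\notin\calK$ condition~\eqref{V:K} forces $|\ell-k_0|\ge 3N$, hence $|k-\ell|\ge|\ell-k_0|-|k-k_0|$ and, more usefully, $\max(|k-\ell|,3)\ge\max\bigl(3,\,|\ell-k_0|-|k-k_0|\bigr)$. A short computation (case distinction on whether $|k-\ell|<3$; the minimum sits at $|k-k_0|=3N-3$, $|\ell-k_0|=3N$) then gives
$$
\tfrac16\,|k-k_0|+\tfrac12\max(|k-\ell|,3)\;\ge\;\tfrac N2+1,
$$
so each summand $|c_k(t)\,\langle\varphi_\ell|W_{q(t)}\varphi_k\rangle|$ is $O(\eps^{N/2+1})$; moreover one retains enough margin that, after splitting off the finitely many $\ell$ whose distance to $\calK$ is of order $N$, the remaining sum over $\ell\notin\calK$ is dominated by a geometrically convergent series. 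This yields $\eps^2\|d(t)\|^2\le C\eps^{N+2}$, i.e.\ $\|d(t)\|\le C\eps^{N/2}$, and the stability estimate of the first paragraph completes the proof, with $C$ depending on $k_0$, $\calK$, $\max_{0\le t\le T}\|Q(t)\|_2$ and $T$.

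The hard part is the coefficient decay estimate. The Galerkin matrix $G$ is not banded --- every entry can be nonzero --- and carries only the geometric off-diagonal decay $\eps^{\max(|k-\ell|,3)/2}$, so one cannot argue by counting how many applications of $G$ are needed to reach a distant multi-index. The weighted-norm trick above (equivalently, a careful Dyson-series bookkeeping) is what converts this off-diagonal decay into a pointwise $\eps^{|k-k_0|/6}$ bound on the coefficients, and choosing the weight exponent so that $\tfrac1{\I\eps}\tilde G$ stays bounded yet the weight is still strong enough to beat the $\max(|k-\ell|,3)/2$ coupling in the final summation over $\ell\notin\calK$ is the delicate balance.
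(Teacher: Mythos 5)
Your overall architecture coincides with the paper's: the defect is identified as $\tfrac1{\I\eps}P_\calK^\perp W_{q(t)}\psi_\calK(t)$, the coefficient decay $|c_k(t)|\le C\eps^{|k-k_0|/6}$ is obtained by exactly the same rescaling $\tilde c_k=\eps^{-|k-k_0|/6}c_k$ with the exponent $1/6$ chosen so that $\tfrac1{\I\eps}\tilde G$ stays $O(1)$, the exponent arithmetic $\tfrac16|k-k_0|+\tfrac12\max(|k-\ell|,3)\ge\tfrac N2+1$ is the paper's, and the conclusion is via Lemma~\ref{lem:stability}. (A small misreading: \eqref{V:K} says $\calK$ \emph{contains} the ball of radius $3N$ about $k_0$, not that it is contained in it, so $k\in\calK$ does not give $|k-k_0|\le 3N-1$; this is harmless, since larger $|k-k_0|$ only improves the coefficient decay.)

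The genuine gap is in the defect estimate, at the sentence claiming that ``the remaining sum over $\ell\notin\calK$ is dominated by a geometrically convergent series.'' You apply Lemma~\ref{V:lem:F-bound} to the full non-polynomial remainder $W_{q(t)}$ for every pair $(k,\ell)$ and sum over all $\ell\notin\calK$. But the constant $c$ in that lemma depends on $k$ and $\ell$: to extract the factor $\eps^{|k-\ell|/2}$ one must Taylor-expand $W$ to degree at least $|k-\ell|$, so the constant involves derivatives of $W$ of order up to $|k-\ell|$ and moments of the Hagedorn polynomials of that degree. For a general smooth, polynomially bounded potential these constants are not uniformly bounded in $\ell$, let alone summable, so the geometric-series claim is unjustified; the only uniform statement available is $\sum_\ell|\langle\varphi_\ell|W_q\varphi_k\rangle|^2=\|W_q\varphi_k\|^2=O(\eps^3)$, which only recovers the order-$\eps^{1/2}$ bound. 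The missing idea is to split $W_{q(t)}=W_{N+1}(\cdot,t)+R_{N+1}(\cdot,t)$ into its degree-$(N+1)$ Taylor polynomial and remainder \emph{before} projecting: then $\tfrac1\eps\|R_{N+1}\psi_\calK\|=O(\eps^{N/2})$ directly by Lemma~\ref{lem:int-bound} (no projection needed), while $W_{N+1}\psi_\calK$ is a \emph{finite} linear combination of Hagedorn functions with $|\ell|\le K+N+1$, so the sum over $\ell\notin\calK$ has finitely many terms and your per-term estimate, which is correct, finishes the argument. With that truncation inserted, your proof becomes the paper's.
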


\begin{proof} The proof is in three parts: in the first part we derive bounds for the coefficients $c_k(t)$ that decay geometrically with powers of $\eps$ for $k$ moving away from $k_0$. In the second part we use this to bound the defect that results from inserting the Galerkin approximation into the Schr\"odinger equation. Finally, we conclude from a small defect to a small error.

(a) We begin by showing that for $0\le t \le T$,
\begin{equation}\label{V:coeff-decay}
c_k(t) =O( \eps^{|k-k_0|/6}),
\end{equation}
where the constant symbolised by the $O$-notation depends on $k_0$, $\calK$ and $T$ but is independent of $\eps$. 
Let us define, for some $\alpha\ge 0$,
$$
b_k(t) = c_k(t) / \eps^{\alpha |k-k_0|}.
$$
Then $b_k(0)=\delta_{k,k_0}$, and since $c(t)$ solves \eqref{V:hag-coeff-ode}, $b(t)=(b_k(t))_{k\in\calK}$ is a solution to the differential equation
\bch
$$
\dot b(t) =  F(t) b(t) \quad\text{with }\ F(t)=(f_{\ell k}(t)) , \ \ 
f_{\ell k}(t) = \frac 1{\I \eps} \eps^{-\alpha |\ell-k_0|} g_{\ell k}(t) \eps^{\alpha|k-k_0|}.
$$
We choose $\alpha$ maximal such that $f_{\ell k}(t)=O(1)$ for all $\ell,k$.
By Lemma~\ref{V:lem:F-bound}, $g_{\ell k}(t) = O(\eps^{\max(|k-\ell|,3)/2})$, 
\ech
and by the triangle inequality we have
$- |\ell-k_0| + |k-k_0| \ge - |k-\ell|$. So we require, for arbitrary $n=|k-\ell|$,
$$
-1 - \alpha n + \tfrac12\max(n,3) \ge 0.% \quad\text{for all $n\in \N$}.
$$
For $n=3$ this yields the condition $\alpha\le 1/6$, and it is then seen that the inequality is satisfied for all $n\in\N$ with $\alpha=1/6$. For this $\alpha$, we thus have $b(t)=O(1)$, which proves \eqref{V:coeff-decay}.

(b) We show that the defect obtained on inserting the approximation $\psi_\calK$ into the Schr\"odinger equation,
$$
d(t) := \frac{\partial \psi_\calK}{\partial t}(t)  -  \frac 1 {\I\eps}  H \psi_\calK(t) ,
$$
is bounded, uniformly for $0\le t \le T$, by 
\begin{equation} \label{V:defect-bound}
\| d(t) \| = O(\eps^{N/2}).
\end{equation}
By \eqref{V:defect} and \eqref{V:hag-coeff-ode}, the defect equals
$$
d(t) =   \frac 1 {\I\eps} P_\calK(t)^\perp W_{q(t)} \psi_\calK(t),
$$
where $P_\calK(t)^\perp={\mathrm{Id}} - P_\calK(t)$ and $P_\calK(t)$ is the orthogonal projection from $L^2(\Rb^d)$ onto the space spanned by $\varphi_k(t)=\varphi_k^\eps[q(t),p(t),Q(t),P(t)]$ for $k\in\calK$. 

We split $W_{q(t)}$ into its $(N+1)$st degree Taylor polynomial $W_{N+1}(\cdot,t)$ at $q(t)$ and the remainder $R_{N+1}(\cdot,t)$. 
With the integral formula for the remainder term, Lemma~\ref{lem:int-bound} shows that
\begin{equation} \label{V:RN}
\| \tfrac1\eps R_{N+1}(t)  \psi_\calK(t)\| = O(\eps^{(N+2)/2-1}) = O(\eps^{N/2}).
\end{equation}
On the other hand, $W_{N+1}(t)\psi_\calK(t)$ is a linear combination of Hagedorn functions $\varphi_\ell(t)$ with $|\ell| \le K +N+1$, where
$K=\max_{k\in\calK}|k|$. We thus have the finite sum
$$
 \frac 1 {\I\eps} P_\calK(t)^\perp W_{N+1}(t) \psi_\calK(t) =  \frac 1 {\I\eps}\, e^{\I S(t)/\eps}
 \sum_{\ell \notin \calK} \sum_{k\in\calK} c_k(t) \langle \varphi_\ell(t), W_{N+1}(t) \varphi_k(t) \rangle,
 $$
 where the first sum is over all $\ell\in\N^d$ with $|\ell| \le K +N+1$ and $\ell\notin\calK$.
 By Lemma~\ref{V:lem:F-bound}, 
 $
  \langle \varphi_\ell(t), W_{N+1}(t) \varphi_k (t)\rangle = O(\eps^{\max(|k-\ell|,3)/2}),
 $
and together with \eqref{V:coeff-decay} this gives us
$$
 \frac 1 {\I\eps} \,c_k(t) \langle \varphi_\ell(t), W_{N+1}(t) \varphi_k(t) \rangle = O(\eps^{-1+|k-k_0|/6 + \max(|k-\ell|,3)/2}).
$$
By the triangle inequality, we have $|k-k_0|\ge |\ell-k_0|-|k-\ell|$, and by condition \eqref{V:K} we have $|\ell-k_0|\ge 3N$ for $\ell\notin\calK$. Therefore,
$$
-1+\frac{|k-k_0|}6 + \frac{\max(|k-\ell|,3)}2 \ge -1 + \frac{N}2 -\frac{|k-\ell|}6+ \frac{\max(|k-\ell|,3)}2 \ge \frac N2,
$$
so that
$$
 \frac 1 {\I\eps} \,c_k(t) \langle \varphi_\ell(t), W_N(t) \varphi_k(t) \rangle = O(\eps^{N/2}),
$$
which together with \eqref{V:RN} yields  \eqref{V:defect-bound}.

(c) Finally, with Lemma~\ref{lem:stability} we conclude from a small defect to a small error: for $0\le t \le T$,
$$
\| \psi_\calK(t) - \psi(t) \| \le \int_0^t \| d(s)\|  \D s = O(\eps^{N/2}).
$$
This gives the stated result.
\end{proof}

\begin{remark}
For $N=2$, condition \eqref{V:K} can be replaced by the weaker condition
$$
\{ k\in \N^d\,:\, |k-k_0| \le 3 \} \subset \calK.
$$
We first obtain  $c_k(t)=O(\eps^{1/2})$ for $k\ne k_0$. As in (b) above, using this bound, the above condition on $\calK$ and Lemma~\ref{V:lem:F-bound} for $k=k_0$ and $|\ell-k_0|\ge 4$, we obtain  the defect bound $\|d(t)\|=O(\eps)$, which yields the $O(\eps)$ error bound. 
%In particular, if we start from a Gaussian ($k_0=0$), then including cubic polynomials times a Gaussian yields an $O(\eps)$ approximation to the exact wave function, as opposed to the $O(\eps^{1/2})$ approximation by just a Gaussian (see Theorem~\ref{thm:error-gauss}).
\end{remark}

By the above proof and by linearity, we immediately get the following more general result where the coefficients of the initial data decay sufficiently fast away from some finite multi-index set $\calK_0$.

\begin{theorem} [higher-order error bound]
\label{V:cor:hag-err}
Let $N\ge 2$ and suppose that for some finite multi-index set $\calK_0\subset \N^d$, the multi-index set $\calK$ of the Galerkin approximation \eqref{V:hag-series} is such that, with the distance $\mathrm{dist}(k,\calK_0) := \min_{k_0\in\calK_0} {|k-k_0|}$,
$$%\begin{equation}\label{V:K}
\{ k\in \N^d\,:\, \mathrm{dist}(k,\calK_0) < 3N  \} \subset \calK.
$$%\end{equation}
Let $\psi(\cdot,t)$ denote the solution of the Schr\"odinger equation \eqref{tdse} with a smooth, polynomially bounded potential $V$, with initial data given by a linear combination of Hagedorn function with multi-indices $k\in\calK$:
$$
\psi(\cdot,0) = \sum_{k\in\calK} c_k(0) \varphi_{k_0}^\eps[q(0),p(0),Q(0),P(0)],
$$
where $Q(0),P(0)$ satisfy the symplecticity relations \eqref{hag-rel} and the coefficients are bounded by
$$
|c_k(0)| \le C_0 \,\eps^{\mathrm{dist}(k,\calK_0)/6},  \qquad k\in\calK.
$$
Let $\psi_\calK(\cdot,t)$ denote the Galerkin approximation \eqref{V:hag-series} with the parameters $(q(t),p(t),Q(t),P(t))$  satisfying the classical equations
\eqref{V:classical-eom} and with coefficients $c_k(t)$ for $k\in\calK$ determined by \eqref{V:hag-coeff-ode}, with initial data 
$\psi_\calK(\cdot,0)=\psi(\cdot,0)$. Then, the error is bounded by
$$
\| \psi_\calK(\cdot,t) - \psi(\cdot,t) \| \le C \eps^{N/2} \quad\text{ for }\quad 0\le t\le T,
$$
where $C$ is independent of $\eps$, but depends on $\calK_0$, $\calK$, $C_0$, on $\max_{0\le t \le T}\|Q(t)\|_2$ and $T$.
\end{theorem}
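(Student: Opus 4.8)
The plan is to rerun the three‑part proof of Theorem~\ref{V:thm:hag-err}, replacing the single multi‑index $k_0$ by the finite set $\calK_0$ and the weight $|k-k_0|$ throughout by the set distance $\mathrm{dist}(k,\calK_0)$; since both the exact Schr\"odinger propagator and the coefficient equations \eqref{V:hag-coeff-ode} are linear, this also covers the superposition initial datum considered here. The only additional elementary input needed is the triangle inequality $\mathrm{dist}(k,\calK_0)-\mathrm{dist}(\ell,\calK_0)\ge-|k-\ell|$. As a preliminary, I would note that, since the matrices $G(t)$ in \eqref{V:hag-coeff-ode} are Hermitian, $\sum_{k\in\calK}|c_k(t)|^2$ is conserved and equals $\sum_{k\in\calK}|c_k(0)|^2\le C_0^2|\calK|$, so that $\|\psi_\calK(t)\|$ and $\sum_{k\in\calK}|c_k(t)|$ stay bounded on $[0,T]$.

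The first step, corresponding to part~(a) of the proof of Theorem~\ref{V:thm:hag-err}, is to show that $c_k(t)=O(\eps^{\mathrm{dist}(k,\calK_0)/6})$ uniformly on $[0,T]$. Setting $b_k(t)=\eps^{-\mathrm{dist}(k,\calK_0)/6}c_k(t)$, the assumed decay $|c_k(0)|\le C_0\eps^{\mathrm{dist}(k,\calK_0)/6}$ gives $b(0)=O(1)$, and $b(t)=(b_k(t))_{k\in\calK}$ solves $\I\eps\dot b=\widetilde G(t)b$ with $\widetilde G_{\ell k}(t)=\eps^{-\mathrm{dist}(\ell,\calK_0)/6}G_{\ell k}(t)\eps^{\mathrm{dist}(k,\calK_0)/6}$. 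Lemma~\ref{V:lem:F-bound} gives $G_{\ell k}(t)=O(\eps^{\max(|k-\ell|,3)/2})$, and together with the triangle inequality above this yields $\tfrac1\eps\widetilde G_{\ell k}(t)=O(\eps^{-1-|k-\ell|/6+\max(|k-\ell|,3)/2})=O(1)$, since $-1-n/6+\max(n,3)/2\ge0$ for every integer $n\ge0$. As the coefficient matrix $\tfrac1{\I\eps}\widetilde G(t)$ of this finite linear system is thus $O(1)$ on $[0,T]$, a Gr\"onwall estimate gives $b(t)=O(1)$, which is the claim.

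The second step bounds the Galerkin defect $d(t)=\tfrac1{\I\eps}P_\calK(t)^\perp W_{q(t)}\psi_\calK(t)$ from \eqref{V:defect}, where $W_{q(t)}=V-U_{q(t)}$ and $P_\calK(t)$ projects onto $\mathrm{span}\{\varphi_k(\cdot,t):k\in\calK\}$. I would split $W_{q(t)}$ into its degree‑$(N+1)$ Taylor polynomial $W_{N+1}(\cdot,t)$ at $q(t)$, which still vanishes to second order at $q(t)$ so that Lemma~\ref{V:lem:F-bound} applies, and an integral remainder $R_{N+1}(\cdot,t)$. The remainder contributes $\|\tfrac1\eps R_{N+1}(t)\psi_\calK(t)\|=O(\eps^{N/2})$ by Lemma~\ref{lem:int-bound} and the boundedness of $\sum_k|c_k(t)|$. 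For the polynomial part, the three‑term recurrence \eqref{V:rec} shows that $W_{N+1}(t)\psi_\calK(t)$ is a finite linear combination of Hagedorn functions, so $P_\calK(t)^\perp$ applied to it equals $\tfrac1{\I\eps}e^{\I S(t)/\eps}\sum_{\ell\notin\calK}\sum_{k\in\calK}c_k(t)\langle\varphi_\ell(t),W_{N+1}(t)\varphi_k(t)\rangle\varphi_\ell(t)$; Lemma~\ref{V:lem:F-bound} together with the first step makes each term $O(\eps^{-1+\mathrm{dist}(k,\calK_0)/6+\max(|k-\ell|,3)/2})$, and since $\mathrm{dist}(\ell,\calK_0)\ge3N$ for $\ell\notin\calK$ (the contrapositive of the hypothesis on $\calK$) and $\mathrm{dist}(k,\calK_0)\ge\mathrm{dist}(\ell,\calK_0)-|k-\ell|$, the exponent comes out $\ge N/2$ in every term. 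Thus $\|d(t)\|=O(\eps^{N/2})$, and since $\psi_\calK-\psi$ vanishes at $t=0$ and $\partial_t(\psi_\calK-\psi)=\tfrac1{\I\eps}H(\psi_\calK-\psi)+d$, Lemma~\ref{lem:stability} finally gives $\|\psi_\calK(t)-\psi(t)\|\le\int_0^t\|d(s)\|\,\D s=O(\eps^{N/2})$.

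There is no genuine obstacle, but one point deserves attention: one cannot simply invoke Theorem~\ref{V:thm:hag-err} term by term for each summand $c_k(0)\varphi_k(\cdot,0)$ of the initial data, because its hypothesis $\{j:|j-k|<3N\}\subset\calK$ need not hold for an arbitrary $k\in\calK$ under the weaker assumption $\{k:\mathrm{dist}(k,\calK_0)<3N\}\subset\calK$ made here, and for those $k\in\calK$ whose distance to $\calK_0$ is close to (but below) $3N$ neither the term‑by‑term estimate nor the crude bound $\|\psi_\calK^{[k]}(t)-\psi^{[k]}(t)\|\le2$ combined with $|c_k(0)|\le C_0\eps^{\mathrm{dist}(k,\calK_0)/6}$ is sharp enough. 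Running the argument directly, with the weight $\mathrm{dist}(\cdot,\calK_0)$ built into the coefficient bounds from the start, circumvents this; the assumed initial decay is exactly what forces $b(0)=O(1)$, and the triangle inequality for $\mathrm{dist}(\cdot,\calK_0)$ is what propagates this bound in time and drives the exponent bookkeeping in the defect estimate.
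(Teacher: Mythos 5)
Your proof is correct and follows exactly the route the paper intends: the paper justifies this theorem only with the remark ``by the above proof and by linearity'', i.e.\ by rerunning the three-part argument of Theorem~\ref{V:thm:hag-err} with the weight $|k-k_0|$ replaced by $\mathrm{dist}(k,\calK_0)$ and using the triangle inequality for the set distance, which is precisely what you do. Your closing observation that a naive term-by-term invocation of the single-index theorem would fail (since the hypothesis $\{j:|j-k|<3N\}\subset\calK$ need not hold for every $k\in\calK$) is a correct and worthwhile clarification of the paper's terse justification.
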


The above proof shows that as long as the position matrix satisfies a bound $\| Q(t) \|_2 \le M$ and the positions remain in a fixed compact set, the error is bounded by
$$
\| \psi_\calK(\cdot,t) - \psi(\cdot,t) \| \le C_M \,t\,\eps^{N/2},
$$
with $C_M$ independent of $t$, 
so that potentially the approximation remains accurate over very long time scales. The growth of $\| Q(t) \|_2$ is determined by the linearised classical equations \eqref{V:classical-eom}. In the case of a positive Lyapunov exponent, the time scale cannot exceed a constant times $\log \eps^{-1}$, but in more favourable cases, such as integrable and near-integrable systems, where $\| Q(t) \|_2$ has only linear growth for all (or extremely long) times, 
the time scale of approximation can be longer; see the discussion of the Ehrenfest time in Section~\ref{subsec:ehrenfest}.

\subsection{Notes}
The construction and results in this section are due to \cn{Hag98} and \cn{Hag81}.   
Beyond the estimates of finite order in $\eps$ shown here, 
\cn{HagJ99} and \cn{HagJ00} derived exponentially small estimates in the case of an analytic potential.

\cn{Hag98} also showed that his functions behave well under the scaled Fourier transform
$
\calF_\eps \varphi(\xi) \!=\! (2\pi\eps)^{-d/2} \!\int_{\Rb^d}
\varphi(x)\, \e^{-\I\xi\cdot x/\eps}\D x
$, \bch$\xi\in\Rb^d$\ech\;: for every~$k$, 
$$%\begin{equation}\label{V:fourier-0}
\calF_\eps \varphi_k^\eps [q,p,Q,P](\xi) =
e^{-ip\cdot q/\eps} \varphi_k^\eps [p,-q,P,-Q](\xi).
$$%\end{equation}
\cn{LasT14} gave transformation properties of the Hagedorn functions under Wigner and Fourier--Bros--Iagolnitzer (FBI) transforms and derive remarkable properties of the polynomial factor of the Hagedorn functions. The generating function for these polynomials was given by \cn{Hag15} and \cite{DieKT17}.

Hagedorn's semiclassical wave packets are closely related to, or coincide with, generalised coherent states or generalised squeezed states as studied by \cn{Com92}, \cn{Rob07} and
\cn{ComR12}. The precise relationship was expounded by \cn{LasT14} and more recently by \cn{Ohs19}.

\cn{Ohs18} derived differential equations for the variational approximation by a {\it single} Hagedorn function $\varphi_k^\eps$ of arbitrary index $k$, for an approximate Hamiltonian.
A fully variational approximation by semiclassical wave packets  \eqref{V:hag-series} appears not to have been considered in the literature, not least because a separation of the motion of the position and momentum parameters $[q,p,Q,P]$ from that of the coefficients $c_k$ is not feasible for a fully variational approximation, as opposed to the semi-variational approximation considered here.

Hagedorn wave packets were proposed as a computational tool for semiclassical quantum dynamics by
\cn{FaoGL09}; see  also  \cn{GraH14}.

%\newpage
%\section{Continuous Gaussian approximations}
%\newpage
\def\calB{{\mathcal B}}
\def\calC{{\mathcal C}}
\def\calD{{\mathcal D}}
\def\calH{{\mathcal H}}
\def\calI{{\mathcal I}}
\def\calJ{{\mathcal J}}
\def\calL{{\mathcal L}}

\section{Continuous superpositions of Gaussians}
\label{sec:csg}
In this section we prove that wave functions of the semiclassical Schr\"odinger equation \eqref{tdse-sc} with general $L^2$ initial data can be approximated by continuous superpositions of Gaussian wave packets with an $O(\eps)$ error in the $L^2$-norm. We will explore 
both major types of such approximations, based on either frozen or thawed evolving Gaussians. In both cases, the approximations can be numerically realised with particle methods that use the classical equations of motion and their linearisation.

\subsection{Continuous superpositions of thawed and frozen Gaussians}

We consider semiclassical approximation for the Schr\"odinger equation \eqref{tdse-sc}
with a smooth, subquadratic potential function $V:\Rb^d\to\Rb$ and general initial data $\psi_0$. For convenience we take the initial wave function in the class of complex-valued Schwartz functions $\calS(\Rb^d)$, but we note that our error bounds extend directly to general $L^2$ initial data by density. We recall that a Schwartz function on $\Rb^d$ is an infinitely differentiable function that together with its derivatives decays faster than the inverse of any polynomial. 

Our main tool for the representation of wave functions will be the wave packet 
transform.
% and related bounded operators that be can be derived from them. 
We again let $\langle\cdot\mid\cdot\rangle$ denote the $L^2$ inner product on $\Rb^d$ and let $\|\cdot\|$ denote the $L^2$-norm.

\begin{proposition}[wave packet transform]\label{prop:wp_trafo} 
For any Schwartz function $g:\Rb^d\to\C$ of unit norm we define the corresponding wave packet\footnote{We will not indicate the obvious dependence on $\eps$ in the notation: $g_z=g_z^\eps$.}
\[
g_z(x) = \eps^{-d/4}\, g\!\left(\frac{x-q}{\sqrt\eps}\right)\, \e^{\I p\cdot (x-q)/\eps},\qquad x\in\Rb^d
\]
for $z=(q,p)\in\Rb^{2d}$.
Then, for every Schwartz function  $\psi\in \calS(\Rb^d)$, 
\begin{align*}
\psi (x) &= (2\pi\eps)^{-d} \int_{\Rb^{2d}} \langle g_z|\psi\rangle\, g_z (x) \D z,\qquad x\in\Rb^d,\\
\|\psi\|^2  &= (2\pi\eps)^{-d} \int_{\Rb^{2d}} |\langle g_z|\psi\rangle|^2 \D z.
\end{align*}
\end{proposition}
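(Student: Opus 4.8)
The plan is to reduce both identities to the Plancherel theorem for the $\eps$-scaled Fourier transform $\calF_\eps\varphi(\xi)=(2\pi\eps)^{-d/2}\int_{\Rb^d}\varphi(x)\,\e^{-\I\xi\cdot x/\eps}\D x$, which is unitary on $L^2(\Rb^d)$. The key observation is that, for fixed position $q$, the map $p\mapsto\langle g_z|\psi\rangle$ is, up to a phase and a constant, a scaled Fourier transform: setting $h_q(x)=\eps^{-d/4}\,\overline{g((x-q)/\sqrt\eps)}\,\psi(x)$ and pulling the factor $\e^{\I p\cdot q/\eps}$ out of the oscillatory integrand, one finds $\langle g_z|\psi\rangle=\e^{\I p\cdot q/\eps}\,(2\pi\eps)^{d/2}\,\calF_\eps h_q(p)$, and therefore $|\langle g_z|\psi\rangle|^2=(2\pi\eps)^d\,|\calF_\eps h_q(p)|^2$.

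For the norm identity I would integrate this relation first in $p$ and then in $q$. Integrating in $p$ and using $\|\calF_\eps h_q\|=\|h_q\|$ gives $\int_{\Rb^d}|\langle g_z|\psi\rangle|^2\D p=(2\pi\eps)^d\,\eps^{-d/2}\int_{\Rb^d}|g((x-q)/\sqrt\eps)|^2\,|\psi(x)|^2\D x$. Integrating in $q$ and exchanging the order of integration, which is legitimate by Tonelli since the integrand is nonnegative, the inner $q$-integral becomes, after the substitution $w=(x-q)/\sqrt\eps$, exactly $\|g\|^2=1$, independently of $x$. What remains is $(2\pi\eps)^d\int_{\Rb^d}|\psi(x)|^2\D x$, which is the claimed Plancherel-type formula.

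For the reconstruction formula I would set $\Phi(x):=(2\pi\eps)^{-d}\int_{\Rb^{2d}}\langle g_z|\psi\rangle\,g_z(x)\D z$ and test it against an arbitrary Schwartz function $\chi$. Using $\langle\chi|g_z\rangle=\overline{\langle g_z|\chi\rangle}$ and Fubini, $\langle\chi|\Phi\rangle=(2\pi\eps)^{-d}\int_{\Rb^{2d}}\langle g_z|\psi\rangle\,\overline{\langle g_z|\chi\rangle}\D z$, and the right-hand side is precisely the polarisation of the norm identity just established, so $\langle\chi|\Phi\rangle=\langle\chi|\psi\rangle$ for all $\chi\in\calS(\Rb^d)$ and hence $\Phi=\psi$ in $L^2$. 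Since $\psi$ is continuous and the integral defining $\Phi$ converges absolutely and uniformly in $x$ — using that $z\mapsto\langle g_z|\psi\rangle$ decays rapidly (the wave packet transform maps Schwartz functions to Schwartz functions in $z$) together with the uniform bound $\|g_z\|_{L^\infty}\le\eps^{-d/4}\|g\|_{L^\infty}$ — the function $\Phi$ is continuous as well, so the equality $\Phi=\psi$ holds pointwise.

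The only point requiring a little care is the justification of the interchanges of integration: for the norm identity Tonelli suffices, whereas for the reconstruction formula one must first establish the rapid decay of $z\mapsto\langle g_z|\psi\rangle$ (a standard property of the FBI/wave packet transform of a Schwartz function) in order to invoke Fubini and to upgrade the $L^2$-equality $\Phi=\psi$ to the pointwise statement. Everything else amounts to the phase manipulation, two elementary changes of variables, and the unitarity of $\calF_\eps$.
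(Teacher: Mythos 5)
Your proof is correct, but it reverses the logical order of the paper's argument and is therefore a genuinely different route. The paper proves the pointwise reconstruction formula \emph{first} and directly: it writes out the double integral, recognises the inner $p$-integral as a Fourier inversion of the windowed function $\overline g\bigl(\tfrac{\cdot-q}{\sqrt\eps}\bigr)\psi$ evaluated back at $x$, and then the remaining $q$-integral collapses to $\psi(x)$ by the normalisation $\eps^{-d/2}\int_{\Rb^d}|g(\tfrac{x-q}{\sqrt\eps})|^2\,\D q=1$; the norm identity then follows in one line by pairing $\psi$ with its own reconstruction. You instead prove the isometry first — recognising $p\mapsto\langle g_z|\psi\rangle$ as $(2\pi\eps)^{d/2}$ times a phase times $\calF_\eps h_q(p)$ and applying Plancherel in $p$, then Tonelli in $q$ — and recover the reconstruction formula by polarisation and a duality argument, upgrading the resulting $L^2$-identity to a pointwise one via continuity of both sides. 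The two proofs use the same underlying mechanism (the $p$-integral is a Fourier transform in disguise; the $q$-integral is trivial by the normalisation of $g$), but your ordering buys a cleaner path to the operator-theoretic statement $\calB^*\calB=\Id$ and $\|\calB\psi\|=\|\psi\|$ (the paper's later Proposition on the adjoint of the wave packet transform), and it extends to general $L^2$ data by density without further work; the price is the extra step of justifying rapid decay of $z\mapsto\langle g_z|\psi\rangle$ and the continuity upgrade, which the paper's direct computation sidesteps. You correctly flag both of these points, and the decay property is indeed the standard fact the paper itself records (without proof) immediately after the proposition.
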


\begin{proof}
%We use the Fourier inversion formula to obtain
We use the inversion formula for the Fourier transform: a Schwartz function $f$  is reconstructed from its scaled Fourier transform
$$
 \calF_\eps f(\xi) =  (2\pi\eps)^{-d/2} \int_{\Rb^d}f(x) \,\e^{-\I\xi\cdot x/\eps}\D x
$$
by the inversion formula
$$
f(x) = (2\pi\eps)^{-d/2} \int_{\Rb^d} \calF_\eps f(\xi) \,\e^{\I\xi\cdot x/\eps}\D \xi .
$$
For a Schwartz function $\psi$ and for $x\in\Rb^d$, this yields
\begin{align*}
&(2\pi\eps)^{-d} \int_{\Rb^{2d}} \langle g_z|\psi\rangle\, g_z(x) \D z \\
&= (2\pi\eps)^{-d} \eps^{-d/2} \int_{\Rb^{3d}} \overline g(\tfrac{y-q}{\sqrt\eps}) \, g(\tfrac{x-q}{\sqrt\eps})\, 
\e^{\I p\cdot(x-y)/\eps}\,\psi(y)\D (y,q,p)\\
&= \eps^{-d/2} \int_{\Rb^d} (2\pi\eps)^{-d/2} \int_{\Rb^d} 
\calF_\eps\bigl( \overline g(\tfrac{\cdot-q}{\sqrt\eps})  \, \psi \bigr)(p)\, 
\e^{\I p\cdot x/\eps} \D p \
g(\tfrac{x-q}{\sqrt\eps})\,  \D q
\\
&= \eps^{-d/2} \int_{\Rb^{d}} |g(\tfrac{x-q}{\sqrt\eps})|^2\,\psi(x)\D q \, =\,  \psi(x),%\qquad x\in\Rb^d,
\end{align*}
since the normalisation of the function $g$ implies 
\[
\eps^{-d/2} \int_{\Rb^{d}} |g(\tfrac{x-q}{\sqrt\eps})|^2\,\D q = \int_{\Rb^d} |g(y)|^2 \D y \ =\  1.
\]
Moreover, by the inversion formula proved above, 
\begin{align*}
\|\psi\|^2 &= \big\langle \psi\mid (2\pi\eps)^{-d}\int_{\Rb^{2d}}\langle g_z|\psi\rangle g_z \D z\big\rangle \\
&= (2\pi\eps)^{-d} \int_{\Rb^{2d}} 
\langle g_z|\psi\rangle \langle \psi|g_z\rangle \D z \\
&= (2\pi\eps)^{-d} \int_{\Rb^{2d}} 
|\langle g_z|\psi\rangle|^2 \D z.
\end{align*}
\end{proof}

\bch 
We note that if $\psi$ is a Schwartz function, then also $z\mapsto\langle g_z\mid\psi\rangle$ is a Schwartz function. This property ensures the existence of many integrals appearing below.
\ech

We use the inversion formula of Proposition~\ref{prop:wp_trafo} for the standard normalised Gaussian profile
\[
g(x) = \pi^{-d/4} \exp(-\tfrac{1}{2} |x|^2),\qquad x\in\Rb^d
\]
and represent the initial data for the Schr\"odinger evolution as
\[
\psi_0 = (2\pi\eps)^{-d} \int_{\Rb^{2d}} \langle g_z|\psi_0\rangle\, g_z \D z.
\]
We will now explore two related possibilities of transforming the identity 
\[
\e^{-\I t H/\eps}\psi_0 = (2\pi\eps)^{-d} \int_{\Rb^{2d}} \langle g_z|\psi_0\rangle\, 
\e^{-\I t H/\eps} g_z \D z
\]
into semiclassical approximations. Both approaches use the same basic 
ingredients from classical mechanics.
\begin{itemize}
\item[-] The flow $\Phi^t:\Rb^{2d}\to\Rb^{2d}$ 
of the classical equations of motion
\begin{equation}\label{csg:class-eom}
\dot q = p,\qquad \dot p = -\nabla V(q),
\end{equation}
which to  $z\in\Rb^{2d}$ associates the solution $\Phi^t(z)=(q(t,z),p(t,z))$ with initial datum $z$.
\item[-]  The linearised equations of motion, linearised at $q(t)$,
\begin{equation}\label{eq:linflow}
\dot Q = P,\qquad \dot P = -\nabla^2 V(q) Q.
\end{equation}
\item[-] The action integral 
\begin{equation} \label{csg:action}
S(t,z) = \int_0^t \left(\tfrac12|p(s,z)|^2 - V(q(s,z))\right) \D s.
\end{equation}
for the trajectory $\Phi^t(z) = (q(t,z),p(t,z))$
with initial datum $z\in\Rb^{2d}$.
\end{itemize}

\subsubsection{Thawed Gaussian approximation} 
We approximate an individual time-evolved Gaussian wave packet by the solution
of the 
time-dependent  Schr\"odinger equation with locally quadratic approximation to the potential:
\[
\I\eps\partial_t\varphi_z^{\rm th} = -\tfrac{\eps^2}{2}\Delta\varphi_z^{\rm th}+ U_{q(t,z)}\varphi_z^{\rm th},
\qquad \varphi_z^{\rm th}(0) = g_z,
\]
where $U_q$ denotes the second order Taylor polynomial of $V$ expanded around 
the point $q$. This motivates us to consider the thawed Gaussian superposition
\[
\calI_{\rm th}(t)\psi_0 = (2\pi\eps)^{-d} \int_{\Rb^{2d}} \langle g_z|\psi_0\rangle\, 
\varphi_z^{\rm th}(t) \D z
\]
as an approximation to the Schr\"odinger solution $\psi(t)$. 
From Proposition~\ref{V:thm:gwp-quad} (in the version for a time-dependent quadratic potential) we know 
that $\varphi_z^{\rm th}(t)$ is a Gaussian wave packet of the form 
\[
\varphi_z^{\rm th}(t) = \e^{\I S(t,z)/\eps} \,\bigl(g[C(t,z)]\bigr)_{\Phi^t(z)}
\]
with a thawed Gaussian profile  
\[
g[C](x) =  \pi^{-d/4} \det(\Im C)^{1/4} 
\exp\!\left(\tfrac{\I}{2} x^T C x\right),
\qquad x\in\Rb^d.
\]
The width matrix $C(t,z)$ is determined by the matrix Riccati equation
\begin{equation}  \label{csg:riccati}
\begin{aligned} 
\dot C(t,z) &= - C(t,z)^2 - \nabla^2 V(q(t,z)), \\
C(0,z) &= \I\,\Id_d,
\end{aligned}
\end{equation}
or equivalently in Hagedorn's parametrisation (see Section~\ref{subsec:hag-gauss}) as 
$$
C(t,z)=P(t,z)Q(t,z)^{-1},
$$ 
where
$Q(t,z)$ and $P(t,z)$ are the solution to the linearised classical equations of motion \eqref{eq:linflow} corresponding to the linearisation at $q(t,z)$ and with initial data 
$$
Q(0,z)=\Id, \quad\ P(0,z)=\I\, \Id.
$$

We may thus rewrite the thawed approximation as
\[
\calI_{\rm th}(t)\psi_0 = (2\pi\eps)^{-d} \int_{\Rb^{2d}} \langle g_z|\psi_0\rangle\, 
\e^{\I S(t,z)/\eps} \,\bigl(g[C(t,z)]\bigr)_{\Phi^t(z)} \D z.
\]

Let us compare the classically thawed wave packet $\varphi_z^{\rm th}(t)$ with its variational cousin 
$u(t)$: 
\begin{itemize}
\item[-] Both functions provide approximations to the Schr\"odinger solution $\psi(t)$ 
of order $\sqrt\eps$ in the $L^2$-norm and of order $\eps$ for expectation values. 
(Analogous arguments to those for the proof  
of Theorem~\ref{thm:error-gauss} apply to $\varphi_z^{\rm th}(t)$ as well.)\\*[-2ex]
\item[-] Both functions conserve norm, while only the variational Gaussian~$u(t)$ is energy-conserving.\\*[-2ex]
\item[-] The classical equations of motion require point evaluations of the potential and its 
derivatives and are thus computationally less demanding than the variational equations of motion, which are 
built on averages. 
\end{itemize}

We will find that the oscillations of the different Gaussians in the continuous superposition 
$\calI_{\rm th}(t)\psi_0$ improve accuracy and we will prove an error estimate of order 
$\eps$ in the $L^2$-norm (Theorem~\ref{theo:thawed}).

\subsubsection{Frozen Gaussian approximation}

The Herman--Kluk propagator is based on a different approach for the dynamics of the individual building blocks. 
They are still Gaussian wave packets with classically moving phase space centres, but their width matrices are kept frozen. 
The low accuracy of such a frozen Gaussian 
\[
\varphi_z^{\rm fr}(t) = \e^{\I S(t,z)/\eps} g_{\Phi^t(z)}
\]
is compensated by an amazing reweighting in phase space: using the linearised classical motion we construct a smooth function $a_\natural:\Rb\times\Rb^{2d}\to\C$, known as the Herman--Kluk prefactor, such 
that 
\[
\calI_\natural(t)\psi_0 = (2\pi\eps)^{-d} \int_{\Rb^{2d}} \langle g_z|\psi_0\rangle\, 
a_\natural(t,z)\,\e^{\I S(t,z)/\eps}\, g_{\Phi^t(z)}  \D z
\]
approximates the Schr\"odinger solution $\psi(t)$ with the same asymptotic accuracy as the 
superposition of thawed Gaussians, that is, with an error of order~$\eps$ with respect to the $L^2$-norm (Theorem~\ref{theo:hk}). 

\subsection{Accuracy of the Gaussian superposition and numerical algorithm}

\subsubsection{Thawed Gaussians}
We consider an origin-centred Gaussian $g[C]_0$ with complex symmetric width matrix~$C\in\C^{d\times d}$ 
with positive definite imaginary part. The semiclassical Fourier transform of such a function is  
a Gaussian with width matrix $-C^{-1}$, that is, 
\[
\calF_\eps g[C]_0 = g[-C^{-1}]_0.
\]
A short calculation analogous to the one in Lemma~\ref{lem:hag-rel} shows that Hagedorn's 
parametrisation of $C=PQ^{-1}$ allows us to write the imaginary part of the matrix $-C^{-1}$ as  
\[
\Im\!(-C^{-1}) = (PP^*)^{-1}.
\]
We thus have for the minimal eigenvalues that 
\begin{align*}
&\lambda_{\rm min}(\Im C) = \|Q\|^{-2},\\
&\lambda_{\rm min}(\Im(-C^{-1})) = \|P\|^{-2}.
\end{align*}
Both the width of a thawed Gaussian in position and in momentum space play a crucial role 
when analysing the accuracy of the continuous thawed superposition. The following spectral 
parameter will be important for estimating the approximation error. 

\begin{definition}[spectral parameter]\label{def:spec_par} Let $C\in\C^{d\times d}$ be a complex symmetric matrix with positive definite imaginary part. 
We set $\rho_Q = \lambda_{\rm min}(\Im C)$ and $\rho_P = \lambda_{\rm min}(\Im\!(-C^{-1}))$ 
and call the positive number
\[
\rho_* = \frac{\rho_Q\rho_P}{2\rho_Q+2\rho_P} = \tfrac12 (\|Q\|^2+\|P\|^2)^{-1}
\]
the {\em spectral parameter} of the matrix $C$. 
\end{definition}

A uniform lower bound on the spectral parameter is enough to prove that the continuous 
thawed superposition is accurate of order $\eps$ for arbitrary initial data that are 
Schwartz functions. 

\begin{theorem}[thawed Gaussian approximation]\label{theo:thawed}
We now consider the thawed Gaussian approximation
\[
\calI_{\rm th}(t)\psi_0 = (2\pi\eps)^{-d} \int_{\Rb^{2d}} \langle g_z| \psi_0\rangle\,
\e^{\I S(t,z)/\eps} \,(g[C(t,z)])_{\Phi^t(z)} \D z
\]
for an arbitrary Schwartz function $\psi_0:\Rb^d\to\C$.  
We assume that the potential function $V:\Rb^d\to\Rb$ is smooth and its derivatives of order $\ge 2$ 
are all bounded. We also assume that for the solutions $C(t,z)$ of the Riccati equation \eqref{csg:riccati},
%\begin{align*}
%\dot C(t,z) &= - C(t,z)^2 - \nabla^2 V(q(t,z)),\\
%C(0,z) &= \I\,\Id_d,
%\end{align*}
there exists a positive lower bound $\rho>0$ such that the spectral parameters satisfy
\[
 \rho_*(t,z) \ge \rho \quad\ \text{for all }\ (t,z)\in\left[\,0,\bar t\,\right]\times\Rb^{2d}.
\]
Then, the solution $\psi(t)$ of the time-dependent Schr\"odinger equation with potential $V$ and initial data $\psi_0$ satisfies
\[
\|\psi(t) - \calI_{\rm th}(t)\psi_0\| \le c \,t\,\eps\,\|\psi_0\|,\qquad 0\le t\le \bar t,
\]
where the constant $c<\infty$ depends on the spectral bound $\rho$, on derivative bounds of $V$ and on $\bar t$  but is independent of $\eps$, $\psi_0$ and $t\le\bar t$. 
\end{theorem}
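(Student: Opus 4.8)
The plan is to turn the defect of the superposition $\calI_{\rm th}(t)\psi_0$ in the Schr\"odinger equation into a linear operator acting on $\psi_0$, bound that operator in $L^2$ using the Parseval property of the wave packet transform (Proposition~\ref{prop:wp_trafo}) and the stability Lemma~\ref{lem:stability}, and extract the gain from the single-Gaussian rate $O(\sqrt\eps)$ to $O(\eps)$ from an integration by parts in the phase-space variable $z$. Concretely: since $\calI_{\rm th}(0)\psi_0=\psi_0$ by the inversion formula of Proposition~\ref{prop:wp_trafo}, and since each thawed building block solves $\I\eps\partial_t\varphi_z^{\rm th}=(-\tfrac{\eps^2}2\Delta+U_{q(t,z)})\varphi_z^{\rm th}$ exactly, differentiating under the integral sign gives
$$
\I\eps\,\partial_t\calI_{\rm th}(t)\psi_0-H\,\calI_{\rm th}(t)\psi_0
= -(2\pi\eps)^{-d}\int_{\Rb^{2d}}\langle g_z|\psi_0\rangle\,W_{q(t,z)}\,\varphi_z^{\rm th}(t)\,\D z
=: -\eps\,d(t),
$$
with $W_q=V-U_q$ the non-quadratic Taylor remainder of $V$ at $q$. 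By Lemma~\ref{lem:stability}, $\|\psi(t)-\calI_{\rm th}(t)\psi_0\|\le\int_0^t\|d(s)\|\,\D s$, so it suffices to show that the operator $T_s\colon\psi_0\mapsto(2\pi\eps)^{-d}\int\langle g_z|\psi_0\rangle\,W_{q(s,z)}\,\varphi_z^{\rm th}(s)\,\D z$ satisfies $\|T_s\|_{L^2\to L^2}\le c\,\eps^2$ uniformly for $0\le s\le\bar t$.

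I would then split $W_q=W_q^{(3)}+W_q^{(\ge4)}$ into its leading cubic term at $q$ and the remainder of order $\ge4$. For the remainder, Lemma~\ref{lem:int-bound} together with the assumed boundedness of the derivatives of $V$ of order $\ge2$ gives $\|W_{q(s,z)}^{(\ge4)}\varphi_z^{\rm th}(s)\|\le c\,\eps^2$ uniformly in $z$, the spectral bound $\rho\le\rho_*(s,z)$ (hence $\Im C(s,z)\ge\rho$) keeping the thawed Gaussian localised of width $O(\sqrt\eps)$. Writing this function as a wave packet $(\chi_z)_{\Phi^s(z)}$ with profile of $L^2$-norm $O(\eps^2)$, substituting $w=\Phi^s(z)$ (the classical flow is volume-preserving), and using Cauchy--Schwarz in $w$ together with the Parseval identity of Proposition~\ref{prop:wp_trafo} on the $g_z$-side, one bounds the corresponding part of $T_s$ in operator norm by $c\,\eps^2$.

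The cubic part is the crux. A direct estimate gives only $\|W_{q(s,z)}^{(3)}\varphi_z^{\rm th}(s)\|\le c\,\eps^{3/2}$ --- short by one factor $\eps^{1/2}$ --- and a Schur estimate on the overlap kernel $\langle W_{q'}\varphi_{z'}^{\rm th}(s)|W_q\varphi_z^{\rm th}(s)\rangle$ does no better, since it is $O(\eps^3)$ on the diagonal. The missing factor is a genuine superposition effect: $W_q^{(3)}$ is \emph{odd} about $q$, which is exactly the centre of $\varphi_z^{\rm th}(s)$, so each monomial $(x-q(s,z))^m$ with $|m|=3$ multiplying $\varphi_z^{\rm th}(s)$ can be rewritten, through the $\eps\,\partial_p$-relation for Gaussian wave packets (up to the flow Jacobian), as a first-order $z$-derivative of $\varphi_z^{\rm th}(s)$. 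An integration by parts in $z$ then transfers this derivative onto the slowly varying factor $\langle g_z|\psi_0\rangle$ and onto the smooth $z$-amplitude built from $\nabla^3V(q(s,z))$, $S(s,z)$, $Q(s,z)$, $P(s,z)$; each derivative produces a gain $\eps^{1/2}$, not a loss, which combined with $\eps^{3/2}$ from the triple zero makes the cubic contribution to $T_s$ of operator norm $O(\eps^2)$ as well. Adding the two contributions gives $\|T_s\|\le c\,\eps^2$, hence $\|d(s)\|\le c\,\eps\|\psi_0\|$, and Lemma~\ref{lem:stability} yields $\|\psi(t)-\calI_{\rm th}(t)\psi_0\|\le c\,t\,\eps\,\|\psi_0\|$ for $0\le t\le\bar t$.

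I expect the main obstacle to be making the $z$-integration by parts for the cubic term rigorous: it amounts to treating $\calI_{\rm th}(s)$ as a semiclassical Fourier integral operator and requires controlling, uniformly for $z\in\Rb^{2d}$ and $s\in[0,\bar t]$, the $z$-derivatives of the classical flow $\Phi^s$, of the action $S(s,z)$, and of the width matrix $C(s,z)=P(s,z)Q(s,z)^{-1}$ (equivalently, of the solutions of the linearised equations \eqref{eq:linflow}). This is where both hypotheses are used: the boundedness of the higher derivatives of $V$ gives, via Gronwall estimates, the uniform control of the flow and its variational derivatives, with a constant depending on $\bar t$; and the spectral bound $\rho$ keeps the $\varphi_z^{\rm th}(s)$ uniformly localised in \emph{both} position and momentum --- the momentum width governed by $\|P(s,z)\|$ through $\Im(-C^{-1})=(PP^*)^{-1}$ --- which is what keeps the overlaps and $z$-derivatives bounded.
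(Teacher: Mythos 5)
Your overall architecture coincides with the paper's: the defect of $\calI_{\rm th}(t)$ is exactly $\tfrac{1}{\I\eps}W_{q(t,z)}$ inserted into the superposition (Lemma~\ref{lem:res_thawed}), the stability Lemma~\ref{lem:stability} reduces everything to an operator-norm bound, a basic bound of order $\eps^{|m|/2}$ handles even-degree insertions, and the cubic term must be upgraded to $\eps^{2}$ by an integration by parts in the phase-space variable. However, the two hard steps are asserted rather than carried out, and both assertions fail as stated. For the basic bound, writing $W^{(\ge 4)}_{q(s,z)}\varphi^{\rm th}_z(s)$ as a wave packet with a small profile and invoking the Parseval identity of Proposition~\ref{prop:wp_trafo} on the output side does not work, because that identity requires a profile that is the \emph{same} for every $z$; here the profile depends on $z$ through the width matrix $C(s,z)$. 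This is precisely the obstruction the paper flags before Proposition~\ref{prop:bound_thawed}, and it is overcome by lifting the operator to phase space via the Bargmann transform and running a Schur test on the kernel, which in turn needs the uniform overlap estimate $|\langle g_X\mid g[C(z)]_{\Phi(z)}\rangle|\le \exp(-\tfrac{\rho}{2\eps}|X-\Phi(z)|^2)$ — the place where the hypothesis $\rho_*(t,z)\ge\rho$ actually enters (Lemmas~\ref{lem:gauss}, \ref{lem:det_pd} and Proposition~\ref{prop:bound_thawed_ext}).

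The more serious gap is in the integration by parts, which you rightly identify as the crux but whose mechanism you misstate. You propose to transfer a $z$-derivative onto the ``slowly varying'' factor $\langle g_z|\psi_0\rangle$ and a smooth amplitude, claiming a gain of $\eps^{1/2}$ per derivative. But $\langle g_z|\psi_0\rangle$ is not slowly varying on the relevant scale: $\partial_{p_j}\langle g_z|\psi_0\rangle=-\tfrac{\I}{\sqrt\eps}\langle (x_j g)_z|\psi_0\rangle$ costs a factor $\eps^{-1/2}$ in the $L^2$ norm over $z$, $\partial_{q_j}\langle g_z|\psi_0\rangle$ contains a term of size $|p_j|/\eps$, and a derivative falling on $\e^{\I S(s,z)/\eps}$ costs $\eps^{-1}$; a generic $z$-derivative therefore destroys the gain rather than producing one. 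The proof works only because there is one specific complex combination, the Wirtinger derivative $\I\partial_q+\partial_p$, for which all of these dangerous contributions cancel identically: applied to the full phase $\Psi_{\rm th}(t,x,y,z)$ — which packages $\overline{g_z(y)}$, the action integral and the evolved Gaussian together — the $(y-q)$-terms, the bare $p$-terms and the $\partial_z S$ terms cancel, leaving $M_{\rm th}(t,z)(x-q(t,z))$ plus a quadratic correction coming from $\partial_z C(t,z)$, with $M_{\rm th}$ invertible and $|\det M_{\rm th}|\ge 2^{d/2}\det(\Im C)^{1/2}$ (Lemma~\ref{lem:phase_thawed}). Exhibiting and verifying this cancellation — not merely controlling the $z$-derivatives of $\Phi^s$, $S$, $Q$, $P$ by Gronwall — is the missing idea; without it the cubic contribution only yields $O(\eps^{3/2})$ and the claimed rate is not reached.
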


The proof proceeds in three steps and is given later on. We will establish the following: 

\paragraph{First step: basic norm bounds.}
We consider oscillatory integral operators of the form 
\[
(\calI\psi)(x) =   (2\pi\eps)^{-d} \int_{\Rb^{2d}} \langle g_z| \psi\rangle\, a(z)\, (x-\Phi_q(z))^m \,
(g[C(z)])_{\Phi(z)} \D z,
\]
that have the following building blocks:
\begin{enumerate}\setlength{\itemsep}{1ex}
\item[-] $a:\Rb^{2d}\to\C$ is a bounded function, 
\item[-] $\Phi=(\Phi_q,\Phi_p):\Rb^{2d}\to\Rb^{2d}$ is a volume-preserving 
map, 
\item[-] $\{C(z)\,|\, z\in\Rb^{2d}\}$ is a family of complex symmetric matrices with positive definite 
imaginary part having a positive lower bound for its spectral parameters. 
\end{enumerate}
For  a multi-index $m=(m_1,\dots,m_d)\in\N_0^d$ and for $x=(x_1,\dots,x_d)\in\Rb^d$ we write $x^m=x_1^{m_1}\dots x_d^{m_d}$ and we denote $|m|=m_1+\dots+m_d$.

We derive norm bounds that are of order $\eps^{|m|/2}$ and 
depend on the norm of $\psi$, the supremum of $|a(z)|$, and spectral bounds for the 
matrices $C(z)$. 

\paragraph{Second step: norm bounds accounting for the collective oscillation.}
We analyse the phase of the oscillatory integral operator $\calI_{\rm th}(t)$ taking into 
account the interplay between the action integral $S(t,z)$ and the Hamiltonian flow~$\Phi^t(z) =(q(t,z),p(t,z))$. By an 
elegant integration by parts, we obtain norm bounds for integral operators of the form
\[
(\calI(t)\psi)(x) =  (2\pi\eps)^{-d} \int_{\Rb^{2d}} \langle g_z| \psi\rangle\, (x-q(t,z) )^m \, 
\e^{\I S(t,z)/\eps} (g[C(t,z)])_{\Phi^t(z)} \D z,
\]
that depend on the parity of $|m|$ and are of order $\eps^{\lceil |m|/2\rceil}$, where $\lceil |m|/2\rceil$ 
denotes the smallest integer $\ge |m|/2$. 

\paragraph{Third step: defect calculation.}
We calculate the defect and apply the stability Lemma~\ref{lem:stability}. 
Together with the norm bounds we will have proved Theorem~\ref{theo:thawed}. 

%\medskip
%One can decompose the solution of the Riccati equation that governs the evolution of the thawed width matrices 
%according to
%\[
%C(t,z) = P(t,z)Q(t,z)^{-1}.
%\] 
%Then one arrives at the linearised equations of motion,
%\begin{equation}\label{eq:linflow}
%\dot Q(t,z) = P(t,z),\qquad \dot P(t,z) = -\nabla^2 V(q(t,z)) Q(t,z)
%\end{equation}
%subject to the initial conditions 
%\[
%Q(0,z) = \Id_d\quad\text{and}\quad P(0,z) = \I\Id_d.
%\]
\subsubsection{Frozen Gaussians}
The frozen Gaussian approximation relies on the same information from the classical dynamics as 
the thawed one. It uses the flow, the linearised flow, and the action integral. However, 
the linearisation now defines a reweighting factor that allows to keep the individual Gaussians of frozen 
unit width. Let us define this reweighting factor.

\begin{definition}[Herman--Kluk prefactor]\label{def:hk} 
Let $Q_\natural(t,z)$ and $P_\natural(t,z)$ denote the solution to the linearised equations of motion \eqref{eq:linflow} subject to the initial conditions 
\[
Q_\natural(0,z) = \Id\quad\text{and}\quad P_\natural(0,z) = -\I \,\Id.
\]
Define the Herman--Kluk matrix as the complex $d\times d$ matrix
\[
M_\natural(t,z) = Q_\natural(t,z) + \I P_\natural(t,z),
\]
whose invertibility will be proved in Lemma~\ref{lem:phase}. Then, 
the smooth complex-valued function 
$a_\natural:\Rb\times\Rb^{2d}\to\C$, 
\[
a_\natural(t,z) = \sqrt{2^{-d}\det(M_\natural(t,z))},
\]
is called the {\em Herman--Kluk prefactor}. The branch of the square root is determined by 
continuity with respect to time.
\end{definition}

We note that, with $D\Phi^t(z)$ denoting the Jacobian matrix of the flow map,
\begin{align*}
M_\natural(t,z) &= 
(\Id, \I \, \Id) \, D\Phi^t(z) \begin{pmatrix} \Id \\ - \I \, \Id \end{pmatrix}
\\
&=\partial_q q(t,z)-\I\partial_p q(t,z) +\partial_p p(t,z) + \I\partial_q p(t,z) .
\end{align*}

%
%Writing the Herman--Kluk matrix as 
%\[
%M_\natural(t,z) = Q_\natural(t,z) + \I P_\natural(t,z)
%\]
%with
%\[
%Q_\natural(t,z) = (\partial_q - \I\partial_p)q(t,z)\quad\text{and}\quad
%P_\natural(t,z) = (\partial_q - \I\partial_p)p(t,z),
%\]
%the two summands $Q_\natural(t,z)$ and $P_\natural(t,z)$ can be determined 
%by solving the linearised equations of motion \eqref{eq:linflow} subject to the initial conditions 
%\[
%Q_\natural(0,z) = \Id\quad\text{and}\quad P_\natural(0,z) = -\I \Id.
%\]
In contrast to the thawed evolution, which uses the linearised flow for multiplying 
its components to build the thawed width matrices, the frozen dynamics sums
the corresponding matrices and takes a determinant. Either way, 
we obtain continuous superpositions that are first order accurate with respect 
to the semiclassical parameter $\eps$. 

\begin{theorem}[Herman--Kluk propagator]\label{theo:hk}
We consider the Herman--Kluk propagator 
\[
(\calI_\natural(t)\psi_0)(x) = (2\pi\eps)^{-d} \int_{\Rb^{2d}} \langle g_z| \psi_0\rangle\,a_\natural(t,z)\,
\e^{\I S(t,z)/\eps} \,g_{\Phi^t(z)}(x) \D z, \quad\ x\in \Rb^d,
\]
for arbitrary initial data $\psi_0:\Rb^d\to\C$ that are Schwartz functions.  
We assume that the potential function $V:\Rb^d\to\Rb$ is smooth and its derivatives of order $\ge 2$ 
are all bounded. Then, the solution $\psi(t)$ of the time-dependent Schr\"odinger equation with initial data $\psi_0$ satisfies
\[
\|\psi(t) - \calI_\natural(t)\psi_0\| \le\bch c \ech  \,t\,\eps\,\|\psi_0\|,\qquad 0\le t\le \bar t,
\]
where the constant $c <\infty$ depends on derivative bounds of $V$ and on $\bar t$ but   
is independent of $\eps$, $\psi_0$ and $t\le\bar t$. 
\end{theorem}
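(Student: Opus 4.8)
The plan is to follow the three-step scheme announced before Definition~\ref{def:hk}, adapted to the frozen case, and then close the argument with the stability Lemma~\ref{lem:stability}. First I record the initial consistency $\calI_\natural(0)=\Id$: since $\Phi^0=\Id$, $S(0,z)=0$ and $M_\natural(0,z)=Q_\natural(0,z)+\I P_\natural(0,z)=\Id+\I(-\I\,\Id)=2\,\Id$, so that $a_\natural(0,z)=\sqrt{2^{-d}\det(2\,\Id)}=1$, the operator $\calI_\natural(0)$ reduces to the wave packet transform reconstruction of Proposition~\ref{prop:wp_trafo} with the standard Gaussian profile $g$, hence is the identity. Setting $e(t)=\calI_\natural(t)\psi_0-\psi(t)$ with $\psi(t)=\e^{-\I tH/\eps}\psi_0$, one has $e(0)=0$ and $\partial_t e=\tfrac1{\I\eps}He+d(t)$ with defect $d(t)=\partial_t\bigl(\calI_\natural(t)\psi_0\bigr)-\tfrac1{\I\eps}H\calI_\natural(t)\psi_0$; by Lemma~\ref{lem:stability} it then suffices to prove $\|d(t)\|\le c\,\eps\,\|\psi_0\|$ for $0\le t\le\bar t$. (Differentiation under the integral sign and all integrals below are legitimate since $z\mapsto\langle g_z|\psi_0\rangle$ is a Schwartz function.) Along the way one establishes, by Gronwall estimates from the boundedness of the derivatives of $V$ of order $\ge2$, uniform bounds on $[0,\bar t]\times\Rb^{2d}$ for the flow $\Phi^t$, its Jacobian $D\Phi^t$, the linearised solutions $Q_\natural,P_\natural$, the action $S(t,z)$, and $M_\natural,\dot M_\natural$; and, crucially, the invertibility of $M_\natural(t,z)$ with $|\det M_\natural(t,z)|$ bounded above and below (this is Lemma~\ref{lem:phase}, which rests on the symplecticity of $D\Phi^t$), so that $a_\natural$ is bounded away from $0$ and $\dot a_\natural/a_\natural$ is bounded.

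\textbf{Defect computation and the non-quadratic remainder.} Next I compute the defect building block by building block. Writing $u_z(t,x)=a_\natural(t,z)\,\e^{\I S(t,z)/\eps}\,g_{\Phi^t(z)}(x)$ and using the classical equations $\dot q=p$, $\dot p=-\nabla V(q)$, $\dot S=\tfrac12|p|^2-V(q)$ to cancel the constant, linear and quadratic parts of the second order Taylor polynomial $U_{q(t,z)}$ of $V$ at $q(t,z)$ — exactly as in the derivation preceding Theorem~\ref{thm:eom-gauss}, but now with a \emph{frozen} width, so that the term of $-\tfrac{\eps^2}2\Delta u_z$ that is quadratic in $x-q$ reads $-\tfrac12|x-q|^2$ while $\dot C=0$ — a short calculation gives, with $q=q(t,z)$ and $W_q=V-U_q$,
\[
\I\eps\,\partial_t u_z - H u_z = \Bigl(\I\eps\,\tfrac{\dot a_\natural}{a_\natural} - \tfrac{\eps d}2 + \tfrac12 (x-q)^T\bigl(\Id-\nabla^2 V(q)\bigr)(x-q)\Bigr)u_z \;-\; W_{q(t,z)}\,u_z .
\]
Integrated against $(2\pi\eps)^{-d}\langle g_z|\psi_0\rangle$, the last term is handled exactly as the defect in the thawed case: Taylor-expanding $W_q$ at $q$ to third order plus integral remainder (all coefficients bounded since $\partial^m V$ is bounded for $|m|\ge2$) produces a finite sum of oscillatory integral operators carrying $(x-q)^m$ with $|m|\ge3$, together with a remainder; the second-step norm bound $O(\eps^{\lceil|m|/2\rceil})$ and the first-step bound for the remainder then yield a total contribution $O(\eps)\,\|\psi_0\|$.

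\textbf{The quadratic defect and the Herman--Kluk prefactor (the hard step).} The remaining part of the defect — the $\tfrac1\eps$-weighted quadratic term $\tfrac1\eps(x-q)^T(\Id-\nabla^2 V(q))(x-q)u_z$ together with the two scalar terms — is where the Herman--Kluk prefactor does its work, and is the main obstacle. Taken one building block at a time this part is only $O(1)$, not $O(\eps)$: the frozen Gaussian does not solve the locally quadratic Schr\"odinger equation, and no pointwise-in-$x$ choice of prefactor can repair this. What saves the day is the collective oscillation, made precise by an integration by parts in the phase-space variable $z$. Using the elementary identity $(x-q)_j\,g_{\Phi^t(z)}(x)=-\I\eps\,(\partial_{p_j}g)|_{\Phi^t(z)}$ and inverting the bounded, invertible Jacobian $D\Phi^t(z)$ to express the $p$-derivatives of the profile through $z$-derivatives of $g_{\Phi^t(z)}$, one rewrites the $\tfrac1\eps$-weighted quadratic factor — after an integration by parts in $z$ performed with due care for the oscillatory phase $\e^{\I(S(t,z)+p\cdot(x-q))/\eps}$, so that the phase oscillations are genuinely exploited — as a sum of first-step-type oscillatory integral operators with bounded symbols, which are $O(\eps)\,\|\psi_0\|$, plus precisely the two scalar terms, \emph{provided} $a_\natural$ satisfies the transport identity $\dot a_\natural=\tfrac12\,a_\natural\,\tr\!\bigl(\dot M_\natural M_\natural^{-1}\bigr)$ with $a_\natural(0,\cdot)=1$, i.e. $a_\natural=\sqrt{2^{-d}\det M_\natural}$. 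With this choice the surviving non-oscillatory contributions cancel the scalar terms, and this whole part is $O(\eps)\,\|\psi_0\|$.

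\textbf{Conclusion and obstacles.} Combining the last two paragraphs gives $\|d(t)\|\le c\,\eps$ on $[0,\bar t]$, and Lemma~\ref{lem:stability} yields $\|\psi(t)-\calI_\natural(t)\psi_0\|\le\int_0^t\|d(s)\|\,\D s\le c\,t\,\eps\,\|\psi_0\|$, as claimed. The genuine difficulties are the technical pillars on which this rests: the first- and second-step oscillatory integral operator norm bounds (proved, as for the thawed case, via a Schur test on the Gram kernel $(2\pi\eps)^{-d}\langle f_z|f_{z'}\rangle$ of the Gaussian building blocks, using the Gaussian decay of their overlap in the phase-space distance $|\Phi(z)-\Phi(z')|$ and the volume preservation of $\Phi$); the bookkeeping of the integration by parts in $z$ together with the verification that the Herman--Kluk prefactor is exactly the half-density making the leading contributions cancel; and the uniform lower bound on $|\det M_\natural|$ in Lemma~\ref{lem:phase}, where the symplectic geometry of the linearised flow enters. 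Note that, in contrast to the thawed case, no uniform lower bound on a spectral parameter is needed, since the frozen Gaussians never spread.
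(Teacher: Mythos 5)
Your proposal is correct and follows essentially the same route as the paper: defect plus stability (Lemma~\ref{lem:stability}), a Taylor split of the potential into its quadratic part and $W_q$, the observation that the frozen building blocks leave an $O(1)$ quadratic defect that only the collective phase-space oscillation can remove, an integration by parts in $z$ against the phase, and the resulting transport identity $\partial_t a_\natural=\tfrac12\,a_\natural\,\tr(\partial_tM_\natural\,M_\natural^{-1})$ that forces $a_\natural=\sqrt{2^{-d}\det M_\natural}$. Two small points of divergence: (i) you propose proving the basic norm bounds by a Schur test on the Gaussian Gram kernel; this works (it is what the paper does in the harder thawed case, Proposition~\ref{prop:bound_thawed}), but for frozen Gaussians the paper instead uses the more elementary wave-packet-transform isometry and a direct Fourier-inversion argument (Proposition~\ref{prop:basic_bound}, Corollary~\ref{cor:norm_bound}), which yields a cleaner, dimension-independent constant. (ii) In the integration-by-parts step the object you invert is not the Jacobian $D\Phi^t(z)$ but the Herman--Kluk matrix $M_\natural(t,z)=(\Id,\I\,\Id)\,D\Phi^t(z)\bigl(\begin{smallmatrix}\Id\\-\I\,\Id\end{smallmatrix}\bigr)$, whose invertibility is exactly the content of Lemma~\ref{lem:phase} via $|\det M_\natural|^2=\det(\Id_{2d}+D\Phi^{tT}D\Phi^t)$ --- you cite the right lemma later, so this is a slip of wording rather than a gap.
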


The proof strategy for the frozen approximation is analogous to the thawed one. Let us briefly comment on 
its three main steps. 

\paragraph{First step: basic norm bounds.} We establish basic norm bounds  
for frozen integral operators that contain polynomial terms of the form $(x-\Phi_q(z))^m$. Again these 
bounds are of order $\eps^{|m|/2}$. However, their proof is considerably simpler than 
in the thawed case, since the frozen Gaussian profiles allow for a direct Fourier 
inversion argument that is unfortunately not applicable for the thawed operators.

\paragraph{Second step: norm bounds accounting for the collective oscillations. } 
The second step of the proof consists again in analysing the phase of the oscillatory 
integral operator and provides the improved norm bound that is of order  
$\lceil |m|/2\rceil$. The calculations are slimmer than in the thawed case, 
since there are no quadratic polynomials generated by derivatives of the width matrices.

\paragraph{Third step: defect calculation.} 
The third step calculates the defect of the frozen approximation and performs an 
integration by parts for determining the Herman--Kluk prefactor. In contrast to the first 
two steps of the proof, here the analysis of the frozen approximation is a bit more 
demanding than of the thawed one. 

\begin{remark}[mass and energy conservation]
Both the thawed and the frozen Gaussian approximation fail to be mass- or energy-conserving unless the potential is a polynomial of 
degree $\le 2$. 
\end{remark}

\subsubsection{Numerical algorithm}\label{alg:gauss_super}
As well as answering an interesting question about the approximation power of oscillatory Gaussian integrals, 
both Theorem~\ref{theo:thawed} and Theorem~\ref{theo:hk} also motivate simple particle methods for the approximation of the Schr\"odinger solution $\psi(t,x)$ that are accurate to first order with respect to 
the semiclassical parameter~$\eps$, provided that the initial wave packet transform 
$z\mapsto\langle g_z|\psi_0\rangle$ is accessible and the numerical quadrature used is sufficiently accurate.
The algorithm reads as follows.
\begin{enumerate}
\setlength{\itemsep}{1ex}
\item[1.] Choose a set of numerical quadrature points $z_i\in\Rb^{2d}$
and evaluate the initial transform $\langle g_z|\psi_0\rangle$ at the points $z_i$.
\item[2.] Transport the points $z_i$ by the classical flow $\Phi^t$, solve the linearised equations of motion and compute the action integrals. The linearised equations of motion are solved for the initial 
conditions 
\begin{align*}
Q(0,z_i) &= \Id_d, \ P(0,z_i) = \I\,\Id_d\qquad \text{(thawed Gaussians)}\\
Q_\natural(0,z_i) &= \Id_d,\ P_\natural(0,z_i) = -\I\,\Id_d \quad \text{(frozen Gaussians)}.
\end{align*} 
\item[3.] Extract from the linearised flow computation either the width matrices or the Herman--Kluk 
prefactor, that is, 
\begin{align*}
C(t,z_i) &= P(t,z_i) Q(t,z_i)^{-1}\hspace*{7em}\text{(thawed Gaussians)}\\
a_{\natural}(t,z_i) &= 2^{-d/2}\det\!\left(Q_\natural(t,z_i) + \I P_\natural(t,z_i)\right)^{1/2}\quad\text{(frozen Gaussians)}.
\end{align*}
\item[4.] Form the (possibly weighted) sums 
\[
(2\pi\eps)^{-d}\ \sum_{i}\, \langle g_{z_i}|\psi_0\rangle \, 
\e^{\I S(t,z_i)/\eps} \, (g[C(t,z_i)])_{\Phi^t(z_i)}(x) \, w_i
\]
or 
\[
(2\pi\eps)^{-d}\ \sum_{i}\, \langle g_{z_i}|\psi_0\rangle \, a_\natural(t,z_i)\,
\e^{\I S(t,z_i)/\eps} \, g_{\Phi^t(z_i)}(x) \, w_i
\]
according to the chosen quadrature rule.
\end{enumerate}

\subsection{Norm bounds for continuous superpositions of frozen Gaussians}

Since the first step for proving the error estimate for the frozen Gaussian approximation 
is considerably less demanding than for the thawed one, we start by 
deriving basic norm bounds for oscillatory integral operators that generalise 
the Herman--Kluk propagator. This first basic estimate requires neither  
that the profile function is Gaussian nor that the time evolution stems from a 
classical Hamiltonian flow. 

%\medskip
%For $x=(x_1,\dots,x_d)\in\Rb^d$ and a multi-index $m=(m_1,\dots,m_d)\in\N_0^d$ we write $x^m=x_1^{m_1}\dots x_d^{m_d}$ and we denote $|m|=m_1+\dots+m_d$. 

\begin{proposition}[norm bound, general profile]\label{prop:basic_bound}
We assume the following.
\begin{itemize}
\item[1.]
Let $g\in \calS(\Rb^d)$ be of unit norm.\\*[-2ex]
\item[2.] 
Let $a:\Rb^{2d}\to\C$ be a measurable and bounded function.\\*[-2ex]
\item[3.] 
Let $\Phi=(\Phi_q,\Phi_p):\Rb^{2d}\to\Rb^{2d}$ be a volume-preserving diffeomorphism. 
\end{itemize}
For $m\in \N_0^d$ we define, for $\psi\in\calS(\Rb^d)$,
\[
(\calI \psi)(x) = (2\pi\eps)^{-d} \int_{\Rb^{2d}} \langle g_z|\psi\rangle\, 
a(z)\,(x-\Phi_q(z))^m\, g_{\Phi(z)}(x) \D z.
\]
Then, $\calI\psi$ is square-integrable and satisfies
\[
\|\calI\psi\| \ \le\  c_m \,\eps^{|m|/2} \sup_{z\in\Rb^{2d}}|a(z)| \ \|\psi\|,
\]
where $c_m>0$ depends on the $m$th moment of $g$. In particular, $c_0 = 1$. 
\end{proposition}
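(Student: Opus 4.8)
The plan is to prove Proposition~\ref{prop:basic_bound} by a $TT^*$-type duality argument that reduces everything to the resolution-of-identity contained in Proposition~\ref{prop:wp_trafo}, once the polynomial weight $(x-\Phi_q(z))^m$ has been absorbed into the Gaussian profile. The key elementary observation, which is exactly where the power $\eps^{|m|/2}$ originates, is that substituting $u=(x-\Phi_q(z))/\sqrt\eps$ in the definition of the wave packet $g_{\Phi(z)}$ gives
\[
(x-\Phi_q(z))^m\, g_{\Phi(z)}(x) = \eps^{|m|/2}\,(h_m)_{\Phi(z)}(x), \qquad h_m(u):=u^m g(u),
\]
where $h_m\in\calS(\Rb^d)$ and $c_m:=\|h_m\| = \|x^m g\|_{L^2}$ is finite, depends only on $g$ (on the $m$th moment of $|g|^2$), and equals $1$ when $m=0$. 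Hence $\calI\psi = \eps^{|m|/2}\,\widetilde{\calI}\psi$, where $\widetilde{\calI}$ is obtained from $\calI$ by dropping the $\eps^{|m|/2}(x-\Phi_q(z))^m$ factor and replacing the output profile $g$ by $h_m$, while keeping $g$ in the input bracket $\langle g_z\mid\psi\rangle$.

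Next I would estimate $\|\widetilde{\calI}\psi\|$ by testing against an arbitrary $\phi\in\calS(\Rb^d)$ with $\|\phi\|=1$. Since $z\mapsto\langle g_z\mid\psi\rangle$ is a Schwartz function (as noted after Proposition~\ref{prop:wp_trafo}), the integral defining $\calI\psi$ is absolutely convergent and Fubini gives
\[
\langle\phi\mid\widetilde{\calI}\psi\rangle = (2\pi\eps)^{-d}\int_{\Rb^{2d}} \langle g_z\mid\psi\rangle\, a(z)\, \langle\phi\mid(h_m)_{\Phi(z)}\rangle\, \D z .
\]
Pulling out $\sup_z|a(z)|$, distributing the prefactor as $(2\pi\eps)^{-d/2}\cdot(2\pi\eps)^{-d/2}$, and applying the Cauchy--Schwarz inequality in the $z$-integral bounds $|\langle\phi\mid\widetilde{\calI}\psi\rangle|$ by $\sup_z|a(z)|$ times the product of $\bigl((2\pi\eps)^{-d}\int|\langle g_z\mid\psi\rangle|^2\D z\bigr)^{1/2}$ and $\bigl((2\pi\eps)^{-d}\int|\langle\phi\mid(h_m)_{\Phi(z)}\rangle|^2\D z\bigr)^{1/2}$.

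Then I would evaluate the two factors. By the second identity of Proposition~\ref{prop:wp_trafo} applied to the unit-norm profile $g$, the first factor equals $\|\psi\|$. For the second, the change of variables $w=\Phi(z)$ is legitimate because $\Phi$ is a volume-preserving diffeomorphism of $\Rb^{2d}$, so $\D z=\D w$ and $w$ ranges over all of $\Rb^{2d}$; applying Proposition~\ref{prop:wp_trafo} to the unit-norm profile $h_m/c_m$ and using $|\langle\phi\mid(h_m)_w\rangle| = c_m\,|\langle (h_m/c_m)_w\mid\phi\rangle|$ shows that this factor equals $c_m\|\phi\| = c_m$. Combining, $|\langle\phi\mid\widetilde{\calI}\psi\rangle|\le c_m\,\sup_z|a(z)|\,\|\psi\|$ for every Schwartz $\phi$ of unit norm; since this uniform bound also shows $\widetilde{\calI}\psi\in L^2(\Rb^d)$, taking the supremum over $\phi$ and multiplying by $\eps^{|m|/2}$ yields $\|\calI\psi\| \le c_m\,\eps^{|m|/2}\,\sup_z|a(z)|\,\|\psi\|$, with $c_0=1$.

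I do not expect a genuine obstacle here; the one point that requires care is to \emph{avoid} Minkowski's integral inequality in $z$ at the first step, since that would replace $\int|\langle g_z\mid\psi\rangle|^2\D z$ by $\int|\langle g_z\mid\psi\rangle|\D z$ and so lose a power of $\eps$. It is precisely the $TT^*$/Cauchy--Schwarz-in-phase-space structure, combined with the fact that the wave packet transform is a fixed multiple of an isometry for \emph{every} unit-norm Schwartz profile, that keeps the bound sharp. The only remaining, routine, points are the justification of Fubini and of the a priori meaningfulness of $\calI\psi$, both of which rest on the rapid decay of $z\mapsto\langle g_z\mid\psi\rangle$.
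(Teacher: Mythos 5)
Your proposal is correct and follows essentially the same route as the paper's proof: the scaling identity $(x-\Phi_q(z))^m\,g_{\Phi(z)} = \eps^{|m|/2}(x^m g)_{\Phi(z)}$ to produce the factor $\eps^{|m|/2}$, duality against a test function with Cauchy--Schwarz in the phase-space variable, the isometry of the wave packet transform (Proposition~\ref{prop:wp_trafo}) applied to the normalised profile $x^m g/\|x^m g\|$, and volume preservation of $\Phi$ to handle the composition. No gaps.
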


\begin{proof} We use the inner products and the associated norms in the Hilbert spaces $L^2(\Rb^d)$ and $L^2(\Rb^{2d})$. 
We distinguish them by subscripts $x$ and $z$, respectively. \bch Note that both inner products are 
conjugate linear in the first argument.\ech
Let $\varphi\in L^2(\Rb^d)$. Then,  
\begin{align*}
&\langle \varphi\,|\, \calI\psi\rangle_x \\
&=   
\left\langle (2\pi\eps)^{-d/2} 
\left\langle (x-q)^m g_{z}|\varphi \right\rangle_x\circ\Phi \ \Big|\  
a(z)\, (2\pi\eps)^{-d/2}\langle g_z|\psi\rangle_x\right\rangle_z.
\end{align*}
Hence, by the Cauchy--Schwarz inequality, 
\begin{align*}
&\left|\langle \varphi\,|\, \calI\psi\rangle_x\right| \\*[1ex]
&\ \le \|a\|_\infty\ 
(2\pi\eps)^{-d/2} \left\|\left\langle (x-q)^m\,g_{z}|\varphi\right\rangle_x\circ\Phi \right\|_z
(2\pi\eps)^{-d/2}\left\| \langle g_z|\psi\rangle_x\right\|_z.
\end{align*}
Since the profile function $g$ has unit norm, the norm formula of Proposition~\ref{prop:wp_trafo} implies 
\[
(2\pi\eps)^{-d}\left\|\langle g_z|\psi\rangle_x\right\|_z^2 \ =\  (2\pi\eps)^{-d} \int_{\Rb^{2d}} \left|\langle g_z|\psi\rangle_x\right|^2\, \D z =  \|\psi\|_x^2.
\]
Since $\Phi$ is volume-preserving, we also have
\[
\left\|\langle (x-q)^m \, g_{z}|\varphi\rangle_x\circ\Phi\right\|_z^2
\ =\  \left\|\langle (x-q)^m \, g_z|\varphi\rangle_x\right\|_z^2.
\]
We observe that 
\bch\[
(x-q)^m g_z(x) \ =\  \eps^{|m|/2} \left(x^m g\right)_z\!(x)
\]\ech
and apply Proposition~\ref{prop:wp_trafo} to the normalised profile function $x^m g/\|x^m g\|$. 
We obtain
\[
(2\pi\eps)^{-d} \left\|\langle (x-q)^m \, g_{z}|\varphi\rangle_x\circ\Phi\right\|_z^2 
\ =\  \eps^{|m|}\, \|x^m g\|_x^2\, \|\varphi\|_x^2.
\]
In summary, setting 
\[
c_m = \|x^m g\|_x,
\] 
we have proved 
\[
\left|\langle \varphi\,|\, \calI\psi\rangle_x\right| \ \le\  c_m\, \eps^{|m|/2} \sup_{z\in\Rb^{2d}}|a(z)| \ 
\|\varphi\|_x\ \|\psi\|_x.
\]
\end{proof}

Proposition~\ref{prop:basic_bound} ensures the well-definedness of the oscillatory 
Herman--Kluk integral. With 
\begin{align*}
g(x) &= \pi^{-d/4} \exp(-\tfrac12|x|^2),\quad m=0,\\*[1ex]
a(z) &= \e^{\I S(t,z)/\eps} a_\natural(t,z), \quad\text{and}\quad 
\Phi(z) = \Phi^t(z),
\end{align*}
it implies that $\calI_\natural(t)\psi_0$ is a square-integrable function with 
\[
\|\calI_\natural(t)\psi_0\| \le \sup_{z\in\Rb^{2d}} |a_\natural(t,z)| \, \|\psi_0\|. 
\]
With a slight adjustment, the proof of Proposition~\ref{prop:basic_bound} extends to a larger 
class of integral operators, that incorporates not just powers $(x-\Phi_q(z))^m$ but functions 
of the more general form  
\[
(x-\Phi_q(z))^m \, b(\Phi_q(z),x),
\]
where $b:\Rb^{2d}\to\Rb$ is a measurable and bounded function. 
We will need this extension later on, when estimating the non-quadratic contributions of the 
potential to the approximation error.

\begin{corollary}[norm bound, general profile]\label{cor:norm_bound}
Let $b:\Rb^{2d}\to\C$ be a measurable and bounded function. Under the assumptions 
of Proposition~\ref{prop:basic_bound} define for any square-integrable $\psi:\Rb^d\to\C$
\[
(\calI \psi)(x) = (2\pi\eps)^{-d} \int_{\Rb^{2d}} \langle g_z|\psi\rangle\, 
a(z)\,(x-\Phi_q(z))^m\,b(\Phi_q(z),x)\, g_{\Phi(z)}(x) \D z.
\]
Then, $\calI\psi$ is square-integrable and satisfies
\[
\|\calI\psi\| \ \le\  c_m \ \eps^{|m|/2} \sup_{z\in\Rb^{2d}}|a(z)|\ 
\sup_{(x,z)\in\Rb^{3d}}|b(q,x)|\  \|\psi\|,
\]
where $c_m>0$ depends on the moments of the profile function~$g$. In particular, $c_0 = 1$.
\end{corollary}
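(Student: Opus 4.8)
The plan is to follow the duality argument used to prove Proposition~\ref{prop:basic_bound} essentially verbatim, carrying the extra bounded factor $b$ through the computation and extracting its supremum at the one step where it enters. It suffices to establish the bound for $\psi\in\calS(\Rb^d)$ and then pass to general square-integrable $\psi$ by density. So fix $\psi\in\calS(\Rb^d)$ and an arbitrary $\varphi\in\calS(\Rb^d)$. Arguing exactly as in Proposition~\ref{prop:basic_bound} --- the rearrangement being legitimate because $z\mapsto\langle g_z\mid\psi\rangle$ is a Schwartz function and the Gaussian factor decays rapidly --- we write $\langle\varphi\mid\calI\psi\rangle_x$ as an inner product over $\Rb^{2d}$,
\[
\langle\varphi\mid\calI\psi\rangle_x = \big\langle (2\pi\eps)^{-d/2} F \,\mid\, (2\pi\eps)^{-d/2}\, a(z)\,\langle g_z\mid\psi\rangle_x \big\rangle_z,
\qquad F(z) := \big\langle (x-\Phi_q(z))^m\, b(\Phi_q(z),\cdot)\, g_{\Phi(z)} \,\mid\, \varphi \big\rangle_x ,
\]
the only difference with Proposition~\ref{prop:basic_bound} being that the factor $b(\Phi_q(z),x)$ now sits inside $F$ rather than multiplying $a$. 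Cauchy--Schwarz in $z$ then bounds $|\langle\varphi\mid\calI\psi\rangle_x|$ by the product of $(2\pi\eps)^{-d/2}\|F\|_z$ and $(2\pi\eps)^{-d/2}\|a(z)\langle g_z\mid\psi\rangle_x\|_z$, and the second factor is at most $(\sup_z|a(z)|)\,\|\psi\|$ by the norm identity of Proposition~\ref{prop:wp_trafo}, just as before.

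It remains to bound $(2\pi\eps)^{-d}\|F\|_z^2$, and this is the only place where a new argument is required. Since $\Phi$ is volume-preserving we may substitute $w=\Phi(z)=(w_q,w_p)$; using the scaling identity $(x-w_q)^m g_w(x)=\eps^{|m|/2}(x^m g)_w(x)$ and writing $x^m g=c_m\gamma$ with $\gamma:=x^m g/\|x^m g\|$ of unit norm and $c_m:=\|x^m g\|$, the task reduces to showing
\[
(2\pi\eps)^{-d}\int_{\Rb^{2d}} \big|\,\langle b(w_q,\cdot)\,\gamma_w \,\mid\, \varphi\rangle_x\,\big|^2\,\D w \ \le\ \Big(\sup|b|\Big)^{2}\,\|\varphi\|_x^{2} .
\]
For $b\equiv 1$ this is exactly the norm identity of Proposition~\ref{prop:wp_trafo}; that proposition cannot be quoted here, however, because the ``profile'' $b(w_q,\cdot)\gamma$ depends on the centre $w_q$. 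The point is that it does \emph{not} depend on the momentum variable $w_p$, so the Fourier-inversion step in the proof of Proposition~\ref{prop:wp_trafo} is unaffected: for fixed $w_q$, the map $w_p\mapsto\langle b(w_q,\cdot)\gamma_w\mid\varphi\rangle_x$ is, up to a unimodular phase and the factor $(2\pi\eps)^{d/2}$, the scaled Fourier transform $\calF_\eps$ of $x\mapsto\overline{b(w_q,x)\,\gamma_{(w_q,0)}(x)}\,\varphi(x)$, so Plancherel in $w_p$ turns the inner integral into $\int_{\Rb^d}|b(w_q,x)|^2\,|\gamma_{(w_q,0)}(x)|^2\,|\varphi(x)|^2\,\D x$. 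Bounding $|b(w_q,x)|\le\sup|b|$ pointwise, then applying Fubini and using $\int_{\Rb^d}|\gamma_{(w_q,0)}(x)|^2\,\D w_q=\|\gamma\|^2=1$ for each fixed $x$, produces precisely the displayed inequality.

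Combining the two estimates and taking the supremum over $\varphi\in\calS(\Rb^d)$ of unit norm gives $\|\calI\psi\|\le c_m\,\eps^{|m|/2}\,(\sup_z|a(z)|)\,(\sup|b|)\,\|\psi\|$ with $c_m=\|x^m g\|$ depending only on the moments of $g$ (so $c_0=\|g\|=1$); in particular $\calI\psi$ is square-integrable, and the estimate for general square-integrable $\psi$ follows by density. The only genuine obstacle in all of this is the observation underpinning the displayed bound --- namely that, $b$ being independent of the momentum slot $w_p$, the Plancherel step in $w_p$ proceeds as in Proposition~\ref{prop:wp_trafo} and $\sup|b|$ can be pulled out of the $w_p$-integral and then past the $w_q$-integral; everything else is a line-by-line repetition of the proof of Proposition~\ref{prop:basic_bound}.
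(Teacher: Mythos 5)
Your proposal is correct and follows essentially the same route as the paper: the duality/Cauchy--Schwarz step is repeated verbatim from Proposition~\ref{prop:basic_bound}, and the only new ingredient --- that $b$ depends on $x$ and on the position component $w_q$ but not on the momentum $w_p$, so that Plancherel (equivalently, Fourier inversion) in $w_p$ collapses the squared norm to a diagonal integral $\int |b(q,x)|^2\,|f(\tfrac{x-q}{\sqrt\eps})|^2\,|\varphi(x)|^2\,\D(x,q)$ from which $\sup|b|$ is extracted pointwise --- is exactly the observation the paper adds. The only (cosmetic) difference is that you perform Plancherel in $w_p$ for fixed $w_q$ and then integrate in $w_q$, whereas the paper writes out the fourfold integral and invokes Fourier inversion once; the computations are identical.
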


\begin{proof}
We literally repeat the proof of Proposition~\ref{prop:basic_bound}, adding  
the following observation. Denote $f(x) = x^m g(x)$. We have \bch by Fourier inversion, that\ech
\begin{align*}
&\left\|\langle (x-q)^m \,b(q,x)\, g_z|\varphi\rangle_x\right\|_z^2\\*[1ex]
&= \eps^{|m|-d/2} \int_{\Rb^{4d}} \overline b(q,y) b(q,x)
\overline f(\tfrac{y-q}{\sqrt\eps}) f(\tfrac{x-q}{\sqrt\eps})\overline\varphi(y) \varphi(x) 
\e^{\I p\cdot(x-y)/\eps}\D (x,y,z)\\*[1ex]
&= (2\pi\eps)^{d}\, \eps^{|m|-d/2} \int_{\Rb^{2d}} |b(q,x)|^2 \,
|f(\tfrac{x-q}{\sqrt\eps})|^2\, |\varphi(x)|^2 \D (x,q)\\*[1ex]
&\le (2\pi\eps)^{d}\, \eps^{|m|}\, \sup_{(q,x)\in\Rb^{2d}}|b(q,x)|^2 \ \|f\|^2 \ \|\varphi\|^2. 
\end{align*}
\end{proof}

So far we have not used the fact that the profile function of the Herman--Kluk propagator is Gaussian and we have not used the properties of the Hamiltonian flow, nor have we incorporated the phase contributions 
from the action integral. However, a more detailed analysis will allow us to recognise 
that the contributions from monomials $(x-q(t,z))^m$ can be smaller than 
expected, depending on the parity of $|m|$. 
For this analysis, the second step for the proof of Theorem~\ref{theo:hk}, we open the inner product integral in $\calI_\natural(t)\psi_0$ and write
\[
\langle g_z| \psi_0\rangle\,\e^{\I S(t,z)/\eps} \,g_{\Phi^t(z)}(x)
= (\pi\eps)^{-d/2} \int_{\Rb^{d}} \e^{\I \Psi(t,x,y,z)/\eps}\, \psi_0(y) \D y,
\]
using a complex-valued phase function $\Psi(t,x,y,z)$, that is quadratic 
with respect to $y-q$ and $x-q(t,z)$. This function has 
remarkable properties.  

\begin{lemma}[phase function, frozen Gaussians]\label{lem:phase}
The phase function 
\begin{align*}
&\Psi: \Rb\times\Rb^d\times\Rb^d\times\Rb^{2d}\to\C,\\
&\Psi(t,x,y,z) = \tfrac{\I}{2}\left(|y-q|^2+|x-q(t,z)|^2\right) \\
&\qquad\qquad\qquad- p\cdot(y-q) + p(t,z)\cdot(x-q(t,z)) + S(t,z)
\end{align*}
satisfies for all $(t,x,y,z)$
\[
(\I\partial_q + \partial_p)\Psi(t,x,y,z) = M(t,z)^T (x-q(t,z)),
\]
where $M(t,z)$ denotes the complex $d\times d$ matrix 
\[
M(t,z) = \partial_q q(t,z) -\I\partial_p q(t,z) + \partial_p p(t,z) + \I\partial_q p(t,z).
\]
The matrix $M(t,z)$ has the following properties: 
\begin{enumerate}
\item[1.] $M(t,z)$ is invertible with 
\[
|\det M(t,z)| = \sqrt{\det\!\left(\Id_{2d} + D\Phi^t(z)^T D\Phi^t(z)\right)}.
\]
\item[2.] Its time derivative satisfies
\begin{align*}
\partial_t M(t,z) 
&= \partial_q p(t,z) -\I\partial_p p(t,z) \\*[1ex]
&\quad - \nabla^2 V(q(t,z))(\partial_p q(t,z)+\I\partial_q q(t,z)) .
\end{align*}
\item[3.] If $V$ is a polynomial of degree $\le 2$, then $M(t)$ is independent of $z$. 
\end{enumerate}
\end{lemma}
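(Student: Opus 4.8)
\emph{Plan of proof.} Everything except the determinant formula in property~1 is obtained by direct differentiation; that formula is the one place where a genuine structural argument is needed. For the gradient identity I would first record the classical relations for the action,
\[
\partial_q S(t,z)=(\partial_q q(t,z))^{T}p(t,z)-p,\qquad \partial_p S(t,z)=(\partial_p q(t,z))^{T}p(t,z).
\]
Each follows because the two sides agree at $t=0$ and have the same $t$-derivative: differentiating in $t$ and using $\dot q=p$, $\dot p=-\nabla V(q)$ and $\dot S=\tfrac12|p|^2-V(q)$, both sides of the first relation have derivative $(\partial_q p)^{T}p-(\partial_q q)^{T}\nabla V(q)$, and similarly for the second. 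With these at hand one differentiates the summands of $\Psi$ in $q$ and in $p$ and forms $\I\partial_q+\partial_p$. The terms coming from $\tfrac\I2|y-q|^2$ and $-p\cdot(y-q)$ combine to $(y-q)-(y-q)+\I p=\I p$; the terms coming from $\tfrac\I2|x-q(t,z)|^2$ and $p(t,z)\cdot(x-q(t,z))$ produce $\bigl((\partial_q q)^{T}-\I(\partial_p q)^{T}+(\partial_p p)^{T}+\I(\partial_q p)^{T}\bigr)(x-q(t,z))$, which is precisely $M(t,z)^{T}(x-q(t,z))$, together with $-\I(\partial_q q)^{T}p(t,z)-(\partial_p q)^{T}p(t,z)$; and the action term gives $\I\partial_q S+\partial_p S$, which by the two relations equals $\I(\partial_q q)^{T}p(t,z)-\I p+(\partial_p q)^{T}p(t,z)$. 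All terms not proportional to $(x-q(t,z))$ cancel, leaving the asserted identity.

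\emph{Properties 2 and 3.} For property~2, differentiate $M=\partial_q q-\I\partial_p q+\partial_p p+\I\partial_q p$ in $t$, interchange $\partial_t$ with $\partial_q$ and $\partial_p$, and insert $\partial_t q=p$, $\partial_t p=-\nabla V(q)$, so that $\partial_t\partial_q q=\partial_q p$, $\partial_t\partial_p q=\partial_p p$, $\partial_t\partial_q p=-\nabla^2V(q)\,\partial_q q$ and $\partial_t\partial_p p=-\nabla^2V(q)\,\partial_p q$; regrouping yields the stated formula. Property~3 follows at once: if $V$ is a polynomial of degree $\le2$ then $\nabla^2V$ is a constant matrix, so by property~2 the function $t\mapsto M(t,z)$ solves a $z$-independent linear ODE with the $z$-independent initial value $M(0,z)=2\,\Id$, hence is independent of $z$. (Equivalently, the flow $\Phi^t$ is then affine with $z$-independent linear part, so $D\Phi^t(z)$, and therefore $M(t,z)=(\Id,\I\Id)\,D\Phi^t(z)\begin{pmatrix}\Id\\-\I\Id\end{pmatrix}$, does not depend on $z$.)

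\emph{Property 1.} Since $\Phi^t$ is the flow of a Hamiltonian system, $\mathcal D:=D\Phi^t(z)$ is symplectic, $\mathcal D^{T}J\mathcal D=J$. Writing $E:=\begin{pmatrix}\Id\\-\I\Id\end{pmatrix}$ one has $M=(\Id,\I\Id)\,\mathcal D\,E$, and since $EE^{*}=\Id_{2d}-\I J$, $E^{*}JE=2\I\,\Id$ and $E^{*}E=2\,\Id$, the symplecticity relation gives
\[
M^{*}M=E^{*}\mathcal D^{T}(EE^{*})\mathcal D E=E^{*}\bigl(\Id_{2d}+\mathcal D^{T}\mathcal D\bigr)E=E^{*}\mathcal D^{T}\mathcal D E+2\,\Id,
\]
which is positive definite; hence $M$ is invertible. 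For the value of $|\det M|$ I would invoke the singular value decomposition adapted to the symplectic group, $\mathcal D=\mathcal O_1\Sigma\mathcal O_2^{T}$ with $\mathcal O_1,\mathcal O_2$ orthogonal symplectic and $\Sigma=\mathrm{diag}(\sigma_1,\dots,\sigma_d,\sigma_1^{-1},\dots,\sigma_d^{-1})$. An orthogonal symplectic matrix $\begin{pmatrix}X&-Y\\Y&X\end{pmatrix}$ obeys $(\Id,\I\Id)\begin{pmatrix}X&-Y\\Y&X\end{pmatrix}=(X+\I Y)(\Id,\I\Id)$ and the transposed relation on the right, so $\mathcal O_1$ and $\mathcal O_2$ factor out of $M$ as unitary matrices $U_1$ on the left and $U_2^{*}$ on the right, while $(\Id,\I\Id)\,\Sigma\,E=\mathrm{diag}(\sigma_i+\sigma_i^{-1})$; thus $M=U_1\,\mathrm{diag}(\sigma_i+\sigma_i^{-1})\,U_2^{*}$, whence
\[
|\det M|^{2}=\prod_{i=1}^{d}(\sigma_i+\sigma_i^{-1})^{2}=\prod_{i=1}^{d}(1+\sigma_i^{2})(1+\sigma_i^{-2})=\det(\Id_{2d}+\Sigma^{2})=\det\bigl(\Id_{2d}+\mathcal D^{T}\mathcal D\bigr).
\]
The main obstacle is exactly this last step: one must recognize the orthogonal symplectic factors of $D\Phi^t(z)$ as unitary matrices acting on the $d\times d$ matrix $M$, which is what collapses the $2d$-dimensional determinant onto $M$; the gradient identity and properties~2 and~3 are routine once the action relations are in place.
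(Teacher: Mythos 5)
Your proof is correct, and for the gradient identity, the time derivative and the quadratic case it coincides with the paper's (the paper derives the action relations by integrating $\tfrac{d}{ds}(\partial_q q^T p)$ from $0$ to $t$, which is the same computation as your "equal at $t=0$, equal derivatives" argument). Where you genuinely diverge is the determinant formula in property~1. The paper introduces the auxiliary $2d\times 2d$ matrix $F=\Id_{2d}+D\Phi+\I J(\Id_{2d}-D\Phi)$, observes that its two left blocks commute so that $\det F=\det(F_{11}F_{22}-F_{21}F_{12})=2^d\det M$, and then computes $F^*F=2\Id_{2d}+2\,D\Phi^T D\Phi$ directly, with symplecticity killing the cross terms involving $J$; this yields $|\det M|^2=4^{-d}\det(F^*F)$ by entirely elementary means. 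You instead invoke the symplectic singular value decomposition $D\Phi^t(z)=\mathcal O_1\Sigma\mathcal O_2^T$ with orthogonal symplectic factors and reciprocally paired singular values, push the orthogonal symplectic factors through $(\Id,\I\,\Id)$ and $E$ as unitaries, and reduce $M$ to $U_1\,\mathrm{diag}(\sigma_i+\sigma_i^{-1})\,U_2^*$. This is a valid and arguably more conceptual route -- it explains \emph{why} the $2d$-dimensional determinant collapses onto the $d\times d$ matrix $M$, and your identity $M^*M=E^*D\Phi^T D\Phi\,E+2\,\Id$ gives invertibility more directly than the paper does (there it is a corollary of the determinant formula) -- but it rests on the existence of the symplectic SVD, a nontrivial structural fact about $Sp(2d,\Rb)$ that you would need to prove or cite, whereas the paper's commuting-block trick needs nothing beyond the block determinant formula and the relation $D\Phi^TJD\Phi=J$.
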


\begin{proof}
We start by differentiating the action integral. By the classical equations of motion, we have 
\begin{align*}
\partial_q S(t,z) 
&= \int_0^t \left(\partial_q p(s,z)^T p(s,z) - \partial_q q(s,z)^T\nabla V(q(s,z))\right) \D s\\
&= \int_0^t \left(\partial_q \dot q(s,z)^T p(s,z) + \partial_q q(s,z)^T\dot p(s,z)\right) \D s\\
&= \int_0^t \tfrac{d}{\D s}\left(\partial_q q(s,z)^T p(s,z)\right) \D s
\ =\ \partial_q q(t,z)^T p(t,z)-p,
\end{align*}
\bch since $\partial_q q(0,z) = \Id_d$.\ech
Analogously,
\[
\partial_p S(t,z) \ =\  \int_0^t \tfrac{d}{\D s}\left(\partial_p q(s,z)^T p(s,z)\right) \D s
\ =\ \partial_p q(t,z)^T p(t,z).
\]
Now we compute the derivatives of the phase function, 
\begin{align*}
&\partial_q\Psi(t,x,y,z) = -\I(y-q) -\I\,\partial_q q(t,z)^T(x-q(t,z)) + p \\*[1ex]
&\qquad + \partial_q p(t,z)^T(x-q(t,z))
- \partial_q q(t,z)^Tp(t,z) + \left(\partial_q q(t,z)^T p(t,z)-p\right)\\*[1ex]
&\qquad= -\I(y-q) +\left(\partial_q p(t,z)-\I\,\partial_q q(t,z)\right)^T(x-q(t,z))
\end{align*}
and
\begin{align*}
&\partial_p\Psi(t,x,y,z) = -\I\,\partial_p q(t,z)^T(x-q(t,z)) -(y-q) \\*[1ex]
&\qquad + \partial_p p(t,z)^T(x-q(t,z))
- \partial_p q(t,z)^Tp(t,z) + \partial_p q(t,z)^Tp(t,z)\\*[1ex]
&\qquad= \left(\partial_p p(t,z)-\I\,\partial_p q(t,z)\right)^T(x-q(t,z)) -(y-q).
\end{align*}
This implies
\[
(\I\partial_q + \partial_p)\Psi(t,x,y,z) = M(t,z)^T(x-q(t,z)).
\] 
To prove invertibility, we work with the Jacobian matrix of the flow, 
\[
D\Phi^t(z) = \begin{pmatrix} \partial_q q(t,z) & \partial_p q(t,z)\\*[1ex] 
\partial_q p(t,z) & \partial_p p(t,z)\end{pmatrix}.
\]
and the matrix \bch$F = F(t,z)$ defined by\ech
\begin{align*}
F &:= \Id_{2d} + D\Phi + \I J(\Id_{2d}-D\Phi) \\*[1ex]
&= 
\begin{pmatrix}\Id_d + \partial_q(q+\I p) & \partial_p(q+\I p) -\I\Id_d\\
\partial_q(p-\I q) + \I\Id_d & \Id_d + \partial_p(p-\I q)\end{pmatrix}.
\end{align*}
Since the two left blocks of $F$ commute, its determinant satisfies
\begin{align*}
\det(F) &= \det(F_{11}F_{22}-F_{21}F_{12})\\*[1ex]
&=\det(\partial_q(q+\I p) + \partial_p(p-\I q) + \I \partial_q(p-\I q) -\I\partial_p(q+\I p))\\*[1ex]
& = \det(2M) = 2^d \det(M).
\end{align*}
We calculate 
\begin{align*}
F^*F &= (\Id_{2d} + D\Phi)^T(\Id_{2d} + D\Phi) + (\Id_{2d}-D\Phi)^T(\Id_{2d}-D\Phi)\\*[1ex]
&= 2\Id_{2d} + 2D\Phi^T D\Phi, 
\end{align*}
where we have used that the cross-terms involving $J$ vanish, 
\begin{align*}
&\I(\Id_{2d} + D\Phi)^T J (\Id_{2d}-D\Phi) - \I (\Id_{2d}-D\Phi)^TJ^T(\Id_{2d} + D\Phi)\\*[1ex]
&= \I (-J D\Phi+ D\Phi^TJ) + \I(JD\Phi-D\Phi^T J) = 0, 
\end{align*}
due to the symplecticity of $D\Phi^t(z)$. 
Hence, 
\begin{align*}
|\det M|^2 &= 4^{-d} \det(F^* F)\\
 &= \det(\Id_{2d} + D\Phi^T D\Phi).
\end{align*}
For the time derivative we simply calculate
\[
\partial_t M(t,z) = (\partial_q -\I\partial_p) p(t,z) - \nabla^2 V(q(t,z)) (\partial_p +\I\partial_q) q(t,z).
\]
If the potential $V$ is a polynomial of degree $\le 2$, 
then the Jacobian $D\Phi^t$ and thus $M(t)$ are independent of $z$. 
\end{proof}

The key observation of Lemma~\ref{lem:phase} is that the Wirtinger derivative 
\[
(\I\partial_q+\partial_p)\Psi(t,x,y,z)
\] 
is independent of the variable $y$ and linear in $x-q(t,z)$. This allows for an integration by parts that turns monomial terms $(x-q(t,z))^m$ into powers of the form $\eps^{\lceil |m|/2\rceil}$, where $\lceil |m|/2\rceil$ 
denotes the smallest integer $\ge |m|/2$. This explains the high accuracy of appropriate continuous superpositions of frozen Gaussians, which is of order $\eps$ instead of $\sqrt\eps$. 

\begin{proposition}[norm bound, frozen Gaussians]\label{prop:improved_bound} 
Let the functions $a,b:\Rb^{2d}\to\C$ be smooth and bounded. 
Let
\[
g(x) = \pi^{-d/4}\exp(-\tfrac12|x|^2), \quad x\in\Rb^d,
\] 
be the standard normalised Gaussian and let $m\in\N_0^d$. 
For any square-integrable function $\psi:\Rb^d\to\C$ define
\begin{align*}
&\calI(t)\psi \\
&= (2\pi\eps)^{-d} \int_{\Rb^{2d}} \langle g_z|\psi\rangle \,a(t,z)\, (x-q(t,z))^m \,b(q(t,z),x) \,
\e^{\I S(t,z)/\eps} g_{\Phi^t(z)} \D z
\end{align*}
Then, $\calI(t)\psi$ is square-integrable and satisfies
\[
\|\calI(t)\psi\| \le\  \gamma_{m}\, c_{|m|}(a,b,\Phi^t)\  \eps^{\lceil |m|/2\rceil} \|\psi\|
\]
where the constant $\gamma_{m}>0$ depends on the polynomial degree $m$ and  
\begin{align*}
& c_{|m|}(a,b,\Phi^t) =\\
& \sup_{|\alpha|\le |m|,(x,z)\in\Rb^{3d}} \|\partial^\alpha_z (a(z) b(q,x) M(t,z)^{-T})\| \ 
\sup_{|\alpha|\le |m|, z\in\Rb^{2d}} |\partial^\alpha_z\Phi^t(z)|.
\end{align*}
\end{proposition}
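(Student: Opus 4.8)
The plan is to reduce the estimate to the two elementary norm bounds already established — Proposition~\ref{prop:basic_bound} and Corollary~\ref{cor:norm_bound} — by one carefully chosen integration by parts in the phase-space variable $z$ that exploits the structure of the phase $\Psi$ from Lemma~\ref{lem:phase}. First I would open the inner product and the prefactor $\e^{\I S/\eps}g_{\Phi^t(z)}$ as in the discussion preceding Lemma~\ref{lem:phase}, so that
\[
\calI(t)\psi(x) = (2\pi\eps)^{-d}(\pi\eps)^{-d/2}\int_{\Rb^{2d}}\int_{\Rb^d} a(t,z)\,(x-q(t,z))^m\,b(q(t,z),x)\,\e^{\I\Psi(t,x,y,z)/\eps}\,\psi(y)\,\D y\,\D z
\]
becomes a genuine oscillatory integral over $(y,z)$ with phase $\Psi/\eps$, whose $L^2_x$-norm we must control.

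The algebraic heart is Lemma~\ref{lem:phase}: writing $L_j = \I\partial_{q_j}+\partial_{p_j}$, we have $L_j\,\e^{\I\Psi/\eps} = \tfrac{\I}{\eps}\bigl(M(t,z)^T(x-q(t,z))\bigr)_j\,\e^{\I\Psi/\eps}$, and since $M(t,z)$ is invertible,
\[
(x-q(t,z))_i\,\e^{\I\Psi/\eps} = -\I\eps\sum_{j=1}^d \bigl(M(t,z)^{-T}\bigr)_{ij}\,L_j\,\e^{\I\Psi/\eps}.
\]
I would then argue by induction on $|m|$. If $|m|$ is even, Corollary~\ref{cor:norm_bound} (profile $g$, amplitude $a(t,z)\e^{\I S(t,z)/\eps}$, bounded factor $b$, volume-preserving map $\Phi^t$) applies directly and gives the order $\eps^{|m|/2}=\eps^{\lceil|m|/2\rceil}$. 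If $|m|$ is odd, choose $i$ with $m_i\ge 1$, substitute the displayed identity for the single factor $(x-q(t,z))_i$, and integrate by parts in $z$; the boundary terms vanish because $z\mapsto\langle g_z|\psi\rangle$ is a Schwartz function when $\psi$ is, while all the other factors grow at most polynomially in $z$. Each $L_j$ now falls on the amplitude $a(t,z)\bigl(M^{-T}\bigr)_{ij}b(q(t,z),x)$ and on the remaining monomial $(x-q(t,z))^{m-\langle i\rangle}$, producing by the Leibniz rule, with bounded coefficients, a polynomial of degree $|m|-1$ (even) and polynomials of degree $|m|-2$ (odd) in $x-q(t,z)$, each multiplied by $\e^{\I\Psi/\eps}$. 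The degree-$(|m|-1)$ part is bounded by Corollary~\ref{cor:norm_bound} to order $\eps\cdot\eps^{(|m|-1)/2}=\eps^{\lceil|m|/2\rceil}$, and the degree-$(|m|-2)$ parts are bounded by the induction hypothesis to order $\eps\cdot\eps^{\lceil(|m|-2)/2\rceil}=\eps^{\lceil|m|/2\rceil}$; the base cases $|m|=0$ (Corollary~\ref{cor:norm_bound}) and $|m|=1$ (one integration by parts) are immediate.

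Two technical points carry the proof. First, one must check that after each integration by parts the amplitude is again uniformly bounded in $z$, so that Corollary~\ref{cor:norm_bound} keeps applying: since the derivatives of $V$ of order $\ge 2$ are bounded, the Jacobian $D\Phi^t(z)$ — and hence $M(t,z)$ and, using $|\det M(t,z)|\ge 1$ from Lemma~\ref{lem:phase}, also $M(t,z)^{-1}=\mathrm{adj}(M(t,z))/\det M(t,z)$ — together with all their $z$-derivatives of order $\le|m|$ are bounded uniformly in $z$ for $0\le t\le\bar t$; the $z$-derivatives of $q(t,z)$ that enter through $(x-q(t,z))$ and through $b(q(t,z),x)$ are likewise bounded. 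These are exactly the quantities gathered in $c_{|m|}(a,b,\Phi^t)$, and tracking how often each factor is differentiated along the recursion shows that derivatives of order at most $|m|$ suffice. Second, the expansion $(x-q)_i=\bigl(M^{-T}M^T(x-q)\bigr)_i$ together with the Leibniz rule only ever generates a finite, $m$-dependent number of terms, which is what the constant $\gamma_m$ absorbs. The main obstacle is not a single deep estimate but this organisation: one must set up the induction so that every term encountered genuinely fits the framework of Corollary~\ref{cor:norm_bound} with uniformly bounded amplitude, and so that the gain $\eps^{\lceil|m|/2\rceil}$ — rather than the weaker $\eps^{|m|/2}$ that a single integration by parts leaves for the degree-$(|m|-2)$ remainder — is actually recovered, which is precisely why a one-shot argument does not work and the recursion on odd degrees is needed.
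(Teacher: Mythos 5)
Your proof is correct and follows essentially the same route as the paper's: the Wirtinger-derivative identity of Lemma~\ref{lem:phase}, integration by parts in $z$ to trade one factor of $x-q(t,z)$ for a factor of $\eps$, and Corollary~\ref{cor:norm_bound} to bound the resulting amplitudes. The only difference is organisational --- you recurse on odd $|m|$ in steps of two and dispatch even $|m|$ directly via Corollary~\ref{cor:norm_bound}, whereas the paper inducts on $|m|$ in steps of one --- and both bookkeepings yield the same exponent $\lceil |m|/2\rceil$.
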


\begin{proof}
We perform an inductive proof over $|m|$. The case $|m| = 0$ is already covered by the norm bound of 
Corollary~\ref{cor:norm_bound}. We therefore start with $|m|=1$, where $m=e_j$ for some $j=1,\ldots,d$.  
We use the derivative formula of Lemma~\ref{lem:phase}, 
\[
(x-q(t,z))\, \e^{\I\Psi(t,x,y,z)/\eps} = 
\tfrac{\eps}{\I} \,M(t,z)^{-T}(\I\partial_q + \partial_p) \,\e^{\I\Psi(t,x,y,z)/\eps}.
\]
Integration by parts yields
\begin{align*}
&\calI(t)\psi(x) \\
&= \frac{1}{2^{d} (\pi\eps)^{3d/2}} \int_{\Rb^{3d}} a(t,z) (x-q(t,z))_j\, b(q(t,z),x)\,  
\e^{\I\Psi(t,x,y,z)/\eps} \psi(y) \D(y,z)\\
&= \frac{\eps\I}{2^{d} (\pi\eps)^{3d/2}} \sum_{k=1}^d  \int_{\Rb^{3d}} 
(\I\partial_q+\partial_p)_k \left(a(t,z)\,M(t,z)^{-T}_{jk}\, b(q(t,z),x)\right) \\*[1ex]
&\hspace*{14em}\times\e^{\I\Psi(t,x,y,z)/\eps} \psi(y) \D(y,z).
\end{align*}
Using Corollary~\ref{cor:norm_bound}, we obtain some constant $\gamma_{e_j}>0$ such that
\[
\|\calI(t)\psi\| \ \le\  \gamma_{e_j}\, c_1(a,b,\Phi^t)\,\eps\, \|\psi\|. 
\]
For the inductive step, we consider 
\begin{align*}
&\calI(t)\psi(x) =\\
&\frac{1}{2^{d} (\pi\eps)^{3d/2}} \int_{\Rb^{3d}} a(t,z) (x-q(t,z))^{m+e_j}\, b(q(t,z),x)\,  
\e^{\I\Psi(t,x,y,z)/\eps} \psi(y) \D(y,z)\\
&=\frac{\eps\I}{2^{d} (\pi\eps)^{3d/2}} \sum_{k=1}^d \int_{\Rb^{3d}} a_{jk}(t,x,z)
\e^{\I\Psi(t,x,y,z)/\eps} \psi(y) \D(y,z)
\end{align*} 
with
\begin{align*}
a_{jk}(t,x,z) 
&= (\I\partial_q+\partial_p)_k\, \left( a(t,z)\,M(t,z)^{-T}_{jk}\, b(q(t,z),x)\right)(x-q(t,z))^m\\*[1ex]
&\quad+ a(t,z)\,M(t,z)^{-T}_{jk}\, b(q(t,z),x)\ (\I\partial_q+\partial_p)_k (x-q(t,z))^m. 
\end{align*}
We therefore write
\[
\calI(t) = \eps\left(\calI_0(t) + \calI_1(t)\right),
\]
where the integral operator $\calI_0(t)$ contains monomials in $x-q(t,z)$ of order $|m|$, while 
those in $\calI_1(t)$ are of order $|m|-1$. By the inductive hypothesis, there exist some constants 
$\widetilde\gamma_0,\widetilde\gamma_1>0$ such that
\[
\|\calI_0(t)\psi\| \ \le\   \widetilde \gamma_0\, c_{|m|+1}(a,b,\Phi^t) \,\eps^{\lceil |m|/2\rceil} \|\psi\|
\]
and
\[
\|\calI_1(t)\psi\| \ \le\  \widetilde \gamma_1\, c_{|m|}(a,b,\Phi^t) \,\eps^{\lceil (|m|-1)/2\rceil} \|\psi\|.
\]
We have
\[
\lceil(|m|-1)/2\rceil+1 = \left\{ 
\begin{array}{ll}
|m|/2+1 = \lceil (|m|+1)/2\rceil, & \text{for $m$ even,}\\*[1ex]
(|m|+1)/2 = \lceil (|m|+1)/2\rceil, & \text{for $m$ odd,}
\end{array}
\right. 
\]
and therefore
\[
\|\calI(t)\psi\| \ \le\  \gamma_{m+e_j}\,c_{|m|}(a,b,\Phi^t) \,\eps^{\lceil (|m|+1)/2\rceil} \, \|\psi\|
\]
for some constant $\gamma_{m+e_j}>0$.
\end{proof}

\subsection{The Herman--Kluk propagator}\label{sec:hk_theorem}
We next calculate the defect of a frozen Gaussian superposition that is built on an arbitrary, 
not yet determined weight factor $a(t,z)$.

\begin{lemma}[defect calculation]\label{lem:res}
For a smooth and bounded function $a:\Rb\times\Rb^{2d}\to\C$ and square-integrable $\psi_0:\Rb^d\to\C$ we consider
\[
\calI(t)\psi_0 = (2\pi\eps)^{-d} \int_{\Rb^{2d}} \langle g_z| \psi_0\rangle\,a(t,z)\,
\e^{\I S(t,z)/\eps} \,g_{\Phi^t(z)}(x) \D z.
\]
Then, the defect 
\[
d(t) = \left(\tfrac{1}{\I\eps} H-\partial_t\right)\calI(t)\psi_0
\]
satisfies
\[%\label{eq:res}
d(t) = (2\pi\eps)^{-d} \int_{\Rb^{2d}} \langle g_z| \psi_0\rangle\,\delta(t)\,
\e^{\I S(t,z)/\eps} \,g_{\Phi^t(z)} \D z,
\]
where 
\begin{align*}
&\delta(t,x,z) = -\partial_t a(t,z) + \tfrac{1}{\I\eps} a(t,z)\left(-\tfrac12|x-q(t,z)|^2 +\tfrac{\eps d}{2}\right) \\*[1ex]
&+ \tfrac{1}{\I\eps} a(t,z)\left( \tfrac12 (x-q(t,z))^T\, \nabla^2 V(q(t,z)) (x-q(t,z)) + W_{q(t,z)}(x)\right),
\end{align*}
and 
\[
W_q = V - U_q
\] 
denotes the non-quadratic remainder of the potential $V$ expanded around a point $q\in\Rb^d$.
\end{lemma}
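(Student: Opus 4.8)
The plan is to differentiate the integral representation of $\calI(t)\psi_0$ under the integral sign in $z$ and to read off, pointwise in the phase-space variable $z$, the coefficient that multiplies $\e^{\I S(t,z)/\eps}g_{\Phi^t(z)}(x)$ after the operator $\tfrac{1}{\I\eps}H-\partial_t$ has been applied. Since $\psi_0\in\calS(\Rb^d)$, the weight $z\mapsto\langle g_z\mid\psi_0\rangle$ is again a Schwartz function, and the maps $(t,z)\mapsto\bigl(\Phi^t(z),S(t,z),a(t,z)\bigr)$ are smooth with all derivatives of at most polynomial growth in $z$ (the hypothesis that the derivatives of $V$ of order $\ge 2$ are bounded yields the required control on $\Phi^t$, and hence on $S$). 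This legitimises differentiation under the integral sign and the convergence of every integral below, so it remains to compute $\bigl(\tfrac{1}{\I\eps}H-\partial_t\bigr)\bigl(a(t,z)\e^{\I S(t,z)/\eps}g_{\Phi^t(z)}(x)\bigr)$ and to factor out $\e^{\I S(t,z)/\eps}g_{\Phi^t(z)}(x)$.

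Two elementary computations feed into this, both using only the explicit Gaussian form $g_{\Phi^t(z)}(x)=(\pi\eps)^{-d/4}\exp\!\bigl(-\tfrac1{2\eps}|x-q|^2+\tfrac{\I}{\eps}p\cdot(x-q)\bigr)$, with $q=q(t,z)$, $p=p(t,z)$, together with the classical equations $\dot q=p$, $\dot p=-\nabla V(q)$ and $\partial_tS(t,z)=\tfrac12|p|^2-V(q)$. First, differentiating the exponent in time gives
\[
\partial_t\bigl(\e^{\I S/\eps}g_{\Phi^t(z)}\bigr)=\Bigl(\tfrac{\I}{\eps}\bigl(\tfrac12|p|^2-V(q)\bigr)+\tfrac1\eps p\cdot(x-q)-\tfrac{\I}{\eps}\nabla V(q)\cdot(x-q)-\tfrac{\I}{\eps}|p|^2\Bigr)\e^{\I S/\eps}g_{\Phi^t(z)}.
\]
Second, since $a(t,z)\e^{\I S(t,z)/\eps}$ is independent of $x$, one has $H\bigl(a\e^{\I S/\eps}g_{\Phi^t(z)}\bigr)=a\e^{\I S/\eps}Hg_{\Phi^t(z)}$, and differentiating the Gaussian twice yields $-\tfrac{\eps^2}2\Delta_xg_{\Phi^t(z)}=\bigl(\tfrac{\eps d}2-\tfrac12|x-q|^2+\I p\cdot(x-q)+\tfrac12|p|^2\bigr)g_{\Phi^t(z)}$, while the multiplication term is $V(x)g_{\Phi^t(z)}$, which we split as $V=U_q+W_q$ with the quadratic Taylor polynomial $U_q(x)=V(q)+\nabla V(q)\cdot(x-q)+\tfrac12(x-q)^T\nabla^2V(q)(x-q)$.

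Subtracting the two expressions and collecting terms then gives the claim, the substance of the argument being an almost complete cancellation. The linear Taylor contribution $\tfrac1{\I\eps}a\,\nabla V(q)\cdot(x-q)$ from $V$ cancels the $\nabla V(q)\cdot(x-q)$ term produced by $-\partial_t$; the constant Taylor contribution $\tfrac1{\I\eps}a\,V(q)$ and the $\tfrac12|p|^2$ and $|p|^2$ terms cancel against the $-\partial_tS$ and $-\partial_t$-exponent contributions; and the two copies of $\tfrac1\eps p\cdot(x-q)$, one from $-\tfrac{\eps^2}2\Delta$ and one from $\partial_t$, cancel. What survives is exactly
\[
\delta(t,x,z)=-\partial_ta+\tfrac1{\I\eps}a\bigl(-\tfrac12|x-q|^2+\tfrac{\eps d}2\bigr)+\tfrac1{\I\eps}a\bigl(\tfrac12(x-q)^T\nabla^2V(q)(x-q)+W_q(x)\bigr),
\]
that is, the reweighting term $-\partial_ta$, the ``frozen-width defect'' $\tfrac1{\I\eps}a(-\tfrac12|x-q|^2+\tfrac{\eps d}2)$ arising because the Laplacian applied to an unevolving Gaussian profile does not reproduce a Gaussian evolution, and the contribution of the potential $V$ with its value and gradient at $q$ removed. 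The only delicate point is the bookkeeping of these cancellations; there is no analytic obstruction, which is why the statement is phrased as a lemma rather than a theorem. The Herman--Kluk prefactor $a_\natural$ is then, in the next step of the analysis, chosen precisely so as to eliminate the leading part of $\delta$ by an integration by parts exploiting Lemma~\ref{lem:phase}.
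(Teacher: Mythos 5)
Your proof is correct and follows essentially the same route as the paper: compute $\partial_t\bigl(\e^{\I S/\eps}g_{\Phi^t(z)}\bigr)$ using the classical equations of motion and $\dot S=\tfrac12|p|^2-V(q)$, compute $-\tfrac{\eps^2}{2}\Delta_x g_{\Phi^t(z)}$, Taylor-expand $V=U_q+W_q$, and collect terms; your bookkeeping of the cancellations checks out and reproduces the stated $\delta$. The only difference is that you add an explicit remark justifying differentiation under the integral sign, which the paper omits.
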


\begin{proof}
We start by calculating the two time derivatives, namely
\[
\partial_t \e^{\I S(t)/\eps} = -\tfrac{1}{\I\eps}\left( \tfrac12|p(t)|^2-V(q(t))\right) \e^{\I S(t)/\eps}
\]
and 
\begin{align*}
\partial_t g_{\Phi^t} &= 
\partial_t\left(-\tfrac{1}{2\eps}|x-q(t)|^2 + \tfrac\I\eps p(t)^T (x-q(t)) \right) g_{\Phi^t}\\
&= \tfrac{1}{\I\eps}\left( (\I p(t)-\dot p(t))^T(x-q(t)) +|p(t)|^2\right) g_{\Phi^t}.
\end{align*}
This implies that
\begin{align*}
&\partial_t\left( \e^{\I S(t)/\eps}g_{\Phi^t}\right) \\
&= 
\tfrac{1}{\I\eps}\left( \tfrac12|p(t)|^2 + V(q(t)) + (\I p(t)-\dot p(t))^T(x-q(t)) \right) \e^{\I S(t)/\eps}g_{\Phi^t}
\end{align*}
and consequently
\begin{align*}
&\partial_t \,\calI(t)\psi_0 \\
&= (2\pi\eps)^{-d} \int_{\Rb^{2d}} \langle g_z| \psi_0\rangle\, 
\tfrac{1}{\I\eps}\left( \I\eps\partial_t a(t) + \left( \tfrac12|p(t)|^2 + V(q(t)) \right) a(t) \right)
\e^{\I S(t)/\eps} g_{\Phi^t} \D z\\
&\quad + (2\pi\eps)^{-d} \int_{\Rb^{2d}} \langle g_z| \psi_0\rangle\, \tfrac{1}{\I\eps}\, a(t) \,
(\I p(t)-\dot p(t))^T(x-q(t)) \,\e^{\I S(t)/\eps}g_{\Phi^t} \D z.
\end{align*}
We also calculate the space derivatives,
\[
\partial_j g_z(x) = \left( -\tfrac1\eps(x-q)_j + \tfrac\I\eps p_j\right) g_z(x),\qquad j=1,\ldots,d,
\]
and
\[
-\tfrac{\eps^2}{2} \Delta_x g_z(x) = \left( -\tfrac12|x-q|^2 + \I p^T(x-q) + \tfrac12|p|^2 +\tfrac{\eps d}{2}\right)g_z(x).
\]
Adding the time and space derivatives together with a Taylor expansion of the potential, 
we obtain the claimed form of the defect.
\end{proof}

In view of the frozen norm bounds determined in Proposition~\ref{prop:improved_bound}, the calculated form of the defect is encouraging. 
The leading-order terms are 
\begin{itemize}
\item[-] 
the time derivative of the reweighting function $a(t,z)$, 
\item[-] the contributions 
that are quadratic in $x-q(t,z)$, one stemming from the Laplacian, the other from the quadratic 
expansion of the potential. 
\end{itemize}
The non-quadratic remainder of the potential will contribute to the overall error, 
and we have a first indication of an integral representation 
of the dynamics that is exact for quadratic potentials. 

\medskip
Next, we explicitly calculate the terms generated by the integration by parts 
used for the improved norm bound in Proposition~\ref{prop:improved_bound}.  
We will obtain an explicit formula for the reweighting function, which solely depends 
on the classical flow map and provides an error of order $\eps$.  

\begin{proof}(of Theorem~\ref{theo:hk})\quad 
We let $A(t,z)$ denote the real symmetric $d\times d$ matrix
\[
A(t,z) = -\Id+\nabla^2 V(q(t,z))
\] 
and write the defect determined in Lemma~\ref{lem:res} as
\[
\delta(t,x,z) = -\partial_t a(t,z) + \delta_2(t,x,z) + 
\tfrac{1}{\I\eps}a(t,z) \left( \tfrac{\eps d}{2} + W_{q(t,z)}(x)\right), 
\]
where the quadratic part is given by 
\[
\delta_2(t,x,z) = \tfrac{1}{2\I\eps}\,  a(t,z)\,(x-q(t,z))^T\,A(t,z)  (x-q(t,z))
\]
and the non-quadratic remainder by
\[
W_q(x) = \tfrac12 \sum_{|m|=3} (x-q)^m \int_0^1 (1-\theta)^2\ \partial^m V((1-\theta)q+\theta x) \D\theta.
\]
We perform an integration by parts for the quadratic contributions. By the derivative formula of 
Lemma~\ref{lem:phase}, 
\[
(x-q(t,z))\, \e^{\I\Psi(t,x,y,z)/\eps} = 
\tfrac{\eps}{\I} \,M(t,z)^{-T}(\I\partial_q + \partial_p) \,\e^{\I\Psi(t,x,y,z)/\eps},
\]
we have
\begin{align*}
&\int_{\Rb^{2d}} \delta_2(t,x,z) \e^{\I\Psi(t,x,y,z)/\eps} \D z \\*[1ex]
&= 
-\int_{\Rb^{2d}}  a(t,z)\,\tfrac12(x-q(t,z))^T\,A(t,z) M(t,z)^{-T} \left(\I\partial_q + \partial_p\right)\e^{\I\Psi(t,x,y,z)/\eps}\D z\\*[1ex]
&=
\int_{\Rb^{2d}} \widetilde \delta_2(t,x,z)\,\e^{\I\Psi(t,x,y,z)/\eps} \D z
\end{align*}
with
\begin{align*}
&\widetilde \delta_2(t,x,z) \\
&\ = \tfrac{1}{2} \sum_{k,\ell=1}^d 
\left(\I\partial_q + \partial_p\right)_\ell \left( a(t,z)\,(x-q(t,z))_k \,(A(t,z) M(t,z)^{-T})_{k,\ell}\right).
\end{align*}
We calculate the derivative
\[
\left(\I\partial_q + \partial_p\right)_\ell (x-q(t,z))_k = -\left(\I\partial_q q(t,z)^T + \partial_p q(t,z)^T\right)_{k,\ell}, 
\]
and obtain
\begin{align*}
&\tfrac{1}{2}\sum_{k,\ell=1}^d (A(t,z) M(t,z)^{-T})_{k,\ell}
\left(\I\partial_q + \partial_p\right)_\ell (x-q(t,z))_k \\
&= -\tfrac{1}{2}\ \tr\left(\left(\I\partial_q q(t,z)^T + \partial_p q(t,z)^T\right) A(t,z)\, M(t,z)^{-T}\right).
\end{align*}
By the phase function Lemma~\ref{lem:phase}, 
\begin{align*}
&-\left(\I\partial_q q^T + \partial_p q^T\right) A - \I M^T \\*[1ex]
&=  \left(\I\partial_q q^T + \partial_p q^T\right) \left(\Id-\nabla^2 V\right) - 
\left( \I\partial_qq^T + \I\partial_p p^T - \partial_q p^T + \partial_p q^T\right)\\*[1ex]
&= \partial_t M^T.
\end{align*}
We thus obtain
\begin{align*}
&\widetilde\delta_2(t,x,z) + \tfrac{d}{2\I} \ =\  
\tfrac{1}{2}\,\tr\left(\partial_t M(t,z)^T\ M(t,z)^{-T}\right) \bch a(t,z)\ech\\
&\quad+ 
\tfrac{1}{2} \sum_{k,\ell=1}^d (x-q(t,z))_k \left(\I\partial_q + \partial_p\right)_\ell \left( a(t,z)\,(A(t,z) M(t,z)^{-T})_{k,\ell}\right).
\end{align*}
This suggests choosing $a(t,z)$ as the solution of the ordinary differential equation
\[
\partial_t a(t,z) = \tfrac12\,\tr\left(\partial_t M(t,z)^T\ M(t,z)^{-T}\right) a(t,z),\qquad a(0,z) = 1.
\]
By Liouville's formula,
\[
\partial_t \det(M(t,z)) = \det(M(t,z))\ \tr(\partial_t M(t,z) M(t,z)^{-1}). 
\]
Therefore, 
\[
a(t,z) \ =\  \sqrt{2^{-d}\det(M(t,z))} \ =\  a_\natural(t,z).
\]
The corresponding defect 
\[
d_\natural(t) = (2\pi\eps)^{-d} \int_{\Rb^{2d}} \langle g_z| \psi_0\rangle\,\delta_\natural(t)\,
\e^{\I S(t,z)/\eps} \,g_{\Phi^t(z)}\D z,
\] 
is determined by the function
\begin{align*}
\delta_\natural(t,x,z) &=  \tfrac{1}{2} \sum_{k,\ell=1}^d (x-q(t,z))_k \left(\I\partial_q + \partial_p\right)_\ell \left( a_\natural(t,z)\,(A(t,z) M(t,z)^{-T})_{k,\ell}\right) \\
&\qquad + \tfrac{1}{\I\eps}a_\natural(t,z) W_{q(t,z)}(x). 
\end{align*}
The function $\delta_\natural$ consists of two parts. The first one is linear in $x-q(t,z)$. The frozen norm bound 
of Proposition~\ref{prop:improved_bound} results in an upper bound for the corresponding oscillatory integral of 
order $\eps^{\lceil 1/2\rceil} = \eps$. 
The second part involving the non-quadratic remainder of the potential is cubic in $x-q(t,z)$ but divided by $\eps$, resulting in an upper bound that is of order 
\[
\eps^{\lceil 3/2\rceil-1} = \eps^{2-1} = \eps
\] 
as well. Altogether we obtain
\[
\|d_\natural(t)\| \ \le\  C\, \eps\, \|\psi_0\|\qquad 0\le t\le \bar t,
\]
where the constant $C>0$ is independent of $\eps$ and $\psi_0$. 
By the stability Lemma~\ref{lem:stability}, the error then satisfies
\[
\|\psi(t) - \calI_\natural(t)\psi_0\| \ \le\  \int_0^t \|d_\natural(s)\| \D s \ \le\  C\, t\, \eps\,\|\psi_0\|.
\]
\end{proof}

As for the variational Gaussian approximation and the Hagedorn wave packets, we observe 
exactness for quadratic potentials. 

\begin{corollary}[exactness for quadratic potentials] If the potential function $V$ is quadratic, then the Herman--Kluk propagator is exact, that is, $\psi(t) = \calI_\natural(t)\psi_0$ for all square-integrable initial data $\psi_0$ and all times $t$.
\end{corollary}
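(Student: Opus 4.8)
The plan is to show that for a quadratic potential the defect produced by the Herman--Kluk superposition vanishes identically, so that exactness then follows from the stability Lemma~\ref{lem:stability} together with the initial normalisation $\calI_\natural(0)\psi_0=\psi_0$.

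First I would return to the defect formula obtained in the proof of Theorem~\ref{theo:hk}: the defect $d_\natural(t)$ is the oscillatory integral over $\Rb^{2d}$ whose integrand carries the factor
\[
\delta_\natural(t,x,z) = \tfrac12\sum_{k,\ell=1}^d (x-q(t,z))_k \,(\I\partial_q+\partial_p)_\ell\!\left( a_\natural(t,z)\,(A(t,z)M(t,z)^{-T})_{k,\ell}\right) + \tfrac{1}{\I\eps}\, a_\natural(t,z)\,W_{q(t,z)}(x),
\]
with $A(t,z)=-\Id+\nabla^2 V(q(t,z))$ and $W_q=V-U_q$ the non-quadratic remainder of $V$ at $q$. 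If $V$ is a polynomial of degree $\le 2$, then $W_q\equiv 0$, so the second summand disappears; moreover $\nabla^2V$ is then a constant matrix, hence $A(t,z)=A(t)$ is independent of $z$, and by Lemma~\ref{lem:phase}(3) the matrix $M(t,z)=M(t)$ is independent of $z$ as well, so $a_\natural(t,z)=\sqrt{2^{-d}\det M(t)}$ is also $z$-independent. Therefore the bracket $a_\natural(t)(A(t)M(t)^{-T})_{k,\ell}$ is constant in $z$, its Wirtinger derivative $(\I\partial_q+\partial_p)_\ell$ vanishes, and we conclude $\delta_\natural\equiv 0$, hence $d_\natural(t)=0$ for all $t$.

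Next I would verify the initial data: at $t=0$ we have $S(0,z)=0$, $\Phi^0(z)=z$ and $M(0,z)=Q_\natural(0,z)+\I P_\natural(0,z)=\Id+\I(-\I\,\Id)=2\,\Id$, so that $a_\natural(0,z)=\sqrt{2^{-d}\det(2\,\Id)}=1$; the wave packet transform inversion formula of Proposition~\ref{prop:wp_trafo}, applied to the standard normalised Gaussian profile $g$, then gives $\calI_\natural(0)\psi_0=\psi_0$. Setting $e(t)=\calI_\natural(t)\psi_0-\psi(t)$ and using the defect identity of Lemma~\ref{lem:res}, the error solves $\partial_t e=\tfrac1{\I\eps}He-d_\natural(t)=\tfrac1{\I\eps}He$ with $e(0)=0$, so Lemma~\ref{lem:stability} forces $\|e(t)\|=0$ for $t\ge 0$; the same computation with time reversed handles $t<0$. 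This establishes exactness for Schwartz $\psi_0$, and the general $\psi_0\in L^2(\Rb^d)$ follows by density, since $\calI_\natural(t)$ is bounded on $L^2$ by Proposition~\ref{prop:basic_bound}.

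I do not anticipate a real obstacle: all the analytic labour is already contained in the proof of Theorem~\ref{theo:hk}, and the corollary is just the observation that both contributions to the defect --- the cubic term coming from $W_q$ and the linear term coming from differentiating the prefactor, the width data and $A$ with respect to $z$ --- are turned off by quadraticity of $V$. The only point demanding a little care is confirming that \emph{every} matrix entering $\delta_\natural$ (namely $\nabla^2V$, $M(t,z)$, $A(t,z)$ and $a_\natural(t,z)$) is genuinely independent of the phase-space variable $z$, which is precisely what Lemma~\ref{lem:phase}(3) guarantees.
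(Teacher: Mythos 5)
Your proposal is correct and follows the same route as the paper: the paper's proof likewise observes that a constant Hessian makes the flow Jacobian, $A(t)$, $M(t)$ and $a_\natural(t)$ independent of $z$, so that (together with $W_q\equiv 0$) the defect $d_\natural(t)$ vanishes and exactness follows from the stability argument already set up in the proof of Theorem~\ref{theo:hk}. Your version merely spells out the details the paper leaves implicit (the explicit form of $\delta_\natural$, the check $a_\natural(0,z)=1$ and $\calI_\natural(0)\psi_0=\psi_0$, and the density extension to general $L^2$ data), all of which are accurate.
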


\begin{proof} Quadratic potentials have a constant Hessian $\nabla^2 V$. Therefore, the Jacobian of the flow, 
the matrices $A(t)$ and $M(t)$, and the Herman--Kluk prefactor $a_\natural(t)$ do not depend 
on $z$. Consequently, $d_\natural(t) = 0$.
\end{proof}

\subsection{First norm bounds for continuous thawed Gaussian superpositions} 

The basic norm bounds we have used to analyse the frozen Gaussian superposition, that is, Proposition~\ref{prop:basic_bound} and Corollary~\ref{cor:norm_bound}, do not directly apply for the thawed Gaussian superposition 
\[
\calI_{\rm th}(t)\psi_0 = (2\pi\eps)^{-d} \int_{\Rb^{2d}} \langle g_z|\psi_0\rangle\, 
\e^{\I S(t,z)/\eps} g(t)_{\Phi^t(z)} \D z,
\] 
due to the $z$-dependence of the Gaussian width matrix $C(t,z)$. It is natural to 
assume that the imaginary parts of the time-evolved family $C(t,z)$ are bounded 
from below in the sense that
\[
\Im C(t,z) \ge \Im C_0 \ \text{for all}\ (t,z)\in\Rb\times \Rb^{2d},
\]
where $C_0$ is a complex symmetric $d\times d$ matrix with $\Im C_0>0$. 
Then, it is tempting to write 
\[
g[C(z)]_{\Phi^t(z)}(x) = a_0(t,z) \,b_0(t,x,z)\, g[C_0]_{\Phi^t(z)}(x)
\] 
with a determinantal prefactor  
\[
a_0(t,z) =  \left(\frac{\det\Im C(t,z)}{\det\Im C_0}\right)^{1/4} 
\]
and an integrable Gaussian function 
\[
b_0(t,x,z) = \exp(\tfrac{\I}{2\eps}(x-q(t,z))^T (C(t,z)-C_0)(x-q(t,z))).
\]
The proof of Proposition~\ref{prop:basic_bound} easily accommodates the 
fixed width Gaussian wave packet $g[C_0]_{\Phi^t(z)}(x)$ and the renormalisation function $a_0(t,z)$. However,  
the elegant Fourier inversion argument of Corollary~\ref{cor:norm_bound} is blocked, since 
the function $b_0(t,x,z)$ depends on $x$ and $z$ simultaneously, while the $z$-dependence is 
not only caused by $q(t,z)$ but also by the thawed width matrix $C(t,z)$. Hence, 
we need to resort to alternative, more general techniques for controlling 
oscillatory integral operators. For this,  we reformulate the previously employed wave packet inversion 
and norm formulas from a more abstract point of view. 

\begin{proposition}[wave packet transform]\label{prop:bargmann} 
Let $g:\Rb^d\to\C$ denote a square-integrable function of unit norm. The adjoint operator of the isometry
\[
\calB: L^2(\Rb^d)\to L^2(\Rb^{2d}), \quad (\calB\psi)(z) = (2\pi\eps)^{-d/2} \langle g_z|\psi\rangle,
\]
is given by
\begin{align*}
&\calB^*: L^2(\Rb^{2d})\to L^2(\Rb^d), \\
&(\calB^*\Psi)(x) = (2\pi\eps)^{-d/2} \int_{\Rb^{2d}} \Psi(z) g_z(x) \D z.
\end{align*}
It satisfies
\[
\calB^*\calB\psi = \psi\quad\text{and}\quad\|\calB\psi\| = \|\psi\|
\]
for all $\psi\in L^2(\Rb^d)$.
\end{proposition}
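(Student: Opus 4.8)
The plan is to read this proposition as an operator-theoretic restatement of the inversion and norm identities of Proposition~\ref{prop:wp_trafo}, supplemented by a routine density argument and an identification of the Hilbert-space adjoint.

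First I would record that $\calB$ is well defined on all of $L^2(\Rb^d)$: since $g\in L^2(\Rb^d)$ implies $g_z\in L^2(\Rb^d)$ with $\|g_z\|=\|g\|=1$ for every $z$, the map $z\mapsto\langle g_z\mid\psi\rangle$ is a measurable function and, by the Cauchy--Schwarz inequality, $|(\calB\psi)(z)|\le(2\pi\eps)^{-d/2}\|\psi\|$ uniformly in $z$. The second identity in Proposition~\ref{prop:wp_trafo} gives $\|\calB\psi\|_{L^2(\Rb^{2d})}=\|\psi\|_{L^2(\Rb^d)}$ for every Schwartz $\psi$, so $\calB$ is isometric on the dense subspace $\calS(\Rb^d)\subset L^2(\Rb^d)$ and therefore extends uniquely to an isometry on $L^2(\Rb^d)$. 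Since $\calB\psi_n\to\calB\psi$ pointwise (again by Cauchy--Schwarz, with the uniform bound $\|g_z\|=1$) whenever $\psi_n\to\psi$ in $L^2$, and the extension is the $L^2$-limit of $\calB\psi_n$, the extension still coincides with the stated pointwise formula; hence $\|\calB\psi\|=\|\psi\|$ for all $\psi\in L^2(\Rb^d)$.

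Next I would identify the adjoint. For $\psi\in\calS(\Rb^d)$ and $\Psi\in\calS(\Rb^{2d})$ — a dense class in which $z\mapsto\langle g_z\mid\psi\rangle$ is Schwartz (as noted after Proposition~\ref{prop:wp_trafo}), so that the double integral is absolutely convergent and Fubini applies — one computes, using that the inner products are conjugate linear in the first argument,
\[
\langle \calB\psi\mid\Psi\rangle_{L^2(\Rb^{2d})}
= (2\pi\eps)^{-d/2}\!\int_{\Rb^{2d}}\overline{\langle g_z\mid\psi\rangle}\,\Psi(z)\,\D z
= \Big\langle \psi \,\Big|\, (2\pi\eps)^{-d/2}\!\int_{\Rb^{2d}}\Psi(z)\,g_z\,\D z\Big\rangle_{L^2(\Rb^d)}.
\]
Thus the operator $\Psi\mapsto(2\pi\eps)^{-d/2}\int_{\Rb^{2d}}\Psi(z)g_z\,\D z$ agrees with the Hilbert-space adjoint $\calB^*$ on a dense subspace, and since both are bounded — the integral operator because $\calB$ is isometric, so $\|\calB^*\|=\|\calB\|=1$ — they agree on all of $L^2(\Rb^{2d})$, the integral being interpreted as the $L^2$-limit of its truncations when $\Psi$ is merely square-integrable.

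Finally, $\calB^*\calB=\Id$ follows either directly from the first identity of Proposition~\ref{prop:wp_trafo}, which for Schwartz $\psi$ reads $\psi=(2\pi\eps)^{-d}\int_{\Rb^{2d}}\langle g_z\mid\psi\rangle\,g_z\,\D z=\calB^*\calB\psi$ and then extends to $L^2(\Rb^d)$ by density, or by polarising the isometry relation $\|\calB\psi\|^2=\|\psi\|^2$, i.e.\ $\langle\psi\mid\calB^*\calB\psi\rangle=\langle\psi\mid\psi\rangle$ for all $\psi$. I expect the only point requiring genuine care to be the bookkeeping around the Fubini interchange and the passages to the $L^2$-limit — that is, checking that the explicitly defined integral operator really is the Hilbert-space adjoint; once the computation is restricted to Schwartz data this is routine, and everything else reduces to Proposition~\ref{prop:wp_trafo}.
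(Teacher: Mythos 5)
Your proof is correct and follows essentially the same route as the paper: the adjoint is identified by the same Fubini/duality computation, and both the inversion $\calB^*\calB=\Id$ and the isometry are reduced to Proposition~\ref{prop:wp_trafo} (the paper obtains the isometry from $\calB^*\calB=\Id$ exactly as in your polarisation remark). The only difference is that you make explicit the density and $L^2$-extension arguments that the paper leaves implicit.
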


\begin{proof}
For all \bch$\Psi\in L^2(\Rb^{2d})$\ech and $\psi\in L^2(\Rb^d)$, we have 
\bch
\begin{align*}
\langle \Psi,\calB\psi\rangle 
&= (2\pi\eps)^{-d/2} \int_{\Rb^{3d}} 
\overline{\Psi(z) g_z(x)} \psi(x) \D(x,z)\\
&= \Big\langle (2\pi\eps)^{-d/2}\int_{\Rb^{2d}} \Psi(z) \,g_z \D z\mid \psi\Big\rangle,
\end{align*}\ech
which proves the claimed form of the adjoint operator. Moreover, 
\[
\calB^*\calB\psi = (2\pi\eps)^{-d} \int_{\Rb^{2d}} \langle g_z|\psi\rangle g_z \D z = \psi.
\]
and 
\[
\|\calB\psi\|^2 \ =\ \langle \calB\psi|\calB\psi\rangle \ =\ 
\langle \psi| \calB^*\calB\psi\rangle \ =\ \|\psi\|^{2}.
\]
\end{proof}

Using the wave packet transform $\calB$, we lift the thawed operator 
$\calI_{\rm th}(t)$ to phase space and there prove its boundedness, unhindered by the $z$-dependent Gaussian width matrices.  By the isometric inversion property of the wave 
packet transform proved above, we have for all $\psi\in L^2(\Rb^d)$, 
\[
\|\calI_{\rm th}(t)\psi\| \ =\ \|\underbrace{\calB\, \calI_{\rm th}(t)\, \calB^*}_{=:\calI_{\calB}(t)} \calB\psi\| 
\ \le\  \|\calI_\calB(t)\|\ \|\psi\|,
\]
such that a bound on the operator norm of $\calI_{\calB}(t)$ will provide a bound for the 
thawed Gaussian superposition. The proof of the next result, 
Proposition~\ref{prop:bound_thawed}, follows this line of 
argumentation:

\begin{proposition}[thawed norm bound]\label{prop:bound_thawed}
We assume the following:
\begin{itemize}
\item[1.]
Let $\{C(z)|\,z\in\Rb^{2d}\}$ be a family of complex symmetric $d\times d$ matrices with positive 
definite imaginary part. We denote the corresponding normalised Gaussian profile functions by 
\[
g[C(z)](x) = (\pi\eps)^{-d/4} (\det\Im C(z))^{1/4} \exp(\tfrac\I2 x^T C(z) x)
\]
for $x\in\Rb^d$. We assume the existence of $\rho>0$ such that the spectral parameters of 
the matrices satisfy
%\begin{equation}\label{eq:rho_star}
\[
\rho_*(z) \ge \rho\quad \text{for all}\ z\in\Rb^{2d}.
\]
%\end{equation}
\item[2.] 
Let $a:\Rb^{2d}\to\C$ be a measurable and bounded function.\\*[-2ex]
\item[3.] 
Let $\Phi:\Rb^{2d}\to\Rb^{2d}$ be a volume-preserving diffeomorphism. 
\end{itemize}
For any square-integrable function $\psi:\Rb^d\to\C$ define
\[
(\calI \psi)(x) = (2\pi\eps)^{-d} \int_{\Rb^{2d}} \langle g[\I\Id]_z|\psi\rangle\, 
a(z)\,g[C(z)]_{\Phi(z)}(x) \D z.
\]
Then, $\calI\psi$ is square-integrable and satisfies
\[
\|\calI\psi\| \ \le\ (2/\rho)^d \sup_{z\in\Rb^{2d}}|a(z)| \ \|\psi\|.
\]
\end{proposition}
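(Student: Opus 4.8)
The plan is to bound $\|\calI\psi\|$ by a single application of the Cauchy--Schwarz inequality which detaches the (standard) wave packet transform of $\psi$ from the thawed synthesis kernel, so that the proof reduces to the norm formula of Proposition~\ref{prop:wp_trafo} together with one uniform estimate: a Gaussian overlap bound powered by the spectral parameter. Throughout I would think of the operator as being lifted to phase space, exactly in the spirit of the $\calB,\calB^*$ calculus announced before the statement.

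\emph{Step 1: two-sided control of the widths.} First I would turn $\rho_*(z)\ge\rho$ into genuine matrix bounds. Writing each $C(z)$ in Hagedorn's parametrisation $C(z)=P(z)Q(z)^{-1}$ with $(Q(z),P(z))$ satisfying \eqref{hag-rel}, Lemma~\ref{lem:hag-rel} gives $\Im C(z)=(Q(z)Q(z)^*)^{-1}$ and $\Im(-C(z)^{-1})=(P(z)P(z)^*)^{-1}$, so by Definition~\ref{def:spec_par}, $\rho_*(z)=\tfrac12(\|Q(z)\|^2+\|P(z)\|^2)^{-1}\ge\rho$ yields $\|Q(z)\|^2,\|P(z)\|^2\le(2\rho)^{-1}$; and the relation $Q(z)^*P(z)-P(z)^*Q(z)=2\I\,\Id$ forces $\|Q(z)v\|\ge\|P(z)\|^{-1}$ and $\|P(z)v\|\ge\|Q(z)\|^{-1}$ for every unit vector $v$, hence $\|Q(z)^{-1}\|,\|P(z)^{-1}\|\le(2\rho)^{-1/2}$ as well. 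Together,
\[
2\rho\,\Id\ \le\ \Im C(z)\ \le\ (2\rho)^{-1}\Id,\qquad 2\rho\,\Id\ \le\ \Im(-C(z)^{-1})\ \le\ (2\rho)^{-1}\Id\qquad(z\in\Rb^{2d}),
\]
so the thawed Gaussians $g[C(z)]_{\Phi(z)}$ and their $\eps$-scaled Fourier transforms $g[-C(z)^{-1}]$ are uniformly localised in position and in momentum, with rates bounded above and below in terms of $\rho$ alone.

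\emph{Step 2: reduction to an overlap integral.} With $g=g[\I\,\Id]$ (unit norm), Cauchy--Schwarz in $L^2(\Rb^{2d}_z)$ gives, for every $\phi\in L^2(\Rb^d)$,
\[
|\langle\phi\mid\calI\psi\rangle|\ \le\ \Bigl((2\pi\eps)^{-d}\!\!\int|\langle g_z\mid\psi\rangle|^2\,\D z\Bigr)^{1/2}\,\|a\|_\infty\,\Bigl((2\pi\eps)^{-d}\!\!\int|\langle\phi\mid g[C(z)]_{\Phi(z)}\rangle|^2\,\D z\Bigr)^{1/2}.
\]
The first factor equals $\|\psi\|$ by Proposition~\ref{prop:wp_trafo}. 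In the last factor I would substitute $w=\Phi(z)$ (volume-preserving) to obtain $(2\pi\eps)^{-d}\int|\langle\phi\mid g[\widetilde C(w)]_w\rangle|^2\,\D w=\|T\phi\|_{L^2(\Rb^{2d})}^2$, where $\widetilde C(w)=C(\Phi^{-1}(w))$ and $(T\phi)(w)=(2\pi\eps)^{-d/2}\langle g[\widetilde C(w)]_w\mid\phi\rangle$ is the varying-width analogue of the wave packet analysis. Since $\|T\|^2=\|TT^*\|$ and $TT^*$ is the integral operator on $L^2(\Rb^{2d})$ with Hermitian kernel $(2\pi\eps)^{-d}\langle g[\widetilde C(w)]_w\mid g[\widetilde C(w')]_{w'}\rangle$, Schur's test reduces the entire proof to the single estimate
\[
\sup_{w\in\Rb^{2d}}\ (2\pi\eps)^{-d}\!\!\int_{\Rb^{2d}}\bigl|\langle g[\widetilde C(w)]_w\mid g[\widetilde C(w')]_{w'}\rangle\bigr|\,\D w'\ \le\ (2/\rho)^{2d};
\]
taking $\phi=\calI\psi$ then yields $\|\calI\psi\|\le(2/\rho)^d\,\|a\|_\infty\,\|\psi\|$.

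\emph{Step 3 (the main obstacle): the uniform Gaussian overlap estimate.} Here I would evaluate $\langle g[C_1]_{(a_1,b_1)}\mid g[C_2]_{(a_2,b_2)}\rangle$ by completing the square in the Gaussian integral over $x\in\Rb^d$, whose quadratic form is $-\tfrac1{2\eps}x^T\Gamma x$ with $\Gamma=-\I(C_2-\overline{C_1})$, $\Re\Gamma=\Im C_1+\Im C_2\ge4\rho\,\Id$, and check that the modulus of the result is at most a $\rho$-dependent prefactor times a Gaussian in $w_1-w_2$: for equal widths $C_1=C_2=C$ and equal momenta the exponent comes out as $-\tfrac1{4\eps}(a_1-a_2)^T\bigl(\Re C\,(\Im C)^{-1}\Re C+\Im C\bigr)(a_1-a_2)$, which is $\le-\tfrac{\rho}{2\eps}|a_1-a_2|^2$ because $\Re C\,(\Im C)^{-1}\Re C\succeq0$ and $\Im C\succeq2\rho\,\Id$; the momentum directions are treated identically after passing to Fourier transforms (decay rate governed by $\Im(-C_i^{-1})\ge2\rho$), and the general case with $C_1\ne C_2$ and mixed variables follows by combining these, giving $|\langle g[C_1]_{w_1}\mid g[C_2]_{w_2}\rangle|\le(2\rho)^{-d}\exp(-\tfrac{c}{\eps}|w_1-w_2|^2)$ with $c$ of order $\rho$ (the prefactor bound uses $(\det\Im C_i)^{1/4}\le(2\rho)^{-d/4}$ and $|\det\Gamma|\ge\det(\Re\Gamma)\ge(4\rho)^d$). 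Integrating this over $w'$ and dividing by $(2\pi\eps)^d$ produces a constant of the form $(\mathrm{const}/\rho)^{2d}$, and tracking the numerical factors delivers the bound $(2/\rho)^{2d}$ needed in Step~2. The hard part is precisely this last estimate: keeping the cross-terms and the $C_1\ne C_2$ contribution to the exponent uniformly negative definite, with a decay rate of the right order in $\rho$, is the one place where the full strength of the two-sided bounds of Step~1 — the upper bounds on $\Im C(z)$ and on $\Im(-C(z)^{-1})$, not just the lower ones — is genuinely used.
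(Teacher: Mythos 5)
Your overall architecture -- lift to phase space, reduce to an integral operator with a Hermitian Gaussian-overlap kernel, and apply the Schur test with decay rate governed by the spectral parameter -- is the same as the paper's. The difference is where you factorise: the paper conjugates $\calI$ by the \emph{standard} wave packet transform $\calB$ of Proposition~\ref{prop:bargmann}, so its kernel $k_\calB(X,Y)$ only ever involves overlaps of a standard Gaussian with a standard Gaussian and of a standard Gaussian with a thawed one; the latter is exactly the content of Lemma~\ref{lem:gauss}, whose positivity statement $\Im\calC\ge\rho_*\Id$ is established in Lemma~\ref{lem:det_pd} by a block-Cholesky argument in the Hagedorn factorisation. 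By instead applying Cauchy--Schwarz to detach $\psi$ first and then running Schur on $TT^*$, you have shifted the entire burden onto the overlap of \emph{two thawed Gaussians with different width matrices} $\widetilde C(w)\neq\widetilde C(w')$, an estimate the paper never needs and never proves.

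That is where the gap is. Your Step~3 asserts $|\langle g[C_1]_{w_1}\mid g[C_2]_{w_2}\rangle|\le M\,\e^{-c|w_1-w_2|^2/\eps}$ with $c\sim\rho$ by treating the position variables via completion of squares, the momentum variables ``identically after passing to Fourier transforms,'' and then ``combining these.'' Separate one-sided decay in $q_1-q_2$ and in $p_1-p_2$ does not combine into joint Gaussian decay in the full phase-space separation: after completing the square the exponent is a quadratic form in $(w_1-w_2)\in\Rb^{2d}$ with nontrivial position--momentum cross terms, and one must prove that the imaginary part of the full $2d\times 2d$ matrix is bounded below by a multiple of $\rho$ -- this is precisely the role of Lemma~\ref{lem:det_pd} in the special case $C_1=\I\Id$, and for general $C_1\neq C_2$ the corresponding linear algebra is genuinely harder and is not supplied. (Your Step~1 two-sided bounds on $\Im C$ and $\Im(-C^{-1})$ are correct but do not by themselves control these cross terms.) The cleanest repair within your framework is to bypass configuration space entirely and compute $|\langle g[C_1]_{w_1}\mid g[C_2]_{w_2}\rangle|^2=(2\pi\eps)^d\langle\calW_{g[C_1]_{w_1}}\mid\calW_{g[C_2]_{w_2}}\rangle$ using Proposition~\ref{lem:WG}: the two Wigner functions are real phase-space Gaussians with symplectic, positive definite covariance matrices $G_i$ satisfying $\lambda_{\min}(G_i)=1/\lambda_{\max}(G_i)\ge 2\rho_*\ge 2\rho$, and the product of two such Gaussians yields the factor $\exp\bigl(-\tfrac1\eps(w_1-w_2)^T(G_1^{-1}+G_2^{-1})^{-1}(w_1-w_2)\bigr)$ with $(G_1^{-1}+G_2^{-1})^{-1}\ge\rho\,\Id$ and a prefactor $2^d\det(G_1+G_2)^{-1/2}\le 1$. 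With that lemma in hand your Steps~1--2 close the argument (and in fact yield a constant no worse than the stated $(2/\rho)^d$); without it, Step~3 is an assertion, not a proof.
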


\begin{remark}
Before entering the proof, we note that the above estimate might be too pessimistic. 
For the case of frozen Gaussians of unit width, that is, $C(z) = \I\,\Id$ for all $z$, we  
have $\rho_*(z) = 1/4$, whereas the frozen norm 
bound in Proposition~\ref{prop:basic_bound} provides the dimension-independent estimate 
\[
\|\calI\psi\|\ \le \ \sup_{z\in\Rb^{2d}}|a(z)|\,\|\psi\|
\]
for all square-integrable functions $\psi:\Rb^d\to\C$. 
\end{remark}

\begin{proof} We first calculate the integral kernel of the wave packet transformed operator 
$\calI_{\calB} := \calB\, \calI\, \calB^*$. 
We use the abbreviation $g = g[\I\,\Id]$ for the standard Gaussian.  
Let $\Psi\in L^2(\Rb^{2d})$ and $X\in\Rb^{2d}$. Then, 
\begin{align*}
(\calI_{\calB}\Psi)(X) &= (2\pi\eps)^{-d/2}\ \calB\calI \bch\int_{\Rb^{2d}} \Psi(Y) g_Y(\cdot) \D Y\ech\\
&= (2\pi\eps)^{-3d/2}\ \calB \int_{\Rb^{4d}}\Psi(Y) \,\langle g_z\,|\, g_Y\rangle\, a(z)\, 
\bch g[C(z)]_{\Phi(z)}(\cdot)\ech \D(Y,z)\\
&= (2\pi\eps)^{-2d} \int_{\Rb^{4d}}\Psi(Y) \,\langle g_z\,|\, g_Y\rangle\, a(z)\, 
\langle g_X\,|\,g[C(z)]_{\Phi(z)}\rangle \D(Y,z).
\end{align*} 
Hence, the integral kernel of $\calI_{\calB}$ is given by
\[
k_{\cal B}(X,Y) = (2\pi\eps)^{-2d} \int_{\Rb^{2d}}\langle g_z\,|\, g_Y\rangle\, a(z)\, 
\langle g_X\,|\,g[C(z)]_{\Phi(z)}\rangle \D z.
\]
We aim at bounding
\begin{align*}
C_1 &:= \sup_{X\in\Rb^{2d}} \int_{\Rb^{2d}} |k_{\calB}(X,Y)| \D Y ,\\*[1ex]
C_2 &:= \sup_{Y\in\Rb^{2d}} \int_{\Rb^{2d}} |k_{\calB}(X,Y)| \D X,
\end{align*}
since then, by the Schur test, see \eg\, \cite[Theorem~0.3.1]{Sog17}, 
\[
\|\calI_{\calB}\| \le \sqrt{C_1 C_2}. 
\]
We now use two estimates on inner products of Gaussian wave packets. First, 
an easy calculation using the Fourier transform of a standardized Gaussian function yields 
\[
|\langle g_z\,|\, g_Y\rangle| \ \le\  \exp(-\tfrac{1}{4\eps}|Y-z|^2). 
\]
Second, a fairly involved technical calculation also provides  
\[
|\left\langle g_{X} \mid g[C(z)]_{\Phi(z)} \right\rangle | \ \le\   
\exp(-\tfrac{\rho}{2\eps}\, |X-\Phi(z)|^2),
\]
where $\rho>0$ is a lower bound for the spectral parameter of the matrix family 
$\{C(z)\mid z\in\Rb^{2d}\}$. 
These two exponential estimates yield
\begin{align*}
|k_{\cal B}(X,Y)| 
&\le \|a\|_\infty\, (2\pi\eps)^{-2d} \int_{\Rb^{2d}} |\langle g_z\,|\, g_Y\rangle|\ 
|\langle g_X\,|\,g[C(z)]_{\Phi(z)}\rangle| \D z\\
&\le \|a\|_\infty\, (2\pi\eps)^{-2d} \int_{\Rb^{2d}} 
\exp(-\tfrac{1}{4\eps}|Y-z|^2 -\tfrac{\rho}{2\eps}|X-\Phi(z)|^2)
\D z.
\end{align*}
Then, for all $X\in\Rb^{2d}$,
\begin{align*}
&\int_{\Rb^{2d}} |k_{\cal B}(X,Y)| \D Y \\
&\le \|a\|_\infty\, (2\pi\eps)^{-2d} \int_{\Rb^{4d}} 
\exp(-\tfrac{1}{4\eps}|Y-z|^2 -\tfrac{\rho}{2\eps}|X-\Phi(z)|^2)
\D (Y,z)\\*[1ex]
&= \|a\|_\infty\, (2\pi\eps)^{-2d}\, (4\pi\eps)^{d}\, (2\pi\eps/\rho)^d
\ =\  \|a\|_\infty\,   (2/\rho)^d,   
\end{align*}
where we have used that $\Phi$ is volume-preserving. 
Similarly, for all $Y\in\Rb^{2d}$ we obtain 
\[
\int_{\Rb^{2d}} |k_{\cal B}(X,Y)| \D X \ \le\  \|a\|_\infty\,   (2/\rho)^d.   
\]
In summary,
\[
\|\calI\| \ =\  \|\calI_\calB\| \ \le\  \|a\|_\infty \, (2/\rho)^d.
\]
\end{proof}

Proposition~\ref{prop:bound_thawed} ensures the well-definedness of the 
oscillatory integral defining the continuous thawed superposition $\calI_{\rm th}(t)\psi_0$. 
Indeed, with the matrix $C(z)$ being the solution of the Riccati equation
\[
\dot C(t,z) = -C(t,z)^2 - \nabla^2 V(q(t,z)),\qquad C(0,z) = \I\Id, 
\]
and $\rho_*(t)>0$ the corresponding spectral parameter, with 
\begin{align*}
a(z) &= \e^{\I S(t,z)/\eps}\quad\text{and}\quad \Phi(z) = \Phi^t(z),
\end{align*}
Proposition~\ref{prop:bound_thawed} proves that $\calI_{\rm th}(t)\psi_0$ is a square-integrable function with 
\[
\|\calI_{\rm th}(t)\psi_0\| \ \le\  (2/\rho(t))^d \, \|\psi_0\|. 
\]
Analysing the approximation error of the thawed Gaussian approximation, we will encounter 
the non-quadratic remainder of the potential $V$, when expanded around a classical trajectory. 
We therefore have to extend the previous norm bounds such that they also cover terms of 
the form   
\[
(x-\Phi_q(z))^m \, b(\Phi_q(z),x),
\]
where $b:\Rb^{2d}\to\Rb$ is a smooth and bounded function. 
For this, we follow the same strategy of proof as before, however, working with the 
cross-Wigner function of two Gaussian wave packets instead of its inner product. 

\begin{proposition}[thawed norm bound]\label{prop:bound_thawed_ext}
We consider a smooth function $b:\Rb^{2d}\to\C$, that is bounded together 
with all its derivatives. Under the assumptions 
of Proposition~\ref{prop:bound_thawed}, we define for any square-integrable 
function $\psi:\Rb^d\to\C$
\begin{align*}
&(\calI \psi)(x) = \\
&(2\pi\eps)^{-d} \int_{\Rb^{2d}} \langle g[\I\Id]_z|\psi\rangle\, 
a(z)\,(x-\Phi_q(z))^m\,b(\Phi_q(x),x)\, g[C(z)]_{\Phi(z)}(x) \D z.
\end{align*}
Then, $\calI\psi$ is square-integrable, and 
\[
\|\calI\psi\| \ \le\ \gamma_m(\rho) \,c_{|m|}(a,b)\, \eps^{|m|/2}\, \|\psi\|,
\]
where the constant $0<\gamma_m(\rho)<\infty$ depends on the polynomial degree $m$ and the 
spectral bound $\rho$ and 
\begin{align*}
c_{|m|}(a,b)  &= \sup\{|a(z)|\mid z\in\Rb^{2d}\}\\*[1ex]  
&\qquad\times\sup\{|\partial^\alpha_y b(x,y)|\mid (x,y)\in\Rb^{2d}, |\alpha|\le |m|+2d+2\}.
\end{align*}
\end{proposition}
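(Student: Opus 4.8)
The plan is to reuse the architecture of the proof of Proposition~\ref{prop:bound_thawed}. By the isometric wave packet transform $\calB$ of Proposition~\ref{prop:bargmann} one has $\|\calI\psi\| = \|\calI_\calB\calB\psi\| \le \|\calI_\calB\|\,\|\psi\|$ with $\calI_\calB := \calB\,\calI\,\calB^*$, so it is enough to bound the operator norm of the integral operator $\calI_\calB$ on $L^2(\Rb^{2d})$ by the Schur test. Writing $g = g[\I\,\Id]$ for the standard Gaussian profile, the kernel of $\calI_\calB$ is
\[
k_\calB(X,Y) = (2\pi\eps)^{-2d}\int_{\Rb^{2d}} \langle g_z\mid g_Y\rangle\, a(z)\, K_m(X,z)\,\D z ,
\]
where
\[
K_m(X,z) := \bigl\langle g_X \,\bigm|\, (\widehat q-\Phi_q(z))^m\, b(\Phi_q(z),\widehat q)\, g[C(z)]_{\Phi(z)} \bigr\rangle .
\]
The factor $|\langle g_z\mid g_Y\rangle| \le \exp(-\tfrac1{4\eps}|Y-z|^2)$ is handled exactly as in Proposition~\ref{prop:bound_thawed}, so the whole task reduces to a good uniform estimate for the weighted overlap $K_m(X,z)$.

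Since $(\widehat q-\Phi_q(z))^m\,b(\Phi_q(z),\widehat q)$ is multiplication by the function $x\mapsto (x-\Phi_q(z))^m b(\Phi_q(z),x)$, the overlap $K_m(X,z)$ is the integral of that function against the cross-Wigner function of $g[C(z)]_{\Phi(z)}$ and $g_X$. For two Gaussian wave packets this cross-Wigner function is an explicit Gaussian on $\Rb^{2d}$, localised near the midpoint of $\Phi(z)$ and $X$, whose modulus is bounded by $\exp(-\tfrac{\rho}{c\eps}|X-\Phi(z)|^2)$ for a constant $c=c(d)$, the lower bound $\rho$ on the spectral parameter $\rho_*(z)$ governing its width in both position and momentum; this is where the negative powers of $\rho$ enter, just as in Proposition~\ref{prop:bound_thawed}. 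The difficulty is that $b(\Phi_q(z),x)$ couples the integration variable $z$ to the output variable $x$, so the Fourier-inversion step of Corollary~\ref{cor:norm_bound} is unavailable and no naive factorisation works; I would break the coupling by a finite Taylor expansion of $b$ in its second argument around $x=\Phi_q(z)$ with integral remainder, so that only $b$ and finitely many of its $x$-derivatives \emph{evaluated at the purely $z$-dependent point} $\Phi_q(z)$ remain, multiplied by monomials in $x-\Phi_q(z)$. Each such monomial, together with the original factor $(x-\Phi_q(z))^m$, is integrated against the Gaussian $x$-marginal of the cross-Wigner function: writing $x-\Phi_q(z)$ as $(x-x_*(X,z))+(x_*(X,z)-\Phi_q(z))$, where $x_*(X,z)$ is the centre of that Gaussian with $|x_*(X,z)-\Phi_q(z)|\le c(\rho)\,|X-\Phi(z)|$, and invoking the Gaussian moment bounds of Lemma~\ref{lem:int-bound}, one obtains a factor $\eps^{|m|/2}$ modulated by a polynomial in $\eps^{-1/2}|X-\Phi(z)|$. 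The regularity requirement $|\alpha|\le |m|+2d+2$ in $c_{|m|}(a,b)$ is precisely what one needs to cancel the monomial $(x-\Phi_q(z))^m$, to keep the Taylor remainder of higher order in $\eps$, and to ensure absolute convergence of the ensuing phase-space integrals over $\Rb^{2d}$. The net output is a bound
\[
|K_m(X,z)| \ \le\ \kappa_m(\rho,b)\,\eps^{|m|/2}\,\bigl(1+\eps^{-1/2}|X-\Phi(z)|\bigr)^{N_m}\exp\!\bigl(-\tfrac{\rho}{c\eps}|X-\Phi(z)|^2\bigr),
\]
with $\kappa_m(\rho,b)$ depending only on $m$, $\rho$ and finitely many derivatives of $b$, and $N_m$ depending only on $m$.

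With this in hand, the Schur test is routine: inserting the bound on $K_m$ and the estimate on $|\langle g_z\mid g_Y\rangle|$ into $k_\calB$, I would estimate $C_1 = \sup_X\int|k_\calB(X,Y)|\,\D Y$ and $C_2 = \sup_Y\int|k_\calB(X,Y)|\,\D X$ by integrating first over $z$ — substituting $w=\Phi(z)$, which is legitimate because $\Phi$ is a volume-preserving diffeomorphism — and then over the remaining variable; the two Gaussians in $|Y-z|^2$ and $|X-w|^2$ together with the polynomial weight are absorbed by the same moment estimates and a bookkeeping of the $(2\pi\eps)$-prefactors, yielding $C_1, C_2 \le \gamma_m(\rho)\,c_{|m|}(a,b)\,\eps^{|m|/2}$, the negative powers of $\rho$ arising from the cross-Wigner Gaussian's width and the change of variables, exactly as in Proposition~\ref{prop:bound_thawed}. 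The Schur test then gives $\|\calI\| = \|\calI_\calB\| \le \sqrt{C_1 C_2} \le \gamma_m(\rho)\,c_{|m|}(a,b)\,\eps^{|m|/2}$, which is the assertion.

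The step I expect to be the main obstacle is the estimate for $K_m(X,z)$: the factor $b(\Phi_q(z),x)$ entangles the integration variable with the output variable and thereby blocks both the Fourier-inversion argument of Proposition~\ref{prop:basic_bound}/Corollary~\ref{cor:norm_bound} and any direct factorisation, so one genuinely has to pass to the cross-Wigner function of the two Gaussians (the device by which the proof of Proposition~\ref{prop:bound_thawed} replaces the plain inner-product bound) and localise $b$ by a finite Taylor expansion at $\Phi_q(z)$; the fiddly part is then tracking exactly how many derivatives of $b$ are consumed and how the powers of $\eps$, the polynomial weights in $|X-\Phi(z)|$ and the spectral bound $\rho$ balance, which is what produces the (deliberately generous) constant $c_{|m|}(a,b)$ involving derivatives of $b$ up to order $|m|+2d+2$.
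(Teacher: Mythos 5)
Your overall architecture --- lifting to phase space with $\calB$, the Schur test, and the reduction to estimating the weighted overlap $K_m(X,z)=\langle g_X\mid(x-\Phi_q(z))^m\,b(\Phi_q(z),x)\,g[C(z)]_{\Phi(z)}\rangle$ by integrating against the cross-Wigner function of the two Gaussians --- is exactly the paper's, and you correctly identify that the coupling of $x$ and $z$ through $b$ blocks the Fourier-inversion shortcut of Corollary~\ref{cor:norm_bound}. The gap is in your claimed bound for $K_m(X,z)$. You assert that the modulus of $\calW(g_X,g[C(z)]_{\Phi(z)})$ is bounded by $\exp(-\tfrac{\rho}{c\eps}|X-\Phi(z)|^2)$; it is not. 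By the explicit formula (Lemma~\ref{lem:gauss_cross}) its modulus is a Gaussian bump of height $(\pi\eps)^{-d}$ in $w-\tfrac12(X+\Phi(z))$ with \emph{no} decay in the separation $|X-\Phi(z)|$: all of the off-diagonal decay sits in the oscillatory factor $\e^{\I(X-\Phi(z))\cdot Jw/\eps}$ and is only released by integrating by parts against a smooth amplitude. For the plain overlap ($m=0$, $b\equiv1$) that integration is exact because the Fourier transform of a Gaussian is a Gaussian, which is why Proposition~\ref{prop:bound_thawed} gets Gaussian decay; for a general bounded smooth $b$ the decay in the momentum separation $|p_X-\Phi_p(z)|$ is only polynomial, at a rate limited by how many derivatives of $b$ one spends. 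Your Taylor-expansion device does not repair this: the polynomial part of the expansion is fine, but the integral remainder still contains $\partial^N_y b(\Phi_q(z),\cdot)$ coupling $x$ and $z$, and for it Cauchy--Schwarz plus Lemma~\ref{lem:int-bound} only yields $O(\eps^{(|m|+N)/2})$ with no decay in $|X-\Phi(z)|$ whatsoever --- after which the $z$-integral $\int_{\Rb^{2d}}\D z$ in the Schur test diverges.

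What is needed, and what the paper supplies in Proposition~\ref{prop:fourier}, is a genuine non-stationary-phase argument: after the substitution $w=\tfrac12(X+\Phi(z))+\sqrt\eps\,W$ one recognises $K_m(X,z)$ as a Gaussian-windowed Fourier transform of the smooth amplitude $W\mapsto\widetilde b_m(W,X,z)$ evaluated at the frequency $\eta=J^T(X-\Phi(z))/\sqrt\eps$, and repeated integration by parts in $W$ trades one derivative of $b$ per unit of polynomial decay, giving $|K_m(X,z)|\le\gamma_{m,n}(\rho)\,\eps^{|m|/2}\bigl(1+|X-\Phi(z)|^2/\eps\bigr)^{-n/2}\sup_{|\alpha|\le|m|+n}\|\partial^\alpha_y b\|_\infty$. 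Choosing $n=2d+2$ makes the resulting weight integrable over $\Rb^{2d}$ (Lemma~\ref{lem:int_poly}), which is precisely where the derivative count $|m|+2d+2$ in $c_{|m|}(a,b)$ comes from; with that bound in place your Schur-test conclusion goes through as written.
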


\begin{proof}
As before in Proposition~\ref{prop:bound_thawed}, we lift the integral kernel of the operator $\calI$ to phase space via the wave packet 
transform and work with 
\begin{align*}
&k_{\cal B}(X,Y) \\
&= (2\pi\eps)^{-2d} \int_{\Rb^{2d}}\langle g_z\,|\, g_Y\rangle\, a(z)\, 
\langle g_X\,|(x-\Phi_q(z))^m\, b(\Phi_q(z),x)\,g[C(z)]_{\Phi(z)}\rangle \D z
\end{align*}
for all $X,Y\in\Rb^{2d}$. Again using the Schur test for the operator norm, 
we aim to estimate
\[
\sup_{X\in\Rb^{2d}} \int_{\Rb^{2d}}  |k_{\cal B}(X,Y)| \D Y\quad\text{and}\quad
\sup_{Y\in\Rb^{2d}} \int_{\Rb^{2d}}  |k_{\cal B}(X,Y)| \D X. 
\]
We introduce the shorthand notation
\[
b_m(w,z) = (w_q-\Phi_q(z))^m\, b(\Phi_q(z),w_q),\quad w,z\in\Rb^{2d},
\] 
for the new contribution and write the crucial  inner product in terms of the cross-Wigner function as
\begin{align*}
\bch I_*(X,z)\ech &\bch:=\ech \langle g_X\mid (x-\Phi_q(z))^m\, b(\Phi_q(z),x)\, g[C(z)]_{\Phi(z)}\rangle\\*[1ex]
&= \int_{\Rb^{2d}} b_m(w,z)\ \calW(g_X,g[C(z)]_{\Phi(z)})(w) \D w,
\end{align*}
where
\begin{align*}
& \calW(g_X,g[C(z)]_{\Phi(z)})(w) \\
&= 
(2\pi\eps)^{-d} \int_{\Rb^d} \overline{g_X(w_q+\tfrac12 y)} \ g[C(z)]_{\Phi(z)}(w_q-\tfrac12 y)\  
\e^{\I w_p\cdot y/\eps} \D y. 
\end{align*}
The cross Wigner function of Gaussian wave packets can explicitly be determined as
\begin{align*}
&\calW(g_{X},g[C(z)]_{\Phi(z)})(w) = \gamma_*(X,z)\, (\pi\eps)^{-d}\, \e^{\I (X-\Phi(z))\cdot Jw/\eps}\\
&\qquad\times
\exp(\tfrac{\I}{2\eps}\,(w-\tfrac12(X+\Phi(z)))^T\calC(z)(w-\tfrac12(X+\Phi(z))))\bch.\ech
\end{align*}
See for example \cite[Proposition~244]{Gos11}, where the calculation is carried out for the 
special case $X = \Phi(z) = 0$. The scalar prefactor $\gamma_*(X,z)\in\C$ depends on 
$C(z),\Phi(z),X$, while $\calC(z)$ is a complex symmetric $2d\times 2d$ matrix with positive 
definite imaginary part, that depends solely on the original matrix $C(z)$. It requires some linear algebra to 
show that
\[
|\gamma_*(X,z)|\le 1,
\]
and that the imaginary part of $\calC(z)$ satisfies the lower bound
\[
\Im(\calC(z)) \ge 4\rho\Id .
\]
Moreover, 
\begin{equation}\label{eq:calC}
\tr(\Im\calC(z)) = d\quad\text{and}\quad \|\Re\calC(z)\| \le \sqrt{3d}
\end{equation}
for all $z\in\Rb^{2d}$. Therefore, we may write 
\begin{align*}
& I_*(X,z) = \\
&\quad\frac{\gamma_*(X,z)}{(\pi\eps)^{d}} \int_{\Rb^{2d}} b_m(w,z) 
\e^{\I (X-\Phi(z))\cdot Jw/\eps + \I\, \calC(z)(w-(X+\Phi(z))/2)^2/(2\eps)} \D w.
\end{align*}
The change of variables 
\[
w = \tfrac12(X+\Phi(z)) + \sqrt\eps W
\] 
combined with the identity 
\[
(X-\Phi(z))\cdot J(X+\Phi(z)) = 2 X\cdot J\Phi(z)
\]
yields 
\begin{align*}
&I_*(X,z) =\\*[1ex]
&\quad\frac{\gamma_*(X,z)}{\pi^{d}}\ \e^{\I X\cdot J\Phi(z)/\eps} \int_{\Rb^{2d}} 
\widetilde b_m(W,X,z) \e^{\I (X-\Phi(z))\cdot JW/\sqrt\eps}\e^{\I W\cdot \calC(z) W/2} \D W
\end{align*}
with
\begin{align*}
& \widetilde b_m(W,X,z) = b_m(\sqrt\eps\, W+\tfrac12(X+\Phi(z)),z) \\*[1ex]
&=\  \eps^{|m|/2} \left(W_q+\tfrac{1}{2\sqrt\eps}(X-\Phi(z))_q\right)^m\, 
b(\Phi_q(z),\sqrt\eps\, W_q+\tfrac12(X+\Phi(z))_q).
\end{align*}
We now view $I_*(X,z)$ as the windowed Fourier transform of the smooth function 
$W\mapsto \widetilde b_m(W,X,z)$ evaluated in the point 
\[
\eta = J^T(X-\Phi(z))/\sqrt\eps,
\] 
where the window is the complex Gaussian function
$W\mapsto \exp(\I W\cdot\calC(z) W/2)$. Then, repeated integration by parts provides 
spectral decay of $I_*(X,z)$ with respect to $\eta$. That is, for all $n\in\N$, 
\begin{align*}
& |I_*(X,z)| \le\\
&  \quad 
\gamma_{m,n}(\rho) \,\eps^{|m|/2} \left( 1 + \left|\tfrac{X-\Phi(z)}{\sqrt\eps}\right|^2\right)^{-n/2} 
\sup_{|\alpha|\le |m|+n} \|\partial^\alpha_y b(\Phi_q(z),y)\|_\infty, 
\end{align*}
where the constant $\gamma_{m,n}(\rho)>0$ depends on the lower spectral bound $\rho$ for the imaginary part $\Im\calC(z)$.
For more general Gaussian windows, the constant for controlling 
spectral decay also depends on the trace of the imaginary part and the 
spectral norm of the real part of the matrix $\calC(z)$, which in our case are negligible 
due to the special spectral properties given in \eqref{eq:calC}. 
We now choose $n=2d+2$ and obtain for all $X\in\Rb^{2d}$,
\begin{align*}
&\int_{\Rb^{2d}} |k_{\cal B}(X,Y)| \D Y \ \le\  
\frac{\gamma_{m}(\rho)\,c_{|m|}(a,b)\, \eps^{|m|/2}}{(2\pi\eps)^{2d}} \\
&\qquad\times
\int_{\Rb^{4d}} \exp(-\tfrac{1}{4\eps}|Y-z|^2) 
\left( 1 + \left|\tfrac{X-\Phi(z)}{\sqrt\eps}\right|^2\right)^{-(d+1)}\D(Y,z)\\*[1ex]
& \qquad = \gamma_m(\rho)/d! \ c_{|m|}(a,b)\ \eps^{|m|/2} .
\end{align*}
Similarly we obtain for all $Y\in\Rb^{2d}$,
\begin{align*}
&\int_{\Rb^{2d}} |k_{\cal B}(X,Y)| \D X \ \le\   
\frac{\gamma_m(\rho)\,c_{|m|}(a,b)\,\eps^{|m|/2}}{(2\pi\eps)^{2d}} \\
&\qquad\times \int_{\Rb^{4d}} \exp(-\tfrac{1}{4\eps}|Y-z|^2) 
\left( 1 + \left|\tfrac{X-\Phi(z)}{\sqrt\eps}\right|^2\right)^{-(d+1)}\D(X,z)\\*[1ex]
& \qquad = \gamma_m(\rho)/d! \ c_{|m|}(a,b)\ \eps^{|m|/2}.
\end{align*}
\end{proof}

\subsection{Analysis of the thawed phase function}

We next analyse the dynamical properties of the thawed Gaussian approximation, following 
the same strategy developed for the Herman--Kluk propagator. We open the inner product 
integral in $\calI_{\rm th}(t)\psi_0$ and write
\[
\langle g_z| \psi_0\rangle\,\e^{\I S(t,z)/\eps} \,g(t)_{\Phi^t(z)}(x)
= (\pi\eps)^{-d/2} \int_{\Rb^{d}} \e^{\I \Psi_{\rm th}(t,x,y,z)/\eps}\, \psi_0(y) \D y,
\]
using a complex-valued phase function $\Psi_{\rm th}(t,x,y,z)$, that is quadratic 
with respect to $y-q$ and $x-q(t,z)$. We will now explicitly describe that the Wirtinger derivative of 
the thawed phase function is quadratic with respect to $x-q(t,z)$ and does not depend on $y$. 
The invertibility of the resulting matrix $M_{\rm th}(t,x)$ allows for the crucial 
integration by parts that reveals the optimal approximation power of the thawed Gaussian approximation. 

\begin{lemma}[thawed phase function]\label{lem:phase_thawed}
The phase function 
\begin{align*}
&\Psi_{\rm th}: \Rb\times\Rb^d\times\Rb^d\times\Rb^{2d}\to\C,\\
&\Psi_{\rm th}(t,x,y,z) = \tfrac{1}{2}\left(\I |y-q|^2+(x-q(t,z))^T C(t,z)(x-q(t,z))\right) \\
&\qquad\qquad\qquad- p\cdot(y-q) + p(t,z)\cdot(x-q(t,z)) + S(t,z)
\end{align*}
satisfies for all $(t,x,y,z)$
\begin{align*}
&(\I\partial_q + \partial_p)\Psi_{\rm th}(t,x,y,z) = M_{\rm th}(t,z)(x-q(t,z)) \\
&\qquad+ \left(\tfrac{1}{2}(x-q(t,z))^T \left((\I\partial_{q_j}+\partial_{p_j})C(t,z)\right) (x-q(t,z))\right)_{j=1}^d,
\end{align*}
where $M_{\rm th}(t,z)$ denotes the complex $d\times d$ matrix
\begin{align*}
& M_{\rm th}(t,z) \\
&= -\I\partial_q q(t,z)^T C(t,z) +\partial_p p(t,z)^T + \I\partial_q p(t,z)^T  -\partial_p q(t,z)^T C(t,z),
\end{align*}
that is invertible and satisfies
\[
|\det M_{\rm th}| \ \ge\  2^{d/2} \det(\Im C)^{1/2}.
\]
\end{lemma}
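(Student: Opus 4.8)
The plan is to mirror the proof of Lemma~\ref{lem:phase} for the frozen case; the only genuinely new ingredient is that the quadratic part of the phase now carries the $z$-dependent width matrix $C(t,z)$ instead of the fixed matrix $\I\,\Id$. First I would differentiate the action $S(t,z)$ exactly as there: from the classical equations $\dot q=p$, $\dot p=-\nabla V(q)$ together with $\partial_q q(0,z)=\Id_d$ and $\partial_p q(0,z)=0$ one gets $\partial_q S(t,z)=\partial_q q(t,z)^T p(t,z)-p$ and $\partial_p S(t,z)=\partial_p q(t,z)^T p(t,z)$.

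Next I would differentiate $\Psi_{\rm th}$ term by term with respect to $q$ and $p$. The term $\tfrac12(x-q(t,z))^T C(t,z)(x-q(t,z))$ yields, in addition to the linear contribution $-(\partial_q q)^T C\,(x-q(t,z))$ (resp. $-(\partial_p q)^T C\,(x-q(t,z))$ from the $p$-derivative), a purely quadratic contribution $\bigl(\tfrac12(x-q(t,z))^T(\partial_{q_j}C)(x-q(t,z))\bigr)_{j=1}^d$ (resp. with $\partial_{p_j}$); these quadratic pieces are precisely the second term in the asserted identity. All contributions that depend on $y$, and all contributions involving $p$ or $p(t,z)$ but not $x-q(t,z)$, cancel after forming $\I\,\partial_q+\partial_p$ — the cancellations are the same as in Lemma~\ref{lem:phase} — and collecting the terms linear in $x-q(t,z)$ gives $M_{\rm th}(t,z)(x-q(t,z))$ with $M_{\rm th}=-\I(\partial_q q)^T C-(\partial_p q)^T C+\I(\partial_q p)^T+(\partial_p p)^T$, as claimed. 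This step is more tedious than deep; the one thing that must be verified is that the $y$-terms and the $p$-, $p(t,z)$-terms drop out, and organising the computation so that these cancellations are transparent is the main obstacle.

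For invertibility and the determinant bound I would switch to Hagedorn's parametrisation. Since \eqref{eq:linflow} are the variational equations of the flow \eqref{csg:class-eom}, the solution $(Q(t,z),P(t,z))$ with $Q(0,z)=\Id$, $P(0,z)=\I\,\Id$ satisfies $Q(t,z)=\partial_q q(t,z)+\I\,\partial_p q(t,z)$ and $P(t,z)=\partial_q p(t,z)+\I\,\partial_p p(t,z)$, while $C(t,z)=P(t,z)Q(t,z)^{-1}$ with $Q,P$ obeying the symplecticity relations \eqref{hag-rel} for all $t$ (the preservation argument is the one from the proof of Theorem~\ref{thm:eom-gauss-PQ}, which uses only the symmetry of $\nabla^2V(q(t,z))$). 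Writing $\partial_q q=\tfrac12(Q+\overline Q)$, $\partial_p q=-\tfrac{\I}{2}(Q-\overline Q)$ and likewise for the $p$-blocks gives $\I\,\partial_q q+\partial_p q=\I\overline Q$ and $\I\,\partial_q p+\partial_p p=\I\overline P$, hence $M_{\rm th}^T=\I(\overline P-C\overline Q)$ and therefore $M_{\rm th}Q=\I(\overline{P}^{\,T}Q-\overline{Q}^{\,T}CQ)=\I(P^*Q-Q^*P)=2\,\Id_d$ by the second relation in \eqref{hag-rel} (using $CQ=P$ and $\overline{P}^{\,T}=P^*$, $\overline{Q}^{\,T}=Q^*$). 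Thus $M_{\rm th}$ is invertible, and since $\Im C=(QQ^*)^{-1}$ by Lemma~\ref{lem:hag-rel}, we get $|\det M_{\rm th}|=2^d|\det Q|^{-1}=2^d\det(\Im C)^{1/2}$, which is even stronger than the stated bound $|\det M_{\rm th}|\ge 2^{d/2}\det(\Im C)^{1/2}$. The key algebraic fact is the identity $M_{\rm th}Q=2\,\Id_d$; once the Hagedorn substitution is made this is short, and everything else follows from it at once.
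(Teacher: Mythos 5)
Your proposal is correct. The first half — the differentiation of $S(t,z)$ and the term-by-term computation of $(\I\partial_q+\partial_p)\Psi_{\rm th}$, with the extra quadratic contribution coming from $\partial_{q_j}C$ and $\partial_{p_j}C$ — is exactly what the paper does, and your bookkeeping of the cancellations of the $y$- and $p$-dependent terms is the right one. Where you genuinely diverge is the invertibility/determinant part. The paper keeps the real block decomposition $M_{\rm th}=(\I\,\Id,\ \Id)\,D\Phi^T\bigl(\begin{smallmatrix}-C\\ \Id\end{smallmatrix}\bigr)$ and studies the weighted product $M_{\rm th}(\Im C)^{-1}M_{\rm th}^*$ via a block Cholesky factorisation and the symplecticity of $D\Phi^t$, arriving at $M_{\rm th}(\Im C)^{-1}M_{\rm th}^*=A^*A+2\,\Id$ for a certain matrix $A$, whence only the lower bound $|\det M_{\rm th}|\ge 2^{d/2}\det(\Im C)^{1/2}$. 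Your route through Hagedorn's parametrisation — using $Q=\partial_q q+\I\,\partial_p q$, $P=\partial_q p+\I\,\partial_p p$, the preserved relations \eqref{hag-rel}, and $CQ=P$ to obtain the identity $M_{\rm th}Q=\I(P^*Q-Q^*P)=2\,\Id$ — is correct and strictly sharper: it yields the closed form $M_{\rm th}=2Q(t,z)^{-1}$ and hence $|\det M_{\rm th}|=2^{d}\det(\Im C)^{1/2}$ exactly (equivalently, the paper's matrix $A$ satisfies $A^*A=2\,\Id$, information the paper discards since it only needs a lower bound). Your identity also feeds nicely into the later integration-by-parts steps, where an explicit $M_{\rm th}^{-1}=\tfrac12 Q$ is more convenient than mere invertibility. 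The only point to make explicit if you write this up is that the initial data $Q(0)=\Id$, $P(0)=\I\,\Id$ satisfy \eqref{hag-rel} and that their propagation under \eqref{eq:linflow} preserves these relations (the argument from Theorem~\ref{thm:eom-gauss-PQ}, which uses only the symmetry of $\nabla^2V(q(t,z))$); you have noted both.
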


As before for the frozen Gaussian approximation,
we use the Wirtinger derivative of the phase function for an integration by parts 
that proceeds analogously up to additional book-keeping for  
the quadratic terms in $x-q(t,z)$ that occur, because the width matrix $C(t,z)$ 
depends on the phase space variable $z$. 

\begin{proposition}[thawed bound, revisited]\label{prop:improved_bound_thawed} 
Let $b:\Rb^{2d}\to\C$ be a smooth and bounded function. 
Let
\[
g(t,x) = \pi^{-d/4} \det(\Im C(t,z))^{1/4}  \exp(\tfrac\I2 x^T C(t,z)x), \quad x\in\Rb^d,
\] 
be the origin centred thawed Gaussian with width matrix $C(t,z)$, $z\in\Rb^{2d}$. 
We assume the existence of $\rho>0$ such that the spectral parameters of 
these matrices satisfy the lower bound
%\begin{equation}\label{eq:rho_star}
\[
\rho_*(t,z) \ge \rho\quad\text{for all}\ (t,z)\in\Rb\times\Rb^{2d}.
\]
%\end{equation}
Let $m\in\N_0^d$. 
For any square-integrable function $\psi:\Rb^d\to\C$ define
\begin{align*}
&\calI_{\rm th}(t)\psi \\
&= (2\pi\eps)^{-d} \int_{\Rb^{2d}} \langle g_z|\psi\rangle \,(x-q(t,z))^m \,b(q(t,z),x) \,
\e^{\I S(t,z)/\eps} g(t)_{\Phi^t(z)} \D z
\end{align*}
Then, $\calI_{\rm th}(t)\psi$ is square-integrable and satisfies
\[
\|\calI(t)\psi\| \le\  \gamma_{m}(b,\rho)\, d_{|m|}(M_{\rm th}^{-1},\Phi^t,C(t))\  
\eps^{\lceil |m|/2\rceil} \|\psi\|,
\]
where 
\begin{align*}
& d_{|m|}(M_{\rm th}^{-1},\Phi^t,C(t)) =\\
& \sup_{|\alpha|\le |m|,z\in\Rb^{2d}} 
\left( \|\partial^\alpha_z M(t,z)^{-1}\| \ |\partial^\alpha_z\Phi^t(z)|\ \|\partial^\alpha_z C(t)\|\right).
\end{align*}
The constant $\gamma_{m}(b,\rho)>0$ depends on the polynomial degree $m$, the 
spectral bound $\rho$ and higher-order derivative bounds for the function $b$. 
\end{proposition}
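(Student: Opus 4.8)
The plan is to run the same inductive argument on the total degree $|m|$ that was used in the proof of Proposition~\ref{prop:improved_bound} for frozen Gaussians, with the basic thawed norm bound of Proposition~\ref{prop:bound_thawed_ext} now playing the role that Corollary~\ref{cor:norm_bound} played there. For $|m|=0$ the claimed estimate is precisely Proposition~\ref{prop:bound_thawed_ext} with $m=0$, the amplitude $a(z)=\e^{\I S(t,z)/\eps}$ being bounded of modulus one since $V$, and hence $S(t,z)$, is real-valued. For the inductive step one first writes the kernel in open form,
\[
\langle g_z| \psi\rangle\,\e^{\I S(t,z)/\eps} \,g(t)_{\Phi^t(z)}(x)
= (\pi\eps)^{-d/2} \int_{\Rb^{d}} \e^{\I \Psi_{\rm th}(t,x,y,z)/\eps}\, \psi(y) \D y,
\]
so that the action phase is absorbed into $\Psi_{\rm th}$.

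The decisive tool is the derivative formula of Lemma~\ref{lem:phase_thawed}. Unlike the frozen case, where $(\I\partial_q+\partial_p)\Psi$ is exactly linear in $x-q(t,z)$, here the Wirtinger derivative carries an extra term quadratic in $x-q(t,z)$, proportional to $(\I\partial_{q_\ell}+\partial_{p_\ell})C(t,z)$, which is nonzero precisely because the thawed width matrix depends on $z$. Inverting the (invertible, by Lemma~\ref{lem:phase_thawed}) matrix $M_{\rm th}(t,z)$ yields the schematic identity
\[
(x-q(t,z))_k\, \e^{\I\Psi_{\rm th}/\eps}
= \tfrac{\eps}{\I}\sum_{\ell}(M_{\rm th}^{-1})_{k\ell}\,(\I\partial_{q_\ell}+\partial_{p_\ell})\e^{\I\Psi_{\rm th}/\eps}
-\tfrac12\sum_{\ell}(M_{\rm th}^{-1})_{k\ell}\,(x-q)^T\bigl((\I\partial_{q_\ell}+\partial_{p_\ell})C\bigr)(x-q)\,\e^{\I\Psi_{\rm th}/\eps}.
\]
Peeling off one factor $(x-q(t,z))_j$ from $(x-q(t,z))^{m+e_j}$, substituting, and integrating by parts in $z$ in the first term, one moves $(\I\partial_{q_\ell}+\partial_{p_\ell})$ onto the amplitude $(M_{\rm th}^{-1})_{j\ell}(x-q(t,z))^m b(q(t,z),x)$; by the product rule this produces oscillatory integrals whose monomial in $x-q(t,z)$ has degree $|m|$ (derivative on $M_{\rm th}^{-1}$ or on $b(q(t,z),\cdot)$, picking up $\partial_z\Phi^t$ factors) or $|m|-1$ (derivative on $(x-q(t,z))^m$). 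The inductive hypothesis, applied at degrees $|m|$ and $|m|-1$, bounds these by $\eps^{\lceil|m|/2\rceil}$ and $\eps^{\lceil(|m|-1)/2\rceil}$, and the global prefactor $\eps$ upgrades them to $\eps^{\lceil|m|/2\rceil+1}$ and $\eps^{\lceil(|m|-1)/2\rceil+1}=\eps^{\lceil(|m|+1)/2\rceil}$. The second, quadratic term carries no gain of $\eps$ but raises the monomial degree to $|m|+2$; applying the basic bound of Proposition~\ref{prop:bound_thawed_ext} directly at that degree gives $\eps^{(|m|+2)/2}=\eps^{\lceil|m|/2\rceil+1}$. Since $\lceil(|m|+1)/2\rceil\le\lceil|m|/2\rceil+1$ and $\lceil(|m|+1)/2\rceil\le(|m|+2)/2$, every contribution is of order at least $\eps^{\lceil(|m|+1)/2\rceil}=\eps^{\lceil|m+e_j|/2\rceil}$, closing the induction. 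The amplitude factors $M_{\rm th}^{-1}$, $(\I\partial_{q_\ell}+\partial_{p_\ell})C$ and $b(q(t,z),\cdot)$, together with their $z$-derivatives up to order $|m|$, remain bounded: for $M_{\rm th}^{-1}$ one uses $|\det M_{\rm th}|\ge 2^{d/2}\det(\Im C)^{1/2}$ from Lemma~\ref{lem:phase_thawed} together with $\rho_*(t,z)\ge\rho$, which forces $\lambda_{\min}(\Im C(t,z))\ge 2\rho$ and hence $\det\Im C(t,z)\ge(2\rho)^d$; all such bounds are collected into the constants $d_{|m|}(M_{\rm th}^{-1},\Phi^t,C(t))$ and $\gamma_m(b,\rho)$.

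The main obstacle is exactly that new quadratic term in Lemma~\ref{lem:phase_thawed}: each integration by parts intended to gain a power of $\eps^{1/2}$ simultaneously creates a term of higher polynomial degree in $x-q(t,z)$, so the clean iteration of the frozen proof no longer applies verbatim. The point to verify carefully is that this higher-degree term is already small enough on its own via Proposition~\ref{prop:bound_thawed_ext} — which costs $\eps^{1/2}$ per unit of monomial degree — so that the ceiling arithmetic still produces the sharp exponent $\lceil|m|/2\rceil$. A secondary, purely bookkeeping difficulty is to check that the composite amplitudes produced after repeated differentiation stay within the class of smooth bounded functions to which Proposition~\ref{prop:bound_thawed_ext} applies, with constants controlled by $\rho$, by $z$-derivatives of the flow, of $C(t,z)$, of $M_{\rm th}(t,z)^{-1}$ and of $b$.
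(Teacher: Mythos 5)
Your proposal follows essentially the same route as the paper's own proof in Section~\ref{proof:improved_bound_thawed}: an induction on $|m|$ anchored at Proposition~\ref{prop:bound_thawed_ext}, a single integration by parts via the Wirtinger-derivative identity of Lemma~\ref{lem:phase_thawed}, with the new $z$-dependent quadratic term handled directly by the basic bound at degree $|m|+2$, and the identical ceiling arithmetic to close the induction. The argument is correct as stated (your side remark that the determinant lower bound alone controls $\|M_{\rm th}^{-1}\|$ is not quite needed, since the statement simply collects $\sup\|\partial_z^\alpha M_{\rm th}^{-1}\|$ into the constant $d_{|m|}$).
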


\subsection{The error of the continuous thawed Gaussian approximation}

The improved thawed norm bound of Proposition~\ref{prop:improved_bound_thawed} 
provides the aimed for error estimate for the thawed Gaussian superposition, 
once we have calculated its defect. 

\begin{lemma}[defect calculation]\label{lem:res_thawed}
For an arbitrary square-integrable initial datum $\psi_0:\Rb^d\to\C$ we consider
\[
\calI_{\rm th}(t)\psi_0 = (2\pi\eps)^{-d} \int_{\Rb^{2d}} \langle g_z| \psi_0\rangle\,
\e^{\I S(t,z)/\eps} \,g(t)_{\Phi^t(z)}(x) \D z.
\]
Then, the defect 
\[
d_{\rm th}(t) = \left(\tfrac{1}{\I\eps} H-\partial_t\right)\calI_{\rm th}(t)\psi_0
\]
satisfies
\[
d_{\rm th}(t) = (2\pi\eps)^{-d} \int_{\Rb^{2d}} \langle g_z| \psi_0\rangle\,\tfrac{1}{\I\eps} W_{q(t)}\,
\e^{\I S(t,z)/\eps} \,g(t)_{\Phi^t(z)} \D z,
\]
where 
$W_q = V - U_q$ 
denotes the non-quadratic remainder of the potential~$V$ expanded around a point $q\in\Rb^d$.
\end{lemma}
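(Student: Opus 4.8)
The plan is to observe that the building block $\e^{\I S(t,z)/\eps}\,g(t)_{\Phi^t(z)}$ of the thawed superposition is, by construction, the function $\varphi_z^{\rm th}(t)$ that solves the Schr\"odinger equation with the \emph{locally quadratic} potential $U_{q(t,z)}$, and then to apply $\tfrac1{\I\eps}H-\partial_t$ under the phase-space integral. First I would record, from Proposition~\ref{V:thm:gwp-quad} in its version for a time-dependent quadratic potential, applied to $U_{q(t,z)}$ along the classical trajectory $\Phi^t(z)=(q(t,z),p(t,z))$ of \eqref{csg:class-eom} with action \eqref{csg:action}, that
\[
\varphi_z^{\rm th}(t) = \e^{\I S(t,z)/\eps}\,(g[C(t,z)])_{\Phi^t(z)}, \qquad
\I\eps\,\partial_t\varphi_z^{\rm th}(t) = -\tfrac{\eps^2}{2}\Delta\varphi_z^{\rm th}(t) + U_{q(t,z)}\,\varphi_z^{\rm th}(t),
\]
where $C(t,z)=P(t,z)Q(t,z)^{-1}$ solves the Riccati equation \eqref{csg:riccati}, equivalently is built from the linearised flow \eqref{eq:linflow}. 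Splitting $V=U_{q(t,z)}+W_{q(t,z)}$ and inserting $\partial_t\varphi_z^{\rm th}=\tfrac1{\I\eps}(-\tfrac{\eps^2}{2}\Delta+U_{q(t,z)})\varphi_z^{\rm th}$ then gives the pointwise identity
\[
\Bigl(\tfrac1{\I\eps}H-\partial_t\Bigr)\varphi_z^{\rm th}(t) = \tfrac1{\I\eps}\bigl(V-U_{q(t,z)}\bigr)\varphi_z^{\rm th}(t) = \tfrac1{\I\eps}\,W_{q(t,z)}\,\varphi_z^{\rm th}(t),
\]
which, after multiplication by $(2\pi\eps)^{-d}\langle g_z\mid\psi_0\rangle$ and integration over $z\in\Rb^{2d}$, is precisely the asserted formula for $d_{\rm th}(t)$.

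The one point that needs care is the interchange of $\tfrac1{\I\eps}H-\partial_t$ with the integral $\int_{\Rb^{2d}}\D z$. Here I would use that for $\psi_0\in\calS(\Rb^d)$ the weight $z\mapsto\langle g_z\mid\psi_0\rangle$ is a Schwartz function, hence rapidly decaying in $z$, that $\|\varphi_z^{\rm th}(t)\|=1$ and $\varphi_z^{\rm th}$ depends smoothly on $(t,z)$, and that, because $V$ is subquadratic, $|W_{q(t,z)}(x)|\le c\,|x-q(t,z)|^3$ with $c$ independent of $z$, so that $\|W_{q(t,z)}\varphi_z^{\rm th}(t)\|=O(\eps^{3/2})$ uniformly in $z$ by Lemma~\ref{lem:int-bound}, while $\|\partial_t\varphi_z^{\rm th}(t)\|$ and $\|H\varphi_z^{\rm th}(t)\|$ grow at most polynomially in $z$ on every bounded time interval. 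These bounds supply the local-uniform domination needed to differentiate under the integral, first for $x$ in a compact set and then on all of $\Rb^d$ via the $L^2$ estimates. For general $L^2$ initial data the formula follows by density of $\calS(\Rb^d)$ together with the uniform operator bound of Proposition~\ref{prop:bound_thawed}.

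I expect the main obstacle to be essentially bookkeeping rather than anything substantial: either making the differentiation-under-the-integral step rigorous without cluttering the exposition, or, if one prefers a self-contained computation not invoking Proposition~\ref{V:thm:gwp-quad}, carrying out the direct differentiation of $\e^{\I S/\eps}g(t)_{\Phi^t(z)}$ in $t$ and $x$, exactly as in the frozen case of Lemma~\ref{lem:res}, and checking that every term that is constant, linear or quadratic in $x-q(t,z)$ cancels against the corresponding part of $-\tfrac{\eps^2}{2}\Delta+U_{q(t,z)}$, leaving only $\tfrac1{\I\eps}W_{q(t,z)}$. The mathematical content is light precisely because the thawed Gaussian has been designed to solve the locally quadratic problem exactly, which is also why, in contrast to Lemma~\ref{lem:res}, no reweighting prefactor is generated here.
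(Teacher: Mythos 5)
Your proof is correct, but it takes a genuinely different route from the paper's. The paper proves the lemma by the direct computation you mention only as a fallback: it differentiates $\e^{\I S(t,z)/\eps}g(t)_{\Phi^t(z)}$ in time (including the normalising determinant, handled through Hagedorn's parametrisation $\Im C=(QQ^*)^{-1}$), applies $\tfrac1{\I\eps}H$, and obtains two explicit quadratic polynomials $f_1,f_2$ in $x-q(t,z)$ whose difference collapses to $W_{q(t,z)}$ by the classical equations of motion --- mirroring the structure of the frozen-case Lemma~\ref{lem:res}. Your primary argument instead outsources the entire cancellation to Proposition~\ref{V:thm:gwp-quad}: since each building block is by construction the exact solution of the Schr\"odinger equation with the locally quadratic potential $U_{q(t,z)}$, its defect is $\tfrac1{\I\eps}(V-U_{q(t,z)})\varphi_z^{\rm th}=\tfrac1{\I\eps}W_{q(t,z)}\varphi_z^{\rm th}$, and integration over $z$ finishes the proof. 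This is shorter and makes transparent why, unlike in the frozen case, no reweighting prefactor appears; the paper's computation buys self-containedness and a presentation parallel to Lemma~\ref{lem:res}. One point to watch in your route: the identification $\varphi_z^{\rm th}(t)=\e^{\I S/\eps}(g[C(t,z)])_{\Phi^t(z)}$ must hold \emph{including the normalisation}. Proposition~\ref{V:thm:gwp-quad} produces the factor $(\det Q(t,z))^{-1/2}$ with a continuously chosen branch, whereas $g[C]$ as defined carries the positive root $\det(\Im C)^{1/4}=|\det Q|^{-1/2}$; these differ by a time-dependent unimodular phase whose derivative would otherwise contaminate the defect at order one. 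The convention has to be fixed consistently (the paper's own proof differentiates the normalisation as if it were $(\det Q)^{-1/2}$, so it makes the same identification), but once it is, your argument goes through. Your care with the interchange of $\partial_t$, $H$ and $\int\D z$, and the reduction from $L^2$ data to Schwartz data by density, goes beyond what the paper records and is sound.
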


Having established the defect as being essentially a cubic polynomial divided by $\eps$, 
the stability lemma and the improved norm bound allow us to estimate the thawed 
approximation as being of order $\eps$, 
as stated in Theorem~\ref{theo:thawed} whose proof we can now conclude.

\begin{proof}(of Theorem~\ref{theo:thawed})
By Lemma~\ref{lem:res_thawed}, the defect satisfies
\[
d_{\rm th}(t) = (2\pi\eps)^{-d} \int_{\Rb^{2d}} \langle g_z| \psi_0\rangle\,\tfrac{1}{\I\eps} W_{q(t)}\,
\e^{\I S(t,z)/\eps} \,g_{\Phi^t(z)}\D z. 
\] 
Since the remainder potential is cubic in $x-q(t,z)$ but divided by $\eps$, 
the improved norm estimate of Proposition~\ref{prop:improved_bound_thawed} provides 
an upper bound for the defect that is of order 
\[
\eps^{\lceil 3/2\rceil-1} = \eps^{2-1} = \eps,
\] 
that is, 
\[
\|d_{\rm th}(t)\| \ \le\  C\, \eps\, \|\psi_0\|\qquad 0\le t\le \bar t,
\]
where the constant $C>0$ is independent of $\eps$ and $\psi_0$. 
By the stability Lemma~\ref{lem:stability}, the error then satisfies
\[
\|\psi(t) - \calI_{\rm th}(t)\psi_0\| \ \le\  \int_0^t \|d_{\rm th}(s)\| \D s \ \le\  C\, t\, \eps\,\|\psi_0\|.
\]
\end{proof}

As for the variational Gaussian approximation, the Hagedorn wave packets and the Herman--Kluk propagator, 
we observe exactness for quadratic potentials. 

\begin{corollary}[exactness for quadratic potentials] If the potential function $V$ is quadratic, then the thawed Gaussian approximation is exact, that is, $\psi(t) = \calI_{\rm th}(t)\psi_0$ for all square-integrable initial data 
$\psi_0$ and all times~$t$.
\end{corollary}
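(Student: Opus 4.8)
The plan is to reuse the machinery already set up for the thawed approximation, namely the defect formula of Lemma~\ref{lem:res_thawed} and the stability estimate of Lemma~\ref{lem:stability}, specialised to the case $W_q\equiv 0$.

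First I would observe that when $V$ is a polynomial of degree $\le 2$, its second-order Taylor polynomial $U_q$ about any point $q\in\Rb^d$ coincides with $V$ itself, so the non-quadratic remainder $W_q = V - U_q$ vanishes identically. By Lemma~\ref{lem:res_thawed}, the defect
$d_{\rm th}(t) = \bigl(\tfrac{1}{\I\eps}H - \partial_t\bigr)\calI_{\rm th}(t)\psi_0$
is then identically zero for all $t$, since its integrand is proportional to $W_{q(t)}$.

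Next I would check the initial matching. At $t=0$ we have $S(0,z)=0$, $\Phi^0(z)=z$ and $C(0,z)=\I\,\Id$, hence $g(0)_{\Phi^0(z)} = g_z$, and the wave packet inversion formula of Proposition~\ref{prop:wp_trafo} gives $\calI_{\rm th}(0)\psi_0 = (2\pi\eps)^{-d}\int_{\Rb^{2d}}\langle g_z\mid\psi_0\rangle\, g_z\,\D z = \psi_0 = \psi(0)$. Finally, setting $e(t)=\calI_{\rm th}(t)\psi_0-\psi(t)$, the definition of the defect together with the Schr\"odinger equation for $\psi$ shows that $\partial_t e = \tfrac{1}{\I\eps}H e - d_{\rm th}(t)$ with $e(0)=0$; since $d_{\rm th}\equiv 0$, Lemma~\ref{lem:stability} yields $\|e(t)\| \le \|e(0)\| + \int_0^t \|d_{\rm th}(s)\|\,\D s = 0$, so $\calI_{\rm th}(t)\psi_0 = \psi(t)$ for all $t$.

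There is essentially no obstacle here; the two points needing a word of justification are the identity $W_q\equiv 0$ for quadratic $V$ and the initial matching $\calI_{\rm th}(0)\psi_0 = \psi_0$, both of which are immediate. An entirely equivalent route, which I would mention as an alternative, is to argue directly from Proposition~\ref{V:thm:gwp-quad} in its time-dependent-quadratic version: for quadratic $V$ the locally quadratic model equals $V$, so each building block $\varphi_z^{\rm th}(t)$ is already an exact Schr\"odinger solution with initial datum $g_z$; integrating the (linear) Schr\"odinger flow against the weights $\langle g_z\mid\psi_0\rangle$ and invoking linearity and the inversion formula of Proposition~\ref{prop:wp_trafo} gives $\calI_{\rm th}(t)\psi_0 = \e^{-\I tH/\eps}\psi_0 = \psi(t)$.
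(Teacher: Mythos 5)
Your proof is correct and follows exactly the route the paper intends (and spells out for the analogous Herman--Kluk corollary): for quadratic $V$ the non-quadratic remainder $W_q$ vanishes, so the defect of Lemma~\ref{lem:res_thawed} is zero, and Lemma~\ref{lem:stability} together with the initial matching from Proposition~\ref{prop:wp_trafo} gives exactness. Your alternative argument via Proposition~\ref{V:thm:gwp-quad} and linear superposition is also valid, but it is not a genuinely different mechanism.
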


\subsection{Postponed proofs for the thawed Gaussian superposition}

Here we collect the proofs of the technical results that were left 
out in our previous discussion of the thawed Gaussian superposition. 
They refer to the following topics:
\begin{itemize}
\setlength{\itemsep}{0.25ex}
\item[-] inner products of Gaussian wave packets with different width matrix,
\item[-] calculation of the cross Wigner function for Gaussian wave packets,
\item[-] decay properties of a Gaussian windowed Fourier transform,  
\item[-] basic analysis of the thawed phase function $\Psi_{\rm th}(t,x,y,z)$,
\item[-] integration by parts for the improved norm bound of Proposition~\ref{prop:improved_bound_thawed}, 
\item[-] calculation of the defect for the continuous thawed superposition.
\end{itemize}

\subsubsection{Inner products of Gaussian wave packets for proving Proposition~\ref{prop:bound_thawed_ext}}

We now calclulate the inner product of two Gaussian wave packets with different width matrices and estimate the magnitude of this phase space function. We used this bound when proving Proposition~\ref{prop:bound_thawed_ext}.

\begin{lemma}[inner product of Gaussians]\label{lem:gauss} 
Let $C\in\C^{d\times d}$ be a complex symmetric matrix such that $\Im C>0$. 
Denote by $\rho_*>0$ the spectral parameter of $C$. 
Then, there exist
\begin{enumerate}
\item[1.] a complex symmetric matrix $\calC\in\C^{2d\times 2d}$ with 
$\Im\calC \ge \rho_*\Id$,
\item[2.] a complex number $\gamma\in\C$ with $|\gamma|\le 1$,
\end{enumerate}
such that 
\[
\left\langle g_{z_1} \mid g[C]_{z_2} \right\rangle = 
\gamma\ \e^{\I p_2\cdot (q_1-q_2)/\eps}\, \exp(\tfrac{\I}{2\eps} (z_1-z_2)^T \calC (z_1-z_2)).
\]
for all phase space centres $z_1,z_2\in\Rb^{2d}$.
\end{lemma}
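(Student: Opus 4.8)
The plan is to compute the inner product $\langle g_{z_1}\mid g[C]_{z_2}\rangle$ explicitly as a Gaussian integral and then read off the quadratic form in the phase-space displacement $z_1-z_2$. First I would reduce to the case $z_2=0$ by exploiting covariance: since $g_{z_1}(x)=\eps^{-d/4}g(\tfrac{x-q_1}{\sqrt\eps})\e^{\I p_1\cdot(x-q_1)/\eps}$ and similarly for $g[C]_{z_2}$, a translation $x\mapsto x+q_2$ together with the phase factor $\e^{\I p_2\cdot(q_1-q_2)/\eps}$ pulled out in front reduces everything to $\langle g_{\tilde z}\mid g[C]_0\rangle$ with $\tilde z=z_1-z_2$. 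The remaining object is $\int_{\Rb^d}\overline{g_{\tilde z}(x)}\,g[C]_0(x)\,\D x$, which after the substitution $x=\sqrt\eps\,y$ becomes a pure Gaussian integral in $y$ whose exponent is a quadratic polynomial in $y$ with a linear term carrying $(q_1-q_2)/\sqrt\eps$ and $(p_1-p_2)/\sqrt\eps$. Completing the square in $y$ and performing the Gaussian integral gives a prefactor (the normalisation constants together with $\det(\tfrac{\I\Id - C}{2\I})^{-1/2}$ or similar) times $\exp$ of a quadratic form in $\tilde z/\sqrt\eps$; rescaling, this is $\exp(\tfrac{\I}{2\eps}\tilde z^T\calC\tilde z)$ for some complex symmetric $2d\times 2d$ matrix $\calC$ built from $C$ and the fixed width $\I\Id$ of $g$.

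Next I would verify the two claimed properties of the resulting $\calC$ and the scalar $\gamma$. For $|\gamma|\le 1$ the clean argument is Cauchy--Schwarz: $|\langle g_{z_1}\mid g[C]_{z_2}\rangle|\le\|g_{z_1}\|\,\|g[C]_{z_2}\|=1$ since both wave packets have unit norm, and evaluating the formula at $z_1=z_2$ shows $\gamma=\langle g_{z}\mid g[C]_{z}\rangle$ (which indeed has modulus $\le1$); so the modulus bound on $\gamma$ is immediate without touching the explicit determinant. The substantive point is the spectral lower bound $\Im\calC\ge\rho_*\Id$. Here I would compute $\Im\calC$ in block form in terms of $\Im C$, $\Re C$ and $(\Im C)^{-1}$ — using Hagedorn's parametrisation $C=PQ^{-1}$ with $\Im C=(QQ^*)^{-1}$ as in Lemma~\ref{lem:hag-rel} is likely the most efficient bookkeeping — and then bound its smallest eigenvalue below by $\rho_*=\tfrac12(\|Q\|^2+\|P\|^2)^{-1}$. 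This is essentially the same linear-algebra estimate already invoked (but not proved) for the cross-Wigner matrix $\calC(z)$ in the proof of Proposition~\ref{prop:bound_thawed_ext}, specialised to $d\to 2d$ is not needed — here it is genuinely $2d\times 2d$ because both position and momentum displacements enter.

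The main obstacle is precisely this eigenvalue estimate for $\Im\calC$: the matrix couples the two Gaussians' widths in a nontrivial way, and one must show that even when $\Im C$ has a tiny eigenvalue (wide Gaussian in position, controlled by $\rho_Q$) or when $-C^{-1}$ has a tiny eigenvalue (wide in momentum, controlled by $\rho_P$), the combined $2d$-dimensional quadratic form still decays at rate $\rho_*$, which is the harmonic-mean-type combination $\tfrac{\rho_Q\rho_P}{2\rho_Q+2\rho_P}$ from Definition~\ref{def:spec_par}. The natural route is to diagonalise $\Im C=(QQ^*)^{-1}$, so that one works with a single eigenvalue $\lambda\in[\rho_Q,\infty)$ at a time (plus the real part of $C$, which only enters $\Re\calC$ and drops out of $\Im\calC$), reducing the problem to a one-dimensional computation: the inner product of a unit-width Gaussian with a Gaussian of width $\lambda$ yields $\Im\calC$ with eigenvalues of the form $\tfrac{\lambda}{1+\lambda^2}$ and $\tfrac{1}{1+\lambda^2}\cdot\tfrac{1}{\lambda}$ type expressions, whose minimum over $\lambda\ge\rho_Q$ combined with the momentum-space constraint gives exactly $\rho_*$. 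Once the scalar case is pinned down, the general bound follows by the orthogonal change of variables, and the remaining routine determinant bookkeeping for the exact value of $\gamma$ can be suppressed since only $|\gamma|\le1$ is needed.
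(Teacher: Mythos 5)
Your overall architecture matches the paper's: reduce to $z_2=0$ by translation covariance, evaluate the remaining Gaussian integral by completing the square, and read off $\gamma$ and $\calC$. Your Cauchy--Schwarz argument for $|\gamma|\le 1$ (evaluate the formula at $z_1=z_2$ to identify $\gamma=\langle g_z\mid g[C]_z\rangle$, then bound by the product of the unit norms) is in fact cleaner than the paper's route, which first derives the explicit expression $\gamma = 2^{d/4}(\det\Im C)^{1/4}\det(\Id-\I C)^{-1/2}$ and then invokes the Ostrowski--Taussky determinant inequality.

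However, your plan for the key estimate $\Im\calC\ge\rho_*\Id$ has a genuine gap. The explicit computation yields $\calC$ in terms of $(\Id-\I C)^{-1}$, so that
\[
\Im\calC = \begin{pmatrix}\Id - \Re\!(\Id-\I C)^{-1} & -\Im\!(\Id-\I C)^{-1}\\ -\Im\!(\Id-\I C)^{-1} & \Re\!(\Id-\I C)^{-1}\end{pmatrix},
\]
and both blocks depend on $\Re C$ as well as $\Im C$: already in one dimension, with $C=a+\I b$, one finds $\Re\,(1-\I C)^{-1}=(1+b)/((1+b)^2+a^2)$. So your assertion that $\Re C$ ``only enters $\Re\calC$ and drops out of $\Im\calC$'' is false, and the proposed reduction to a scalar computation by diagonalising $\Im C$ does not go through: $\Re C$ and $\Im C$ are in general not simultaneously diagonalisable, so no orthogonal change of variables decouples the $2d\times 2d$ form into $d$ independent $2\times 2$ blocks. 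The paper closes this step structurally rather than by diagonalisation: writing $\Id-\I C=(Q-\I P)Q^{-1}$ in Hagedorn's parametrisation, it obtains the factorisation $\Im\calC=\calB^{*}\bigl(\Id + M + \I J\bigr)\calB$ with $\calB=\mathrm{diag}\bigl((Q-\I P)^{-1},(Q-\I P)^{-1}\bigr)$, $M$ positive semidefinite (indeed $z^*Mz=\|Pz_q-Qz_p\|^2$) and $J$ skew-symmetric, whence $z^*\,\Im\calC\, z\ge\lambda_{\rm min}(\calB^*\calB)\,|z|^2$; the elementary estimate $x^*(Q-\I P)(Q^*+\I P^*)x\le 2(\|Q\|^2+\|P\|^2)\,|x|^2$ then gives $\lambda_{\rm min}(\calB^*\calB)\ge\rho_*$. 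Some such global linear-algebra identity is needed; the one-eigenvalue-at-a-time reduction you sketch cannot supply it.
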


\begin{remark}
For the inner product of two Gaussians with unit width, that is, for $C = \I \Id$, we have 
$\gamma=1$ and $\calC = \I/2\, \Id$. In particular, the only eigenvalue of 
$\Im\calC$ is $1/2$, while the spectral parameter $\rho_*=1/4$ offers an overly 
pessimistic lower spectral bound. 
\end{remark}

\begin{proof} To avoid writing differences of phase space centres later on, we start by observing that for 
arbitrary square-integrable functions $f,h:\Rb^d\to\C$
\begin{align*}
&\langle f_{z_1}\,|\,h_{z_2}\rangle \\
&= \eps^{-d/2} \int_{\Rb^d} 
\overline{f(\tfrac{x-q_1}{\sqrt\eps})}\ \, h(\tfrac{x-q_2}{\sqrt\eps}) \,
\e^{\I (-p_1\cdot(x-q_1) + p_2\cdot(x-q_2))/\eps} \D x\\*[1ex]
&= \eps^{-d/2} \int_{\Rb^d} 
\overline{\bch f(\tfrac{y-q_1+q_2}{\sqrt\eps})\ech} \, h(\tfrac{y}{\sqrt\eps}) \,
\e^{\I (-p_1\cdot\bch(y-q_1+q_2)\ech + p_2\cdot y)/\eps} \D y\\*[1ex]
&= \e^{\I p_2\cdot (q_1-q_2)/\eps} \ \langle f_{z_1-z_2}\,|\,h_0\rangle
\end{align*}
for all $z_1,z_2\in\Rb^{2d}$, since 
\begin{align*}
&-p_1\cdot(y-q_1-q_2) + p_2\cdot y \\
&\qquad = -(p_1-p_2)\cdot (y-q_1-q_2) + p_2\cdot (q_1-q_2).
\end{align*}
Hence, it is enough to analyse $\langle g_z\,|\,g[C]_0\rangle$ for arbitrary $z\in\Rb^{2d}$.
We have
\begin{align*}
&\langle g_z\,|\,g[C]_0\rangle \\
&=  (\pi\eps)^{-d/2}\, \det(\Im C)^{1/4} \int_{\Rb^d} 
\e^{-\tfrac{1}{2\eps}|x-q|^2 -\tfrac{\I}{\eps}p\cdot(x-q) +\tfrac{\I}{2\eps}\, x\cdot C x} \D x.
\end{align*}
By \cite[Theorem~1 in Appendix~A]{Fol89}, we have
\[
\int_{\Rb^d} \exp(-\pi x^T A x - 2\pi\I v^T x) \D x = (\det A)^{-1/2} \exp(-\pi v^T A^{-1}v),
\]
where $A$ is a complex symmetric $d\times d$ matrix with positive definite real part and $v$ a complex vector.
Using this formula with
\[
A = \tfrac{1}{2\pi\eps} (\Id-\I C),\qquad v = \tfrac{1}{2\pi\eps}(p+\I q),
\]
we obtain
\[
\langle g_z\,|\,g[C]_0\rangle \ =\  \gamma \exp(\tfrac{\I}{2\eps} z^T \calC z),
\]
where the prefactor is given by 
\[
\gamma = \frac{2^{d/4}\det \Im C^{1/4}}{\det(\Id-\I C)^{1/2}},
\]
and the quadratic form 
\[
\tfrac{\I}{2\eps} z^T\calC z \ =\   
-\tfrac{1}{2\eps}|q|^2 + \tfrac{\I}{\eps} p^T q - \tfrac{1}{2\eps} (p+\I q)^T (\Id-\I C)^{-1}(p+\I q)
\]
is induced by the complex symmetric $2d\times 2d$ matrix
\[
\calC = \begin{pmatrix}\I\,\Id - \I(\Id-\I C)^{-1} & \Id - (\Id-\I C)^{-1}\\
\Id - (\Id-\I C)^{-1} & \I(\Id-\I C)^{-1}
\end{pmatrix}.
\]
The upper bound on $\gamma$ and the positive definiteness of $\Im\calC$ are proved in Lemma~\ref{lem:det_pd}.
\end{proof}

The following lemma contains the linear algebra for bounding the determinantal prefactor 
and proving the positive definiteness of the imaginary part of the width matrix of 
the phase space Gaussian. 

\begin{lemma}[linear algebra estimates]\label{lem:det_pd} Let $C$ be a complex 
symmetric $d\times d$ matrix with positive definite imaginary part and spectral parameter $\rho_*>0$. 
Then, $\Id-\I C$ is invertible, and we have
\[
\frac{2^{2d}\det \Im C}{|\det(\Id-\I C)^2|} \le 1.
\] 
Moreover, the complex symmetric $2d\times 2d$ matrix
\[
\calC = \begin{pmatrix}\I\,\Id - \I(\Id-\I C)^{-1} & \Id - (\Id-\I C)^{-1}\\
\Id - (\Id-\I C)^{-1} & \I(\Id-\I C)^{-1}
\end{pmatrix}
\]
has an imaginary and a real part satisfying 
\[
\tr(\Im\calC) = d,\qquad\Im\calC \ge \rho_*\Id_{2d},
\qquad\|\Re\calC\|_2 \ \le\  \|\calC\|_2 \ \le\  \sqrt{3d}.
\] 
\end{lemma}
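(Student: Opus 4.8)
I would prove the statement in four parts. Write $A=\Re C$, $B=\Im C>0$, $M=\Id-\I C$ and $R=M^{-1}$; since $C=C^T$ we have $C^*=\overline C$, and the Hermitian part of $M$ equals $\tfrac12(M+M^*)=\Id+B>0$, so $M$ and hence $\Id-\I C$ are invertible.

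\emph{(i) Determinant bound.} The plan is to combine the identity $(\Id-\I C)(\Id+\I C^*)=\Id+2\Im C+CC^*$ with $(C-\I\Id)(C-\I\Id)^*=\Id-2\Im C+CC^*\ge 0$, which give $\Id+2\Im C+CC^*\ge 4\Im C>0$. Since $\overline{\det(\Id-\I C)}=\det(\Id+\I C^*)$,
\[
|\det(\Id-\I C)|^2=\det\!\bigl((\Id-\I C)(\Id+\I C^*)\bigr)\ \ge\ \det(4\Im C)\ =\ 2^{2d}\det\Im C ,
\]
which is the claim; it is exactly the bound ensuring $|\gamma|\le 1$ in Lemma~\ref{lem:gauss}.

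\emph{(ii) Trace and spectral norm.} From $\calC=\begin{pmatrix}\I(\Id-R)&\Id-R\\ \Id-R&\I R\end{pmatrix}$ one reads off $\tr\calC=\I\tr(\Id-R)+\I\tr R=\I d$, hence $\tr(\Im\calC)=\Im(\tr\calC)=d$. For the norms, $\|\Re\calC\|_2\le\|\calC\|_2$ since $\Re\calC=\tfrac12(\calC+\overline\calC)$ with $\|\overline\calC\|_2=\|\calC\|_2$, and $\|\calC\|_2\le\|\calC\|_F$, so it suffices to bound $\|\calC\|_F^2=3\|\Id-R\|_F^2+\|R\|_F^2=3d-6\,\Re\tr R+4\|R\|_F^2$. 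Using $\Re R=M^{-1}(\Id+B)(M^*)^{-1}$ and $R^*R=(MM^*)^{-1}$, cyclicity of the trace gives $3\,\Re\tr R-2\|R\|_F^2=\tr\!\bigl((MM^*)^{-1}(\Id+3B)\bigr)>0$ as a trace of a product of two positive definite matrices; hence $\|\calC\|_F^2\le 3d$.

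\emph{(iii) Lower bound on $\Im\calC$.} This is the delicate step, which I would attack via Hagedorn's factorisation $C=PQ^{-1}$ of Lemma~\ref{lem:hag-rel}. Setting $N=Q-\I P$, one has $R=(\Id-\I C)^{-1}=QN^{-1}$, and a computation using the symplecticity relations exhibits $\Im\calC$ as the congruence
\[
\Im\calC=
\begin{pmatrix}(N^*)^{-1}&0\\ 0&(N^*)^{-1}\end{pmatrix}
\begin{pmatrix}P^*P+\Id&-S\\ -S&Q^*Q+\Id\end{pmatrix}
\begin{pmatrix}N^{-1}&0\\ 0&N^{-1}\end{pmatrix},\qquad S=\Re(Q^*P).
\]
Since $N^*N=Q^*Q+P^*P+2\Id$ and $\rho_*=\tfrac12(\|Q\|^2+\|P\|^2)^{-1}$, the inequality $\Im\calC\ge\rho_*\Id_{2d}$ is then equivalent to
\[
\begin{pmatrix}P^*P+\Id&-S\\ -S&Q^*Q+\Id\end{pmatrix}\ \ge\ \rho_*\begin{pmatrix}N^*N&0\\ 0&N^*N\end{pmatrix}.
\]
Testing against $(u_1,u_2)\in\C^{2d}$ turns this into a scalar inequality between quadratic forms, which I would close by completing the square in the cross term $-2\Re(u_1^*Su_2)=-\Re\bigl((Qu_1)^*(Pu_2)+(Pu_1)^*(Qu_2)\bigr)$ against the diagonal blocks, the decisive input being the consequence $Q^*Q+P^*P\ge 2\Id$ of symplecticity (for a unit vector $v$, $\|Qv\|^2+\|Pv\|^2\ge 2\|Qv\|\|Pv\|\ge 2|(Qv)^*(Pv)|\ge 2|\Im((Qv)^*(Pv))|=2$). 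The hard part is to arrange the square-completion so that the emerging weight is exactly $\rho_*$; in the scalar case $d=1$ it reduces to $2T^2-11T+16>0$ with $T=\|Q\|^2+\|P\|^2+2\ge 4$, which holds because the discriminant is $-7<0$, and I expect the multivariate estimate to follow this same scheme.
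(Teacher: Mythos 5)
Your parts (i) and (ii) are complete and correct, and part (i) takes a genuinely different route from the paper: you obtain $|\det(\Id-\I C)|^2=\det\bigl((\Id-\I C)(\Id+\I C^*)\bigr)=\det(\Id+2\Im C+CC^*)\ge\det(4\,\Im C)$ from $(C-\I\Id)(C-\I\Id)^*\ge0$ and monotonicity of the determinant on the positive definite cone, whereas the paper invokes the Ostrowski--Taussky inequality $\det\Re(\Id-\I C)\le|\det(\Id-\I C)|$ and then the scalar bound $4\lambda/(1+\lambda)^2\le1$ eigenvalue by eigenvalue. Both are equally short. In part (ii) you compute the same quantity $\|\calC\|_F^2=3d-6\,\Re\tr R+4\|R\|_F^2$ as the paper, and your reduction of the correction term to $3\,\Re\tr R-2\|R\|_F^2=\tr\bigl((MM^*)^{-1}(\Id+3\,\Im C)\bigr)>0$ is cleaner than the paper's chain of identities in terms of $C^{-1}$. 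The trace identity $\tr(\Im\calC)=d$ is immediate either way.

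Part (iii) contains a genuine gap. Your congruence is exactly the paper's (note $-S=-P^*Q-\I\Id=-Q^*P+\I\Id$ by the relation $Q^*P-P^*Q=2\I\Id$), and the reduction of $\Im\calC\ge\rho_*\Id_{2d}$ to the block inequality against $\rho_*\,\mathrm{diag}(N^*N,N^*N)$ with $N^*N=Q^*Q+P^*P+2\Id$ is correct. But the decisive estimate is never carried out: ``I would close by completing the square'' and ``I expect the multivariate estimate to follow this same scheme'' is a plan, not a proof, and the only evidence supplied is the case $d=1$. That scalar verification (your $2T^2-11T+16>0$, which is indeed the right discriminant computation) hinges on the identity $S^2=|Q|^2|P|^2-1$, i.e.\ on $\det$ of the Gram matrix of $Q,P$ being $\Im(\overline QP)^2=1$; this has no matrix analogue ($S^*S\ne Q^*Q\,P^*P-\Id$ for non-commuting blocks), so the ``same scheme'' does not carry over and the multivariate inequality needs a different mechanism. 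One way to close it within your framework: the middle matrix equals
\[
(\Id+\I J)+\begin{pmatrix}P^*\\-Q^*\end{pmatrix}\begin{pmatrix}P&-Q\end{pmatrix},
\qquad\text{so}\qquad
w^*(\cdot)\,w=\|w_q-\I w_p\|^2+\|Pw_q-Qw_p\|^2 ;
\]
the map $w\mapsto(a,b)=(w_q-\I w_p,\,Pw_q-Qw_p)$ is inverted by $w_q=N^{-1}(Qa-\I b)$, $w_p=N^{-1}(Pa-b)$, whence $\|Nw_q\|^2+\|Nw_p\|^2\le2(\|Qa\|^2+\|Pa\|^2)+4\|b\|^2\le2(\|Q\|^2+\|P\|^2)(\|a\|^2+\|b\|^2)$ using $\|Q\|^2+\|P\|^2\ge2$, which is precisely the claimed bound. (The paper instead splits off the rank-structured summand and the $\I J$ term and bounds $\lambda_{\min}$ of the congruence factor; either way, some argument of this kind must replace the square-completion you left open.)
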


\begin{proof}
The determinantal fraction can be estimated by the Ostrowski--Taussky inequality \cite[Theorem~7.8.19]{HorJ13}. 
Indeed, the real part of the matrix $\Id-\I C$ equals $\Id+\Im C$ and is thus positive definite. Therefore,  
\[
\det(\Id+\Im C) = \det\Re\!(\Id-\I C) \le |\det(\Id -\I C)|,
\] 
which implies
\[
\frac{2^{2d}\det \Im C}{|\det(\Id-\I C)^2|} 
\ \le\  \frac{\det(4\,\Im C)}{\det(\Id+\Im C)^2} \ =\  
\prod_{\bch\lambda\in\sigma\!({\rm Im} C)\ech} \frac{4\lambda}{(1+\lambda)^2} \ \le\  1.
\]
We next prove that the imaginary part of $\calC$, 
\[
\Im\calC = \begin{pmatrix}\Id - \Re\!(\Id-\I C)^{-1} & -\Im\!(\Id-\I C)^{-1}\\ 
-\Im\!(\Id-\I C)^{-1} & \Re\!(\Id-\I C)^{-1}\end{pmatrix},
\]
is positive definite. We use Hagedorn's decomposition $C=PQ^{-1}$, where $P,Q$ are complex invertible $d\times d$ 
matrices satisfying 
\[
Q^*P-P^*Q = 2\I\,\Id\quad\text{and}\quad Q^TP-P^T Q=0.
\] We write 
\[
\Id-\I C = (Q-\I P)Q^{-1}
\] 
and observe that
\begin{align*}
\Re\!(\Id-\I C)^{-1} &= \tfrac12(Q(Q-\I P)^{-1} + (Q^*+\I P)^{-1}Q^*)\\
&= \tfrac12 (Q^*+\I P^*)^{-1} (2Q^* Q + \I P^*Q - \I Q^* P)(Q-\I P)^{-1}\\
&= (Q^*+\I P^*)^{-1} (Q^* Q + \Id)(Q-\I P)^{-1}. 
\end{align*}
Similarly we obtain
\begin{align*}
\Im\!(\Id-\I C)^{-1}
&= \tfrac{1}{2\I} (Q^*+\I P^*)^{-1} (\I P^*Q + \I Q^* P)(Q-\I P)^{-1}\\
&= (Q^*+\I P^*)^{-1} (Q^*P - \I\,\Id) (Q-\I P)^{-1}\\
&= (Q^*+\I P^*)^{-1} (P^*Q + \I\,\Id) (Q-\I P)^{-1}
\end{align*}
and
\begin{align*}
\Id &= (Q^*+\I P^*)^{-1} (Q^*Q + \I P^* Q- \I Q^* P + P^*P) (Q-\I P)^{-1}\\
&= (Q^*+\I P^*)^{-1} (Q^*Q + 2\,\Id + P^*P) (Q-\I P)^{-1}.
\end{align*}
Setting
\[
\calB = \begin{pmatrix}(Q-\I P)^{-1} & 0\\ 0 & (Q- \I P)^{-1}\end{pmatrix},
\] 
we obtain 
\begin{align*}
\Im\calC  &=  \calB^{*}
\begin{pmatrix}\Id + P^* P & -P^*Q-\I\Id \\ -Q^*P+\I\Id & \Id + Q^* Q\end{pmatrix}
\calB\\*[1ex]
&= \calB^{*}
\left(
\Id + \begin{pmatrix}P^* P & -P^*Q\\ -Q^*P& Q^* Q\end{pmatrix} + \I J
\right)
\calB.
\end{align*}
Therefore, for all $z\in\C^{2d}$,
\[
z^*\,\Im\calC z \ =\  \|\calB z\|^2 + \|P(\calB z)_q - Q(\calB z)_p\|^2 + \I (\calB z)^*J\calB z.
\]
Since $J$ is skew-symmetric, the third summand vanishes. Neglecting the second nonnegative summand, 
we thus obtain
\[
z^*\,\Im\calC z \ \ge\  \lambda_{\rm min}(\calB^*\calB)\, \|z\|^2,
\]
and it remains to analyse the matrix $\calB^*\calB$.  
The minimal eigenvalue of $\calB^*\calB$ is the inverse of the maximal eigenvalue of $\calB^{-1}\calB^{-*}$. 
For all $x\in\C^d$,
\begin{align*}
x^*(Q-\I P)(Q^*+\I P^*)x &= 
\|Q^*x\|^2 + 2\,\Re\langle Q^*x,\I P^*x\rangle + \|P^*x\|^2 \\*[1ex]
&\le 2 \left(\|Q^*x\|^2 + \|P^*x\|^2\right)\\*[1ex]
&\le 2\left(\lambda_{\rm max}(QQ^*) + \lambda_{\rm max}(PP^*)\right) \|x\|^2.
\end{align*}
Therefore
\[
\lambda_{\rm min}(\calB^*\calB) \ \ge\  \frac{\rho_Q\rho_P}{2(\rho_Q+\rho_P)} \ =\  \rho_*.
\]
To bound the spectral norm of the real part $\Re\calC$, we use that the $j$th eigenvalue of 
$\Re\calC$ is dominated by the $j$th singular value of $\calC$, $j=1,\ldots,2d$, where both eigenvalues and singular 
values are ordered descendingly. This implies 
\[
\|\Re\calC\|_2 \le \|\calC\|_2 \le \|\calC\|_F. 
\]
We write
\[
\calC = \begin{pmatrix}\I A & A\\ A & -\I A + \I\Id\end{pmatrix}
\quad\text{with}\quad 
A = \Id - (\Id-\I C)^{-1}.
\]
Then, 
\begin{align*}
\tr(\calC^*\calC) &= \tr\begin{pmatrix}-\I A^* & A^*\\ A^* & \I A^* - \I\Id\end{pmatrix}
\begin{pmatrix}\I A & A\\ A & -\I A + \I\Id\end{pmatrix}\\*[1ex]
&= \tr\!\left(3A^*A + (\I A^* - \I\Id)(-\I A + \I\Id)\right)\\*[1ex]
&= \tr(4A^*A - 2\Re A + \Id).
\end{align*}
We observe that
\[
A = (\Id-\I C-\Id)(\Id-\I C)^{-1} = (\Id+ \I C^{-1})^{-1}
\]
and calculate for the real part of $A$ that
\begin{align*}
2\,\Re A &= \left( (\Id+ \I C^{-1})^{-1} + (\Id- \I C^{-*})^{-1}\right)\\
&= (\Id- \I C^{-*})^{-1}(\Id-\I C^{-*} + \Id + \I C^{-1})(\Id+ \I C^{-1})^{-1}\\
&= (\Id- \I C^{-*})^{-1}(2\,\Id + 2\,\Im\!(-C^{-1}))(\Id+ \I C^{-1})^{-1}.
\end{align*}
Since
\[
(\Id- \I C^{-*})(\Id+ \I C^{-1}) = \Id + 2\,\Im\!(-C^{-1}) + C^{-*}C^{-1},
\]
we have
\begin{align*}
\tr(\calC^*\calC) 
&= \tr\!\left((\Id- \I C^{-*})^{-1}(3\,\Id + C^{-*}C^{-1})(\Id+ \I C^{-1})^{-1}\right)\\*[1ex]
&= \tr\!\left((\Id + 2\,\Im\!(-C^{-1}) + C^{-*}C^{-1})^{-1}(3\,\Id + C^{-*}C^{-1})\right)\\*[1ex]
&\le \tr(3\,\Id) \ =\  3d,
\end{align*}
where the last estimate relies on the fact that all involved matrices are positive definite. Hence,
\[
\|\Re\calC\|_2 \ \le\  \sqrt{\tr(\calC^*\calC)} \ \le\  \sqrt{3d}.
\]
\end{proof}

\subsubsection{Gaussian cross-Wigner functions for proving Proposition~\ref{prop:bound_thawed_ext}}
We next calculate the formula of the cross-Wigner function of two Gaussian wave packets that 
was used to prove the extended norm bound of Proposition~\ref{prop:bound_thawed_ext}. 
We separate the calculation into two parts. 
First, we analyse how the wave packet transform
\[
f_z(x) = \eps^{-d/4}\, f\!\left(\tfrac{x-q}{\sqrt\eps}\right)\, \e^{\I p\cdot x/\eps},\qquad z=(q,p)\in\Rb^{2d},
\]
influences the cross-Wigner transform
\[
\calW(f,h)(z) = 
(2\pi\eps)^{-d} \int_{\Rb^d} \overline{f(q+\tfrac12 y)} \ h(q-\tfrac12 y)\  \e^{\I p\cdot y/\eps} \D y
\]
of two arbitrary square-integrable functions $f,h:\Rb^d\to\C$. 
Then, we explicitly calculate the Gaussian cross-transform as a Gaussian wave packet in phase space. 

\begin{lemma}[cross-Wigner function]\label{lem:crossW}
For arbitrary $f,h\in L^2(\Rb^d)$ and $z_1,z_2\in\Rb^{2d}$ 
the cross-Wigner function satisfies
\[
\calW(f_{z_1},h_{z_2})(z) = \mu_{12}\, \exp(\tfrac{\I}{\eps} (z_1-z_2)^T Jz)\ 
\calW(f_0,h_0)(z-\tfrac12(z_1+z_2))
\]
with $\mu_{12} = \exp(\tfrac{\I}{2\eps} (p_1-p_2)^T(q_1+q_2))$.  
\end{lemma}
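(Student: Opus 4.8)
The plan is to prove Lemma~\ref{lem:crossW} by a single measure-preserving change of variables in the defining integral; what is really being used is the covariance of the cross-Wigner transform under the Heisenberg--Weyl action on phase space, and the profile functions $f,h$ enter only through being square-integrable.

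First I would substitute the definitions into
\[
\calW(f_{z_1},h_{z_2})(z) = (2\pi\eps)^{-d}\int_{\Rb^d} \overline{f_{z_1}(q+\tfrac12 y)}\,h_{z_2}(q-\tfrac12 y)\,\e^{\I p\cdot y/\eps}\,\D y
\]
for $z=(q,p)$, using $f_{z_1}(q+\tfrac12 y) = \eps^{-d/4} f\!\big(\tfrac{q+\frac12 y-q_1}{\sqrt\eps}\big)\,\e^{\I p_1\cdot(q+\frac12 y)/\eps}$ and the analogous formula for $h_{z_2}(q-\tfrac12 y)$. Collecting terms, the integrand splits into the profile factor $\eps^{-d/2}\,\overline{f\big(\tfrac{q+\frac12 y-q_1}{\sqrt\eps}\big)}\,h\big(\tfrac{q-\frac12 y-q_2}{\sqrt\eps}\big)$ times an exponential whose phase is affine in $y$ with linear coefficient $p-\tfrac12(p_1+p_2)$ and a $y$-independent remainder collecting the cross terms.

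The key step is the substitution $y\mapsto y+(q_1-q_2)$, which preserves Lebesgue measure on $\Rb^d$. Its effect is that the argument of $f$ becomes $\tfrac{(q-\frac12(q_1+q_2))+\frac12 y}{\sqrt\eps}$ and the argument of $h$ becomes $\tfrac{(q-\frac12(q_1+q_2))-\frac12 y}{\sqrt\eps}$, so that the integral, together with its normalising constant $(2\pi\eps)^{-d}\eps^{-d/2}$, becomes exactly $\calW(f_0,h_0)$ evaluated at the shifted phase-space point $z-\tfrac12(z_1+z_2)$. What remains outside the integral is a single $y$-independent exponential phase.

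It then only remains to identify that leftover phase with $\mu_{12}\,\exp(\tfrac{\I}{\eps}(z_1-z_2)^T J z)$. This is a routine expansion of the bilinear form: writing everything in terms of the scalar products $p_i\cdot q_j$, $p_i\cdot q$ and $q_i\cdot p$, and using $J=\left(\begin{smallmatrix}0&-\Id\\\Id&0\end{smallmatrix}\right)$, both sides agree term by term. There is no analytic obstacle anywhere; the only thing requiring care is the phase bookkeeping, and in particular being consistent about the normalisation convention chosen for $f_z$ (whether the plane wave is $\e^{\I p\cdot x/\eps}$ or $\e^{\I p\cdot(x-q)/\eps}$, which shifts $\mu_{12}$ by a constant phase) and about the sign convention implicit in $(z_1-z_2)^T J z$.
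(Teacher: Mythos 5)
Your proof is correct and essentially identical to the paper's: the paper likewise expands the definitions and performs the translational change of variables $x = y + q_2 - q_1$ (the same shift you propose), after which the integral with its normalising constants is recognised as $\calW(f_0,h_0)(z-\tfrac12(z_1+z_2))$ and the leftover $y$-independent phase is collected into $\mu_{12}\,\e^{\I(z_1-z_2)^T Jz/\eps}$. Your caveat about the plane-wave convention in $f_z$ is well taken, since the paper states the lemma for the profile $\e^{\I p\cdot x/\eps}$ but its proof tacitly uses $\e^{\I p\cdot(x-q)/\eps}$; this only alters the unimodular constant $\mu_{12}$, which is immaterial in the norm estimates where the lemma is applied.
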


\begin{proof}
We have
\begin{align*}
&\calW(f_{z_1},h_{z_2})(z) = 
(2\pi\eps)^{-d} \int_{\Rb^d} \overline{f_{z_1}(q+\tfrac12 y)} \ h_{z_2}(q-\tfrac12 y)\  \e^{\I p\cdot y/\eps} \D y\\
&=
(2\pi\eps)^{-d} \eps^{-d/2} \int_{\Rb^d} \overline{f\!\left(\tfrac{q-q_1+y/2}{\sqrt\eps}\right)} \ 
h\!\left(\tfrac{q-q_2-y/2}{\sqrt\eps}\right)\ \e^{-\I p_1\cdot(q-q_1+y/2)/\eps}  \\ 
&\hspace{12em}\times\e^{\I p_2\cdot(q-q_2-y/2)/\eps}\ \e^{\I p\cdot y/\eps} \D y.
\end{align*}
With the translational change of variables $x = y+ q_2- q_1$, we obtain
\begin{align*}
q-q_1+\tfrac12y &= q-\tfrac12(q_1+q_2) + \tfrac12x,\\
q-q_2-\tfrac12y &= q-\tfrac12(q_1+q_2) - \tfrac12x, 
\end{align*}
and therefore
\begin{align*}
&\calW(f_{z_1},h_{z_2})(z) \\
&= (2\pi\eps)^{-d} \eps^{-d/2} \int_{\Rb^d} \overline{f\!\left(\tfrac{q-(q_1+q_2)/2+x/2}{\sqrt\eps}\right)} \ 
h\!\left(\tfrac{q-(q_1+q_2)/2-x/2}{\sqrt\eps}\right) \e^{\I p\cdot (x-q_2+q_1)/\eps}\\ 
&\times\exp(-\tfrac{\I}{\eps} p_1^T(q-\tfrac12(q_1+q_2)+\tfrac12x)) \ 
\exp(\tfrac{\I}{\eps} p_2^T(q-\tfrac12(q_1+q_2)-\tfrac12x)) \D x\\*[1.5ex]
&= 
(2\pi\eps)^{-d}\eps^{-d/2} \ \exp(\tfrac{\I}{2\eps} (p_1-p_2)^T(q_1+q_2)) \ 
\e^{\I q\cdot(p_2-p_1)/\eps}\ \e^{\I p\cdot (q_1-q_2)/\eps} \\*[1ex]
& \qquad\times \int_{\Rb^d} \overline{f\!\left(\tfrac{q-(q_1+q_2)/2+x/2}{\sqrt\eps}\right)} \ 
h\!\left(\tfrac{q-(q_1+q_2)/2-x/2}{\sqrt\eps}\right) \e^{\I (p - (p_1+p_2)/2)\cdot x/\eps}\D x\\*[1ex]
&= \mu_{12}\, \exp(\tfrac{\I}{\eps} (z_1-z_2)^T Jz)\ 
\calW(f_0,h_0)(z-\tfrac12(z_1+z_2)).
\end{align*}
\end{proof}

The above lemma allows to calculate the cross-Wigner transform for functions centred 
in the origin and then to move the resulting transform in phase space. We will apply this 
approach to Gaussian wave packets with different width matrices. 

\begin{lemma}[Gaussian cross-Wigner function]\label{lem:gauss_cross}
We consider a complex symmetric matrix $C\in\C^{d\times d}$ with $\Im C>0$. Then, there exist
\begin{enumerate}
\item[1.] a complex symmetric matrix $\calC_W\in\C^{2d\times 2d}$ with $\Im\calC_W >0$,
\item[2.]  a complex number $\gamma\in\C$ with $|\gamma|\le 1$, 
\end{enumerate}
such that for all $z,z_1,z_2\in\Rb^{2d}$
\begin{align*}
&\calW(g_{z_1},g[C]_{z_2})(z) \\
&= \gamma\, \mu_{12}\, (\pi\eps)^{-d}\, \e^{\I (z_1-z_2)\cdot Jz/\eps}\,\exp(\tfrac{\I}{2\eps}\,(z-\tfrac12(z_1+z_2))^T\calC_W(z-\tfrac12(z_1+z_2))),
\end{align*}
where 
\[
\mu_{12} = \exp(\tfrac{\I}{2\eps} (p_1-p_2)^T(q_1+q_2)).
\]  
In particular, if $C=\I\Id$, then $\gamma = 1$ and $\calC_W = 2\I\Id$.
\end{lemma}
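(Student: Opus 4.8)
The plan is to combine the two preceding lemmas with an explicit one-dimensional-style Gaussian integral. First I would invoke Lemma~\ref{lem:crossW} to reduce the general statement to the case of origin-centred wave packets: it already tells us that
\[
\calW(g_{z_1},g[C]_{z_2})(z) = \mu_{12}\, \e^{\I (z_1-z_2)^T Jz/\eps}\ \calW(g_0,g[C]_0)(z-\tfrac12(z_1+z_2)),
\]
so it suffices to compute $\calW(g_0,g[C]_0)(w)$ for $w\in\Rb^{2d}$ and show it equals $\gamma\,(\pi\eps)^{-d}\exp(\tfrac{\I}{2\eps}w^T\calC_W w)$ with the stated properties of $\gamma$ and $\calC_W$. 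This is purely the content of the ``$\calW(f_0,h_0)$'' factor, with $f=g$ the standard Gaussian and $h=g[C]$.

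Next I would write out $\calW(g_0,g[C]_0)(w)$ from the definition, with $w=(w_q,w_p)$:
\[
\calW(g_0,g[C]_0)(w) = (2\pi\eps)^{-d}\,\pi^{-d/4}(\pi\eps)^{-d/4}\det(\Im C)^{1/4}\int_{\Rb^d} \e^{-\tfrac{1}{2\eps}|w_q+\tfrac{y}{2}|^2}\ \e^{\tfrac{\I}{2\eps}(w_q-\tfrac{y}{2})^T C(w_q-\tfrac{y}{2})}\ \e^{\I w_p\cdot y/\eps}\D y.
\]
The exponent is a quadratic form in $y$ (with complex symmetric coefficient matrix whose real part is $\tfrac14(\Id)+\tfrac14\Im C>0$), plus a term linear in $y$, plus a term independent of $y$; I would collect these and apply the Gaussian integral formula $\int\exp(-\pi y^T A y - 2\pi\I v^T y)\D y=(\det A)^{-1/2}\exp(-\pi v^T A^{-1}v)$ (the same identity cited from \cite[Theorem~1 in Appendix~A]{Fol89} in the proof of Lemma~\ref{lem:gauss}). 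Performing this integral produces a Gaussian in $w=(w_q,w_p)$: the quadratic form has some complex symmetric $2d\times 2d$ matrix, which I would rename $\tfrac1\eps\calC_W$ up to the factor $\tfrac\I2$, and a scalar prefactor that combines the normalisation constants with $(\det A)^{-1/2}$; call the latter $\gamma\,(\pi\eps)^{-d}$. For the special case $C=\I\Id$ one checks directly that $A=\tfrac{1}{2\pi\eps}\Id$, $v=\tfrac{1}{2\pi\eps}(w_q-\I w_p)$ (up to signs), and the integral collapses to the standard Wigner function of the Gaussian, giving $\gamma=1$ and $\calC_W=2\I\Id$, consistent with the classical formula $\calW(g,g)(w)=(\pi\eps)^{-d}\e^{-|w|^2/\eps}$.

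The remaining—and main—work is the linear-algebra verification that $|\gamma|\le 1$ and $\Im\calC_W>0$. For $|\gamma|\le 1$ I expect the same mechanism as in Lemma~\ref{lem:det_pd}: the prefactor will have the shape $2^{cd}\det(\Im C)^{1/4}/|\det(\text{something})|^{1/2}$ with the ``something'' having positive definite real part, so that the Ostrowski--Taussky inequality \cite[Theorem~7.8.19]{HorJ13} bounds $|\det(\text{something})|$ below by the determinant of its real part and the elementary bound $4\lambda\le(1+\lambda)^2$ finishes it. For $\Im\calC_W>0$ I would again use Hagedorn's factorisation $C=PQ^{-1}$ with $Q^*P-P^*Q=2\I\Id$, $Q^TP-P^TQ=0$, substitute into the explicit block form of $\calC_W$, and conjugate by a suitable block-diagonal matrix built from $(Q-\I P)^{-1}$ to bring $\Im\calC_W$ into a manifestly positive form of the type $\calB^*(\Id+(\text{PSD})+\I J)\calB$, whose $\I J$ part contributes nothing to $z^*\,\Im\calC_W\,z$ since $J$ is skew-symmetric. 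This is the step I expect to be the most delicate, since it requires the precise block entries of $\calC_W$ coming out of the Gaussian integral to be massaged correctly; but it is structurally identical to the argument already carried out in the proof of Lemma~\ref{lem:det_pd}, so it should go through. (The lemma statement only claims $\Im\calC_W>0$, not the sharper bound $\Im\calC_W\ge\rho_*\Id$, so one does not even need to track the spectral parameter here.)
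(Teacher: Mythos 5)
Your plan is correct and follows essentially the same route as the paper: reduce to the origin-centred case via Lemma~\ref{lem:crossW}, evaluate $\calW(g_0,g[C]_0)$ with the complex Gaussian integral formula from Folland (the paper takes $A=\tfrac{1}{8\pi\eps}(\Id-\I C)$, $v=\tfrac{1}{4\pi\eps}((C-\I\Id)q-2p)$), and read off $\gamma$ and $\calC_W$; the bound $|\gamma|\le 1$ then comes from the Ostrowski--Taussky estimate already packaged in Lemma~\ref{lem:det_pd}, exactly as you anticipate, since $\gamma=2^{d/2}\bigl(\det\Im C/\det(\Id-\I C)^2\bigr)^{1/4}$. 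The one place where you propose more work than is needed is the positivity of $\Im\calC_W$: rather than rerunning the Hagedorn factorisation and block-Cholesky argument on the new matrix, the paper observes the identity $\Id-(\Id-\I C)^{-1}=-\I C(\Id-\I C)^{-1}$, which shows that
\[
\calC_W = 4\,\calC + \begin{pmatrix}0 & -2\Id\\ -2\Id & 0\end{pmatrix},
\qquad\text{hence}\qquad \Im\calC_W = 4\,\Im\calC,
\]
where $\calC$ is precisely the matrix of Lemma~\ref{lem:det_pd}. Positivity (and in fact the sharper bound $\Im\calC_W\ge 4\rho_*\Id$ used later in Proposition~\ref{prop:bound_thawed_ext}) is then inherited for free. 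Your direct approach would also go through, since the matrix you would be analysing is a positive multiple of $\Im\calC$, but spotting the algebraic relation to $\calC$ saves the entire second pass through the linear algebra.
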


\begin{proof}
By Lemma~\ref{lem:crossW}, 
\[
\calW(g_{z_1},g[C]_{z_2})(z) = \gamma_{12} \, \e^{\I (z_1-z_2)\cdot Jz/\eps}\,
\calW(g_0,g[C]_0)(z-\tfrac12(z_1+z_2)).
\]
We have
\[
\calW(g_{0},g[C]_{0})(z) = 
(2\pi\eps)^{-d} \int_{\Rb^d} \overline{g_{0}(q+\tfrac12 y)} \ g[C]_{0}(q-\tfrac12 y)\  
\bch \e^{\I p\cdot y/\eps} \D y.\ech
\]
The product of the two Gaussian functions reads
\begin{align*}
&\overline{g_0(q+\tfrac12 y)} \ g[C]_{0}(q-\tfrac12 y) \\
&= (\pi\eps)^{-d/2}(\det\Im C)^{1/4} \  
\e^{-\tfrac{1}{2\eps}|q+y/2|^2 + \tfrac{\I}{2\eps}(q-y/2)\cdot C(q-y/2)}\\
&= (\pi\eps)^{-d/2}(\det\Im C)^{1/4} \  
\e^{\tfrac{\I}{2\eps}q\cdot(C+\I\Id)q}\ 
\e^{-\tfrac{1}{8\eps} y\cdot(\Id-\I C)y -\tfrac{\I}{2\eps}(C-\I\Id) q\cdot y}.
\end{align*}
We now use the Gaussian integral formula 
\[
\int_{\Rb^d} \exp(-\pi y^T A y - 2\pi\I v^T y) \D y = (\det A)^{-1/2} \exp(-\pi v^T A^{-1}v)
\]
for the complex symmetric matrix 
\[
A = \tfrac{1}{8\pi\eps}(\Id-\I C),\qquad \Re A>0,
\]
and the complex vector $v = \tfrac{1}{4\pi\eps}((C-\I\Id)q -2p)$. We write 
the quadratic form resulting from the Fourier integral as
\[
-\pi v^T A^{-1}v = -\tfrac{1}{2\eps}((C-\I\Id)q -2p)^T (\Id-\I C)^{-1} ((C-\I\Id)q -2p).
\]
We observe that
\begin{align*}
(C-\I\Id)(\Id-\I C)^{-1} &= -\I (2\I C + (\Id-\I C)) (\Id-\I C)^{-1} \\
&= 2C(\Id-\I C)^{-1} - \I \Id
\end{align*}
and
\begin{align*}
&(C-\I\Id)(\Id-\I C)^{-1}(C-\I\Id) \\
&= -\I(2C(\Id-\I C)^{-1} - \I \Id)(2\I C + (\Id-\I C))\\
&= 4C^2(\Id-\I C)^{-1} - 3\I C - \Id. 
\end{align*}
Since
\begin{align*}
& C+\I\Id +\I\left(4C^2(\Id-\I C)^{-1} - 3\I C - \Id\right) = 4C + 4\I C^2(\Id-\I C)^{-1}\\
&= 4\left(C(\Id-\I C) + \I C^2\right))(\Id-\I C)^{-1} = 4C(\Id-\I C)^{-1}, 
\end{align*}
we may write 
\[
\calW_{g_0,g[C]_0}(z) = 
(\pi\eps)^{-d} \ \gamma\ 
\exp(\tfrac{\I}{2\eps}z^T\calC_W z),
\]
where the determinantal prefactor is given by
\[
\gamma = 2^{d/2} \left(\frac{\det\Im C}{\det(\Id-\I C)^2}\right)^{1/4},
\]
and $\calC_W$ denotes the complex symmetric $2d\times 2d$ matrix
\[
\calC_W =
\begin{pmatrix}
4C(\Id-\I C)^{-1} & -4\I C(\Id-\I C)^{-1} - 2 \Id\\*[0.5ex]
-4\I C(\Id-\I C)^{-1} - 2 \Id & 4\I(\Id-\I C)^{-1}
\end{pmatrix}.
\]
We compare $\calC_W$ with the complex symmetric matrix $\calC$ of 
Lemma~\ref{lem:det_pd}. Since
\begin{align*}
\Id - (\Id-\I C)^{-1} &= (\Id-\I C-\Id)(\Id-\I C)^{-1} = -\I C(\Id-\I C)^{-1},
\end{align*}
we have
\[
\calC_W = 4\,\calC + \begin{pmatrix}0 & -2\Id\\ -2\Id & 0\end{pmatrix}
\quad\text{and}\quad \Im\calC_W = 4\,\Im\calC > 0.
\]
\end{proof}

\subsubsection{Gaussian windowed Fourier transforms for proving 
Proposition~\ref{prop:bound_thawed_ext}}
The next estimate generalizes the classic integration by parts argument for 
proving the superalgebraic decay of the Fourier transform of a smooth function.

\begin{proposition}[Gaussian Fourier integral]\label{prop:fourier}
Let $C\in\C^{d\times d}$ be a complex symmetric matrix with positive definite 
imaginary part. Denote
\[
\rho = \lambda_{\rm min}(\Im C),\quad \tau = \tr(\Im C),\quad \mu = \|\Re C\|.
\]
Then, for all $m\in\N_0^{d}$ and all $n\ge 0$ there exists a constant $c_{m,n}(\rho,\tau,\mu)>0$ such that, for all
smooth functions $f:\Rb^{d}\to\C$ that are bounded together with all its derivatives, 
and for all $z\in\Rb^{d}$, the integral 
\[
I(z) = (2\pi)^{-d/2} \int_{\Rb^{d}} (w+z)^m\, f(w)\, \exp(\tfrac{\I}{2} w^T C w) \, \e^{\I z\cdot w} \D w
\]
satisfies the estimate
\[
|I(z)| \ \le\  c_{m,n}(\rho,\tau,\mu)\, \left(1+|z|^2\right)^{-n/2} \sup_{|\alpha|\le |m|+n} \|\partial^\alpha f\|.
\]
\end{proposition}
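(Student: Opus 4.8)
The plan is to integrate by parts repeatedly, exploiting two differential operators that each gain a factor controlling the decay in $z$: the operator $L_1 = (1+|z|^2)^{-1}(1 - \I\, z\cdot\nabla_w)$, whose transpose fixes the plane-wave factor $\e^{\I z\cdot w}$ up to the prefactor $(1+|z|^2)^{-1}$, and a suitable "Gaussian" vector field that absorbs the oscillatory Gaussian weight. Concretely, since
\[
(1 - \I\, z\cdot\nabla_w)\, \e^{\I z\cdot w} = (1+|z|^2)\, \e^{\I z\cdot w},
\]
one has $\e^{\I z\cdot w} = (1+|z|^2)^{-1}(1+\I\, z\cdot\nabla_w)^T$ acting on the rest of the integrand under the integral sign. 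Applying this identity $n$ times and moving the derivatives onto $(w+z)^m f(w)\exp(\tfrac\I2 w^T C w)$, one picks up a prefactor $(1+|z|^2)^{-n}$ together with a sum of terms in which at most $n$ derivatives have been distributed among the three factors. The derivatives hitting $(w+z)^m$ only lower the polynomial degree; those hitting $f$ produce $\partial^\alpha f$ with $|\alpha|\le n$, which is bounded by hypothesis; the problematic ones are those hitting the Gaussian, since $\nabla_w \exp(\tfrac\I2 w^T Cw) = \I (Cw)\exp(\tfrac\I2 w^T Cw)$ brings down factors of $w$ that grow.

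To control those growing factors I would first pull out, before integrating by parts, the genuine decay of the Gaussian: write
\[
\bigl|\exp(\tfrac\I2 w^T C w)\bigr| = \exp(-\tfrac12 w^T \Im C\, w) \le \exp(-\tfrac\rho2 |w|^2),
\]
and note that any polynomial in $w$ of bounded degree is dominated by $\e^{\rho |w|^2/4}$ up to a constant depending on $\rho$, $\tau$, $\mu$ and the degree. Thus after the $n$-fold integration by parts the integrand is bounded by $(1+|z|^2)^{-n}$ times a constant times $\bigl(1+|w+z|^{|m|}\bigr)\,\bigl(\sup_{|\alpha|\le |m|+n}\|\partial^\alpha f\|\bigr)\,\e^{-\rho|w|^2/4}$, whose $w$-integral is finite with a value depending only on $\rho,\tau,\mu,m,n$ (the extra $|m|$ derivatives can arise if one also integrates by parts to tame the polynomial growth of $(w+z)^m$ in $z$, or one simply bounds $|w+z|^{|m|}\le 2^{|m|}(|w|^{|m|}+|z|^{|m|})$ and absorbs the $|z|^{|m|}$ by taking $n$ larger — either way the stated form with $|\alpha|\le|m|+n$ comes out). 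This yields
\[
|I(z)| \le c_{m,n}(\rho,\tau,\mu)\,(1+|z|^2)^{-n/2}\sup_{|\alpha|\le |m|+n}\|\partial^\alpha f\|,
\]
after relabelling $n$.

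The main obstacle, and the only place requiring genuine care rather than bookkeeping, is keeping the constant's dependence confined to $\rho,\tau,\mu$ (and $m,n$) and \emph{independent of the full matrix $C$}. Each derivative on the Gaussian produces a factor $Cw$, hence powers of $\|C\|$; and $\|C\| = \|\Re C + \I\,\Im C\|$ involves $\Im C$, whose norm is \emph{not} bounded by $\tau = \tr(\Im C)$ in a dimension-free way but \emph{is} bounded by $\tau$ with a constant depending only on $d$ — which is acceptable here since $d$ is fixed. So one must be slightly careful: bound $\|\Im C\|\le \tr(\Im C) = \tau$ (legitimate since $\Im C \succ 0$, so its operator norm is its largest eigenvalue, which is at most the sum of all eigenvalues) and $\|\Re C\| = \mu$, giving $\|C\|\le \mu + \tau$; then every factor $Cw$ arising from differentiation contributes at most $(\mu+\tau)|w|$, and the Gaussian decay $\e^{-\rho|w|^2/4}$ still wins, with the resulting finite integral depending on $\rho,\tau,\mu$ exactly as claimed. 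I would carry out the argument in this order: (i) reduce to bounding the integrand pointwise after $n$ integrations by parts via $L_1$; (ii) expand the derivatives by Leibniz, classifying terms by how many derivatives land on each factor; (iii) estimate each resulting term using $|\exp(\tfrac\I2 w^TCw)|\le \e^{-\rho|w|^2/4}\cdot \e^{-\rho|w|^2/4}$, spending one half on polynomial growth and keeping the other for integrability; (iv) integrate in $w$ and collect constants.
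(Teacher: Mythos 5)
Your argument is correct and delivers the stated bound, but it runs on a different integration-by-parts operator from the one used in the paper. You work with the constant-coefficient operator $L_1=(1+|z|^2)^{-1}(1-\I\,z\cdot\nabla_w)$, which reproduces only the plane wave $\e^{\I z\cdot w}$; consequently each transposed application also differentiates the Gaussian $\exp(\tfrac{\I}{2}w^TCw)$ and brings down a factor $-z^TCw$, so a single step nets only $(1+|z|^2)^{-1/2}$ of decay and you need on the order of $|m|+n$ steps, absorbing all polynomial growth in $w$ (and the powers of $\|C\|\le\mu+\tau$) into the genuine damping $\exp(-\tfrac{\rho}{2}|w|^2)$ --- your observation that $\|\Im C\|\le\tr(\Im C)=\tau$ is precisely what keeps the constant confined to $(\rho,\tau,\mu)$. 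The paper instead integrates by parts against the full damped phase: it folds $(w+z)^m f(w)\exp(\tfrac{\I}{2}w^T\Re C\,w)$ into a single amplitude $f_C$ and uses the $w$-dependent vector field $(\Im C\,w+\I z)/(|\Im C\,w|^2+|z|^2)$, which reproduces $\exp(-\tfrac12 w^T\Im C\,w)\,\e^{\I z\cdot w}$ exactly; the cost then shows up as the divergence of that field --- which is where the dependence on $\tau=\tr(\Im C)$ enters --- and as derivatives of $f_C$, each accompanied by a clean factor $|z|^{-1}$, with the point $z=0$ treated separately via the moment bound of Lemma~\ref{lem:int-bound}. Your version avoids both the divergence term and the case split at $z=0$, at the price of a slightly lossier gain per step; both routes end up needing the same number of derivatives of $f$ and the same dependence of the constant on $\rho$, $\tau$ and $\mu$, so the choice is a matter of bookkeeping rather than substance.
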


\begin{proof} 
We start by considering $z=0$. Since
\[
I(0) = (2\pi)^{-d/2} \int_{\Rb^{d}} w^m\, f(w)\, \exp(\tfrac{\I}{2} w^T C w) \D w, 
\]
the bound on Gaussian moments given in Lemma~\ref{lem:int-bound} provides a constant 
$\gamma_{0,m}(\rho)>0$ such that
\[
|I(0)| \ \le\  \gamma_{0,m}(\rho)\,\|f\|_\infty.
\]
We now consider $z\neq0$ in the following. We combine the polynomial factor $(w+z)^m$ and the function $f$ 
together with the non-decaying part of the Gaussian, 
\[
f_C(w,z) = (w+z)^m f(w) \exp(\tfrac{\I}{2}w^T \Re C w). 
\] 
We write
\[
I(z) = (2\pi)^{-d/2} \int_{\Rb^{d}} f_C(w,z) \exp(-\tfrac{1}{2} w^T \Im C w) \, \e^{\I z\cdot w} \D w.
\]
We have
\begin{align*}
&\exp(-\tfrac{1}{2} w^T \Im C w) \e^{\I z\cdot w} \\
&\quad= -\frac{(\Im Cw+\I z)^T}{|(\Im C w,z)|^2}\ \partial_w \left(\exp(-\tfrac{1}{2} w^T \Im C w) \e^{\I z\cdot w}\right).
\end{align*}
Therefore, by partial integration,
\[
I(z) = (2\pi)^{-d/2} \int_{\Rb^{2}}
 {\rm div}_w\!\left(\frac{(\Im Cw+\I z) f_C(w,z)}{|(\Im Cw,z)|^2}\right)\, 
\e^{-\tfrac{1}{2} \bch w\cdot {\rm Im}C w + \I z\cdot w\ech} \D w .
\]
We decompose
\[
{\rm div}_w\!\left(\frac{(\Im Cw+\I z)f_C(w,z)}{|(\Im Cw,z)|^2}\right) = g_1(w,z) + g_2(w,z)
\]
with
\[
g_1(w,z) = 
f_C(w,z) \ {\rm div}_w\!\left(\frac{(\Im Cw+\I z)}{|(\Im Cw,z)|^2}\right)
\]
and
\[
g_2(w,z) = 
\frac{(\Im Cw+\I z)f_C(w,z)}{|(\Im Cw,z)|^2}\cdot\nabla f_C(w,z).
\]
Correspondingly we write the integral as
\[
I(z) = I_1(z) + I_2(z).
\]
To estimate the first integral $I_1(z)$, we calculate the divergence, 
\[
{\rm div}_w\,\frac{\Im C w+\I z}{|(\Im Cw,z)|^2} = \frac{\tr(\Im C)-2(\Im Cw+\I z)\cdot \Im Cw}{|(\Im Cw,z)|^4},
\]
and obtain 
\begin{align*}
\left| {\rm div}_w\,\frac{\Im Cw+\I z}{|(\Im Cw,z)|^2} \right| & \le\  \frac{\tr(\Im C)+3|(\Im Cw,z)|^2}{|(\Im Cw,z)|^4} \\*[1ex]
& \le\  \tr(\Im C)|z|^{-4} + 3|z|^{-2}.
\end{align*}
Hence, 
\[
g_1(w,z) \le \left(\tr(\Im C)|z|^{-4} + 3|z|^{-2}\right) |w+z|^{|m|}\  \|f\|_\infty,
\]
and there exists a constant $\gamma_{1,m}(\rho,\tau)>0$ such that 
\[
|I_1(z)| \le  \gamma_{1,m}(\rho,\tau)\ |z|^{|m|-2} \ \|f\|_\infty. 
\]
To estimate the second integral $I_2(z)$, we observe that 
\[
\frac{|\Im Cw+\I z|}{|(\Im Cw,z)|^2} \ =\  |(\Im Cw,z)|^{-1} \ \le\  |z|^{-1}
\]
and
\[
|\nabla_w f_C(w,z)| \le \sup_{|\alpha|\le 1} \|\partial^\alpha f\| 
\left( |m|\,|w+z|^{|m|-1} + |w+z|^{|m|}(1 + |\Re C w|)\right). 
\]
Therefore, there exists a constant $\gamma_{2,m}(\rho,\mu)>0$ such that 
\[
|I_2(z)| \ \le\  \gamma_{2,m}(\rho,\mu) |z|^{|m|-1} \sup_{|\alpha|\le 1} \|\partial^\alpha f\|.
\]
Hence, combining the bounds for $z=0$ and $z\neq0$, we have proved the existence of a constant 
$\gamma_m(\rho,\tau,\mu)>0$ guaranteeing for all $z$ that
\[
|I(z)| \ \le\  \gamma_m(\rho,\tau,\mu) \left(1+|z|^2\right)^{(|m|-1)/2} \sup_{|\alpha|\le 1} \|\partial^\alpha f\|. 
\]
Repeating the previous integration by parts $|m|+ n$ times generates 
higher-order derivatives of the functions 
\[
w\mapsto f_C(w,z)\quad\text{and}\quad w\mapsto (\Im Cw+\I z)/|(\Im C w,z)|^{-2},
\] 
which can be bounded in terms of 
\[
\sup_{|\alpha|\le |m|+n}\|\partial^\alpha f\|_\infty\quad\text{and}\quad(1+|z|^2)^{-n/2},
\]
providing the claimed estimate. 
\end{proof}

We will use the previous polynomial estimate inside an integral over phase space. The following 
result guarantees integrability for sufficiently large polynomial degree. 

\begin{lemma}[polynomial integral]\label{lem:int_poly} 
\[
\int_{\Rb^{2d}} \left(1 + |z|^2\right)^{-(d+1)}\D z \ =\  \pi^d/d!\ .
\]
\end{lemma}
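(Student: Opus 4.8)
The plan is to reduce this $2d$-dimensional integral to a trivial one-dimensional integral by a Gamma-function (Schwinger-parameter) representation, using crucially that the exponent $2d$ is even, so that a Gaussian integral over $\Rb^{2d}$ yields an exact power of $\pi/t$.

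First I would record that, for every $r\ge 0$,
\[
(1+r)^{-(d+1)} = \frac{1}{d!}\int_0^\infty t^d\,\e^{-t(1+r)}\,\D t,
\]
which is nothing but the substitution $t\mapsto t/(1+r)$ in the definition $\Gamma(d+1)=d!=\int_0^\infty t^d\e^{-t}\,\D t$. Applying this with $r=|z|^2$ and inserting it into the left-hand side, I would interchange the order of the $t$- and $z$-integrations. Since the resulting integrand $t^d\,\e^{-t}\,\e^{-t|z|^2}$ is nonnegative, Tonelli's theorem justifies the interchange with no further work.

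Next I would evaluate the inner integral: as $z=(z_1,\dots,z_{2d})\in\Rb^{2d}$ and $|z|^2=\sum_j z_j^2$, it factorises into $2d$ one-dimensional Gaussians,
\[
\int_{\Rb^{2d}}\e^{-t|z|^2}\,\D z = \Bigl(\sqrt{\pi/t}\,\Bigr)^{2d} = (\pi/t)^d,
\]
and the factor $t^d$ here cancels the $t^d$ coming from the Gamma representation. What is left is
\[
\frac{\pi^d}{d!}\int_0^\infty \e^{-t}\,\D t = \frac{\pi^d}{d!},
\]
which is exactly the claimed identity.

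I do not anticipate any genuine obstacle; the only step needing a word of justification is the interchange of integrations, and that is immediate from nonnegativity of the integrand (Tonelli). An alternative route passes to polar coordinates on $\Rb^{2d}$, uses the surface area $|S^{2d-1}| = 2\pi^d/(d-1)!$, and after the substitution $u=r^2$ recognises a Beta integral $B(d,1)=1/d$; this gives the same value $\pi^d/d!$, but the Gamma-function argument above is shorter and sidesteps the sphere-area formula.
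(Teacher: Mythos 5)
Your argument is correct, but it takes a genuinely different route from the paper. The paper groups the $2d$ real coordinates into $d$ pairs, passes to two-dimensional polar coordinates in each pair, and peels off the radial integrals one at a time using the elementary antiderivative $2\int_0^\infty(1+r^2+c)^{-n/2}r\,\D r=(1+c)^{-n/2+1}/(n/2-1)$; iterating this for a general exponent $n>2d$ yields $\pi^d/\bigl((n/2-1)\cdots(n/2-d)\bigr)$, which specialises to $\pi^d/d!$ at $n=2d+2$. You instead write $(1+|z|^2)^{-(d+1)}=\frac{1}{d!}\int_0^\infty t^d\e^{-t(1+|z|^2)}\,\D t$ via the Gamma function, swap the order of integration by Tonelli, and let the $2d$-dimensional Gaussian integral $(\pi/t)^d$ cancel the $t^d$. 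Both computations are complete and correct. The paper's iterated-polar-coordinate version delivers the extra information of the closed form for all even-dimensional powers $n>2d$ essentially for free, while your Schwinger-parameter version is shorter, avoids any case-by-case radial integration, and generalises just as cleanly to arbitrary exponents $s>d$ (giving $\pi^d\,\Gamma(s-d)/\Gamma(s)$) if one ever needs that. Either proof is acceptable here.
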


\begin{proof}
Let $n>2d$. We use two-dimensional polar coordinates to write
\begin{align*}
&\int_{\Rb^{2d}} (1 + |z|^2)^{-n/2}\D z \\
&=(2\pi)^d \int_{[0,\infty)^d} (1+ r_1^2+\cdots + r_d^2)^{-n/2} \,r_1\cdots r_d\, \D(r_1,\ldots,r_d). 
\end{align*}
Since for all $c\ge 0$,
\[
2 \int_0^\infty (1+r^2+c)^{-n/2} \,r \D r = \frac{(1+c)^{-n/2+1}}{n/2-1},
\]
we have
\[
\int_{\Rb^{2d}} \left(1 + |z|^2\right)^{-n/2}\D z = \frac{\pi^d}{(n/2-1)\cdots (n/2-d)},
\]
which gives the claimed formula for $n=2d+2$.
\end{proof}

\subsubsection{Properties of the thawed phase function, Lemma~\ref{lem:phase_thawed}}
\label{proof:phase_thawed}

Here we verify the differentiation formula for the thawed phase function $\Psi_{\rm th}$ and 
prove invertibility of the matrix $M_{\rm th}$. 

\begin{proof} 
We recall that the gradients of the action integral satisfy
\begin{align*}
\partial_q S(t,z) &= \partial_q q(t,z)^T p(t,z)-p,\\
\partial_p S(t,z) &=  \partial_p q(t,z)^T p(t,z).
\end{align*}
We therefore obtain for the derivatives of the phase function, 
\begin{align*}
&\partial_q\Psi_{\rm th}(t,x,y,z) = -\I(y-q) + \tfrac{1}{2}(x-q(t,z))^T \partial_q C(t,z)(x-q(t,z))\\*[1ex]
&\qquad-\partial_q q(t,z)^T C(t,z)(x-q(t,z)) + p + \partial_q p(t,z)^T(x-q(t,z))\\*[1ex]
&\qquad- \partial_q q(t,z)^Tp(t,z) + \left(\partial_q q(t,z)^T p(t,z)-p\right)\\*[1ex]
&\qquad= -\I(y-q) + \tfrac{1}{2}(x-q(t,z))^T \partial_q C(t,z)(x-q(t,z)) \\*[1ex]
&\qquad+\left(\partial_q p(t,z)- C(t,z)\partial_q q(t,z)\right)^T(x-q(t,z))
\end{align*}
and
\begin{align*}
&\partial_p\Psi_{\rm th}(t,x,y,z) = \tfrac{1}{2}(x-q(t,z))^T \partial_p C(t,z)(x-q(t,z))\\*[1ex]
&\qquad-\partial_p q(t,z)^T C(t,z)(x-q(t,z)) - (y-q) + \partial_p p(t,z)^T(x-q(t,z))\\*[1ex]
&\qquad - \partial_p q(t,z)^Tp(t,z) + \partial_p q(t,z)^Tp(t,z)\\*[1ex]
&\qquad= - (y-q)+ \tfrac{1}{2}(x-q(t,z))^T \partial_p C(t,z)(x-q(t,z))\\*[1ex]
&\qquad+\left(\partial_p p(t,z)- C(t,z)\partial_p q(t,z)\right)^T(x-q(t,z)).
\end{align*}
This implies
\begin{align*}
&(\I\partial_q + \partial_p)\Psi(t,x,y,z) = M_{\rm th}(t,z)(x-q(t,z)) + \\
&\qquad \left(\tfrac{1}{2}(x-q(t,z))^T \left((\I\partial_{q_j}+\partial_{p_j})C(t,z)\right) (x-q(t,z))\right)_{j=1}^d.
\end{align*}
with
\begin{align*}
& M_{\rm th}(t,z) \\
&= -\I\partial_q q(t,z)^T C(t,z) +\partial_p p(t,z)^T + \I\partial_q p(t,z)^T  -\partial_p q(t,z)^T C(t,z).
\end{align*}
To prove invertibility, we decompose the thawed matrix as
\[
M_{\rm th} = \begin{pmatrix}\I\,\Id & \Id\end{pmatrix} 
\begin{pmatrix}\partial_q q^T & \partial_q p^T\\*[1ex] \partial_p q^T & \partial_p p^T\end{pmatrix}
\begin{pmatrix}-C\\ \Id\end{pmatrix}.
\]
Accounting for the presence of the matrix $C$ in the above decomposition, we do not work directly 
with the plain product $M_{\rm th}\, M_{\rm th}^*$, but with the weighted matrix
$M_{\rm th}\, (\Im C)^{-1}\, M_{\rm th}^*$. We analyse the $C$-dependent contribution to this product, 
that is, the matrix
\[
P_{\rm th} = \begin{pmatrix}-C\\ \Id\end{pmatrix}(\Im C)^{-1} \begin{pmatrix}-C^* & \Id\end{pmatrix}. 
\]
We have
\begin{align*}
P_{\rm th} &= \begin{pmatrix}C(\Im C)^{-1}C^* & -C(\Im C)^{-1}\\ -(\Im C)^{-1}C^* & (\Im C)^{-1}\end{pmatrix}\\*[1ex] 
& = \begin{pmatrix}\Im C + \Re C(\Im C)^{-1}\Re C & -\Re C(\Im C)^{-1}-\I \Id\\ 
-(\Im C)^{-1}\Re C + \I\Id & (\Im C)^{-1}\end{pmatrix}.
\end{align*}
Hence, $P_{\rm th}$ is of the form 
\[
P_{\rm th} = \begin{pmatrix}A^2 + BA^{-2}B & -BA^{-2}\\ -A^{-2}B & A^{-2}\end{pmatrix} + \I J
\]
with $A=(\Im C)^{1/2}$ and $B=\Re C$. The first summand of $P_{\rm th}$ allows for a block 
Cholesky factorization, and we obtain 
\[
P_{\rm th} = \Lambda^T\Lambda + \I J
\]
with
\[
\Lambda = \begin{pmatrix} A & 0\\ -A^{-1}B & A^{-1}\end{pmatrix}\ .
\]
Altogether, we have established that  
\begin{align*}
M_{\rm th} (\Im C)^{-1} M_{\rm th}^* 
&= \begin{pmatrix}\I\,\Id & \Id\end{pmatrix} \left( D\Phi^T \Lambda^T \Lambda D\Phi + \I J\right)
\begin{pmatrix}-\I\,\Id\\ \Id\end{pmatrix}\\
&=\left( \Lambda D\Phi\begin{pmatrix}-\I\,\Id\\ \Id\end{pmatrix}\right)^*
\left( \Lambda D\Phi\begin{pmatrix}-\I\,\Id\\ \Id\end{pmatrix}\right) + 2\,\Id\ .
\end{align*}
This implies for the determinant 
\[
|\det M_{\rm th}| \ \ge\  2^{d/2} \det(\Im C)^{1/2},
\]
and we have proved that $M_{\rm th}$ is invertible.
\end{proof}

\subsubsection{Improved norm bound, Proposition~\ref{prop:improved_bound_thawed} }
\label{proof:improved_bound_thawed}
Using integration by parts, we prove that polynomial powers in the oscillatory integral 
operator lower the norm bound with respect to the semi-classical parameter. 

\begin{proof} As for Proposition~\ref{prop:improved_bound}, 
we perform an inductive proof over $|m|$. The case $|m| = 0$ is already covered by the norm bound of 
Proposition~\ref{prop:bound_thawed_ext}. 
We therefore start with $|m|=1$, where $m=e_j$ for some $j=1,\ldots,d$.  
We use the derivative formula of Lemma~\ref{lem:phase_thawed}, 
\begin{align*}
&(x-q(t,z))\, \e^{\I\Psi_{\rm th}(t,x,y,z)/\eps} \\
&=
\left(\tfrac{\eps}{\I} \, M_{\rm th}(t,z)^{-1}(\I\partial_q + \partial_p) - v(t,x,z) \right)\,
\e^{\I\Psi_{\rm th}(t,x,y,z)/\eps}.
\end{align*}
with
\begin{align*}
&v(t,x,z) = \\
&M_{\rm th}(t,z)^{-1} \left(\tfrac{1}{2}(x-q(t,z))^T \left((\I\partial_{q_j}+\partial_{p_j})C(t,z)\right) (x-q(t,z))\right)_{j=1}^d.
\end{align*}
Integration by parts yields
\begin{align*}
&\bch \calI_j(t)\psi(x)\ech \\
&= 2^{-d} (\pi\eps)^{-3d/2} \int_{\Rb^{3d}} (x-q(t,z))_j\, b(q(t,z),x)\,  
\e^{\I\Psi(t,x,y,z)/\eps} \psi(y) \D(y,z)\\
&= \eps\I \,\sum_{k=1}^d \calI_{jk}(t)\psi(x) - \calI_2(t)\psi(x)
\end{align*}
with
\begin{align*}
&\calI_{jk}(t)\psi(x) = 2^{-d} (\pi\eps)^{-3d/2} \\
&\qquad\int_{\Rb^{3d}} \,(\I\partial_q+\partial_p)_k\left(b(q(t,z),x)\, M_{\rm th}(t,z)^{-1}_{jk}\right)\,  
\e^{\I\Psi(t,x,y,z)/\eps} \psi(y) \D(y,z)
\end{align*}
and
\begin{align*}
&\calI_{2}(t)\psi(x) \\
&= 2^{-d} (\pi\eps)^{-3d/2} \int_{\Rb^{3d}} 
b(q(t,z),x)\, v_j(t,x,z) \, \e^{\I\Psi(t,x,y,z)/\eps} \bch \psi(y) \D(y,z).\ech
\end{align*}
All the operators $\calI_{jk}(t)$ contain terms that are of order zero with respect to $x-q(t,z)$, 
while the corresponding polynomial terms in $\calI_2(t)$ are of order two. Therefore, 
by Proposition~\ref{prop:bound_thawed_ext},
\[
\|\calI_{jk}(t)\psi\| \ \le \gamma_0(b,\rho_*)\ d_1(M_{\rm th}(t)^{-1},\Phi^t)\, 
\, \|\psi\|
\]
and 
\[
\|\calI_2(t)\psi\| \ \le\  \gamma_2(b,\rho_*)\ d_0(M_{\rm th}(t)^{-1})\ d_1(C(t))\,\eps\, \|\psi\|.
\]
This implies 
\begin{align*}
\|\bch \calI_j(t)\psi\ech\| &\ \le\   \eps\,\sum_{k=1}^d \|\calI_{jk}(t)\psi\| + 
\|\calI_2(t)\psi\| \\
& \ \le\   \gamma_{0,2}(b,\rho_*)\ d_1(M_{\rm th}(t)^{-1},\Phi^t,C(t))\ \eps\, \|\psi\|.
\end{align*}
For the inductive step, we consider 
\begin{align*}
&\bch\calI_j(t)\psi(x)\ech \\
&= 2^{-d} (\pi\eps)^{-3d/2} \int_{\Rb^{3d}} (x-q(t,z))^{m+e_j}\, b(q(t,z),x)\,  
\e^{\I\Psi(t,x,y,z)/\eps} \psi(y) \D(y,z)\\*[1ex]
&=\eps\I\left(\calI_{|m|}(t) + \calI_{|m|-1}(t)\right)\psi(x)  + \calI_{|m|+2}(t)\psi(x) 
\end{align*}
with
\begin{align*}
&\calI_{|m|}(t)\psi(x) = 2^{-d} (\pi\eps)^{-3d/2}\ \sum_{k=1}^d \int_{\Rb^{3d}} (x-q(t,z))^m\\*[1ex]  
&\quad\times(\I\partial_q+\partial_p)_k \left(b(q(t,z),x)\, M_{\rm th}(t,z)^{-1}_{jk}\right)
\e^{\I\Psi(t,x,y,z)/\eps} \psi(y) \D(y,z)
\end{align*}
and 
\begin{align*}
&\calI_{|m|-1}(t)\psi(x) = 2^{-d} (\pi\eps)^{-3d/2}\ \sum_{k=1}^d 
\int_{\Rb^{3d}}b(q(t,z),x)\, M_{\rm th}(t,z)^{-1}_{jk}\\*[1ex]
&\quad\times
(\I\partial_q+\partial_p)_k \left((x-q(t,z))^m\right)\ 
\e^{\I\Psi(t,x,y,z)/\eps} \psi(y) \D(y,z)
\end{align*}
and
\begin{align*}
&\calI_{|m|+2}(t)\psi(x) = - 2^{-d} (\pi\eps)^{-3d/2}\\*[1ex]
&\quad\int_{\Rb^{3d}} (x-q(t,z))^{m}\, b(q(t,z),x)\,v_j(t,x,z)
\e^{\I\Psi(t,x,y,z)/\eps} \psi(y) \D(y,z).
\end{align*} 
These three integral operators contain monomials in $x-q(t,z)$ of order $|m|$, $|m|-1$
and $|m|+2$, respectively. By the inductive hypothesis, 
\begin{align*}
\|\calI_{|m|}(t)\psi\| & \ \le \ \gamma_{|m|}(b,\rho_*)\ d_{|m|+1}(M_{\rm th}(t)^{-1}, \Phi^t, C(t))\ 
\bch\eps^{\lceil{|m|/2\rceil}}\ech\,\|\psi\|,\\*[1ex]
\|\calI_{|m|-1}(t)\psi\| & \ \le \ \gamma_{|m|-1}(b,\rho_*)\ d_{|m|}(M_{\rm th}(t)^{-1}, \Phi^t, C(t))\ 
\eps^{\lceil(|m|-1)/2\rceil}\,\|\psi\|.
\end{align*}
By the norm estimate of Proposition~\ref{prop:bound_thawed_ext}, 
\[
\|\calI_{|m|+2}(t)\psi\| \ \le\  \gamma_{|m|+2}(\rho_*,b)\, d_{1}(C(t))\, \eps^{(|m|+2)/2}\,\|\psi\|.
\]
Combining the three estimates, we obtain
\begin{align*}
\|\bch\calI_j(t)\psi\ech\| &\ \le\  \eps\left(\|\calI_{|m|}(t)\psi\| + \|\calI_{|m|-1}(t)\psi\|\right) + 
\|\calI_{|m|+2}(t)\psi\| \\*[1ex]
& \ \le\ 
\gamma_{|m|+2}(b,\rho_*)\,d_{|m|+1}(M_{\rm th}(t)^{-1},\Phi^t,C(t)) \eps^{\lceil(|m|+1)/2\rceil}\,\|\psi\|, 
\end{align*}
where we have used that 
\[
\lceil(|m|-1)/2\rceil+1 = \lceil (|m|+1)/2\rceil.
\]
\end{proof}

\subsubsection{Defect calculation, Lemma~\ref{lem:res_thawed}}\label{proof:res_thawed}
The calculation of the defect of the thawed Gaussian approximation is similar to that  
for the variationally determined Gaussian wave packet. 

\begin{proof}
The time derivative of the thawed Gaussian's normalization, 
\[
\partial_t (\det \Im C(t))^{1/4}=  -\tfrac12\, (\det \Im C(t))^{1/4}\  \tr\,C(t),
\]
is elegantly calculated via the relation 
$\Im C(t) = (Q(t)Q(t)^*)^{-1}$ 
in Hagedorn's parametrization of complex symmetric matrices with positive definite imaginary part. Then, one obtains 
\begin{align*}
\partial_t g(t)_{\Phi^t} &= -\tfrac12\tr\,C(t) g(t)_{\Phi^t}\\
& + \partial_t\left(\tfrac{\I}{2\eps}(x-q(t))^T C(t)(x-q(t)) + \tfrac\I\eps p(t)^T (x-q(t))\right) 
g(t)_{\Phi^t}
\end{align*}
and consequently
\[
\partial_t\left( \e^{\I S(t)/\eps}g(t)_{\Phi^t}\right) 
=\tfrac{1}{\I\eps}\, f_1(t)\,\e^{\I S(t)/\eps}g(t)_{\Phi^t}
\]
with
\begin{align*}
f_1 &= -\tfrac{\I\eps}{2} \tr(C) - \tfrac12|p|^2 + V(q) \\
&\qquad + p^T\dot q + (C\dot q-\dot p)^T(x-q) -\tfrac12(x-q)^T \dot C(x-q).
\end{align*}
The analogue of the variational calculation yields 
\[
\tfrac{1}{\I\eps}H \,g(t)_{\Phi^t}  = \tfrac{1}{\I\eps} f_2(t) g(t)_{\Phi^t}
\]
for the action of the Schr\"odinger operator, with
\begin{align*}
f_2 &= -\tfrac{\I\eps}{2}\tr(C) + \tfrac12|p|^2  + V(q) + (Cp+\nabla V(q))^T (x-q) \\*[1ex]
&\qquad+ \tfrac12(x-q)^T(C^2+\nabla^2 V(q))(x-q) + p^T C(x-q) + W_q.
\end{align*}
By the classical equations of motion, we have 
$f_2(t)-f_1(t) = W_{q(t)}$ 
and therefore the claimed representation of the defect. 
\end{proof}

\subsection{Notes}

The wave packet transform of Propositions~\ref{prop:wp_trafo} and \ref{prop:bargmann} has different names in different mathematical subcultures. In time-frequency analysis, it is known as windowed Fourier transform, short-time Fourier transform or continuous wavelet transform when alluding to \cite{GroM84}.
For the special case of a Gaussian window function, it is called a 
Gabor transform in reference to \cn{Gab46}. 
In the coherent states monograph of \cn[Chapter~1.2.3]{ComR12}, the wave packet transform is 
called the Fourier--Bargmann 
transform. The books of \cn[Chapter~3.3]{Fol89} and \cn[Chapter~3]{Mar02} use the term 
Fourier--Bros--Iagolnitzer transform, which is often abbreviated to FBI transform. 

The notion of a thawed Gaussian superposition seems to originate in chemical physics. 
Numerical analysts commonly refer to such approximations as Gaussian beams. 
They have been applied successfully to dispersive wave equations in the high frequency 
regime, but also to hyperbolic problems. \cn{JinMS11} gave an earlier account of this line of research 
for the semiclassical Schr\"odinger equation. In the Gaussian beam literature, WKB initial data
\[
\psi_0(x) = (A_0(x) + \eps A_1(x) + \cdots) \e^{\I S_{\rm in}(x)/\eps}
\]
are usually considered, either approximated by Gaussian wave packets with 
centres $z=(q,p)$ in some subset of phase space or by first choosing the position 
components $q$ in some subset of configuration space and then fixing the momenta to be 
$p=\nabla S_{\rm in}(q)$. For Gaussian beam superpositions with centres on a Lagrangian 
submanifold of phase space, the first optimal error estimate -- an order $\eps$ result in 
$L^2(\Rb^d)$ -- was proved by \cn{Zhe14} using WKB techniques with a \bch matching procedure  \ech
at caustic points. In contrast, \cn{LiuRT16} considered Gaussian beam superpositions that are 
position guided and improved their earlier results \cite{LiuRT13} to the 
optimal order $\eps$ in $L^2$-norm. Their direct analysis of the oscillatory 
integral operators also allows for an extension to Sobolev and sup-norm 
estimates. 
The thawed approach developed by \cn{BerBCN17} and \cn{CorGN17} represents an 
arbitrary square-integrable initial datum by a Gabor frame of Gaussian wave packets 
that are centred on a  phase space lattice.
The performed error analysis, however,   
only yields an order $\sqrt\eps$ result. The line of argument for proving 
the optimal order $\eps$ estimate presented in Theorem~\ref{theo:thawed}  
is indicated in \cn[Sections 3--4]{Rob10} but not carried out fully there.  
 
The class of frozen Gaussian approximations is similar in spirit to the thawed 
superpositions, but only evolves the phase space centres and corresponding action integrals.
The first method of this type 
seems to have been proposed by \cn{Hel81}, who did not consider a time-dependent 
weighting factor but argued that collective correlation might improve the accuracy despite 
the constraint that the wave packets are not allowed 
to spread. This ansatz was followed up by \cn{HerK84}, who introduced a time-dependent 
weight to the frozen approximation. Later, in \cite{KluHD86}, the original lengthy expression  
of the Herman--Kluk prefactor was simplified to the more handy form of Definition~\ref{def:hk}, 
and numerical results for a particle method with initial Monte Carlo sampling were presented. 
Fifteen years later, a couple of papers in the chemical literature critically questioned the validity of the Herman--Kluk 
approximation, and it was \cn{Kay06} who developed the elaborate integration 
by parts performed in Theorem~\ref{theo:hk} to justify a semiclassical expansion in powers 
of the semiclassical parameter, which has the Herman--Kluk propagator as its leading term. 
\bch The first correction term of this expansion was numerically explored in \cite{HocK06}.
\ech
The first mathematically rigorous error estimates were due to \cn{SwaR09}. A noticeably simplified approach 
to the required norm bounds relying on Bargmann kernel estimates was later proposed by \cn{Rob10}.
The direct use of the wave packet transform, however, as pursued in 
Proposition~\ref{prop:basic_bound} and Corollary~\ref{cor:norm_bound}, appears to be new and offers 
an even more elementary method for assessing the accuracy of the frozen approximation. 
The numerical realisation of the Herman--Kluk propagator as a particle method was considered in 
 \cite{LasS17}.

%\newpage
%\section{Wigner functions}
\def\op{\mathrm{op}}
\def\calH{\mathcal{H}}
\def\calL{\mathcal{L}}
\def\calR{\mathcal{R}}

\section{Wigner functions} 
\label{sec:wigner}
In this section we derive approximations to expectation values of observables that are obtained directly without previously computing the wave function. They are based on Egorov's theorem, which relates quantum observables and classically propagated observables, and on Wigner or Husimi functions, which represent averages of quantum observables as integrals over classical phase space.
The combination leads to a computational approach of $O(\eps^2)$ accuracy in which quadrature points in phase space (particles) are transported by the classical equations of motion.

\subsection{Weyl quantisation}

To every classical observable, {\it i.e.}~a smooth function $a:\Rb^{2d}\to\Rb$ on phase space, we would like to assign a quantum observable, {\it i.e.}~a self-adjoint linear operator $\widehat a = \op(a)$ acting on a suitable subspace ${\rm dom}(\widehat a)$ of $L^2(\Rb^d)$, such that the coordinate projections $(q,p)\mapsto q_j$ and 
$(q,p)\mapsto p_j$ for $j=1,\ldots,d$ result in the familiar position and momentum operators,
\[
\widehat{q_j}\psi(x) = x_j\psi(x)\quad\text{and}\quad \widehat{p_j}\psi(x) = -\I\eps\partial_j\psi(x).
\]
However, due to the commutator relation 
\[
\tfrac{1}{\I\eps}[\widehat q_j,\widehat p_k] = \delta_{jk},
\]
there are several ways to quantise even a simple function such as the product $(q,p)\mapsto q_j p_j$. 
Weyl quantisation takes a democratic attitude to this non-commutative challenge and 
works for bivariate polynomials as follows. 

\begin{definition}[Weyl-quantised polynomial] Let $j,k\ge 0$, and consider the polynomial
$a:\Rb^{2}\to\Rb$ with $a(q,p) = q^j p^k$ for all $(q,p)\in\Rb^2$. 
We define the Weyl-quantised operator $\op(a)$ of the polynomial function $a$ according to
\[
\op(a) = \frac{1}{(j+k)!} 
\sum_{\sigma\in S_{j+k}} 
\sigma(\underbrace{\widehat q,\ldots,\widehat q}_{j\ \text{times}},
\underbrace{\widehat p,\ldots,\widehat p}_{k\ \text{times}}),
\]
where, for a permutation $\sigma$ of $j+k$ elements and operators $A_1,\ldots,A_{j+k}$, we denote 
$
\sigma(A_1,\ldots,A_{j+k}) = A_{\sigma(1)} \cdots A_{\sigma(j+k)}.
$
\end{definition}

This democratic average of all the possible orderings of the position 
and the momentum operator has remarkable properties. Imposing linearity 
of the quantisation map $a\mapsto\op(a)$, the binomial theorem yields for all $\alpha,\beta\in\Rb$ that
\begin{align*}
\op((\alpha q+\beta p)^n) 
&= \sum_{k=0}^n \binom{n}{k} \alpha^k \beta^{n-k} \op(q^k p^{n-k})\\
&= \sum_{k=0}^n \binom{n}{k} \alpha^k \beta^{n-k} \frac{1}{n!} 
\sum_{\sigma\in S_{n}} 
\sigma(\underbrace{\widehat q,\ldots,\widehat q}_{k\ \text{times}},
\underbrace{\widehat p,\ldots,\widehat p}_{n-k\ \text{times}})\\
&=(\alpha \widehat q + \beta \widehat p)^n,
\end{align*}
and for all $N\in\N$, 
\[
\bch\op\Big(\ \sum_{n=0}^N\frac{\I^n}{n!}(\alpha q + \beta p)^n \Big) \ech = 
\sum_{n=0}^N\frac{\I^n}{n!}(\alpha \widehat q + \beta \widehat p)^n.
\]
Formally passing to the limit $N\to\infty$, this suggests defining
$$
\op(\exp(\I(\alpha q + \beta p))) := \exp(\I(\alpha \widehat q + \beta \widehat p)).
$$
We let
\[
\calF_\eps a(\alpha,\beta) = \frac{1}{2\pi\eps} \int_{\Rb^2} a(q,p) \ \e^{-\I(\alpha q+ \beta p)/\eps} d(q,p)
\]
denote \bch the semiclassically scaled, unitary\ech Fourier transform of a Schwartz function $a:\Rb^2\to\Rb$. By the Fourier inversion formula we may write 
\[
a(q,p) = \frac{1}{2\pi\eps} \int_{\Rb^2} \calF_\eps a(\alpha,\beta) \ \e^{\I(\alpha q + \beta p)/\eps} d(\alpha,\beta).
\]
This motivates to set
\[
\op(a) = \frac{1}{2\pi\eps} \int_{\Rb^2} \calF_\eps a(\alpha,\beta)\  
\e^{\I(\alpha \widehat q + \beta \widehat p)/\eps} d(\alpha,\beta),
\]
where the operator-valued integration has to be performed carefully. 
The unitary operator $\e^{\I(\alpha \widehat q + \beta \widehat p)/\eps}$ 
is often referred to as a Heisenberg--Weyl translation operator. 
The following lemma explains this notion.

\begin{lemma}[Heisenberg--Weyl translation]
For all $\alpha,\beta\in\Rb^d$ and for all $\varphi\in L^2(\Rb^d)$, 
\[
\e^{\I(\alpha\cdot \widehat q + \beta\cdot \widehat p)/\eps}\varphi(x) = 
\e^{\I\alpha\cdot\beta/(2\eps)} \e^{\I \alpha\cdot x/\eps} \varphi(x+\beta), \quad x\in\Rb^d.
\] 
\end{lemma}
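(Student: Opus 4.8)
The plan is to exploit the fact that $\alpha\cdot\widehat q$ and $\beta\cdot\widehat p$ have a scalar commutator, $[\alpha\cdot\widehat q,\beta\cdot\widehat p]=\I\eps\,\alpha\cdot\beta$ by \eqref{V:qp-comm}, so that the unitary $\e^{\I(\alpha\cdot\widehat q+\beta\cdot\widehat p)/\eps}$ can be computed explicitly. Since both sides of the asserted identity are, for fixed $\alpha,\beta$, bounded (indeed unitary) operators on $L^2(\Rb^d)$, it suffices to verify the formula on the dense subspace $\calS(\Rb^d)$; the general case then follows by density and continuity.

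First I would introduce, for $s\in\Rb$ and $\varphi\in\calS(\Rb^d)$, the candidate evolution
\[
\psi(s,x) := \e^{\I s^2\alpha\cdot\beta/(2\eps)}\,\e^{\I s\alpha\cdot x/\eps}\,\varphi(x+s\beta),
\]
which reduces to the claimed expression at $s=1$. Differentiating in $s$ and using $\widehat q\psi=x\psi$, $\widehat p\psi=-\I\eps\nabla_x\psi$, a direct computation gives $\partial_s\psi=\tfrac{\I}{\eps}\bigl(\alpha\cdot\widehat q+\beta\cdot\widehat p\bigr)\psi$ with $\psi(0,\cdot)=\varphi$: the term $\tfrac{\I}{\eps}s(\alpha\cdot\beta)\psi$ produced by the scalar prefactor cancels against the matching term arising when $\beta\cdot\nabla_x$ hits the phase $\e^{\I s\alpha\cdot x/\eps}$, and what remains is exactly $\tfrac{\I}{\eps}(\alpha\cdot x)\psi+\beta\cdot\nabla_x\psi$. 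One also checks directly that $s\mapsto\psi(s,\cdot)$ is a strongly continuous one-parameter unitary group: unitarity is clear from $|\psi(s,x)|=|\varphi(x+s\beta)|$, and the group law relating $\psi(s+t,\cdot)$ to $\psi(s,\psi(t,\cdot))$ follows from a short calculation in which the cross-term $\e^{\I st\,\alpha\cdot\beta/\eps}$ arises precisely from translating the phase $\e^{\I s\alpha\cdot x/\eps}$ by $t\beta$.

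Then I would invoke Stone's theorem. Since $\alpha\cdot\widehat q+\beta\cdot\widehat p$ is a first-order differential operator with real, at most linear, coefficients, it is essentially self-adjoint on $\calS(\Rb^d)$, so the unitary group it generates is uniquely determined; as the generator of $\psi(s,\cdot)$ has just been computed to be $\tfrac{\I}{\eps}(\alpha\cdot\widehat q+\beta\cdot\widehat p)$, we conclude $\psi(s,\cdot)=\e^{\I s(\alpha\cdot\widehat q+\beta\cdot\widehat p)/\eps}\varphi$, and setting $s=1$ proves the lemma. The only point requiring any care — and it is entirely standard — is this identification of the generator with the operator, i.e.\ essential self-adjointness together with the applicability of Stone's theorem; everything else is the elementary differentiation above. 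If one prefers a still shorter route, the Weyl--Baker--Campbell--Hausdorff factorisation $\e^{\I(\alpha\cdot\widehat q+\beta\cdot\widehat p)/\eps}=\e^{\I\alpha\cdot\beta/(2\eps)}\,\e^{\I\alpha\cdot\widehat q/\eps}\,\e^{\I\beta\cdot\widehat p/\eps}$ (valid because the commutator is central), combined with the facts that $\e^{\I\alpha\cdot\widehat q/\eps}$ acts as multiplication by $\e^{\I\alpha\cdot x/\eps}$ and $\e^{\I\beta\cdot\widehat p/\eps}=\e^{\beta\cdot\nabla_x}$ acts as translation by $\beta$, yields the formula at once.
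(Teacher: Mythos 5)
Your proof is correct and follows essentially the same route as the paper's: propose the explicit candidate, differentiate in the evolution parameter to check it solves $\I\eps\,\partial_s\psi=-(\alpha\cdot\widehat q+\beta\cdot\widehat p)\psi$ (up to your sign convention of evaluating at $s=1$ rather than $t=-1$), and conclude. You additionally spell out the uniqueness step (essential self-adjointness on $\calS(\Rb^d)$ plus Stone's theorem) that the paper leaves implicit, and the BCH factorisation you mention is a valid shortcut, but neither changes the substance of the argument.
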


\begin{proof}
We use the unitary evolution problem for the unbounded self-adjoint operator $\alpha\cdot \widehat q + \beta\cdot \widehat p$ for Schwartz class initial data,
\[
\I\eps\partial_t\psi = (\alpha\cdot \widehat q + \beta\cdot\widehat p)\psi,\qquad \psi(0)=\varphi,
\]
and verify that its solution $\psi(t,x) = \e^{-\I(\alpha\cdot \widehat q + \beta\cdot \widehat p)\,t/\eps}\varphi(x)$ satisfies  
\[
\psi(t,x) = \e^{\I\alpha\cdot\beta t^2/(2\eps)} \e^{-\I \alpha\cdot x t/\eps} \varphi(x-t\beta).
\]
Indeed, we differentiate the above formula and obtain 
\begin{align*}
\I\eps\partial_t\psi(t,x) 
&= (-\alpha\cdot\beta t + \alpha\cdot x)\psi(t,x) 
- \I\eps \beta \e^{\I\alpha\cdot\beta t^2/(2\eps)} \e^{-\I \alpha\cdot x t/\eps} \nabla\varphi(x-t\beta)\\
&= (\alpha\cdot x -\I\eps\beta\cdot\partial_x)\psi(t,x) = (\alpha\cdot\widehat q +\beta\cdot\widehat p)\psi(t,x). 
\end{align*}
Then our claim follows by setting $t=-1$. 
\end{proof}

Using the  formula of the lemma for the phase space translation in the one-dimensional case $d=1$, we rewrite the operator $\op(a)$ as an integral operator,
\begin{align*}
&\op(a)\varphi(x) \\
&=\frac{1}{2\pi\eps} \int_{\Rb^2} \calF_\eps a(\alpha,\beta)\  
\e^{\I(\alpha \widehat q + \beta \widehat p)/\eps} \varphi(x) d(\alpha,\beta)\\
&= \frac{1}{(2\pi\eps)^{2}} \int_{\Rb^4} a(w,z)\,
\e^{\I\alpha(x+\beta/2-w)/\eps -\I\beta z/\eps}\, \varphi(x+\beta) \; d(w,z,\alpha,\beta)\\
&=\frac{1}{2\pi\eps} \int_{\Rb^2} a(x+\tfrac{1}{2}\beta,z)\, \e^{-\I \beta z/\eps} \varphi(x+\beta) d(z,\beta)\\
&=\frac{1}{2\pi\eps} \int_{\Rb^2} a(\tfrac12(x+y),z) \e^{\I z(x-y)/\eps} \varphi(y) d(z,y).
\end{align*}
We naturally arrive at the following definition of Weyl-quantised Schwartz functions. We recall that a Schwartz function on $\Rb^{2d}$ is a smooth ({\it i.e.} infinitely differentiable) function that decays faster than the inverse of any polynomial.

\begin{definition}[Weyl-quantised Schwartz function] For a Schwartz function $a:\Rb^{2d}\to\Rb$, we define the hermitian operator
$\op(a)$ on $L^2(\Rb^d)$ by setting, for all 
$\varphi\in L^2(\Rb^d)$,
\begin{equation}\label{def:op}
\op(a)\varphi(x) = \int_{\Rb^d} \kappa_a(x,y) \varphi(y)\D y
\end{equation}
with
\[
\kappa_a(x,y) = (2\pi\eps)^{-d}\int_{\Rb^d} a(\tfrac12(x+y),p)\ \e^{\I p\cdot(x-y)/\eps} dp.
\]
\end{definition}

We note that the integral kernel $\kappa_a$ can also be used to define the Weyl quantisation of smooth 
polynomially bounded functions $a:\Rb^{2d}\to\Rb$, if the partial Fourier transform 
is understood in a distributional sense and the domain of the operator $\op(a)$ is chosen 
carefully. In particular, we may view the Hamiltonian operator 
\[
H=-\tfrac{\eps^2}{2}\Delta_x + V
\] 
as the semiclassically scaled Weyl quantisation $H = \op(h)$  
of the classical Hamilton function $h(q,p) = \tfrac12|p|^2 + V(q)$.  

%\subsection{Semiclassical commutator estimates}
\subsection{Quantum versus classical evolution of observables}

For expectation values $\langle A \rangle_{\psi(t)} = \langle \psi(t)\,|\, A\psi(t)\rangle$ of time-evolved wave functions
\[
\psi(t) = \exp\Bigl(\frac t {\I\eps}H\Bigr)\psi(0)
\]
%for rather general square integrable initial data $\psi(0)\in L^2(\Rb^d)$, 
we observe 
that the time derivative
\begin{equation} \label{wig:comm}
\frac{d}{dt} \langle A \rangle_{\psi(t)} = \bigl\langle \tfrac{1}{\I\eps} [A,H] \bigr\rangle_{\psi(t)}
\end{equation}
involves the {\it commutator}
\[
[A,H] = AH -HA
\] 
of the observable $A$ and the Hamiltonian operator 
$H$. 

On the other hand, along the classical Hamiltonian equations of motion
\begin{align*}
\dot q &= \phantom{+} \nabla_p h(q,p),%\qquad q(0)=q_0,
\\
\dot p &= - \nabla_q h(q,p),%\qquad p(0)=p_0,
\end{align*}
or rewritten in the combined phase space variables $z=(q,p)\in\Rb^{2d}$,
$$
\dot z = J^{-1} \nabla h(z), \qquad \text{where} \quad J= 
\bch\begin{pmatrix} 0 & -\Id_d \\ \Id_d & 0 \end{pmatrix}\ech, \ J^{-1}=-J,
$$
the classical observable $a(z(t))$ changes according to
\begin{equation}\label{wig:poisson}
\frac d{d t}\, a(z) = \nabla a(z) \cdot \dot z = \nabla a(z) \cdot J^{-1} \nabla h(z) =: \{ a, h \}(z),
\end{equation}
which is the {\it Poisson bracket} of the functions $a$ and $h$ at $z$. Like the commutator above, the Poisson bracket is skew-symmetric and satisfies the Jacobi identity. We note that for $h(q,p)=\tfrac12 |p|^2 + V(q)$, we have
$\{ a,h \} =  \nabla_q a \cdot p - \nabla_p a\cdot \nabla V$.

In the following, we prove a result of quantum-classical correspondence,  
in the sense that the scaled commutator of the Weyl operators $A=\op(a)$ and $H=\op(h)$ is approximately the Weyl-quantised Poisson bracket of $a$ and $h$,
\[
\tfrac{1}{\I\eps} [A,H] = \op(\{a,h\}) + \mathcal O(\eps^2).
\]
The arguments of our proof involve Taylor expansion and partial integration, 
up to a final estimate that relies on the Calder\'on--Vaillancourt theorem on 
the $L^2$-continuity of pseudo-differential operators, \bch see for example \cite[Theorem~2.8.1]{Mar02}. \ech

\begin{proposition}[commutator estimate]\label{prop:comm}
Let the phase space function $a:\Rb^{2d}\to\Rb$ that defines the observable $A=\op(a)$ and the potential 
$V:\Rb^d\to\Rb$ both be smooth functions such that their derivatives of order $\ge 3$ are all bounded. 
Then, there exists a constant $C<\infty$ such that for all $\eps>0$ and all Schwartz 
functions $\varphi:\Rb^d\to\C$
\[
\|\bigl(\tfrac{1}{\I\eps} [A,H] - \op(\{a,h\})\bigr)\varphi\| \le C \eps^2 \|\varphi\|.
\]
If $a$ or $V$ is a polynomial of degree at most $2$, then $C=0$. 
\end{proposition}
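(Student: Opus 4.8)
\textbf{Proof plan for Proposition~\ref{prop:comm}.}
The plan is to compute the integral kernel of the scaled commutator $\tfrac{1}{\I\eps}[A,H]$ explicitly using the Weyl calculus, expand the resulting phase-space difference quotient by Taylor's theorem, identify the leading term as $\op(\{a,h\})$, and control the remainder by the Calder\'on--Vaillancourt theorem. First I would recall that for two Weyl-quantised symbols $a$ and $h$ the composition $\op(a)\op(h)=\op(a\sharp h)$ is again a pseudo-differential operator with the Moyal product symbol
\[
(a\sharp h)(z) = \frac{1}{(2\pi\eps)^{2d}}\int_{\Rb^{4d}} a\bigl(z+\tfrac12 J u\bigr)\, h\bigl(z-\tfrac12 J v\bigr)\,\e^{\I u\cdot v/\eps}\,\D u\,\D v,
\]
or, written more symmetrically in a form convenient for Taylor expansion, with the oscillatory phase $\tfrac{1}{\eps}(\zeta-z)\wedge(\eta-z)$ where $\wedge$ is the symplectic form built from $J$. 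Since our $h(q,p)=\tfrac12|p|^2+V(q)$ has bounded derivatives of order $\ge 3$ (because $V$ does) and $a$ likewise, both $a$ and $h$ lie in a symbol class to which this calculus applies; the operators $A$, $H$ and their commutator are well-defined on the Schwartz space, which is preserved.

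The key computational step is to expand the Moyal product to second order in $\eps$. Writing $a\sharp h = ah + \tfrac{\eps}{2\I}\{a,h\} + \eps^2 r_{a,h}$ and similarly $h\sharp a = ah + \tfrac{\eps}{2\I}\{h,a\} + \eps^2 r_{h,a}$, one gets by antisymmetry of the Poisson bracket that
\[
\tfrac{1}{\I\eps}[A,H] = \tfrac{1}{\I\eps}\op(a\sharp h - h\sharp a) = \op(\{a,h\}) + \eps\,\op(r_{a,h}-r_{h,a}).
\]
Here I would carry out the Taylor expansion carefully: expanding $a(\zeta)$ and $h(\eta)$ around $z$ to second order, the zeroth-order term gives $ah$, the first-order term gives $\tfrac{\eps}{2\I}\{a,h\}$ after performing the Gaussian-type oscillatory integrals in $(\zeta-z,\eta-z)$ (which reproduce derivatives via integration by parts against the phase), the second-order terms in the expansion of $a\sharp h$ and of $h\sharp a$ \emph{coincide} — they involve $\nabla^2 a$ and $\nabla^2 h$ contracted symmetrically — and therefore cancel in the commutator; what genuinely survives at order $\eps^2$ in the commutator is a remainder built from \emph{third} derivatives of $a$ and of $h$ with an integral (Taylor) remainder. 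Since $h$ has bounded derivatives of order $\ge 3$ and likewise $a$, this remainder symbol $\widetilde r := r_{a,h}-r_{h,a}$ lies in a symbol class with bounded derivatives of all orders (the lower-order derivatives of $\widetilde r$ are harmless because the integrand only ever sees derivatives of order $\ge 3$ of $a$ and $h$, which are bounded, multiplied by polynomially-controlled Gaussian factors that integrate to constants after rescaling $(\zeta-z,\eta-z)=\sqrt\eps(\cdot)$).

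The final step is to apply the Calder\'on--Vaillancourt theorem (\cite[Theorem~2.8.1]{Mar02}): a Weyl operator whose symbol has all derivatives up to some finite order $N(d)$ bounded, uniformly in $\eps$, is bounded on $L^2(\Rb^d)$ with operator norm controlled by those derivative bounds, uniformly in $\eps$. Applying this to $\op(\widetilde r)$ gives $\|\op(\widetilde r)\varphi\|\le C'\|\varphi\|$ with $C'$ independent of $\eps$, hence $\|(\tfrac{1}{\I\eps}[A,H]-\op(\{a,h\}))\varphi\| = \eps\|\op(\widetilde r)\varphi\|\le C\eps^2\|\varphi\|$ — wait, more precisely the remainder is $\eps^2\op(\widetilde r)$ after the bookkeeping above, giving directly the $O(\eps^2)$ bound; I would track the power of $\eps$ via the substitution $(\zeta-z,\eta-z)=\sqrt\eps(\cdot)$ which converts each extra order of Taylor expansion into a half power of $\eps$, so that two orders past the Poisson bracket yield $\eps^2$. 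The claim $C=0$ when $a$ or $V$ is a polynomial of degree $\le 2$ is immediate: then the third derivatives of $a$ or of $h$ vanish identically, so $\widetilde r\equiv 0$ and the Moyal expansion terminates exactly at the Poisson-bracket term. The main obstacle is the careful identification that the order-$\eps^2$ terms cancel between $a\sharp h$ and $h\sharp a$ — i.e.\ verifying the antisymmetry structure of the Moyal expansion term by term — together with the bookkeeping needed to present $\widetilde r$ as a genuine symbol (with the Taylor integral remainder) to which Calder\'on--Vaillancourt legitimately applies uniformly in $\eps$; once that is set up, the rest is routine.
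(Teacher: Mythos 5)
Your plan is correct and would yield the stated bound, but it takes a genuinely different route from the paper's proof. You invoke the full Weyl--Moyal composition calculus: expand $a\sharp h$ and $h\sharp a$, observe that the even-order terms are symmetric under exchange of the two symbols and hence cancel in the commutator, so the first surviving correction beyond the Poisson bracket is the third-order term, of size $\eps^3$ before dividing by $\I\eps$, with a remainder built from third derivatives; Calder\'on--Vaillancourt then bounds it. The paper never sets up a composition formula. It exploits the special structure $h=\tfrac12|p|^2+V(q)$: the commutator with the kinetic part is computed \emph{exactly} by one integration by parts in the kernel representation, giving $\tfrac{1}{\I\eps}[\op(a),-\tfrac{\eps^2}{2}\Delta_x]=\op(\nabla_q a\cdot p)$ with no remainder whatsoever (the kinetic symbol is quadratic in $p$), and for the potential part it Taylor-expands $V(x)-V(y)$ about the midpoint $\tfrac12(x+y)$, where the second-order term vanishes by symmetry; the cubic Taylor remainder, after converting each factor $(x-y)$ into $\tfrac{\eps}{\I}\partial_p$ acting on the phase and integrating by parts in $p$, produces $\eps^2$ times an operator whose symbol involves only third $p$-derivatives of $a$ and third derivatives of $V$, to which Calder\'on--Vaillancourt applies directly. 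Your route buys generality --- it works for any Hamiltonian symbol $h$ with bounded derivatives of order $\ge 3$, not just kinetic-plus-potential --- at the price of having to justify the Moyal product and its asymptotic expansion with integral remainder for the unbounded symbols $h$ (quadratic in $p$) and $a$ (a priori of cubic growth), which is standard in weighted symbol classes but is precisely the ``bookkeeping'' you flag as the main obstacle. The paper's route is more elementary and self-contained, and its remainder is a manifestly bounded symbol from the outset. Both arguments give $C=0$ for quadratic $a$ or $V$ for the same reason: every term in the remainder carries a third derivative of $a$ or of $h$, which then vanishes identically.
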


\begin{proof}
We first compute the commutator with the Laplacian. We obtain by partial integration 
\begin{align*}
\op(a)\Delta_x\varphi(x) &= 
(2\pi\eps)^{-d} \int_{\Rb^{2d}} a(\tfrac{x+y}{2},p) \e^{\I p\cdot(x-y)/\eps}\Delta_y\varphi(y) \,\bch\D(p,y)\ech\\
&=(2\pi\eps)^{-d} \int_{\Rb^{2d}} \Delta_y \left(a(\tfrac{x+y}{2},p) \e^{\I p\cdot(x-y)/\eps}\right)\varphi(y) \,\bch \D(p,y)\ech.
\end{align*}
Now we exchange the variable of differentiation according to
\begin{align*}
&\Delta_y \left(a(\tfrac{x+y}{2},p) \e^{\I p\cdot(x-y)/\eps}\right)\\
&=\Delta_x \left(a(\tfrac{x+y}{2},p) \e^{\I p\cdot(x-y)/\eps}\right) 
- \tfrac{2\I}{\eps}\nabla_x a(\tfrac{x+y}{2},p)\cdot p \, \bch \e^{\I p\cdot(x-y)/\eps}\ech.
\end{align*}
This implies
\[
\op(a)\Delta_x\varphi(x) = \Delta_x\op(a)\varphi(x) - \tfrac{2\I}{\eps}\op( \nabla_q a \cdot p)\varphi(x),
\]
so we obtain the identity
\[
\tfrac{1}{\I\eps}[\op(a),-\tfrac{\eps^2}{2}\Delta_x] = \op(\nabla_q a\cdot p).
\]
Next we compute the commutator with the potential, 
\begin{align}\label{eq:commV}
&[\op(a),V]\varphi(x) \\\nonumber
&= -
(2\pi\eps)^{-d} \int_{\Rb^{2d}} a(\tfrac{x+y}{2},p) \left(V(x) -V(y)\right)\e^{\I p\cdot(x-y)/\eps}\varphi(y) \,
\bch \D(p,y)\ech.
\end{align}
Taylor expansion around the midpoint yields
\[
V(x) - V(y) = \nabla V(\tfrac{x+y}{2})^T (x-y) + \tfrac14 \sum_{|m|=3}  r_m(x,y) (x-y)^m,
\]
where for multi-indices $m=(m_1,\dots,m_d)\in\N_0^d$ with $|m|=\sum_{i=1}^d m_i =3$, we write $(x-y)^m=\prod_{i=1}^d (x_i-y_i)^{m_i}$ and
$\partial^mV =\partial_1^{m_1}\ldots\partial_d^{m_d}V$, and
\[
r_m(x,y) = \int_0^1 \tfrac12 (1-\theta)^{2} 
\left( \partial^m V(\tfrac{x+y}{2}+\theta \tfrac{x-y}{2}) 
- \partial^m V(\tfrac{x+y}{2}+\theta \tfrac{y-x}{2})\right) \bch \D\theta.\ech
\]
 We observe that
\[
(x-y) \e^{\I p\cdot(x-y)/\eps} = \tfrac{\eps}{\I}\,\nabla_p \e^{\I p\cdot(x-y)/\eps}.
\]
Inserting the first summand $\nabla V(\tfrac{x+y}{2})\cdot (x-y)$ back into the integral, we perform a partial integration and obtain
\begin{align*}
& (2\pi\eps)^{-d}  \int_{\Rb^{2d}} a(\tfrac{x+y}{2},p) \,\nabla V(\tfrac{x+y}{2})\cdot
\tfrac{\eps}{\I}\nabla_p\e^{\I p\cdot(x-y)/\eps}\varphi(y) \,\bch \D(p,y)\ech \\
&= \I\eps\, \op(\nabla_p a\cdot \nabla V) \varphi(x).
\end{align*}
Analogously, we obtain for the terms in the second summand
\begin{align*}
&(2\pi\eps)^{-d}  \int_{\Rb^{2d}} a(\tfrac{x+y}{2},p) \,r_m(x,y)\, (\tfrac{\eps}{\I}\partial_p)^m 
\e^{\I p\cdot(x-y)/\eps}\varphi(y) \,\bch \D(p,y)\ech\\
&= (2\pi\eps)^{-d}  \int_{\Rb^{2d}} (-\tfrac{\eps}{\I}\partial_p)^m a(\tfrac{x+y}{2},p) \,r_m(x,y)\, 
\e^{\I p\cdot(x-y)/\eps}\varphi(y) \,\bch \D(p,y).\ech
\end{align*}
To bound these integrals we use a Calder\'on--Vaillancourt theorem, see for example \cite[Theorem~2.8.1]{Mar02}.
These theorems prove boundedness for a large class of pseudo-differential operators that contains  
Weyl-quantised operators provided that their symbols have bounded derivatives: 
Let $b:\Rb^{3d}\to\C$ be a smooth function with bounded derivatives and $\widetilde\op(b)$ the operator with the integral kernel
\[
\kappa_b(x,y) = (2\pi\eps)^{-d}\int_{\Rb^d} b(x,y,p)\ \e^{\I p\cdot(x-y)/\eps} dp.
\]
Then, $\widetilde\op(b)$ is a bounded operator on $L^2(\Rb^d)$ with  
\begin{equation}\label{eq:cv}
\|\widetilde\op(b)\| \le C \sum_{|m|\le M} \|\partial^m b\|_\infty, 
\end{equation}
where the positive constants $C,M>0$ only depend on the dimension $d$. 
In our case, we set
\[
b(x,y,p) = \tfrac14 \sum_{|m|=3}  \partial^m_p a(\tfrac{x+y}{2},p) \,r_m(x,y)
\]
and summarise our calculations as
\[
\tfrac{1}{\I\eps}[\op(a),\op(h)] =  \op(\{a,h\}) + \eps^2\, \widetilde\op(b).
\]
Applying the Calder\'on--Vaillancourt estimate \eqref{eq:cv}, we conclude our proof.
\end{proof}

%\subsection{Quantum vs. classical evolution of observables}
The correspondence between commutators of operators and Poisson brackets of functions 
can be lifted to a similar relation between quantum and classical propagation of observables. 
For the formulation of this result, we consider the classical flow map $\Phi^t:\Rb^{2d}\to\Rb^{2d}$ of the Hamilton function 
$h$, that is, $\Phi^t(z_0)$  equals the solution $z(t)$ of the  Hamiltonian differential equations with initial value $z_0$.
The classical flow $\Phi^t$ thus satisfies
\[
\partial_t \Phi^t = J^{-1} \nabla h\circ\Phi^t,\qquad \Phi^t|_{t=0} = \Id_{2d},
\]
and for functions $a$ that evolve along this flow, we have from \eqref{wig:poisson} (noting energy conservation $h=h\circ\Phi^t$)
\[
\partial_t (a\circ\Phi^t) = \{a\circ\Phi^t,h\}.
\]
%\begin{lemma}[Poisson bracket and classical flow]\label{lem:poisson}
%Let $a:\Rb^{2d}\to\C$ be a smooth function. Then, for all $t\in\Rb$,  
%\[
%\partial_t (a\circ\Phi^t) = \{h,a\circ\Phi^t\}.
%\]
%\end{lemma}
%
%\begin{proof}
%We first calculate the time derivative, 
%\begin{align*}
%\partial_t (a\circ\Phi^t )&= (\nabla a\circ\Phi^t) \cdot\partial_t \Phi^t 
%= (\nabla a\circ\Phi^t) \cdot J^T (\nabla h\circ\Phi^t)\\
%&= (\nabla h\circ\Phi^t) \cdot J (\nabla a\circ\Phi^t).
%\end{align*}
%By conservation of energy $h$ and symplecticity of the Jacobian $D\Phi^t$,
%\begin{align*}
%\{h,a\circ\Phi^t\} &= \{h\circ\Phi^t,a\circ\Phi^t\}
%=\nabla (h\circ\Phi^t)^T J \,\nabla (a\circ\Phi^t)\\
%&=  (\nabla h\circ\Phi^t)^T \, D\Phi^t J (D\Phi^t)^T (\nabla a\circ\Phi^t)\\
%&= (\nabla h\circ\Phi^t) \cdot J\, (\nabla a\circ\Phi^t),
%\end{align*}
%which yields the result.
%\end{proof}
%Lemma~\ref{lem:poisson} in combination with the commutator estimate of 
Proposition~\ref{prop:comm} yields the following key result 
of quantum-classical correspondence for the evolution of expectation values.  

\begin{theorem}[Egorov's theorem]\label{theo:egorov}
We consider classically evolved expectation values. We assume the following:
\begin{enumerate}
%\item[1.] 
%%The momentum derivatives of order greater or equal than one 
%The derivatives of the classical flow $\Phi^t$  with respect to the initial values 
%are bounded, for all orders $m\ge 1$, by $c_m\,\e^{\lambda t}$ for all times $t\in[0,\overline t]$. 
\item[1.] The potential function $V$ is smooth, and its derivatives of order $\ge 2$ are all bounded.
\item[2.] The function $a$ that defines the observable $A=\op(a)$ is smooth and 
bounded together with all its derivatives. 
\end{enumerate}
Then, the error between the classically evolved expectation value and that of the 
 solution $\psi(t)$ of the Schr\"odinger equation with Schwartz class initial data of unit norm satisfies
\[
\left| \langle A \rangle_{\psi(t)} - 
\langle \op(a\circ\Phi^t)\rangle_{\psi(0)} \right| \le c\,t\, \eps^2,
\qquad 0\le t\le \overline t, 
\]
where $c$ depends on the derivative bounds 
of $a$ and $V$ and on $\bar t$ but is independent of  $\psi(0)$, $\eps$ and $t\le \bar t$.
\end{theorem}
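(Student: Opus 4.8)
The plan is to compare the classically evolved observable $\op(a\circ\Phi^t)$ with the Heisenberg-evolved quantum observable $A(t)=U(t)^*AU(t)$, where $U(t)=\e^{-\I tH/\eps}$, and show that the expectation values of these two operators differ by $O(t\eps^2)$. First I would introduce the operator-valued defect $\calR(t) = U(t)\,\op(a\circ\Phi^t)\,U(t)^* - A$ and compute its time derivative. Using $\partial_t U(t)^* = \tfrac{1}{\I\eps}HU(t)^*$ (and the conjugate for $U(t)$), together with the transport equation $\partial_t(a\circ\Phi^t) = \{a\circ\Phi^t,h\}$ from \eqref{wig:poisson}, one finds
\[
\tfrac{\D}{\D t}\bigl( U(t)\,\op(a\circ\Phi^t)\,U(t)^*\bigr) = U(t)\Bigl( \op(\{a\circ\Phi^t,h\}) - \tfrac{1}{\I\eps}[\op(a\circ\Phi^t),H]\Bigr)U(t)^*,
\]
so the integrand is exactly the commutator defect controlled by Proposition~\ref{prop:comm}. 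Integrating from $0$ to $t$ and using $U$ unitary gives, for any Schwartz $\varphi$,
\[
\bigl\| \bigl( U(t)\,\op(a\circ\Phi^t)\,U(t)^* - A\bigr)\varphi\bigr\| \le \int_0^t \bigl\| \bigl(\tfrac{1}{\I\eps}[\op(a\circ\Phi^s),H] - \op(\{a\circ\Phi^s,h\})\bigr)\, U(s)^*\varphi\bigr\|\,\D s.
\]

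Next I would apply Proposition~\ref{prop:comm} to the symbol $a\circ\Phi^s$ in place of $a$: its hypothesis requires that the derivatives of $a\circ\Phi^s$ of order $\ge 3$ be bounded, uniformly for $s\in[0,\bar t]$. This is where the bulk of the work sits, and I expect it to be the main obstacle. The boundedness of $a$ and all its derivatives is assumed, but $\Phi^s$ has derivatives that may grow; one must show that the composition $a\circ\Phi^s$ nonetheless has derivatives bounded uniformly on $[0,\bar t]$. The key point is that by assumption~1 the potential $V$ has bounded derivatives of order $\ge 2$, hence $\nabla^2 V$ and all higher derivatives are bounded, so the variational equations for the Jacobian $D\Phi^s$ and for all higher derivatives $\partial^\alpha\Phi^s$ are linear (or polynomially nonlinear in lower-order derivatives) with bounded coefficients; a Gronwall argument then yields bounds on $\sup_{s\le\bar t}|\partial^\alpha\Phi^s(z)|$ that are uniform in $z$ (though they may depend on $\bar t$). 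Combined with Fa\`a di Bruno's formula and the boundedness of all derivatives of $a$, this gives $\sup_{s\le\bar t}\|\partial^\beta(a\circ\Phi^s)\|_\infty < \infty$ for every $\beta$, and the constant in Proposition~\ref{prop:comm} applied to $a\circ\Phi^s$ can be taken uniform in $s\le\bar t$. (Note $a$ itself need not be polynomially bounded in the growth sense here; boundedness of $a$ and its derivatives, together with the uniform derivative bounds on $\Phi^s$, suffice for the Calder\'on--Vaillancourt step inside the proof of Proposition~\ref{prop:comm}.)

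With the uniform constant $C$ from Proposition~\ref{prop:comm} in hand, the integrand above is bounded by $C\eps^2\|U(s)^*\varphi\| = C\eps^2\|\varphi\|$, so $\|(U(t)\op(a\circ\Phi^t)U(t)^* - A)\varphi\| \le Ct\eps^2\|\varphi\|$. Finally I would conclude: writing $\psi(0)$ for the unit-norm Schwartz initial datum and $\psi(t)=U(t)\psi(0)$,
\[
\langle A\rangle_{\psi(t)} - \langle \op(a\circ\Phi^t)\rangle_{\psi(0)} = \langle \psi(t)\mid A\psi(t)\rangle - \langle \psi(0)\mid \op(a\circ\Phi^t)\psi(0)\rangle = \langle \psi(0)\mid \bigl( A - U(t)^*\op(a\circ\Phi^t)U(t)\bigr)\psi(0)\rangle,
\]
so by Cauchy--Schwarz and the operator bound just derived (applied with $U(t)^*$ in the roles above, or equivalently by taking adjoints), $\bigl|\langle A\rangle_{\psi(t)} - \langle \op(a\circ\Phi^t)\rangle_{\psi(0)}\bigr| \le Ct\eps^2\|\psi(0)\|^2 = Ct\eps^2$. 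The constant $c=C$ depends only on the derivative bounds of $a$ and $V$ and on $\bar t$, as claimed. If $a$ or $V$ is a polynomial of degree $\le 2$ then $\Phi^s$ is affine, $a\circ\Phi^s$ stays a polynomial of degree $\le 2$, and the defect vanishes identically, giving exact quantum-classical correspondence.
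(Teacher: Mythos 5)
Your proof is correct and follows essentially the same route as the paper's: interpolate between the Heisenberg-evolved and the classically transported observable, identify the time derivative of the interpolant as the commutator defect, and apply Proposition~\ref{prop:comm} to the symbol $a\circ\Phi^s$, whose derivatives are bounded uniformly on $[0,\bar t]$ by exactly the Gronwall/Fa\`a di Bruno argument you sketch (the paper asserts this in one sentence). Only trivial slips: the sign in $\partial_t U(t)^*$, and the operator in your final display should be $U(t)^*AU(t)-\op(a\circ\Phi^t)$ rather than $A-U(t)^*\op(a\circ\Phi^t)U(t)$ --- but this has the same norm as your $U(t)\op(a\circ\Phi^t)U(t)^*-A$ by unitary conjugation, so the bound goes through unchanged.
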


\begin{proof}
We first compare quantum and classical evolution,
\begin{align*}
&\e^{\I tH/\eps}\,\op(a)\,\e^{-\I tH/\eps} - \op(a\circ\Phi^t)\\
&= \int_0^t \tfrac{d}{ds} \left( \e^{\I sH/\eps}\,\op(a\circ\Phi^{t-s})\,\e^{-\I sH/\eps}\right) \D s\\
&= \int_0^t \e^{\I sH/\eps}
\left( \tfrac{1}{\I\eps}[\op(a\circ\Phi^{t-s}),H]-\partial_t\,\op(a\circ\Phi^{t-s})\right)\e^{-\I sH/\eps}\D s,
\end{align*}
where we note that
$$
\partial_t\,\op(a\circ\Phi^{t-s}) = \op( \partial_t(a\circ\Phi^{t-s}))= \op (\{a\circ\Phi^{t-s},h\}).
$$
This implies
\begin{align*}
&\langle \psi(t)\,|\, A \psi(t) \rangle - \langle \psi(0)\,|\, \op(a\circ\Phi^t) \,\psi(0)\rangle \\
&= 
\int_0^t \langle \psi(s)\,|\,
\left( \tfrac{1}{\I\eps}[\op(a\circ\Phi^{t-s}),H]-\op (\{a\circ\Phi^{t-s},h\})\right)\psi(s)\rangle \D s.
\end{align*}
Since the Schr\"odinger evolution is norm-conserving, we then obtain
\begin{align*}
&\left|\langle \psi(t)\,|\, A \psi(t) \rangle - \langle \psi(0)\,|\, \op(a\circ\Phi^t) \,\psi(0)\rangle \right|\\
&\le 
\int_0^t \|\left( \tfrac{1}{\I\eps}[\op(a\circ\Phi^{t-s}),H]-\op (\{a\circ\Phi^{t-s},h\})\right)\psi(s)\| \D s.
\end{align*}
The assumptions on $a$ and $V$ ensure that $a\circ\Phi^t$ has bounded derivatives of all orders uniformly on bounded time intervals.
By the commutator estimate of Proposition~\ref{prop:comm}, there exists a constant $c$ such that for $0\le s \le t \le \bar t$
$$%\begin{align*}
\| \left( \tfrac{1}{\I\eps}[\op(a\circ\Phi^{t-s}),H]-\op (\{a\circ\Phi^{t-s},h\})\right)\psi(s)\| 
\le c \,\eps^2.
$$%\end{align*}
Inserting this bound into the above inequality proves the result.
\end{proof}

\begin{remark}
The above argument not only proves an error bound for expectation values but also 
provides an evolution estimate in operator norm. Revisiting 
the proof, we observe, that under the assumptions of Theorem~\ref{theo:egorov},
\begin{equation}\label{wig:egorov-norm}
\left\|\e^{\I tH/\eps}\,\op(a)\,\e^{-\I tH/\eps} - \op(a\circ\Phi^t)\right\| \ \le\  c\, t\, \eps^2,
\qquad 0\le t\le \overline t.
\end{equation}
\end{remark}

\bigskip
\begin{corollary}[norm and energy conservation]
The approximation provided by Egorov's theorem conserves norm and energy. 
\end{corollary}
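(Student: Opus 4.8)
The plan is to observe that the two conservation statements are just two special choices of the classical observable $a$, for which the Egorov approximation $\langle\op(a\circ\Phi^t)\rangle_{\psi(0)}$ to $\langle A\rangle_{\psi(t)}$ is not merely $O(\eps^2)$-accurate but exactly independent of $t$.

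First I would treat norm conservation. Take $a\equiv 1$, so that $A=\op(a)=\Id$ and $\langle A\rangle_{\psi(t)}=\|\psi(t)\|^2$. Since $\Phi^t$ maps $\Rb^{2d}$ into itself, the composition $a\circ\Phi^t\equiv 1$ is again the constant function, hence $\op(a\circ\Phi^t)=\Id$ and the Egorov approximation equals $\langle\op(a\circ\Phi^t)\rangle_{\psi(0)}=\|\psi(0)\|^2$, which does not depend on $t$. (This is the case $C=0$ of Proposition~\ref{prop:comm}, so the approximation in fact coincides with the exact, norm-conserving, value.)

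Next I would treat energy conservation. Take $a=h$ with $h(q,p)=\tfrac12|p|^2+V(q)$, so that $A=\op(h)=H$ in the extended sense of Weyl quantisation recalled above, and $\langle A\rangle_{\psi(t)}=\langle H\rangle_{\psi(t)}$. The key point is that the classical Hamilton function is constant along its own flow: as already noted before Theorem~\ref{theo:egorov}, energy conservation gives $h\circ\Phi^t=h$ for all $t$ (equivalently $\partial_t(h\circ\Phi^t)=\{h\circ\Phi^t,h\}=\{h,h\}=0$). Therefore $\op(h\circ\Phi^t)=\op(h)=H$, and the Egorov approximation equals $\langle H\rangle_{\psi(0)}$, independent of $t$.

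The only point requiring care is that neither $1$ nor $h$ is a Schwartz function, so $\op$ must be read in the extended sense for polynomially bounded symbols mentioned after the definition of $\op(a)$; this is no real obstacle, and in particular one does not need the commutator estimate of Proposition~\ref{prop:comm}, since for these two symbols the identities $a\circ\Phi^t=a$ and $\op(a\circ\Phi^t)=\op(a)$ hold exactly.
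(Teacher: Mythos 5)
Your proof is correct and follows essentially the same route as the paper: choosing $a\equiv 1$ and $a=h$, and noting that $1\circ\Phi^t=1$ and, by classical energy conservation, $h\circ\Phi^t=h$, so the classically evolved expectation values are exactly constant in time. The extra remark about reading $\op$ in the extended sense for the non-Schwartz symbols $1$ and $h$ is a reasonable point of care that the paper leaves implicit.
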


\begin{proof} Norm and energy are associated with the observables $a=1$ and $a=h$, respectively. 
We have
\[
\frac{d}{dt}\left\langle \op(1\circ\Phi^t)\right\rangle_{\psi(0)} = 0\quad\text{and}\quad
\frac{d}{dt}\left\langle \op(h\circ\Phi^t)\right\rangle_{\psi(0)} = 0.
\]
\end{proof}

\subsection{Wigner functions}\label{subsec:wigner}

Egorov's theorem turns into a computational method for the simulation of expectation values when 
describing the inner products involving Weyl-quantised operators $A=\op(a)$ as phase space averages 
weighted by the wave function's Wigner function. Indeed, we observe that for Schwartz functions $a:\Rb^{2d}\to\Rb$ 
and $\psi\in L^2(\Rb^d)$, the measure-preserving change of variables 
$\Rb^{2d}\to\Rb^{2d}$, $(q,y)\mapsto(q+\tfrac12 y,q-\tfrac12 y)$ 
yields
\begin{align*}
\langle A\rangle_\psi 
&= \int_{\Rb^d} \overline{\psi}(q)\, \op(a)\psi(q) \D q\\
&= (2\pi\eps)^{-d} \int_{\Rb^{3d}} \overline{\psi}(q) \,
a(\tfrac{1}{2}(q+y),p) \,\e^{\I p\cdot (q-y)/\eps} \psi(y)\D(q,p,y)\\
&= \int_{\Rb^{2d}} a(q,p)\, 
(2\pi\eps)^{-d} \int_{\Rb^d} \overline{\psi}(q+\tfrac{1}{2}y) \, \psi(q-\tfrac{1}{2}y)
\, \e^{\I p\cdot y/\eps} \D y \D(q,p).
\end{align*}
This simple calculation has far-reaching consequences, and so we reformulate the result as a combined theorem and definition.

\begin{theorem}[Wigner function]\label{theo:wigner function} Let $A=\op(a)$ be the Weyl quantisation of a Schwartz function $a:\Rb^{2d}\to\Rb$.
Then, the expectation value of the observable $A$ with respect to  a wave function $\psi\in L^2(\Rb^d)$ equals
\begin{equation} \label{wig:av-wig}
\langle A\rangle_\psi = \int_{\Rb^{2d}} a(z)\, \calW_\psi(z)\D z,
\end{equation}
where the {\it Wigner function} $\calW_\psi:\Rb^{2d}\to\Rb$ is given by
\[
\calW_\psi(q,p) = (2\pi\eps)^{-d} \int_{\Rb^d}  \overline{\psi}(q+\tfrac12 y)\psi(q-\tfrac12 y) \,
\e^{\I p\cdot y/\eps} \D y.
\]
\end{theorem}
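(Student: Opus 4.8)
The statement is essentially the displayed computation preceding it, so the plan is to present that computation carefully and then dispose of the measure-theoretic fine print. First I would start from the integral-kernel definition \eqref{def:op} of $\op(a)$ and unfold the expectation value; since the $L^2$ inner product is conjugate linear in its first slot,
\[
\langle A\rangle_\psi = \int_{\Rb^d}\overline{\psi(x)}\,(\op(a)\psi)(x)\,\D x
= (2\pi\eps)^{-d}\int_{\Rb^{3d}}\overline{\psi(x)}\,a\bigl(\tfrac12(x+y),p\bigr)\,\e^{\I p\cdot(x-y)/\eps}\,\psi(y)\,\D(x,y,p).
\]
This first step is pure bookkeeping: substitute the explicit formula for $\kappa_a$ and read $\langle A\rangle_\psi$ as a single threefold integral.

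The second step is the linear change of variables $(x,y)\mapsto(q,u)$ given by $q=\tfrac12(x+y)$ and $u=x-y$, \ie\ $x=q+\tfrac12 u$ and $y=q-\tfrac12 u$. This is a bijection of $\Rb^{2d}$ with Jacobian determinant $1$, hence measure preserving; under it the midpoint argument of $a$ collapses to $q$, the phase becomes $\e^{\I p\cdot u/\eps}$, and the integrand factorises as $a(q,p)\,\overline{\psi(q+\tfrac12 u)}\,\psi(q-\tfrac12 u)\,\e^{\I p\cdot u/\eps}$. The third step is then to invoke Fubini to pull the factor $a(q,p)$ outside and carry out the $u$-integration first, giving
\[
\langle A\rangle_\psi = \int_{\Rb^{2d}} a(q,p)\,\Bigl((2\pi\eps)^{-d}\int_{\Rb^d}\overline{\psi(q+\tfrac12 u)}\,\psi(q-\tfrac12 u)\,\e^{\I p\cdot u/\eps}\,\D u\Bigr)\,\D(q,p);
\]
renaming the dummy variable $u$ as $y$ identifies the inner bracket as precisely the claimed $\calW_\psi(q,p)$, which yields \eqref{wig:av-wig}.

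The only genuine analytic issue --- and hence the step I expect to be the real obstacle --- is the justification of Fubini, since for general $\psi\in L^2(\Rb^d)$ neither the oscillatory $p$-integral defining $\calW_\psi$ nor the threefold integral above converges absolutely. I would settle this by first proving the identity for $\psi$ in the Schwartz class: then $(q,u)\mapsto\overline{\psi(q+\tfrac12 u)}\,\psi(q-\tfrac12 u)$ is itself a Schwartz function and $a$ is bounded, so the threefold integral is absolutely convergent and every manipulation above is legitimate. The extension to arbitrary $\psi\in L^2$ then follows by density, using that $\psi\mapsto\langle A\rangle_\psi$ is continuous on $L^2$ (because $\op(a)$ is a bounded operator for Schwartz symbols $a$, \eg\ by the Calder\'on--Vaillancourt bound invoked earlier) and that $\psi\mapsto\calW_\psi$ is continuous from $L^2(\Rb^d)$ into $L^2(\Rb^{2d})$ while $a\in L^2(\Rb^{2d})$. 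An alternative that avoids density is to note that for a.e.\ fixed $q$ the map $u\mapsto\overline{\psi(q+\tfrac12 u)}\,\psi(q-\tfrac12 u)$ lies in $L^1(\Rb^d)$ by Cauchy--Schwarz, so $\calW_\psi(q,\cdot)$ is a well-defined bounded continuous function, and then to approximate $a$ by compactly supported symbols before exchanging the order of integration.
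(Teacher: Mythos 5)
Your proposal is correct and follows essentially the same route as the paper: the paper's argument is precisely the kernel-unfolding followed by the measure-preserving change of variables $(q,y)\mapsto(q+\tfrac12 y,\,q-\tfrac12 y)$ and identification of the inner $y$-integral as $\calW_\psi$. Your additional discussion of how to justify Fubini (Schwartz class first, then density, or Cauchy--Schwarz in $u$ for fixed $q$) is a sound way of filling in a step the paper leaves implicit, but it does not change the approach.
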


%This calculation motivates the following definition. 
%
%\begin{definition}[Wigner function]
%For a function $\psi\in L^2(\Rb^d)$ we define its Wigner function $\calW_\psi:\Rb^{2d}\to\Rb$, 
%\[
%\calW_\psi(q,p) = (2\pi\eps)^{-d} \int_{\Rb^d}  \overline{\psi}(q+\tfrac12 y)\psi(q-\tfrac12 y) 
%\e^{\I p\cdot y/\eps}\D y.
%\]
%\end{definition}
%
%With this definition, we may write the previous expectation value as a phase space average weighted by the 
%Wigner function according to the key identity
%\begin{equation} \label{wig:av-wig}
%\langle A\rangle_\psi = \int_{\Rb^{2d}} a(z)\, \calW_\psi(z)\D z.
%\end{equation}

\medskip
The values of Wigner functions are real, but not always positive, since for example 
$\psi(x) = -\psi(-x)$ implies  \bch $\calW_\psi(0) = -(\pi\eps)^{-d} \|\psi\|^2\le 0$\ech.
One can even fully characterize the square-integrable functions with non-negative Wigner function by 
\[
\calW_\psi \ge 0 \quad \text{if and only if}\quad  \psi\ \text{is a complex-valued Gaussian};
\]
see \cn{Hud74} and \cn{SotC83}. In particular, the Wigner function of a complex-valued Gaussian wave packet 
is a real-valued phase space Gaussian as Proposition~\ref{lem:WG} later in this section will prove.

\medskip
Despite its general lack of positivity, the Wigner function has several properties of a simultaneous probability density with respect to positions and momenta. For $\psi\in L^2(\Rb^d)$, we let 
\[
\calF_\eps \psi(p) = (2\pi\eps)^{-d/2} \int_{\Rb^d} \psi(x) \, \e^{-\I x\cdot p/\eps} \D x,\qquad p\in\Rb^d,
\]
denote the scaled Fourier transform of the function $\psi$. 

\begin{lemma}[marginals of the Wigner function] Let $\psi\in L^2(\Rb^d)$. Then, for all $(q,p)\in\Rb^{2d}$, 
\[
|\psi(q)|^2 = \int_{\Rb^d} \calW_\psi(q,p) \D p\quad\text{and}\quad
|\calF_\eps\psi(p)|^2 = \int_{\Rb^d} \calW_\psi(q,p) \D q.
\]
In particular, 
\[
\|\psi\|^2 = \int_{\Rb^{2d}} \calW_\psi(z)\D z.
\]
\end{lemma}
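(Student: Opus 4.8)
The plan is to reduce all three identities to a partial (semiclassically scaled) Fourier transform combined with Fubini's theorem, carrying out the computation first for $\psi$ in the Schwartz class and then passing to general $\psi\in L^2(\Rb^d)$ by density. As a preliminary observation, for each fixed $q\in\Rb^d$ the function $F_q(y):=\overline\psi(q+\tfrac12 y)\,\psi(q-\tfrac12 y)$ belongs to $L^1(\Rb^d)$, since by the Cauchy--Schwarz inequality and the translation invariance of Lebesgue measure $\|F_q\|_{L^1}\le 2^d\|\psi\|^2$; hence $\calW_\psi(q,p)$ is defined by an absolutely convergent integral for every $(q,p)$. When $\psi\in\calS(\Rb^d)$ the function $(x,x')\mapsto\overline\psi(x)\psi(x')$ lies in $\calS(\Rb^{2d})$, and $\calW_\psi$ is obtained from it by the linear substitution $(x,x')=(q+\tfrac12 y,\,q-\tfrac12 y)$ followed by a partial Fourier transform in $y$; thus $\calW_\psi\in\calS(\Rb^{2d})$ and all the integral manipulations below are legitimate.

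For the second marginal I would expand $|\calF_\eps\psi(p)|^2=\calF_\eps\psi(p)\,\overline{\calF_\eps\psi(p)}$ as a double integral over $\Rb^d\times\Rb^d$, perform the measure-preserving change of variables $(x,x')=(q-\tfrac12 y,\,q+\tfrac12 y)$, and identify the result, after Fubini, with $\int_{\Rb^d}\calW_\psi(q,p)\,\D q$. For the first marginal I would fix $q$ and invoke the Fourier inversion formula for the Schwartz function $F_q$, in the form
\[
\int_{\Rb^d}\Big((2\pi\eps)^{-d}\int_{\Rb^d}F_q(y)\,\e^{\I p\cdot y/\eps}\,\D y\Big)\,\D p = F_q(0),
\]
whose left-hand side is exactly $\int_{\Rb^d}\calW_\psi(q,p)\,\D p$ and whose right-hand side equals $\overline\psi(q)\psi(q)=|\psi(q)|^2$. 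The third identity then follows by integrating the first one over $q$ (Fubini being available since $\calW_\psi\in\calS(\Rb^{2d})$) together with $\|\psi\|^2=\int_{\Rb^d}|\psi(q)|^2\,\D q$; equivalently, it is the integral over $p$ of the second marginal combined with the Plancherel identity $\|\calF_\eps\psi\|=\|\psi\|$.

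Finally I would extend the identities to arbitrary $\psi\in L^2(\Rb^d)$ by density of $\calS(\Rb^d)$, using that $\psi\mapsto|\psi|^2$ and $\psi\mapsto|\calF_\eps\psi|^2$ are continuous from $L^2$ into $L^1$, so that the two marginal identities persist as equalities of $L^1$-functions and the last one as an equality of numbers whenever $\calW_\psi$ is integrable. The only genuine delicacy, and the step I expect to require the most care in the write-up, is exactly this last passage: for a general $L^2$ function $\calW_\psi$ need not be integrable over $\Rb^{2d}$, nor $p\mapsto\calW_\psi(q,p)$ over $\Rb^d$, so one must be explicit that the marginal integrals are to be read in the $L^1$-density (respectively distributional) sense rather than as absolutely convergent integrals. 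The Fourier-analytic core of the lemma, by contrast, is the entirely routine computation sketched above.
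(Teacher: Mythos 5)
Your proof is correct and follows essentially the same route as the paper: the position marginal is obtained by Fourier inversion of $y\mapsto\overline\psi(q+\tfrac12 y)\psi(q-\tfrac12 y)$ evaluated at $y=0$, and the total integral follows by integrating in $q$. The only (minor) difference is that you compute the momentum marginal directly by expanding $|\calF_\eps\psi(p)|^2$ and changing variables, whereas the paper deduces it from the first marginal via the symmetry $\calW_{\calF_\eps\psi}(q,p)=\calW_\psi(-p,q)$ of Lemma~\ref{lem:FW}; both are routine, and your added care about the Schwartz-to-$L^2$ density passage is a reasonable supplement the paper leaves implicit.
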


\begin{proof} The formula for the position density follows by Fourier inversion:
with $\sigma_q(y) =   \overline{\psi}(q+\tfrac12 y)\psi(q-\tfrac12 y) $,
\begin{align*}
\int_{\Rb^d} \calW_\psi(q,p) \D p 
&=  (2\pi\eps)^{-d/2}\int_{\Rb^{d}}  (2\pi\eps)^{-d/2}\int_{\Rb^{d}} \sigma_q(y) \,
\e^{\I p\cdot y/\eps} \D y \D p \\
&=  (2\pi\eps)^{-d/2}\int_{\Rb^{d}} (\calF_\eps^{-1} \sigma_q)(p) \D p = \sigma_q(0)=
|\psi(q)|^2.
\end{align*}
The formula for the momentum density then follows from the following lemma, in which the roles of $q$ and $p$ are exchanged by Fourier transformation.
%Using Lemma~\ref{lem:FW}, we also obtain
%\begin{align*}
%\int_{\Rb^d} \calW_\psi(q,p) dq = \int_{\Rb^d} \calW_{\calF_\eps\psi}(p,-q) dq 
% = |\calF_\eps\psi(p)|^2.
%\end{align*}
\end{proof}

%The duality between Wigner transformation and Weyl quantisation is complemented by 
%a simple relation to the Fourier transform. 

\begin{lemma}[Fourier and Wigner transformation]\label{lem:FW}
%For $\psi\in L^2(\Rb^d)$, we denote by 
%\[
%\calF_\eps \psi(p) = (2\pi\eps)^{-d/2} \int_{\Rb^d} \psi(x) \, \e^{-\I x\cdot p/\eps} \D x,\qquad p\in\Rb^d,
%\]
%the scaled Fourier transform of the function $\psi$. 
For $(q,p)\in\Rb^{2d}$,
\[
\calW_{\calF_\eps\psi}(q,p) = \calW_{\psi}(-p,q)\quad\text{and}\quad
\calW_{\calF_\eps^{-1}\psi}(q,p) = \calW_{\psi}(p,-q).
\]
\end{lemma}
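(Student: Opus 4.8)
The plan is to prove the two identities by a direct change of variables in the defining integral of the Wigner function, using the inversion formula for the scaled Fourier transform that was already recorded just before the lemma. First I would write out the Wigner function of $\calF_\eps\psi$ at a point $(q,p)$:
\[
\calW_{\calF_\eps\psi}(q,p) = (2\pi\eps)^{-d} \int_{\Rb^d} \overline{\calF_\eps\psi}(q+\tfrac12 y)\,\calF_\eps\psi(q-\tfrac12 y)\,\e^{\I p\cdot y/\eps}\D y.
\]
Then I would substitute the integral formulas for $\calF_\eps\psi(q-\tfrac12 y)$ and $\overline{\calF_\eps\psi}(q+\tfrac12 y) = \overline{(2\pi\eps)^{-d/2}\int \psi(x)\e^{-\I x\cdot(q+y/2)/\eps}\D x}$, so that the right-hand side becomes a triple integral over $x,x',y\in\Rb^d$ with exponential weight
\[
(2\pi\eps)^{-2d}\iiint \overline{\psi}(x)\,\psi(x')\,\e^{\I x\cdot(q+y/2)/\eps}\,\e^{-\I x'\cdot(q-y/2)/\eps}\,\e^{\I p\cdot y/\eps}\,\D(x,x',y).
\]

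The key step is to carry out the $y$-integration: the $y$-dependent part of the phase is $\e^{\I y\cdot(x/2 + x'/2 + p)/\eps}$, and integrating over $y$ produces $(2\pi\eps)^{d}\,\delta(p + \tfrac12(x+x'))$ in the sense of Fourier inversion, i.e. it forces $x' = -2p - x$. Substituting this back collapses the expression to a single integral over $x$; after writing $x = -p + \tfrac12 u$ (so that $x' = -p - \tfrac12 u$) the remaining phase is $\e^{\I u\cdot q/\eps}$ up to a check of signs, and one reads off
\[
\calW_{\calF_\eps\psi}(q,p) = (2\pi\eps)^{-d}\int_{\Rb^d} \overline\psi(-p+\tfrac12 u)\,\psi(-p-\tfrac12 u)\,\e^{\I q\cdot u/\eps}\D u = \calW_\psi(-p,q),
\]
which is exactly the first identity. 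The second identity follows either by the same computation with $\calF_\eps^{-1}$ in place of $\calF_\eps$ (only the sign of the exponent changes, flipping $(-p,q)$ to $(p,-q)$), or more cheaply by applying the first identity to $\varphi=\calF_\eps^{-1}\psi$ and using $\calF_\eps\varphi=\psi$ together with the involution $(q,p)\mapsto(-p,q)$ applied twice; I would present the short argument via $\calF_\eps^{-1}$ directly to keep signs transparent.

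The main obstacle is purely bookkeeping: keeping the complex conjugate on the correct factor, tracking the sign of every exponent through the two Fourier substitutions, and making the formal $\delta$-function manipulation rigorous. To avoid the distributional shortcut one can instead recognise, after substituting the Fourier integral for just one factor, that the inner $y$-integral is itself an inverse scaled Fourier transform evaluated at a shifted point, and then apply Fubini (justified since for $\psi$ in a dense class such as Schwartz functions all integrands are absolutely integrable, and the general $L^2$ case follows by the continuity of $\psi\mapsto\calW_\psi$ as a map into $L^2(\Rb^{2d})$, which is a standard consequence of the unitarity properties already used in this section). No genuinely new idea is needed beyond the Fourier inversion formula and a linear change of variables.
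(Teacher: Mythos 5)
Your proposal is correct and follows essentially the same route as the paper: both expand the two factors $\calF_\eps\psi(q\mp\tfrac12 y)$ as Fourier integrals, perform the $y$-integration by Fourier inversion to force $x'=-2p-x$, and then rescale the remaining variable to read off $\calW_\psi(-p,q)$, with the second identity obtained analogously. The sign bookkeeping in your sketch checks out against the paper's computation.
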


\begin{proof} We have 
\begin{align*}
&\calW_{\calF_\eps\psi}(q,p) \\
&= 
(2\pi\eps)^{-2d} \int_{\Rb^{3d}} \overline\psi(x) \psi(\widetilde x) 
\e^{\I x\cdot(q+y/2)/\eps - \I\widetilde x\cdot(q-y/2)/\eps + \I p\cdot y/\eps}\D(x,\widetilde x,y).
\end{align*}
We write
\[
x\cdot(q+\tfrac12y)-\widetilde x\cdot(q-\tfrac12y) +p\cdot y = 
q\cdot(x-\widetilde x) + y\cdot(\tfrac12 x + \tfrac12\widetilde x + p)
\]
such that by Fourier inversion
\begin{align*}
&\calW_{\calF_\eps\psi}(q,p) \\
&= 
(2\pi\eps)^{-2d}\, 2^d \int_{\Rb^{3d}} \overline\psi(x) \psi(2\widetilde x) 
\e^{\I q\cdot(x-2\widetilde x)/\eps + y\cdot(x/2 + \widetilde x + p)}
\D(x,\widetilde x,y)\\
&=
(2\pi\eps)^{-d}\,2^d \int_{\Rb^{d}} \overline\psi(x) \psi(-2p-x) 
\e^{\I q\cdot(2x+2p)/\eps} \D x\\
&=
(2\pi\eps)^{-d} \int_{\Rb^{d}} \overline\psi(-p+\tfrac12 y) \psi(-p-\tfrac12 y) 
\e^{\I q\cdot y/\eps}\D y \ =\ \calW_\psi(-p,q).
\end{align*}
The second formula is proved in the same way.
\end{proof}

Wigner transformation turns the position and the momentum operator into quadratic 
operators as follows. 

\begin{lemma}[position and momentum operator] \label{wig:lem:pos-mom}
For any Schwartz function $\psi:\Rb^d\to\C$ and for any $j=1,\ldots,d$,
\[
\calW_{\widehat q_j\psi} = (q_j^2+\tfrac{\eps^2}{4}\partial_{p_j}^2)\calW_\psi\quad\text{and}\quad
\calW_{\widehat p_j\psi} = (p_j^2+\tfrac{\eps^2}{4}\partial_{q_j}^2)\calW_\psi.
\]
\end{lemma}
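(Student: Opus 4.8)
The plan is to prove the first identity by a direct computation in the defining integral of the Wigner function, and then to deduce the second one essentially for free by conjugating with the scaled Fourier transform and invoking the Fourier--Wigner symmetry of Lemma~\ref{lem:FW}.

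For $\widehat q_j$, I substitute $(\widehat q_j\psi)(x)=x_j\psi(x)$ into the definition of $\calW_{\widehat q_j\psi}$. The integrand then carries the extra factor $(q_j+\tfrac12 y_j)(q_j-\tfrac12 y_j)=q_j^2-\tfrac14 y_j^2$. The $q_j^2$ part pulls out of the integral and gives $q_j^2\,\calW_\psi$. For the $-\tfrac14 y_j^2$ part I use the elementary identity $y_j^2\,\e^{\I p\cdot y/\eps}=-\eps^2\,\partial_{p_j}^2\e^{\I p\cdot y/\eps}$; since $\psi$ is Schwartz, differentiation under the integral sign is legitimate and the $\partial_{p_j}^2$ moves outside, producing $\tfrac{\eps^2}{4}\partial_{p_j}^2\calW_\psi$. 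Adding the two contributions yields $\calW_{\widehat q_j\psi}=(q_j^2+\tfrac{\eps^2}{4}\partial_{p_j}^2)\calW_\psi$.

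For $\widehat p_j$, the key observation is that the momentum operator is the position operator conjugated by the scaled Fourier transform: an integration by parts gives $\calF_\eps(\widehat p_j\psi)=\widehat q_j(\calF_\eps\psi)$, hence $\widehat p_j\psi=\calF_\eps^{-1}\bigl(\widehat q_j(\calF_\eps\psi)\bigr)$. I then apply $\calW_{\calF_\eps^{-1}\phi}(q,p)=\calW_\phi(p,-q)$ from Lemma~\ref{lem:FW} with $\phi=\widehat q_j(\calF_\eps\psi)$, next the first identity already proved (now applied to $\calF_\eps\psi$), and finally $\calW_{\calF_\eps\psi}(q,p)=\calW_\psi(-p,q)$ from Lemma~\ref{lem:FW}. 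Tracking the substitution $a=p$, $b=-q$ and the chain-rule sign bookkeeping $\partial_{b_j}^2\bigl[\calW_\psi(-b,a)\bigr]\big|_{b=-q}=\partial_{q_j}^2\calW_\psi(q,a)$ collapses everything to $\calW_{\widehat p_j\psi}=(p_j^2+\tfrac{\eps^2}{4}\partial_{q_j}^2)\calW_\psi$.

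The main obstacle is the routine but sign-sensitive bookkeeping in this last step: keeping straight which phase-space slot each derivative acts on and which signs the reflection $q\mapsto-q$ introduces. If one prefers to avoid the Fourier detour, $\calW_{\widehat p_j\psi}$ can instead be computed directly: with $u=q+\tfrac12 y$, $v=q-\tfrac12 y$ one has $\overline{(\widehat p_j\psi)}(u)(\widehat p_j\psi)(v)=\eps^2(\partial_j\overline\psi)(u)(\partial_j\psi)(v)$, and applying the operator $\partial_{q_j}^2-4\partial_{y_j}^2$ to $F(q,y)=\overline\psi(u)\psi(v)$ isolates exactly $4(\partial_j\overline\psi)(u)(\partial_j\psi)(v)$; one then pulls $\partial_{q_j}^2$ out of the integral and integrates by parts twice in $y_j$ against $\e^{\I p\cdot y/\eps}$, using $\partial_{y_j}^2\e^{\I p\cdot y/\eps}=-(p_j^2/\eps^2)\e^{\I p\cdot y/\eps}$. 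Either route completes the proof.
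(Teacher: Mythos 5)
Your proof is correct and follows essentially the same route as the paper: a direct computation with the factor $q_j^2-\tfrac14 y_j^2$ and the identity $y_j^2\,\e^{\I p\cdot y/\eps}=-\eps^2\partial_{p_j}^2\e^{\I p\cdot y/\eps}$ for the position operator, and then the second formula deduced from the first via Lemma~\ref{lem:FW}, exactly as the paper indicates. Your sign bookkeeping in the Fourier conjugation step, and the alternative direct computation you sketch at the end, are both correct.
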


We provide an almost effortless proof of these two formulas, even though 
Proposition~\ref{prop:Wladder} contains them as a special case. 

\begin{proof} We calculate 
\begin{align*}
\calW_{\widehat q_j\psi}(q,p) 
&= (2\pi\eps)^{-d} \int_{\Rb^d}  (q_j^2-\tfrac14y_j^2)\overline{\psi}(q+\tfrac12 y)\psi(q-\tfrac12 y) 
\e^{\I p\cdot y/\eps}\D y\\
&= (q_j^2+\tfrac{\eps^2}{4}\partial_{p_j}^2)\calW_\psi(q,p), 
\end{align*}
since
$
(-\I\eps\partial_{p_j})^2 \e^{\I p\cdot y/\eps} = y_j^2 \e^{\I p\cdot y/\eps}.
$
The second formula then follows from the first formula and  Lemma~\ref{lem:FW}.
%Moreover, by Lemma~\ref{lem:FW},
%\begin{align*}
%\calW_{\widehat p_j\psi}(q,p) &= \calW_{\calF_\eps^{-1}p_j\calF_\eps\psi}(q,p)
%= \calW_{p_j\calF_\eps\psi}(p,-q)\\
%&= (p_j^2+\tfrac{\eps^2}{4}\partial_{q_j}^2)\calW_{\calF_\eps\psi}(p,-q)
%= (p_j^2+\tfrac{\eps^2}{4}\partial_{q_j}^2)\calW_\psi(q,p).
%\end{align*}
\end{proof}

The duality relation between the Weyl quantisation of observables and 
the Wigner transformation of wave functions, as given by Theorem~\ref{theo:wigner function},
immediately yields the following version of Egorov's theorem (Theorem~\ref{theo:egorov}).

\begin{corollary}[Egorov's theorem for Wigner functions]\label{cor:egorow}
Under the assumptions of Theorem~\ref{theo:egorov},
the error between the classically evolved Wigner function and the expectation value of the 
Schr\"odinger solution $\psi(t)$ with initial value $\psi(0)$ of unit norm satisfies
\[
\left| \langle A\rangle_{\psi(t)} - \int_{\Rb^{2d}}  a(\Phi^t(z))\,\calW_{\psi(0)}(z) \D z \right| 
\le c\, t\, \eps^2,
\qquad 0<t\le \bar t,
\]
where the constant $c<\infty$ depends only on the derivative bounds 
of $a$ and~$V$ but is independent of $\psi(0)$, $\eps$ and $t$.
\end{corollary}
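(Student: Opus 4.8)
The plan is to derive Corollary~\ref{cor:egorow} directly from Egorov's theorem (Theorem~\ref{theo:egorov}) by translating the operator statement into a phase space integral using the Wigner duality of Theorem~\ref{theo:wigner function}. The key observation is that the classically evolved observable $\op(a\circ\Phi^t)$ is again the Weyl quantisation of a Schwartz function $a\circ\Phi^t:\Rb^{2d}\to\Rb$, so Theorem~\ref{theo:wigner function} applies to it. Concretely, I would write
\[
\langle \op(a\circ\Phi^t)\rangle_{\psi(0)} = \int_{\Rb^{2d}} (a\circ\Phi^t)(z)\, \calW_{\psi(0)}(z)\, \D z = \int_{\Rb^{2d}} a(\Phi^t(z))\, \calW_{\psi(0)}(z)\, \D z,
\]
which is exactly the quantity appearing in the corollary.

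From here the proof is immediate: Theorem~\ref{theo:egorov} gives
\[
\bigl| \langle A\rangle_{\psi(t)} - \langle \op(a\circ\Phi^t)\rangle_{\psi(0)} \bigr| \le c\, t\, \eps^2, \qquad 0 < t \le \bar t,
\]
with $c$ depending only on the derivative bounds of $a$ and $V$ and on $\bar t$, and substituting the phase space integral identity above for the second term yields precisely the claimed bound. So the proof is essentially two lines: invoke Theorem~\ref{theo:wigner function} to rewrite the expectation value of the propagated observable, then invoke Theorem~\ref{theo:egorov}.

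There is really no substantial obstacle here, since all the work has already been done in Theorem~\ref{theo:egorov} and Theorem~\ref{theo:wigner function}. The only minor point to be careful about is that Theorem~\ref{theo:wigner function} is stated for Schwartz symbols, so one should note that under assumption~2 of Theorem~\ref{theo:egorov} the function $a$ is smooth and bounded together with all derivatives, and the classical flow $\Phi^t$ is a smooth diffeomorphism with derivatives bounded uniformly on $[0,\bar t]$ (as already used in the proof of Theorem~\ref{theo:egorov}); hence $a\circ\Phi^t$ lies in the same symbol class and the Wigner duality identity \eqref{wig:av-wig} extends to it by the usual density and boundedness arguments. Here is the proof.

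\begin{proof}
By Theorem~\ref{theo:wigner function}, applied to the Weyl-quantised observable $\op(a\circ\Phi^t)$ and the wave function $\psi(0)$, we have
\[
\langle \op(a\circ\Phi^t) \rangle_{\psi(0)} = \int_{\Rb^{2d}} (a\circ\Phi^t)(z)\,\calW_{\psi(0)}(z)\,\D z
= \int_{\Rb^{2d}} a(\Phi^t(z))\,\calW_{\psi(0)}(z)\,\D z.
\]
Under the assumptions of Theorem~\ref{theo:egorov}, the symbol $a\circ\Phi^t$ is smooth and, uniformly for $0\le t\le\bar t$, bounded together with all its derivatives, so this identity is justified. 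Theorem~\ref{theo:egorov} gives
\[
\left| \langle A \rangle_{\psi(t)} - \langle \op(a\circ\Phi^t)\rangle_{\psi(0)} \right| \le c\,t\,\eps^2, \qquad 0<t\le\bar t,
\]
with $c$ depending only on the derivative bounds of $a$ and $V$ and on $\bar t$. Combining the two displays yields
\[
\left| \langle A\rangle_{\psi(t)} - \int_{\Rb^{2d}} a(\Phi^t(z))\,\calW_{\psi(0)}(z)\,\D z \right| \le c\,t\,\eps^2, \qquad 0<t\le\bar t,
\]
which is the claimed estimate.
\end{proof}
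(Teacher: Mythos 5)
Your proof is correct and is exactly the argument the paper intends: the paper states that the duality relation of Theorem~\ref{theo:wigner function} applied to the propagated symbol $a\circ\Phi^t$, combined with Theorem~\ref{theo:egorov}, "immediately yields" the corollary. Your additional remark about extending the Wigner duality identity from Schwartz symbols to smooth bounded symbols with bounded derivatives is a sensible point of care that the paper passes over silently.
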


This result motivates a numerical
method for the computation of expectation values that is particularly inexpensive and second 
order accurate with respect to the semiclassical parameter~$\eps$, provided that the 
initial Wigner function is accessible:
\begin{enumerate}
\item[1.] Choose a set of numerical quadrature points $z_i\in\Rb^{2d}$
and evaluate the initial Wigner function $\calW_{\psi(0)}$ at the points $z_i$.
\item[2.] Transport the points $z_i$ by the classical flow $\Phi^t$ and
evaluate the classical observable $a$ at the points $\Phi^t(z_i)$ of the classical trajectories.
\item[3.] Form the (possibly weighted) sum of products of the quantities from 1.~and 2.~according to the chosen quadrature rule.
\end{enumerate}

%\begin{proof}
%We rewrite the error estimate of Theorem~\ref{theo:egorov} as
%\[
%\left|  \langle A\rangle_{\psi(t)} - 
%\int_{\Rb^{2d}} \calW_{\psi(0)}(z)\, a(\Phi^{t}(z))\, d z \right| 
%\le c\, t\, \eps^2.
%\]
%Since the classical flow is symplectic, we have
%\[
%\int_{\Rb^{2d}} \calW_{\psi(0)}(z) a(\Phi^{t}(z)) d z = 
%\int_{\Rb^{2d}} \left(\calW_{\psi(0)}\circ\Phi^{-t}\right)\!(z)\, a(z)\, d z,
%\]
%which proves the result.
%\end{proof}

\subsection{Husimi functions and spectrograms} 
It is tempting to view the above computational scheme  
as a particle method that propagates samples from the initial Wigner probability distribution along classical trajectories. 
However, due to Hudson's theorem, this point of view is limited to initial data that are Gaussian wave packets, since they are the only functions with positive Wigner function. 

On the other hand, the following analysis shows that 
%the Wigner function fails to be positive 
%just by  terms of the order~$\eps$, and that 
a systematic correction 
is possible that reconciles sampling of positive phase space densities with classical transport. This is important because it enables the use of Monte Carlo methods, which sample from a probability distribution and are able to approximate phase space integrals in high dimensions and under low regularity requirements.

We now reenter the study of the basic properties of Wigner functions
with the  fundamental result of the next lemma, which states that the convolution of two Wigner 
functions is always non-negative:
$$
(\calW_\varphi*\calW_\psi)(z) =  \int_{\Rb^{2d}} \calW_\varphi(z-w) \,\calW_\psi(w) \D w \ge 0.
$$
Such convolutions are known in the literature 
as {\em spectrograms.} 

\begin{lemma}[spectrograms]\label{lem:spectro} For all $\varphi,\psi\in L^2(\Rb^d)$, we have
%\begin{align*}
%\langle \calW_\varphi\mid\calW_\psi \rangle &= (2\pi\eps)^{-d}\left|\langle \varphi|\psi\rangle\right|^2,\\*[1ex]
$$
(\calW_\varphi*\calW_\psi)(z) = (2\pi\eps)^{-d}\left|\langle\varphi_z|\psi\rangle\right|^2,\qquad z=(q,p)\in\Rb^{2d},
$$
%\end{align*}
where
$\varphi_z(x) = \e^{-\I p\cdot (q-x)/\eps} \varphi(q-x)$ for $ x\in\Rb^d$.
Moreover, 
\[
\int_{\Rb^{2d}} (\calW_\varphi*\calW_\psi)(z) \D z = \|\varphi\|^2\, \|\psi\|^2.
\]
For $\varphi$ and $\psi$ of unit norm, $\calW_\varphi*\calW_\psi$ is thus a probability density on $\Rb^{2d}$.
\end{lemma}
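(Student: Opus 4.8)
The plan is to collapse the phase-space convolution into a single oscillatory integral and to recognise it as the modulus squared of an inner product. First I would substitute the definition of the Wigner function into
\[
(\calW_\varphi*\calW_\psi)(q,p) = \int_{\Rb^{2d}} \calW_\varphi(q-q',p-p')\,\calW_\psi(q',p')\,\D(q',p'),
\]
which introduces two auxiliary variables $u,v\in\Rb^d$, one coming from each Wigner transform. Carrying out the integration over the momentum variable $p'$ produces a factor $(2\pi\eps)^d\,\delta(u-v)$, which identifies the two auxiliary variables with each other. For Schwartz $\varphi,\psi$ this manipulation is legitimate (for instance by reading the inner $u$-integral through the Fourier inversion formula), and the general $L^2$ case then follows, since both sides, regarded as functions of $z$, depend continuously in the sup norm on $(\varphi,\psi)\in L^2(\Rb^d)\times L^2(\Rb^d)$ and Schwartz functions are dense.

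After the collapse, one is left with an integral over $q'$ and a single variable $u$. I would then apply the measure-preserving change of variables $x = q' - \tfrac12 u$, $x' = q' + \tfrac12 u$ (Jacobian $1$). A short computation shows that the four arguments of $\varphi$ and $\psi$ become $q-x$, $q-x'$, $x$, $x'$ respectively, and the oscillatory factor becomes $\e^{\I p\cdot(x'-x)/\eps}$, so that, using $\overline{\varphi_z(x)} = \e^{\I p\cdot(q-x)/\eps}\,\overline{\varphi(q-x)}$, the integral factorises as
\[
(\calW_\varphi*\calW_\psi)(z) \;=\; (2\pi\eps)^{-d}\,\Big|\int_{\Rb^d}\e^{\I p\cdot(q-x)/\eps}\,\overline{\varphi(q-x)}\,\psi(x)\,\D x\Big|^2 \;=\; (2\pi\eps)^{-d}\,\bigl|\langle\varphi_z|\psi\rangle\bigr|^2 .
\]
Non-negativity of the spectrogram is then immediate.

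For the normalisation I would use the identity just proved: since its right-hand side is non-negative, Tonelli's theorem justifies the iterated integration, $\int_{\Rb^{2d}}(\calW_\varphi*\calW_\psi)(z)\,\D z = (2\pi\eps)^{-d}\int_{\Rb^d}\bigl(\int_{\Rb^d} |\langle\varphi_{(q,p)}|\psi\rangle|^2\,\D p\bigr)\D q$. Writing $\langle\varphi_{(q,p)}|\psi\rangle = \e^{\I p\cdot q/\eps}\,(2\pi\eps)^{d/2}\,\calF_\eps G_q(p)$ with $G_q(x) = \overline{\varphi(q-x)}\,\psi(x)$, Plancherel for the unitary scaled Fourier transform $\calF_\eps$ gives $\int_{\Rb^d}|\langle\varphi_{(q,p)}|\psi\rangle|^2\,\D p = (2\pi\eps)^d\|G_q\|^2$, and then $\int_{\Rb^d}\|G_q\|^2\,\D q = \|\varphi\|^2\|\psi\|^2$ by a further application of Tonelli. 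Collecting the constants yields $\int_{\Rb^{2d}}(\calW_\varphi*\calW_\psi)(z)\,\D z = \|\varphi\|^2\|\psi\|^2$; in particular, when $\|\varphi\| = \|\psi\| = 1$ the spectrogram is a probability density on $\Rb^{2d}$.

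The main obstacle is making the $p'$-integration rigorous: before the reduction the double integral is not absolutely convergent (there is no decay in $p'$), so the step producing the Dirac delta must be justified either by a regularisation and limiting argument, by interpreting it as Fourier inversion on Schwartz data, or by first proving the identity on a dense class and passing to the limit. Once this reduction is secured, all remaining steps — the change of variables, the factorisation, and the Plancherel/Fubini computation for the normalisation — are routine and absolutely convergent.
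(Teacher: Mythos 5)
Your proof is correct, and it rests on the same two computational pivots as the paper's: a Fourier-inversion step that collapses the momentum convolution variable (your Dirac delta), and the measure-preserving shear $(u,q')\mapsto(q'-\tfrac12 u,\,q'+\tfrac12 u)$ that factorises the resulting fourfold integral into $|\langle\varphi_z|\psi\rangle|^2$. The difference is organisational. The paper first proves the Moyal-type identity $\langle\calW_\varphi\mid\calW_\psi\rangle=(2\pi\eps)^{-d}\,|\langle\varphi|\psi\rangle|^2$ --- which is where the Fourier-inversion step lives --- together with the covariance relation $\calW_\varphi(z-w)=\overline{\calW_{\varphi_z}(w)}$, and then reads the spectrogram as the inner product $\langle\calW_{\varphi_z}\mid\calW_\psi\rangle$. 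That factorisation buys something for the rigour issue you flag: the Moyal identity gives $\|\calW_\varphi\|_{L^2(\Rb^{2d})}=(2\pi\eps)^{-d/2}\|\varphi\|^2$, so for arbitrary $\varphi,\psi\in L^2(\Rb^d)$ the convolution is an absolutely convergent pairing of two square-integrable phase-space functions, and only the Moyal identity itself needs the Schwartz-then-density treatment. Your direct expansion instead produces a double phase-space integral that is not absolutely convergent before the $p'$-integration; you correctly identify this and your repair (prove the identity on Schwartz data, then extend using the uniform bound $(2\pi\eps)^{-d}\|\varphi\|^2\|\psi\|^2$ valid for both sides) is sound. For the normalisation, your Plancherel-plus-Tonelli argument and the paper's second Fourier-inversion computation are equivalent; yours is, if anything, the more careful of the two about absolute convergence.
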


\begin{proof} We begin with the noteworthy identity
$$
\langle \calW_\varphi\mid\calW_\psi \rangle = (2\pi\eps)^{-d}\left|\langle \varphi|\psi\rangle\right|^2.
$$
This is obtained by Fourier inversion,
\begin{align*}
\langle \calW_\varphi\mid \calW_\psi \rangle 
&= \int_{\Rb^{2d}} \overline{\calW_\varphi(z)} \calW_\psi(z)\D z \\
&= (2\pi\eps)^{-2d} \int_{\Rb^{4d}} \varphi(q+\tfrac12 y) \overline\varphi(q-\tfrac12 y) \\
&\qquad\times\overline\psi(q+\tfrac12 y') \psi(q-\tfrac12 y') \,\e^{\I p\cdot (y'-y)/\eps}\D(y,y',q,p)\\*[1ex]
&= (2\pi\eps)^{-d} \int_{\Rb^{2d}} \varphi(q+\tfrac12 y) \overline\varphi(q-\tfrac12 y)
\overline\psi(q+\tfrac12 y) \psi(q-\tfrac12 y) \D(y,q)
\end{align*}
and by the measure-preserving change of variables $(y,q)\mapsto (q+\tfrac12y,q-\tfrac12y)$, 
\begin{align*}
\langle \calW_\varphi \mid \calW_\psi \rangle 
&= (2\pi\eps)^{-d} \int_{\Rb^{2d}} \varphi(x) \overline\varphi(x')
\overline\psi(x) \psi(x') \D(x,x')
%\\*[1ex]
%&
= (2\pi\eps)^{-d}\left|\langle \varphi|\psi\rangle\right|^2.
\end{align*}
We now turn to the convolution $\calW_\varphi*\calW_\psi$. We denote $z=(q_z,p_z)$ and $w=(q_w,p_w)\in\Rb^{2d}$, and write
$$
\calW_\varphi(z-w)
= (2\pi\eps)^{-d} \int_{\Rb^d}
 \overline\varphi(q_z-q_w+\tfrac12y) \varphi(q_z-q_w-\tfrac12y) \e^{\I (p_w-p_z)\cdot y/\eps} \D y,
$$
where the integrand equals $\overline\varphi_z(q_w+\tfrac12y) \varphi_z(q_w-\tfrac12y) \e^{\I p_w\cdot y/\eps}$. This yields
$$
\calW_\varphi(z-w) = \overline{\calW_{\varphi_z}(w)},
$$
and hence  the convolution becomes
\begin{align*}
(\calW_\varphi*\calW_\psi)(z) 
&= \int_{\Rb^{2d}} \calW_\varphi(z-w) \calW_\psi(w) \D w
= \int_{\Rb^{2d}} \overline{\calW_{\varphi_z}(w)} \, \calW_\psi(w) \D w
\\
&= \langle \calW_{\varphi_z}\mid\calW_\psi\rangle = (2\pi\eps)^{-d}\left|\langle \varphi_z|\psi\rangle\right|^2.
\end{align*}
To calculate the integral of the spectrogram, we again use Fourier inversion, 
\begin{align*}
&\int_{\Rb^{2d}} (\calW_\varphi*\calW_\psi)(z) \D z = 
(2\pi\eps)^{-d} \int_{\Rb^{2d}} \langle\varphi_z|\psi\rangle \langle \psi|\varphi_z\rangle \D z\\
&=
(2\pi\eps)^{-d} \int_{\Rb^{4d}} \overline\varphi(q-x) \psi(x) \overline\psi(y) \varphi(q-y) \e^{\I p\cdot (y-x)}\, d(x,y,q,p) \\
&= \int_{\Rb^{4d}} \overline\varphi(q-x) \psi(x) \overline\psi(x) \varphi(q-x)\D (x,q)
= \|\varphi\|^2\, \|\psi\|^2.
\end{align*} 
\end{proof}

The most popular spectrogram stems from the convolution with a standard Gaussian function 
of unit width and phase space centre zero. 

\begin{definition}[Husimi function]
For a function $\psi\in L^2(\Rb^d)$ we define its {\em Husimi function} $\calH_\psi:\Rb^{2d}\to[0,\infty)$, 
\[
\calH_\psi(z) = (\calW_{g_0}*\calW_\psi)(z) = (2\pi\eps)^{-d}\, \left|\left\langle g_z|\psi\right\rangle\right|^2,
\]
where 
\[
g_z(x) = (\pi\eps)^{-d/4} \exp(-\tfrac{1}{2\eps}|x-q|^2 + \tfrac\I\eps\, p\cdot(x-q)),\qquad x\in\Rb^d,
\]
is a unit width Gaussian centred in the phase space point $z=(q,p)\in\Rb^{2d}$.
\end{definition}

The Husimi function has the benefit of being a non-negative phase space distribution, $\calH_\psi\ge0$, 
and this is accompanied by a convenient normalisation property. Since the standard Gaussian $g_0$ 
has unit norm, Lemma~\ref{lem:spectro} yields that 
\[
\int_{\Rb^{2d}} \calH_\psi(z) \D z = \|\psi\|^2.
\]
The price to be paid is the loss of the exact duality relation for expectation values with 
Weyl-quantised observables,  as enjoyed by the Wigner function. The following estimates 
quantify how much Husimi averages deviate from the expectation value. 

\begin{theorem}[expectation values]\label{theo:husimi} 
Let $a:\Rb^{2d}\to\Rb$ be a smooth function that is bounded together with all its derivatives, 
and consider its Weyl quantisation $A=\op(a)$. Then, for  all $\psi\in L^2(\Rb^d)$,   
\[
\left|\langle A\rangle_\psi - 
\int_{\Rb^{2d}} (a-\tfrac\eps4\Delta a)(z) \,\calH_\psi(z) \D z\right| \le C\, \eps^2 \|\psi\|^2,
\]
where the constant $C$ depends on derivatives of $a$ of order $\ge 4$. 
\end{theorem}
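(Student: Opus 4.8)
The plan is to express the Husimi average as a Weyl average for the \emph{mollified} symbol and then compare symbols. Recall from Theorem~\ref{theo:wigner function} that for any Schwartz (or polynomially bounded) $b$, $\int b\,\calW_\psi = \langle \op(b)\rangle_\psi$. So first I would identify the operator attached to the Husimi weighting: since $\calH_\psi = \calW_{g_0}*\calW_\psi$ and the Wigner function of the standard unit-width Gaussian $g_0$ is the phase-space Gaussian $\calW_{g_0}(z) = (\pi\eps)^{-d} \e^{-|z|^2/\eps}$ (see Proposition~\ref{lem:WG}), Fubini gives
\[
\int_{\Rb^{2d}} a(z)\,\calH_\psi(z)\,\D z = \int_{\Rb^{2d}} (a * \calW_{g_0})(w)\,\calW_\psi(w)\,\D w = \langle \op(a*\calW_{g_0})\rangle_\psi .
\]
Thus the quantity to estimate is $\langle \op\!\big(a - a*\calW_{g_0} - \tfrac\eps4\Delta a\big)\rangle_\psi$, and it suffices to bound the operator norm of $\op(r)$, where $r = a - a*\calW_{g_0} - \tfrac\eps4\Delta a$, by $C\eps^2$.

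Second, I would Taylor-expand the convolution. Because $\calW_{g_0}$ is an even Gaussian with $\int \calW_{g_0} = 1$, $\int z_j\calW_{g_0} = 0$, and $\int z_j z_k \calW_{g_0} = \tfrac\eps2\delta_{jk}$ (the factor $\eps/2$ coming from the $(\pi\eps)^{-d}\e^{-|z|^2/\eps}$ normalisation), the second-order Taylor expansion of $a(w - z)$ around $w$ inside $(a*\calW_{g_0})(w) = \int a(w-z)\calW_{g_0}(z)\,\D z$ yields exactly $a(w) + \tfrac\eps4\Delta a(w)$ plus an integral remainder. Hence
\[
r(w) = -\,\tfrac16 \sum_{|m|=3}\frac{3!}{m!}\int_{\Rb^{2d}} z^m\Big(\!\int_0^1 (1-\theta)^2\,\partial^m a(w-\theta z)\,\D\theta\Big)\calW_{g_0}(z)\,\D z ,
\]
up to the standard bookkeeping of the multivariate Taylor formula with integral remainder. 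After the rescaling $z = \sqrt\eps\,\zeta$, each $z^m$ with $|m|=3$ contributes a factor $\eps^{3/2}$ against the Gaussian $(\pi)^{-d}\e^{-|\zeta|^2}$; but odd moments of the Gaussian vanish, so the genuine leading contribution of the $|m|=3$ terms is zero and one must push the Taylor expansion one order further, to $|m|=4$, whose even moments survive and produce the advertised $O(\eps^2)$. Concretely I would write the fourth-order Taylor remainder, use that $\int z^m\calW_{g_0} = O(\eps^2)$ for $|m|=4$ and $=0$ for $|m|=3$, and conclude $r(w) = O(\eps^2)$ uniformly in $w$, together with uniform bounds on all derivatives $\partial^\alpha r$ (these only involve derivatives of $a$ of order $\ge 4$, which are assumed bounded).

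Third, to turn the symbol bound into an operator-norm bound I would invoke the Calder\'on--Vaillancourt theorem exactly as in the proof of Proposition~\ref{prop:comm}: $\|\op(r)\| \le C\sum_{|m|\le M}\|\partial^m r\|_\infty \le C'\eps^2$, with $M$ and the constants depending only on $d$, and $C'$ depending on derivative bounds of $a$ of order $\ge 4$. Combining, $|\langle A\rangle_\psi - \int (a-\tfrac\eps4\Delta a)\,\calH_\psi| = |\langle\op(r)\rangle_\psi| \le \|\op(r)\|\,\|\psi\|^2 \le C'\eps^2\|\psi\|^2$, which is the claim. A routine density argument extends the identity $\int b\,\calW_\psi = \langle\op(b)\rangle_\psi$ and the manipulations above from Schwartz $\psi$ to general $\psi\in L^2$.

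\textbf{Main obstacle.} The delicate point is the cancellation of the $|m|=3$ term: one must be careful to expand $a*\calW_{g_0}$ to \emph{fourth} order and to verify that the odd moments of the Gaussian kill precisely the term that would otherwise contribute at order $\eps^{3/2}$, so that the error is genuinely $O(\eps^2)$ rather than $O(\eps^{3/2})$; this is why the hypothesis involves derivatives of order $\ge 4$. The remaining ingredients — the explicit moments of $\calW_{g_0}$, Fubini, and the Calder\'on--Vaillancourt estimate — are either computed earlier in the paper or standard, and require only careful constant-tracking.
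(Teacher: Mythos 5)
Your overall strategy --- pushing the Gaussian convolution onto the symbol via Fubini and the evenness of $\calW_{g_0}$, Taylor-expanding the mollified symbol using the vanishing odd moments and the second moments $\tfrac\eps2\delta_{jk}$, and converting the resulting symbol bound into an operator bound by Calder\'on--Vaillancourt --- is exactly the paper's. But the symbol you propose to estimate is set up incorrectly, and the error is not cosmetic. Since
\[
\int_{\Rb^{2d}}\bigl(a-\tfrac\eps4\Delta a\bigr)(z)\,\calH_\psi(z)\,\D z
= \bigl\langle \op\bigl((a-\tfrac\eps4\Delta a)*\calW_{g_0}\bigr)\bigr\rangle_\psi,
\]
the quantity to bound is $\langle\op(s)\rangle_\psi$ with $s = a - a*\calW_{g_0} + \tfrac\eps4\,(\Delta a)*\calW_{g_0}$. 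Here the $+\tfrac\eps4\Delta a$ obtained by expanding the last term cancels the $\tfrac\eps4\Delta a$ produced by expanding $a*\calW_{g_0}=a+\tfrac\eps4\Delta a+O(\eps^2)$, and only after this cancellation is $s=O(\eps^2)$. Your symbol $r = a - a*\calW_{g_0} - \tfrac\eps4\Delta a$ carries the opposite sign, so the two terms add instead of cancel: $r = -\tfrac\eps2\Delta a + O(\eps^2)$, which is only $O(\eps)$, and the claimed bound $\|\op(r)\|\le C\eps^2$ is false. (Your displayed integral-remainder formula is in fact the remainder of $a*\calW_{g_0}-a-\tfrac\eps4\Delta a$, i.e.\ it tacitly corresponds to the $+$ sign, so the write-up is also internally inconsistent.)

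The missing ingredient is precisely that the correction $-\tfrac\eps4\Delta a$ in the statement is integrated against the \emph{Husimi} function, not the Wigner function, so one must also expand $(\Delta a)*\calW_{g_0}=\Delta a+O(\eps)$ and exploit the cancellation. This is what the paper does: it writes the expansion $\int b\,\calH_\psi=\int(b+\tfrac\eps4\Delta b)\,\calW_\psi+\eps^2\int r_b\,\calW_\psi$ once for $b=a$ and once for $b=\Delta a$, and subtracts $\eps/4$ times the second identity from the first. With that extra step (and the correct sign) your argument goes through; the remaining points --- fourth-order Taylor expansion, vanishing odd moments, fourth moments of size $\eps^2$, Calder\'on--Vaillancourt, and density in $L^2$ --- are all correct and coincide with the paper's proof.
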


In particular, this implies
\[
\left|\langle A\rangle_\psi - 
\int_{\Rb^{2d}} a(z) \,\calH_\psi(z) \D z\right| \le c\, \eps \|\psi\|^2
\]
for some constant $c$ that depends on derivatives of $a$ of order $\ge 2$.

\begin{proof} 
We use the symmetry $\calW_{g_0}(-z) = \calW_{g_0}(z)$ to obtain 
\begin{align*}
\int_{\Rb^{2d}} a(z) \calH_\psi(z) \D z 
&= \int_{\Rb^{2d}} a(z) (\calW_{g_0}*\calW_\psi)(z) \D z\\
&= \int_{\Rb^{4d}} a(z)  \calW_{g_0}(z-w)\,\calW_\psi(w)\D (w,z)\\
&= \int_{\Rb^{2d}} (a*\calW_{g_0})(z)\, \calW_\psi(z) \D z.
\end{align*}
We then write the convolution, 
\[
(a*\calW_{g_0})(z) = (\pi\eps)^{-d} \int_{\Rb^{2d}} a(z-w) \,\e^{-|w|^2/\eps} \D w, 
\]
and perform a third-order Taylor expansion, 
\begin{align*}
a(z-w) &= a(z) - \nabla a(z)^T w + \tfrac12 w^T \nabla^2 a(z) w - 
\tfrac{1}{3!} \sum_{|m|=3} w^m \partial^m a(z) \\
&+ \tfrac{1}{3!} \sum_{|m|=4} w^m \int_0^1 (1-\theta)^3 \partial^m a(z-\theta w) \D\theta.
\end{align*}
We observe that the odd moments vanish, 
\[
\int_{\Rb^{2d}} w^m \,\e^{-|w|^2/\eps} \D w = 0,\qquad |m| \text{ odd},
\]
and calculate the second order contribution,  
\begin{align*}
&\tfrac12 (\pi\eps)^{-d}\int_{\Rb^{2d}} w^T \nabla^2 a(z) w \,\e^{-|w|^2/\eps} \D w \\
&= 
\tfrac12 \sum_{k,l=1}^{2d} \nabla^2 a(z)_{kl} \, 
(\pi\eps)^{-d}\int_{\Rb^{2d}} w_k w_l \,\e^{-|w|^2/\eps} \D w\\
&= \tfrac12 \Delta a(z)\ \frac{1}{\sqrt{\pi\eps}} \int_{\Rb} x^2 \,\e^{-x^2/\eps} \D x 
\, =\,  \tfrac\eps4 \Delta a(z).
\end{align*}
Then, 
\[
(a*\calW_{g_0})(z) = a(z) + \tfrac\eps4\Delta a(z) + \eps^2 r(z)
\]
with 
\[
r(z) = \tfrac{1}{3!} \sum_{|m|=4} \pi^{-d}
\int_{\Rb^{2d}} \int_0^1 w^m (1-\theta)^3 \partial^m a(z-\theta w) \e^{-|w|^2} \D\theta \D w.
\]
This implies
\begin{align*}
&\int_{\Rb^{2d}} a(z) \calH_\psi(z)\D z - \int_{\Rb^{2d}} (a+\tfrac{\eps}{4}\Delta a)(z) \calW_\psi(z)\D z
%\\*[1ex]
%&\qquad
= \eps^2 \int_{\Rb^{2d}} r(z) \calW_\psi(z)\D z,
\end{align*}
where, by the Calder\'on--Vaillancourt theorem, see for example \cite[Theorem~2.8.1]{Mar02},  
\[
\left|\int_{\Rb^{2d}} r(z) \calW_\psi(z)\D z\right| = \left|\langle \psi\ |\ \op(r)\psi\rangle\right| 
\le C \|\psi\|^2
\]
with a constant $C>0$ that depends on derivatives of $a$ of order $\ge 4$.  
With $\Delta a$ instead of $a$ we also have
\begin{align*}
&\int_{\Rb^{2d}} \! \Delta a(z) \calH_\psi(z)\D z - \!\int_{\Rb^{2d}} (\Delta a+\tfrac{\eps}{4}\Delta^2 a)(z) \calW_\psi(z)\D z
%\\*[1ex]
%&\qquad
= \eps^2 \!\!\int_{\Rb^{2d}} \widetilde r(z) \calW_\psi(z)\D z,
\end{align*}
where the right-hand side with the remainder $\widetilde r$ is bounded like the corresponding integral with $r$ above. Subtracting $\eps/4$ times the equation for $\Delta a$ from that for $a$ then yields the result.
\end{proof}

The Husimi function has the same first moments as the Wigner function, but otherwise fails to 
reproduce expectation values with an error of order~$\eps$. However, due to its explicit form
one can use the error term as an additive correction to the Husimi function on noting  that
$$
\int_{\Rb^{2d}} (a-\tfrac\eps4\Delta a)(z) \,\calH_\psi(z) \D z =
\int_{\Rb^{2d}}  a(z) \,\Bigl(\calH_\psi - \tfrac\eps4\Delta\calH_\psi\Bigr)(z) \D z.
$$
\begin{definition}[Husimi correction]
For a  function $\psi\in L^2(\Rb^d)$ we define its {\em
Husimi correction} by  
\[
\calH^{\rm c}_\psi = \calH_\psi - \tfrac\eps4\Delta\calH_\psi.
\]
\end{definition}

By its construction, the Husimi correction reproduces expectation values up to an error that is second 
order with respect to $\eps$ and thus perfectly compatible with the approximation error of Egorov's theorem. 

\begin{corollary}[Egorov's theorem for Husimi corrections]\label{cor:husimi}
Under the assumptions of Theorem~\ref{theo:egorov},
the error between the classically evolved Husimi correction  
and the expectation value of the  solution $\psi(t)$ of the Schr\"odinger equation with initial value $\psi(0)$ of unit norm satisfies
\[
\left| \langle A\rangle_{\psi(t)} - \int_{\Rb^{2d}} a\bigl(\Phi^t(z)\bigr)\, \calH^{\rm c}_{\psi(0)}(z) \D z \right| 
\le c\, t\, \eps^2,
\qquad 0<t\le \bar t,
\]
where the constant $c<\infty$ depends only on the constant $\rho$ and the derivative bounds 
on $a$ and $V$ but is independent of $\psi(0)$, $\eps$ and $t\le \bar t$.
\end{corollary}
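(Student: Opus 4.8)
The plan is to obtain the bound by concatenating two approximations that have already been established: Egorov's theorem in the form of Theorem~\ref{theo:egorov}, which replaces the quantum-evolved expectation value by the expectation value of the classically transported observable, and Theorem~\ref{theo:husimi} (in the reformulation via the Husimi correction), which replaces an expectation value of a Weyl-quantised observable by a phase space integral against the Husimi correction, up to an $O(\eps^2)$ error.

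First I would invoke Theorem~\ref{theo:egorov} with the observable $A=\op(a)$ and the unit-norm initial datum $\psi(0)$, giving
\[
\bigl| \langle A\rangle_{\psi(t)} - \langle \op(a\circ\Phi^t)\rangle_{\psi(0)} \bigr| \le c\,t\,\eps^2, \qquad 0\le t\le\bar t .
\]
Here it is essential, and already recorded in the proof of Theorem~\ref{theo:egorov}, that the assumptions on $a$ (smooth and bounded together with all its derivatives) and on $V$ (smooth, with bounded derivatives of order $\ge 2$, so that the flow $\Phi^t$ and all its derivatives are controlled uniformly on $[0,\bar t]$ by the variational equations and Gronwall's lemma) imply that $b_t:=a\circ\Phi^t$ is again smooth and bounded together with all its derivatives, with bounds uniform for $t\in[0,\bar t]$.

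Next I would apply Theorem~\ref{theo:husimi} to the observable $b_t$ and the state $\psi(0)$. Using the identity $\int_{\Rb^{2d}} (b-\tfrac\eps4\Delta b)(z)\,\calH_\psi(z)\D z = \int_{\Rb^{2d}} b(z)\,\calH^{\rm c}_\psi(z)\D z$, Theorem~\ref{theo:husimi} yields
\[
\Bigl| \langle \op(b_t)\rangle_{\psi(0)} - \int_{\Rb^{2d}} b_t(z)\,\calH^{\rm c}_{\psi(0)}(z)\D z \Bigr| \le C\,\eps^2\,\|\psi(0)\|^2 = C\,\eps^2 ,
\]
where $C$ depends only on derivatives of $b_t$ of order $\ge 4$, hence, by the uniform bounds from the previous step, on derivative bounds of $a$ and $V$ and on $\bar t$ but not on $\eps$, $\psi(0)$ or $t\le\bar t$. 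Since $b_t=a\circ\Phi^t$, the integral on the right is exactly $\int_{\Rb^{2d}} a(\Phi^t(z))\,\calH^{\rm c}_{\psi(0)}(z)\D z$. Adding the two displayed estimates by the triangle inequality gives the claimed bound for $0\le t\le\bar t$.

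The only nontrivial point in this argument is the uniform-in-time regularity of $a\circ\Phi^t$, i.e.\ that its derivatives up to any fixed order are bounded on $\Rb^{2d}$ uniformly for $t\in[0,\bar t]$; this is not proved afresh here but is precisely the regularity statement used in the proof of Theorem~\ref{theo:egorov}, and it rests on differentiating the flow equation $\partial_t\Phi^t = J^{-1}\nabla h\circ\Phi^t$ and estimating the resulting linear variational systems by Gronwall's inequality, using that $\nabla^2 V$ and all higher derivatives of $V$ are bounded. No new oscillatory-integral or pseudodifferential estimates are needed beyond those already contained in Theorem~\ref{theo:egorov} and Theorem~\ref{theo:husimi}.
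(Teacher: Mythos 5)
Your proof is correct and is exactly the argument the paper intends: concatenate Theorem~\ref{theo:egorov} with Theorem~\ref{theo:husimi} applied to the observable $a\circ\Phi^t$, using the identity $\int (b-\tfrac\eps4\Delta b)\,\calH_\psi = \int b\,\calH^{\rm c}_\psi$ and the uniform-in-time derivative bounds on $a\circ\Phi^t$ that are already invoked in the proof of Egorov's theorem. The only cosmetic point is that the Husimi step contributes a $t$-independent $O(\eps^2)$ term, so the concatenation actually yields $c(1+t)\eps^2$ rather than $c\,t\,\eps^2$; this imprecision is already present in the corollary as stated and does not affect your argument.
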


%\begin{proof}
%We combine Theorem~\ref{theo:egorov} with Proposition~\ref{prop:husimi} to obtain
%\[
%\left| \langle A\rangle_{\psi(t)} - \int_{\Rb^{2d}}  \calH^{\rm c}_{\psi(0)}(z) \,a(\Phi^t(z))\, d z \right| 
%\le c\, t\, \eps^2. 
%\]
%Since the classical flow preserves volume, we may move it from the observable to the Husimi correction 
%without changing the integral value. 
%\end{proof}

Since the Husimi correction is a difference $\calH_\psi^{\rm c} = \calH_\psi-\tfrac\eps4\Delta\calH_\psi$, 
it loses the positivity of the Husimi function. However, one can express it as a linear combination of 
spectrograms and thus write the weighted phase space integral
as a linear combination of integrals with positive weight. 

\begin{theorem}[spectrogram expansion]\label{theo:spec} 
For every $\psi\in L^2(\Rb^d)$, 
the Husimi correction is a linear combination of spectrograms, 
\[
\calH_\psi^{\rm c} = 
(1+\tfrac{d}{2}) \calH_\psi - \tfrac12 \sum_{j=1}^d (\calW_{\varphi_j}*\calW_\psi),
\]
where 
\[
\varphi_j(x) = (\pi\eps)^{-d/4} \sqrt{\frac{2}{\eps}} \,x_j\, \e^{-|x|^2/\eps},\qquad x\in\Rb^d,
\]
is a first order Hermite function. Moreover, 
\[
\int_{\Rb^{2d}} \calH_\psi^{\rm c}(z)\D z = \|\psi\|^2.
\]
\end{theorem}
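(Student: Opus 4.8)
## Proof proposal for Theorem~\ref{theo:spec}

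The plan is to verify the claimed identity by direct computation, expressing the Laplacian of the Husimi function in terms of spectrograms, and then to read off the normalisation from the already-established properties of spectrograms. The key observation is that $\calH_\psi = \calW_{g_0}*\calW_\psi$ where $g_0(x) = (\pi\eps)^{-d/4}\exp(-\tfrac{1}{2\eps}|x|^2)$ is the standard Gaussian, so convolution commutes with differentiation and $\Delta\calH_\psi = (\Delta\calW_{g_0})*\calW_\psi$. The main task is therefore to compute $\Delta_z\calW_{g_0}(z)$ and recognise it as a combination of $\calW_{g_0}$ itself and the Wigner functions $\calW_{\varphi_j}$ of the first-order Hermite functions.

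First I would compute the Wigner function of the Gaussian $g_0$ explicitly: a standard Gaussian integral gives $\calW_{g_0}(q,p) = (\pi\eps)^{-d}\exp(-(|q|^2+|p|^2)/\eps)$, a phase space Gaussian. Applying the $2d$-dimensional Laplacian $\Delta_z = \sum_{j}(\partial_{q_j}^2 + \partial_{p_j}^2)$ to $\e^{-|z|^2/\eps}$ yields $\Delta_z\calW_{g_0}(z) = (\tfrac{4}{\eps^2}|z|^2 - \tfrac{4d}{\eps})\calW_{g_0}(z)$. Next I would compute $\calW_{\varphi_j}$ for $\varphi_j(x) = (\pi\eps)^{-d/4}\sqrt{2/\eps}\,x_j\,\e^{-|x|^2/\eps}$; this is again a Gaussian integral (the extra polynomial factor $x_j$ only changes the prefactor), and one finds after a short calculation that $\calW_{\varphi_j}(q,p) = \tfrac{2}{\eps}(q_j^2+p_j^2)\,\calW_{g_0}(q,p) - \calW_{g_0}(q,p)$, so that $\sum_{j=1}^d\calW_{\varphi_j}(z) = (\tfrac{2}{\eps}|z|^2 - d)\calW_{g_0}(z)$. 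Comparing with the expression for $\Delta_z\calW_{g_0}$ above gives the clean relation $\tfrac\eps4\Delta_z\calW_{g_0} = \sum_{j}\calW_{\varphi_j} - \tfrac{d}{2}\calW_{g_0}$, whence by convolution with $\calW_\psi$,
\[
\tfrac\eps4\Delta\calH_\psi = \sum_{j=1}^d (\calW_{\varphi_j}*\calW_\psi) - \tfrac{d}{2}\calH_\psi.
\]
Substituting this into $\calH_\psi^{\rm c} = \calH_\psi - \tfrac\eps4\Delta\calH_\psi$ immediately gives the stated spectrogram expansion with coefficient $1+\tfrac{d}{2}$ in front of $\calH_\psi$ and $-\tfrac12$ in front of each $\calW_{\varphi_j}*\calW_\psi$ — wait, one must track the factor $\tfrac12$: the computation should be arranged so that the Hermite normalisation constant in the definition of $\varphi_j$ produces exactly the $-\tfrac12$ coefficient, which fixes the precise form of $\varphi_j$ quoted in the statement.

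For the normalisation, I would simply integrate the identity over $\Rb^{2d}$. By Lemma~\ref{lem:spectro}, $\int\calH_\psi = \|g_0\|^2\|\psi\|^2 = \|\psi\|^2$ and $\int(\calW_{\varphi_j}*\calW_\psi) = \|\varphi_j\|^2\|\psi\|^2 = \|\psi\|^2$, using that each $\varphi_j$ is a normalised first-order Hermite function (this should be checked once, by the same Gaussian moment integral, or by noting $\int|x_j|^2\e^{-|x|^2/\eps}\D x$). Hence
\[
\int_{\Rb^{2d}}\calH_\psi^{\rm c}(z)\,\D z = (1+\tfrac{d}{2})\|\psi\|^2 - \tfrac12\sum_{j=1}^d\|\psi\|^2 = \|\psi\|^2,
\]
as claimed. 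Alternatively this follows directly from $\int\Delta\calH_\psi = 0$, since $\calH_\psi$ is a Schwartz function on phase space when $\psi$ is Schwartz (and by density for general $L^2$), which is perhaps the cleaner route.

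I expect no serious obstacle here; the only point requiring care is bookkeeping of the Gaussian normalisation constants, so that the particular first-order Hermite functions $\varphi_j$ in the statement come out with exactly the right prefactor $(\pi\eps)^{-d/4}\sqrt{2/\eps}$ and the coefficients in the linear combination are precisely $1+\tfrac d2$ and $-\tfrac12$. Everything reduces to one- and two-variable Gaussian moment integrals, and the structural identity $\tfrac\eps4\Delta_z\calW_{g_0} = \sum_j\calW_{\varphi_j} - \tfrac d2\calW_{g_0}$ is the heart of the matter.
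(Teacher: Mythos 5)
Your proposal follows the paper's proof exactly: compute $\calW_{g_0}(z)=(\pi\eps)^{-d}\e^{-|z|^2/\eps}$ and its phase-space Laplacian, identify $\sum_{j}\calW_{\varphi_j}=(\tfrac2\eps|z|^2-d)\,\calW_{g_0}$, and convolve with $\calW_\psi$; all of your intermediate formulas are correct, and the normalisation statement is obtained as in the paper from Lemma~\ref{lem:spectro} (or, as you note, more cleanly from $\int\Delta\calH_\psi=0$). The one slip is the displayed ``clean relation'': your own formulas give
\[
\tfrac\eps4\Delta\calW_{g_0}=\bigl(\tfrac1\eps|z|^2-d\bigr)\calW_{g_0}
=\tfrac12\sum_{j=1}^d\calW_{\varphi_j}-\tfrac d2\,\calW_{g_0},
\]
with the factor $\tfrac12$ forced by matching the coefficient $\tfrac1\eps$ of $|z|^2$ against the $\tfrac2\eps$ in $\sum_j\calW_{\varphi_j}$ --- it does not come from adjusting the normalisation of the $\varphi_j$, which are already unit-norm first-order Hermite functions --- and with this correction the coefficients $1+\tfrac d2$ and $-\tfrac12$ drop out immediately.
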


\begin{proof} Since
\[
\Delta\calH_\psi = \Delta(\calW_{g_0}*\calW_\psi) =\Delta\calW_{g_0}* \calW_\psi,
\]
we just have to calculate the Laplacian of the Wigner function of the standard Gaussian $g_{0}$. 
We have $\calW_{g_0} = (\pi\eps)^{-d} \,\e^{-|z|^2/\eps}$ (by direct computation, or see the next subsection) and hence
\begin{align*}
\Delta \calW_{g_0}(z) &= %(\pi\eps)^{-d}\Delta \e^{-|z|^2/\eps} = 
%(\pi\eps)^{-d}\, \nabla^T \left(-\tfrac2\eps z\, \e^{-|z|^2/\eps}\right) \\
%&= 
(\pi\eps)^{-d} \,\tfrac2\eps\left(-2d + \tfrac{2}{\eps}|z|^2\right) \e^{-|z|^2/\eps}.
\end{align*}
By calculating the Wigner function $\calW_{\varphi_j}$ via Lemma~\ref{wig:lem:pos-mom} (or by invoking the Laguerre connection given in Theorem~\ref{theo:laguerre}),
\begin{align*}
\sum_{j=1}^d \calW_{\varphi_j}(z) 
&= -(\pi\eps)^{-d}\, \sum_{j=1}^d \left(1-\tfrac2\eps|z_j|^2\right) \e^{-|z|^2/\eps}\\ 
&= -(\pi\eps)^{-d} \left(2d-\tfrac2\eps|z|^2\right) \e^{-|z|^2/\eps} + d\,\calW_{g_0}(z).
\end{align*}
Therefore, 
\[
\Delta\calW_{g_0} = \tfrac2\eps\, \sum_{j=1}^d \calW_{\varphi_j} -\tfrac{2d}{\eps}\,\calW_{g_0},
\]
which proves the stated formula for $\calH_\psi^c$.
Since the Gaussian and the Hermite functions are all normalised to one, this formula and
Lemma~\ref{lem:spectro} yield the formula for the integral of $\calH_\psi^c$.
%$$
%\int_{\Rb^{2d}} \calH_\psi^{\rm c}(z)\D z = (1+\tfrac{d}{2}) \|\psi\|^2 - \tfrac12 \,d\,\|\psi\|^2 = \|\psi\|^2.
%$$
%\begin{align*}
%&\int_{\Rb^{2d}} (\calH_\psi-\tfrac\eps4\Delta\calH_\psi)(z) \D z \\
%&= 
%(1+\tfrac{d}{2}) \int_{\Rb^{2d}} \calH_\psi(z)\D z 
%- \tfrac12\sum_{j=1}^d \int_{\Rb^{2d}} (\calW_{\varphi_j}*\calW_\psi)(z) \D z \ = \|\psi\|^2.
%\end{align*}
\end{proof}

The classically propagated Husimi correction shares the norm and energy conservation obeyed by the 
Wigner function. 

\begin{corollary}[norm and energy]
The approximation provided by Egorov's theorem for the Husimi correction given in Corollary~\ref{cor:husimi} 
conserves norm and energy. 
\end{corollary}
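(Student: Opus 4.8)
\emph{Proof proposal.} The plan is to follow the template of the two earlier norm-and-energy conservation statements in this section (for the plain Egorov approximation and, before that, for variational Gaussians): norm and energy are the expectation values of the trivial observable $a\equiv 1$ and of the classical Hamilton function $h(q,p)=\tfrac12|p|^2+V(q)$, respectively, so it suffices to show that the classically propagated Husimi-correction functional
\[
\mathcal{E}_a(t) := \int_{\Rb^{2d}} a\bigl(\Phi^t(z)\bigr)\,\calH^{\rm c}_{\psi(0)}(z)\,\D z
\]
is independent of $t$ for these two choices of $a$.

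First I would treat the norm, taking $a\equiv 1$: then $a\circ\Phi^t\equiv 1$, so $\mathcal{E}_1(t)=\int_{\Rb^{2d}}\calH^{\rm c}_{\psi(0)}(z)\,\D z$, which equals $\|\psi(0)\|^2$ by Theorem~\ref{theo:spec} and is therefore constant in $t$. Next I would treat the energy, taking $a=h$: along the classical Hamiltonian flow one has energy conservation $h\circ\Phi^t=h$ (the same identity used earlier when writing $\partial_t(a\circ\Phi^t)=\{a\circ\Phi^t,h\}$), so $\mathcal{E}_h(t)=\int_{\Rb^{2d}}h(z)\,\calH^{\rm c}_{\psi(0)}(z)\,\D z$ is again manifestly constant. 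More conceptually, differentiating under the integral sign gives $\tfrac{d}{dt}\mathcal{E}_a(t)=\int_{\Rb^{2d}}\{a\circ\Phi^t,h\}(z)\,\calH^{\rm c}_{\psi(0)}(z)\,\D z$, which vanishes whenever $\{a\circ\Phi^t,h\}\equiv 0$; this holds both for $a\equiv 1$ (trivially) and for $a=h$ (since $\{h,h\}=0$).

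I do not expect a genuine obstacle here; the only point deserving a brief remark is that $h$ is unbounded, so Corollary~\ref{cor:husimi} as stated does not cover the energy observable. But the conservation assertion concerns the approximation functional $\mathcal{E}_a$ itself, and its time-invariance rests solely on the classical identity $h\circ\Phi^t=h$ together with the elementary finiteness of $\int_{\Rb^{2d}}h(z)\,\calH^{\rm c}_{\psi(0)}(z)\,\D z$ for Schwartz initial data and polynomially bounded $V$ — not on the pseudodifferential error estimates. Thus the statement follows at once from Theorem~\ref{theo:spec} and energy conservation of the classical flow, exactly as for the Wigner-function version in Corollary~\ref{cor:egorow}.
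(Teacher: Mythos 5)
Your proposal is correct and follows essentially the same argument as the paper: norm conservation from $1\circ\Phi^t\equiv 1$ together with the normalisation $\int\calH^{\rm c}_{\psi(0)}=\|\psi(0)\|^2$ of Theorem~\ref{theo:spec}, and energy conservation from the classical identity $h\circ\Phi^t=h$. Your additional remarks on the Poisson-bracket formulation and the unboundedness of $h$ are sound but not needed.
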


\begin{proof}
For the norm, that is the expectation of $1 = \op(1)$, the volume preservation of the 
classical flow together with the norm relation given in Theorem~\ref{theo:spec} yields
\[
%\int_{\Rb^{2d}}  \left(\calH^{\rm c}_{\psi(0)}\circ\Phi^{-t}\right)\!(z) \, d z \, =\,  
\int_{\Rb^{2d}}  \calH^{\rm c}_{\psi(0)}(z) \, d z \, =\, \|\psi(0)\|^2,\qquad t\in\Rb.
\]
For the energy, that is the observable $H=\op(h)$, we have by classical energy conservation that
$$
\int_{\Rb^{2d}}  h(\Phi^t(z)) \,\calH^{\rm c}_{\psi(0)}(z) \, d z  
$$
is independent of $t$.
\end{proof}

\subsection{Wigner functions for Gaussian wave packets}

We now explicitly calculate the only possible positive Wigner functions, 
namely those of Gaussian wave packets. 

\begin{proposition}[Gaussian Wigner function]\label{lem:WG}
We consider a normalized Gaussian wave packet 
\[
u(x) = (\pi\eps)^{-d/4} \det(\Im C)^{1/4}
 \exp(\tfrac{\I}{2\eps}(x-q)^T C(x-q) + \tfrac{\I}{\eps}p^T(x-q))
\]
with phase space centre $z=(q,p)\in\Rb^{2d}$ and complex symmetric width matrix 
$C\in\C^{d\times d}$ such that $\Im C$ is positive definite. 
Then, 
\[
\calW_u(\zeta) = (\pi\eps)^{-d} \exp(-\tfrac1\eps (\zeta-z)^T G(\zeta-z)),\qquad \zeta\in\Rb^{2d},
\]
where 
\[
G = \begin{pmatrix} \Im C + (\Re C)^T (\Im C)^{-1} \Re C & -(\Re C)^T (\Im C)^{-1} \\*[1ex]
-(\Im C)^{-1}\Re C & (\Im C)^{-1}\end{pmatrix}
\]
is a real symmetric, positive definite, symplectic matrix in $\Rb^{2d\times 2d}$.
\end{proposition}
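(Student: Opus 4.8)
The plan is to reduce the computation to an origin‑centred Gaussian by the translation covariance of the Wigner transform, then carry out the resulting (absolutely convergent, since $\Im C>0$) Gaussian integral explicitly, and finally read off the three algebraic properties of $G$ from a block factorisation.

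First I would reduce to $q=p=0$. Writing $u(x)=\e^{\I p\cdot(x-q)/\eps}\,v(x-q)$ with the origin‑centred Gaussian $v(x)=(\pi\eps)^{-d/4}\det(\Im C)^{1/4}\exp(\tfrac{\I}{2\eps}x^TCx)$ and substituting into the definition of $\calW_u$, the two plane‑wave factors coming from $\overline u$ and $u$ combine to $\e^{-\I p\cdot y/\eps}$, and one finds $\calW_u(\zeta)=\calW_v(\zeta-z)$ with $z=(q,p)$. Hence it suffices to show $\calW_v(\eta)=(\pi\eps)^{-d}\exp(-\tfrac1\eps\,\eta^TG\eta)$.

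Next I would evaluate $\calW_v(q',p')$. In the integrand the exponent equals $\tfrac1\eps$ times $-\tfrac{\I}{2}(q'+\tfrac12 y)^T\overline C(q'+\tfrac12 y)+\tfrac{\I}{2}(q'-\tfrac12 y)^T C(q'-\tfrac12 y)+\I\,p'\cdot y$. Expanding, using $C=C^T$ and the identities $C-\overline C=2\I\,\Im C$, $C+\overline C=2\,\Re C$, this collapses to $\tfrac1\eps\bigl[-q'^T\Im C\,q'-\tfrac14 y^T\Im C\,y+\I(p'-\Re C\,q')^Ty\bigr]$. Completing the square in $y$ and applying the complex Gaussian integral formula \cite[Theorem~1 in Appendix~A]{Fol89} (legitimate because $\Im C$ is positive definite) gives
\[
\calW_v(q',p')=(\pi\eps)^{-d}\exp\!\Bigl(-\tfrac1\eps\bigl[q'^T\Im C\,q'+(p'-\Re C\,q')^T(\Im C)^{-1}(p'-\Re C\,q')\bigr]\Bigr),
\]
once one checks that the accumulated powers of $\pi$ and $\eps$ reduce to $(\pi\eps)^{-d}$ and that the two factors $\det(\Im C)^{\pm1/2}$ cancel. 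Expanding the bracketed quadratic form in $(q',p')$ and using $(\Re C)^T=\Re C$ identifies it with $\eta^TG\eta$, $\eta=(q',p')$, for $G$ exactly as in the statement.

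For the properties of $G$: realness is immediate, and symmetry follows since $\Im C$ and $\Re C$ are symmetric, so $(\Re C)^T(\Im C)^{-1}\Re C$ and $(\Im C)^{-1}$ are symmetric and the two off‑diagonal blocks are mutual transposes. For positive definiteness and the symplectic relation $G^TJG=J$ I would set $A=(\Im C)^{1/2}$, $B=\Re C$ and
\[
L=\begin{pmatrix} A & 0\\ -A^{-1}B & A^{-1}\end{pmatrix}.
\]
A direct block multiplication gives $L^TL=G$; since $L$ is block triangular with invertible diagonal blocks it is invertible, so $G=L^TL$ is positive definite. A second block multiplication shows $LJL^T=J$ (the $B$‑terms cancel), hence $L$, and therefore $L^T$, is symplectic, and the product $G=L^TL$ of symplectic matrices is symplectic. (Positivity is also transparent from $\eta^TG\eta=q'^T\Im C\,q'+(p'-\Re C\,q')^T(\Im C)^{-1}(p'-\Re C\,q')$, which is $\ge 0$ with equality only at $\eta=0$.) The only genuine labour is the bookkeeping in the third step — tracking the complex quadratic form through the expansion, the shift of the integration variable, and the cancellation of the normalising constants; everything else is routine algebraic verification.
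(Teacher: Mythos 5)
Your proof is correct. The computation of $\calW_u$ is essentially the paper's: the authors also shift to $x_q=x-q$, $\xi_p=\xi-p$ and reduce the Fourier integral to a Gaussian with inverse width $(\Im C)^{-1}$ (they evaluate it by diagonalising $\Im C$ rather than by completing the square and citing the complex Gaussian formula, but that is immaterial). Where you genuinely diverge is in establishing that $G$ is symmetric, positive definite and symplectic. The paper defers this to a separate lemma that routes through Hagedorn's factorisation $C=PQ^{-1}$: it introduces $Z=\binom{Q}{P}$, proves from the relations $Q^TP-P^TQ=0$, $Q^*P-P^*Q=2\I\,\Id$ that $\Re(ZZ^*)$ is real symmetric, positive definite and symplectic with $\Im(ZZ^*)=J$, and then identifies $G=J^T\Re(ZZ^*)J$. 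Your block Cholesky factorisation $G=L^TL$ with $L=\bigl(\begin{smallmatrix} A & 0\\ -A^{-1}B & A^{-1}\end{smallmatrix}\bigr)$, $A=(\Im C)^{1/2}$, $B=\Re C$, together with the check $L^TJL=J$, is more elementary and self-contained — it needs no appeal to the $(Q,P)$ parametrisation at all — and it is in fact the very same factorisation the paper deploys later when proving invertibility of the thawed phase-function matrix. What the paper's longer route buys is the explicit formula $G=\bigl(\begin{smallmatrix}PP^* & -\Re(PQ^*)\\ -\Re(QP^*) & QQ^*\end{smallmatrix}\bigr)$ in Hagedorn's variables, which is then reused for the phase-space factorisation of the Wigner functions of higher Hagedorn wave packets; for the proposition as stated, your argument is shorter.
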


\begin{proof}
For $x,\xi\in\Rb^d$, we denote 
\[
x_q = x-q,\quad \xi_p = \xi-p,
\] 
and calculate
\begin{align*}
&\overline u(x+\tfrac12 y) u(x-\tfrac12 y) = (\pi\eps)^{-d/2} \det(\Im C)^{1/2}\\*[1ex]
&\quad\exp\!\left(\tfrac{\I}{2\eps}(x_q^T(C-\bar C)x_q - x_q^T(C+\bar C)y +\tfrac14 y^T(C-\bar C)y) 
-\tfrac{\I}{\eps}p^T y\right).
\end{align*}
Therefore,
\begin{align*}
\calW_u(x,\xi) &= (2\pi\eps)^{-d} (\pi\eps)^{-d/2} \det(\Im C)^{1/2} \exp(-\tfrac{1}{\eps} x_q^T\Im C x_q) \\
&\quad \int_{\Rb^d} \exp(-\tfrac{1}{4\eps} y^T \Im C y + \tfrac{\I}{\eps} y^T(\xi_p -\Re C x_q))\D y.
\end{align*}
We denote $k = \xi_p -\Re C x_q$ and diagonalize the positive definite matrix 
\[
\Im C = S^T {\rm diag}(\lambda_1,\ldots,\lambda_d) S
\] 
by an orthogonal matrix $S$. 
This allows us to write the Fourier integral as
\begin{align*}
&\int_{\Rb^d} \exp(-\tfrac{1}{4\eps} y^T \Im C y + \tfrac{\I}{\eps} y^T(\xi_p -\Re C x_q))\D y \\
&= \prod_{n=1}^d \int_{\Rb} \exp(-\tfrac{\lambda_n}{4\eps}y_n^2 + \tfrac{\I}{\eps}\, y_n (Sk)_n)\D y_n\\
&= \prod_{n=1}^d \sqrt{\tfrac{4\pi\eps}{\lambda_n}} \exp(-\tfrac{1}{\lambda_n\eps}\,(Sk)_n^2)\\*[1ex]
&= (4\pi\eps)^{d/2} \det(\Im C)^{-1/2} \exp(-\tfrac1\eps k^T (\Im C)^{-1} k).
\end{align*}
We thus obtain
\begin{align*}
\calW_u(x,\xi) &= (\pi\eps)^{-d}\exp(-\tfrac{1}{\eps} x_q^T \Im C x_q) \\
&\qquad
\exp(-\tfrac1\eps (\xi_p -\Re C x_q)^T (\Im C)^{-1} (\xi_p -\Re C x_q)),
\end{align*}
which implies that $\calW_u$ is a real-valued Gaussian with covariance matrix~$G$. 
The proof that $G$ is positive definite and symplectic is given in Lemma~\ref{lem:G}.
\end{proof}

For a better understanding of the special properties of the covariance matrix just obtained we 
continue the analysis of symplectic matrix relations previously started in Lemma~\ref{lem:hag-rel}.
%We recall that we have used the matrix
%\[
% J=\begin{pmatrix} 0 & -I_d \\ I_d & 0
%\end{pmatrix}
%\]
%for defining our symplectic structure on $\Rb^{2d}$.

\begin{lemma}[symplecticity]\label{lem:sympl}
Let $Q,P\in\C^{d\times d}$ be matrices that satisfy Hagedorn's conditions
$Q^TP- P^T Q = 0$ and $Q^*P-P^*Q = \bch 2\I\, \Id_d\ech$.
We define the rectangular matrix 
\[
Z = \begin{pmatrix}Q\\ P\end{pmatrix}\in\C^{2d\times d}.
\]
Then, 
\begin{equation}\label{eq:Zsympl}
Z^T J^T Z = 0\quad\text{and}\quad Z^* J^T Z = 2\I I_d,
\end{equation}
and $ZZ^* = \Re\!(ZZ^*) + \I\, \Im\!(ZZ^*)$, where 
\[
\Im\!(ZZ^*) = J, 
\]
and $\Re\!(ZZ^*)$ is a real symmetric, positive definite, symplectic matrix.
\end{lemma}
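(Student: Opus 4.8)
The plan is to verify the two displayed identities in \eqref{eq:Zsympl} by direct substitution of Hagedorn's relations, and then to read off the structure of $ZZ^*$ from them together with the symplecticity arguments already developed in Lemma~\ref{lem:hag-rel}. First I would compute
\[
Z^T J^T Z = \begin{pmatrix} Q^T & P^T\end{pmatrix}\begin{pmatrix} 0 & \Id_d\\ -\Id_d & 0\end{pmatrix}\begin{pmatrix} Q\\ P\end{pmatrix} = Q^T P - P^T Q = 0
\]
by the first Hagedorn relation, and likewise
\[
Z^* J^T Z = Q^* P - P^* Q = 2\I\,\Id_d
\]
by the second. These are one-line calculations once $J^T=-J=\left(\begin{smallmatrix}0 & \Id_d\\ -\Id_d & 0\end{smallmatrix}\right)$ is written out.

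Next I would treat the $2d\times 2d$ matrix $ZZ^*$. Its imaginary part is $\Im(ZZ^*) = \tfrac1{2\I}(ZZ^* - (ZZ^*)^*) = \tfrac1{2\I}(ZZ^* - ZZ^*)$; this is trivially zero, so that cannot be the right route — instead the claim $\Im(ZZ^*)=J$ must be extracted differently. The correct observation is that $ZZ^*$ is Hermitian, hence $\Im(ZZ^*)$ should be interpreted as the skew-symmetric part $\tfrac1{2}(ZZ^* - \overline{ZZ^*}) = \tfrac1{2}(ZZ^* - \overline Z\,\overline Z^{\,*})$ coming from the real/imaginary split of the complex entries; writing $Z = \Re Z + \I\,\Im Z$ one gets $ZZ^* = (\Re Z\,\Re Z^T + \Im Z\,\Im Z^T) + \I(\Im Z\,\Re Z^T - \Re Z\,\Im Z^T)$, so $\Re(ZZ^*)$ and $\Im(ZZ^*)$ are the symmetric and skew-symmetric real matrices in this decomposition. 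Then I would relate $\Im Z\,\Re Z^T - \Re Z\,\Im Z^T$ to the real $2d\times 2d$ matrix $Y = \left(\begin{smallmatrix}\Re Q & \Im Q\\ \Re P & \Im P\end{smallmatrix}\right)$ of Lemma~\ref{lem:hag-rel}: a short computation shows $\Im(ZZ^*) = Y J Y^T$, and since $Y$ is symplectic (equivalently $YJY^T = J$, which follows from $Y^TJY=J$ for real $Y$), this equals $J$.

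For the final assertion about $\Re(ZZ^*)$, I would argue as follows. Positive definiteness: for $v\in\Rb^{2d}$, $v^T \Re(ZZ^*) v = \Re(v^T ZZ^* v) = \Re(\|Z^* v\|^2) = \|Z^* v\|^2 \ge 0$, with equality only if $Z^* v = 0$; but $Z$ has rank $d$ with $\Im(ZZ^*) = J$ invertible forcing $ZZ^*$ invertible, hence $\Re(ZZ^*) + \I J$ invertible — one then shows the kernel of $Z^*$ on real vectors is trivial because $Z^*v=0$ together with $\Im(ZZ^*)=J$ would make $Jv=0$, impossible for $v\ne0$. (Alternatively, $\Re(ZZ^*) = YY^T$ is manifestly positive definite since $Y$ is invertible.) Symplecticity: from $YJY^T = J$ we read off directly that $YY^T = \Re(ZZ^*)$ is symplectic, i.e.\ $(YY^T)J(YY^T)^T = Y(Y^TJY)Y^T = YJY^T = J$; symmetry is obvious. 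The main obstacle I anticipate is purely notational — keeping the "imaginary part of a Hermitian matrix" convention straight and correctly identifying $ZZ^*$ with $YY^T + \I J$, after which everything reduces to the already-proven symplecticity of $Y$; there is no real analytic difficulty.
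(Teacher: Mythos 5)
Your proof is correct, and it takes a somewhat different computational route from the paper's. The paper never leaves the complex picture: it forms $(Z,\bar Z)$, uses the relations \eqref{eq:Zsympl} (and their conjugates) to compute $\tfrac{1}{2\I}(ZZ^*-\bar Z Z^T)\,J^T(Z,\bar Z)=(Z,\bar Z)$ and hence $\Im(ZZ^*)=J$, verifies symplecticity of $\Re(ZZ^*)$ by expanding $\tfrac14(\bar Z Z^T+ZZ^*)J(ZZ^*+\bar Z Z^T)$ with the same relations, and gets positive definiteness from $z^T\Re(ZZ^*)z=|Z^*z|^2$ together with $\Im(ZZ^*)z=Jz$. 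You instead split $Z=\Re Z+\I\,\Im Z$ and identify $ZZ^*=YY^T+\I\,YJY^T$ with $Y=(\Re Z,\Im Z)$ the real matrix of Lemma~\ref{lem:hag-rel}; everything then reduces to the single fact $Y^TJY=J$ (hence $YJY^T=J$ and $YY^T$ symmetric, positive definite and symplectic). Your factorisation is arguably cleaner and makes the structural content more transparent, at the price of invoking the equivalence $Y^TJY=J\Leftrightarrow YJY^T=J$; the paper's version buys a self-contained calculation that uses only \eqref{eq:Zsympl}. Two small remarks: your interim formula $\tfrac12(ZZ^*-\overline{ZZ^*})$ for the imaginary part is missing the factor $\tfrac1{2\I}$ (the subsequent explicit decomposition is correct, so nothing downstream is affected), and your first positive-definiteness argument (via $Z^*v=0\Rightarrow Jv=0$) is in fact exactly the paper's; the parenthetical alternative $\Re(ZZ^*)=YY^T$ with $Y$ invertible is the cleaner of the two and fits your framework better.
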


\begin{proof}
We recall from Lemma~\ref{lem:hag-rel} that $(\Re\!(Z) , \Im\!(Z))$ is a symplectic matrix. 
Hence, $(Z, \bar Z)$ is an invertible matrix. We observe that
\begin{align*}
\Im\!(ZZ^*)J^T (Z, \bar Z) &= \tfrac{1}{2\I}(ZZ^*-\bar Z Z^T) J^T (Z, \bar Z) \\*[1ex]
&= (Z, \bar Z),
\end{align*}
which implies $\Im\!(ZZ^*) = J$. 
The real part of $ZZ^*$ is also symplectic, since
\begin{align*}
\Re\!(ZZ*)^T J \Re\!(ZZ^*) &= \tfrac14(\bar Z Z^T+ZZ^*)J(ZZ^*+\bar Z Z^T)\\*[1ex]
&=\tfrac14(-2\I ZZ^*+2\I \bar Z Z^T) \\*[1ex]
&= \Im\!(ZZ^*) = J.
\end{align*}
Moreover, for all $z\in\Rb^{2d}$,
\begin{align*}
z^T \Re\!(ZZ^*)z &= \tfrac12 z^T(ZZ^*z + \bar ZZ^T z) \\*[1ex]
&= |Z^*z|^2\ge0.
\end{align*}
If $Z^*z=0$, then $ZZ^*z=0$ and $\Im\!(ZZ^*)z=0$, which implies $z=0$. Hence, the real part of $ZZ^*$ 
is a positive definite matrix.  
\end{proof}

The explicit form of the real and the imaginary part of $ZZ^*$ allows us to express   
the covariance matrix of the Gaussian Wigner function in terms of  Hagedorn's parametrisation.

\begin{lemma}[covariance matrix]\label{lem:G}
Let $C\in\C^{d\times d}$ be a complex symmetric matrix with $\Im(C)>0$ and 
$C=PQ^{-1}$ a factorisation according to Lemma~\ref{lem:hag-rel}. 
Then, the covariance matrix $G$
of the Gaussian Wigner function in Lemma~\ref{lem:WG} can be rewritten as
\begin{align*}
G &= J^T \Re\!(Z Z^*)J = \begin{pmatrix}PP^* & -\Re\!(PQ^*) \\*[1ex] -\Re\!(QP^*) & QQ^*\end{pmatrix}.
\end{align*}
\end{lemma}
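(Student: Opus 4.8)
The plan is to start from the closed form of $G$ in Proposition~\ref{lem:WG}, substitute Hagedorn's factorisation $C=PQ^{-1}$, and match the $d\times d$ blocks. Throughout I write $R=\Re C$ and $S=\Im C$; both are real symmetric with $S>0$, and the symmetry $C=C^{T}$ gives $R=R^{T}$, $S=S^{T}$ and $C^{*}=\bar C$.

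First I would record the needed consequences of the symplecticity relations \eqref{hag-rel}. Lemma~\ref{lem:hag-rel} gives $QQ^{*}=(\Im C)^{-1}=S^{-1}$, and an entirely analogous computation (the one carried out in Section~\ref{sec:csg}) yields $PP^{*}=(\Im(-C^{-1}))^{-1}$; in particular $QQ^{*}$ and $PP^{*}$ are real. Since $P=CQ$ and $P^{*}=Q^{*}\bar C$, I then compute $PQ^{*}=CQQ^{*}=CS^{-1}$ and $PP^{*}=CQQ^{*}\bar C=CS^{-1}\bar C$. Expanding $C=R+\I S$, $\bar C=R-\I S$ gives $CS^{-1}\bar C=(R+\I S)S^{-1}(R-\I S)=RS^{-1}R+S$, which is manifestly real symmetric, so $PP^{*}=S+RS^{-1}R=\Im C+(\Re C)^{T}(\Im C)^{-1}\Re C$ --- the $(1,1)$-block of $G$. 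Similarly $\Re(PQ^{*})=\Re(CS^{-1})=RS^{-1}=(\Re C)^{T}(\Im C)^{-1}$ and $\Re(QP^{*})=\Re((PQ^{*})^{*})=\Re(S^{-1}\bar C)=S^{-1}R=(\Im C)^{-1}\Re C$, which are the off-diagonal blocks of $G$ up to sign, while $QQ^{*}=S^{-1}=(\Im C)^{-1}$ is the $(2,2)$-block. This establishes
\[
G=\begin{pmatrix} PP^{*} & -\Re(PQ^{*})\\ -\Re(QP^{*}) & QQ^{*}\end{pmatrix}.
\]

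It remains to identify this with $J^{T}\Re(ZZ^{*})J$. From $Z=\begin{pmatrix}Q\\P\end{pmatrix}$ we get $ZZ^{*}=\begin{pmatrix}QQ^{*}&QP^{*}\\ PQ^{*}&PP^{*}\end{pmatrix}$, so, using that $QQ^{*}$ and $PP^{*}$ are real, $\Re(ZZ^{*})=\begin{pmatrix}QQ^{*}&\Re(QP^{*})\\ \Re(PQ^{*})&PP^{*}\end{pmatrix}$. A one-line block computation with $J=\begin{pmatrix}0&-\Id\\ \Id&0\end{pmatrix}$ shows that $J^{T}\begin{pmatrix}A&B\\C&D\end{pmatrix}J=\begin{pmatrix}D&-C\\-B&A\end{pmatrix}$ for any $2d\times 2d$ block matrix; applying this to $\Re(ZZ^{*})$ reproduces exactly the displayed matrix, hence $G=J^{T}\Re(ZZ^{*})J$. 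This is consistent with Lemma~\ref{lem:sympl} and Proposition~\ref{lem:WG}, since $\Re(ZZ^{*})$ is real symmetric, positive definite and symplectic, and these properties are preserved under the symplectic conjugation $X\mapsto J^{T}XJ$.

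The whole argument is bookkeeping; the only mild traps are to remember that $QQ^{*}$ and $PP^{*}$ are automatically real while $PQ^{*}$ is not, and to carry the transposes induced by $C=C^{T}$ correctly when comparing, for instance, $RS^{-1}$ with $(\Re C)^{T}(\Im C)^{-1}$. I do not expect a genuine obstacle.
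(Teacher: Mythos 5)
Your proof is correct and, like the paper's, it is a direct block computation that matches the formula for $G$ from Proposition~\ref{lem:WG} against blocks built from $Q$ and $P$; however, the algebra is organised in the opposite direction and rests on different inputs. The paper starts from the blocks of $G$, writes $\Re C=\tfrac12(PQ^{-1}+Q^{-*}P^*)$, and simplifies $(\Im C)^{-1}\Re C$ and $(\Re C)^T(\Im C)^{-1}\Re C$ using the commutation relation $Q^*P-P^*Q=2\I\,\Id$; this yields the intermediate form $J^TZZ^*J-\I J$, whose identification with $J^T\Re\!(ZZ^*)J$ then requires $\Im\!(ZZ^*)=J$ from Lemma~\ref{lem:sympl}. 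You instead substitute $P=CQ$ and $QQ^*=(\Im C)^{-1}$ into $PP^*$, $PQ^*$, $QP^*$ and expand in $\Re C$ and $\Im C$, which lands directly on the real parts; the only consequence of the symplecticity relations you actually need is $QQ^*=(\Im C)^{-1}$, and Lemma~\ref{lem:sympl} is never invoked. Both routes are a few lines; yours is marginally more economical in its prerequisites, while the paper's makes the structural link to $ZZ^*$ and Lemma~\ref{lem:sympl} more visible. One cosmetic remark: your appeal to $PP^*=(\Im(-C^{-1}))^{-1}$ is never used --- the identity $PP^*=C(\Im C)^{-1}\bar C$ that you derive from $P=CQ$ already does the job.
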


\begin{proof}
By Lemma~\ref{lem:hag-rel}, $(\Im C)^{-1} = QQ^*$. Using $Q^*P-P^*Q=2\I I_d$, we simplify the 
matrix blocks of $G$ according to
\begin{align*}
&(\Im C)^{-1}\,\Re C = 
\tfrac12 QQ^* (PQ^{-1} + Q^{-*}P^*) \\
&\quad= \tfrac12 \left(Q(2\I I_d + P^*Q)Q^{-1} + QP^*\right) \\
&\quad= \I I_d + QP^*
\end{align*}
and
\begin{align*}
&(\Re C)^T (\Im C)^{-1} \Re C =\tfrac12 (PQ^{-1} + Q^{-*}P^*)(\I I_d + QP^*)\\
&\quad= \tfrac12(\I PQ^{-1} + PP^* + \I Q^{-*}P^* + Q^{-*}(Q^*P-2\I I_d)P^*)\\
&\quad=\tfrac12(\I PQ^{-1} + 2PP^* - \I Q^{-*}P^*)\\
&\quad=PP^* -\Im C.
\end{align*}
This implies
\begin{align*}
G &= \begin{pmatrix} 
PP^* & \I I_d - PQ^*\\
-\I I_d - QP^* & QQ^*
\end{pmatrix}\\*[1ex]
&= J^T ZZ^* J - iJ = J^T\Re\!(ZZ^*) J,
\end{align*}
where the last equation uses Lemma~\ref{lem:sympl}. 
\end{proof}

\subsection{Wigner functions for Hagedorn's semiclassical wave packets}
We now revisit the raising procedure for Hagedorn's semiclassical wave packets,
\[
\varphi_{k+\langle j\rangle} = \frac{1}{\sqrt{k_j+1}} A_j^\dagger \varphi_k,
\qquad k\in\N_0^d,\qquad j=1,\ldots,d,
\]
and reformulate it in terms of Wigner functions. We rewrite the raising operator 
\[
A^\dagger[q,p,Q,P] = \displaystyle 
\frac \I{\sqrt{2\eps}}\Bigl( P^*(\widehat q - q) -
Q^*(\widehat p - p) \Bigr)
\]
in phase space notation as 
\begin{equation}\label{W:ladder}
A^\dagger[z,Z] =  \frac{\I}{\sqrt{2\eps}} \,Z^*J(\widehat z-z)
\end{equation}
with the phase space centre, the width matrices, and the position and momentum operators denoted by 
\[
z = \begin{pmatrix}q\\p\end{pmatrix},\quad 
Z = \begin{pmatrix}Q\\P\end{pmatrix},\quad
\widehat z = \begin{pmatrix}\widehat q\\ \widehat p\end{pmatrix}.
\] 
We first calculate the action of the raising operator on arbitrary Wigner functions. 

\begin{lemma}[raising of a Wigner function]\label{prop:Wladder} 
Let $A^\dagger = A^\dagger[z,Z]$ be the raising operator given in \eqref{W:ladder} and denote 
$\calR = \calR[z,Z]$, 
\[
\calR = -\frac{\I}{\sqrt{2\eps}} \ Z^T (J(\zeta-z) - \tfrac{\I\eps}{2} \nabla_\zeta).
\]
Then, for all Schwartz functions $\psi:\Rb^d\to\C$ and all $m=1,\ldots,d$,
\[
\calW_{A_m^\dagger\psi} = \calR_m \overline{\calR_m} \, \calW_\psi. 
\]
\end{lemma}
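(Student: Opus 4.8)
The plan is to pass to the level of Weyl symbols, where the raising of a Wigner function becomes a composition of first-order differential operators. The starting point is the identification of the Wigner function as the Weyl symbol of the rank-one operator $|\psi\rangle\langle\psi|$: comparing the integral kernel $\psi(x)\overline\psi(y)$ of $|\psi\rangle\langle\psi|$ with $\kappa_a$ from \eqref{def:op} and inverting the partial Fourier transform gives $|\psi\rangle\langle\psi|=(2\pi\eps)^d\,\op(\calW_\psi)$ for $\psi\in\calS(\Rb^d)$ (a direct kernel computation, in the vein of the one preceding Theorem~\ref{theo:wigner function}). Since $A_m^\dagger$ and $A_m$ are mutually adjoint on $\calS$ by Lemma~\ref{V:lem:ladder}, we have $|A_m^\dagger\psi\rangle\langle A_m^\dagger\psi|=A_m^\dagger\,|\psi\rangle\langle\psi|\,A_m$, and hence
\[
\op\big(\calW_{A_m^\dagger\psi}\big)=A_m^\dagger\,\op(\calW_\psi)\,A_m .
\]
As $a\mapsto\op(a)$ is injective on Schwartz symbols (immediate from \eqref{def:op} by inverting the partial Fourier transform), it suffices to compute how left multiplication by $A_m^\dagger$ and right multiplication by $A_m$ act on a Weyl symbol.

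The one computational ingredient I would establish is the symbol rule for the phase-space shift operators. Writing $\widehat z=(\widehat q,\widehat p)$ and letting $\zeta$ denote the symbol variable, the identity $(x-y)_je^{\I p\cdot(x-y)/\eps}=-\I\eps\,\partial_{p_j}e^{\I p\cdot(x-y)/\eps}$ (and the analogous differentiation in $x$ for the momentum components of $\widehat z$) followed by one integration by parts in \eqref{def:op} — the same manipulation that underlies Lemma~\ref{wig:lem:pos-mom} — gives, for every symbol $b$ and $j=1,\dots,2d$,
\[
(\widehat z_j-z_j)\,\op(b)=\op\!\Big(\big((\zeta-z)_j+\tfrac{\I\eps}{2}(J^T\nabla_\zeta)_j\big)b\Big),
\]
\[
\op(b)\,(\widehat z_j-z_j)=\op\!\Big(\big((\zeta-z)_j-\tfrac{\I\eps}{2}(J^T\nabla_\zeta)_j\big)b\Big).
\]
Substituting $A_m^\dagger=\tfrac{\I}{\sqrt{2\eps}}\big(Z^*J(\widehat z-z)\big)_m$ and its adjoint $A_m=-\tfrac{\I}{\sqrt{2\eps}}\big(Z^TJ(\widehat z-z)\big)_m$ (cf.\ the passage from \eqref{V:ladder} to \eqref{W:ladder}), and using $JJ^T=\Id$ to collapse $Z^TJJ^T\nabla_\zeta=Z^T\nabla_\zeta$ and $Z^*JJ^T\nabla_\zeta=Z^*\nabla_\zeta$, one finds that right multiplication by $A_m$ sends a symbol $\sigma$ to $\calR_m\sigma$, while left multiplication by $A_m^\dagger$ sends it to $\tfrac{\I}{\sqrt{2\eps}}\big(Z^*\big(J(\zeta-z)+\tfrac{\I\eps}{2}\nabla_\zeta\big)\big)_m\sigma$. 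Since $Z^*=\overline{Z^T}$ and $J,\zeta,z$ are real, the latter operator is exactly the complex conjugate $\overline{\calR_m}$ of $\calR_m$; therefore $\op(\calW_{A_m^\dagger\psi})=\op\big(\overline{\calR_m}\,\calR_m\,\calW_\psi\big)$.

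It remains to interchange the two factors. Both $\calR_m=-\tfrac{\I}{\sqrt{2\eps}}\big(v^TJ(\zeta-z)-\tfrac{\I\eps}{2}\,v\cdot\nabla_\zeta\big)$ and $\overline{\calR_m}=\tfrac{\I}{\sqrt{2\eps}}\big(\overline v^TJ(\zeta-z)+\tfrac{\I\eps}{2}\,\overline v\cdot\nabla_\zeta\big)$, with $v$ the $m$th column of $Z$, are first-order differential operators with affine coefficients, and only their multiplication parts fail to commute with their gradient parts; a one-line evaluation shows that $[\calR_m,\overline{\calR_m}]$ is a scalar proportional to $\overline v^TJ^Tv+v^TJ^T\overline v$, which vanishes because $v^TJ^T\overline v=(v^TJ^T\overline v)^{T}=\overline v^TJv=-\overline v^TJ^Tv$ by antisymmetry of $J$. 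Hence $\overline{\calR_m}\,\calR_m\,\calW_\psi=\calR_m\,\overline{\calR_m}\,\calW_\psi$ (both functions being Schwartz), and injectivity of the Weyl quantisation yields $\calW_{A_m^\dagger\psi}=\calR_m\overline{\calR_m}\,\calW_\psi$. I expect the main obstacle to be purely bookkeeping: keeping the signs and complex conjugations straight so that left and right multiplication are correctly identified with $\overline{\calR_m}$ and $\calR_m$, and noticing that the (structure-free, Hagedorn-relation-free) commutator identity is precisely what converts the order $\overline{\calR_m}\calR_m$ produced by the composition into the stated order $\calR_m\overline{\calR_m}$.
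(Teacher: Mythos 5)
Your argument is correct, and it reaches the result by a route that is packaged differently from the paper's, although the two are close relatives. The paper works directly with the bilinear Wigner--Moyal transform $\calW_{\psi,\phi}$, computes $\calW_{\widehat z\psi,\phi} = (\zeta+\tfrac{\I\eps}{2}J\nabla_\zeta)\calW_{\psi,\phi}$ by the same midpoint-splitting and integration by parts that you perform on the Weyl kernel, deduces $\calW_{A_m^\dagger\psi,\phi}=\calR_m\calW_{\psi,\phi}$ by antilinearity in the first argument, and then obtains the factors already in the order $\calR_m\overline{\calR_m}$ from the conjugation symmetry $\calW_{\psi,\phi}=\overline{\calW_{\phi,\psi}}$ together with the reality of $\calW_\psi$; no commutator is needed. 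You instead identify $\calW_\psi$ as $(2\pi\eps)^{-d}$ times the Weyl symbol of $|\psi\rangle\langle\psi|$, write $|A_m^\dagger\psi\rangle\langle A_m^\dagger\psi|=A_m^\dagger|\psi\rangle\langle\psi|A_m$, and invoke the exact first-order Bopp-shift rules for left and right composition with affine symbols — these rules are verbatim the same kernel manipulation the paper uses inside its proof (and in Lemma~\ref{wig:lem:pos-mom}), just stated at the level of operator symbols rather than of the cross-Wigner transform. What your framing buys is a conceptually clean reduction to standard Weyl calculus and injectivity of quantisation; what it costs is the extra step of converting $\overline{\calR_m}\calR_m$ into $\calR_m\overline{\calR_m}$. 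Your commutator computation $[\calR_m,\overline{\calR_m}]=0$ is correct (and uses nothing beyond the antisymmetry of $J$), but note that this reordering is in fact automatic: computing $A_m^\dagger\,\op(\calW_\psi)\,A_m$ by first applying the left factor and then the right factor yields the symbol $\calR_m\overline{\calR_m}\calW_\psi$ directly, so associativity of operator composition already gives $\overline{\calR_m}\calR_m\calW_\psi=\calR_m\overline{\calR_m}\calW_\psi$ without a separate verification. All the individual sign and conjugation checks in your derivation (the identification of right multiplication by $A_m$ with $\calR_m$, of left multiplication by $A_m^\dagger$ with $\overline{\calR_m}$, and the collapse $Z^TJJ^T\nabla_\zeta=Z^T\nabla_\zeta$) are accurate.
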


\begin{proof} 
As an auxiliary tool for our calculation, we use the bilinear Wigner--Moyal transform 
of two functions $\psi,\phi:\Rb^d\to\C$, 
\[
\calW_{\psi,\phi}(\zeta) = (2\pi\eps)^{-d} \int_{\Rb^d}  \overline{\psi}(x+\tfrac12 y)\phi(x-\tfrac12 y) 
\e^{\I \xi\cdot y/\eps}\D y
\]
for all $\zeta=(x,\xi)\in\Rb^{2d}$. 
With a slight abuse of vectorial notation, 
\begin{align*}
\calW_{\widehat q\psi,\phi}(\zeta)
&= (2\pi\eps)^{-d} \int_{\Rb^d}  (x+\tfrac12y)\overline{\psi}(x+\tfrac12 y)\phi(q-\tfrac12 y) 
\e^{\I \xi\cdot y/\eps}\D y\\
&= (x + \tfrac{\eps}{2\I}\partial_\xi) \calW_{\psi,\phi}(\zeta).
\end{align*}
Moreover, by the product rule,
\[
-\I\eps\partial_x\, \calW_{\psi,\phi}(\zeta) = -\calW_{\widehat p\psi,\phi}(\zeta) 
+ \calW_{\psi,\widehat p\phi}(\zeta).
\]
By partial integration,
\begin{align*}
\calW_{\widehat p\psi,\phi}(\zeta) &= 
(2\pi\eps)^{-d} \int_{\Rb^d}  2\I\eps\,\partial_y\overline{\psi}(x+\tfrac12 y)\ \phi(x-\tfrac12 y) 
\e^{\I \xi\cdot y/\eps}\D y\\
&= -\calW_{\psi,\widehat p\phi}(\zeta) +2\xi\, \calW_{\psi,\phi}(\zeta).
\end{align*}
Therefore, 
\begin{align*}
\calW_{\widehat q\psi,\phi}(\zeta) &= (x - \tfrac{\I\eps}{2}\partial_\xi) \calW_{\psi,\phi}(\zeta),\\
\calW_{\widehat p\psi,\phi}(\zeta) &= (\xi + \tfrac{\I\eps}{2}\partial_x)\calW_{\psi,\phi}(\zeta),
\end{align*}
which yields
\[
\calW_{\widehat z\psi,\phi}(\zeta) = (\zeta+\tfrac{\I\eps}{2}J\nabla_\zeta) \calW_{\psi,\phi}(\zeta).
\]
Since the Wigner--Moyal transform is anti-linear for the first argument, we then obtain 
\[
\calW_{A^\dagger_m\psi,\phi} = \calR_m\,\calW_{\psi,\phi}.
\]
Next we observe that $\calW_{\psi,\phi} = \overline{\calW_{\phi,\psi}}$,
such that
\[
\calW_{A^\dagger_m\psi} =\calR_m\, \calW_{\psi,A^\dagger_m\psi} =  \calR_m \overline{\calR_m}\, \calW_\psi.
\]
\end{proof}

The raising operator $\calR$ contains the affine map $\zeta\mapsto Z^T J(\zeta-z)$ 
whose gradient inherits the isotropy and the normalisation property of the rectangular matrix $Z$ 
in the following sense.

\begin{lemma}[affine map]\label{lem:affine} 
We consider $z\in\Rb^{2d}$ and $Z\in\C^{2d\times d}$ satisfying the relations~\eqref{eq:Zsympl}.
Define the affine map
\[
\ell: \Rb^{2d}\to\C^{d},\quad \ell(\zeta) = Z^T J(\zeta-z).
\]
Then, for all $m,n=1,\ldots, d$,
\[
(Ze_m)^T\, \nabla_\zeta \ell_n(\zeta) = 0\quad\text{and}\quad
(Ze_m)^*\, \nabla_\zeta \ell_n(\zeta) = 2\I\, \delta_{mn}.
\]
Moreover, 
\[
|\ell(\zeta)|^2 = \zeta^T G\zeta,\qquad \zeta\in\Rb^{2d}.
\]
\end{lemma}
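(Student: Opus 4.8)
The plan is to verify the three identities directly from the definitions of $\ell$ and the relations \eqref{eq:Zsympl}, exploiting that $\ell$ is affine so that its gradient is constant in $\zeta$. Write $\ell(\zeta) = Z^T J(\zeta - z)$, so that componentwise $\ell_n(\zeta) = (Ze_n)^T J(\zeta-z) = e_n^T Z^T J(\zeta-z)$. Differentiating, $\nabla_\zeta \ell_n(\zeta) = J^T Z e_n$, which is independent of $\zeta$. (Here I use $J^T = -J$, but it is cleanest to keep $J^T$ in the formula.)

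First I would compute $(Ze_m)^T \nabla_\zeta \ell_n(\zeta) = (Ze_m)^T J^T (Ze_n) = e_m^T (Z^T J^T Z) e_n$. By the first relation in \eqref{eq:Zsympl}, namely $Z^T J^T Z = 0$, this vanishes for all $m,n$, giving the first claimed identity. Second, $(Ze_m)^* \nabla_\zeta \ell_n(\zeta) = (Ze_m)^* J^T (Ze_n) = e_m^T (Z^* J^T Z) e_n$; by the second relation in \eqref{eq:Zsympl}, $Z^* J^T Z = 2\I\, I_d$, so this equals $2\I\,\delta_{mn}$, which is the second identity.

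For the last identity, $|\ell(\zeta)|^2 = \ell(\zeta)^* \ell(\zeta) = (\zeta-z)^T J^T \overline{Z} Z^T J(\zeta-z)$; wait — more carefully, since $\zeta - z$ is real, $\ell(\zeta)^* = (\zeta-z)^T J^T \overline{Z}$ and $\ell(\zeta) = Z^T J(\zeta-z)$, so $|\ell(\zeta)|^2 = (\zeta-z)^T J^T (\overline{Z} Z^T) J (\zeta-z)$. Using $\overline{Z} Z^T = \overline{ZZ^*}$ and the fact from Lemma~\ref{lem:sympl} that $\Im(ZZ^*) = J$, one has $\overline{ZZ^*} = \Re(ZZ^*) - \I J$; the skew-symmetric part $-\I J$ contributes $-\I (\zeta-z)^T J^T J (\zeta-z)$, and since $J^T J = I_{2d}$ this is a purely imaginary multiple of $|\zeta-z|^2$, whereas $|\ell(\zeta)|^2$ is real, so that term must cancel — which it does because $J^T J$ is symmetric and appears with coefficient $-\I$, forcing it to drop when we take the real quantity; the honest way is to note $(\zeta-z)^T J^T J(\zeta-z)$ is real, so $|\ell(\zeta)|^2$ being real forces us to have miscounted, and indeed $\overline{ZZ^*}$ symmetric part is $\Re(ZZ^*)$ and the antisymmetric part $-\I J$ gives zero in a quadratic form with a \emph{real} vector only if combined correctly — so I would instead just write $|\ell(\zeta)|^2 = (\zeta-z)^T J^T \Re(\overline{Z}Z^T) J(\zeta-z)$ directly, invoking that the $\I J$ piece sandwiched by $J^T(\cdot)J$ with a real vector yields a term $\pm\I |\zeta-z|^2$ that cannot survive in a real number, hence is absent. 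Then $J^T \Re(ZZ^*) J = G$ by Lemma~\ref{lem:G} (recall $G = J^T \Re(ZZ^*) J$ there, with $z$ the phase space centre absorbed), which gives $|\ell(\zeta)|^2 = (\zeta-z)^T G (\zeta-z)$. I should double-check whether the statement means $\zeta^T G \zeta$ with $\zeta$ already centred (i.e., $z=0$) or genuinely $(\zeta-z)^T G(\zeta-z)$; from the lemma's phrasing "$|\ell(\zeta)|^2 = \zeta^T G\zeta$" it appears $z$ is meant to be $0$ in this normalised statement, or $G$ is understood as the centred covariance; I would state it as $(\zeta-z)^T G(\zeta-z)$ and remark that it reduces to $\zeta^T G\zeta$ when $z=0$.

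The main obstacle is the bookkeeping in the last identity: keeping straight the conjugates in $Z^* = \overline{Z}^T$ versus $\overline{Z}Z^T = \overline{ZZ^*}$, and cleanly justifying that the antisymmetric $\I J$ contribution drops out of the real quadratic form. The cleanest route is to avoid expanding $\overline{ZZ^*}$ and instead argue: $|\ell(\zeta)|^2 = \Re\big((\zeta-z)^T J^T \overline{Z} Z^T J (\zeta-z)\big)$ since the left side is real and equals its own real part, then $\Re(\overline{Z}Z^T) = \Re(ZZ^*) = J^T G J$ (inverting the relation in Lemma~\ref{lem:G}), and $J^T (J^T G J) J^{TT}$... — actually the slickest is to use $\nabla_\zeta \ell_n = J^T Z e_n$ together with $\overline{\nabla_\zeta \ell_n} = J^T \overline{Z} e_n$ and write $|\ell(\zeta)|^2 = \sum_n |\ell_n(\zeta)|^2$ with each $|\ell_n(\zeta)|^2 = (\zeta-z)^T (\overline{\nabla \ell_n})(\nabla \ell_n)^T (\zeta-z)$, sum over $n$ to get $(\zeta-z)^T \overline{Z}Z^T (\zeta-z)$ — no $J$'s if I instead write $\ell_n(\zeta) = e_n^T \ell(\zeta)$ and use the form $\ell(\zeta)\ell(\zeta)^* $... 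I will settle on: $|\ell(\zeta)|^2 = (\zeta-z)^T (J^T \overline Z)(Z^T J)(\zeta-z)$, observe the matrix $(J^T\overline Z)(Z^T J) = J^T(\overline Z Z^T)J$, take real parts (legitimate as the scalar is real), and apply Lemma~\ref{lem:G}. I expect this to go through with a few lines once the conjugation conventions are pinned down at the outset.
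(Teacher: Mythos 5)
Your proposal is correct and follows essentially the same route as the paper: the first two identities are the identical one-line computations from $\nabla_\zeta\ell_n = J^T Z e_n$ together with \eqref{eq:Zsympl}, and for the norm identity the paper likewise writes $|\ell(\zeta)|^2$ as the quadratic form of $J^T ZZ^* J$ in a real vector and kills the imaginary contribution $\I\, J^T J J = \I J$ by skew-symmetry of $J$ --- which is equivalent to your take-real-parts argument and cleanly resolves your momentary miscount of the $J$ factors --- before invoking Lemma~\ref{lem:G}. Your remark about the centring is also on point: the displayed identity in the lemma silently identifies $\zeta$ with $\zeta-z$, and the honest statement is $|\ell(\zeta)|^2=(\zeta-z)^T G(\zeta-z)$.
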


\begin{proof} Using \eqref{eq:Zsympl}, we calculate
\begin{align*}
(Ze_m)^T\nabla_\zeta \ell_n(\zeta) &= e_m^T (Z^T J^T Z) e_n = 0,\\
(Ze_m)^*\nabla_\zeta \ell_n(\zeta) &= e_m^T (Z^* J^T Z) e_n = 2\I \delta_{mn}.
\end{align*}
By Lemma~\ref{lem:G}, the imaginary part of $ZZ^*$ equals $J$. Therefore, by skew-symmetry,
\begin{align*}
|\ell(\zeta)|^2 &= |Z^T J\zeta|^2  = \zeta^T J^T ZZ^* J \zeta \\
&= \zeta^T J^T\Re(ZZ^*)J\zeta = 
\zeta^T G \zeta,\qquad \zeta\in\Rb^{2d}.
\end{align*}
\end{proof}

The above relation between the quadratic form that defines the Wigner function 
of a Gaussian wave packet, and the affine map $\ell(\zeta)$, suggests a factorised reformulation 
of the  Wigner function of the wave packet. 

\begin{lemma}[Gaussian Wigner function]\label{lem:WGfactor}
Let $z\in\Rb^{2d}$, and let the matrix $Z\in\C^{2d\times d}$ satisfy the conditions \eqref{eq:Zsympl}. 
Consider the Gaussian wave packet $\varphi_0 = \varphi_0[z,Z]$. Then, 
\[
\calW_{\varphi_0}(\zeta) = 
(\pi\eps)^{-d} \prod_{n=1}^d \exp(-\tfrac{1}{\eps}|\ell_n(\zeta)|^2),\qquad \zeta\in\Rb^{2d}.
\]
\end{lemma}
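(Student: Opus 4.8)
The plan is to obtain the factorised formula by assembling two results already established: the explicit Gaussian Wigner function of Proposition~\ref{lem:WG} and the quadratic-form identity of Lemma~\ref{lem:affine}. First I would note that, with $Z=\begin{pmatrix}Q\\P\end{pmatrix}$, the relations \eqref{eq:Zsympl} read $Z^TJ^TZ=Q^TP-P^TQ=0$ and $Z^*J^TZ=Q^*P-P^*Q=2\I\,\Id_d$, which are exactly Hagedorn's relations \eqref{hag-rel}. Hence by Lemma~\ref{lem:hag-rel} the matrix $Q$ is invertible, $C=PQ^{-1}$ is complex symmetric with positive definite imaginary part, and $\varphi_0=\varphi_0[z,Z]$ is precisely the normalised Gaussian wave packet treated in Proposition~\ref{lem:WG}, with phase space centre $z$ and width matrix $C$. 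That proposition then yields $\calW_{\varphi_0}(\zeta)=(\pi\eps)^{-d}\exp(-\tfrac1\eps(\zeta-z)^TG(\zeta-z))$, and by Lemma~\ref{lem:G} the covariance matrix is $G=J^T\Re\!(ZZ^*)J$.

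Second, I would rewrite the exponent. The product over $n$ in the claimed formula collapses into a single exponential, $\prod_{n=1}^d\exp(-\tfrac1\eps|\ell_n(\zeta)|^2)=\exp(-\tfrac1\eps\sum_{n=1}^d|\ell_n(\zeta)|^2)=\exp(-\tfrac1\eps|\ell(\zeta)|^2)$, where $|\cdot|$ denotes the Euclidean norm on $\C^d$ and $\ell(\zeta)=Z^TJ(\zeta-z)$. So everything reduces to the identity $|\ell(\zeta)|^2=(\zeta-z)^TG(\zeta-z)$, which is precisely Lemma~\ref{lem:affine} applied with the base point shifted by $z$: since $\zeta-z$ and $J$ are real one has $|\ell(\zeta)|^2=(\zeta-z)^TJ^T\overline Z\,Z^TJ(\zeta-z)$, a real number, hence equal to its own real part $(\zeta-z)^TJ^T\Re\!(ZZ^*)J(\zeta-z)=(\zeta-z)^TG(\zeta-z)$. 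Substituting this back into the expression for $\calW_{\varphi_0}$ gives the stated formula.

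There is no real obstacle in this argument; it is a short assembly of earlier results. The one place that merits a line of care is the passage from $|Z^TJw|^2$, which a priori involves the matrix $\overline Z\,Z^T$, to the real symmetric form $w^TJ^T\Re\!(ZZ^*)Jw$ — this works only because the quantity is manifestly real and a real number equals its own real part, and it is already contained in the proof of Lemma~\ref{lem:affine}. I would also mention, for completeness, that $\calW_{\varphi_0}$ is unaffected by the choice of branch of $(\det Q)^{-1/2}$ in the normalisation of $\varphi_0$, since the Wigner function is bilinear in $\overline{\varphi_0}$ and $\varphi_0$; but this is already absorbed into Proposition~\ref{lem:WG}, so nothing new needs to be verified.
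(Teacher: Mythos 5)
Your proof is correct and takes essentially the same route as the paper's, which likewise just combines Proposition~\ref{lem:WG} (with the covariance matrix rewritten via Lemma~\ref{lem:G}) and the quadratic-form identity $|\ell(\zeta)|^2=(\zeta-z)^TG(\zeta-z)$ of Lemma~\ref{lem:affine}, then factorises the exponential componentwise. The extra care you take with the shift by $z$ and the reality of $w^TJ^T\overline{Z}Z^TJw$ is sound and only makes explicit what the paper leaves implicit.
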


\begin{proof}
We combine Proposition~\ref{lem:WG} and Lemma~\ref{lem:affine} to obtain
\begin{align*}
\calW_{\varphi_0}(\zeta) &= (\pi\eps)^{-d} \exp(-\tfrac{1}{\eps}\,\zeta^T G\zeta) 
=  (\pi\eps)^{-d} \exp(-\tfrac{1}{\eps}\,|\ell(\zeta)|^2)\\
&=(\pi\eps)^{-d} \prod_{n=1}^d \exp(-\tfrac{1}{\eps}|\ell_n(\zeta)|^2).
\end{align*}
\end{proof}

Due to the orthogonality property of the affine map,
the factorization of the zeroth-order Wigner function is 
preserved by the raising process in phase space.
This striking phenomenon does not unfold in the ``half-dimensional'' 
construction of Hagedorn's wave packets in configuration space.

\begin{proposition}[factorization in phase space]\label{prop:factor}
Let $z\in\Rb^{2d}$, and let $Z\in\C^{2d\times d}$ satisfy the conditions \eqref{eq:Zsympl}. 
Consider the raising operator $\calR = \calR[z,Z]$ and the affine map $\ell = \ell[z,Z]$. 
Then, for all $m,n = 1,\ldots,d$ and all Schwartz functions $f_n:[0,\infty[\to\Rb$,
\[
\calR_m \overline{\calR_m}\, \Big(\prod_{n=1}^d f_n(|\ell_n(\zeta)|^2)\Big) = 
g_m(|\ell_m(\zeta)|^2) \prod_{n\neq m} f_n(|\ell_n(\zeta)|^2)
\]
with $\zeta\in\Rb^{2d}$, where $g_m:[0,\infty[\to\Rb$ is defined by 
\begin{equation}\label{eq:gm}
g_m(x) = \tfrac{1}{2\eps}\left(
\left(x-\eps\right) f_m(x) 
-2\eps \,x\, f_{m}'(x) 
+ \eps^2 \left( f_m'(x) + x f_m''(x)\right) 
\right). 
\end{equation}
\end{proposition}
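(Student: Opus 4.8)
The plan is to establish the factorization by a direct computation using the operator $\calR_m\overline{\calR_m}$ acting on a product of radial functions $\prod_n f_n(|\ell_n(\zeta)|^2)$, exploiting the orthogonality relations of Lemma~\ref{lem:affine}. First I would write out $\calR_m$ explicitly as $\calR_m = -\tfrac{\I}{\sqrt{2\eps}}\,(Ze_m)^T(J(\zeta-z)-\tfrac{\I\eps}{2}\nabla_\zeta)$, so that $\calR_m$ involves the directional derivative $(Ze_m)^T\nabla_\zeta$, and $\overline{\calR_m}$ involves $(Z^*e_m)^T\nabla_\zeta = \overline{(Ze_m)}^T\nabla_\zeta$. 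The key point is that when these differential operators hit a factor $f_n(|\ell_n(\zeta)|^2)$ with $n\neq m$, the chain rule produces $f_n'(\cdot)\cdot 2\Re\bigl(\overline{\ell_n(\zeta)}\,(Ze_m)^T\nabla_\zeta\ell_n(\zeta)\bigr)$ — but $(Ze_m)^T\nabla_\zeta\ell_n(\zeta) = 0$ by Lemma~\ref{lem:affine}, so those factors are annihilated by the derivative part of $\calR_m$. Hence $\calR_m$ (and likewise $\overline{\calR_m}$) acts on $\prod_n f_n(|\ell_n|^2)$ only through the multiplicative part on the factors $n\neq m$, and those multiplicative contributions from $\calR_m$ and $\overline{\calR_m}$ must cancel; this requires checking that the purely multiplicative (non-derivative) piece of $\calR_m\overline{\calR_m}$ restricted to the $n\neq m$ factors vanishes, which follows from combining $\ell_m = (Ze_m)^TJ(\zeta-z)$ being linear and the two relations $(Ze_m)^T\nabla_\zeta\ell_m = 0$ and $(Ze_m)^*\nabla_\zeta\ell_m = 2\I$.

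Second, I would reduce the whole computation to a single variable. Since only the factor $f_m(|\ell_m(\zeta)|^2)$ survives the action of $\calR_m\overline{\calR_m}$, it suffices to compute $\calR_m\overline{\calR_m}$ applied to $h(\zeta):=f_m(|\ell_m(\zeta)|^2)$, where $x := |\ell_m(\zeta)|^2 = \ell_m(\zeta)\overline{\ell_m(\zeta)}$. Writing $L:=\ell_m(\zeta)$, $\bar L := \overline{\ell_m(\zeta)}$, $D:=(Ze_m)^T\nabla_\zeta$, $\bar D:=(Z^*e_m)^T\nabla_\zeta$, we have $DL=0$, $\bar D L = 2\I$ (hence $D\bar L = \overline{\bar D L} = -2\I$ after taking care that $\overline{D} = \bar D$, $\overline{L}=\bar L$), and $\bar D\bar L = 0$. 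Also $\calR_m = \tfrac{\I}{\sqrt{2\eps}}(\eps\bar L\cdot\tfrac{1}{\eps}\,\ldots)$ — more precisely one identifies $\calR_m h = \bigl(\bar L - \tfrac{\eps}{2}\,?\bigr)$ type expression; I would carefully record $\calR_m = \tfrac{1}{\sqrt{2\eps}}\bigl(\bar L \cdot(\text{mult.}) + \tfrac{\eps}{?}D\bigr)$ from the explicit form, using that $(Ze_m)^TJ(\zeta-z)=L$ and that $(Ze_m)^T\nabla_\zeta$ applied to the linear form gives the right normalization. Then $\calR_m h$ and $\calR_m\overline{\calR_m}h$ become expressions in $L,\bar L,x=L\bar L$ and derivatives $f_m'(x),f_m''(x)$, obtained by repeated application of the chain rule: $D h = f_m'(x)\,D x = f_m'(x)(L\,D\bar L + \bar L\,DL) = -2\I L f_m'(x)$, and so on. Collecting terms, everything organizes into functions of $x$ alone, and matching coefficients yields exactly formula~\eqref{eq:gm} for $g_m$.

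Third, I would verify the bookkeeping that leads to \eqref{eq:gm}: the term $(x-\eps)f_m(x)$ comes from the ``number operator'' structure $\tfrac1{2\eps}(\bar L L + \text{commutator correction})$ analogous to $A^\dagger A = N$ shifting by a constant $-\tfrac12$ in units of $\eps$; the term $-2\eps\,x\,f_m'(x)$ arises from one derivative hitting the Gaussian-type argument once on each side; and $\eps^2(f_m'(x)+x f_m''(x))$ is the ``pure second-derivative'' contribution $\bigl(\tfrac{\eps}{2}\bigr)^2\cdot 4\cdot(\text{Laplacian-in-}L\text{-plane of }f_m)$, where $\partial_L\partial_{\bar L}f_m(L\bar L) = f_m'(x) + x f_m''(x)$. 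I expect the main obstacle to be precisely this last step of \emph{careful constant-tracking} through the two applications of the first-order operators $\calR_m$ and $\overline{\calR_m}$: one must correctly separate the multiplicative parts from the derivative parts, use the commutator $[\bar D, L]=2\I$, $[D,\bar L]=-2\I$ in the right order, and keep the factors of $\sqrt{2\eps}$, $\eps/2$ and $\I$ consistent so that the cross-terms combine into $-2\eps x f_m'$ rather than leaving a stray $\eps$ or sign. No deep idea is needed beyond Lemma~\ref{lem:affine} and the chain rule; the content is the verification that the one-dimensional reduction is clean and that the algebra reproduces \eqref{eq:gm} exactly.
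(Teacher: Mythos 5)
Your proposal is correct and follows essentially the same route as the paper: expand $\calR_m\overline{\calR_m}$ as the product of two first-order operators, use the relations $(Ze_m)^T\nabla\ell_n=0$ and $(Ze_m)^*\nabla\ell_n=2\I\delta_{mn}$ from Lemma~\ref{lem:affine} so that all directional derivatives of $|\ell_n|^2$ vanish for $n\neq m$ (whence those factors pass through untouched), and then collect the zeroth-, first- and second-order contributions on the single surviving factor into \eqref{eq:gm}. The only slight misstatement is that nothing needs to ``cancel'' on the $n\neq m$ factors — the multiplicative piece $\tfrac{1}{2\eps}|\ell_m|^2$ survives and supplies the $x f_m(x)$ term — but this does not affect the validity of the argument.
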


\begin{proof}
We calculate the product of the raising operator with itself, 
\begin{align*}
&\calR_m\overline{\calR_m}\\
&= \tfrac{1}{2\eps}
\big( \ell_m- \tfrac{\I\eps}{2} (Ze_m)^T\nabla\big)
\big( \overline{\ell_m} + \tfrac{\I\eps}{2} (Ze_m)^*\nabla\big)\\*[1ex]
&= \tfrac{1}{2\eps}\left( |\ell_m|^2 
+ \tfrac{\I\eps}{2}\ell_m(Ze_m)^*\nabla
- \tfrac{\I\eps}{2}(Ze_m)^T\nabla\overline{\ell_m}
+\tfrac{\eps^2}{4}\,(Ze_m)^T\nabla\  (Ze_m)^*\nabla\right).
\end{align*}
By Lemma~\ref{lem:affine}, we express the above gradient term as
\[
\tfrac{\I\eps}{2}\ell_m(Ze_m)^*\nabla
- \tfrac{\I\eps}{2}\underbrace{(Ze_m)^T\nabla\overline{\ell_m}}_{-2\I} = 
-\eps + \tfrac{\I\eps}{2}\left( \ell_m(Ze_m)^* - \overline{\ell_m}(Ze_m)^T\right)\nabla,
\]
and we observe that all derivatives in $\calR_m\overline{\calR_m}$ are directional. We therefore calculate
\[
(Ze_m)^T \nabla\Big( \prod_{n=1}^d f_n(|\ell_n|^2)\Big) =
\sum_{n=1}^d f_{n}'(|\ell_n|^2)\ (Ze_m)^T\nabla |\ell_n|^2\ \prod_{j\neq n} f_{j}(|\ell_j|^2).
\]
Again by Lemma~\ref{lem:affine},
\begin{align*}
(Ze_m)^T \nabla |\ell_n|^2 &= 
(Ze_m)^T \left( \ell_n\nabla\overline{\ell_n} + \overline{\ell_n}\nabla \ell_n\right) \\*[1ex]
&= -2\I\delta_{mn}\ell_m.
\end{align*}
Hence, the summation collapses to a single summand, and we obtain
\[
(Ze_m)^T \nabla\Big( \prod_{n=1}^d f_n(|\ell_n|^2)\Big) = 
-2\I \,\ell_m\, f_{m}'(|\ell_m|^2) \ \prod_{n\neq m} f_{n}(|\ell_n|^2).
\]
This implies for the first order derivative term
\begin{align*}
&\tfrac{\I\eps}{2}\left( \ell_m(Ze_m)^* - \overline{\ell_m}(Ze_m)^T\right)\nabla 
\Big( \prod_{n=1}^d f_n(|\ell_n|^2)\Big)\\*[1ex]
&= -2\eps \,|\ell_m|^2\, f_{m}'(|\ell_m|^2) \prod_{n\neq m} f_{n}(|\ell_n|^2). 
\end{align*}
For the second order derivative contribution we obtain
\begin{align*}
&\tfrac{\eps^2}{4}\left((Ze_m)^T\nabla\  (Ze_m)^*\nabla\right)\Big( \prod_{n=1}^d f_n(|\ell_n|^2)\Big) \\
& = \tfrac{\eps^2}{4}(Ze_m)^T \nabla \Big( 2\I \,\overline{\ell_m}\, f_{m}'(|\ell_m|^2) \ \prod_{n\neq m} f_{n}(|\ell_n|^2)\Big).
\end{align*}
The first of the three above derivatives satisfies 
\[
(Ze_m)^T\nabla \overline{\ell_m} = -2\I.
\] 
The second one is
\begin{align*}
(Ze_m)^T\nabla f_{m}'(|\ell_m|^2) &= f_m''(|\ell_m|^2) (Ze_m)^T\nabla |\ell_m|^2\\*[1ex]
& = -2\I \ell_m f_m''(|\ell_m|^2),
\end{align*}
while the third derivative simply vanishes
\begin{align*}
(Ze_m)^T\nabla\Big(\prod_{n\neq m} f_{n}(|\ell_n|^2)\Big) &= 
\sum_{n\neq m} f_n'(|\ell_n|^2) \underbrace{(Ze_m)^T\nabla |\ell_n|^2}_{=0} 
\prod_{j\neq n} f_{j}(|\ell_j|^2) \\
&= 0.
\end{align*}
In summary, we obtain for the second derivatives that
\begin{align*}
&\tfrac{\eps^2}{4}\left((Ze_m)^T\nabla\  (Ze_m)^*\nabla\right)\Big( \prod_{n=1}^d f_n(|\ell_n|^2)\Big) \\
&=\eps^2\left( f_m'(|\ell_m|^2) + |\ell_m|^2 f_m''(|\ell_m|^2)\right) \prod_{n\neq m}^d f_n(|\ell_n|^2).
\end{align*}
Now adding the constant, first- and second-order contributions together, we get
\[
\calR_m\overline{\calR_m} \Big( \prod_{n=1}^d f_n(|\ell_n|^2)\Big) = 
g_m(|\ell_m|^2) \prod_{n\neq m}^d f_n(|\ell_n|^2)
\]
with
\[
g_m(x) = \tfrac{1}{2\eps}\left(
\left(x-\eps\right) f_m(x) 
-2\eps \,x\, f_{m}'(x) 
+ \eps^2 \left( f_m'(x) + x f_m''(x)\right) 
\right). 
\]
\end{proof}

The classical Hermite--Laguerre connection proves that the Wigner transform of a Hermite 
function is a Laguerre function; see \cn{Gro46} or \cite[Chapter 1.9]{Fol89}. 
Proposition~\ref{prop:factor} allows us to extend this connection to the Wigner transform of a Hagedorn 
wave packet.

\begin{theorem}[Laguerre connection]\label{theo:laguerre} For all $k\in\N_0^d$, 
the Wigner function of the $k$th Hagedorn wave packet 
$\varphi_k = \varphi_k[q,p,Q,P] = \varphi_k[z,Z]$ satisfies 
\[
\calW_{\varphi_k}(\zeta) = \frac{(-1)^{|k|}}{(\pi\eps)^{d}} \, \, 
\prod_{n=1}^d L_{k_n}(\tfrac2\eps\,|\ell_n(\zeta)|^2)  \, \exp(-\tfrac1\eps |\ell_n(\zeta)|^2),
\]
where $L_{k_n}$ denotes the $k_n$th Laguerre polynomial, and $\ell(\zeta) = \ell[z,Z](\zeta)$ is the affine map 
\[
\ell(\zeta)= Z^T J(\zeta-z), \quad\zeta\in\Rb^{2d}.
\]
\end{theorem}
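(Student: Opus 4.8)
The plan is to argue by induction on $|k|$, using the phase-space raising procedure and the factorisation of Proposition~\ref{prop:factor}.

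For $|k|=0$ there is nothing to do: since $L_0\equiv 1$, the claimed identity reads
\[
\calW_{\varphi_0}(\zeta) = (\pi\eps)^{-d}\prod_{n=1}^d \exp\bigl(-\tfrac1\eps|\ell_n(\zeta)|^2\bigr),
\]
which is exactly Lemma~\ref{lem:WGfactor}.

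For the inductive step I would fix $k\in\N_0^d$ with $|k|\ge 1$, choose an index $j$ with $k_j\ge 1$, and apply the raising relation \eqref{V:raising} with the multi-index $k-\langle j\rangle$ in place of $k$, obtaining $\varphi_k = (k_j)^{-1/2} A_j^\dagger\varphi_{k-\langle j\rangle}$. Since the Wigner transform is sesquilinear, $\calW_{\varphi_k} = k_j^{-1}\,\calW_{A_j^\dagger\varphi_{k-\langle j\rangle}}$, and because $\varphi_{k-\langle j\rangle}$ is a Schwartz function, Lemma~\ref{prop:Wladder} gives
\[
\calW_{\varphi_k} = \frac{1}{k_j}\,\calR_j\overline{\calR_j}\,\calW_{\varphi_{k-\langle j\rangle}}.
\]
By the induction hypothesis $\calW_{\varphi_{k-\langle j\rangle}}(\zeta) = (\pi\eps)^{-d}\prod_{n=1}^d f_n(|\ell_n(\zeta)|^2)$ with $f_n(x) = (-1)^{(k-\langle j\rangle)_n}L_{(k-\langle j\rangle)_n}(\tfrac2\eps x)\,\e^{-x/\eps}$; each $f_n$ is a polynomial times a decaying exponential, hence a Schwartz function on $[0,\infty)$, so Proposition~\ref{prop:factor} applies and reduces everything to a scalar computation: it remains to check that the operator in \eqref{eq:gm} sends $f_j$ to $(m+1)(-1)^{m+1}L_{m+1}(\tfrac2\eps\cdot)\,\e^{-\cdot/\eps}$, where $m=(k-\langle j\rangle)_j$, so that $k_j^{-1}g_j(x) = (-1)^{k_j}L_{k_j}(\tfrac2\eps x)\,\e^{-x/\eps}$. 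Feeding this back in and using $(k-\langle j\rangle)_n=k_n$ for $n\ne j$ together with $\prod_n(-1)^{k_n}=(-1)^{|k|}$ yields the claimed formula for $\calW_{\varphi_k}$.

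The only real content is that scalar identity. To verify it I would write $f_j(x) = (-1)^m L_m(s)\,\e^{-s/2}$ with $s=\tfrac2\eps x$, compute $f_j'$ and $f_j''$ by the chain and product rules, insert into \eqref{eq:gm}, and cancel the common factor $(-1)^m\e^{-x/\eps}$ together with an overall factor $2\eps$; this leaves the purely polynomial identity
\[
(s-1)L_m(s) + (1-2s)L_m'(s) + s\,L_m''(s) = -(m+1)\,L_{m+1}(s).
\]
Using the Laguerre differential equation $sL_m'' + (1-s)L_m' + mL_m = 0$ to eliminate $L_m''$, the left-hand side becomes $(s-1-m)L_m - sL_m'$; then the recurrences $sL_m' = mL_m - mL_{m-1}$ and $(m+1)L_{m+1} = (2m+1-s)L_m - mL_{m-1}$ turn both sides into $(s-1-2m)L_m + mL_{m-1}$, closing the induction. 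I do not anticipate any analytic obstacle; the care needed is bookkeeping — tracking the rescaling $s=\tfrac2\eps x$ hidden inside $g_j$, the signs $(-1)^{|k|}$, the normalising constants, and the admissibility of the $f_n$ as inputs to Proposition~\ref{prop:factor}.
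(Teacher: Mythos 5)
Your proposal is correct and follows essentially the same route as the paper: induction anchored at Lemma~\ref{lem:WGfactor}, the raising identity of Lemma~\ref{prop:Wladder} combined with the factorisation of Proposition~\ref{prop:factor}, and the reduction to the scalar Laguerre identity via Laguerre's differential equation and the two recurrences (your verification that both sides equal $(s-1-2m)L_m+mL_{m-1}$ matches the paper's computation of $p_m$). The only cosmetic difference is that you descend from $k$ to $k-\langle j\rangle$ while the paper ascends from $k$ to $k+\langle m\rangle$, and you make the normalisation $1/\sqrt{k_j}$ and the sesquilinearity of the Wigner transform explicit where the paper leaves them implicit.
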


\begin{proof} 
We perform an inductive proof over $k\in\N_0^d$. 
The base case $k=0$ was proved in Lemma~\ref{lem:WGfactor}. 
For the inductive step we choose $k\in\N_0^d$ and write
\[
\calW_{\varphi_k}(\zeta) = \prod_{n=1}^d f_n(|\ell_n(\zeta)|^2)\quad\text{with}\quad
f_n(x) = \tfrac{(-1)^{k_n}}{\pi\eps}  L_{k_n}(\tfrac2\eps x)  \, \e^{-x/\eps}.
\]
By Proposition~\ref{prop:factor}, we have for all $m=1,\ldots,d$ that
\[
\calR_m\overline{\calR_m}\, \calW_{\varphi_k}(\zeta) = 
g_m(|\ell_m(\zeta)|^2) \prod_{n\neq m} f_n(|\ell_n(\zeta)|^2)
\]
with $g_m$ given in equation \eqref{eq:gm}. To determine the function $g_m$ explicitly, 
we calculate the first and second derivatives of $f_m$,
\begin{align*}
f_m'(x) &= 
\frac{(-1)^{k_m}}{\pi\eps^2}\left( 2L_{k_m}'(\tfrac2\eps x) - L_{k_m}(\tfrac2\eps x)\right) \e^{-x/\eps},\\*[1ex]
f_m''(x) &= 
\frac{(-1)^{k_m}}{\pi\eps^3}\left( 4L_{k_m}''(\tfrac2\eps x) - 4L_{k_m}'(\tfrac2\eps x) + L_{k_m}(\tfrac2\eps x)\right) \e^{-x/\eps}.
\end{align*}
We obtain 
\[
g_m(x) = \frac{(-1)^{k_m}}{\pi\eps} 
p_m(x) \e^{-x/\eps}
\]
with
\[
p_m(x) = \tfrac2\eps x L_{k_m}''(\tfrac2\eps x) + (1-\tfrac4\eps x)L_{k_m}'(\tfrac2\eps x)  
+ (\tfrac2\eps x-1) L_{k_m}(\tfrac2\eps x) .
\]
By Laguerre's equation, 
\[
xL_k'' + (1-x)L_k' + kL_k = 0,
\]  
the above polynomial simplifies to
\[
p_m(x) = (\tfrac2\eps x-1-k_m)L_{k_m}(\tfrac2\eps x) - \tfrac2\eps x L_{k_m}'(\tfrac2\eps x). 
\]
Combining the two recursion formulas 
\begin{align*}
xL_k' &= kL_k - kL_{k-1},\\
(k+1)L_{k+1} &= (2k+1-x)L_k - kL_{k-1}, 
\end{align*}
we obtain
\[
(k+1)L_{k+1} = (k+1-x)L_k  +  xL_k' 
\]
and
\[
p_m(x) = -(k_m+1) L_{k_m}(\tfrac2\eps x).
\]
\end{proof}

\subsection{Aside: Proof of the commutator bound of Lemma~\ref{lem:obs-err-ineq}}\label{sec:proofobs}

As another application of semiclassical commutator estimates we now give the postponed proof 
of Lemma~\ref{lem:obs-err-ineq} for the variational Gaussian wave packets. We begin with the following useful remark.

\begin{remark}
Tightening the assumptions in Proposition~\ref{prop:comm} on the observable $a$ 
and the potential function $V$ such that derivatives of order $\ge 2$ 
are bounded, we can prove a less ambitious but also useful commutator estimate, 
\begin{equation}\label{eq:comm1}
\|\left(\tfrac{1}{\I\eps}[\op(a),V] - \op(-\nabla_p a) \nabla V\right)\varphi\| \le C\eps \|\varphi\|,
\end{equation}
where the constant $C>0$ depends on derivatives of $a$ and $V$ of order greater or equal 
than two. For the proof, we insert in the integral \eqref{eq:commV} 
the right point Taylor expansion
\begin{align*}
&\left( V(x) - V(y)\right)\e^{\I p\cdot(x-y)/\eps} 
= \nabla V(y)^T(x-y) \ \e^{\I p\cdot(x-y)/\eps} + {\mathcal O}(\|x-y\|^2)\\*[1ex]
&\qquad= \tfrac{\eps}{\I}\, \nabla V(y)^T \nabla_p \e^{\I p\cdot(x-y)/\eps} + {\mathcal O}(\|x-y\|^2)   
\end{align*}
and continue the argument with a second-order remainder that, after multiplication 
by $\I/\eps$, results in an upper bound that is first order with respect to $\eps$.
\end{remark}

\begin{proof}{\em (of Lemma~\ref{lem:obs-err-ineq})}\quad  For the variationally evolving  
Gaussian wave packet $u$ we prove the bound
\[
\left| \left\langle \tfrac{1}{\I\eps}[W_{u(t-s)},A(s)]\right\rangle_{u(t-s)}\right| \le c\,\eps,
\qquad 0\le s\le t\le \bar t.
\]
We first analyse the remainder potential 
\[
W_u = V-U_u.
\] 
 By Proposition~\ref{prop:project},
\[
U_u = \langle V\rangle_u +\eps\widetilde\alpha 
+ \langle\nabla V\rangle_u^T (x-q) + \tfrac12(x-q)^T \langle\nabla^2 V\rangle_u(x-q)
\]
with $\widetilde\alpha = -\tfrac14\tr((\Im C)^{-1}\langle \nabla^2 V\rangle_u)$. 
A second order Taylor approximation around the centre point $q$ provides 
\[
V(x) = V(q) + \nabla V(q)^T(x-q) + \tfrac12 (x-q)^T \nabla^2 V(q) (x-q) + \widetilde W_q(x),
\]
where by Lemma~\ref{lem:int-bound} the non-quadratic remainder satisfies
\[
\|\widetilde W_q u\| \le \widetilde c_0\,\eps^{3/2}\quad\text{and}\quad 
\|\nabla\widetilde W_q u\| \le \widetilde c_1\,\eps
\]
for $\eps$-independent constants $\widetilde c_0,\widetilde c_1>0$. 
We obtain
\begin{align*}
W_u &= \left(V(q) - \langle V\rangle_u -\eps\widetilde\alpha\right) 
+ \left( \nabla V(q) - \langle\nabla V\rangle_u\right)^T(x-q) \\
&\qquad + 
\tfrac12(x-q)^T\left( \nabla^2 V(q) - \langle\nabla V^2\rangle_u\right)(x-q) + \widetilde W_q.
\end{align*}
Lemma~\ref{lem:av} then provides constants $c_0,c_1>0$ such that
\[
\|W_u u\|\le c_0\, \eps\quad\text{and}\quad\|\nabla W_u u\| \le c_1\, \eps.
\]
Now we start working on the expectation value. We have
\[
\left\langle \tfrac{1}{\I\eps}[W_{u(t-s)},A(s)]\right\rangle_{u(t-s)} 
=\left\langle \tfrac{1}{\I\eps}[W_{u(t-s)},\op(a\circ\Phi^s)]\right\rangle_{u(t-s)} 
+ r_1(s,t)
\]
with remainder
\[
r_1(s,t) = \left\langle \tfrac{1}{\I\eps}[W_{u(t-s)},(A(s)-\op(a\circ\Phi^s))]\right\rangle_{u(t-s)}. 
\]
By Egorov's theorem and the above estimate on the remainder potential, 
\begin{align*}
|r_1(s,t)| &\le \tfrac{1}{\eps}\, \|W_{u(t-s)} u(t-s)\|\ \|A(s)-\op(a\circ\Phi^s)\|\ \|u(t-s)\|  \\*[1ex]
&\le c_1\, \eps^2. 
\end{align*}
Moreover, by the first order commutator estimate~\eqref{eq:comm1},
\begin{align*}
&\left\langle \tfrac{1}{\I\eps}[W_{u(t-s)},\op(a\circ\Phi^s)]\right\rangle_{u(t-s)} \\*[1ex]
&\quad= \left\langle \op(-\partial_p (a\circ\Phi^s)) u(t-s) \mid \nabla W_{u(t-s)} u(t-s)\right\rangle +\eps\, r_2(s,t)
\end{align*}
with remainder $r_2(s,t)$ bounded independently from $\eps$. We conclude the proof by observing that
\begin{align*}
&\left|\left\langle \op(-\partial_p (a\circ\Phi^s)) u(t-s) \mid \nabla W_{u(t-s)} u(t-s)\right\rangle\right| \\*[1ex]
&\le \|\op(-\partial_p (a\circ\Phi^s))\| \ \|u(t-s)\| \ \|\nabla W_{u(t-s)} u(t-s)\|
\ \le\  c_2\,\eps.
\end{align*}
%Beal's theorem yields that the operator $A(s) = U(s)^* A U(s)$ is the Weyl quantisation $\op(a(s))$ 
%of a smooth real-valued phase space function $a(s)$ that inherits the growth properties of $a$, 
%see \cite[Theorem~11.1]{Zwo12}. In particular, 
%\[
%\| \op(\partial_p a(s)) \| \le \gamma(s), 
%\]  
%where $\gamma(s)>0$ is bounded independently from $\eps$. 
\end{proof}

\subsection{Notes}

The symmetrised mapping from classical phase space functions to operators, usually referred to as Weyl quantisation,  is due to \cn{Wey27}. Our exposition here, which first introduces Weyl operators for polynomials and then lifts the construction to more general functions via exponentials and Fourier transforms,   
is inspired by \cite[Chapter 13]{Hal13}. The proof of the semiclassical commutator estimate, which provides the link between the classical Poisson bracket of functions $\{a,b\}$ and the 
commutator of operators $[\op(a),\op(b)]$, follows the elementary approach to symbolic Weyl calculus 
presented in \cite[Section~4]{Fer14}. 

The evolution of pseudo-differential operators via classical dynamics was first formulated by \cn{Ego69}. 
Refined versions of Egorov's theorem, in particular higher-order estimates with respect to the semiclassical 
parameter~$\eps$ and the validity on the Ehrenfest time scale, are addressed in \cite{BouR02} and also in the 
monograph of \cn[Chapter~11]{Zwo12}. 

The Wigner function was introduced as a quasi-probability distribution on phase space by \cn{Wig32}, when
developing the thermodynamics of quantum-mechanical systems. 
Decades later, the Wigner function was also used as an efficient tool for computational quantum dynamics. In this context, Egorov's theorem has appeared under at least three different names:
first, as the linearised semiclassical 
initial value representation of \cn{Mil74b} and \cn{WanSM98}, which is mostly referred to by its impressive 
acronym LSC-IVR;
second, as the Wigner phase space method of \cn{Hel76b} and \cn{BroH81}; 
third, as the statistical quasiclassical method of \cn{LeeS80}. 
In the mathematical literature, numerical realisations of Egorov's theorem were considered in 
\cite{LasR10}.

The Husimi function or Husimi Q representation of a function was intro\-duced 
by \cn{Hus40}. It is a Wigner function convolved with a Gaussian function of appropriate covariance, 
such that a nonnegative phase space distribution is obtained. 
The classical propagation of the Husimi function, with suitable corrections that render an approximation 
that is second order accurate with respect to $\eps$, was carried out in \cite{KelL13};
see also \cite{GaiL14} for a related approach to the numerical computation of fourth-order corrections 
to Egorov's theorem. 
The novel spectrogram method that combines initial sampling of positive phase space distributions with plain, 
uncorrected classical dynamics has been proposed by \cn{KelLO16}. Higher-order spectrogram 
expansions have recently been analysed in \cite{Kel19}.

The Wigner function of a complex-valued Gaussian function whose covariance matrix is  
complex symmetric with positive definite imaginary part, was explicitly calculated in 
\cite[Chapter 11.2]{Gos11}. The observation that the Wigner function of a 
Hagedorn wave packet generalizes the well-known Hermite--Laguerre connection 
of the classical Hermite functions, is due to \cn{LasT14}. Our proof here using the 
raising and lowering operators seems to be new, but draws on 
ideas developed earlier by \cn{LasST18}, see also \cite{DieKT17}.

%\section{Time integration}
\def\bfpsi{{\pmb\psi}}
\def\bfphi{{\pmb\varphi}}
\def\dt{\tau}

\section{Time integration}
\label{sec:time}

Seemingly off-topic, we begin this section by recapitulating the St\"ormer--Verlet time integration method for the {\it classical} equations of motion, a method that is based on splitting the Hamilton function into kinetic energy and potential energy. This prelude is chosen because the numerical solution of the classical equations of motion and their linearisation is required in various semiclassical approximations encountered in previous sections, and also because even for direct discretisation methods of the semiclassically scaled Schr\"odinger equation, the error analysis requires error bounds for time integrators of the classical equations.

Splitting methods prevail in the time integration of the Schr\"odinger equation. We study the widely used Strang splitting in Section~\ref{subsec:t:strang} and combined with spatial discretisation by Fourier collocation (the split-step Fourier method) in
Section~\ref{subsec:t:ssf}. A more refined splitting method, the symmetric Zassenhaus splitting, is studied in Section~\ref{subsec:t:zass}.

In Section~\ref{subsec:t:gwp} we study a structure-preserving splitting integrator for the equations of motion of variational Gaussians, and in Section~\ref{subsec:t:hagwp} a splitting integrator for Hagedorn's semiclassical wave packets.

\subsection{Symplectic integration of the classical equations of motion and their linearisation: the St\"ormer--Verlet method}
\label{subsec:t:sv}
Various algorithms in previous sections require the time integration of the classical equations of motion
\begin{equation} \label{t:eom}
\dot q = p,\qquad \dot p = -\nabla V(q)
\end{equation}
for $q(t),p(t)\in\Rb^d$
and of their linearisation
\begin{equation} \label{t:lin-eom}
\dot Q = P,\qquad \dot P = -\nabla^2 V(q)Q
\end{equation}
for complex matrices $Q(t), P(t)\in \C^{d\times d}$.

The flow $\Phi^t$ of the Hamiltonian system \eqref{t:eom} is {\it symplectic}, \ie\ for all $z=(q,p)\in\Rb^{2d}$, the Jacobian matrix $D\Phi^t(z)$ satisfies
\begin{equation}\label{t:symp-flow}
D\Phi^t(z)^T J D\Phi^t(z) = J \quad\text{ with }\quad J=
\bch\begin{pmatrix} 0 & -\Id \\ \Id & 0 
\end{pmatrix}\ech;
\end{equation}
see any textbook on classical mechanics or, \eg,
\cn{HaiLW06}, Chapter VI. Therefore, the flow preserves volume in phase space: 
\begin{equation}\label{t:vol}
\det(D\Phi^t(z))=1.
\end{equation}
Volume preservation is important in the approximations of Sections~\ref{sec:csg} and~\ref{sec:wigner}.

A solution of the linearised equations \eqref{t:lin-eom}, which is given as
$$
\begin{pmatrix}  Q(t) \\  P(t)
\end{pmatrix} 
= D\Phi^t(q(0),p(0)) 
\begin{pmatrix}  Q(0) \\  P(0)
\end{pmatrix} ,
$$
therefore satisfies the symplecticity relation
\begin{equation}\label{t:symplecticity-PQ}
Y(t)^T J Y(t) = J \qquad \text{for} \qquad
Y(t) =
\begin{pmatrix} \Re Q(t) & \Im Q(t) \\ \Re P(t) & \Im P(t) 
\end{pmatrix},
\end{equation}
provided that this relation holds for the initial values $(Q(0),P(0))$. This relation is crucial in Hagedorn's parametrisation of Gaussian wave packets (Section~\ref{subsec:hag-gauss}), which is also heavily used in the approximations of Sections~\ref{sec:hagwp} and~\ref{sec:csg}.

General time integration methods do not preserve the properties \eqref{t:symp-flow}--\eqref{t:symplecticity-PQ}, but symplectic integrators do. There is a plethora of such methods of arbitrary order of accuracy; see, \eg~\cn{BlaC16}, \cn{HaiLW06} and \cn{LeiR04}. Here we just describe the {\it St\"ormer--Verlet method} (or leapfrog method), which is a simple, explicit symplectic integrator of second order that enjoys many remarkable properties; see \cn{HaiLW03}. 
It is the standard integrator of classical molecular dynamics. The St\"ormer--Verlet method applied to \eqref{t:eom} can be interpreted as a Strang splitting of the vector field that corresponds to the Hamilton function
$$
H(q,p) = T(p) + V(q), \qquad \text{with}\ \ T(p)=\tfrac12 |p|^2,
$$
that is, the flow $\Phi^\dt(z)$ over a time step $\tau$ is approximated by 
$$
\Phi^\dt \approx \Sigma^\tau =\Phi_V^{\dt/2} \circ \Phi_T^\dt \circ \Phi_V^{\dt/2} ,
$$
where $\Phi_V^t(q,p)=(q,p-t \nabla V(q))$ is the flow of the potential part, and
$\Phi_T^t(q,p)=(q+tp,p)$ is the flow of the kinetic part of the Hamilton function. A step of the St\"ormer--Verlet integrator for \eqref{t:eom}, from the approximation $(q^n,p^n)$ at time $t^n=n\dt$ to the new approximation $(q^{n+1},p^{n+1})=\Sigma^\dt(q^n,p^n)$ at time $t^{n+1}$, then reads
\begin{align}
\nonumber
p^{n+1/2} &= p^n -\tfrac12 {\dt}\, \nabla V(q^n) \\
\label{t:sv}
q^{n+1} &= \bch q^n + \dt\, p^{n+1/2} \ech \\
\nonumber
p^{n+1} &= p^{n+1/2} 
-\tfrac12 {\dt}\, \nabla V(q^{n+1}).
\end{align}
The linearisation of these equations coincides with applying the St\"ormer--Verlet method to the linearised equations of motion \eqref{t:lin-eom},
\begin{align}
\nonumber
P^{n+1/2} &= P^n -\tfrac12 {\dt}\, \nabla^2 V(q^n) Q^n \\
\label{t:lin-sv}
Q^{n+1} &= \bch Q^n + \dt\, P^{n+1/2} \ech \\
\nonumber
P^{n+1} &= P^{n+1/2} 
-\tfrac12 {\dt}\, \nabla^2 V(q^{n+1}) Q^{n+1}.
\end{align}
As a composition of symplectic maps, the St\"ormer--Verlet method is symplectic, \ie\ for all $z=(q,p)\in \Rb^{2d}$,
$$
D\Sigma^\dt(z)^T J D\Sigma^\dt(z) = J,
$$
and hence volume-preserving, $ |\det(D\Sigma^\dt(z))| =1$.
Since differentiation of the numerical flow is equivalent to applying the integrator to the differentiated equations of motion,
it also preserves the symplecticity relation \eqref{t:symplecticity-PQ}: for all $n$,
\begin{equation}\label{t:symplecticity-PQ-n}
(Y^n)^T J Y^n = J \qquad \text{for} \qquad
Y^n =
\begin{pmatrix} \Re Q^n & \Im Q^n \\ \Re P^n & \Im P^n 
\end{pmatrix},
\end{equation}
provided this relation is satisfied for the initial values $(Q^0,P^0)$.

For a smooth potential, the St\"ormer--Verlet method is convergent of second order (see the above references): uniformly for $n$ and $\dt$ with $n\dt\le \bar t$ and for all $z$ in an arbitrary compact set $K$,
$$
|(\Sigma^\dt)^n(z) - \Phi^{n\dt}(z)| \le c\dt^2,
$$
where $c=c(\bar t, K)$. A second-order error bound of the same type also holds true for the derivative,
$$
|D(\Sigma^\dt)^n(z) - D\Phi^{n\dt}(z)| \le c\dt^2,
$$
again because differentiation of the numerical flow is equivalent to applying the integrator to the differentiated equations of motion, and similarly for higher derivatives. Under appropriate conditions on the potential $V$, such as a smooth potential that is bounded together with all its derivatives, the error bounds become uniform for all $z\in\Rb^{2d}$ and not just for $z$ in a compact set.

%%%%%%%%%%%%%%%%%%%%%%%%%%%%%%%%%%%%%%%%%%%%%%%%%%%%%%%%%%%%%%%

\subsection{Strang splitting for the semiclassically scaled 
Schr\"odinger equation}
\label{subsec:t:strang}
We consider the semi-discretisation in time of the Schr\"odinger equation in semiclassical scaling,
\begin{equation}\label{t:tdse}
\I\eps\partial_t \psi = H\psi,\qquad H = -\frac{\eps^2}{2}\Delta_x + V,
\end{equation}
for $t\in\Rb$ and $x\in \Rb^d$ or in a hypercube with periodic boundary conditions, $x\in\T^d$. For the following it is convenient to rewrite the equation more concisely as
\begin{equation}\label{t:eq-XY}
\partial_t \psi = (X+Y)\psi \qquad\text{with }\  X=-\I \eps^{-1} V,\ \ Y= \tfrac12 \I \eps \Delta_x.
\end{equation}
Splitting integrators make use of the fact that the actions of the exponential operators $\exp(tX)$ and $\exp(tY)$ on a wave function are easier to compute (or to approximate) than that of the full solution operator $U(t)=\exp(t(X+Y))$. In fact, 
$\bigl(\exp(tX)\varphi\bigr)(x)= e^{-\I (t/\eps) V(x)}\varphi(x)$ is given by pointwise multiplication in position space, and
$(\mathcal{F} \exp(tY)\varphi\bigr) (\xi) = e^{-\I t \eps |\xi|^2/2} (\mathcal{F}\varphi)(\xi)$ is given by pointwise multiplication in momentum space. 

A widely used splitting method is the Strang splitting  or Mar\v{c}uk splitting, named after \cn{Str68} and \cn{Mar68}, respectively. It approximates the wave function 
$\psi(\dt)=U(\dt)\psi^0$ at a (small) time step $\dt$ by
\begin{equation}\label{t:strang-1}
\psi^1 = S(\dt)   \psi^0 \quad\text{ with } \quad S(\dt)   = \exp(\tfrac12 \dt X) \exp(\dt Y) \exp(\tfrac12 \dt X).
\end{equation}
At further discrete times $t_n=n\dt$ ($n=1,2,\dots$), the exact wave function $\psi(t_n)=U(t_n)\psi^0=U(\dt)^n\psi^0$ is approximated by
\begin{equation}\label{t:strang-n}
\psi^n = S(\dt)  ^n \psi^0.
\end{equation}
There are many more splitting methods: the roles of $X$ and $Y$ can be reversed in the definition of $S(\dt)  $, there are the first-order Lie--Trotter splittings $\psi^1 = \exp(\dt Y) \exp(\dt X)\psi^0$ and $\psi^1 = \exp(\dt X) \exp(\dt Y)\psi^0$, whose concatenation in an alternating way yields the Strang splitting (with twice the step size), and there are various higher-order splittings
$$
\psi^1 = \exp(b_m\dt Y) \exp(a_m\dt X)\dots\exp(b_1\dt Y) \exp(a_1\dt X)\psi^0
$$ 
with suitably chosen coefficients $a_i$ and $b_i$; see, \eg\  \cite{McLQ02,HaiLW06,BlaCM08} and references therein. Here, a method is said to be of order $r$ if, for fixed {\it matrices} $X$ and~$Y$, the error after one time step is $\psi^1-\psi(\dt)=O(\dt^{r+1})$, where the constant implied by the $O$-notation may depend on the norms of the matrices $X$ and~$Y$. This then implies 
$\psi^n-\psi(t_n) = O(\dt^r)$ on bounded time intervals ${t_n\le \bar t}$. The Strang splitting is of order two, as is readily verified by comparison of the Taylor series of the matrix exponentials. 

However, it is by no means obvious that the notion of order based on fixed matrices $X$ and $Y$ has any significance for the situation of the Schr\"odinger equation, which has an unbounded operator $Y$ (for which the exponential series is not defined) and, in the semiclassical scaling, has operators $X$ and $Y$ that depend on the small parameter $\eps$. Second-order convergence of the Strang splitting for the Schr\"odinger equation with a bounded, sufficiently regular potential $V$ and without semiclassical scaling (\ie\  for $\eps=1$), was shown in the $L^2$-norm for $H^2$ initial data by \cn{JahL00}. 

Rigorous error bounds for general splitting methods in the semiclassical scaling $\eps\ll 1$ were proved by
\cn{DesT10}. Here we present their result for the Strang splitting, with a different proof. We require that the initial data are bounded in the following $\eps$-scaled Sobolev norm for $m=2$:
$$
\| \varphi \|_{H^m_\eps} ^2 =
\sum_{|\alpha|\le m} \| \eps^{|\alpha|} \partial^\alpha \varphi \|_{L^2}^2,
$$
where the sum is over all multi-indices $\alpha=(\alpha_1,\dots,\alpha_d)\in \N^d$ with $|\alpha|=\sum_{i=1}^d \alpha_i\le m$,  and 
$\eps^{|\alpha|}\partial^\alpha \varphi=(\eps\partial_1)^{\alpha_1} \dots (\eps\partial_d)^{\alpha_d}\varphi$.

\begin{theorem} [$L^2$-error bound for the Strang splitting] \label{t:thm:strang-schroedinger}
Let the potential $V$ and its partial derivatives up to fourth order be continuous and bounded. Then, the error of the Strang splitting
\eqref{t:strang-1}-\eqref{t:strang-n}
is bounded in the $L^2$-norm by
$$
\| \psi^n - \psi(t_n) \|_{L^2} \le C\, t_n \, \frac{\dt^2}\eps \max_{0\le t \le t_n}\| \psi (t) \|_{H^2_\eps}, \qquad n\ge 0,
$$
where $C$ is independent of $\eps$, $\dt$ and $n$ (but depends on bounds of partial derivatives of $V$ up to order $4$).
\end{theorem}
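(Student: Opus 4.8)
The plan is to prove the theorem by the standard Lady Windermere's fan argument: reduce the global error to a sum of propagated local errors, and then carefully estimate one local error, keeping track of the $\eps$-powers that appear with each spatial derivative. Throughout, the key structural facts are that $U(t)=\exp(t(X+Y))$ and the split-step operator $S(\dt)$ are both $L^2$-isometries (since $X=-\I\eps^{-1}V$ and $Y=\tfrac12\I\eps\Delta_x$ are skew-adjoint, as are $\exp(\tfrac12\dt X)$ and $\exp(\dt Y)$), so that composing them does not amplify errors. I would also record at the outset that $U(t)$ preserves the $\eps$-scaled Sobolev spaces $H^m_\eps$ up to a constant on bounded time intervals (this follows from the boundedness of $V$ and its derivatives up to order $m$ by a straightforward energy/commutator estimate), which is what makes the right-hand side $\max_{0\le t\le t_n}\|\psi(t)\|_{H^2_\eps}$ legitimate.

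First I would write the telescoping identity
\[
\psi^n - \psi(t_n) = S(\dt)^n\psi^0 - U(t_n)\psi^0 = \sum_{j=0}^{n-1} S(\dt)^{\,n-1-j}\bigl(S(\dt)-U(\dt)\bigr)U(t_j)\psi^0,
\]
and use the isometry of $S(\dt)$ to bound
\[
\|\psi^n-\psi(t_n)\|_{L^2} \le \sum_{j=0}^{n-1} \bigl\| \bigl(S(\dt)-U(\dt)\bigr)\psi(t_j)\bigr\|_{L^2}.
\]
So everything reduces to the local error estimate: for a function $\varphi$ bounded in $H^2_\eps$,
\[
\bigl\| \bigl(S(\dt)-U(\dt)\bigr)\varphi\bigr\|_{L^2} \le C\,\frac{\dt^3}{\eps}\,\|\varphi\|_{H^2_\eps}.
\]
Given this, summing over $j$ (there are $t_n/\dt$ terms) produces the claimed $C\,t_n\,\dt^2/\eps$ bound, with the maximum of $\|\psi(t)\|_{H^2_\eps}$ absorbing the propagated norms.

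To get the local error I would use the standard variation-of-constants / iterated-Duhamel representation of $S(\dt)-U(\dt)$ in terms of double commutator integrals. Writing $S(\dt)$ and $U(\dt)$ as solutions of the respective "flows'' and expanding, the leading term of the local error is governed by iterated integrals of the form
\[
\int\!\!\int \exp(\cdots)\,[X,[X,Y]]\,\exp(\cdots)\,\varphi \,,\qquad \int\!\!\int \exp(\cdots)\,[Y,[Y,X]]\,\exp(\cdots)\,\varphi,
\]
each carrying a factor $\dt^3$. The crucial point is the $\eps$-counting inside these double commutators. One computes $[X,Y]=-\tfrac12\I^2(\nabla V\cdot\eps\nabla_x + \tfrac12\eps(\Delta V))= \tfrac12(\nabla V\cdot \widehat p + \tfrac12 \eps^2 \Delta V)$ up to scalars — the point being that $X=-\I\eps^{-1}V$ contributes an $\eps^{-1}$ but the commutator with $Y=\tfrac12\I\eps\Delta_x$ eats one $\eps$ per derivative hitting $V$; so $[X,Y]$ is a first-order differential operator with $\eps^0$-sized coefficients plus an $\eps^2$ term. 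Then $[X,[X,Y]]$ is a multiplication operator, still carrying one surviving $\eps^{-1}$ (there is no derivative left to kill it), with coefficient $(\nabla V)^2$ up to constants, while $[Y,[Y,X]]$ is a second-order differential operator of the form $\eps^{-1}\cdot\eps^2\cdot(\text{second-order in }\eps\nabla_x)$ with coefficients built from $\nabla^2 V$; acting on $H^2_\eps$ this is controlled by $\eps^{-1}\|\varphi\|_{H^2_\eps}$ using that $\eps^2\partial^\alpha$ with $|\alpha|=2$ is bounded on $H^2_\eps$. In both cases one obtains an operator bounded by $C\eps^{-1}$ from $H^2_\eps$ to $L^2$, where $C$ depends only on bounds of partial derivatives of $V$ up to order $4$ (the fourth derivatives enter $[Y,[Y,X]]$ through $\nabla^2 V$ differentiated twice by the outer $Y$). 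Multiplying by the $\dt^3$ from the iterated integrals, and using that the intervening exponential factors are $L^2$-isometries and preserve $H^2_\eps$ up to a constant on a single step, gives the local bound $C\dt^3\eps^{-1}\|\varphi\|_{H^2_\eps}$.

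The main obstacle is precisely the bookkeeping in the last paragraph: one must organize the Taylor/Duhamel remainder of $S(\dt)-U(\dt)$ so that every term of order $\dt^1$ and $\dt^2$ cancels exactly (this is the second-order consistency of Strang, relying on symmetry) and only genuine $\dt^3$ double-commutator remainder terms survive, and then verify that each such term, no matter how the $\eps$-weighted derivatives are distributed, maps $H^2_\eps$ to $L^2$ with a bound of size $\eps^{-1}$ and not worse — i.e. that no term produces $\eps^{-2}$. The "democratic'' distribution of derivatives among $V$-factors and the Gaussian-type oscillatory exponentials $\exp(\dt Y)$ must be tracked, but crucially $\exp(\dt Y)$ commutes with $\eps\nabla_x$ and is an $H^m_\eps$-isometry, so it never costs or saves any $\eps$-power. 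Once this is set up cleanly — for instance by conjugating everything by $\exp(-\tfrac12\dt X)$ to reduce to a Lie--Trotter-type remainder and then expanding — the estimates are routine, and assembling them with the telescoping identity above completes the proof.
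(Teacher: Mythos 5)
Your proposal is correct and follows essentially the same route as the paper: the telescoping (Lady Windermere) reduction using unitarity of $S(\tau)$, the $\eps$-uniform $H^m_\eps$-wellposedness of the exact flow, a Duhamel/defect representation of $S(\tau)-U(\tau)$ whose surviving $O(\tau^3)$ terms are double-commutator integrals, and exactly the commutator bounds $\|[X,[X,Y]]\varphi\|+\|[Y,[Y,X]]\varphi\|\le C\eps^{-1}\|\varphi\|_{H^2_\eps}$ that the paper isolates as a lemma. The only blemish is your explicit formula for $[X,Y]$: the correct expression is $[X,Y]=-\tfrac12\Delta V-\nabla V\cdot\nabla_x$, whose first-order part is $\eps^{-1}\,\nabla V\cdot(\eps\nabla_x)$ and whose zeroth-order part is $O(1)$ rather than $O(\eps^2)$, so that $\|[X,Y]\varphi\|\le c\,\eps^{-1}\|\varphi\|_{H^1_\eps}$; this slip does not affect the argument, since the $\eps^{-1}$ bounds you actually invoke for the double commutators are the right ones.
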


The $H^m_\eps$-norms are appropriate norms for the semiclassical Schr\"odinger equation, 
as the following regularity result shows.

\begin{lemma} [$\mathbf{\eps}$-uniform wellposedness in $H^m_\eps$] \label{t:lem:Hmeps}
Let $m\ge 0$ be an integer, and assume that the potential $V$ and its partial derivatives up to order $m$ are continuous and bounded. Then, the $H^m_\eps$-norm of any solution $\psi(t)=U(t)\psi^0$ of the semiclassically scaled Schr\"odinger equation \eqref{t:tdse} with initial data $\psi^0\in H^m_\eps$ is bounded by
$$
\| \psi(t) \|_{H^m_\eps} \le (1+ct)^m \, \| \psi^0 \|_{H^m_\eps}, \qquad t\ge 0,
$$
where $c$ is independent of $\eps$ and $t$ and $\psi^0$ (but depends on $m$ and bounds of partial derivatives of $V$ up to order $m$). Moreover, the same bound holds for $\exp(-\I t\eps^{-1}V)\psi^0$, and $\exp(\I t\eps  \Delta)$ is an isometry with respect to the  $H^m_\eps$-norm.
\end{lemma}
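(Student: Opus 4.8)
The statement has three parts: the $H^m_\eps$-bound for the Schr\"odinger propagator $U(t)$, the analogous bound for the potential exponential $\exp(-\I t\eps^{-1}V)$, and the isometry property of the kinetic exponential $\exp(\I t\eps\Delta)$. The plan is to prove them from easiest to hardest. The kinetic exponential is trivial: in Fourier variables it acts as multiplication by the unimodular factor $\e^{-\I t\eps|\xi|^2/2}$, which commutes with multiplication by $(\eps\xi)^\alpha$, so by Plancherel it is an isometry on each $H^m_\eps$ (and indeed preserves each summand $\|\eps^{|\alpha|}\partial^\alpha\varphi\|_{L^2}$ separately). For the potential exponential I would proceed by induction on $m$: writing $\phi(t) = \e^{-\I t\eps^{-1}V}\psi^0$, each $\eps^{|\alpha|}\partial^\alpha\phi(t)$ satisfies, by the Leibniz rule, an ODE whose right-hand side is $-\I\eps^{-1}V\cdot(\eps^{|\alpha|}\partial^\alpha\phi)$ plus a sum of terms of the form $c_{\alpha\beta}\,(\eps^{|\alpha-\beta|}\partial^{\alpha-\beta}(\I\eps^{-1}V))\,(\eps^{|\beta|}\partial^\beta\phi)$ with $|\beta|<|\alpha|$; since $|\alpha-\beta|\ge 1$ the factor $\eps^{|\alpha-\beta|}\eps^{-1}$ is bounded (it equals $\eps^{|\alpha-\beta|-1}\le 1$), and $\partial^{\alpha-\beta}V$ is bounded by hypothesis, so these source terms are controlled by $\|\phi(t)\|_{H^{m-1}_\eps}$. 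A Gronwall argument then yields the $(1+ct)^m$ bound; the exponent $m$ appears because each order of differentiation contributes one factor of $(1+ct)$ through the nested induction.

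For the main assertion, the bound on $U(t)$, I would use the same strategy, differentiating the Schr\"odinger equation $\I\eps\partial_t\psi = -\tfrac{\eps^2}{2}\Delta\psi + V\psi$. Applying $\eps^{|\alpha|}\partial^\alpha$ and multiplying through by $\eps^{-1}\I^{-1}$ gives
\[
\partial_t(\eps^{|\alpha|}\partial^\alpha\psi) = \tfrac{\I\eps}{2}\Delta(\eps^{|\alpha|}\partial^\alpha\psi) - \tfrac{\I}{\eps}\,\eps^{|\alpha|}\partial^\alpha(V\psi),
\]
and the Leibniz expansion of $\eps^{|\alpha|}\partial^\alpha(V\psi)$ again splits into $V\cdot(\eps^{|\alpha|}\partial^\alpha\psi)$ plus lower-order terms with a gained power of $\eps$. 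The $L^2$ energy estimate for this equation: testing against $\eps^{|\alpha|}\partial^\alpha\psi$, the Laplacian term contributes nothing to $\tfrac{d}{dt}\|\cdot\|_{L^2}^2$ (it is skew-adjoint and has the imaginary prefactor), the term $-\tfrac{\I}{\eps}V\cdot(\eps^{|\alpha|}\partial^\alpha\psi)$ also contributes nothing ($V$ real, so $\tfrac{\I}{\eps}V$ skew-adjoint), and the remaining source terms are bounded in $L^2$ by $C\|\psi(t)\|_{H^{|\alpha|-1}_\eps}$ using boundedness of $\partial^{\alpha-\beta}V$ and $\eps^{|\alpha-\beta|-1}\le 1$. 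Summing over $|\alpha|\le m$ and applying Gronwall with the inductive hypothesis on $\|\psi\|_{H^{m-1}_\eps}$ gives $\|\psi(t)\|_{H^m_\eps}\le(1+ct)^m\|\psi^0\|_{H^m_\eps}$.

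\textbf{Main obstacle.} The only real subtlety is bookkeeping in the induction: one must organise the Leibniz terms so that the ``dangerous'' factor $\eps^{-1}$ from the scaled equation is always absorbed by a compensating $\eps^{|\alpha-\beta|}$ with $|\alpha-\beta|\ge1$, and verify that the two potentially problematic terms (the top-order $V\psi$ contribution and, for $U(t)$, the Laplacian) are skew-symmetric and hence drop out of the real part of the $L^2$ inner product. A minor technical point is justifying the differentiation under the integral and the a priori $H^m_\eps$-regularity of the solution for $\psi^0\in H^m_\eps$; this follows from the standard semigroup theory for $H$ on $L^2$ (self-adjointness, cited earlier via Kato's theorem in the molecular setting, and here directly since $V$ is bounded) combined with a density/approximation argument, or by first establishing the estimate for $\psi^0$ in the Schwartz class and then extending by density using the bound itself. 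None of these steps is deep; the result is essentially a scaled version of classical energy estimates, and I expect the write-up to be short once the induction is set up cleanly.
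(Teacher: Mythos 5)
Your proposal is correct and follows essentially the same route as the paper: induction on $m$, commuting $\eps^{|\alpha|}\partial^\alpha$ through the equation, observing that the Leibniz terms carry a compensating factor $\eps^{|\alpha-\beta|-1}\le 1$, discarding the skew-adjoint top-order contributions via the $L^2$ energy identity (the paper packages this as its stability Lemma~\ref{lem:stability}), and bounding the source by the induction hypothesis at level $m-1$. The only cosmetic difference is that you invoke ``Gronwall'' where the paper simply integrates the source term directly -- since the commutator remainder is controlled entirely by $\|\psi\|_{H^{m-1}_\eps}$ and not by $\|\psi^{(\alpha)}\|$ itself, no Gronwall step is needed, and direct integration is what produces the polynomial factor $(1+ct)^m$ rather than an exponential.
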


\begin{proof}
The proof proceeds by induction on $m$. The result clearly holds for $m=0$. Let $m\ge 1$, and let $\alpha\in\N^d$ be a multi-index with $|\alpha|=m$.
We denote $\psi^{(\alpha)} = \eps^{|\alpha|}\partial^\alpha \psi$, which satisfies the equation
$$
\I\eps \partial_t \psi^{(\alpha)} = H\psi^{(\alpha)} + [ \eps^{|\alpha|}\partial^\alpha, H ] \psi.
$$
Here, the commutator equals
$$
[ \eps^{m}\partial^\alpha, H ] \psi = [ \eps^m\partial^\alpha, V ] \psi = \sum_{|\beta|\le m-1} V_{\alpha,\beta} \,\psi^{(\beta)} \eps^{m-|\beta|},
$$
where the functions $V_{\alpha,\beta}$, which are integer multiples of partial derivatives of $V$, result from the product rule 
$\partial^\alpha (V\psi)= V\partial^\alpha \psi+ \sum_{|\beta|\le m-1} V_{\alpha,\beta} \,\partial^\beta\psi$.
By Lemma~\ref{lem:stability} we then have in the $L^2$-norm
\begin{align*}
\| \psi^{(\alpha)} (t)\| &\le \| \psi^{(\alpha)} (0)\| + \int_0^t \,\Bigl\| \frac1\eps [ \eps^{|\alpha|}\partial^\alpha, H ] \psi(s)\Bigr\| \, \D s
\\
&\le  \| \psi^{(\alpha)} (0)\| + \int_0^t \, \Bigl\|\sum_{|\beta|\le m-1} V_{\alpha,\beta} \,\psi^{(\beta)}(s) \eps^{m-1-|\beta|}\Bigr\| \, \D s.
\end{align*}
Using the induction hypothesis for $\psi^{(\beta)}(s)$ with $|\beta|\le m-1$, we obtain
$$
\| \psi^{(\alpha)} (t)\| \le \| \psi^{(\alpha)} (0)\| + ct(1+ct)^{m-1} \, \| \psi(0) \|_{H^{m-1}_\eps},
$$
and the result for $\psi(t)$ follows. The result for $\exp(-\I t\eps^{-1}V)$ is obtained by the same argument, and the isometry property of
$\exp(\I t\eps  \Delta)$ on $H^m_\eps$ follows immediately by using Fourier transformation and the Plancherel formula.
\end{proof}

We further need the following estimates.

\begin{lemma}[commutator bounds] \label{t:lem:comm-bounds}
The commutators and the iterated commutators of the operators  $ X=-\I \eps^{-1} V$ and $Y= \tfrac12 \I \eps \Delta$ are bounded  by
\begin{align*}
&\| [X,Y]\varphi \|_{L^2} \le \frac{c_1}\eps \| \varphi \|_{H^{1}_\eps} ,
\\
& \| [X, [X,Y]]\varphi \|_{L^2} +  \| [Y, [Y,X]]\varphi \|_{L^2} \le \frac{c_2}\eps \| \varphi \|_{H^2_\eps},
\end{align*}
where $c_1$ and $c_2$ are independent of $\eps$ and $\varphi\in H^2_\eps$, but depend on second- and fourth-order derivatives of $V$, respectively.
\end{lemma}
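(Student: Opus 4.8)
The plan is to compute all the commutators explicitly as multiplication or differential operators and then bound them directly in the $H^m_\eps$-norms, reducing everything to the product rule and the definition of $\|\cdot\|_{H^m_\eps}$. Since $X=-\I\eps^{-1}V$ acts by multiplication and $Y=\tfrac12\I\eps\Delta$ is a second-order constant-coefficient differential operator, every iterated commutator will again be a differential operator whose coefficients are (scaled) partial derivatives of $V$; the $\eps$-powers will work out because each commutator trades one factor of $\eps^{-1}$ from $X$ against one factor of $\eps$ from $Y$, with a compensating gain or loss of differentiation order.

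First I would compute $[X,Y]\varphi$. Writing $[X,Y]\varphi = XY\varphi - YX\varphi = -\tfrac{\I}{2\eps}\bigl(V\,\I\eps\,\Delta\varphi - \I\eps\,\Delta(V\varphi)\bigr) \cdot(-1)$, and expanding $\Delta(V\varphi) = (\Delta V)\varphi + 2\nabla V\cdot\nabla\varphi + V\Delta\varphi$, the $V\Delta\varphi$ terms cancel and one is left with $[X,Y]\varphi = \tfrac{\I}{2}\bigl((\Delta V)\varphi + 2\nabla V\cdot\nabla\varphi\bigr)\cdot(\text{a bounded constant})$ — more precisely a first-order differential operator with coefficients $\nabla V$, $\Delta V$ and no singular $\eps$-power out front once one keeps track of the $\eps$ carefully: the leading term $\eps\,\nabla V\cdot\nabla\varphi$ should be rewritten as $\nabla V\cdot(\eps\nabla\varphi)$, which is controlled by $\|\varphi\|_{H^1_\eps}$, while $\eps(\Delta V)\varphi$ is $O(\eps)$ times $\|\varphi\|_{L^2}$. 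Collecting the worst term gives $\|[X,Y]\varphi\|_{L^2}\le c_1\eps^{-1}\|\varphi\|_{H^1_\eps}$ after factoring — I would double-check the precise $\eps$-bookkeeping here, since the stated bound has an explicit $\eps^{-1}$, indicating that one factor of $\eps$ survives in the denominator (this comes from the $1/\eps$ in $X$ not being fully absorbed; the estimate is stated so as to match the Strang error bound's $1/\eps$). For the second-order commutators I would iterate: $[X,[X,Y]]$ is a commutator of a multiplication operator with a first-order operator, hence a multiplication operator whose symbol involves $|\nabla V|^2$ and $\Delta V$, again with the appropriate $\eps$-power so that $\|[X,[X,Y]]\varphi\|_{L^2}\le c_2\eps^{-1}\|\varphi\|_{L^2}\le c_2\eps^{-1}\|\varphi\|_{H^2_\eps}$; and $[Y,[Y,X]]$ is a commutator of $\Delta$ with a first-order operator with coefficients $\nabla V$, giving a second-order differential operator whose coefficients involve derivatives of $V$ up to order four, so that $\|[Y,[Y,X]]\varphi\|_{L^2}\le c_2\eps^{-1}\|\varphi\|_{H^2_\eps}$ after writing all derivatives of $\varphi$ in $\eps$-scaled form.

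Concretely, the steps in order are: (i) expand $[X,Y]\varphi$ via the product rule, observe the cancellation of the top-order term, and bound the remainder by pulling $\eps$'s onto the derivatives of $\varphi$; (ii) compute $[X,[X,Y]]\varphi$ — purely a multiplication operator, so only $V$-derivatives and $\|\varphi\|_{L^2}$ enter; (iii) compute $[Y,[Y,X]]\varphi$, which after two applications of the product rule is a differential operator of order two in $\varphi$ with coefficients that are second- and fourth-order derivatives of $V$; (iv) in each case rewrite $\partial^\alpha\varphi = \eps^{-|\alpha|}(\eps\partial)^\alpha\varphi$ and use $\|(\eps\partial)^\alpha\varphi\|_{L^2}\le\|\varphi\|_{H^{|\alpha|}_\eps}$, collecting the surviving $\eps$-powers to match the stated bounds. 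The main obstacle — really the only subtlety — is the careful $\eps$-accounting: one must verify that after all cancellations exactly one net factor $\eps^{-1}$ remains (and not $\eps^{-2}$ or $\eps^0$), which requires being scrupulous about which factors of $\eps$ come from $Y=\tfrac12\I\eps\Delta$ versus $X=-\I\eps^{-1}V$, and about absorbing the remaining $\eps$'s into the $H^m_\eps$-scaled derivatives of $\varphi$ rather than leaving them on derivatives of $V$ (which are $\eps$-independent). Everything else is a routine application of the Leibniz rule and the triangle inequality, using only boundedness of the relevant derivatives of $V$.
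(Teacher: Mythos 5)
Your proposal is correct and is precisely the ``straightforward direct calculation'' that the paper invokes without giving details: expand each commutator by the Leibniz rule, observe the cancellation of the top-order term, and absorb the surviving derivatives of $\varphi$ into the $\eps$-scaled Sobolev norms. On the one point you flag for double-checking, note that the $\eps$ from $Y$ and the $\eps^{-1}$ from $X$ cancel exactly, so $[X,Y]=-\tfrac12(\Delta V)-\nabla V\cdot\nabla$ carries no $\eps$ prefactor at all and the stated $\eps^{-1}$ arises solely from rewriting $\nabla V\cdot\nabla\varphi=\eps^{-1}\,\nabla V\cdot(\eps\nabla\varphi)$; likewise $[X,[X,Y]]=-\I\eps^{-1}|\nabla V|^2$ is a pure multiplication operator (the $\Delta V$ part commutes with $V$ and drops out), and in $[Y,[Y,X]]$ the worst term is $\I\eps\,\nabla^2V:\nabla^2\varphi=\I\eps^{-1}\,\nabla^2V:(\eps^2\nabla^2\varphi)$, which yields the claimed $c_2\,\eps^{-1}\|\varphi\|_{H^2_\eps}$ bound.
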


\begin{proof} The bounds follow from a straightforward direct calculation.
\end{proof}

%\bch
%\begin{remark} In the proof of Proposition~\ref{prop:comm}, we have calculated that
%\[
%\tfrac{1}{\I\eps}[a,-\tfrac{\eps^2}{2}\Delta_x] = \op(\nabla_q a\cdot p)
%\]
%This implies for $a=V$ that
%\[
%[X,Y] = 
%\tfrac{1}{\I\eps}\op(\nabla_q V\cdot p) = 
%\tfrac{1}{\I\eps}\left(\nabla_q V\cdot\hat p + \hat p \cdot\nabla_q V\right).
%\]
%Hence, we obtain the estimate
%\[
%\|[X,Y]\varphi\|_{L^2} \le 
%\tfrac{2}{\eps} \sup_{|\alpha|\le 2} \|\partial^\alpha V\|_\infty\  \|\varphi\|_{H^1_\eps}.
%\]
%If $V$ is subquadratic, then there exists constant $\tilde c<\infty$
%\[
%\|[X,Y]\varphi\|_{L^2} \le 
%\tfrac{\tilde c}{\eps}\  \|\varphi\|_{\Sigma^1_\eps}.
%\]
%For the double commutator, we obtain 
%\[
%[Y,[X,Y]] =
%\tfrac{1}{(\I\eps)^2}[-\tfrac{\eps^2}{2}\Delta_x,\op(\nabla_q V\cdot p)] = 
%-\tfrac{1}{\I\eps}\op(p\cdot\nabla^2 V p)
%\]
%and therefore, there exist $d_2,\tilde d_2<\infty$ with
%\[
%\|[Y,[X,Y]]\varphi\|_{L^2} \le \tfrac{d_2}{\eps} \sup_{|\alpha|\le 4} \|\partial^\alpha V\|_\infty\  
%\|\varphi\|_{H^2_\eps}
%\]
%and
%\[
%\|[Y,[X,Y]]\varphi\|_{L^2} \le \tfrac{\tilde d_2}{\eps} \sup_{|\alpha|\le 4} \|\varphi\|_{\Sigma^2_\eps}
%\]
%Similarly we obtain for the other double commutator
%\[
%[X,[X,Y]] = \tfrac{1}{(\I\eps)^2}[V,\op(\nabla_q V\cdot p)] = \tfrac{1}{\I\eps} \op(\{V,\nabla_q V\cdot p\}) 
%= \tfrac{1}{\I\eps} |\nabla_q V|^2,
%\]
%which implies 
%\[
%\|[X,[X,Y]]\varphi\|_{L^2} \le \tfrac{1}{\eps} \sup_{|\alpha|\le 1} \|\partial^\alpha V\|_\infty^2\  
%\|\varphi\|_{L^2}
%\]
%and the existence of $d<\infty$
%\[
%\|[X,[X,Y]]\varphi\|_{L^2} \le \tfrac{d}{\eps}\|\varphi\|_{\Sigma^2_\eps}
%\]
%\end{remark}
%\ech

\begin{proof} (of Theorem~\ref{t:thm:strang-schroedinger}) 
To avoid swarms of factors $1/2$ swirling around, it is convenient to consider, as a function of $t$ that will later be evaluated at $t=\dt/2$,
$$
S(2t)=  \exp(t X) \exp(2t Y) \exp(t X).
$$
The solution operator $U(2t)=\exp(2t(X+Y))$ satisfies
$$
\frac \D{\D t} U(2t) =  (X+Y) U(2t) + U(2t)(X+Y),
$$
and time differentiation of $S(2t)$ yields
\begin{align}
\nonumber
\frac \D{\D t} S(2t) &=   X S(2t)  + \exp( t X)  Y\exp(t Y) \exp( t X) 
\\ 
\nonumber
&\quad +
\exp( t X) \exp(t Y)  Y \exp( t X) + S(2t)    X
\\
\label{t:evolution-with-defect}
&=  (X+Y) S(2t)  + S(2t)   (X+Y) + R(2t)
\end{align}
with the defect
$$
R(2t) =  [ \exp( t X),Y] \exp(- tX) S(2t) -  S(2t)  \exp(- tX) [  \exp( t X),Y ].
$$
By the variation-of-constants formula, we therefore have
\begin{equation} \label{t:R-eq}
S(\dt) = U(\dt) + E(\dt) \quad\text{ with }\quad E(\dt) = \tfrac12 \int_0^{\dt} U(\tfrac12(\dt-s)) R(s) U(\tfrac12(\dt-s)) \D s.
\end{equation}
The terms before and after $S(2t)$ in the formula of $R(2t)$ equal the following expressions, as can be verified by differentiating the left-hand sides and integrating from $0$ to $t$:
\begin{align}
\nonumber
 [ \exp(  t X),Y] \exp(-  tX) &=   \int_0^t \exp(  sX) [X,Y] \exp(-  sX) \D s
 \\
 \label{t:exp-comm}
  \exp(-  tX) [  \exp(  t X),Y ] &=   \int_0^t \exp(-  sX) [X,Y] \exp(  sX) \D s.
\end{align}
Here we observe the following identity for the integrands, which is again verified by differentiation,
\begin{align}
\nonumber
&\exp(\pm  sX) C \exp(\mp  sX)  
\\
&\qquad = C \pm   \int_0^s \exp(\pm  rX) [X,C] \exp(\mp  rX) \D r.
\label{t:exp-comm-2}
\end{align}
So we obtain
\begin{equation}\label{t:R1}
R(2t) =  t [[X,Y] ,S(2t)]  + R_2(2t)
\end{equation}
with
\begin{align*}
R_2(2t) = &  \int_0^t \int_0^s \exp(  rX) [X,[X,Y]] \exp(-  rX) \D r \D s\, S(2t) \\
&+
S(2t)  \int_0^t \int_0^s \exp(-  rX) [X,[X,Y]] \exp(  rX) \D r \D s.
\end{align*}
With  Lemmas~\ref{t:lem:Hmeps} and~\ref{t:lem:comm-bounds} we obtain the bound
$$
\| R_2(2t) \varphi \|_{L^2} \le t^2 \,  \frac{c_2(\gamma+1)}{2\eps}  \, \| \varphi \|_{H^2_\eps},
$$
where $\gamma$ is a bound of $\exp(tY)$ as an operator on $H^2_\eps$, as provided by Lemma~\ref{t:lem:Hmeps}.
We now turn to the first term in $R(2t)$.
With the commutator $C=[X,Y]$, and observing that the commutator has the derivative-like product rule $[C,AB]=[C,A]B + A[C,B]$, we have
\begin{align}\nonumber
[C,S(2t)] =& \ [C,\exp(  t X)] \exp(2t Y) \exp(  t X) 
\\ 
\label{t:CS-comm}
&
+\exp(  t X) [C,\exp(2t Y)] \exp(  t X) 
\\ 
\nonumber
&
+ \exp(  t X) \exp(2t Y) [C,\exp(  t X)].
\end{align}
From \eqref{t:exp-comm} (with $C$ instead of $Y$) we have
$$
[C,\exp(  t X)]  = -   \int_0^t \exp(  (t-s) X)  [X,C]  \exp(  s X) \D s
$$
and likewise (with $C$ instead of $Y$ and $2Y$ instead of $  X$)
$$
[C,\exp(2t Y)]  = - \int_0^t \exp((t-s) 2Y)  [2Y,C]  \exp(s \,2Y) \D s.
$$
Using these formulae in \eqref{t:CS-comm}, together with the mapping properties of $\exp(tX)$ and $\exp(tY)$ as stated in Lemma~\ref{t:lem:Hmeps} and the commutator bounds of Lemma~\ref{t:lem:comm-bounds}, we obtain
$$
\| [C,S(2t)] \varphi \|_{L^2} \le 2t \,\frac{c_2}\eps \, \|\varphi\|_{H^2_\eps}.
$$
This gives us, with $c=\tfrac14 c_2 (1+(\gamma+1)/2)$,
$$
\| R(2t) \varphi \|_{L^2} \le t^2 \, \frac{8c}\eps \, \|\varphi\|_{H^2_\eps},
$$
and using this bound in the formula \eqref{t:R-eq} for the error operator $E(\dt)=S(\dt)-U(\dt)$ then yields the following local error bound, for one step of the Strang splitting method,
\begin{equation}\label{t:strang-local-error}
\| S(\dt)\varphi - U(\dt)\varphi \|_{L^2} \le c\, \frac{\dt^3}\eps\, \|\varphi\|_{H^2_\eps}.
\end{equation}
The error after $n$ steps can be written as
\begin{align*}\psi^n-\psi(t_n) &= S(\dt)  ^n\psi^0 - U(\dt)^n\psi^0
%\\&
= \sum_{j=0}^{n-1} S(\dt)  ^{n-j-1} \,(S(\dt)  -U(\dt)) \, U(\dt)^j\psi^0 
\\&
= \sum_{j=0}^{n-1} S(\dt)  ^{n-j-1} \,(S(\dt)  -U(\dt)) \psi(t_j).
\end{align*}
Since $S(\dt)$  is a unitary operator,
we finally obtain from \eqref{t:strang-local-error}
$$
\|\psi^n-\psi(t_n)\|_{L^2} \le n \,c\, \frac{\dt^3}\eps\, \max_{0\le j \le n-1}\|\psi(t_j)\|_{H^2_\eps}
\le c \,t_n\, \frac{\dt^2}\eps\, \max_{0\le t \le t_n}\|\psi(t)\|_{H^2_\eps},
$$
which is the stated result.
\end{proof}

\begin{remark}
For potential functions $V$ that are smooth and subquadratic in the sense that 
all partial derivatives of order $\ge 2$ are bounded,
%$\|\partial^k V\|_\infty<\infty$ for all $k\in\N_0^d$ with $|k|\ge 2$, 
$\eps$-uniform 
well-posedness holds as well if the Sobolev norms $\|\cdot\|_{H^m_\eps}$ are 
replaced by stronger norms that additionally control the decay at infinity,
\[
\|\varphi\|_{\Sigma^m_\eps} = \sum_{|\alpha|\le m} 
\left(\| \eps^{|\alpha|} \partial^\alpha \varphi \|_{L^2}^2 + \| x^\alpha \psi \|_{L^2}^2\right);
\]
see \cite[Proposition~A.2]{Car13}. For subquadratic potentials, commutator bounds as in Lemma~\ref{t:lem:comm-bounds} can be shown in the $\Sigma^m_\eps$- instead of $H^m_\eps$-norms. Therefore, the proof of Theorem~\ref{t:thm:strang-schroedinger} yields the same second-order error bounds for the Strang splitting also in the case of a subquadratic potential, but with the $\Sigma^m_\eps$- instead of $H^m_\eps$-norms.
\end{remark}

While the $L^2$-error of Strang splitting is only $O(\dt^2/\eps)$, the following result shows that the error bound for observables can be improved to $O(\dt^2+\eps^2)$.

\begin{theorem} [error in observables for the Strang splitting] \label{t:thm:strang-schroedinger-obs}
Let the potential $V$ be smooth and let its partial derivatives of order $\ge 2$ be bounded. Let the observable $A=\op(a)$ be the Weyl quantisation of a Schwartz function $a:\Rb^{2d}\to\Rb$.
Then, the error of the Strang splitting 
\eqref{t:strang-1}-\eqref{t:strang-n} in the expectation values of  $A$ on the time interval $0\le t \le \bar t$
is bounded  by
$$
| \langle A \rangle_{\psi^n} - \langle A \rangle_{\psi(t_n)} | \le c\, (\dt^2+\eps^2) ,
$$
where $c<\infty$ is independent of $\psi^0$ of unit $L^2$-norm, of $\eps$, $\dt$ and $n$ with $n\tau\le \bar t$ (but depends on $\bar t$).
\end{theorem}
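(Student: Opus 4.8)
The plan is to combine the error analysis of the Strang splitting with Egorov's theorem (Theorem~\ref{theo:egorov}) for \emph{both} the exact and the split-step propagators, so that the $O(\dt^2/\eps)$ wave-function error does not enter the observable error. The key observation is that the split-step evolution $S(\dt)$ is itself a composition of \emph{exact} propagators: a half-step of the potential-only Schr\"odinger equation $\I\eps\partial_t\varphi=V\varphi$, a full step of the free Schr\"odinger equation $\I\eps\partial_t\varphi=-\tfrac{\eps^2}{2}\Delta\varphi$, and another potential half-step. Each of these three flows is the unitary group generated by a Weyl-quantised \emph{subquadratic} Hamiltonian (namely $V(q)$, then $\tfrac12|p|^2$, then $V(q)$), so Egorov's theorem applies to each with a remainder of order $\eps^2$ over an $O(1)$ time interval. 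Hence $S(\dt)^*\op(a)S(\dt)$ is, up to $O(\eps^2)$ in operator norm, the Weyl quantisation of $a$ composed with the \emph{classical} split-step map $\Sigma^\dt=\Phi_V^{\dt/2}\circ\Phi_T^\dt\circ\Phi_V^{\dt/2}$, which is exactly the St\"ormer--Verlet map of Section~\ref{subsec:t:sv}. Iterating, $\langle A\rangle_{\psi^n}=\langle\op(a\circ(\Sigma^\dt)^n)\rangle_{\psi^0}+O(t_n\eps^2)$, where the accumulated Egorov remainders add up to $n\cdot O(\dt\,\eps^2)=O(t_n\eps^2)$ because the per-step Egorov error over a step of length $\dt$ is $O(\dt\,\eps^2)$; similarly $\langle A\rangle_{\psi(t_n)}=\langle\op(a\circ\Phi^{t_n})\rangle_{\psi^0}+O(t_n\eps^2)$ from Corollary~\ref{cor:egorow}.

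With both expectation values reduced to classically propagated symbols, the remaining task is purely classical and numerical: estimate $\|\op(a\circ(\Sigma^\dt)^n)-\op(a\circ\Phi^{t_n})\|$. By the Calder\'on--Vaillancourt theorem (as invoked in Proposition~\ref{prop:comm} and Theorem~\ref{theo:husimi}), this operator norm is controlled by finitely many sup-norms of derivatives of the \emph{phase space function} $a\circ(\Sigma^\dt)^n-a\circ\Phi^{t_n}$. The St\"ormer--Verlet method is a second-order symplectic integrator, so $|(\Sigma^\dt)^n(z)-\Phi^{t_n}(z)|\le c\,\dt^2$, and the same second-order bound holds for all derivatives $D^\alpha(\Sigma^\dt)^n(z)-D^\alpha\Phi^{t_n}(z)$, uniformly for $n\dt\le\bar t$ and, under the subquadratic assumption on $V$, uniformly in $z\in\Rb^{2d}$; this is exactly the content recalled at the end of Section~\ref{subsec:t:sv}. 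Since $a$ is Schwartz, the chain rule then gives $\sup_{|\alpha|\le M}\|\partial^\alpha(a\circ(\Sigma^\dt)^n-a\circ\Phi^{t_n})\|_\infty\le c\,\dt^2$ for every fixed $M$, whence $\|\op(a\circ(\Sigma^\dt)^n)-\op(a\circ\Phi^{t_n})\|\le c\,\dt^2$. Testing this operator against $\psi^0$ of unit norm yields the $O(\dt^2)$ contribution, and adding the two $O(t_n\eps^2)\le O(\bar t\,\eps^2)$ Egorov remainders gives the claimed bound $|\langle A\rangle_{\psi^n}-\langle A\rangle_{\psi(t_n)}|\le c(\dt^2+\eps^2)$.

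The main obstacle is the bookkeeping of the accumulated Egorov error for the split-step propagator. Egorov's theorem as stated (Theorem~\ref{theo:egorov}) gives an $O(t\,\eps^2)$ bound for a \emph{smooth} Hamiltonian flow over time $t$; here we need it separately for the two half-steps of the (merely continuous, but smooth and subquadratic) potential $V$ and for the free flow, over a single short step of length $\dt$, and then we must telescope over $n$ steps. One has to check that the per-step remainder is genuinely $O(\dt\,\eps^2)$ rather than $O(\eps^2)$, which follows because each of the three sub-flows runs for time $O(\dt)$ and the commutator estimate of Proposition~\ref{prop:comm} is applied on that short interval; the constants are uniform because $V$ and $\tfrac12|p|^2$ have derivatives of order $\ge 3$ (in fact $\ge 2$ for $V$, and $=0$ beyond order $2$ for the kinetic part) that are bounded, and because the classical split-step flows, together with all their derivatives, stay uniformly bounded on $0\le n\dt\le\bar t$. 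A minor additional point is that for the kinetic half-step the potential-type symbol is the \emph{exactly} quadratic $\tfrac12|p|^2$, so its Egorov remainder vanishes identically, leaving only the two potential half-steps to contribute, which simplifies the estimate. Assembling these uniform per-step bounds and summing over $n\le\bar t/\dt$ steps completes the proof.
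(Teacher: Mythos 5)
Your proposal is correct and its first half coincides with the paper's argument: both reduce $\langle A\rangle_{\psi^n}$ and $\langle A\rangle_{\psi(t_n)}$ to classically transported symbols by applying Egorov's theorem (via Proposition~\ref{prop:comm}) separately to the potential half-steps and the kinetic step of $S(\dt)$, accumulating an $O(n\tau\eps^2)=O(\eps^2)$ remainder, and both then compare $a\circ(\Sigma^\dt)^n$ with $a\circ\Phi^{t_n}$ using the second-order, $z$-uniform error bounds for the St\"ormer--Verlet map. Where you diverge is the final step. The paper passes through the Husimi representation (Theorem~\ref{theo:husimi}), writing each expectation value as $\int \calH_{\psi^0}(z)\,\bigl(b(z)-\tfrac\eps4\Delta b(z)\bigr)\,\D z+O(\eps^2)$ with $b=a\circ(\Sigma^\dt)^n$ resp.\ $a\circ\Phi^{t_n}$, and then exploits that $\calH_{\psi^0}$ is a probability density so that the sup-norm bounds $\|a\circ(\Sigma^\dt)^n-a\circ\Phi^{t_n}\|_{L^\infty}\le C\tau^2$ and $\|\Delta(a\circ(\Sigma^\dt)^n)-\Delta(a\circ\Phi^{t_n})\|_{L^\infty}\le C\tau^2$ suffice. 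You instead bound $\|\op(a\circ(\Sigma^\dt)^n)-\op(a\circ\Phi^{t_n})\|$ directly by the Calder\'on--Vaillancourt theorem. Both are legitimate: your route avoids the $-\tfrac\eps4\Delta$ correction bookkeeping and is slightly more direct, at the price of needing the $O(\tau^2)$ St\"ormer--Verlet error bounds for derivatives of the flow up to the (dimension-dependent) Calder\'on--Vaillancourt order $M$ rather than just up to second order; under the stated hypothesis that all derivatives of $V$ of order $\ge 2$ are bounded, these higher-order bounds do hold uniformly in $z$, as recalled at the end of Section~\ref{subsec:t:sv}. Your observation that the Egorov remainder for the kinetic step vanishes identically (since $\tfrac12|p|^2$ is quadratic) is correct and mildly sharpens the bookkeeping, though the paper's cruder $O(\tau\eps^2)$ per sub-step already suffices.
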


\begin{proof} The proof uses Egorov's theorem (Theorem~\ref{theo:egorov}), the approximate representation of expectation values of observables via the Husimi function (Theorem~\ref{theo:husimi}), and error bounds of the St\"ormer--Verlet method, which is the Strang splitting for the classical  equations of motion.

With the kinetic energy operator $T=-\tfrac12\eps^2\Delta$, the Strang splitting operator reads
$$
S^\dt = \exp(\tfrac{\dt}{2\I\eps} V)\, \exp(\tfrac{\dt}{\I\eps} T)\, \exp(\tfrac{\dt}{2\I\eps} V).
$$
For classical motion under the Hamiltonian 
$$
h(q,p)=\tfrac12|p|^2 + V(q) \equiv h_T(p) + h_V(q)
$$ with corresponding flows
$\Phi^t$, $\Phi^t_T$, $\Phi^t_V$, the Strang splitting is the St\"ormer--Verlet method (see Section~\ref{subsec:t:sv}) with the one-step map
$$
\Sigma^\dt = \Phi^{\dt/2}_V \circ \Phi^\dt_T \circ  \Phi^{\dt/2}_V .
$$
We then write, with $\varphi= \exp(\tfrac{\dt}{\I\eps} T)\, \exp(\tfrac{\dt}{2\I\eps} V)\psi^0$,
$$
\langle A \rangle_{S^\dt\psi^0} = \langle  \exp(-\tfrac{\dt}{2\I\eps} V) A  \exp(\tfrac{\dt}{2\I\eps} V)\rangle_{\varphi}.
$$
By Egorov's theorem (Theorem~\ref{theo:egorov}), we have, uniformly for $\varphi$ of unit norm, that the right-hand side is
$$
\langle  \exp(-\tfrac{\dt}{2\I\eps} V) A  \exp(\tfrac{\dt}{2\I\eps} V)\rangle_{\varphi} = 
\langle \op(a \circ \Phi^{\dt/2}_V) \rangle_\varphi + O(\tau\eps^2).
$$
Since $a\circ \Phi^{\dt/2}_V$ still satisfies the assumptions of Theorem~\ref{theo:egorov}, we obtain further, 
for $\eta= \exp(\tfrac{\dt}{2\I\eps} V)\psi^0$,
\begin{align*}
\langle \op(a \circ \Phi^{\dt/2}_V) \rangle_\varphi 
&= \langle \exp(-\tfrac{\dt}{\I\eps} T)\,\op(a \circ \Phi^{\dt/2}_V) \exp(\tfrac{\dt}{\I\eps} T)\,\rangle_\eta
\\
&= \langle \op(a \circ \Phi^{\dt/2}_V \circ \Phi^\dt_T) \rangle_\varphi + O(\tau\eps^2)
\end{align*}
and yet further
$$
\langle A \rangle_{S^\dt\psi^0}  =  \langle \op(a \circ \Sigma^\dt) \rangle_{\psi^0} + O(\tau\eps^2).
$$
For arbitrary $n$ with $n\dt\le \bar t$, it can be verified that all derivatives of $a\circ(\Sigma^\dt)^n$ are bounded independently of
$n$ and $\dt$ (because differentiating the St\"ormer--Verlet method is the same as applying the method to the differentiated equations of motion), and hence this relation extends to
$$
\langle A \rangle_{\psi^n}  =  \langle \op(a \circ (\Sigma^\dt)^n) \rangle_{\psi^0} + O(n\tau\eps^2).
$$
By Theorem~\ref{theo:husimi} we thus have, with the Husimi function $\calH_{\psi^0}$ of the initial data,
\begin{equation}\label{t:husimi-num}
\langle A \rangle_{\psi^n} = \int_{\Rb^{2d}} \calH_{\psi^0}(z) \, 
\Bigl( a \circ (\Sigma^\dt)^n(z) - \tfrac\eps 4 \Delta (a \circ (\Sigma^\dt)^n)(z) \Bigr) \D z + O(\eps^2)
\end{equation}
together with the corresponding formula for the average over the exact wave function $\psi(t)$ at $t=n\dt$,
\begin{equation}\label{t:husimi-ex}
\langle A \rangle_{\psi(t)} = \int_{\Rb^{2d}} \calH_{\psi^0}(z) \, 
\Bigl( a \circ \Phi^t(z) - \tfrac\eps 4 \Delta (a \circ \Phi^t)(z) \Bigr) \D z + O(\eps^2).
\end{equation}
Under the given assumptions on $a$ and $V$,  the classical St\"ormer--Verlet method satisfies the second-order error bounds
\begin{align*}
&\| a \circ (\Sigma^\dt)^n - a \circ \Phi^t \|_{L^\infty(\Rb^{2d})}  \le C \tau^2,
\\
&\| \Delta (a \circ (\Sigma^\dt)^n) - \Delta(a \circ \Phi^t) \|_{L^\infty(\Rb^{2d})}  \le C \tau^2.
\end{align*}
Since the Husimi function $\calH_{\psi^0}$ is a probability density, using these error bounds in the difference of \eqref{t:husimi-num} and \eqref{t:husimi-ex}
yields the result.
\end{proof}

\begin{remark} Using higher-order versions of the Egorov theorem, the $\eps^2$ error term can be reduced to $\eps^N$ for arbitrary $N\ge 2$. As this refinement becomes very technical, we do not present it here.
\end{remark}

\begin{remark} \cn{GolJP19} give another error bound for the Strang splitting that is robust as $\eps\to 0$. They prove an $O(\tau^2 + \eps^{1/2})$ error bound for the quadratic Monge--Kantorovich or Wasserstein distance between the Husimi functions of the approximate and the exact quantum density operators.
\end{remark}

\subsection{Periodisation and full discretisation: Split-step Fourier method} 
\label{subsec:t:ssf}
For the actual computation, the partial differential equation is truncated to a finite domain, usually by restriction to a sufficiently large interval (or square, cube, hypercube in higher dimensions) and periodisation. This is a reasonable approach  as long as the wave function is well localised, as remains the case in the semiclassical setting up to the Ehrenfest time; see Section~\ref{sec:hagwp}. The $L^2$-error analysis of the Strang splitting extends immediately to the periodised equation, since the same commutator bounds remain valid. 

For a full discretisation, the periodised problem then needs to be discretised in space. This is usually done by Fourier collocation. For notational simplicity, let us consider the one-dimensional situation. Here, the interval, which we rescale to $[-\pi,\pi]$, is discretized with $K=2^L$ equidistant grid points $x_j=j\,2\pi/K$ ($j=-K/2,\dots,K/2-1$), and the wave function $\psi(x_j,t)$ at these points is approximated by values $\psi_j(t)$, which are collected in a vector $\bfpsi(t)\in\C^K$. The Schr\"odinger equation \eqref{t:eq-XY} (on the interval with periodic boundary conditions) is then replaced by a system of ordinary differential equations,
$$
\frac{\D}{\D t} \bfpsi = \mathbf{X} \bfpsi + \mathbf{Y} \bfpsi.
$$
Here, the matrix $\mathbf{X}$ is diagonal with entries $-\I \eps^{-1} V(x_j)$, and \bch$\mathbf{Y} =  \mathbf{F}^{-1} \mathbf{D} \mathbf{F}$\ech, where $\mathbf{F}$ denotes the discrete Fourier transform acting on vectors of dimension $K$, and $\mathbf{D}$ is the diagonal matrix with entries $d_k = \tfrac12 \I \eps k^2$ ($k=-K/2,\dots,K/2-1$).

The Strang splitting in this context reads
$$
\bfpsi^{n+1} = \exp(\tfrac12 \dt  \mathbf{X}) \exp(\dt  \mathbf{Y}) \exp(\tfrac12 \dt  \mathbf{X}) \bfpsi^{n},
$$
or equivalently, for the vector $\widehat \bfpsi = \mathbf{F} \bfpsi$ of Fourier coefficients,
\begin{equation}\label{t:ssf-f}
\widehat \bfpsi^{n+1} = \mathbf{F}\exp(\tfrac12 \dt  \mathbf{X}) \mathbf{F}^{-1} \exp(\dt  \mathbf{D}) \mathbf{F} \exp(\tfrac12 \dt  \mathbf{X}) \mathbf{F}^{-1} \widehat \bfpsi^{n}.
\end{equation}
Algorithmically, this method alternates between pointwise multiplications of vectors and fast Fourier transforms that implement the actions of $\mathbf{F}$ and $\mathbf{F}^{-1}$ on vectors. It is known as the {\em split-step Fourier method} and goes back to \cn{HarT73} and \cn{FleMF76}. \cn{BaoJM02}  consider this method in the semiclassical setting and prove the first rigorous $L^2$-error bounds of the analogous method based on the first-order Lie--Trotter splitting instead of the Strang splitting.

With the vector $\bfpsi^{n}\in\C^K$ we associate its trigonometric interpolation $\psi_K^n\in C_{\rm per}[-\pi,\pi]$. The error bound of Theorem~\ref{t:thm:strang-schroedinger} then extends to the following error bound for the fully discrete method.

\begin{theorem} [$L^2$-error bound for the split-step Fourier method] \label{t:thm:ssf}
Assume that the potential $V$ and its partial derivatives up to $m$th order are continuous and bounded, with $m\ge 4$. For the split-step Fourier method with the starting value $\psi_K^0$ taken as the $K$-point trigonometric interpolation of the given initial value $\psi^0$, the error
is  bounded in the $L^2$-norm by
$$
\| \psi_K^n - \psi(t_n) \|_{L^2} \le c\, \frac{t_n}\eps \, \bigl( \dt^2 +(K\eps)^{-m}\bigr) 
\max_{0\le t \le t_n}\| \psi (t) \|_{H^{m}_\eps}, \qquad n\ge 0,
$$
where $c$ is independent of $\eps$, $\dt$, $n$ and $K$ (but depends on $m$ and on bounds of partial derivatives of $V$).
\end{theorem}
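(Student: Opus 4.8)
The plan is to combine the temporal error bound from Theorem~\ref{t:thm:strang-schroedinger} with a separate estimate for the spatial Fourier collocation error, treating the two sources of error independently and then invoking stability of the discrete propagator to accumulate the per-step errors over $n$ steps. Concretely, write the fully discrete solution operator as $\mathbf{S}_K(\dt) = \exp(\tfrac12\dt\mathbf{X})\exp(\dt\mathbf{Y})\exp(\tfrac12\dt\mathbf{X})$ acting on $\C^K$, and identify its trigonometric interpolant with an operator $S_K(\dt)$ on the space $V_K$ of trigonometric polynomials of degree $<K/2$. The decomposition I would use is
\[
\psi_K^n - \psi(t_n) = \bigl(\psi_K^n - \psi_{\rm Strang}^n\bigr) + \bigl(\psi_{\rm Strang}^n - \psi(t_n)\bigr),
\]
where $\psi_{\rm Strang}^n = S(\dt)^n \psi^0$ is the semidiscrete-in-time Strang approximation on the full periodic domain; the second bracket is already $O(t_n\dt^2\eps^{-1}\|\psi\|_{H^2_\eps})$ by Theorem~\ref{t:thm:strang-schroedinger}, so the work is to control the first bracket.

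First I would analyse the local spatial error of one step of the split-step Fourier method versus one step of the exact Strang operator. The only place a discretisation error enters is the replacement of $\exp(\dt Y)$ and the multiplication operators by their Fourier-collocation analogues together with the initial interpolation. Writing $I_K$ for the $K$-point trigonometric interpolation projector, the classical aliasing estimate gives $\|\varphi - I_K\varphi\|_{L^2} \le c\, K^{-m}\|\partial_x^m\varphi\|_{L^2} = c\,(K\eps)^{-m}\|\varphi\|_{H^m_\eps}$ for $\varphi\in H^m_{\rm per}$, and similarly for the commutator of $I_K$ with multiplication by $V$, using that $V$ and its derivatives up to order $m$ are bounded. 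The diagonal kinetic propagator $\exp(\dt Y)$ commutes with $I_K$ exactly (both are Fourier multipliers), so the local error of one step is bounded by $c\,(K\eps)^{-m}\eps^{-1}\max_{0\le t\le t_n}\|\psi(t)\|_{H^m_\eps}$ — the extra $\eps^{-1}$ coming from the $\eps^{-1}V$ factor in $X$, exactly as in the temporal estimate. Here I would lean on Lemma~\ref{t:lem:Hmeps} to propagate $H^m_\eps$-regularity of the exact solution and to control the $H^m_\eps$-norms of the intermediate states $\exp(\tfrac12\dt X)\psi$, $\exp(\dt Y)\exp(\tfrac12\dt X)\psi$, etc., which appear when one telescopes the difference of operator products.

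Next, for the accumulation over $n$ steps I would telescope in the usual way,
\[
\psi_K^n - \psi_{\rm Strang}^n = \sum_{j=0}^{n-1} S_K(\dt)^{n-j-1}\bigl(S_K(\dt) I_K - S(\dt)\bigr)\psi_{\rm Strang}^j + S_K(\dt)^n(I_K-\Id)\psi^0,
\]
and use that $S_K(\dt)$ is unitary on $L^2$ (each factor is), so $\|S_K(\dt)^{n-j-1}\|_{L^2\to L^2}=1$. This turns the sum into $n$ times the local spatial error plus the initial interpolation error, giving the $O(t_n\eps^{-1}(K\eps)^{-m}\max_t\|\psi(t)\|_{H^m_\eps})$ contribution. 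Combined with the temporal bound, and noting $m\ge 4$ so that Theorem~\ref{t:thm:strang-schroedinger} applies, this yields the stated estimate.

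The main obstacle I anticipate is the bookkeeping in the local spatial error: one must show that inserting $I_K$ between the three exponential factors, and interpolating the initial datum, produces only $O((K\eps)^{-m})$ perturbations uniformly in $\eps$, and this requires that each intermediate state stays bounded in $H^m_\eps$ with an $\eps$-independent (or at worst mildly $t$-dependent) constant. The kinetic exponential is an isometry on $H^m_\eps$ and the potential exponential is bounded on $H^m_\eps$ by Lemma~\ref{t:lem:Hmeps}, so the needed regularity is available; the delicate point is that the aliasing/commutator estimate for $I_K$ against multiplication by $V$ must be phrased in the $\eps$-scaled norms, i.e. one needs $\|[I_K,V]\varphi\|_{L^2}\le c\,(K\eps)^{-m}\|\varphi\|_{H^m_\eps}$ with $c$ depending only on derivative bounds of $V$, which follows from the standard estimate applied to $\eps^{|\alpha|}\partial^\alpha$ and the Leibniz rule, but has to be written out carefully. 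Everything else is a routine repetition of the proof of Theorem~\ref{t:thm:strang-schroedinger} with the spatial error carried along.
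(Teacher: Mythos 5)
Your overall architecture is the same as the paper's: interpret the split-step Fourier method as the Strang splitting with inexact evaluation of the potential exponential, bound the per-step spatial (interpolation) error in the $\eps$-scaled Sobolev norms, use that the fully discrete propagator is (conjugate to) a unitary map to telescope, and add the temporal error from Theorem~\ref{t:thm:strang-schroedinger}. The ingredients you list --- the aliasing estimate $\|I_Kf-f\|_{L^2}\le C(K\eps)^{-m}\|f\|_{H^m_\eps}$, the fact that the kinetic exponential maps $P_K$ into $P_K$ so that no interpolation error is incurred there, and Lemma~\ref{t:lem:Hmeps} to propagate $H^m_\eps$-regularity through the substeps --- are exactly the ones used in the paper.

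There is, however, one genuine gap, and it sits at the step that determines the final form of the bound. You state the per-step spatial error as $c\,(K\eps)^{-m}\eps^{-1}$, with no factor of $\dt$, and then in the accumulation you write $n$ times the local error as $O(t_n\eps^{-1}(K\eps)^{-m})$ --- which is only consistent if the local error carried a factor $\dt$. As stated, your local bound sums to $\frac{t_n}{\dt\,\eps}(K\eps)^{-m}$ (or, with the naive interpolation estimate applied directly to $\exp(\dt X)\varphi$, to $\frac{t_n}{\dt}(K\eps)^{-m}$), neither of which gives the theorem uniformly in $\dt$; for $\dt\ll\eps$ these are strictly worse than the claimed $\frac{t_n}{\eps}(K\eps)^{-m}$. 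The missing idea is this: because the input $\varphi$ of the relevant substep already lies in $P_K$, one has $I_K\varphi=\varphi$, so the interpolation error acts only on the increment
$$
\exp(\dt X)\varphi-\varphi=\dt X\int_0^1\exp(\theta\dt X)\varphi\,\D\theta,
$$
and it is this increment that carries the factor $\dt\,\eps^{-1}$ (from $\dt X=-\I\dt\eps^{-1}V$) alongside the $(K\eps)^{-m}$ from the aliasing estimate, giving the per-step spatial error $c\,\frac{\dt}{\eps}(K\eps)^{-m}\|\varphi\|_{H^m_\eps}$ and hence the stated global bound. Your phrase ``the extra $\eps^{-1}$ coming from the $\eps^{-1}V$ factor in $X$'' shows you are circling this, but the $\dt$ that necessarily accompanies that factor must be kept, and the mechanism ($I_K=\Id$ on $P_K$ plus the integral representation of the increment) must be made explicit; also note the method multiplies by $\e^{-\I\dt V/(2\eps)}$, not by $V$, so the object to interpolate is the exponential, not a commutator $[I_K,V]$. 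Once that is in place, the rest of your argument (unitarity of $S_K(\dt)$ on $P_K$, telescoping, adding the $O(t_n\dt^2/\eps)$ Strang error) goes through as in the paper.
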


For an accurate approximation, the number $K$ of grid points must be chosen considerably larger than $\eps^{-1}$, \ie\  the grid spacing must be considerably smaller than~$\eps$; this is actually already required for the trigonometric interpolation of initial data that are bounded in $H^m_\eps$. On the other hand, a time step size $\dt=O(\eps)$ still suffices for obtaining $O(\dt)$ accuracy.

\begin{proof} We interpret the split-step Fourier method as the space-continuous Strang splitting method with inexact evaluation of $\exp(tX)\varphi$. To this end, let  $P_K$ denote the space of trigonometric polynomials that are linear combinations of $K$ exponentials $e^{ikx}$ with $k$ ranging from $-K/2$ to $K/2-1$.
We write $I_K f\in P_K$ for the trigonometric interpolation of a periodic function $f$ and we note that $I_K\varphi = \varphi$ for all $\varphi\in P_K$. We further need the simple fact that for a function $\varphi$ with the vector of grid values $\bfphi\in\C^K$, the vector of Fourier coefficients of $I_K\varphi$ is given by $\mathbf{F}\bfphi$.

Starting from $\psi_K^0\in P_K$,  the split-step Fourier method computes $\psi_K^n$ as
$$
\psi_K^n = S_K(\dt)^n \psi_K^0, \quad\text{with}\ \
S_K(\dt) = I_K  \exp(\tfrac12 \dt X) I_K \exp(\dt Y) I_K \exp(\tfrac12 \dt X).
$$
Since for $\varphi\in P_K$
we  also have $\exp(tY)\varphi \in P_K$, the expression for $S_K(\dt)$ simplifies to
$$
S_K(\dt) =  I_K \exp(\tfrac12 \dt X)  \exp(\dt Y)  I_K \exp(\tfrac12 \dt X),
$$
so that studying the error $S_K(\dt) - S(\dt)$ reduces to studying the error in the interpolation of $\exp(\dt X)\varphi$. Now, the trigonometric interpolation error is known to satisfy the following bound; see, \eg\  \cite[p.\,77]{Lub08},
\begin{equation}\label{t:int-err}
\| I_K f - f \|_{L^2} \le C_m K^{-m} \, \| \partial_x^m f \|_{L^2},\qquad m\ge 1.
\end{equation}
With $f=\exp(\dt X)\varphi=e^{-\dt\I\eps^{-1} V} \varphi$ for $\varphi\in P_K$, 
rewritten in the form $\exp(\dt X)\varphi = \varphi + \dt X \int_0^1 \exp(\theta \dt X) \varphi\D\theta$,
this implies
$$
\| I_K (\exp(\dt X) \varphi) -  \exp(\dt X)\varphi \|_{L^2} \le c \,\frac\dt\eps (K\eps)^{-m} \| \varphi \|_{H^m_\eps} ,
$$
where $c$ is independent of $\eps$, but depends on bounds of derivatives of $V$. Since $\exp(\tfrac12\dt Y)$ is an isometry on $L^2$ and $H^m_\eps$, this implies that 
$$
\| S_K(\dt)\psi - S(\dt)\psi \|_{L^2} \le c \frac\dt\eps (K\eps)^{-m} \| \psi \|_{H^m_\eps}.
$$
Together with \eqref{t:strang-local-error} and \eqref{t:int-err}, this shows that the local error of the method is bounded, for all $\psi\in H^m_\eps$, by
$$
\| S_K(\dt)\psi - U(\dt)\psi \|_{L^2} \le c \frac\dt\eps \bigl(\dt^2 + (K\eps)^{-m}\bigr) \| \psi \|_{H^m_\eps}.
$$
Furthermore, \eqref{t:ssf-f} shows that $S_K(\dt)$ is conjugate to a unitary operator on $P_K$. By the same argument for the global error as at the end of the proof of Theorem~\ref{t:thm:strang-schroedinger}, beginning after~\eqref{t:strang-local-error} with $S_K(\dt)$ in place of $S(\dt)$, we therefore obtain the stated result.
\end{proof}

%By the same perturbation argument, also Theorem~\ref{t:thm:strang-schroedinger-obs} extends to the full discretisation. It is clearly seen that the problematic error term comes only from the space discretisation.
%
%\begin{theorem} [Error in observables for the split-step Fourier method] \label{t:thm:strang-schroedinger-obs-full}
%Assume that the potential $V$ is smooth 
%and its partial derivatives of all orders are continuous and bounded.
%Let the observable $A=\op(a)$ be the Weyl quantisation of a Schwartz function $a:\Rb^{2d}\to\Rb$. For the split-step Fourier method with the starting value $\psi_K^0$ taken as the $K$-point trigonometric interpolation of the given initial value $\psi^0$, the error
%in the expectation values of  $A$ on the time interval $0\le t \le \bar t$
%is bounded  by
%$$
%| \langle A \rangle_{\psi_K^n} - \langle A \rangle_{\psi(t_n)} | \le c\, (\dt^2+\eps^2) +
%c\, \frac{t_n}\eps \,(K\eps)^{-m}
%\max_{0\le t \le t_n}\| \psi (t) \|_{H^{m}_\eps}
%$$
%where $c<\infty$ is independent of $\psi^0$ of unit $L^2$-norm, of $\eps$, $\dt$ and $n$ with $n\tau\le \bar t$ (but depends on $\bar t$).
%\end{theorem}

The extension to higher dimensions is trivial in theory for a full tensor grid, but computationally this is not feasible except for very small dimensions. The extension of the split-step Fourier method to sparse grids in higher dimensions was studied by~\cn{Gra07a} 
and \cn{Gra07b} 
for the Schr\"odinger equation without semiclassical scaling ($\eps=1$). 
%A certain difficulty with this approach is that the discrete Fourier transform on sparse grids, which is due to \cn{Hal92}, is no longer a unitary operator. Even worse i
In the semiclassical case $\eps\ll 1$, 
the sparse-grid Fourier interpolation error estimate, which is derived by \cn{Gra07a}, leads to a catastrophic scaling in $\eps$ for functions in $H^m_\eps$ with sufficiently large $m$: denoting by $I_\Gamma$ the interpolation operator on a sparse grid $\Gamma$ in $d$ dimensions that corresponds to a hyperbolic cross for the Fourier coefficients (see, \eg\  \cn{BunG04} for these notions) with a maximum of $K=2^L$ grid points in each co-ordinate direction, the interpolation error bound of \cn{Gra07a} can be rewritten as
$$
\| I_\Gamma f - f \|_{L^2} \le C\, L^{d-1} K^{-m+1} \eps^{-md} \| (\eps\partial_1)^m\dots (\eps\partial_d)^m f \|_{L^2}.
$$
This indicates that the total number of grid points must be chosen larger than $\eps^{-d}$, the same condition that arises also for full tensor grids. In view of these theoretical considerations and confirmed by numerical experiments, sparse grids are not appropriate for the direct discretisation of higher-dimensional semiclassical Schr\"odinger equations.

\cn{SuzSN19} proposed and studied a version of the split-step Fourier method with lattice rules for the full discretisation of high-dimensional Schr\"odinger equations (for $\eps\sim 1$, not in the semiclassical scaling). In contrast to the non-unitary sparse grid Fourier transform, the lattice rule uses a unitary fast Fourier transform, which leads to better norm and energy conservation in numerical experiments. The obtained error bounds are in terms of bounds of mixed derivatives like for the sparse grid method and therefore lead to the same unfavourable scaling in $\eps$ for functions in $H^m_\eps$ with $m\ge 1$.

\subsection{Symmetric Zassenhaus splitting}
\label{subsec:t:zass}
Higher orders of convergence of the time discretisation can be achieved in different ways. As an interesting alternative to splitting methods with several factors $\exp(a_i tX)$ and $\exp(b_i tY)$, the symmetric Zassenhaus splitting proposed by \cn{BadIKS14} approximates
$\psi(t)=\exp(t(X+Y))\psi^0$  by $\psi^n = Z(\dt)^n \psi^0$ at $t=n\dt$, where
\begin{align}
\nonumber
Z(\dt) = &\exp(\tfrac12 \dt X) \exp(\tfrac12\dt Y) \exp(2\dt^3C_3) \exp(\tfrac12\dt Y)\exp(\tfrac12 \dt X)
\\
\label{t:zass}
&\text{with } \ 
C_3=\frac1{48}\,[X,[X,Y]] + \frac1{24}\, [Y,[X,Y]],
\end{align}
or higher-order versions of the type
\begin{align*}
Z_m(\dt) = &\exp(\tfrac12 \dt X) \exp(\tfrac12\dt Y) \exp(\dt^3C_3)\exp(\dt^5C_5)\dots\exp(\dt^{2m-1}C_{2m-1}) \times
\\
&\exp(\dt^{2m-1}C_{2m-1})\dots \exp(\dt^5C_5) \exp(\dt^3C_3)\exp(\tfrac12\dt Y)\exp(\tfrac12 \dt X)
\end{align*}
with skew-hermitian operators $C_{k}$, for odd $k$, which are linear combinations of $(k-1)$-fold commutators of $X$ and $Y$. An elegant algorithm for their construction was given by \cn{ArnCC17}. \cn{BadIKS14} applied such methods  to space discretisations of the semiclassical Schr\"odinger equation, under appropriate scaling relations between the semiclassical parameter $\eps$, the step size $\dt$, and the spatial mesh size, which imply that $\dt^3C_3$ is of moderate or small norm so that the action of its exponential on a vector can be approximated efficiently using a few Lanczos iterations, which only require just as few matrix-vector multiplications. The Lanczos method for the action of the  exponential of a skew-hermitian matrix was first proposed by \cn{ParL86}; see \cite[Section III.2.2]{Lub08} for {\it a priori} and {\it  a posteriori} error analyses.

Here we study the error of the symmetric Zassenhaus splitting \eqref{t:zass} for the semiclassical Schr\"odinger equation without any space discretisation, as we did previously for the Strang splitting. By the same arguments as in the proof  of Theorem~\ref{t:thm:ssf}, the result can be extended to the fully discrete situation with Fourier collocation in space.

\begin{theorem} [$L^2$-error of the symmetric Zassenhaus splitting] \label{t:thm:zass}
\quad\\
Assume that the potential $V$ together with its partial derivatives up to sufficiently high order is bounded. Then, the error of the symmetric Zassenhaus splitting $\psi^n = Z(\dt)^n \psi^0$ with $Z(\dt)$ of
\eqref{t:zass} is bounded in the $L^2$-norm by
$$
\| \psi^n - \psi(t_n) \|_{L^2} \le C\, t_n \, \frac{\dt^4}\eps \max_{0\le t \le t_n}\| \psi (t) \|_{H^4_\eps}, \qquad n\ge 0,
$$
where $C$ is independent of $\eps$, $\dt$ and $n$ (but depends on bounds of partial derivatives of $V$).
\end{theorem}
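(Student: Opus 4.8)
The plan is to mirror the proof of Theorem~\ref{t:thm:strang-schroedinger} for the Strang splitting, only carrying the commutator expansion one order further so that the local error becomes $O(\dt^5/\eps)$ rather than $O(\dt^3/\eps)$. The starting point is the observation that $Z(\dt)$ is built from the symmetric Strang-type skeleton $\exp(\tfrac12\dt X)\exp(\tfrac12\dt Y)\,(\cdot)\,\exp(\tfrac12\dt Y)\exp(\tfrac12\dt X)$ by inserting the central factor $\exp(2\dt^3 C_3)$, with $C_3$ the particular double commutator in \eqref{t:zass}. The Baker--Campbell--Hausdorff / Zassenhaus philosophy is that the symmetric splitting $\exp(\tfrac12\dt X)\exp(\dt Y)\exp(\tfrac12\dt X)$ agrees with $\exp(\dt(X+Y))$ modulo $\dt^3$ times a combination of double commutators, namely exactly $\dt^3(\tfrac1{24}[Y,[Y,X]] - \tfrac1{12}[X,[X,Y]])$ up to sign conventions; the insertion of $\exp(2\dt^3C_3)$ is designed to cancel this $\dt^3$ term, pushing the discrepancy to $\dt^5$. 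So the first step is to make this precise as an operator identity with an integral remainder, exactly as \eqref{t:evolution-with-defect}--\eqref{t:R-eq} did for Strang, but now expanding the defect $R(\dt)$ to one higher order in $\dt$.

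Concretely, I would write $Z(2t)$ as a function of $t$, differentiate it, and collect the defect $R(2t)$ defined by $\tfrac{\D}{\D t}Z(2t) = (X+Y)Z(2t) + Z(2t)(X+Y) + R(2t)$. Using the conjugation identities \eqref{t:exp-comm}--\eqref{t:exp-comm-2} repeatedly --- iterating them one further time than in the Strang proof --- one expands $R(2t)$ as a $\dt$-power series: the $\dt^1$ and $\dt^2$ contributions cancel by symmetry (as for Strang), the $\dt^3$ contribution is the double-commutator term that the central factor $\exp(2\dt^3C_3)$ is precisely tuned to annihilate (here the contribution of $\exp(2\dt^3C_3)$ itself, which enters $\tfrac{\D}{\D t}Z(2t)$ as $2\cdot 3\dt^2 C_3$ conjugated by the outer factors, must be tracked and matched against the $\dt^3$ term from the Strang skeleton), and what survives is $O(\dt^4)$ times triple and quadruple commutators of $X$ and $Y$. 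Then, exactly as in Lemma~\ref{t:lem:comm-bounds} but extended to iterated commutators of order three and four, each such commutator applied to $\varphi$ is bounded by $c\eps^{-1}\|\varphi\|_{H^4_\eps}$; combining with the mapping bounds of Lemma~\ref{t:lem:Hmeps} for $\exp(tX)$, $\exp(tY)$ and now also $\exp(\dt^3C_3)$ (which, since $\dt^3C_3$ is skew-hermitian, is an isometry on $L^2$; on $H^4_\eps$ one needs a bound, obtained from Lemma~\ref{t:lem:Hmeps}-type arguments since $C_3$ is a differential operator with bounded coefficients divided by $\eps$) yields $\|R(\dt)\varphi\|_{L^2} \le c\,\dt^4\eps^{-1}\|\varphi\|_{H^4_\eps}$. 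Inserting this into the variation-of-constants formula $E(\dt) = Z(\dt) - U(\dt) = \tfrac12\int_0^\dt U(\tfrac12(\dt-s))R(s)U(\tfrac12(\dt-s))\D s$, which is legitimate since $U$ is an $L^2$-isometry, gives the local error bound $\|Z(\dt)\varphi - U(\dt)\varphi\|_{L^2} \le c\,\dt^5\eps^{-1}\|\varphi\|_{H^4_\eps}$.

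The global error then follows by the standard telescoping argument used at the end of the proof of Theorem~\ref{t:thm:strang-schroedinger}: writing $\psi^n - \psi(t_n) = \sum_{j=0}^{n-1} Z(\dt)^{n-j-1}(Z(\dt)-U(\dt))\psi(t_j)$, using that $Z(\dt)$ is unitary on $L^2$ (being a product of exponentials of skew-hermitian operators), and invoking the $\eps$-uniform $H^4_\eps$ regularity of the exact solution from Lemma~\ref{t:lem:Hmeps}, one obtains $\|\psi^n - \psi(t_n)\|_{L^2} \le n\,c\,\dt^5\eps^{-1}\max_{0\le j\le n-1}\|\psi(t_j)\|_{H^4_\eps} \le C\,t_n\,\dt^4\eps^{-1}\max_{0\le t\le t_n}\|\psi(t)\|_{H^4_\eps}$, which is the claim.

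The main obstacle is the bookkeeping of the $\dt^3$-order cancellation: one must verify that the coefficient in front of $\exp(2\dt^3C_3)$ and the particular linear combination $C_3 = \tfrac1{48}[X,[X,Y]] + \tfrac1{24}[Y,[X,Y]]$ are exactly right to kill the third-order defect of the symmetric skeleton, and that after this cancellation the leftover fourth-order (and, after using symmetry again, genuinely fifth-order in the local error) terms are all expressible as bounded iterated commutators so that the $H^4_\eps$ commutator bounds apply --- this requires extending Lemma~\ref{t:lem:comm-bounds} to triple and quadruple commutators, which entails derivatives of $V$ up to order roughly eight, hence the hypothesis ``partial derivatives up to sufficiently high order bounded.'' The rest is a routine, if lengthy, repetition of the Strang argument one order higher.
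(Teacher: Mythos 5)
Your proposal follows the paper's proof essentially step for step: the same defect equation and variation-of-constants formula, the same iterated-conjugation expansion of the defect via \eqref{t:exp-comm}--\eqref{t:exp-comm-2}, the cancellation of the leading defect term by the choice of $C_3$, the extension of the commutator bounds to three- and four-fold commutators mapping $H^4_\eps\to L^2$, and the telescoping argument using unitarity of $Z(\dt)$ and the $\eps$-uniform $H^4_\eps$-regularity. The one point you should promote from a parenthesis to a load-bearing step is that the $C_3$ cancellation alone only leaves a defect of order $\dt^3$ (hence local error $O(\dt^4/\eps)$ and global order $\dt^3$, one order short); the paper, as you indicate in passing, invokes the symmetry of the method --- the odd-power coefficients of the error expansion are universal algebraic combinations of iterated commutators that vanish identically for matrices, hence also for these operators --- to conclude that the $\dt^3$ defect coefficient vanishes as well, and it is precisely this that yields the $O(\dt^5/\eps)$ local and $O(\dt^4/\eps)$ global bounds.
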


\begin{proof}
The proof proceeds similarly to the proof of the error bound for the Strang splitting. 
%By the same arguments, we first obtain a preliminary second-order local error bound that shows that the inclusion of the extra exponential $\exp(2\dt^3C_3)$, written for $L=2\dt^3C_3$ as
%$$
%\exp(L) = 1 +  L\int_0^1 \exp(\theta L) \, \D\theta,% \quad\text{ with the entire function } \varphi_1(z)= \frac{e^z-1}z,
%$$
%does no harm: as in \eqref{t:strang-local-error}, we obtain
%$$
%\| Z(\dt)\psi^0 - U(\dt)\psi^0 \|_{L^2} \le \tilde c\, \frac{\dt^3}\eps\, \|\psi^0\|_{H^2_\eps}.
%$$
As in \eqref{t:evolution-with-defect}, we obtain
$$
\frac \D{\D t} Z(2t) =  (X+Y) Z(2t) + Z(2t)(X+Y) + R(2t)
$$
with the defect
\begin{align*}
R(2t) =  &\ [ \exp( t X),Y] \exp(- tX) Z(2t) -  Z(2t)  \exp(- tX) [  \exp( t X),Y ]
\\
&\ + [\exp(tX)\exp(tY),48\,t^2 C_3] \exp(-tY) \exp(-tX) Z(2t) \\
&\qquad - Z(2t)  \exp(- tX) \exp(-tY) [\exp(tY)\exp(tX), 48\, t^2 C_3]
\\
&\ + 48\,t^2 \bigl(C_3 Z(2t) + Z(2t) C_3 \bigr).
\end{align*}
By repeated use of \eqref{t:exp-comm-2} we expand
\begin{align*}
 &[ \exp(  t X),Y] \exp(-  tX) =   \int_0^t \exp(  sX) [X,Y] \exp(-  sX) \D s
 \\
 &\qquad= t[X,Y] + \frac{t^2}{2!}[X,[X,Y]] + \frac{t^3}{3!}[X,[X,[X,Y]]] +  R_4^+(t),
 \end{align*}
 where the remainder term reads
 $$
 R_4^+(t) = \int_0^t \int_0^s \int_0^r \int_0^q \exp(   pX) [X,[X,[X,[X,Y]]]] \exp(-  pX) 
 \D p \,\D q\,\D r \, \D s  .
 $$
 Since the four-fold commutator satisfies a bound like in Lemma~\ref{t:lem:comm-bounds} from $H^4_\eps$ to $L^2$, we have for all $\varphi\in H^4_\eps$,
 $$
 \| R_4^+(t) \varphi \|_{L^2} \le \frac{c_4}\eps \, \frac{t^4}{4!} \, \| \varphi \|_{H^4_\eps}.
 $$
 Similarly,
 $$
\exp(- tX) [  \exp( t X),Y ] = t[X,Y] - \frac{t^2}{2!}[X,[X,Y]] + \frac{t^3}{3!}[X,[X,[X,Y]]] +  R_4^-(t),
$$
where $R_4^-(t)$ satisfies the same bound as $R_4^+(t)$.
The next two terms in $R(2t)$ can be expanded similarly, since by the commutator product rule,
\begin{align*}
&[\exp(tX)\exp(tY), C_3] \exp(-tY) \exp(-tX) 
\\
&= [\exp(tX), C_3] \exp(-tX) + \exp(tX) [\exp(tY), C_3] \exp(-tY) \exp(-tX),
\end{align*}
and analogously for the other commutator term with $C_3$. Collecting the dominant terms, we obtain
\begin{align*}
R(2t) =&\ t [[X,Y],Z(2t)] + \tfrac12 t^2 \bigl( [X,[X,Y]] Z(2t) + Z(2t) [X,[X,Y]] \bigr) 
\\
&\ +48\,t^2 (C_3 Z(2t) + Z(2t) C_3) + R_3(t),
\end{align*}
where for all $\varphi\in H^3_\eps$,
$$
\| R_3(t) \varphi \|_{L^2} \le c \, \frac{t^3}{\eps} \, \| \varphi \|_{H^3_\eps}.
$$
Now, for the commutator $[[X,Y],Z(2t)]$ we use the commutator product rule as in \eqref{t:CS-comm} and expand the resulting terms as above. This yields
$$
[[X,Y],Z(2t)] =  2t \, [[X,Y],X+Y] + R_2(t),
$$
where $R_2(t)$ is bounded like $R_3(t)$ with the exponent $2$ instead of $3$. Using that $\exp(L)= 1 +\int_0^1 L\exp(\theta L)\D\theta$ for $L$ any of $tX$, $tY$, $16 t^3 C_3$,
we further find that
\begin{align*}
&[X,[X,Y]] Z(2t) + Z(2t) [X,[X,Y]] = 2 [X,[X,Y]] + R_1(t), 
\\ 
&C_3 Z(2t) + Z(2t) C_3 = 2 C_3  + \widetilde R_1(t),
\end{align*}
where $R_1(t)$ and $\widetilde R_1(t)$ are bounded like $R_3(t)$ with the exponent $1$ instead of $3$. Altogether, we obtain
\begin{align*}
R(2t) =&\ t^2\Bigl(- [X,[X,Y]] - 2 [Y,[X,Y]] + 48\,  C_3\Bigr) + \widetilde R_3(t),
\end{align*}
where $\widetilde R_3(t)$ is bounded like $R_3(t)$. With the choice \eqref{t:zass} of $C_3$, the term in big brackets vanishes, and so we obtain, for all $\varphi\in H^3_\eps$,
$$
\| R(2t) \varphi \|_{L^2} \le c \, \frac{t^3}{\eps} \, \| \varphi \|_{H^3_\eps}.
$$
This is not yet the $O(t^4/\eps)$ estimate that we need. The improved bound is obtained by taking the expansion one power further and verifying that the term multiplying $t^3$ vanishes. Instead of actually performing this formidable calculation by hand (or by Maple or Mathematica), this can be concluded from the fact that because of the symmetry of the method, the terms corresponding to odd powers in the error expansion of the Zassenhaus method vanish identically in the case of ordinary differential equations $z'=(X+Y)z$ with arbitrary {\it matrices} $X$ and $Y$. Then, these algebraic combinations of higher commutators must also vanish for the {\it operators} $X$ and $Y$ considered here. The fourth-order remainder terms are under control with the expansions used above, since also the three-fold and four-fold commutators are bounded as in Lemma~\ref{t:lem:comm-bounds}. Hence, the previous estimate improves to
$$
\| R(2t) \varphi \|_{L^2} \le c \, \frac{t^4}{\eps} \, \| \varphi \|_{H^4_\eps}.
$$
With this bound, the result follows in the same way as for the Strang splitting.
\end{proof}

\begin{remark} The proof of the error bound in observables of Theorem~\ref{t:thm:strang-schroedinger-obs} can be transferred to yield an $O(\tau^4+\eps^2)$ error bound in observables for the symmetric Zassenhaus splitting. The only additional argument required is an appropriate fourth-order error bound for the symmetric Zassenhaus splitting of the classical equations of motion, which has the one-step map
$$
\Psi^\tau = \Phi_V^{\tau/2} \circ \Phi_T^{\tau/2} \circ \Phi_{C_3}^{2\tau^3} \circ \Phi_T^{\tau/2} \circ  \Phi_V^{\tau/2},
$$
where $\Phi_{C_3}^t$ is the flow of the Hamilton function 
$$
C_3=\frac1{48} \{ V, \{ V, T \} \} + \frac1{24} \{ T, \{ V,T \} \}.
$$
\end{remark}

\begin{remark} For the semiclassical Schr\"odinger equation with {\it time-depend\-ent} potential,
the symmetric Zassenhaus splitting has been appropriately combined with the Magnus expansion of the time-dependent Hamiltonian by
\cn{BadIKS16}.
\end{remark}

%%%%%%%%%%%%%%%%%%%%%%%%%%%%%%%%%%%%%%%%%%%%

\subsection{Variational splitting for Gaussian wave packet dynamics}
\label{subsec:t:gwp}
We now return to the setting of Section~\ref{sec:gwp} and recall the notation ${\mathcal M}$ for the manifold of complex Gaussians, the tangent space ${\mathcal T}_{u}{\mathcal M}$ at $u\in{\mathcal M}$, and the orthogonal projection onto the tangent space, $P_u:L^2(\Rb^d)\to {\mathcal T}_{u}{\mathcal M}$.
The variational Gaussian approximation $u(t)\in{\mathcal M}$ is determined by the projected differential equation
\begin{equation} \label{t:PuHu}
\I\eps \partial_t u = P_u H u, \qquad u(0)=u^0\in{\mathcal M}.
\end{equation}
For the Hamiltonian $H=T + V$ with $T=-\tfrac12\eps^2 \Delta_x$,  Strang splitting of this differential equation results in an algorithm where
a time step from
$u^n\in\calM$ to the new approximation $u^{n+1}\in\calM$ at
time $t_{n+1}=t_n+\dt$ reads as follows.

\medskip\noindent
{\bf Abstract formulation of the variational splitting integrator:}
\begin{itemize}
\item[(i)] {\rm Half-step with $V$:\/}
determine $u^{n}_{+}\in\calM$ as the solution at time 
$\dt/2$ of the equation for $u$,
\begin{equation}
\label{t:vsplit-V}
\I\eps \partial_t u = P_u  V u
 \end{equation}
with initial value $u(0) = u^n\in\calM$. 
\item[(ii)] {\rm Full step with $T$:\/}
determine $u^{n+1}_{-}$ as the solution at time $\dt$ of 
\begin{equation}
\label{t:vsplit-T}
\I\eps \partial_t u = P_u  T u
\end{equation}
with initial value $u(0) = u^{n}_{+} $. 
\item[(iii)] {\rm Half-step with $V\,$:\/}
$u^{n+1}$ is the solution at time $\dt/2$ of 
(\ref{t:vsplit-V}) with 
initial value $u(0) = u^{n+1}_{-}$.
\end{itemize} 
Written in the Hagedorn parameters $(q,p,Q,P,\zeta)$ of \eqref{hag-par}, viz.
$$%\begin{equation}\label{hag-par}
u(x,t) = \exp\!\left( \tfrac\I\eps \left(\tfrac12 (x-q(t))^T P(t)Q(t)^{-1}(x-q(t)) + p(t)^T(x-q(t)) + \zeta(t)\right) \right),
$$%\end{equation}
the differential equations above become the following in view of Proposition~\ref{prop:project} and Theorem~\ref{thm:eom-gauss-PQ}:
\begin{itemize}
\item[(i)] and (iii)\ \ Half-step with $V$:
\begin{align*}
\dot{q} &= 0, \quad\
\dot{p} = -\langle \nabla_x V\rangle_u,\\
\dot{Q} &= 0, \quad\
\dot{P} =  - \langle \nabla_x^2 V\rangle_u Q,\\
\dot{\zeta} &= -\langle V\rangle_u +  \bch
\tfrac{\eps}{4} \,\tr( Q^*\langle\nabla^2_x V\rangle_u Q). \ech
\end{align*}
\item[(ii)] Full step with $T$:
\begin{align*}
\dot{q} &= p, \quad\
\dot{p} = 0,\\
\dot{Q} &= P, \quad\
\dot{P} =  0,\\
\dot{\zeta} &= \tfrac12|p|^2  + \tfrac{\I\eps}{2}\,\tr(PQ^{-1}) .
\end{align*}
\end{itemize}
In (i) and (iii) we note that the averages $\langle V\rangle_u$, $\langle \nabla_x V\rangle_u$ and  $\langle \nabla_x^2 V\rangle_u$
depend only on the parameters $q,Q,\Im \zeta$ of the Gaussian, which remain constant in these substeps. Therefore, these differential equations can be solved explicitly, and we arrive at the Gaussian wave packet integrator proposed and studied by \cn{FaoL06}; 
see also \cn[Section IV.4]{Lub08}.

Starting from the Gaussian $u^n$ with parameters $q^n$, $p^n$,
$Q^n$, $P^n$, $\zeta^n$, 
a time step for \eqref{t:PuHu}
from time $t^n$ to $t^{n+1}=t^n+\dt$ proceeds as follows.

\medskip\noindent
{\bf Practical algorithm of the variational splitting integrator:}
\begin{itemize}
\item[(i)]  With the averages $\langle W \rangle^n=\langle u^n\, |  \,
W  u^n \rangle$ for $W=V,\nabla V, \nabla^2 V$, compute
\begin{eqnarray}
\nonumber
p^{n+1/2} &=& p^n -\tfrac12 {\dt}\,\langle \nabla V\rangle^n 
\\
P^{n+1/2} &=& P^n -\tfrac12 {\dt}\,\langle \nabla^2 V\rangle^n Q^n
\\
\nonumber
\zeta^n_+ &=& \zeta^n - \tfrac12 {\dt}\, \langle V \rangle^n +
\bch
\frac{\dt\,\eps} {8}\, {\rm tr}\, 
\bigl( (Q^n)^*\langle \nabla^2 V\rangle^n Q^n\bigr) . \ech
\end{eqnarray}
\item[(ii)]  Compute
\begin{eqnarray}
\nonumber
q^{n+1} &=& \bch q^n + \dt\, p^{n+1/2} \ech
\\
Q^{n+1} &=& \bch Q^n + \dt\, P^{n+1/2}\ech
\\
\nonumber
\zeta^{n+1}_- &=& \zeta^n_+ +
\tfrac12 {\dt}\, \big| p^{n+1/2} \big|^2 + \bch
\tfrac \I 2 \,\eps\, 
{\rm tr}\bigl( \log (\Id + \dt\,P^{n+1/2}(Q^{n})^{-1})\bigr)\,. \ech
\end{eqnarray}
\item[(iii)]  With the averages over the Gaussian at time $t^{n+1}$,
which are the same as those for the previously computed parameters $q^{n+1}$,
$Q^{n+1}$, $\Im\zeta^{n+1}_-$, compute
\begin{eqnarray}
\nonumber
p^{n+1} &=& p^{n+1/2} 
-\tfrac12 {\dt}\,\langle \nabla V\rangle^{n+1}
\\
P^{n+1} &=& P^{n+1/2} 
-\tfrac12 {\dt}\,\langle \nabla^2 V\rangle^{n+1} Q^{n+1}
\\
\nonumber
\zeta^{n+1} &=& \zeta^{n+1}_- \bch
- \tfrac12 {\dt}\, \langle V \rangle^{n+1} +
\frac{\dt\,\eps} {8}\, {\rm tr}\, 
\bigl( (Q^{n+1})^*\langle \nabla^2 V\rangle^{n+1} Q^{n+1}\bigr) .
\ech
\end{eqnarray}
\end{itemize}

\noindent
{\bf Classical limit.}\/
It is instructive to see how this algorithm behaves in the classical limit $\eps\to 0$. In this limit, the averages of $V,\nabla V,\nabla^2V$ tend to point evaluations at the centre of the Gaussian, and hence the equations for position~$q$ and momentum~$p$ become the St\"ormer--Verlet method \eqref{t:sv} for the classical equations of motion.
%\begin{align}
%\nonumber
%p^{n+1/2} &= p^n -\tfrac12 {\dt}\, \nabla V(q^n) \\
%\label{t:sv}
%q^{n+1} &= q^n - \dt\, p^{n+1/2} \\
%\nonumber
%p^{n+1} &= p^{n+1/2} 
%-\tfrac12 {\dt}\, \nabla V(q^{n+1}).
%\end{align}
%This is the leapfrog or St\"ormer--Verlet method for the classical equations of motion $\dot p=-\nabla V(q),\ \dot q =p$.  This method is the standard integrator for classical molecular dynamics. We refer to \cn{HaiLW03} for a review of the many remarkable properties of this numerical method.

\medskip
\noindent
{\bf Conservation properties.}
%\begin{proposition}[Norm conservation] \label{t:prop:norm-cons}
The variational splitting integrator conserves the {\it norm} of the Gaussian wavepacket:
$$
\| u^{n+1} \|_{L^2} = \| u^{n} \|_{L^2}.
$$
%\end{proposition}
%\begin{proof} 
This property is best seen from the abstract formulation: since each of the substeps \eqref{t:vsplit-V}--\eqref{t:vsplit-T} conserves the norm by the argument of the proof of Proposition~\ref{prop:cons}, their composition also conserves the norm.
%\end{proof}

{\it Energy} is not conserved exactly by the variational splitting integrator, but  \cn{FaoL06} show that the  integrator conserves the norm of the Gaussian wavepacket up to $O(\dt^2)$ for exponentially long times $t\le e^{c/\dt}$, provided the eigenvalues of the positive definite width matrix $(Q^n(Q^n)^*)^{-1}$ are bounded from below by a positive multiple of $\eps$ for such times and the positions $q^n$ remain in a compact subset of the domain of analyticity of the potential. The proof of this long-time near-conservation result relies on the symplecticity of the method (in the form of preservation of a Poisson structure) and on results from a backward error analysis as given in \cn[Chapter~IX]{HaiLW06}.

{\it Linear and angular momentum} are conserved exactly for a potential that is invariant under translations or rotations, respectively. This is shown using the abstract formulation of the integrator and the same arguments as in the proof of Proposition~\ref{prop:ang-mom} for the substeps of the abstract algorithm.

The {\it symplecticity relation} \eqref{PQ-symp} or equivalently \eqref{hag-rel} of the matrices $Q$ and $P$ is conserved exactly by the integrator. This follows from the differential equations solved by $Q$ and $P$ in the substeps with the same argument as in the proof of Theorem~\ref{thm:eom-gauss-PQ}.

\medskip
\noindent
{\bf Error bounds.} The method is a standard Strang splitting of the differential equations for the parameters $\Pi=(q,p,Q,P,\zeta)$ that are given by Theorem~\ref{thm:eom-gauss-PQ}. Since $\eps$ is a regular perturbation parameter in these differential equations, the standard error bounds of Strang splitting for ordinary differential equations as given, for example, by \cn{McLQ02} or 
\cn[Section~III.5.3]{HaiLW06}, show that the error in the parameters is $O(\dt^2)$ over bounded time intervals: for sufficiently small $\dt\le \dt_0$ (with $\dt_0$ independent of $\eps$),
\begin{equation} \label{t:gwp-par-err}
\| \Pi^n - \Pi(t_n) \| \le c \dt^2 \qquad\text{for }\ t_n \le \bar t,
\end{equation}
where $c$ is independent of $\eps$, $n$ and $\dt$ with $t_n=n\dt\le \bar t$, but depends exponentially on $\bar t$. Since the parameters appear divided by $\eps$ in the exponent of the Gaussian wave packet, its approximation properties are less obvious. 
%The $L^2$-error bound of the following result is given in \cn{FaoL06}.

\begin{theorem}[error of the Gaussian wave packet integrator] 
\label{t:thm:gwp-err}
\hfill \\
%Let the potential $V$ be smooth with polynomially bounded derivatives.
(a) The $L^2$-error of the wave packet $u^n$ is bounded by
\begin{equation} \label{t:gwp-u-err}
\| u^n - u(t_n) \|_{L^2} \le C \,\frac{\dt^2}{\eps} \qquad\text{for }\ t_n \le \bar t.
\end{equation}
%where $C$ is independent of $\eps$, $n$ and $\dt$ with $t_n=n\dt\le \bar t$, but depends exponentially on $\bar t$.
(b) %A better approximation is, however, obtained for averages of observables. 
Let the observable $A$ be an $\eps$-independent  
polynomial of the position and momentum operators $\widehat q$ and $\widehat p$ 
(with $\widehat q \varphi(x)=x\varphi(x)$ and $\widehat p \varphi(x)=-\I\eps \nabla_x\varphi(x)$), and consider averages $\langle A \rangle_u =\langle u\, |\, Au \rangle$. Then, the error of the average of $A$ over $u^n$ is bounded by
\begin{equation} \label{t:gwp-obs-err}
|\langle A \rangle_{u^n} - \langle A \rangle_{u(t_n)}| \le C \dt^2 \qquad\text{for }\ t_n \le \bar t.
\end{equation}
In both (a) and (b), $C$ is independent of $\eps$, $n$ and $\dt$ with $t_n=n\dt\le \bar t$, but depends exponentially on $\bar t$.
\end{theorem}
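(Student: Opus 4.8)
The plan is to reduce both bounds to two inputs already available: the $O(\dt^2)$ parameter error \eqref{t:gwp-par-err} from standard Strang-splitting theory for the ordinary differential equations of Theorem~\ref{thm:eom-gauss-PQ}, together with the explicit norm formula and moment estimates (Lemmas~\ref{lem:norm} and~\ref{lem:int-bound}) that control how a Gaussian wave packet depends on its parameters. The subtlety, as the statement flags, is that the parameters $\Pi=(q,p,Q,P,\zeta)$ enter the exponent of $u$ divided by $\eps$, so a naive bound $\|u^n-u(t_n)\|\lesssim \eps^{-1}\|\Pi^n-\Pi(t_n)\|$ would only give $O(\dt^2/\eps)$ — which is exactly what we want for part (a), but the $1/\eps$ must be tracked carefully and shown to be harmless for part (b).

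For part (a): write $u^n = u[\Pi^n]$ and $u(t_n)=u[\Pi(t_n)]$, and interpolate linearly between the two parameter vectors, $\Pi_\theta = (1-\theta)\Pi(t_n)+\theta\Pi^n$, so that $u^n-u(t_n) = \int_0^1 \partial_\theta u[\Pi_\theta]\,\D\theta$. The $\theta$-derivative is, by Lemma~\ref{lem:tangent space}, a polynomial of degree $\le 2$ in $x$ (with coefficients built from $\dot\Pi_\theta = \Pi^n-\Pi(t_n)$ divided by $\eps$, exactly as in the computation of the tangent space in the proof of Lemma~\ref{lem:tangent space}) times $u[\Pi_\theta]$. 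One must check that the width matrices $\Im C_\theta = \Im(P_\theta Q_\theta^{-1})$ stay uniformly positive definite along the interpolation: this follows because $\|\Pi^n-\Pi(t_n)\|=O(\dt^2)$ is small, $Q$ and $P$ along the exact solution satisfy the symplecticity relations \eqref{hag-rel}, and the integrator preserves them (noted in the Conservation properties paragraph), so for $\dt\le\dt_0$ the eigenvalues of $\Im C_\theta$ are bounded below by a fixed $\rho>0$ on $[0,\bar t]$. Then the $L^2$-norm of each monomial times $u[\Pi_\theta]$, after normalising by $\|u[\Pi_\theta]\|$, is $O(\eps^{0})$, $O(\eps^{1/2})$ or $O(\eps)$ by Lemma~\ref{lem:int-bound}, while the normalisation constants and the $\zeta$-dependence are controlled via Lemma~\ref{lem:norm}; multiplying by the $\eps^{-1}$ from the exponent gives the worst-case factor $\eps^{-1}\cdot O(1)\cdot O(\dt^2)$, hence \eqref{t:gwp-u-err}. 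Care is needed with the $\dot\zeta_\theta/\eps$ term, since $\Im\zeta$ affects the overall scale; but $\|\Pi^n-\Pi(t_n)\|$ includes $|\Im\zeta^n-\Im\zeta(t_n)|=O(\dt^2)$, so $\exp(\tfrac1\eps|\Im\zeta_\theta-\Im\zeta(t_n)|)=1+O(\dt^2/\eps)$ is absorbed into the constant once $\dt\le\dt_0$ with $\dt_0$ allowed to depend on $\eps$ — or, better, one argues directly at the level of $|u^n(x)|^2$ and $|u(t_n)(x)|^2$ using the normalisation $\|u^n\|=\|u(t_n)\|=1$, which removes the exponential scaling issue entirely.

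For part (b): the key observation is that for an $\eps$-independent polynomial observable $A=\op(a)$, the average $\langle A\rangle_{u[\Pi]}$ is an \emph{explicit smooth function of the parameters alone}, with no factor $\eps^{-1}$: indeed, substituting $x\mapsto q+\sqrt\eps\,Q y$ as in the proof of Lemma~\ref{lem:av} turns $\langle u\,|\,Au\rangle$ into a Gaussian integral in $y$ whose value is a polynomial in $q$, $p$, the entries of $Q$, $P$, $(\Im C)^{-1}=QQ^*$, and in $\eps$ (the $\eps$ enters only polynomially, through powers $\eps^{|k|/2}$ of the scaling). Therefore $\langle A\rangle_{u[\Pi^n]}-\langle A\rangle_{u[\Pi(t_n)]}$ is the increment of a smooth, $\eps$-uniformly bounded (on the relevant compact parameter set, using the lower bound on $\Im C$ again) function evaluated at two parameter points a distance $O(\dt^2)$ apart, which gives \eqref{t:gwp-obs-err} immediately from the mean value theorem and \eqref{t:gwp-par-err}. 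The main obstacle in the whole argument is the bookkeeping in part~(a) around the $\eps^{-1}$ prefactor and the $\Im\zeta$ scaling — i.e.\ making precise that the normalisation conservation of the integrator (stated in the Conservation properties paragraph) lets one work with unit-norm packets throughout, so that Lemma~\ref{lem:int-bound} applies cleanly and the only surviving $\eps$-dependence is the single harmless $\eps^{-1}$. Part~(b) is then essentially a corollary of the smooth-dependence observation and requires no new ideas.
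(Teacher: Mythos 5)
Your proposal is correct and follows essentially the same route as the paper: both parts reduce to the $O(\dt^2)$ parameter error \eqref{t:gwp-par-err}, with (a) obtained from the $\eps^{-1}$-scaled dependence of the exponent on the parameters (your interpolation-plus-moment-bound argument is a more explicit version of the paper's one-line appeal to Lipschitz continuity of the exponential), and (b) obtained exactly as in the paper by rescaling $y=(x-q)/\sqrt\eps$ to exhibit $\langle A\rangle_{u[\Pi]}$ as an $\eps$-uniformly smooth function of the parameters. Your additional care about the $\Im\zeta$ normalisation and the uniform positive definiteness of $\Im C_\theta$ along the interpolation is sound and resolves the only genuine subtlety.
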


\begin{proof} (a) The $L^2$-error bound follows immediately from the error bound \eqref{t:gwp-par-err} and the Lipschitz continuity of the
exponential function on the imaginary axis.

(b) Since a normalised Gaussian $u$ with parameters $\Pi=(q,p,Q,P)$ (here we can ignore $\zeta$) satisfies the relation
$
\widehat p u = -\I\eps \nabla_x u = (PQ^{-1}(x-q) + p) u,
$
it follows that 
$$
\langle A \rangle_u = \int_{\Rb^d} a_\Pi(x) \, |u(x)|^2 \D x
$$
with an $\eps$-independent polynomial $a_\Pi$ whose coefficients depend smoothly on the Gaussian parameters $\Pi$. We substitute $y=(x-q)/\sqrt{\eps}$ and note that $|u(x)|^2 = \eps^{-d/2} |\gamma(Q,y)|^2$ with an $\eps$-independent Gaussian \bch $\gamma(Q,y) = \pi^{-d/4} \det(Q)^{-1/2} \exp(-\frac12 y^T (QQ^*)^{-1} y)$\ech of unit $L^2$-norm, using \eqref{QQ} with \eqref{CPQ}.
So we have
$$
\langle A \rangle_u = \int_{\Rb^d} a_\Pi(q+\sqrt\eps y) \, |\gamma(Q,y)|^2 \D y.
$$
The error bound \eqref{t:gwp-obs-err} now follows by 
taking this formula for $u^n$ and $u(t_n)$ and using the error bound \eqref{t:gwp-par-err} for the parameters.
\end{proof}

We remark that the assumption on the observable $A$  to be polynomial in $\widehat q$ and $\widehat p$ is not essential. The proof shows immediately that the result remains valid when $A$ is a polynomial in $\widehat p$ and in finitely many functions $\alpha_i(\widehat q)$ where the functions $\alpha_i$ and sufficiently many of its derivatives have at most polynomial growth. Moreover, by repeating the argument after taking Fourier transforms, the roles of $\widehat q$ and $\widehat p$ can be reversed. We do not strive for utmost generality here, as this would obscure the basically simple argument given in the proof above.

%%%%%%%%%%%%%%%%%%%%%%%%%%%%%%%%%%%%%%%%%

\subsection{Time integration of Hagedorn's semiclassical wave packets}
\label{subsec:t:hagwp}
We return to the setting of Section~\ref{sec:hagwp}. The  wave function $\psi(x,t)$ is approximated by a linear combination of time-varying Hagedorn functions,
\begin{equation}\label{t:hag-series}
\psi_\calK(x,t)=e^{\I S(t)/\eps} 
\sum_{k\in\calK} c_k(t)\, \varphi_k^\eps[q(t),p(t),Q(t),P(t)](x)
\end{equation}
over a finite multi-index set $\calK$, where
$(q(t),p(t),Q(t),P(t))$  is a solution to the classical equations
\begin{equation}\label{t:classical-eom}
\dot q=p, \ \ \dot p=-\nabla V(q) \quad\text{and}\quad \dot Q=P,\ \ \dot P=-\nabla^2V(q)Q,
\end{equation}
and the classical action $S(t)$ satisfies
\begin{equation}\label{t:S}
\dot S = \tfrac12|p|^2-V(q).
\end{equation}
The coefficients $c_k(t)$ in this approximation are determined by a Galerkin condition that yields the system of differential equations
\eqref{V:hag-coeff-ode}, viz.
\begin{equation}\label{t:hag-coeff-ode}
\I\eps \dot c(t) = G(t)c(t) \quad\text{ with }\quad
G(t)=\bigl( \langle \varphi_\ell(\cdot,t)\,|\,W_{q(t)}\varphi_k(\cdot,t)
\rangle\bigr)_{\ell,k\in\calK}\,,
\end{equation}
where $\varphi_k(x,t)= \varphi_k^\eps[q(t),p(t),Q(t),P(t)](x)$
and $W_q$ is the non-quadratic remainder in the Taylor expansion of the potential $V$ at $q$. 
We recall from Lemma~\ref{V:lem:F-bound} that $\| G(t) \| = O(\eps^{3/2})$, so  the coefficients change slowly: 
$\| \dot c(t) \| = O(\eps^{1/2})$.

\medskip\noindent
{\bf Time integration of the position and momentum parameters.}
The approximate solution of the differential equations \eqref{t:classical-eom} is best done by a numerical integrator that preserves quadratic invariants (a {\it symplectic} integrator), for example a  splitting method based on splitting the kinetic and potential parts, or a Gauss--Runge--Kutta method; see, \eg\ ~\cn{BlaC16}, \cn{HaiLW06} and \cn{LeiR04}. These integrators not only give favourable behaviour for the Hamiltonian equations of motion for $(q,p)$, but also preserve the symplecticity relation~\eqref{hag-rel} of the matrices $Q$ and $P$ along the numerical solution. \cn{FaoGL09} use the St\"ormer--Verlet method \eqref{t:sv} in this context, but it is pointed out by \cn{GraH14} that a higher-order method for the classical equations is favourable. Using a method of order $r$ with step size~$\dt$, we then have for the approximations at time $t_n=n\dt\le \bar t$ the error bound (with $C$ depending on~$\bar t$)
\begin{equation} \label{t:hag-qp-err}
\| (q^n,p^n,Q^n,P^n,S^n) - (q(t_n),p(t_n),Q(t_n),P(t_n),S(t_n)) \| \le C \dt^r.
\end{equation}
This yields an $O(\dt^r/\eps)$  $L^2$-error in the corresponding Hagedorn functions,
\begin{equation} \label{t:hag-phi-err}
\| \varphi_k^\eps[q^n,p^n,Q^n,P^n]-\varphi_k^\eps[q(t_n),p(t_n),Q(t_n),P(t_n)] \|_{L^2} \le C_k\frac{\dt^r}\eps,
\end{equation}
and similarly for the phase factor,
\begin{equation} \label{t:hag-S-err}
|e^{\I S^n/\eps} - e^{\I S(t_n)/\eps}| \le C\frac{\dt^r}\eps.
\end{equation}

\medskip\noindent
{\bf Time integration of the coefficients.}
Let us denote the approximate Galerkin matrix by
$$
G^n = \bigl( \langle \varphi_\ell^n\,|\,W_{q^n}\varphi_k^n
\rangle\bigr)_{\ell,k\in\calK}\,,
$$
where $\varphi_k^n(x)=\varphi_k^\eps[q^n,p^n,Q^n,P^n](x)$.
We integrate  \eqref{t:hag-coeff-ode} numerically with the approximate Galerkin matrix, using the exponential midpoint rule with the double step size $2\tau$,
\begin{equation} \label{t:c-mpr}
c^{2(n+1)} = \exp\Bigl( -2\tau \frac\I\eps G^{2n+1} \Bigr) c^{2n},
\end{equation}
which preserves the Euclidean norm of the coefficient vectors.

Computationally, this can be done efficiently using Lanczos iterations; see \cn[Section III.2.2]{Lub08} and references therein.  This requires only a few matrix-vector products which, moreover, can be computed with high accuracy by a fast matrix-free algorithm for which the Galerkin matrix $G^{2n+1}$ itself need not be assembled; see \cn{FaoGL09} and \cn{Bru15}. 
This matrix-free algorithm uses the recurrence relations of the Hagedorn functions. Its computational cost is proportional to the cardinality of the multi-index set $\calK$ --- and not to its square, that is the number of entries in the Galerkin matrix, which is prohibitively large for  problems in higher dimensions $d$. In an alternative approach due to \cn{HagL17}, the Galerkin matrix is assembled after a transformation that converts the multidimensional Galerkin integrals to a product of one-dimensional integrals.

\medskip\noindent
{\bf Approximation to the wave function.}
We thus obtain, for even $n$, an approximation to the wave function at time $t_n$ that is given by
\begin{equation} \label{t:psiKn}
\psi_{\calK}^n(x) = e^{\I S^n/\eps} 
\sum_{k\in\calK} c_k^n\, \varphi_k^\eps[q^n,p^n,Q^n,P^n](x).
\end{equation}
Since the Hagedorn functions are orthonormal and the Euclidean norm of the coefficient vector is preserved,  the $L^2$-norm of the approximate wave function is also preserved in time: $\| \psi_{\calK}^n \|_{L^2}=1$ for all $n$.

%\medskip\noindent
%{\bf Error bound.}
We now bound the error of this time integration method. Here we also recall the error bound for $\psi_\calK(t) - \psi(t)$ given by Theorem~\ref{V:cor:hag-err}. The following $L^2$-error bound (without the factor $\eps^{1/2}$ in the second term) is due to  \cn{GraH14}.

\begin{theorem}[error of the Hagedorn wave packet integrator] \label{t:thm:hag-err}
\bch We consider the time discretisation of order $r$ with stepsize $\tau$ given in 
\eqref{t:hag-qp-err} for the classical equations and \eqref{t:c-mpr} for the coefficients.  
If the potential $V$ is smooth with polynomially bounded derivatives, then the following error bounds hold true:\ech

(a) The $L^2$-error
is bounded by
\begin{equation} \label{t:hag-Ltwo-err}
\| \psi_\calK^n - \psi_\calK(t_n) \|_{L^2} \le C_1 \frac{\tau^r}\eps + C_2\, \tau^2 \eps^{1/2}.
\end{equation}
%where $C_1$ and $C_2$ are independent of $\eps$, $\dt$ and $n$ with $t_n=n\dt\le \bar t$ (but depend on $\calK$).

(b) %A better approximation is, however, obtained for averages of observables. 
Let the observable $A$ be an $\eps$-independent  
polynomial of the position and momentum operators $\widehat q$ and $\widehat p$. 
%(with $\widehat q \varphi(x)=x\varphi(x)$ and $\widehat p \varphi(x)=-\I\eps \nabla_x\varphi(x)$).
%, and consider averages $\langle A \rangle_u =\langle u\, |\, Au \rangle$. 
Then, the error of the average of $A$ over $\psi_\calK^n$ is bounded by
\begin{equation} \label{t:hag-obs-err}
|\langle A \rangle_{\psi_\calK^n} - \langle A \rangle_{\psi_\calK(t_n)}| \le 
C_1 {\tau^r} + C_2\, \tau^2 \bch \eps^{1/2}\ech
 \qquad\text{for }\ t_n \le \bar t.
\end{equation}
In both (a) and (b), $C_1$ and $C_2$ are independent of $\eps$, $\dt$ and $n$ with $t_n=n\dt\le \bar t$ (but depend on $\calK$ and $\bar t$).
\end{theorem}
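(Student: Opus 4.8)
The plan is to follow the same strategy that proved Theorem~\ref{t:thm:strang-schroedinger-obs} for the Strang splitting, splitting the error into a time-discretisation part for the position/momentum parameters and phase, a time-discretisation part for the coefficient vector, and then estimating the resulting wave-function defects using the moment bounds of Lemma~\ref{lem:int-bound} and the Galerkin-matrix estimates of Lemma~\ref{V:lem:F-bound}. First I would write the two approximations side by side, $\psi_\calK(t_n)= e^{\I S(t_n)/\eps}\sum_k c_k(t_n)\varphi_k^\eps[q(t_n),p(t_n),Q(t_n),P(t_n)]$ and $\psi_\calK^n=e^{\I S^n/\eps}\sum_k c_k^n\varphi_k^\eps[q^n,p^n,Q^n,P^n]$, and use a triangle-inequality decomposition: the error due to replacing the exact classical trajectory by its numerical approximation (with the coefficients held at the exact values), plus the error due to replacing the exact coefficients by the numerically computed ones (on the numerical trajectory). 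For part~(a) the first contribution is controlled by \eqref{t:hag-phi-err} and \eqref{t:hag-S-err}, giving $O(\tau^r/\eps)$; the second contribution requires a separate argument showing $\|c^n-c(t_n)\|=O(\tau^2\eps^{1/2})$, which is the source of the $C_2\tau^2\eps^{1/2}$ term.

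The heart of the coefficient estimate is that the exponential midpoint rule \eqref{t:c-mpr} applied to $\I\eps\dot c=G(t)c$ has local error $O(\tau^3)$ when measured against a fixed matrix, but here the relevant matrix is $\tfrac1\eps G$, whose norm is $O(\eps^{1/2})$ by Lemma~\ref{V:lem:F-bound}. A careful local error analysis of the midpoint rule for $\dot c=\tfrac1{\I\eps}G(t)c$ — expanding in a Magnus/variation-of-constants form and using that both $\tfrac1\eps G(t)$ and its time derivative are $O(\eps^{1/2})$ (the latter since $G(t)$ depends smoothly on the slowly varying classical parameters and $W_{q(t)}$, and differentiating keeps the same vanishing-at-$q$ structure so Lemma~\ref{V:lem:F-bound} still applies) — gives a local error of size $O(\tau^3\eps^{1/2})$, hence a global error $O(\tau^2\eps^{1/2})$ after $n=O(1/\tau)$ steps; one also needs the Grönwall-type stability of the recursion, which is immediate because each step is norm-preserving. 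I would additionally have to account for the use of the \emph{approximate} Galerkin matrix $G^{2n+1}$ built from the numerical parameters rather than $G(t_{2n+1})$: by \eqref{t:hag-qp-err} and the smooth dependence of the Hagedorn functions and of $W_q$ on the parameters, $\|G^{2n+1}-G(t_{2n+1})\|\le C\tau^r\eps^{3/2}$ (the extra $\eps^{3/2}$ again from Lemma~\ref{V:lem:F-bound}-type structure), so this perturbation contributes at most $O(\tau^{r-1}\cdot\tau\eps^{1/2})=O(\tau^r\eps^{1/2})$ to $\|c^n-c(t_n)\|$, which is dominated by the existing terms. Assembling, $\|\psi_\calK^n-\psi_\calK(t_n)\|_{L^2}\le C_1\tau^r/\eps+C_2\tau^2\eps^{1/2}$, using the orthonormality of the Hagedorn functions to pass from coefficient errors to $L^2$-errors and the fact that the coefficient vectors have bounded Euclidean norm.

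For part~(b) I would repeat the argument after which averages of an $\eps$-independent polynomial observable $A$ in $\widehat q,\widehat p$ are expressed, as in the proof of Theorem~\ref{t:thm:gwp-err}(b), as phase-space integrals: using the raising/lowering recurrences \eqref{V:rec} one writes $\langle A\rangle_{\psi_\calK}=\sum_{k,\ell} \bar c_\ell c_k\,\langle\varphi_\ell\,|\,A\varphi_k\rangle$ where each $\langle\varphi_\ell\,|\,A\varphi_k\rangle$ is, after the substitution $y=Q^{-1}(x-q)/\sqrt\eps$, a polynomial in $q,p$ and the entries of $Q,P$ times $\eps$-powers, with coefficients depending smoothly on the parameters. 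The key gain is that the factor $\eps$ that produced the $\tau^r/\eps$ term in the $L^2$-norm (coming from $e^{\I(\cdot)/\eps}$ and from differentiating the Gaussian exponent) is absent here, exactly as in Theorem~\ref{t:thm:gwp-err}(b): the phase $e^{\I S/\eps}$ cancels against its conjugate, and the remaining dependence on the parameters is $\eps$-regular, so \eqref{t:hag-qp-err} gives a clean $O(\tau^r)$. The coefficient error again contributes $O(\tau^2\eps^{1/2})$ through $\|c^n-c(t_n)\|$ paired with the bounded quantities $\langle\varphi_\ell\,|\,A\varphi_k\rangle=O(1)$. I expect the main obstacle to be the bookkeeping in the local error analysis of the exponential midpoint rule that correctly tracks the $\eps^{1/2}$ gain — one must verify not only that $\tfrac1\eps G(t)$ is small but that its time derivative (and, for the midpoint rule, also the deviation of $G(t_{2n+1})$ from the time-average of $G$ over $[t_{2n},t_{2n+2}]$) is small of the same order, which rests on differentiating through the classical flow and re-applying Lemma~\ref{V:lem:F-bound} to the differentiated remainder potential; everything else is a routine, if lengthy, parameter-perturbation estimate.
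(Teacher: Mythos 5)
Your architecture is exactly the paper's: split the error into the parameter/phase contribution (controlled by \eqref{t:hag-phi-err}--\eqref{t:hag-S-err}, giving the $\tau^r/\eps$ term), a coefficient error $\|c^n-c(t_n)\|=O(\tau^2\eps^{1/2})$ obtained from a local error analysis of the exponential midpoint rule plus a perturbation argument for the approximate Galerkin matrix, and, for part (b), a rewriting of $\langle A\rangle_{\psi_\calK}$ as an $\eps$-regular function of the parameters after the substitution $y=Q^{-1}(x-q)/\sqrt\eps$ so that the $1/\eps$ from the phase disappears. All of this matches the paper's proof, including the introduction of auxiliary coefficients propagated with the exact Galerkin matrix and the bound $\|G^n-G(t_n)\|=O(\tau^r\eps^{3/2})$.

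There is, however, one genuine gap, and it sits precisely at the point you yourself flag as the main obstacle: the claim that the time derivative of $\tfrac1\eps G(t)$ is again $O(\eps^{1/2})$ because ``differentiating keeps the same vanishing-at-$q$ structure so Lemma~\ref{V:lem:F-bound} still applies.'' It does not. Differentiating $W_{q(t)}$ through the expansion point gives $\partial_{q_j}W_q(x)=-\tfrac12(x-q)^T\partial_j\nabla^2V(q)(x-q)$, a quadratic form in $x-q$ whose second derivative at $x=q$ is nonzero, so Lemma~\ref{V:lem:F-bound} is not applicable to the differentiated remainder; Lemma~\ref{lem:int-bound} then only yields $\|\tfrac{d}{dt}G\|=O(\eps)$, i.e.\ $\tfrac1\eps\tfrac{d}{dt}G=O(1)$, and the same for the second derivative entering the midpoint quadrature error. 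Carried through, your argument delivers a global coefficient error $O(\tau^2)$ \emph{without} the $\eps^{1/2}$ factor — which is exactly the weaker bound of Graf and Hairer that the theorem improves upon. The true bound $\|\tfrac{d^j}{dt^j}G\|=O(\eps^{3/2})$ does hold, but only after a cancellation between the derivative through the expansion point and the derivative through the moving centre of the Hagedorn functions. The paper makes this cancellation automatic in Lemma~\ref{t:lem:G}: substitute $y=(x-q(t))/\sqrt\eps$ first (the $p$-dependent phases cancel), write $W_{q(t)}(q(t)+\sqrt\eps y)=\eps^{3/2}\int_0^1\tfrac12(1-\theta)^2V'''(q(t)+\theta\sqrt\eps y)(y,y,y)\,\D\theta$ by the integral remainder formula, and only then differentiate in $t$; the factor $\eps^{3/2}$ is extracted \emph{before} differentiation, so all time derivatives of $\eps^{-3/2}G(t)$ are bounded uniformly in $\eps$. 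You need this lemma (or an explicit cancellation argument) to justify the $\eps^{1/2}$ in the $\tau^2$ term of both (a) and (b); the rest of your proposal then goes through as written.
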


The proof uses the following lemmas.

\begin{lemma}[scaling of the Galerkin matrix]
\label{t:lem:G}  
For the Galerkin matrix we have
$$
G(t) = \eps^{3/2} \widetilde G(t),
$$
where all derivatives of $\widetilde G$ are bounded independently of $\eps$.
\end{lemma}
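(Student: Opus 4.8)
The plan is to use the explicit polynomial-times-Gaussian form \eqref{phik-pol} of the Hagedorn functions together with the rescaling $x=q(t)+\sqrt\eps\,y$, which removes all $\eps$-oscillation from the integrand and exposes the power $\eps^{3/2}$ coming from the third-order vanishing of the non-quadratic remainder $W_{q(t)}$ at $q(t)$. First I would write $\varphi_k(\cdot,t)=\varphi_k^\eps[q(t),p(t),Q(t),P(t)]$ as in \eqref{phik-pol}, namely $\varphi_k(x,t)=(2^{|k|}k!)^{-1/2}\,p_k\big(\tfrac1{\sqrt\eps}Q(t)^{-1}(x-q(t));t\big)\,\varphi_0(x,t)$, where the Hermite-type polynomials $p_k(\cdot\,;t)$ depend on $t$ only through the matrix $Q(t)^{-1}\overline{Q(t)}$ via \eqref{phik-pol-rec} and are $\eps$-independent; and recall from Lemma~\ref{lem:norm} (with $\zeta$ normalised to $\|\varphi_0\|=1$) together with \eqref{QQ} and \eqref{CPQ} that $|\varphi_0(x,t)|^2=(\pi\eps)^{-d/2}(\det\Im C(t))^{1/2}\exp(-\tfrac1\eps(x-q(t))^T\Im C(t)(x-q(t)))$, with $C(t)=P(t)Q(t)^{-1}$.

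Substituting $x=q(t)+\sqrt\eps\,y$ in $G_{\ell k}(t)=\langle\varphi_\ell(\cdot,t)\,|\,W_{q(t)}\varphi_k(\cdot,t)\rangle$, the Jacobian $\eps^{d/2}$ cancels the prefactor $(\pi\eps)^{-d/2}$, the polynomial arguments $\tfrac1{\sqrt\eps}Q(t)^{-1}(x-q(t))=Q(t)^{-1}y$ become $\eps$-independent, and the Taylor remainder in integral form gives $W_{q(t)}(q(t)+\sqrt\eps\,y)=\eps^{3/2}R(y,\sqrt\eps;t)$ with $R(y,s;t)=\tfrac12\sum_{|m|=3}y^m\int_0^1(1-\theta)^2\,\partial^mV\big(q(t)+\theta s\,y\big)\,\D\theta$. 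Hence $G(t)=\eps^{3/2}\widetilde G(t)$ with
\[
\widetilde G(t)_{\ell k}=\frac{\pi^{-d/2}(\det\Im C(t))^{1/2}}{\sqrt{2^{|\ell|+|k|}\,\ell!\,k!}}\int_{\Rb^d}\overline{p_\ell(Q(t)^{-1}y;t)}\;p_k(Q(t)^{-1}y;t)\;R(y,\sqrt\eps;t)\;\e^{-y^T\Im C(t)\,y}\,\D y .
\]
The only $\eps$-dependence now sits in the smooth, $\eps$-uniformly controllable factor $R(y,\sqrt\eps;t)$.

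It then remains to show $\eps$-uniform bounds on $\widetilde G$ and its $t$-derivatives on a bounded interval $[0,\bar t]$. On such an interval the classical trajectory $(q,p,Q,P)$ solving \eqref{t:classical-eom} stays in a bounded set, $Q(t)$ is invertible with $\|Q(t)^{\pm1}\|$ bounded, so $\Im C(t)=(Q(t)Q(t)^*)^{-1}$ is symmetric positive definite with eigenvalues in a fixed compact subset of $(0,\infty)$ and $\det\Im C(t)$ bounded; since $V$ is smooth with polynomially bounded derivatives, $R(y,s;t)$ and all its $t$-derivatives are bounded by a fixed polynomial in $|y|$, uniformly for $s\in[0,\sqrt{\eps_0}]$ and $t\in[0,\bar t]$ (a $t$-derivative produces $\dot q=p$ and higher derivatives of $V$ evaluated at $q(t)+\theta s\,y$, still polynomially bounded). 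The Gaussian weight $\e^{-y^T\Im C(t)\,y}$ then makes the integral and all its $t$-derivatives finite, uniformly in $\eps\le\eps_0$ and $t\le\bar t$; differentiating $\widetilde G(t)$ in $t$ also acts on $\det\Im C(t)$, on $Q(t)^{-1}$ via $\tfrac{d}{dt}Q^{-1}=-Q^{-1}PQ^{-1}$, on $Q(t)^{-1}\overline{Q(t)}$, and on $\Im C(t)$, each producing only more terms of the same admissible type. The entrywise size $\widetilde G(t)_{\ell k}=O(1)$ is of course already contained in Lemma~\ref{V:lem:F-bound} (with $\mu=\max(|k-\ell|,3)\ge 3$), but that statement does not by itself control the time derivatives, which is precisely why the rescaling identity above is needed.

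I expect the main obstacle to be the bookkeeping in the last step: verifying that the Taylor remainder $R(y,\sqrt\eps;t)$ and all its $t$-derivatives have $\eps$-uniform polynomial growth in $y$, together with the accompanying lower and upper bounds on $\Im C(t)$ that make the Gaussian-weighted integrals uniformly finite; everything else is bookkeeping once the substitution is performed. I would also remark explicitly that one does \emph{not} claim uniform bounds for $\eps$-derivatives of $\widetilde G$ (differentiating $R$ in $\eps$ costs a factor $\eps^{-1/2}$ through $\partial_sR$); only the $t$-regularity is asserted, which is exactly what the error analysis of the exponential midpoint rule \eqref{t:c-mpr} for \eqref{t:hag-coeff-ode} requires.
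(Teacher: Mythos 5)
Your proposal is correct and follows essentially the same route as the paper's proof: substitute $x=q(t)+\sqrt\eps\,y$, observe that the phases cancel so the integrand becomes $\eps$-independent polynomials times a Gaussian in $y$, and extract $\eps^{3/2}$ from the integral form of the third-order Taylor remainder of $V$ at $q(t)$. The paper phrases the rescaling via the identity $\varphi_k^\eps[q,p,Q,P](x)=\eps^{-d/4}\varphi_k^1[0,p/\sqrt\eps,Q,P]\bigl((x-q)/\sqrt\eps\bigr)$ rather than through the explicit polynomial representation \eqref{phik-pol}, but this is an equivalent bookkeeping choice, and your additional remarks on the $t$-derivatives and on the non-uniformity in $\eps$-derivatives are accurate.
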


\begin{proof} We transform variables $y=(x-q)/\sqrt{\eps}$ and note from \eqref{V:phi-0} and \eqref{V:rec} that
$$
\varphi_k^\eps[q,p,Q,P](x)  = \eps^{-d/4} \varphi_k^1[0,p/\sqrt{\eps},Q,P](y).
$$
We therefore obtain (omitting the omnipresent argument $t$)
\begin{align*}
G_{jk}  &= \int_{\Rb^d} \overline{\varphi_j^\eps[q,p,Q,P](x)}\, W_q(x) \, \varphi_k^\eps[q,p,Q,P](x) \D x
\\
&= \int_{\Rb^d} \overline{\varphi_j^1[0,p/\sqrt{\eps},Q,P](y)}\, W_q(q+\sqrt{\eps}y) \, \varphi_k^1[0,p/\sqrt{\eps},Q,P](y) \D y.
\end{align*}
The exponential factors containing $p$ cancel in this expression, and so this simplifies to
$$
G_{jk} = \int_{\Rb^d} \overline{\varphi_j^1[0,0,Q,P](y)}\, W_q(q+\sqrt{\eps}y) \, \varphi_k^1[0,0,Q,P](y) \D y.
$$
Since $W_q(x)$ is the non-quadratic remainder term of the Taylor expansion of $V$ at $q$, we have by the integral remainder formula
$$
W_q(q+\sqrt{\eps}y) = \eps^{3/2} \int_0^1 \tfrac12 (1-\theta)^2 \, V'''(q+\theta \sqrt\eps y)(y,y,y) \, \D \theta.
$$
Inserting this formula into the above integral yields the result.
\end{proof}

\begin{lemma}[error in the Galerkin matrix]
\label{t:lem:G-err}
With the error bounds \eqref{t:hag-qp-err}, the error in the Galerkin matrix is bounded by
$$
\| G^n - G(t_n) \| \le C\, \tau^r \eps^{3/2},
$$
where $C$ is independent of  $\eps$, $\dt$ and $n$ with $t_n=n\dt\le \bar t$ (but depends on the multi-index set $\calK$).
\end{lemma}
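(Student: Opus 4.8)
The plan is to re-run the computation in the proof of Lemma~\ref{t:lem:G} verbatim, but now with the numerical parameters $(q^n,Q^n,P^n)$ in place of the exact ones $(q(t_n),Q(t_n),P(t_n))$, so that both $G^n$ and $G(t_n)$ appear with the factor $\eps^{3/2}$ pulled out, and then to reduce everything to an $\eps$-uniform Lipschitz bound for the remaining factor as a function of the Gaussian parameters. Introduce the reduced matrix-valued map
\[
\widetilde G[q,Q,P]_{jk} = \int_{\Rb^d} \overline{\varphi_j^1[0,0,Q,P](y)}\,\Bigl(\int_0^1 \tfrac12 (1-\theta)^2\, V'''(q+\theta\sqrt\eps\, y)(y,y,y)\,\D\theta\Bigr)\,\varphi_k^1[0,0,Q,P](y)\,\D y ,
\]
which is exactly the object produced in the proof of Lemma~\ref{t:lem:G} after the substitution $y=(x-q)/\sqrt\eps$, the cancellation of the exponential factors containing $p$, and the use of the integral form of the remainder $W_q$. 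Note that $\varphi_j^1[0,0,Q,P]$ carries no $\eps$: by \eqref{phik-pol} it is a fixed polynomial in $Q^{-1}y$ times the unit-width Gaussian $\varphi_0^1[0,0,Q,P]$, whose modulus square is $\pi^{-d/2}|\det Q|^{-1}\exp(-y^T(QQ^*)^{-1}y)$ by Lemma~\ref{lem:hag-rel}. Since the integrator used for \eqref{t:classical-eom} preserves the relations \eqref{hag-rel}, both parameter triples lie on the symplectic manifold, and one has $G(t_n)=\eps^{3/2}\,\widetilde G[q(t_n),Q(t_n),P(t_n)]$ and $G^n=\eps^{3/2}\,\widetilde G[q^n,Q^n,P^n]$, so
\[
G^n - G(t_n) = \eps^{3/2}\bigl(\widetilde G[q^n,Q^n,P^n] - \widetilde G[q(t_n),Q(t_n),P(t_n)]\bigr),
\]
and it remains to bound this parenthesis by $C\tau^r$.

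For this I would show that $\widetilde G$ is Lipschitz continuous along paths within the symplectic manifold, with a Lipschitz constant independent of $\eps$, locally uniformly near the compact closure of the classical trajectory on $[0,\bar t]$; there $Q(t)$ stays invertible and $(QQ^*)^{-1}$ stays bounded below by a fixed positive constant. Differentiating $\widetilde G[q,Q,P]_{jk}$ under the integral sign, the $q$-derivative brings down $V^{(4)}(q+\theta\sqrt\eps y)(y,y,y,\cdot)$, which is polynomially bounded in $y$ uniformly in $\eps$ and in $q$ on a compact set because $V$ has polynomially bounded derivatives; the $Q$- and $P$-derivatives act on the polynomial and Gaussian factors of $\varphi_j^1[0,0,Q,P]$ and again produce (polynomial in $y$) times (Gaussian in $y$) contributions. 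All resulting integrands are of the form (polynomial in $y$) $\times$ (polynomially bounded function of $y$) $\times$ (Gaussian in $y$), hence integrable with bounds independent of $\eps$ --- this is precisely the bookkeeping already encapsulated in Lemma~\ref{lem:int-bound}. Consequently $\widetilde G$ has an $\eps$-uniformly bounded derivative on a neighbourhood of the trajectory, and for $\tau\le\tau_0$ with $\tau_0$ independent of $\eps$ the numerical triple $(q^n,Q^n,P^n)$ stays in that neighbourhood by \eqref{t:hag-qp-err} and can be joined to $(q(t_n),Q(t_n),P(t_n))$ by a path of length $O(\tau^r)$; combining with \eqref{t:hag-qp-err} gives $\|\widetilde G[q^n,Q^n,P^n]-\widetilde G[q(t_n),Q(t_n),P(t_n)]\|\le C\tau^r$, hence $\|G^n-G(t_n)\|\le C\tau^r\eps^{3/2}$.

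The only genuinely delicate point is the $\eps$-uniformity of the Lipschitz constant. The essential observation is that, after the scaling $y=(x-q)/\sqrt\eps$, the sole remaining $\eps$-dependence sits inside the arguments of derivatives of $V$, through $q+\theta\sqrt\eps y$, where it is harmless because those derivatives grow at most polynomially; meanwhile the potentially dangerous $Q^{-1}$ and $\det Q^{-1/2}$ factors carry no $\eps$ at all and are controlled by the uniform invertibility of $Q$ on the compact interval $[0,\bar t]$, while the Gaussian moment estimates are $\eps$-free by Lemma~\ref{lem:int-bound}. Everything else --- differentiation under the integral sign, the mean-value inequality along the connecting path, and insertion of the $O(\tau^r)$ parameter error --- is routine.
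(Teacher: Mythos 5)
Your proposal is correct and follows essentially the same route as the paper: the paper's own proof is the single sentence ``This follows immediately from the formulas for $G^n_{jk}$ and $G_{jk}(t_n)$ obtained in the previous proof,'' i.e.\ it pulls out the $\eps^{3/2}$ factor via the rescaled representation from Lemma~\ref{t:lem:G} and implicitly invokes exactly the $\eps$-uniform Lipschitz dependence on $(q,Q,P)$ together with \eqref{t:hag-qp-err} that you spell out. Your write-up simply makes explicit the details (uniform invertibility of $Q$ near the trajectory, polynomial boundedness of $V^{(4)}$, Gaussian moment bounds) that the paper leaves to the reader.
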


\begin{proof} This follows immediately from the formulas for $G_{jk}^n$ and $G_{jk}(t_n)$ obtained in the previous proof.
\end{proof}

\begin{lemma}[error of the coefficients] \label{t:lem:c-err}
With the error bounds \eqref{t:hag-qp-err} for $r\ge 2$, the error in the coefficients is bounded by
$$
\| c^n - c(t_n) \| \le C\, \tau^2 \eps^{1/2},
$$
where $C$ is independent of  $\eps$, $\dt$ and $n$ with $t_n=n\dt\le \bar t$.
\end{lemma}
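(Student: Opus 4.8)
The plan is to estimate the error in the coefficients by comparing the exact flow of the coefficient ODE \eqref{t:hag-coeff-ode} with the exponential midpoint rule \eqref{t:c-mpr}, exploiting the smallness of the Galerkin matrix established in Lemma~\ref{t:lem:G}. The key point is that $G = \eps^{3/2}\widetilde G$, so the generator $-\I G/\eps = -\I\eps^{1/2}\widetilde G$ of the coefficient dynamics is $O(\eps^{1/2})$. Since the exponential midpoint rule is a second-order method that preserves the Euclidean norm, its local error for the equation $\dot c = -\I\eps^{1/2}\widetilde G(t)c$ is bounded by $C\tau^3$ times a constant depending on bounds of $\widetilde G$ and its first two time derivatives, times the appropriate power of the small parameter; more precisely, the local error is $O(\tau^3 \eps^{1/2})$ because each commutator/derivative term in the midpoint rule's Taylor expansion carries at least one factor $\eps^{1/2}$ from the generator. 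Summing $O(t_n/\tau)$ local errors and using that the midpoint propagator is norm-preserving (hence stable) gives a global error $O(\tau^2\eps^{1/2})$ for the approximation obtained with the \emph{exact} Galerkin matrices $G(t_{2n+1})$.

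First I would set up the comparison between the exact coefficient vector $c(t_n)$ and the auxiliary vector $\widetilde c^{\,n}$ obtained by running the exponential midpoint rule with the exact matrices $G(t_{2n+1})$ (not the numerically computed $G^{2n+1}$). Using the variation-of-constants formula on the interval $[t_{2n},t_{2(n+1)}]$ and Taylor-expanding both the exact solution operator and the matrix exponential about the midpoint $t_{2n+1}$, I would identify that the leading error terms are of the form $\tau^3$ times products involving $\widetilde G$, $\dot{\widetilde G}$, $\ddot{\widetilde G}$ and $\eps^{1/2}$; since each occurrence of $G/\eps$ contributes exactly one $\eps^{1/2}$, and at least three such factors (or derivatives thereof, which inherit the $\eps^{3/2}$ scaling by Lemma~\ref{t:lem:G}) appear, the bound $\|\widetilde c^{\,n} - c(t_n)\| \le C\tau^2\eps^{1/2}$ follows by the standard Lady Windermere's fan argument using norm-preservation for stability.

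Next I would account for the error incurred by replacing $G(t_{2n+1})$ by the numerically computed $G^{2n+1}$. By Lemma~\ref{t:lem:G-err}, $\|G^{2n+1} - G(t_{2n+1})\| \le C\tau^r\eps^{3/2}$, and since $r\ge 2$ this perturbation of the generator, divided by $\eps$, is $O(\tau^r\eps^{1/2}) = O(\tau^2\eps^{1/2})$ per step after multiplying by the step length; summing and again using norm-boundedness of the propagators gives an additional $O(\tau^2\eps^{1/2})$ contribution. Combining the two estimates yields $\|c^n - c(t_n)\| \le C\tau^2\eps^{1/2}$, with $C$ depending on $\bar t$ and $\calK$ through the derivative bounds on $\widetilde G$, but independent of $\eps$, $\tau$ and $n$ with $t_n = n\tau \le \bar t$.

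The main obstacle I anticipate is the bookkeeping in the local error expansion: one must verify carefully that \emph{every} term in the difference between the exact flow and the midpoint step carries at least one factor of $\eps^{1/2}$, i.e.\ that no term survives with a bare $\tau^3$ and no $\eps$-gain. This requires knowing not just $\|G(t)\| = O(\eps^{3/2})$ but that all time derivatives of $G(t)$ enjoy the same $\eps^{3/2}$ scaling with $\eps$-independent constants, which is exactly the content of Lemma~\ref{t:lem:G} (``all derivatives of $\widetilde G$ are bounded independently of $\eps$''). Granting that lemma, the rest is a routine — if slightly tedious — application of the standard convergence theory for one-step methods applied to a linear non-autonomous ODE with a small, smoothly time-dependent generator, so I would not grind through it in detail beyond indicating the Taylor expansion and the telescoping sum.
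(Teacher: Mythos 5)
Your proposal follows essentially the same route as the paper: introduce the auxiliary midpoint-rule iterates driven by the exact Galerkin matrices $G(t_{2n+1})$, bound their deviation from $c(t)$ via variation of constants and the midpoint quadrature error (each term carrying at least one factor $\eps^{1/2}$ thanks to Lemma~\ref{t:lem:G}), then control the additional perturbation from $G^{2n+1}$ versus $G(t_{2n+1})$ with Lemma~\ref{t:lem:G-err} and $r\ge 2$, using norm preservation of the propagators for stability. The argument is correct and matches the paper's proof in both structure and the use of the key lemmas.
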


\begin{proof} 
We begin by studying the error of auxiliary coefficient vectors $a^n$ that are defined like $c^n$ but using the Galerkin matrix $G(t)$ with exact parameters $(q(t),p(t),Q(t),P(t))$:
$$
a^{2(n+1)} = \exp\Bigl( -2\tau \frac\I\eps G(t_{2n+1}) \Bigr) a^{2n}.
$$
Over a time step, for $t_{2n} \le t \le t_{2n+2}$, we consider the differential equation for the coefficients $c(t)$ of the time-continuous Galerkin approximation,
$$
\I\eps \dot c(t) = G(t) c(t) 
$$
and the differential equation with fixed matrix $G(t_{2n+1})$,
$$
\I\eps \dot a(t) = G(t_{2n+1}) a(t)
$$
with the same starting values $a(t_{2n})=c(t_{2n})$. Writing the differential equation for $c(t)$
as
$$
\I\eps \dot c(t) = G(t_{2n+1}) c(t)  + \bigl( G(t) - G(t_{2n+1}) \bigr) c(t),
$$
subtracting the two differential equations and applying the variation-of-constants formula yields
\begin{align*}
&a(t_{2(n+1)}) - c(t_{2(n+1)})
\\
&= \int_{t_{2n}}^{t_{2(n+1)}} \exp\Bigl( -(t_{2(n+1)}-t)\,\frac\I\eps\, G(t_{2n+1}) \Bigr) 
\frac\I\eps \bigl( G(t) - G(t_{2n+1}) \bigr) c(t) \D t.
\end{align*}
Here we note that by Lemma~\ref{t:lem:G},
\begin{align*}
 &\exp\Bigl( -(t_{2(n+1)}-t)\,\frac\I\eps\, G(t_{2n+1}) \Bigr) = \bch \Id + O(\eps^{1/2}\dt),\ech
 \\
 &\frac1\eps \bigl( G(t) - G(t_{2n+1}) \bigr) = \eps^{1/2}  \bigl( \widetilde G(t) - \widetilde G(t_{2n+1}) \bigr)=O(\eps^{1/2}\dt),
\end{align*}
so that 
\begin{align*}
a(t_{2(n+1)}) - c(t_{2(n+1)}) &= \I \eps^{1/2} \Bigl(  \int_{t_{2n}}^{t_{2(n+1)}} \widetilde G(t) \D t - 2\dt \widetilde G(t_{2n+1}) \Bigr)
+ O(\eps\tau^3) 
\\
&= O(\eps^{1/2}\tau^3)
\end{align*}
by the error bound for the midpoint rule. From this local error bound for the exponential midpoint rule with the hermitian matrix $G(t)$ we conclude the global error bound
$$
a^{2n} - c(t_{2n}) = O(\eps^{1/2}t_n \tau^2).
$$
To estimate the difference between $c^{2n}$ from \eqref{t:c-mpr} and $a^{2n}$, we write the error equation
\begin{align*}
c^{2n+2} - a^{2n+2} &= \exp \Bigl( -2\tau\,\frac\I\eps\, G^{2n+1} \Bigr) \bigl( c^{2n} - a^{2n} \bigr) 
\\
&\quad 
 + \Bigl( \exp \Bigl( -2\tau\,\frac\I\eps\, G(t_{2n+1})\Bigr) - \exp \Bigl( -2\tau\,\frac\I\eps\, G^{2n+1} \Bigr) \Bigr)c^{2n}.
\end{align*}
The last term is of size $O(\eps^{1/2} \tau^{r+1})$ by Lemma~\ref{t:lem:G-err}, and so we obtain
$$
c^{2n} - a^{2n} = O(\eps^{1/2}t_n \tau^r).
$$
Together with the above estimate for $a^{2n} - c(t_{2n})$, this yields the result.
\end{proof}

\begin{proof} (of Theorem~\ref{t:thm:hag-err} )
(a) The $L^2$-error bound follows by combining the error bounds \eqref{t:hag-phi-err}--\eqref{t:hag-S-err} and 
Lemma~\ref{t:lem:c-err} in \eqref{t:psiKn}.

(b) In view of \eqref{phik-pol}--\eqref{phik-pol-rec}, we can write the Hagedorn wave packet with coefficients $c(t)=\bigl(c_k(t)\bigr)$ and parameters $\Pi(t)=(q(t),p(t),Q(t),P(t))$ and $S(t)$ in the form
$$
\psi_\calK(x,t) = \e^{\I S(t)/\eps} \, f\Bigl(c(t),Q(t),\frac{x-q(t)}{\sqrt\eps}\Bigr)\varphi_0^\eps[\Pi(t)](x),
$$
where $f(\cdot,\cdot,\cdot)$ is a smooth function with derivatives bounded independently of $\eps$ and polynomial in the third argument. As in the proof of part (b) of Theorem~\ref{t:thm:gwp-err},
this yields
$$
\langle A \rangle_{\psi_\calK(t)} = \int_{\Rb^d} g(c(t),\Pi(t), y) \, |\gamma(Q(t),y)|^2 \D y,
$$
with a smooth function $g$ that has derivatives bounded independently of $\eps$ and has polynomial growth in the third argument, and with an $\eps$-independent Gaussian $\gamma(Q(t),y)= \mu(Q(t)) \exp(-\frac1 2 y^T (Q(t)Q(t)^{*})^{-1}y)$ of unit $L^2$-norm.
 The error bound \eqref{t:hag-obs-err} now follows by 
comparing this formula and the analogous formula for the numerical approximation $\psi_\calK^n(x)$ (with coefficients $c^n$ and parameters $\Pi^n$ in place of $c(t)$ and $\Pi(t)$, respectively) and using the error bound \eqref{t:hag-qp-err} for the parameters and the error bound of Lemma~\ref{t:lem:c-err} for the coefficients.
\end{proof}

As in Theorem~\ref{t:thm:gwp-err}, the error bound of (b) actually holds for much larger classes of observables $A$.

We mention that  high-order splitting integrators for Hagedorn wave packets have recently been constructed by \cn{BlaG19}.

%\section{High-dimensional quadrature}
\section{High-dimensional quadrature}
\label{sec:quad}

The various numerical approaches we have described require the computation of high-dimensional integrals
$$
\int_{\Rb^n} f(x) \D x,
$$
where $n=d, 2d$ or even $4d$ with the dimension $d$ of the Schr\"odinger equation.  In particular we have the following, listed with increasing difficulty and generality.
\begin{itemize}
\item $f$ is a real Gaussian of width $O(\sqrt\eps)$  times an $\eps$-independent smooth function, in the case of variational Gaussian wave packets (Section~\ref{sec:gwp}).
\item $f$ is a real Gaussian of width $O(\sqrt\eps)$ times a low-degree polynomial of $x/\sqrt\eps$ times an $\eps$-independent smooth function, in the case of Hagedorn wave packets (Section~\ref{sec:hagwp}).
\item $f$ is a  probability density multiplied with a smooth function, in the case of computing expectation values of observables by Wigner/Husimi transform techniques (Section~\ref{sec:wigner}).
%$f$ is the Wigner function (or the Husimi function --- a probability density) of the initial wave function multiplied with a phase-space observable function that is transported in time by classical dynamics (Section~\ref{sec:wigner})
\item $f$ is a probability density multiplied by a highly oscillatory function (with a wave length proportional to $\eps$), in the case of  Herman--Kluk and Gaussian beam approximations (Section~\ref{sec:csg}).
\end{itemize}

The first two cases, and also the third case with a Gaussian probability density, can be treated efficiently by sparse-grid Gauss--Hermite quadrature for moderate dimensions. 
Very high dimensions and more general probability densities are the realm of various types of Monte Carlo methods and quasi-Monte Carlo methods.
The high-dimensional highly oscillatory case is problematic for all of these methods, with a critical dependence of the required number of quadrature points on $\eps$ and/or the dimension.

In this section we briefly discuss these three approaches to high-dimensional quadrature. While there is ample literature available on Monte Carlo and quasi-Monte Carlo methods, this is not the case for Gauss--Hermite quadrature and its sparse-grid version, and so we give more details for the latter. 

Acta Numerica articles related to this section, which contain a wealth of results and many relevant references, are \cite{BunG04,Caf98,DicKS13,Gil15,BouS18}.

\subsection{Sparse-grid Gauss--Hermite quadrature}

{\bf One-dimensional Gauss--Hermite quadrature.}
We begin with one-dimensional Gauss--Hermite quadrature, which is Gaussian quadrature for the weight function $e^{-x^2}$ on the real line; see, \eg\ \cn{Gau97}. Here, the quadrature nodes $x_1,\dots,x_m$ are chosen as the zeros of the $m$th Hermite polynomial $H_m$. With the corresponding weights $\omega_i$, the quadrature formula
$$
 \sum_{i=1}^m \omega_i \, \phi(x_i) \approx \int_{-\infty}^\infty \e^{-x^2}\, \phi(x)\D x 
$$
is exact whenever $\phi$ is  a polynomial of degree less than $2m$. Written alternatively for $f(x)=\e^{-x^2}\, \phi(x)$ and with $w_i=\omega_i \,\e^{x_i^2}$, the quadrature formula
$$
\sum_{i=1}^m w_i \, f(x_i) \approx \int_{-\infty}^\infty f(x)\D x 
$$
is exact whenever $f(x)$ is $\e^{-x^2}$ times a polynomial of degree less than $2m$. The nodes $x_i$ and weights $w_i$ clearly depend on $m$, but we do not indicate this obvious dependence in the notation.

We prove the following error bound and refer to  \cn{MasM94}  for a different error bound.

\begin{theorem}[error bound for Gauss--Hermite quadrature]
\label{thm:gauss-hermite}
For the multiplication and differentiation operator $A= \frac1{\sqrt 2}(x + d/dx)$ on $L^2(\Rb)$, assume that $f(x)= e^{-x^2/2} g(x)$ with $g\in D(A^r)$ for some integer $r$ with $3\le r\le 2m$. Then, the error of Gauss--Hermite quadrature is bounded by
$$
\Bigl| \sum_{i=1}^m w_i \, f(x_i) - \int_{-\infty}^\infty f(x)\D x  \Bigr| \le \frac{Cm}{\sqrt{2m(2m-1)\dots(2m-r+1)}}  \, \| A^r g \|_{L^2},
$$
where $C$ is independent of $m$, $r$ and $f$.
\end{theorem}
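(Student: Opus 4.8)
The plan is to exploit the fact that Gauss--Hermite quadrature with $m$ nodes integrates exactly every function of the form $e^{-x^2}$ times a polynomial of degree $<2m$, equivalently every function of the form $e^{-x^2/2}$ times a polynomial $p$ of degree $<2m$. The natural orthonormal basis here is the system of $L^2(\Rb)$-normalised Hermite functions $h_n(x) = e^{-x^2/2}H_n(x)/\|e^{-x^2/2}H_n\|_{L^2}$, for which the operator $A = \tfrac1{\sqrt2}(x + d/dx)$ acts as a lowering operator, $A h_n = \sqrt{n}\, h_{n-1}$, so that $A^r h_n = \sqrt{n(n-1)\cdots(n-r+1)}\,h_{n-r}$. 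First I would write $f = e^{-x^2/2} g$ and expand $g = \sum_{n\ge 0} \gamma_n H_n$ in Hermite \emph{polynomials}, equivalently $f = \sum_n \gamma_n (e^{-x^2/2}H_n)$; the hypothesis $g\in D(A^r)$ translates into $\sum_n n(n-1)\cdots(n-r+1)\,c_n^2 < \infty$ where $c_n$ are the coefficients of $g$ in the \emph{orthonormal} basis $\{h_n\}$, and $\|A^r g\|_{L^2}^2 = \sum_{n\ge r} n(n-1)\cdots(n-r+1)\, c_n^2$.

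The second step is to split the error. For $n < 2m$ the quadrature is exact on $e^{-x^2/2}H_n$, so only the tail $n \ge 2m$ contributes:
\[
E(f) := \sum_{i=1}^m w_i f(x_i) - \int_{-\infty}^\infty f(x)\,\D x = \sum_{n\ge 2m} c_n\,\|h_n\|^{-1}\!\!\;\cdot\; E\bigl(e^{-x^2/2}H_n\bigr),
\]
after passing from the $c_n$ to the polynomial coefficients (the normalisation constant $\|e^{-x^2/2}H_n\|_{L^2}$ is what converts between the two). Then I would estimate the individual quadrature errors $|E(e^{-x^2/2}H_n)|$ for $n\ge 2m$: the true integral $\int e^{-x^2/2}H_n = 0$ for $n\ge 1$ by orthogonality against $H_0$, so $E(e^{-x^2/2}H_n) = \sum_i w_i\, e^{-x_i^2/2}H_n(x_i) = \sum_i \omega_i\, e^{x_i^2/2}H_n(x_i)$. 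Using the known facts that the total Gauss--Hermite weight $\sum_i\omega_i = \sqrt\pi$, that $e^{x_i^2/2}|H_n(x_i)|$ is controlled uniformly (via the Cramér-type bound $|h_n(x)|\le C$ uniformly in $x$ and $n$, hence $e^{-x^2/2}|H_n(x)| \le C\,\|e^{-x^2/2}H_n\|_{L^2}/\dots$), one gets a bound of the form $|E(e^{-x^2/2}H_n)| \le C\,\|e^{-x^2/2}H_n\|_{L^2}$ --- that is, after converting to the orthonormal normalisation, $|E(h_n)| \le C$ uniformly in $n$. This uniform-in-$n$ bound on $|E(h_n)|$ is the crux; it reduces the whole estimate to Cauchy--Schwarz on the tail.

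The final step is Cauchy--Schwarz. Writing $E(f) = \sum_{n\ge 2m} c_n\, E(h_n)$ with $|E(h_n)|\le C$, and inserting the factor $\sqrt{n(n-1)\cdots(n-r+1)}$ artificially,
\[
|E(f)| \le C \sum_{n\ge 2m} |c_n| \le C\Bigl(\sum_{n\ge 2m} \frac{1}{n(n-1)\cdots(n-r+1)}\Bigr)^{1/2}\Bigl(\sum_{n\ge 2m} n(n-1)\cdots(n-r+1)\,c_n^2\Bigr)^{1/2},
\]
where the second factor is $\le \|A^r g\|_{L^2}$. For the first factor, since $n(n-1)\cdots(n-r+1)\ge (n-r+1)^r$ is increasing in $n$ and $r\ge 3$, one has $\sum_{n\ge 2m}\bigl(n\cdots(n-r+1)\bigr)^{-1} \le \bigl(2m(2m-1)\cdots(2m-r+1)\bigr)^{-1}\cdot\sum_{n\ge 2m}\bigl((2m)\cdots(2m-r+1)/(n\cdots(n-r+1))\bigr)$, and the residual sum is bounded by a geometric-type series (dominated by $\sum_j (2m/(2m+j))^r$ which is $O(m)$ for $r\ge 2$), giving the first factor $\le C\sqrt{m}\,\bigl(2m(2m-1)\cdots(2m-r+1)\bigr)^{-1/2}$. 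Collecting the constants yields the claimed bound $\dfrac{Cm}{\sqrt{2m(2m-1)\cdots(2m-r+1)}}\,\|A^r g\|_{L^2}$.

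The main obstacle I anticipate is establishing the uniform-in-$n$ bound $|E(h_n)|\le C$ on the quadrature error of the normalised Hermite functions. This needs a genuinely uniform pointwise bound on $e^{-x^2/2}|H_n(x)|$ (a Cramér / Szegő-type estimate for Hermite functions), together with the identity $\sum_i\omega_i = \sqrt\pi$ for the Gauss--Hermite weights; the possibility that the nodes $x_i$ grow like $\sqrt{m}$ must not spoil the bound, which is exactly why one works with the $L^2$-normalised functions $h_n$ that are uniformly bounded on all of $\Rb$. Everything else --- the expansion in Hermite functions, the exactness on low-degree polynomials, and the final Cauchy--Schwarz with the elementary estimate of $\sum_{n\ge 2m}\bigl(n\cdots(n-r+1)\bigr)^{-1}$ --- is routine once that uniform bound is in hand.
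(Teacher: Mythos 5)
Your route is essentially the paper's: expand $g$ in the orthonormal Hermite basis, use the ladder structure of $A$ to encode $\|A^r g\|_{L^2}$ as a decay rate of the coefficients, observe that the quadrature is exact on the first $2m$ modes so only the tail $n\ge 2m$ contributes, bound the quadrature functional on each normalised Hermite mode uniformly, and finish with the elementary tail sum (your Cauchy--Schwarz variant versus the paper's termwise coefficient bound is an immaterial difference). The one step where your argument as written would fail is exactly the one you flagged as the crux: the uniform bound $|E(h_n)|\le C$. Unwinding the weights, with $w_i=\omega_i e^{x_i^2}$ and $f=e^{-x^2/2}g$, the quantity you must control is
\[
\Bigl|\sum_{i=1}^m w_i\, e^{-x_i^2/2}\,h_n(x_i)\Bigr| \;=\; \Bigl|\sum_{i=1}^m \omega_i\, e^{x_i^2/2}\,h_n(x_i)\Bigr| \;\le\; C\sum_{i=1}^m \omega_i\, e^{x_i^2/2},
\]
using the uniform pointwise bound on $h_n$. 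The fact you invoke, $\sum_i\omega_i=\sqrt\pi$, does not control this: the extra factor $e^{x_i^2/2}$ is as large as $e^{m}$ at the outermost nodes $x_i\sim\sqrt{2m}$, so one genuinely needs that $\sum_i\omega_i\,e^{x_i^2/2}$ stays bounded uniformly in $m$. This is true but nontrivial; it amounts to the convergence of Gauss--Hermite quadrature applied to the borderline-admissible integrand $e^{x^2/2}$ (so that $\sum_i\omega_i e^{x_i^2/2}\to\int e^{-x^2/2}\,\D x$), which the paper obtains from a classical theorem of Uspensky together with positivity of the weights. Your remark that working with the $L^2$-normalised $h_n$ resolves the issue is not quite right: the normalisation tames the size of $H_n$, but the residual half-Gaussian weight $e^{x_i^2/2}$ remains and must be absorbed by this separate quadrature-convergence fact. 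With that ingredient supplied, the rest of your plan goes through and reproduces the stated bound.
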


We note that for fixed $r$ and large $m$, this yields an $O(m^{-(r/2-1)})$ error bound.

\begin{proof}
We expand $g$ as a  Hermite--Fourier series
$$
g= \sum_{k\ge 0} c_k \, \varphi_k, \qquad\text{with }\quad c_k = \langle \varphi_k\mid g \rangle,
$$
where the functions $\varphi_k$ ($k\ge 0$) are the $L^2$-orthonormal basis of Hermite functions; see, \eg\ \cn{Tha00}. Up to a normalisation factor, $\varphi_k(x)$ equals $H_k(x) \, \e^{-x^2/2}$.
%, which are the eigenfunctions of the harmonic oscillator Hamiltonian $\tfrac12(\widehat p\,^2 + \widehat q\,^2)$.
The zero-order Hermite function is a normalized Gaussian,
$$
\varphi_0(x)=\pi^{-1/4}\, \e^{-x^2/2} ,
$$
and the further Hermite functions are constructed recursively via Dirac's raising operator $A^\dagger= \frac1{\sqrt 2}(x - d/dx)$:
$$
\varphi_{k} = \frac1{\sqrt{k}} A^\dagger \varphi_{k-1}, \qquad k\ge 1.
$$
For $k\ge r$, the coefficient $c_k$ therefore equals
\begin{align*}
c_k = \langle \varphi_k\mid g \rangle &= \frac1{\sqrt{k(k-1)\dots(k-r+1)}}\, \langle (A^\dagger)^r\varphi_{k-r}\mid g \rangle
\\
&= \frac1{\sqrt{k(k-1)\dots(k-r+1)}}\, \langle \varphi_{k-r} \mid A^r g \rangle,
\end{align*}
and hence we obtain with the Cauchy--Schwarz inequality 
\begin{equation}\label{quad:ck}
|c_k| \le \frac1{\sqrt{k(k-1)\dots(k-r+1)}}\, \| A^r g \|, \qquad k\ge r.
\end{equation}
We let $g_{2m}$ denote the truncated Hermite expansion
$$
g_{2m} = \sum_{k < 2m} c_k \, \varphi_k.
$$
Since the quadrature formula integrates $f_{2m}(x):=\e^{-x^2/2} g_{2m}(x)$ exactly, the quadrature error equals
\begin{align*}
&E_m := \sum_{i=1}^m w_i \, f(x_i)  - \int_{-\infty}^\infty f(x)\D x 
\\
&= \sum_{i=1}^m w_i \, \bigl( f(x_i) - f_{2m}(x_i) \bigr) - \int_{-\infty}^\infty \bigl(f(x)-f_{2m}(x)\bigr) \D x 
\\
&= \sum_{k\ge 2m} c_k \Bigl(   \sum_{i=1}^m w_i \, \e^{-x_i^2/2} \varphi_k(x_i) - \int_{-\infty}^\infty \e^{-x^2/2} \varphi_k(x) \D x \Bigr)
\\
&=  \sum_{k\ge 2m} c_k \sum_{i=1}^m w_i \, \e^{-x_i^2/2} \varphi_k(x_i),
\end{align*}
where the last equality uses that $\varphi_k$ is orthogonal to $\e^{-x^2/2} = \pi^{1/4} \varphi_0$. To proceed further, we need to use the following facts:
\begin{enumerate}
\item[(i)] $|\varphi_k(x)|\le 1$ for all real $x$ and all $k\ge 0$;
\item[(ii)] $w_i>0$ for all $i=1,\dots,m$ (and all $m$);
\item[(iii)] there exists $K<\infty$ such that for all $m$,
$
\ \ \sum_{i=1}^m w_i \, \e^{-x_i^2/2} \le K.
$
\end{enumerate}
Property (i) follows from the pointwise bound of Hermite polynomials in \cite[p.\,787]{AbrS64}. We remark that the bound (i) is not sharp but it is sufficient for our purpose.
Property (ii) holds generally for all Gaussian quadrature rules; see \cn{Gau97}. Property (iii) follows from a result by
\cn{Usp28}, which in particular yields that $\sum_{i=1}^m w_i \, \e^{-x_i^2/2}$ converges to $\int_{-\infty}^\infty  \e^{-x^2/2} \D x$ as $m\to\infty$. With (i)--(iii), we estimate
\begin{align*}
|E_m| \le  \sum_{k\ge 2m} |c_k| \sum_{i=1}^m w_i \, \e^{-x_i^2/2} |\varphi_k(x_i)| \le K \sum_{k\ge 2m} |c_k|.
\end{align*}
The result now follows with the bound \eqref{quad:ck}.
\end{proof}

\medskip\noindent
{\bf Sparse-grid Gauss--Hermite quadrature.}
Sparse-grid quadrature was introduced by \cn{Smo63}; see also \cn{Zen91} and \cn{GerG98} for further developments. Here we describe and study sparse-grid quadrature
when it is based on
one-dimensional Gauss--Hermite quadrature in every coordinate direction.
For $\ell=0,1,2,\dots$, let $x_i^\ell$ denote the zeros of 
the Hermite polynomial of degree $2^\ell$, and let $w_i^\ell$
be the corresponding weights, so that we have
the one-dimensional $2^\ell$-point Gauss--Hermite quadrature
formula
$$
 Q_\ell f =
\sum_{i=1}^{2^\ell} w_i^\ell\, f(x_i^\ell)\approx
\int_{-\infty}^\infty  f(x)\, dx 
\,.
$$ 
We introduce the difference formulas between successive levels,
$$
\Delta_\ell f = Q_\ell f - Q_{\ell-1} f\,,
$$
and for the lowest level we set $\Delta_0 f = Q_0 f$. We clearly have
$$
Q_L f = \sum_{\ell=0}^L \Delta_\ell f .
$$
The full tensor quadrature approximation at level $L$ 
to a $d$-dimensional
integral $\int_{\Rb^d} f(x_1,\dots, x_d)\D x_1\dots \!\D x_d$
reads
\begin{equation*}
Q_L \otimes \ldots \otimes Q_L f = 
\sum_{i_1=1}^{2^L}\ldots \sum_{i_d=1}^{2^L}
w_{i_1}^L\ldots w_{i_d}^L \, 
f(x_{i_1}^L,\dots, x_{i_d}^L)\,.
\end{equation*}
This uses $(2^L)^d$ grid points at which $f$ is evaluated. It can be rewritten as
\begin{equation}\label{III:Q-full}
Q_L \otimes \ldots \otimes Q_L f = 
\sum_{\ell_1=0}^{L} \ldots \sum_{\ell_d=0}^{L} 
\Delta_{\ell_1}\otimes\ldots\otimes \Delta_{\ell_d}f.
\end{equation} 
The number of function evaluations is substantially reduced in {\it Smolyak
quadrature}, which neglects all contributions from the
difference terms with $\ell_1+\ldots+\ell_d>L$ and thus arrives
at the quadrature formula
\begin{equation}\label{III:smolyak}
S_L^d f =  \sum_{\ell_1+\ldots+\ell_d\le L} \! 
\Delta_{\ell_1}\otimes\ldots\otimes \Delta_{\ell_d}f
\approx \int_{\Rb^d} f(x_1,\ldots, x_d)\D x_1\ldots \!\D x_d\,.
\end{equation}
%\begin{figure}[!ht]
%  \centering
%  \input{figures/sparseGrid.tex}
%  \caption{Gauss--Hermite sparse grid ($L = 5$, $d=2$).}
%  \label{III:fig:sparse-hermite}
%\end{figure}
%\noindent
Here, $f$ is evaluated only at the points of the 
{\it sparse grid}
$$
\{(x_{i_1}^{\ell_1},\dots, x_{i_d}^{\ell_d}):\,
\ell_1+\ldots+\ell_d\le L\} \,,
$$ 
which, for $K=2^L$, has less than
$K\, (\log K)^{d-1}$  quadrature nodes, instead of $K^{d}$ for the full tensor grid. We prove the following error bound. 

\begin{theorem}[error bound for sparse-grid Gauss--Hermite quadrature]
\label{thm:sparse-grid-gauss-hermite}
For the multiplication and differentiation operators $A_j= \frac1{\sqrt 2}(x_j + \partial_j)$ ($j=1,\dots,d)$ on $L^2(\Rb^d)$, assume that $f(x)= e^{-|x|^2/2} g(x)$ with $g\in D(A_1^r\dots A_d^r)$ for some integer $r\ge 3$. Then, the error of sparse-grid Gauss--Hermite quadrature is bounded by
$$
|S_L^d f - \int_{\Rb^d} f(x)\D x  \Bigr| \le   C^d \,L^{d-1} \,\bigl(2^L\bigr)^{-(r/2-1)}\, \max_{r_1\le r,\ldots,r_d\le r}
%{(r_1,\dots,r_d):\,r_j\le r}
\| A_1^{r_1} \dots A_d^{r_d} \,g \|_{L^2},
$$
where $C>1$ is independent of $L$, $d$ and $f$ (but depends on $r$).
\end{theorem}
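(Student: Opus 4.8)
The plan is to exploit the tensor-product structure of the Smolyak construction to reduce the $d$-dimensional estimate to the one-dimensional bound of Theorem~\ref{thm:gauss-hermite}, and then to sum the resulting estimates over the multi-indices discarded by \eqref{III:smolyak}. Write $\Delta_\ell^{(j)}$ for the one-dimensional difference operator $Q_\ell-Q_{\ell-1}$ (with $\Delta_0^{(j)}:=Q_0^{(j)}$) acting in the $j$th coordinate. Since the full tensor rule $Q_L^{\otimes d}$ converges to $\int_{\Rb^d}$ as $L\to\infty$ (a consequence of one-dimensional convergence together with the absolute convergence of the series established below), there is the decomposition $\int_{\Rb^d} f=\sum_{\ell\in\N_0^d}\Delta_{\ell_1}^{(1)}\otimes\cdots\otimes\Delta_{\ell_d}^{(d)}f$, so that by \eqref{III:smolyak}
\[
S_L^d f-\int_{\Rb^d} f(x)\,\D x=-\sum_{\ell\in\N_0^d,\,|\ell|>L}\Delta_{\ell_1}^{(1)}\otimes\cdots\otimes\Delta_{\ell_d}^{(d)}f,\qquad |\ell|:=\ell_1+\cdots+\ell_d.
\]
It then remains to bound each tensorised difference and to sum the bounds over $|\ell|>L$.

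The first step is a one-dimensional building block. For $f=e^{-x^2/2}g$ and $\ell\ge 1$, applying Theorem~\ref{thm:gauss-hermite} with $m=2^\ell$ and exponent $r$ to both terms of $\Delta_\ell=(Q_\ell-\!\int)-(Q_{\ell-1}-\!\int)$ gives, with $\alpha:=r/2-1>0$, the bound $|\Delta_\ell f|\le C_r\,(2^\ell)^{-\alpha}\,\|A^r g\|_{L^2}$; the finitely many levels with $2^{\ell+1}<r$ are absorbed into $C_r$ by using the exponent $\min(r,2^{\ell+1})\le r$ there. For the base level, $\Delta_0 f=Q_0 f=\sqrt\pi\,f(0)$, and the pointwise value is controlled through the Hermite--Fourier expansion $g=\sum_k c_k\varphi_k$ together with the facts $|\varphi_k(0)|\le 1$ and $|c_k|\le(k(k-1)\cdots(k-r+1))^{-1/2}\|A^r g\|$ for $k\ge r$ that underlie the proof of Theorem~\ref{thm:gauss-hermite}, yielding $|\Delta_0 f|\le C_r(\|g\|_{L^2}+\|A^r g\|_{L^2})$. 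In every case $\Delta_\ell^{(j)}$ is a bounded linear functional in its variable, of norm at most $\delta_\ell$, on the space of functions $h$ for which $A_j^{r_j}(e^{x_j^2/2}h)$ is square-integrable in $x_j$, where $\delta_0$ is a constant, $\delta_\ell=C_r(2^\ell)^{-\alpha}$ for $\ell\ge 1$, and $r_j$ may be any integer in $\{0,\dots,r\}$ (taken as $r$ when $\ell_j\ge 1$ and as $0$ when $\ell_j=0$).

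The second step is the tensorisation. Because $A_j$ and the operators $\Delta_{\ell_k}^{(k)}$ with $k\ne j$ act on disjoint coordinates they commute, so peeling off one coordinate at a time and inducting on $d$ (equivalently, using that a tensor product of bounded Hilbert-space functionals has norm equal to the product of the norms) yields, for $f=e^{-|x|^2/2}g$,
\[
\bigl|\Delta_{\ell_1}^{(1)}\otimes\cdots\otimes\Delta_{\ell_d}^{(d)}f\bigr|\le\Bigl(\prod_{j=1}^d\delta_{\ell_j}\Bigr)\max_{r_1,\dots,r_d\le r}\bigl\|A_1^{r_1}\cdots A_d^{r_d}g\bigr\|_{L^2(\Rb^d)}.
\]
I expect this step --- converting the single-operator one-dimensional $L^2$ estimate into the mixed-norm $d$-dimensional one, and in particular handling the non-decaying base levels $\ell_j=0$ by pointwise control of Hermite expansions while tracking precisely which norm $\|A_1^{r_1}\cdots A_d^{r_d}g\|$ appears --- to be the main technical obstacle; the remaining ingredients are either already available (Theorem~\ref{thm:gauss-hermite}) or routine.

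Finally one must bound $\Sigma_L:=\sum_{\ell\in\N_0^d,\,|\ell|>L}\prod_{j=1}^d\delta_{\ell_j}$ and combine. Organising the sum by $n=|\ell|$ and by the number $k\in\{1,\dots,d\}$ of positive components --- there are $\binom{d}{k}\binom{n-1}{k-1}$ such $\ell$, and for them $\prod_j\delta_{\ell_j}\le\delta_0^{\,d-k}C_r^{\,k}\,2^{-\alpha n}$ since the positive exponents sum to $n$ --- and summing the geometric series in $n$ (writing $n=L+m$, using $\binom{L+m-1}{k-1}\le L^{k-1}(1+m)^{k-1}/(k-1)!$ for $L\ge 1$ and $\sum_{m\ge 1}(1+m)^{k-1}2^{-\alpha m}\le(k-1)!\,(1-2^{-\alpha})^{-k}$), one obtains
\[
\Sigma_L\le 2^{-\alpha L}\,\frac1L\sum_{k=1}^d\binom{d}{k}\delta_0^{\,d-k}\Bigl(\tfrac{C_r}{1-2^{-\alpha}}L\Bigr)^{k}\le 2^{-\alpha L}\,\frac1L\Bigl(\delta_0+\tfrac{C_r}{1-2^{-\alpha}}L\Bigr)^{d}\le C^d\,L^{d-1}\,2^{-\alpha L},
\]
with $C:=\delta_0+C_r/(1-2^{-\alpha})>1$ depending only on $r$, where the last inequality uses $\delta_0+\gamma L\le(\delta_0+\gamma)L$ for $L\ge 1$. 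Since $2^{-\alpha L}=(2^L)^{-(r/2-1)}$, combining with the tensorisation bound gives the assertion; the case $L=0$ is immediate because then $S_0^d f$ is a single weighted point evaluation.
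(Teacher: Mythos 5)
Your argument is correct and rests on the same two pillars as the paper's proof: the one-dimensional Hermite-coefficient decay underlying Theorem~\ref{thm:gauss-hermite}, tensorised over the coordinates, followed by a combinatorial summation over the discarded difference levels. The organisation differs slightly. The paper bounds only the finite sum over $\{\ell_j\le L,\ \ell_1+\cdots+\ell_d>L\}$ (the discrepancy between the sparse and the full tensor rule) and then invokes the full-tensor error bound separately, whereas you sum the entire tail $\{|\ell|>L\}$ of the telescoping series in one pass; your count $\binom{d}{k}\binom{n-1}{k-1}$ together with the geometric-series estimate is in fact more explicit than the paper's rough cardinality bound $\Lambda^{d-1}/(d-1)!$ combined with Stirling's formula. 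One sentence of yours needs tightening: you assert that for $\ell_j=0$ the functional $\Delta_0^{(j)}=Q_0^{(j)}$ is bounded with $r_j=0$, i.e.\ with respect to the $L^2$-norm in that variable; a point evaluation is not $L^2$-bounded, and your own derivation of $|\Delta_0 f|\le C_r(\|g\|+\|A^rg\|)$ shows that the correct statement is boundedness with respect to the maximum of the norms for $r_j\in\{0,r\}$. This does not affect the conclusion, since the right-hand side of the theorem already takes the maximum over all $r_1,\dots,r_d\le r$, but the tensorisation step should be carried out with that maximum --- or, as in the paper, directly through the $d$-dimensional Hermite expansion, where an $\ell_j=0$ direction simply imposes no restriction $k_j\ge 2^{\ell_j}$ on the retained coefficients while contributing a bounded factor.
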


This error bound shows that for sufficiently smooth and rapidly decaying functions $f$ the sparse-grid Gauss--Hermite quadrature still  yields an approximation whose error is proportional to a potentially large power of the inverse of the number $N$ of function evaluations, provided that  $N\gg (\log N)^{2d}$:%, i.e. $d<c\log N/\log\log N$ with a sufficiently small constant $c$
$$
|S_L^d f - \int_{\Rb^d} f(x)\D x  \Bigr| \le (\widetilde C \log N)^{d r/2} N^{-(r/2-1)} \, \max_{r_j\le r}\| A_1^{r_1} \dots A_d^{r_d} \,g \|_{L^2}.
$$
In contrast, for the full tensorized quadrature, where $N=(2^L)^d$, we have only 
$$
|Q_L \otimes \ldots \otimes Q_L f- \int_{\Rb^d} f(x)\D x  \Bigr| \le C^d N^{-(r/2-1)/d} \max_{r_1+\ldots+r_d=r} \| A_1^{r_1} \dots A_d^{r_d} \,g \|_{L^2}.
$$
\begin{proof} (of Theorem~\ref{thm:sparse-grid-gauss-hermite}) We consider the Hermite--Fourier expansion of $g$,
$$
g= \sum_{k=(k_1,\dots,k_d) \atop k_j\ge 0} c_k \, \varphi_{k_1}\!\otimes\dots \otimes\varphi_{k_d} \qquad\text{with }\quad c_k = \langle \varphi_{k_1}\otimes\ldots\otimes \varphi_{k_d} \mid g \rangle.
$$
We write
$$
\Delta_{\ell_1}\otimes\ldots\otimes \Delta_{\ell_d}f = \textstyle\bigl( (Q_{\ell_1} -\int_{\Rb}) - (Q_{\ell_1-1} -\int_{\Rb}) \bigr) \otimes \ldots \otimes
\bigl( (Q_{\ell_d} -\int_{\Rb}) - (Q_{\ell_d-1} -\int_{\Rb}) \bigr)f,
$$
and as in the proof of Theorem~\ref{thm:gauss-hermite}, we find that
$$
|\Delta_{\ell_1}\otimes\ldots\otimes \Delta_{\ell_d}f| \le (2K)^d \sum_{k=(k_1,\dots,k_d) \atop k_j\ge 2^{\ell_j}} |c_k|.
$$
Like in \eqref{quad:ck}, we obtain for $r_j\le k_j$
$$
|c_k| \le \prod_{j=1}^d \frac 1 {\sqrt{k_j (k_j-1)\ldots(k_j-r_j+1)}} \| A_1^{r_1} \dots A_d^{r_d} \,g \|,
$$
which yields, for some $C_r$ depending only on $r$,
$$
|c_k| \le C_r^d \,K_r(g)\,\prod_{j=1}^d k_j^{-r/2}   \quad\text{with}\quad K_r(g) =  \max_{r_j\le r}\| A_1^{r_1} \dots A_d^{r_d} \,g \|.
$$
Combining these bounds and using that $\sum_{k_j\ge 2^{\ell_j}} k_j^{-r/2} \le \widehat C_r \bigl(2^{\ell_j}\bigr)^{-(r/2-1)}$, we obtain the bound
\begin{align*}
\delta &:=\sum_{(\ell_1,\ldots,\ell_d) \atop \ell_j\le L,\, \ell_1+\ldots+\ell_d>L} |\Delta_{\ell_1}\otimes\ldots\otimes \Delta_{\ell_d}f| 
%\le \sum_{\ell_j\le L,\,\ell_1+\ldots+\ell_d>L} (2N)^d \sum_{k=(k_1,\dots,k_d) \atop k_j\ge 2^{\ell_j}} |c_k|
%\\
%& \le \sum_{\ell_j\le L,\,\ell_1+\ldots+\ell_d>L} (2N)^d \sum_{k=(k_1,\dots,k_d) \atop k_j\ge 2^{\ell_j}} C_r^d \,K_r(g)\,\prod_{j=1}^d k_j^{-r/2}
%\\
%& \le (2NC_r)^d \, K_r(g) \,  \sum_{\ell_j\le L,\,\ell_1+\ldots+\ell_d>L} \ \prod_{j=1}^d \bigl(2^{\ell_j}\bigr)^{-(r/2-1)} 
\\
& \,\le \widetilde C_r^d \, K_r(g) \, \sum_{\Lambda>L} \ \sum_{\ell_j\le L,\,\ell_1+\ldots+\ell_d=\Lambda } \bigl(2^\Lambda\bigr)^{-(r/2-1)} .
\end{align*}
As the cardinality of the set $\{(\ell_1,\dots,\ell_d)\in\N^d \mid \ell_j\le L,\,\ell_1+\ldots+\ell_d=\Lambda \}$ is bounded by $\Lambda^{d-1}/(d-1)!$, 
a rough bound  is given by 
$$
\delta \le \widetilde C_r^d \, K_r(g) \,
\frac{(dL)^{d-1}}{(d-1)!} \, \bigl(2^L\bigr)^{-(r/2-1)} .
$$ 
Using Stirling's formula, we finally obtain the bound, with $C$ independent of $L$, $d$ and $f$,
$$
\delta
\le C^d \, L^{d-1}\, \bigl(2^L\bigr)^{-(r/2-1)} \,K_r(g).
$$
This shows that the sparse-grid quadrature and the full tensor quadrature differ by no more than the right-hand side. In view of the above error bound for the full tensor quadrature, which is proved as in the one-dimensional case, the result follows.
%\end{align*}
%which yields the stated error bound.
\end{proof}

We note that for a Gaussian integral with the inverse complex width matrix $QQ^*$,
$$
 (\pi\eps)^{-d/2} (\det Q)^{-1} \int_{\Rb^d} \exp\Bigl(-\frac1\eps\,(x-q)^T (QQ^*)^{-1} (x-q) \Bigr) \phi(x) \D x,
$$
as occurs in Sections~\ref{sec:gwp} and~\ref{sec:hagwp}, and also in Section~\ref{sec:wigner} with a Gaussian or Hagedorn wave packet as initial data, one would first change variables $y=R(x-q)/\sqrt\eps \in \Rb^d$, where $R=Q^{-1}$ if $Q$ is real, and  is otherwise obtained from a $QR$ decomposition
$$
W \begin{pmatrix} \Re Q^{-1} \\ \Im Q^{-1} \end{pmatrix} = \begin{pmatrix}R \\ 0 \end{pmatrix}
$$
with an orthogonal matrix $W\in\Rb^{2d\times 2d}$, so that for the real vector $v=(x-q)/\sqrt\eps$ we have
$$
v^T (QQ^*)^{-1}  v = v^T R^T R v  = |y|^2.
$$
 Then, (sparse-grid) Gauss--Hermite quadrature is applied to the transformed integral
$$
 \pi^{-d/2} \int_{\Rb^d} \e^{-|y|^2} \, \phi(q+\sqrt\eps R^{-1}y) \D y.
$$

\subsection{Quasi-Monte Carlo methods}

In this and the following subsection we consider the problem of computing an integral
$$
I= \int_{\Rb^d} f(x)\D\mu(x)
$$
of a (complex-valued) integrand $f$ over a probability measure $\mu$.

Quasi-Monte Carlo quadrature is an equi-weighted quadrature on well-chosen deterministic quadrature points. 
\bch For any $x\in\Rb^d$, we denote by $(-\infty,x] \, := \{ y \in \Rb^d\mid y \le x \}$ the rectangular interval
with componentwise inequality, and by $\chi_{(-\infty,x]}$ the associated characteristic function. 
Let $x_1,\ldots,x_N\in\Rb^{d}$ and let
$$
D_N(x_1,\ldots,x_N;x) = \frac{1}{N} \sum_{n=1}^N \chi_{(-\infty, x]}(x_n) -\mu((-\infty, x]),\qquad x\in\Rb^{d},
$$
denote the discrepancy function of the probability measure $\mu$ that quantifies the deviation of the empirical distribution for the interval $(-\infty,x]$.\ech This function can also be viewed as the first-order Peano kernel of the equi-weighted quadrature with nodes $x_n$.

If the measure $\mu$ is the product of one-dimensional probability measures so that the inverses of the one-dimensional cumulative distribution functions are accessible, then the well-established low-discrepancy sets for the uniform measure on the unit cube $[0,1]^{d}$ allow to construct points $x_1,\ldots,x_N\in\Rb^{d}$ with 
\begin{equation}\label{qmc-d}
\sup_{x\in\Rb^{d}} |D_N(x_1,\ldots,x_N;x)| = O\bigl((\log N)^{d-1} / N\bigr);
\end{equation}
see \cite[Theorem~4]{AisD15}. However, the practically important question of how to obtain optimal
low-discrepancy sets for  more general probability measures seems to be open. The following result clarifies why the discrepancy function is crucial for equi-weighted quadrature. It gives a Koksma--Hlawka-type result as presented in \cn{LasS17}; see~also \cite{AisD15,DicKS13}, and the original papers by \cn{Kok42} and \cn{Hla61}.

\begin{theorem}[quasi-Monte Carlo error]\label{lem:Koksma} Let $f$ be a Schwartz function on $\Rb^d$ and $\mu$ a probability distribution on $\Rb^{d}$. Then, for all $x_1,\ldots,x_N\in\Rb^{d}$,
$$
\frac{1}{N} \sum_{n=1}^N f(x_n) - \int_{\Rb^{d}} f(x) \D\mu(x) = (-1)^d\int_{\Rb^{d}}\partial_1\ldots\partial_{d} f(x)\,D_N(x_1,\ldots,x_N;x)\D x.
$$
%where $\partial^{1:2d} = \partial_1\partial_2\cdots\partial_{2d}$ denotes the mixed partial derivative through all dimensions.
\end{theorem}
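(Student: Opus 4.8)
The plan is to represent the Schwartz integrand through its top-order mixed derivative by the fundamental theorem of calculus, and then to evaluate the quadrature-error functional term by term; this is essentially the classical Koksma--Hlawka argument adapted to a general probability measure $\mu$ on $\Rb^d$.

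First I would establish the pointwise identity
\[
f(y) = (-1)^d \int_{\Rb^d} \chi_{(-\infty,x]}(y)\ \partial_1\cdots\partial_d f(x)\,\D x, \qquad y\in\Rb^d .
\]
This follows by integrating, in each coordinate $j=1,\dots,d$ in turn, the relation $g(\cdot)=-\int_{(\cdot)_j}^{\infty}\partial_j g$, which is valid for $g=f$ and all its partial derivatives because a Schwartz function and its derivatives decay faster than any polynomial; the resulting iterated integral $(-1)^d\int_{y_1}^{\infty}\!\cdots\!\int_{y_d}^{\infty}\partial_1\cdots\partial_d f(x)\,\D x$ is then rewritten as an integral over $\Rb^d$ on noting that the constraint $x_j\ge y_j$ for all $j$ is the same as $y\in(-\infty,x]$.

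Next I would apply this representation to each of the two pieces of the left-hand side. For the finite average one gets, with a trivial interchange of a finite sum and an integral,
\[
\frac1N\sum_{n=1}^N f(x_n) = (-1)^d \int_{\Rb^d} \Bigl( \frac1N\sum_{n=1}^N \chi_{(-\infty,x]}(x_n)\Bigr)\ \partial_1\cdots\partial_d f(x)\,\D x ,
\]
and for the integral against $\mu$, using Fubini's theorem and $\int_{\Rb^d}\chi_{(-\infty,x]}(y)\,\D\mu(y)=\mu((-\infty,x])$,
\[
\int_{\Rb^d} f(y)\,\D\mu(y) = (-1)^d \int_{\Rb^d} \mu\bigl((-\infty,x]\bigr)\ \partial_1\cdots\partial_d f(x)\,\D x .
\]
Subtracting these two identities and recognising the bracketed difference as $D_N(x_1,\dots,x_N;x)$ then yields the claimed formula.

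The only points requiring a little care — and none is a serious obstacle — are the two analytic justifications. The iterated-integral formula for $f$ holds with exactly the sign $(-1)^d$, which is immediate from the rapid decay of $f$ and its derivatives. Fubini is licensed in the $\mu$-term by the bound $0\le\chi_{(-\infty,x]}(y)\le 1$ together with $\partial_1\cdots\partial_d f\in\calS(\Rb^d)\subset L^1(\Rb^d)$, so the double integrand is dominated by the $\mu\otimes\D x$-integrable function $|\partial_1\cdots\partial_d f(x)|$; likewise the final integral on the right-hand side converges absolutely since $D_N$ is bounded (by $2$) and $\partial_1\cdots\partial_d f$ is integrable. One also notes that the value assigned to $\chi_{(-\infty,x]}$ on the Lebesgue- and $\mu$-null boundary set $\{y:\,y_j=x_j\text{ for some }j\}$ is immaterial, so no ambiguity arises.
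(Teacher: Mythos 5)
Your proposal is correct and follows essentially the same route as the paper's proof: the iterated fundamental-theorem-of-calculus representation $f(y)=(-1)^d\int_{[y,\infty)}\partial_1\cdots\partial_d f$, rewritten via $\chi_{[y,\infty)}(x)=\chi_{(-\infty,x]}(y)$, applied to the empirical average and (via Fubini) to the $\mu$-integral, then subtracted. The extra remarks on integrability and the null boundary set are sound but not needed beyond what the paper already records.
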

\begin{proof}
For any $x\in\Rb^d$ we have
\begin{align*}
f(x) &= -\int_{x_1}^\infty \partial_1 f(y_1,x_2,\ldots,x_n) \D y_1 \\
&= (-1)^d \int_{x_1}^\infty \cdots \int_{x_d}^\infty \partial_1\ldots\partial_d f(y_1,\ldots,y_d) \D y_d \cdots \D y_1\\ 
&= (-1)^d \int_{[x,\infty)} \partial_1\ldots\partial_d f(y) \D y.
\end{align*}
This implies for the arithmetic mean
\begin{align*}
\frac1N \sum_{n=1}^N f(x_n) &= 
\frac{(-1)^d}{N} \sum_{n=1}^N \int_{\Rb^d} \chi_{[x_n,\infty)}(y) \;\partial_1\ldots\partial_d f(y) \D y\\
&= (-1)^d \int_{\Rb^d} \partial_1\ldots\partial_d f(y) \frac1N\sum_{n=1}^N \chi_{(-\infty,y]}(x_n) \D y
\end{align*}
and for the integral
$$
\int_{\Rb^d} f(x) \D \mu(x) =(-1)^d \int_{\Rb^d} \partial_1\ldots\partial_d f(y) \,\mu((-\infty,y]) \D y,
$$
where the last equation also uses Fubini's theorem. 
\end{proof}

The discrepancy bound \eqref{qmc-d} does not look more favourable than the error bounds for sparse-grid methods, but it was a significant achievement in recent years to develop versions of quasi-Monte Carlo methods in weighted Sobolev spaces, which allow for dimension-independent error bounds for appropriate integrands that conform to the weighting in that there is a varying degree of importance between the variables; see \cn{DicKS13}. It is not clear at present if this weighting technique can be put to good use for integrals arising in our context.

\subsection{Monte Carlo methods}
To compute the integral
$$
I= \int_{\Rb^d} f(x)\D\mu(x)
$$
of a (complex-valued) integrand $f$ over a probability measure $\mu$, the basic Monte Carlo method takes $N$ independent samples $x_1,\dots,x_N \in\Rb^d$ of the probability distribution $\mu$ and approximates the integral $I$ by the arithmetic mean
$$
I_N = \frac1N \sum_{n=1}^N f(x_n).
$$
The following simple, yet basic result shows an $O(N^{-1/2})$ error behaviour irrespective of the dimension and of differentiability properties of the integrand.

\begin{theorem}[Monte Carlo error]\label{thm:mc} The expected value of the squared error is given by
$$
\mathbb{E}\bigl( | I_N - I |^2 \bigr) = \frac{\mathbb{V}(f)} N
$$
\vskip 2mm
\noindent
with the variance $\mathbb{V}(f) = \int_{\Rb^d} |f(x)|^2 \D\mu(x) - |I|^2.$
\end{theorem}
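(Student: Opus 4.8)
The plan is to expand the squared modulus of the error directly and exploit independence of the samples together with linearity of expectation. First I would recentre the sum by writing
\[
I_N - I = \frac1N \sum_{n=1}^N \bigl( f(x_n) - I \bigr),
\]
which is legitimate because $\frac1N\sum_{n=1}^N I = I$. Taking the squared modulus produces the double sum
\[
|I_N - I|^2 = \frac1{N^2} \sum_{n=1}^N \sum_{m=1}^N \bigl( f(x_n)-I\bigr)\,\overline{\bigl( f(x_m)-I\bigr)},
\]
so that $\mathbb{E}(|I_N-I|^2)$ is the sum of the $N^2$ numbers $\mathbb{E}\bigl[(f(x_n)-I)\,\overline{(f(x_m)-I)}\bigr]$.

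The next step is to split this into off-diagonal and diagonal contributions. For $n\neq m$ the samples $x_n$ and $x_m$ are independent, hence so are $f(x_n)-I$ and $\overline{f(x_m)-I}$, and the expectation of the product factorises. Since each $x_n$ is distributed according to $\mu$ we have $\mathbb{E}[f(x_n)] = \int_{\Rb^d} f\,\D\mu = I$, so $\mathbb{E}[f(x_n)-I] = 0$ and every off-diagonal term vanishes. The $N$ diagonal terms are all equal to
\[
\mathbb{E}\bigl[|f(x_n)-I|^2\bigr] = \mathbb{E}\bigl[|f(x_n)|^2\bigr] - 2\,\Re\bigl(\overline I\,\mathbb{E}[f(x_n)]\bigr) + |I|^2 = \int_{\Rb^d}|f|^2\,\D\mu - |I|^2 = \mathbb{V}(f),
\]
and collecting everything gives $\mathbb{E}(|I_N-I|^2) = N^{-2}\cdot N\cdot \mathbb{V}(f) = \mathbb{V}(f)/N$.

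The only subtlety worth a line is integrability: all the manipulations above are valid provided $f\in L^2(\Rb^d,\D\mu)$, i.e.\ $\mathbb{V}(f)<\infty$; for a Schwartz integrand against a probability measure this holds automatically, and in general it is the natural standing assumption. I do not expect any real obstacle here — the statement is a direct consequence of pairwise independence of the samples and linearity of expectation, and the ``hard part'' is simply recording the vanishing of the cross terms cleanly.
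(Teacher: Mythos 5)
Your proof is correct and follows essentially the same route as the paper: the paper invokes $\mathbb{E}(|I_N-I|^2)=\mathbb{V}(I_N)=N^{-2}\sum_n\mathbb{V}(f)$ for i.i.d.\ samples, which is precisely the cross-term cancellation you carry out explicitly, and the evaluation of the diagonal term $\mathbb{V}(f)=\int|f|^2\,\D\mu-|I|^2$ is identical. No gaps.
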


\begin{proof} We observe that
$$
\mathbb{E}(I_N) = \frac 1N \sum_{n=1}^N \mathbb{E}(f) =I.
$$
Since the samples are independent and identically distributed, we obtain
$$
\mathbb{E}\bigl( | I_N - I |^2 \bigr) = \mathbb{V}(I_N) = \frac 1 {N^2}  \sum_{n=1}^N \mathbb{V}(f) = \frac{\mathbb{V}(f)}N.
$$
Moreover,
\begin{align*}
\mathbb{V}(f) &= \mathbb{E}\bigl( | f - I |^2 \bigr) = \int_{\Rb^d} |f(x)-I|^2 \D\mu(x)
\\
&=  \int_{\Rb^d} |f(x)|^2 \D\mu(x) -   \int_{\Rb^d} 2\,\Re\!(\overline f(x)\, I) \D\mu(x) + |I|^2
\\
&=  \int_{\Rb^d} |f(x)|^2 \D\mu(x) - 2 |I|^2 + |I|^2 =  \int_{\Rb^d} |f(x)|^2 \D\mu(x) -  |I|^2.
\end{align*}
This proves the result.
\end{proof}

The error bound indicates a difficulty with highly oscillatory integrands of approximate wave length $\eps$ integrated against a probability measure with smooth probability density. 
Consider  the prototypical example of a Gaussian  distribution $\mu$ of width 
$\sqrt\eps$, that is, $\mu(x) = (2\pi\eps)^{-d/2} \,\e^{-|x|^2/2\eps}$, and $f_\xi(x)=(2\pi\eps)^{-d/2} \,\e^{\I \xi\cdot x/\eps}$ for $\xi\in\Rb^d$.
The exact integral is
\[
I(\xi)=(2\pi\eps)^{-d} \int_{\Rb^d} \bch \e^{\I \xi\cdot x/\eps}\ech \e^{-|x|^2/\eps} \D x = (2\pi\eps)^{-d/2} \,\e^{-|\xi|^2/(2\eps)},
\]
which is scaled such that $\int_{\Rb^d} I(\xi) \D \xi =1$. However, for all $\xi\in\Rb^d$,
$$
 \int_{\Rb^d} |f_\xi(x)|^2 \D\mu(x) = (2\pi\eps)^{-d}.
$$
%
%
%$$
%  \int_{\Rb^d} |f(x)|^2 \D\mu(x) \Big/ \Bigl| \int_{\Rb^d} f(x)\D\mu(x) \Bigr|^2 
%  \textcolor{blue}{= \e^{|p|^2/(2\eps)}}  \sim \eps^{-d},
%$$
%\textcolor{blue}{since
%\[
%(2\pi\eps)^{-d/2} \int_{\Rb^d} \e^{\I p\cdot x/\eps} \e^{-|x|^2/\eps} \D x = (2\pi\eps)^{-d/2} \e^{-|p|^2/(2\eps)}.
%\]
%}
The error bound also 
motivates modifications that aim to reduce the variance, such as multi-level Monte Carlo methods; see \cn{Gil15}.

Except for special probability measures $\mu$, in particular Gaussians, it is not known {\it a priori} how to draw independent identically distributed (i.i.d.) samples, as is required in the simple Monte Carlo method described above. This difficulty is addressed by Markov chain Monte Carlo methods which replace the i.i.d. variables with variables of a Markov chain that has  $\mu$ as an invariant distribution. This is based on the Metropolis--Hastings acceptance/rejection algorithm \cite{MetRRTT53,Has70}. We refer to \cn{BouS18} for a concise review of basic concepts of Monte Carlo methods and for an analysis of  Hamiltonian (or hybrid) Monte Carlo methods, which are an important subclass of Markov chain Monte Carlo methods.

\subsection{Quadrature for the continuous Gaussian superpositions}
In Section~\ref{alg:gauss_super} we considered particle methods for the thawed and frozen Gaussian superpositions. These numerical algorithms pose problems of high-dimensional oscillatory quadrature 
when it comes to evaluating the defining integrals 
\[
\calI_{\rm th}(t)\psi_0(x) = (2\pi\eps)^{-d} \int_{\Rb^{2d}} \langle g_{z}|\psi_0\rangle \,  
\e^{\I S(t,z)/\eps} \, g[C(t,z)]_{\Phi^t(z)}(x) \D z
\]
and
\[
\calI_\natural(t)\psi_0(x) = (2\pi\eps)^{-d} \int_{\Rb^{2d}} \langle g_{z}|\psi_0\rangle \, a_\natural(t,z) \, 
\e^{\I S(t,z)/\eps} \, g_{\Phi^t(z)}(x) \D z.
\]
Following \cn{LasS17}, 
we briefly apply the previous results on quasi-Monte Carlo quadrature (Theorem~\ref{lem:Koksma}) and 
plain Monte Carlo quadrature (Theorem~\ref{thm:mc}) for the case that 
the initial data are a Gaussian centred in some point $z_0\in\Rb^{2d}$, that is, $\psi_0 = g_{z_0}$. Then, 
the initial wave packet transform satisfies
\[
\langle g_{z}|\psi_0\rangle = \exp(-\tfrac{1}{4\eps}|z-z_0|^2 + \tfrac{\I}{2\eps}(p+p_0)^T(q-q_0)).
\]
We may write 
\[
(2\pi\eps)^{-d} \langle g_{z}|\psi_0\rangle = r_0(z) \mu_0(z)
\]
with 
\[
r_0(z) = 2^d \exp(\tfrac{\I}{2\eps}(p+p_0)^T(q-q_0))
\]
and
\[
\mu_0(z) = (4\pi\eps)^{-d} \exp(-\tfrac{1}{4\eps}|z-z_0|^2),
\]
where $\mu_0$ defines a probability density on phase space associated with the initial data. Correspondingly, the integrals can be expressed as
\[
\calI_{\rm th}(t)\psi_0(x) = %(2\pi\eps)^{-d} 
\int_{\Rb^{2d}} r_0(z)\, 
\e^{\I S(t,z)/\eps} \, g[C(t,z)]_{\Phi^t(z)}(x) \D\mu_0(z)
\]
and
\[
\calI_\natural(t)\psi_0(x) = %(2\pi\eps)^{-d} 
\int_{\Rb^{2d}} r_0(z) \, a_\natural(t,z) \, 
\e^{\I S(t,z)/\eps} \, g_{\Phi^t(z)}(x) \D\mu_0(z).
\]
The integrands 
\[
f_{\rm th}(t,z) = r_0(z) \, \e^{\I S(t,z)/\eps} \, g[C(t,z)]_{\Phi^t(z)}
\]
and
\[
f_\natural(t,z) = r_0(z) \, a_\natural(t,z) \, \e^{\I S(t,z)/\eps} \, g_{\Phi^t(z)}
\]
are time-dependent functions on phase space $\Rb^{2d}$ with values in the space 
of Schwartz functions $\calS(\Rb^d)$. In particular, the 
exponential of the action integral creates large mixed derivatives, in the sense that
\[
\|\partial_1\cdots\partial_{2d} f_j(t,z)\|\sim\eps^{-2d}
\]
for $j={\rm th}$ and $j=\natural$, where the norm belongs to $L^2(\Rb^d)$. 
Therefore, both integrals are difficult candidates for quasi-Monte Carlo quadrature. 
In contrast, the variance of the integrands, 
\[
\mathbb{V}(f_j(t)) = \int_{\Rb^{2d}} \|f_j(t,z)\|^2 \D\mu_0(z) - \|\calI_j(t)\psi_0\|^2, 
\]
grows exponentially with the dimension, but does not heavily depend on the 
oscillation frequency $\eps$. Indeed, 
\[
\mathbb{V}(f_{\rm th}(t)) \; =\;  4^d - \|\calI_{\rm th}(t)\psi_0\|^2 \ \sim\  4^d
\]
and 
\[
\mathbb{V}(f_\natural(t)) \; =\;  
4^d \int_{\Rb^{2d}} |a_\natural(t,z)|^2 \D\mu_0(z)- \|\calI_\natural(t)\psi_0\|^2 \ \sim\  4^d.
\]
Therefore, Monte--Carlo quadrature seems to 
be a better choice for both continuous Gaussian superpositions, working well at least as long as the 
dimension $d$ is moderately large.

%\section{Several potential energy surfaces}

%\section{Further topics}
\section{Further topics}\label{sec:further}

\subsection{Systems of semiclassical Schr\"odinger equations}
The Born--Oppenheimer approximation for the time-dependent molecular Schr\"odinger equation
\[
\I\eps \partial_t\Psi = -\frac{\eps^2}{2}\Delta_x\Psi + H_e(x)\Psi
\]
relies on the presence of gaps in the spectrum of the electronic Hamiltonian~$H_e(x)$. 
For most polyatomic molecules such spectral gaps exist only locally in $x\in\Rb^{3N}$, since different electronic 
eigenvalues may come rather close to each other or even coalesce for certain nuclear 
configurations $x$. 

To illustrate the importance of spectral gaps for the validity of the Born--Oppenheimer approximation, 
let us focus on the case of \bch two\ech electronic eigenvalues $E_1(x)$ and 
$E_2(x)$ with corresponding normalized eigenfunctions $\Phi_1(x,\cdot)$ and $\Phi_2(x,\cdot)$, that is, 
\begin{equation}\label{eq:eig}
H_e(x)\Phi_k(x,\cdot) = E_k(x)\Phi_k(x,\cdot),\qquad k=1,2.
\end{equation}
We seek the Galerkin approximation of the molecular wave function within the subspace
\[
\calV = \left\{u\in L^2_{xy}:\ u(x,y) = \psi_1(x) \Phi_1(x,y) + \psi_2(x)\Phi_2(x,y),\ \psi_j\in L^2_x\right\}.
\]
The associated $2\times 2$ Schr\"odinger system for the motion of the nuclear wave function is then given by
\[
i\eps\, \frac{\partial\psi}{\partial t} = -\frac{\eps^2}{2}\Delta_x\psi + V\psi+ \eps B_1\psi + \eps^2 B_2\psi \for 
\psi=\begin{pmatrix} \psi_1 \\ \psi_2 \end{pmatrix},
\]
where the potential matrix \bch $V=V_{NN}\Id_2 + {\rm diag}(E_1,E_2)$\ech is diagonal, while the matrix operators
\[
B_j =\begin{pmatrix} B_j^1 & C_j \\*[1ex] 
C_j^* & B_j^2 \end{pmatrix},\qquad j=1,2,
\]
carry the diagonal contributions 
\bch
\[
B_1^{k} = \Im\langle \nabla_x\Phi_k\mid \Phi_k\rangle_{L^2_y} \cdot\hat p\quad \text{and}\quad 
B_2^k = \tfrac12 \|\nabla_x\Phi_k\|^2_{L^2_y}.
\]
\ech
The first and second order off-diagonal 
operators $C_1$ and $C_2$ are responsible for the non-adiabatic coupling between the eigenspaces. 

We examine the first of the two coupling operators 
\[
C_1 = -\I \langle\Phi_1(x)\mid\nabla_x\Phi_2(x)\rangle_{L^2_y} \cdot\hat p
\]  
in more detail. Differentiating the eigenvalue equation \eqref{eq:eig} for the second eigenvalue with respect to $x$ and taking the inner product with the first eigenfunction,  we obtain
\begin{align*}
&\langle \Phi_1(x)\mid\nabla_x H_e(x)\Phi_2(x)\rangle_{L^2_y} + 
\langle \Phi_1(x)\mid H_e(x)\nabla_x \Phi_2(x)\rangle_{L^2_y}\\*[1ex]
&= \langle \Phi_1(x)\mid \nabla_x E_2(x)\Phi_2(x)\rangle_{L^2_y} + 
\langle \Phi_1(x)\mid E_2(x)\nabla_x \Phi_2(x)\rangle_{L^2_y}.
\end{align*}
For different eigenvalues, the eigenfunctions are orthogonal, such that
\[
\langle \Phi_1(x)\mid \nabla_x E_2(x)\Phi_2(x)\rangle_{L^2_y} = 0,
\]
and we obtain the fraction
\begin{equation}\label{eq:coup}
\langle \Phi_1(x)\mid\nabla_x\Phi_2(x)\rangle_{L^2_y} \ =\  \frac{\langle\Phi_1(x)\mid \nabla H_e(x)\Phi_2(x)\rangle_{L^2_y}}{E_2(x)-E_1(x)},
\end{equation}
which carries the difference between the two electronic eigenvalues in the denominator.

\subsubsection{A first rule of thumb for adiabatic decoupling}
As a first rule of thumb, we might approximate the first order coupling operator as
\[
|C_1\psi(x)| \ \approx\ |E_2(x)-E_1(x)|^{-1}\, |(\hat p\psi)(x)|. 
\]
From this rule we expect that the coupling diverges for configurations of the nuclei $x_*$ 
where $E_1(x_*) = E_2(x_*)$. In such a situation, the singularity prevents that the $2\times 2$ system 
can approximately be decoupled into two scalar equations, one governed by the potential energy surface $E_1(x)$, 
the other by $E_2(x)$. Then, one has \bch to consider\ech the dynamics of the two-level system altogether. 

\subsubsection{More on adiabatic decoupling} When analysing the difference of the dynamics of the coupled $2\times 2$ system on the one hand and of the fully decoupled one with diagonal Hamiltonian 
\[
-\frac{\eps^2}{2}\Delta_x+ 
\begin{pmatrix} E_1 & 0\\ 0 & E_2\end{pmatrix}
\]
on the other, one encounters the crucial inner product $\langle \Phi_1(x)\mid\nabla_x\Phi_2(x)\rangle_{L^2_y}$ that defines the coupling operator $C_1$ in the following way. One works with the two eigenprojectors related to the electronic subspaces, 
\[
P_k(x,\cdot)\eta = \langle \Phi_k(x,\cdot)\mid\eta\rangle_{L^2_y}\, \Phi_k(x,\cdot),\quad\eta\in L^2_y,
\quad k=1,2,
\]
and has to control (due to the presence of the Laplacian $-\eps^2\Delta_x$) first and second order derivatives of these projectors. We focus on the first derivatives and calculate
\[
\nabla_x P_2(x)\eta = \langle \nabla_x\Phi_2(x)\mid\eta\rangle_{L^2_y}\, \Phi_2(x) + 
\langle \Phi_2(x)\mid\eta\rangle_{L^2_y}\, \nabla_x\Phi_2(x).
\]
Hence, the $(1,2)$-component of the derivative can be written as
\[
P_1(x) \nabla_x P_2(x) P_2(x)\eta = 
\langle \Phi_2(x)\mid\eta\rangle_{L^2_y}\, \langle \Phi_1(x)\mid\nabla_x\Phi_2(x)\rangle_{L^2_y}\,\Phi_1(x).
\]
Being off-diagonal with respect to the electronic eigenspaces, the above operator can be represented as a commutator with the electronic Hamiltonian, 
namely
\begin{equation}\label{eq:comm12}
P_1(x) \nabla_x P_2(x) P_2(x) = [H_e(x),F_{12}(x)]
\end{equation}
with
\[
F_{12}(x) = \frac{1}{E_1(x)-E_2(x)}\  P_1(x) \nabla_x P_2(x) P_2(x).
\]
The presence of such a commutator allows an integration by parts with respect to time when estimating the approximation error. This produces an additional power of the semiclassical parameter $\eps$, which 
allows us to treat the coupling operator $\eps B_1$ effectively as a second order perturbation $\eps^2 B_{1,\natural}$.  
If $B_{1,\natural}$ is uniformly bounded in $\eps$, as is the case in the presence of a  
uniform spectral gap, then one arrives at the space adiabatic error estimate 
of Theorem~\ref{II:thm:ad}. However, if the electronic eigenvalues are not uniformly 
separated, then the operator
\begin{align*}
&F_{12}(x)\eta = \\
&-\frac{1}{(E_2(x)-E_1(x))^2}\, 
\langle \Phi_2(x)\mid\eta\rangle_{L^2_y} \,
\langle\Phi_1(x)\mid \nabla H_e(x)\Phi_2(x)\rangle_{L^2_y}\,
\Phi_1(x)
\end{align*}
contributes significantly to the error constant $c$ of Theorem~\ref{II:thm:ad}, 
thus providing quantitative information on non-adiabatic transitions between the eigenspaces. 

\subsubsection{Avoided and non-avoided crossings} If there are nuclear configurations $x_*$ 
for which the eigenvalue difference falls below a threshold that is approximately smaller than $\sqrt\eps$,
\[
|E_2(x_*) - E_1(x_*)| \ \lessapprox\  \sqrt\eps, 
\]
then one has to expect that the coupling $\eps F_{12}(x_*)\eta$ is either singular or approximately of the order one with respect to $\eps$, and one obtains leading order non-adiabatic transitions between the eigenspaces. 
These transitions, which are ubiquituous for polyatomic molecules and explain spectacular chemical reactions 
as for example the first step of vision (the cis-trans isomerization of retinal in rhodopsin),
have been discussed for the propagation of Hagedorn wave packets through actual crossings by \cn{Hag94} and through avoided crossings in \cite{Hag98a,HagJ99a,BouGH12}. With respect to general initial data, the propagation of expectation values through conical intersections has been carried out in \cite{FerG02,LasT05,FerL08}, while 
avoided crossings have been considered in \cite{FerL17}. These Egorov-type theorems for 
Schr\"odinger systems motivate a class of particle methods based on classical trajectories that switch 
the potential function, whenever they reach a zone of small eigenvalue gap. They have successfully been 
applied to various model systems for the ultrafast conversion of pyrazine up to full dimension \cite{LasS08,XieSDSD19}, 
a twelve-dimensional model of the ammonia cation \cite{BelDLT15}, and a two-dimensional model 
for the hydrogen-detachment in phenol \cite{XieD17}. The switching mechanism of these rigorously analysed  
methods differs from the one of the highly popular surface hopping algorithm of the fewest switches \cite{Tul90}. 
%As not to be expected by its name, the fewest switches algorithm relies on a hopping mechanism evoked 
%at {\em every} time step of the discretization. Moreover, it uses an auxiliary ordinary differential equation
%that is oscillatory with respect to the semiclassical parameter $\eps$, while incorporating the non-adiabatic 
%coupling vectors $\langle\Phi_1(x)\mid\nabla_x\Phi_2(x)\rangle_{L^2_y}$ that become 
%very large in magnitude if not singular in the regions of interest. 
It seems that so far no rigorous derivation 
of the fewest switches approach has been achieved.

\subsubsection{Codimension one crossings}
There is one exceptional class of eigenvalue crossings that do not generate leading order transitions between the eigenspaces, because the numerator of the fraction in \eqref{eq:coup} 
compensates the vanishing of the eigenvalue gap. 
We illustrate this phenomenon for the simplified situation where the electronic Hamiltonian is replaced by a real symmetric $2\times 2$ matrix with eigenvalues $E_1(x)$ and 
$E_2(x)$. We assume that the trace-free part of this matrix is a 
scalar multiple of a matrix whose eigenvalues are uniformly separated. That is, 
there exist smooth scalar functions $\tau, \gamma:\Rb^{3N}\to\C$
and a smooth matrix-valued function $G:\Rb^{3N}\to\Rb^{2\times 2}$, whose 
eigenvalues $\lambda_1(x)$ and $\lambda_2(x)$ are uniformly separated from each other, 
such that
\[
H_e(x) = \tau(x)\Id_{2} + \gamma(x) G(x).
\]
Then, the coincidence of the eigenvalues $E_1(x)$ and $E_2(x)$ can be characterized as
\[
E_1(x_*) = E_2(x_*) \quad\text{if and only if}\quad \gamma(x_*) = 0.
\]
The submanifolds of $\Rb^{3N}$ that satisfy such a condition have generically codimension one. 
Therefore, these crossings are usually referred to as codimension one crossings. 
We observe that the matrices $H_e(x)$ and $G(x)$ share the same eigenfunctions. Thus, we may 
express the non-adiabatic coupling vector as 
\[
\langle \Phi_1(x)\mid\nabla_x\Phi_2(x)\rangle_{\C^2} \ =\  \frac{\langle\Phi_1(x)\mid \nabla G(x)\Phi_2(x)\rangle_{\C^2}}{\lambda_2(x)-\lambda_1(x)}.
\]
This expression is always finite even for nuclear configurations $x_*$ where the eigenvalues $E_1(x)$ and 
$E_2(x)$ coalesce. We therefore obtain a commutator representation of the form \eqref{eq:comm12} with 
a coupling operator
\begin{align*}
&F_{12}(x)\eta = \\
&-\frac{1}{\gamma(x) (\lambda_2(x)-\lambda_1(x))^2}\, 
\langle \Phi_2(x)\mid\eta\rangle_{L^2_y} \,
\langle\Phi_1(x)\mid \nabla G(x)\Phi_2(x)\rangle_{L^2_y}\,
\Phi_1(x)
\end{align*}
that is less singular than for the other crossing cases, since it diverges linearly and not quadratically 
with respect to $|E_2(x)-E_1(x)|^{-1}$. Codimension one crossings have been considered by 
\cn[Chapter~5]{Hag94} and more recently in \cite{LuZ18}.

\subsection{WKB approximation}

WKB approximations, named after work by Wentzel, Kramers \& Brillouin in 1926, are well documented in the literature; see, \eg\
 \cn{Car08}, \cn{JinMS11}  and references therein. We will therefore describe them only very briefly here. An approximation to the wave function $\psi(x,t)$ of the semiclassically scaled Schr\"odinger equation is sought for in the form
$$
\psi(x,t) \approx  a(x,t)\, \e^{\I S(x,t)/\eps}
$$
where $S$ is a real-valued function, $a$ may (or may not) take complex values, and the derivatives of $a$ and $S$ are bounded independently of $\eps$. Typically, $S$ is chosen independently of $\eps$, and $a$ is sought for in the form of a truncated expansion in powers of $\eps$. The initial data are assumed in this form. A short calculation shows that the defect of such an approximation is of order $O(\eps)$ if $S$ satisfies the
{\it eikonal equation}
$$
\partial_t S + \tfrac12 |\nabla S |^2 + V = 0
$$
and $a$ satisfies the transport equation
$$
\partial_t a + \nabla S \cdot \nabla a + \tfrac12 a \Delta S = 0.
$$
The eikonal equation is the Hamilton--Jacobi equation for the Hamilton function
$H(q,p) = \tfrac12 |p|^2 + V(q)$.
Hamilton--Jacobi theory (around 1840) tells us that 
$$
\nabla S(q(t,z_0),t) =  p(t,z_0),
$$
where $\Phi^t(z_0) = (q(t,z_0),p(t,z_0))$ is the flow of the classical equations of motion $\dot q=p, \ \dot p = - \nabla V(q)$ that correspond to the Hamilton function $H$, for initial data $z_0=(q_0,p_0)$ with $p_0=\nabla S(q_0,0)$;
see, e.g., \cn{HaiLW06}, Theorem VI.5.6, from a classical mechanics perspective or
\cn{Car08}, Section 1.3.1, from a partial differential equations perspective. Then, $S(q(t,z_0))$ is obtained from
\begin{align*}
S(q(t,z_0),t) &- S(q_0,0) = \int_0^t \Bigl(\nabla S(q(s,z_0),s)\cdot \dot q (s,z_0) + \partial_t S(q(s,z_0),s)\Bigr)\D s 
\\
&= \int_0^t \Bigl( |p(s,z_0)|^2 - \bigl(\tfrac12 |\nabla S (q(s,z_0),s)|^2 + V(q(s,z_0))\bigr) \Bigr)\D s
\\
&= \int_0^t \Bigl( \tfrac12 |p(s,z_0)|^2 - V(q(s,z_0)) \Bigr) \D s,
\end{align*}
which is the classical action integral along the  trajectory starting from $z_0=(q_0,p_0)$ with $p_0=\nabla S(q_0,0)$.
This motivates using a particle method for the approximate solution of the eikonal equation,
solving the classical equations of motion for many initial positions $q_0$. Also the transport equation for $a$ can then be 
numerically solved by the particle method, since
$$
\frac\D{\D t} a(q(t,z_0),t) = -\tfrac12 a(q(t,z_0),t)\, \Delta S(q(t,z_0),t) 
$$
and since $\Delta S(q(t,z_0),t)$ can be computed by differentiating $\nabla S(q(t,z_0),t) =  p(t,z_0)$ with respect to $q_0$ and using the chain rule, which yields
$$
\Delta S(q(t,z_0),t) =  \text{trace} \biggl(\Bigl( \frac{\partial p}{\partial z_0} \frac{\partial z_0}{\partial q_0} \Bigr)\Bigl( \frac{\partial q}{\partial z_0} \frac{\partial z_0}{\partial q_0} \Bigr) ^{-1} \biggr),
$$
as long as the matrix inverse on the right-hand side exists.
The derivatives of $p$ and $q$ with respect to the initial data are computed by numerically solving the linearised classical equations of motions.
We remark that the pure particle method can be refined to a semi-Lagrangian scheme.

The main difficulty with the WKB approximation is that the eikonal equation develops singularites in finite time, known as {\it caustics}, 
which arise at points where $x=q(t,z_0)$ for more than one initial datum $z_0=(q_0, \nabla S(q_0,0))$.
This limits the applicability of WKB approximations to short times, as opposed to other approximations considered in this review.

Caustics can be mitigated to quasi-caustics by introducing an asymptotic\-ally vanishing viscosity term in the eikonal equation;
see \cn{BesCM13}. However, estimates beyond the time of appearance of caustics of the eikonal equation are not uniform in $\eps$ with 
this modified WKB approach. Based on this reformulation, a time-splitting method for the integration of the semiclassically scaled linear Schr\"odinger equation is proposed and studied by \cn{ChaLM19}.

\subsection{Nonlinear Schr\"odinger equations in the semiclassical regime}

Nonlinear Schr\"odinger equations in semiclassical scaling, such as
$$
\I\eps \,\partial_t \psi(x,t) = -\frac{\eps^2}2 \Delta_x \psi(x,t) + V(x)\psi(x,t) + \eps^\alpha f(|\psi(x,t)|^2) \psi(x,t), 
$$
have been analysed using WKB techniques; see \cn{Car08} as the authoritative reference. \bch\cn{CarF11} studied the propagation of wave packets for the nonlinear Schr\"odinger equation in the semi-classical regime up to the Ehrenfest time.\ech Numerical studies for various
nonlinear Schr\"odinger equations in the semiclassical regime include those by \cn{BaoJM03} and \cn{Kle08}; see also \cn{JinMS11} and further references therein.
A numerical approach based on the caustics-mitigating WKB reformulation is developed by \cn{BesCM13}. Time-splitting for the above
semiclassical nonlinear Schr\"odinger equation is studied by \cn{Car13} and \cn{CarG17} using WKB analysis, and adaptive splitting methods are developed by \cn{AuzKKT16}. Splitting methods for
nonlinear Schr\"odinger-type systems in the context of coupled Ehrenfest dynamics were considered by 
\cn{JinSZ17} and \cn{FanJS18}.

\bigskip
%\newpage
\bibliography{acta_bib}
\end{document}